\theoremstyle{plain}
\newtheorem{lemma}{Lemma}[section]
\newtheorem{theorem}[lemma]{Theorem}
\newtheorem{proposition}[lemma]{Proposition}
\newtheorem{prop}[lemma]{{Proposition}}
\newtheorem{corollary}[lemma]{Corollary}
\newtheorem{convention}[lemma]{Convention}
\newtheorem*{convention*}{Convention}
\theoremstyle{definition}
\newtheorem{definition}[lemma]{Definition}
\newtheorem{example}[lemma]{Example}
\newtheorem{remark}[lemma]{Remark}
\newtheorem*{definition*}{Definition}
\theoremstyle{remark}
\newcommand{\E}{\mathcal{E}}
\newcommand{\lb}{\left[ \cdot\,,\cdot\right] }
\newcommand{\dd}{\mathrm{d}}
\newcommand{\dif}{\text{d}}
\newcommand{\X}{\mathfrak {X} }
\newcommand{\HRule}{\rule{\linewidth}{0.5mm}}
\begin{document}
\begin{titlepage}

\center
		\begin{center}
			\begin{tabular}{c@{\hskip 3cm}c}
				\includegraphics[height=1.5cm]{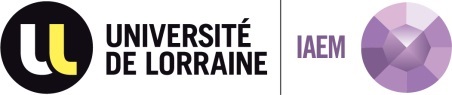} &
			\includegraphics[height=1.5cm]{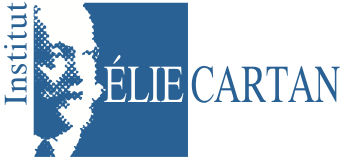}\\
			\end{tabular}
		\end{center}
	
		\vfill

\textsc{\LARGE	Université de Lorraine}\\[1.5cm]
	
\textsc{\Large }\\[0.5cm] 

	 \HRule \\[0.2cm]{ \Large \bfseries UNIVERSAL HIGHER LIE ALGEBRAS OF SINGULAR SPACES AND THEIR SYMMETRIES}\\[0.2cm]\HRule\\[1.5cm]

	\begin{minipage}{0.4\textwidth}
		\begin{flushleft}
			\large
			\textit{Auteur}\\
			Ruben \textsc{Louis} 
		\end{flushleft}
	\end{minipage}
	~
	\begin{minipage}{0.4\textwidth}
		\begin{flushright}
			\large
			\textit{Directeur}\\
			 Prof. Camille \textsc{Laurent-Gengoux}
		\end{flushright}
	\end{minipage}
		
		\vfill
			
		\begin{center}
	\textbf{Thèse}\\\vspace{0.5cm}{ présentée  et soutenue publiquement le 12 novembre 2022 pour l’obtention du}\\\vspace{0.5cm}
       {
\textbf{Doctorat de l’Université de Lorraine}
}\\\vspace{0.5cm} {\textbf{(mention Mathématiques)}}
\\[1cm]
\end{center}\vspace{1cm}
\begin{flushleft}
    \textbf{Membres du jury :}
\end{flushleft}		
\begin{center}
\begin{tabular}{ |c|c|c|c| } 
\hline
\textbf{Directeur de thèse} : & M. Camille Laurent-Gengoux & Professeur, Université de Lorraine, Metz\\
\hline
\textbf{Président de jury} :  & M. Robert Yuncken &Professeur, Université de Lorraine, Metz  \\ \hline
\textbf{Rapporteurs} : & M. Marco Zambon & Professeur,  KU Leuven, Leuven \\ \hline
& Mme. Chenchang Zhu  & Professeure,  Université de Göttingen, Göttingen\\ 
\hline
\textbf{Examinateurs} :& Mme. Claire Debord  & Professeure, Université de Paris, Paris\\ 
\hline
& M. Pol Vanhaecke & Professeur, Université de Poitiers, Poitiers  \\ 
\hline
\textbf{Membre invité} :&M. Rajan Mehta & Professeur, Smith College, Massachusetts \\ 
\hline
\end{tabular}
\end{center}

	\end{titlepage}
	
\newpage
\begin{center}
    {\textbf{ABSTRACT}}
\end{center}

This thesis breaks into two main parts. Let me describe them.

\begin{itemize}
    \item We show that there is an equivalence of categories between Lie-Rinehart algebras over a commutative algebra $\mathcal O$ and homotopy equivalence classes of negatively graded acyclic Lie $\infty$-algebroids. Therefore, this result makes sense of the universal Lie $\infty$-algebroid of every singular foliation, without any additional assumption, and for Androulidakis-Zambon singular Lie algebroids. This extends to a purely algebraic setting the construction of the universal $Q$-manifold of a locally real analytic singular foliation of \cite{LLS,LavauSylvain}.  Also, to any ideal $\mathcal I \subset \mathcal O $ preserved by the anchor map of a Lie-Rinehart algebra $\mathcal A $, we associate a homotopy equivalence class of negatively graded Lie $\infty $-algebroids over complexes computing ${\mathrm{Tor}}_{\mathcal O}(\mathcal A, \mathcal O/\mathcal I) $.
Several explicit examples are given.

\item The second part is dedicated to  some applications of the results on Lie-Rinehart algebras. 

\begin{enumerate}
    \item We associate to any affine variety a universal Lie $\infty$-algebroid of the Lie-Rinehart algebra of its vector fields. We study the effect of some common operations on affine varieties such as blow-ups, germs at a point, etc.
    \item We give an interpretation of the blow-up of a singular foliation $\mathfrak{F}$ in the sense of Mohsen \cite{MohsenOmar} in terms of the universal Lie $\infty$-algebroid of $\mathfrak{F}$, in fact an almost Lie algebroid over a geometric resolution of $\mathfrak{F}$.
    \item We introduce the notion  of longitudinal vector fields on a graded manifold over  a singular foliation, and study their cohomology. We prove that the cohomology groups of the latter vanish. 
    \item We study symmetries of singular foliations through universal Lie $\infty$-algebroids. More precisely, we prove that a weak symmetry action of a Lie algebra $\mathfrak{g}$ on a singular foliation $\mathfrak F$ (which is morally an action of $\mathfrak g$ on the leaf space $M/\mathfrak F$) induces a unique up to homotopy Lie $\infty$-morphism from $\mathfrak{g}$ to the Differential Graded Lie Algebra (DGLA) of vector fields on a universal Lie $\infty$-algebroid of $\mathfrak F$ (such morphim is known under the name "$L_\infty$-algebra action" in \cite{MEHTA2012576}). We deduce from this general result several geometrical consequences. For instance, We give an example of a Lie algebra action on an affine sub-variety which cannot be extended on the ambient space.  Last, we present the notion of bi-submersion towers over a singular foliation and lift symmetries to those. 
\end{enumerate}
\end{itemize}
\textbf{Keywords}: Homotopy algebras, Lie $\infty$-algebroids, dg-manifolds, singular foliations, algebraic geometry, singularities.

\newpage
\begin{center}
    \textbf{{ACKNOWLEDGEMENTS}}
\end{center}
I wish to express my gratitude to God and to several people for their help on the accomplishment of this thesis.

\begin{itemize}
    \item[]First, I would like to express my gratitude to my supervisor  Camille Laurent-Gengoux for accepting me as his Ph.D. student. He was patient, attentive to me. He was a great support for me and the one on which I could lean when I feel lost in my ideas. He encouraged me in my research. He gave me confidence. He knew how to motivate me. He gave me valuable advice and suggested directions to take, articles to read in order to  lead better my research. He transmitted to me knowledge and the ethics of a researcher, he is therefore my mathematic father. I also thank him for the time he devoted to me and for his comments and his suggestions throughout of this Ph.D. I feel very lucky for this opportunity I had  
    to work with him.

\item[] My gratitude also goes  to Chenchang Zhu and Marco Zambon for accepting
to be reporters for my thesis. I also thank Claire Debord, Pol Vanhaecke, Robert Yuncken, and Rajan Amit Mehta for taking part in the jury. 

\item[]I would like to acknowledge the full financial support for this Ph.D from R\'egion Grand Est. Likewise, I thank the managers and staff of Université de Lorraine for their collaboration and their help in the administrative procedures. I particularly would like to warmly thank Prof. Philippe Bonneau for helping me find accommodation and settling me comfortably in Metz to begin my PhD.

\item[]Thanks to the CNRS MITI 80Prime project GRANUM also to Franco-German PHC project Procope. I would like to thank the Institut Henri Poincaré for hosting me in november 2021. I want to thank S. Lavau for his advice and comments on my paper "Symmetries of singular foliations". I would like to thank C. Ospel, P. Vanhaecke and V. Salnikov for giving the possibility to present the results on Lie-Rinehart algebras at the \textquotedblleft Rencontre Poisson à La Rochelle, 21-22 October 2021\textquotedblright. I was glad to meet my mathematic grand father Claude Roger at this conference and I would like to thank him  for his articles on the Lie-Rinehart algebras that he gave me. Also, I thank the organizers of the Geometry Seminar at the Aristotle  
University of Thessaloniki, in particular Panagiotis Batakidis for inviting me in January 28th, 2022 to give a talk on my work.

I would like to thank Iakovos Androulidakis with whom I had the honor of discussing my work at the "Foliations, pseudodifferential operators and groupoids" school, Göttingen, Germany in February 28th-March 4th, 2022. I also would like to thank Leonid Ryvkin for always being ready to discuss with me when I needed it.

I am also grateful to the organizers of Poisson 2022 Advanced School at  CRM (Centre de Recerca Matemàtica) Barcelona, for giving this precious opportunity  to be in charge of the problems sessions for the Lecture  "Singular Foliations". Likewise, I would like to thank the organizers of Poisson 2022 Conference - ICMAT, Madrid for giving the possibility to present my results at  the Poster session. Last, I would like to express sincere gratitude to Université d'État d'Haïti and more precisely the department of mathematics of \'Ecole Normale Supérieure (ENS), for giving a golden opportunity to meet mathematics. I would like to mention a few names of ENS professors who have contributed to this achievement: Prof. Bérard Cenatus, Prof. Lesly Dejean, Prof. Dr. Antonine Phigareau, Prof. Dr. Pierre Timothe, Prof. Dr. J.B Antenord, Prof. Dr. OLguine Yacinthe, Prof. Dr. Oscar Walguen, Prof. Steeve Germain,  Prof. Dr. Aril Milce, Prof. Dr. Yvesner Marcelin and Prof. Dr. J. K Innocent and Prof. Dr. Dieuseul Predelus. I specially would like to thank Prof. Dr. Achis Chery and Prof. Dr. Pol Vanhaecke for supporting me by writing letters in my favor to obtain this grant.

\item[] 
\end{itemize}
\newpage
\begin{center}
    \begin{enumerate}
    \item[]Je tiens à remercier Anderson Augusma ainsi que Dor Dieunel pour m'avoir aidé à vérifier les erreurs typographiques dans le manuscrit.
    \item []Je remercie chaleureusement Wanglaise Fateon qui m'a également aidé à relire mes articles et à identifier les coquilles. La rencontrer a été l'une des meilleures choses qui me soient arrivées.
\item[] Enfin, je dédie cette thèse à mes parents, Madame et Monsieur Wisner Louis qui ont toujours été présents pour m'encourager au tout début et tout au long de ces années de thèse. Je remercie mon seul et unique frère et petit frère Benjamin Louis pour ses blagues drôles qui m'ont aidé à ne pas devenir fou dans la foulée. Je t'aime mon frère. 
    \end{enumerate}
    \vspace{5cm}
    \item[]\hspace{5cm}À tous mes proches !
\end{center}

\tableofcontents 

\chapter*{Introduction}


The recent  studies about Lie $\infty $-algebras  or Lie $\infty $-groups, their morphisms and their -oids equivalent (i.e. Lie $\infty $-algebroids \cite{Campos,Voronov,Voronov2} and \textquotedblleft  higher groupoids\textquotedblright\,\cite{Severa}) is usually justified by their use in various fields of theoretical physics and  mathematics. Lie $\infty $-algebras or -oids often appear where, at first look, they do not seem to be part of the story, but end up to be  needed to answer natural questions, in particular questions where no higher-structure concept seems a priori involved.
Among examples of such a situation, let us cite deformation quantization of Poisson manifolds \cite{Kontsevich} and many recent developments of BV operator theory, e.g. \cite{CamposBV}, deformations of coisotropic submanifolds \cite{CattaneoFelder},  integration problems of Lie algebroids by stacky-groupoids \cite{Zhu}, complex submanifolds and Atiyah classes  \cite{ChenStienonXu,Kapranov,LSX}.  The list could be longer.


For instance, it appears in theory of singular foliations.  Singular foliations arise frequently in differential or algebraic geometry. Here following \cite{AndroulidakisIakovos,AndroulidakisZambon,Cerveau, Debord,LLS} we define a singular foliation on a smooth, complex, algebraic, real analytic manifold $M$  with sheaf of functions $\mathcal O$ to be a subsheaf $\mathfrak F \colon \mathcal U\longrightarrow\mathfrak F(\mathcal U)$ of the sheaf of vector fields $\mathfrak X$, which is closed under the Lie bracket and locally finitely generated as an $\mathcal O$-module. By Hermann's theorem \cite{Hermann}, this is enough to induce a partition of the manifold $M$ into embedded submanifolds of possibly different dimensions, called \emph{leaves} of the singular foliation. Singular foliations appear for instance as orbits of Lie group actions with possibly different dimensions or as symplectic leaves of a Poisson structure.  When all the leaves have the same dimension, we recover the usual \textquotedblleft regular foliations\textquotedblright \cite{DiederichKlas,LLL}.\\

In \cite{LLS}-\cite{LavauSylvain}, it is proven that \textquotedblleft behind\textquotedblright\,several singular foliations $ \mathfrak F$ there is a natural homotopy class of Lie $\infty $-algebroids, called \emph{universal Lie $\infty $-algebroid of $\mathfrak F $}, and that the latter answers natural basic questions about the existence of \textquotedblleft good\textquotedblright\,generators and relations for a singular foliation.The first part of  this thesis  is mainly an algebraization of \cite{LLS}-\cite{LavauSylvain}, that allows to enlarge widely the classes of examples. More precisely, Theorems \ref{thm:existence} and \ref{th:universal} are similar to the main theorems Theorem 2.8. and Theorem 2.9 in \cite{LLS}:
\begin{enumerate}
    \item Theorem \ref{thm:existence} equips  any free $\mathcal O $-resolution of a Lie-Rinehart algebra $\mathcal A $ with a Lie $\infty $-algebroid structure (Theorem 2.8. in \cite{LLS} was a statement for geometric resolutions of locally real analytic singular foliation on an open subset with compact closure). This is a sort of homotopy transfer theorem, except that no existing homotopy transfer theorem applies in the context of \underline{generic} $\mathcal O $-modules (for instance, \cite{Campos} deals only with projective $\mathcal O $-modules).
    { The difficulty is that we cannot apply the explicit transfer formulas that appear in the homological perturbation lemma because there is in general no $\mathcal O$-linear section of $\mathcal A $ to its projective resolutions.}
    \item Theorem \ref{th:universal} states that any Lie $\infty $-algebroid structure that terminates in $\mathcal A $ comes equipped with a unique up to homotopy Lie $\infty $-algebroid morphism to any structure as in the first item (Theorem 2.8. in \cite{LLS} was a similar statement for Lie $\infty $-algebroids whose anchor takes values in a given singular foliation).
\end{enumerate}
As in \cite{LLS}, an immediate corollary of the result is that any two Lie $\infty $-algebroids as in the first item are homotopy equivalent in a unique up to homotopy manner, defining therefore a class canonically associated to the Lie-Rinehart algebra, that deserve in view of the second item to be called \textquotedblleft universal\textquotedblright.\\

However:
\begin{enumerate}
 \item  While \cite{LLS} dealt with Lie $\infty $-algebroids over projective resolutions of finite length and finite dimension, we work here with  Lie $\infty $-algebroids over any \emph{free} resolution -even those of infinite length and of infinite dimension in every degree.
    \item In particular, since we are in a context where taking twice the dual does not bring back the initial space, we can not work with $Q$-manifolds (those being the \textquotedblleft dual\textquotedblright\,of Lie $\infty $-algebroids): it is much complicated to deal with morphisms and homotopies.  
\end{enumerate}
By doing so, several limitations of \cite{LLS} are overcome. While \cite{LLS} only applied to singular foliations which were algebraic or locally real analytic on a relatively compact open subset, the present thesis associates a natural homotopy class of Lie $ \infty$-algebroids to any Lie-Rinehart algebra, and in particular
\begin{enumerate}
    \item[a)]  to any singular foliation on a smooth manifold, (finitely generated or not). This construction still works with singular foliations in the sense of Stefan-Sussmann for instance, or to any involutive $C^\infty(M)$-module in $\Gamma(A)$.
    \item[b)] to any affine variety, to which we associate its Lie-Rinehart algebra of vector fields),
     and more generally to derivations of any commutative algebra,
    \item[c)] to singular Lie algebroids in the sense of Androulidakis and Zambon  \cite{ZambonMarco2},
    \item[d)] to unexpected various contexts, e.g. Poisson vector fields of a Poisson manifold, seen as a Lie-Rinehart algebra over Casimir functions, or symmetries of a singular foliation, seen as a Lie-Rinehart algebra over functions constant on the leaves. 
\end{enumerate}
These Lie $\infty $-algebroids are constructed on $\mathcal O$-free resolutions of the initial Lie-Rinehart algebra
over $\mathcal O $. They are  universal in some sense (see Section \ref{sec:main}), and they also are in particular unique up to homotopy equivalence.

A similar algebraization of the main results of \cite{LLS}, using semi-models category, appeared recently in Ya\"el Fr\'egier and Rigel A. Juarez-Ojeda \cite{Fregier}.
There are strong similarities between our results and theirs, but morphisms and homotopies in \cite{Fregier} do not match ours. It is highly possible, however, that  Theorem \ref{thm:existence} could be recovered using their results. Luca Vitagliano \cite{VitaglianoLuca} also constructed Lie $\infty$-algebra  structures out of regular foliations, which are of course a particular case of Lie-Rinehart algebra. These constructions do not have the same purposes. For regular foliations, our Lie $\infty$-algebroid structure is trivial in the sense that it is a Lie algebroid, while his structures become trivial when a good transverse submanifold exists. Lars Kjeseth \cite{LARSKJESETH,LARSKJESETHBRST} also has a notion of resolutions of Lie-Rinehart algebras. But his construction is more in the spirit of Koszul-Tate resolution: Definition 1. in \cite{LARSKJESETH} defines Lie-Rinehart algebras resolutions as resolutions of their Chevalley-Eilenberg coalgebra, not of the Lie-Rinehart algebra itself as a module. It answers a different category of questions, related to BRST and the search of cohomological model for Lie-Rinehart algebra cohomology.\\

These results can be used to understand the geometry of singular foliations such as their symmetries. More precisely, 
Let $(M,\mathfrak F)$ be a foliated manifold. A \emph{global symmetry} of a singular foliation $\mathfrak F$ on $M$ is a diffeomorphism $\phi\colon M\longrightarrow M$ which preserves $\mathfrak F$, that is, $\phi_*(\mathfrak F)=\mathfrak F$. The image of a leaf through a global symmetry is again a leaf (not necessarily the same leaf). For $G$ a Lie group, a \emph{strict symmetry action} of $G$ on a foliated manifold $(M,\mathfrak F)$ is a smooth action $G\times M\longrightarrow M$ that acts by global symmetries \cite{GarmendiaAlfonso2}. Infinitesimally, it corresponds to a Lie algebra morphism $\mathfrak{g} \longrightarrow \mathfrak X(M)$ between the Lie algebra $(\mathfrak{g},\lb_\mathfrak{g})$ of $G$ and the Lie algebra of symmetries of $\mathfrak F$.

A strict symmetry action of $G$ on $M$ goes down to the leaf space $M/\mathfrak F$, even though the latter space is not a manifold. The opposite direction is more sophisticated, since a strict symmetry action of $G$ on $M/\mathfrak F$ does not induce a strict action over $M$ in general. However, it makes sense to consider linear maps $\varrho\colon\mathfrak g\longrightarrow \mathfrak X(M)$ that satisfy $[\varrho(x),\mathfrak F]\subset\mathfrak F$ for all $x\in\mathfrak{g}$, and which are Lie algebra morphisms up to $\mathfrak F$, namely, $\varrho([x,y]_\mathfrak{g})-[\varrho(x),\varrho(y)]\in\mathfrak F$ for all $x,y\in\mathfrak{g}$. The latter linear maps are called \textquotedblleft \emph{weak symmetry actions}\textquotedblright. These actions induce a \textquotedblleft strict action\textquotedblright on the leaf space i.e.  a Lie algebra morphism $\mathfrak g\longrightarrow \mathfrak X(M/\mathfrak F)$, whenever $M/\mathfrak F$ is a manifold.

Let us emphasize on the following observation:  An infinitesimal action of a Lie algebra $\mathfrak{g}$ on a manifold $M$ is a Lie algebra morphism $\mathfrak g\longrightarrow \mathfrak{X}(M)$. Replacing $M$ by a Lie $\infty$-algebroid $(E,Q)$, one expects to define them as Lie $\infty$-algebra morphisms $\mathfrak{g}\longrightarrow \mathfrak{X}(E,Q)$, the latter space being a DGLA. Various results about those are given in Mehta-Zambon \cite{MEHTA2012576}. In particular, these authors give several equivalent definitions and interpretations of those.

In view of \cite{LLS,LavauSylvain} it is shown that behind every singular foliation or more generally any Lie-Rinehart algebras \cite{CLRL} there exists a Lie $\infty$-algebroid structure which is unique up to homotopy called the \emph{universal} Lie $\infty$-algebroid. Here is a natural question:  what does a symmetry of a singular foliation $\mathfrak F$ induce on an universal Lie $\infty$-algebroid of $\mathfrak F$?  Theorem \ref{main} of this thesis gives an answer to that question. It states that any weak symmetry action of a Lie algebra on a singular foliation $\mathfrak F$ can be lifted to a Lie $\infty$-morphism valued in the DGLA of vector fields on an universal Lie $\infty$-algebroid of $\mathfrak F$. 
Such Lie $\infty$-morphisms were studied by Mehta and Zambon \cite{MEHTA2012576} as "$L_\infty$-algebra actions". This goes in the same direction as \cite{GarmendiaAlfonso2} who already underlined Lie-2-group structures associated  to strict symmetry action of Lie groups. Furthermore, Theorem \ref{main} says this lift is unique modulo homotopy equivalence.

This result gives several geometric consequences. Here is an elementary question: can a Lie algebra action $\mathfrak{g} \to \mathfrak X(W)$ on an affine variety $W \subset \mathbb C^d$ be extended to a Lie algebra action $\mathfrak{g} \to \mathfrak X(\mathbb C^d)$ on $\mathbb{C}^d $?
    Said differently: it is trivial that any vector field on $W$ extends to $\mathbb C^d $, but can this extension be done in such a manner that it preserves the Lie bracket? Although no \textquotedblleft $\infty$-oids\textquotedblright\, appears in the question, which seems to be a pure algebraic geometry question, we claim that the answer goes through Lie $\infty $-algebroids and singular foliations. More precisely, the idea is then to say that any $\mathfrak g $-action on $W$ induces a weak symmetry action on the singular foliation $\mathcal I_W\mathfrak{X}(\mathbb{C}^d)$ of all vector fields vanishing on $W$ (here $\mathcal I_W$ is the ideal that defines $W$). By Theorem \ref{main}, we know that it is possible to lift any weak symmetry action of singular foliation into a Lie $\infty $-morphism. But is it possible to build such a Lie $\infty $-morphism where the polynomial-degree $-1$ of the second order Taylor coefficient is zero? There are cohomological obstructions. In some specific cases, obstruction classes appear on some cohomology, although in general the obstruction is rather a Maurer-Cartan-like equations that may or may not have solutions. We show that both questions are in fact related.\\

    Here is another interesting question where we would like to apply our results: Can we desingularize a singular affine variety $W\subseteq \mathbb C^d$ by making use of the universal Lie $\infty$-algebroid of the singular foliation $\mathfrak{F}=\mathfrak X(W)$ of vector fields tangent $W$? In section \ref{sec:blow-up-procedure}, we use the geometric resolution of the singular foliation (i.e. the resolution on which the universal Lie $\infty$-algebroid is built) to recover several notions of resolution of singularities: on being due to Nash \cite{D.T} and a second one to Mohsen \cite{MohsenOmar}. But the meaning of these spaces are unclear. We would like to relate them with the higher brackets of the universal Lie $\infty$-algebroid, e.g. to understand the role of the $3$-ary bracket in this procedure.

    
    In the last chapter of the thesis, we introduce the  notion of "bi-submersion towers"  over singular foliations that we denote by $\mathcal{T}_B$. The latter notion as the name suggests is a family of "bi-submersions" which are built one over the other. The concept of bi-submersion over singular foliations has been introduced in \cite{AndroulidakisIakovos} and it is used  in $K$-theory \cite{Androulidakis-Iakovos-Georges} or differential geometry \cite{AndroulidakisIakovos2011Pcoa,AndroulidakisZambon,GarmendiaAlfonsoMarcoZambon}. We show that such a bi-submersion tower over a singular foliation $\mathfrak{F}$ exists if  $\mathfrak{F}$ admits a geometric resolution. Provided that it exists, we show in Theorem \ref{thm:final-sym} that  any infinitesimal action of a Lie algebra $\mathfrak{g}$ on the singular foliation $\mathfrak{F}$ lifts to the bi-submersion tower $\mathcal{T}_B$. This lift looks like the beginning of a kind of Lie $\infty$-morphism. We wonder if we can continue the construction in Theorem \ref{thm:final-sym} to a Lie $\infty$-morphism.\\

    The thesis is organized as follows. In chapter \ref{chap:1}, we recall some basics on  graded co-algebra structures on the graded symmetry algebra and their morphisms. We also review the notion of co-derivations and their properties. In Chapter \ref{chap:oid-alg-ver}, we introduce the notion of Lie $\infty$-algebroid on an arbitrary commutative unital algebra $\mathcal{O}$ over a field of characteristic zero, and also define their morphisms in terms of co-derivations. We define the notion of homotopy between them. Some technical Lemmas and Propositions are given. In  Chapter \ref{chap-Lie-Rinehrt}, we  fix notations and review definitions,
examples, and give main properties of Lie-Rinehart algebras. Besides, we construct a Lie $2$-algebroid structure over any Lie-Rinehart algebra that admits a free resolution of length less or equal to $2$. In Chapter \ref{Chap:main}, we state and prove
the main results of the first part of the thesis, i.e. the equivalence of categories between Lie-Rinehart algebras
and homotopy classes of free acyclic Lie $\infty$-algebroids, which justifies the name universal Lie
$\infty$-algebroid of a Lie-Rinehart algebra. Also, we describe  the
universal Lie $\infty$-algebroids of several Lie-Rinehart algebras. The complexity reached by the
higher brackets in these examples should convince us that it is not a trivial structure, even for relatively simple Lie-Rinehart algebras. The Chapter \ref{chap:5} is devoted to the applications of the results of the previous chapters to affine varieties. We recall some basics definitions and theorems on affine varieties $W$. We present three main constructions of Lie-Rinehart associated to $W$ and relate their universal Lie $\infty$-algebroids together. Also, some examples of the universal Lie $\infty$-algebroids are given, such as blow-ups, vector fields vanishing on a complete intersection, etc. In Chapter \ref{chap:oid1}, we recall the notion of $Q$-manifolds and apply the results on Lie-Rinehart algebras to recover the universal Lie $\infty$-algebroids of a singular foliation \cite{LLS}. This chapter ends with a result on the cohomology of longitudinal vector fields. In Chapter \ref{chap:isotropy}, we recall the definition of Androulidakis and Skandalis isotropy Lie algebra
of a singular foliation at a point and recall from \cite{LLS} its relation with the Universal Lie $\infty$-algebroids of a singular foliation. We end with a blow-up procedure for a singular foliation inspired by O. Mohsen. In Chapter \ref{chap:symmetries}, we study symmetries of singular foliations. We present some definitions and facts on weak symmetry actions of Lie algebras on singular foliations and give some examples. We state the main results of the second part of the thesis and present their proofs. In  Chapter \ref{chap:obstruction-theory}, we define an obstruction class for extending a Lie algebra action on an affine variety to ambient space and also give some examples. In Chapter \ref{chap:tower}, we look at symmetries of bi-submersions. Afterwards, we introduce the notion of bi-submersion towers over a singular foliation and point out some observations related to their symmetries.

Finally, in Appendix \ref{appendix:tensor}, we recall the definition of tensor algebra and fix notations. In Appendix \ref{appendix:mod}, we recall some general facts on homological algebra and give some geometric constructions.



\part{Lie-Rinehart algebras $\equiv$ Acyclic Lie $\infty$-algebroids}

\chapter{Preliminaries}\label{chap:1}
This chapter sets the ground for the whole thesis, especially  chapters \ref{chap:oid-alg-ver}, \ref{chap-Lie-Rinehrt} and \ref{Chap:main}. It fixes terminologies, conventions and notations. For more details, we also refer the reader to Appendix \ref{appendix:tensor} and \ref{appendix:mod}.\\

Throughout this thesis, $ \mathbb K$ is a field of characteristic zero, and  $\mathcal O$ is an associative commutative unital $\mathbb{K}$-algebra unless otherwise mentioned. Also, an $\mathcal{O}$-module $\mathcal E$ is seen as $\mathbb{K}$-vector space in the natural way, $\lambda\cdot e:=(\lambda\cdot 1_\mathcal{O})\cdot e$, where $1_\mathcal{O}\equiv 1$ is the unit of $\mathcal{O}$. In the sequel, we will drop the notation "$\cdot$".\\

Geometrically, $\mathcal{O}$ can be understood as the algebra of smooth functions on a manifold $M$, or on an open subset $\mathcal{U}\subset M$ of a complex manifold, or the coordinate ring of an affine variety $W$.

\section{Graded symmetric algebras}\label{conventions}
For $\E=\displaystyle{\oplus_{i\in \mathbb Z}\E_{i}}$ be a $\mathbb{Z}$-graded module, we denote by $\lvert x\rvert \in \mathbb{Z}$ the degree of a homogeneous element $x\in \mathcal{E}$. 
\begin{itemize}
    \item We denote by $\bigodot^\bullet\E  $ and call \emph{(reduced) graded symmetric algebra of $\E $ over $\mathcal O$} the quotient $$T^\bullet_\mathcal O \E/\langle x\otimes_{\mathcal O} y-(-1)^{\lvert x\rvert\lvert y\rvert}y\otimes_{\mathcal O}x\rangle$$ of the tensor algebra (see Appendix \ref{appendix:tensor}) over $\mathcal O $ of $\E$, namely
$$ T^\bullet_\mathcal O \E:= \bigoplus_{k =1}^\infty
\underbrace{ \E \otimes_\mathcal O \cdots \otimes_\mathcal O \E}_{\hbox{\small{$k$ times}}}
$$
 by the ideal generated by $x\otimes_{\mathcal O} y-(-1)^{\lvert x\rvert\lvert y\rvert}y\otimes_{\mathcal O} x$, with $x,y$ arbitrary homogeneous elements of $\E $. This quotient is a graded commutative algebra. We denote its  product by $\odot$.
\item Similarly, we denote by $ S^\bullet_\mathbb K( \E ) $ and call \emph{(reduced) graded symmetric algebra} of $\E $ over \emph{the field $\mathbb K$} the quotient of the tensor algebra (over $\mathbb K $) of $\E$, i.e.,
$$ T^\bullet_\mathbb K \E:= \bigoplus_{k =1}^\infty
\underbrace{ \E \otimes_\mathbb{K} \cdots \otimes_\mathbb{K} \E}_{\hbox{\small{$k$ times}}}$$  by the ideal generated by $x\otimes_\mathbb{K}  y-(-1)^{\lvert x\rvert\lvert y\rvert}y\otimes_\mathbb{K}  x$, with $x,y$ arbitrary homogeneous elements of $\E $. We denote by $\cdot$ the product in $S^\bullet_\mathbb K (\E)$.

\end{itemize}

The algebras $\bigodot^\bullet\E$ and $ S^\bullet_\mathbb{K} (\E)$ come equipped with two different \textquotedblleft grading\textquotedblright\,. Let us make them explicit. 
\begin{enumerate}
    \item  We define the \emph{degree} of $x= x_1 \odot \cdots  \odot x_n\in \bigodot^n\E$ or $x=x_1 \cdot \cdots  \cdot x_n\in S^n_\mathbb{K}(\E)$ by 
$$| x_1 \cdot \cdots  \cdot x_n | =  | x_1 \odot \cdots  \odot x_n | = |x_1|+ \cdots  + |x_n|$$
for any homogeneous $ x_1, \dots, x_n\in\E$. With respect to this degree, $\bigodot^\bullet\E $ and $S^\bullet_\mathbb{K}(\E)$ are graded commutative algebras. 
    \item The second grading is called "polynomial-degree". The \emph{polynomial-degree} of $ x_1 \odot \cdots  \odot x_n\in \bigodot^n\E$ or $ x_1 \cdot \cdots  \cdot x_n\in S^n_\mathbb{K}(\E)$ is defined to be $n$. 
    We have the following  polynomial-degree decomposition
$ \bigodot^\bullet \E = \oplus_{k \geq 1} \bigodot^k \E $ and $ S^\bullet_\mathbb{K} (\E) = \oplus_{k \geq 1} S^k_\mathbb{K} (\E)$,
where $ \bigodot^k\E$ and $ S^k_\mathbb{K}(\E)$ stand for the $\mathcal O$-module of elements of polynomial-degree~$k$ {and the $\mathbb{K}$-vector space of elements of polynomial-degree~$k$, respectively}.
\end{enumerate}
\begin{convention}
For $ \E$ a graded $ \mathcal O$-module,
the set of elements of \emph{polynomial-degree} $k$ and degree $d$ in $ \bigodot^\bullet\E$ (resp. $S^k_\mathbb{K}(\E)$) shall be denoted by $\bigodot^k \mathcal E_{|_d}$ (resp. $S^k_\mathbb{K}(\E)_{|_d}$). For example, $$\displaystyle{\bigodot^k \mathcal E_{|_d}=\bigoplus_{i_1+\cdots +i_k=d}\E_{i_1} \otimes_\mathcal O \cdots \otimes_\mathcal O \E_{i_k}.}$$
\end{convention}
\begin{itemize}
\item For any homogeneous elements $x_1 , \ldots , x_k \in \mathcal{E}$ and $\sigma\in\mathfrak{S}_k$ a permutation of $\{1, \ldots, k\}$, the \emph{Koszul sign} $\epsilon(\sigma; x_1 , \ldots , x_k )$ is defined by:
$$  x_{\sigma(1)} \odot \cdots \odot x_{\sigma(k)}= \epsilon(\sigma; x_1 , \ldots , x_k ) \, x_1 \odot \cdots \odot x_k.$$ We often write $\epsilon(\sigma )$ for $ \epsilon(\sigma; x_1 , \ldots , x_k )$.\\
 
\item For $i, j \in\mathbb{N}$, a \emph{$(i, j)$-shuffle} is a permutation
$\sigma \in \mathfrak{S}_{i+j}$ such that $\sigma(1) <\ldots < \sigma(i)$ and $\sigma(i + 1) < \ldots < \sigma(i + j)$, and the set of
all $(i, j)$-shuffles is denoted by $\mathfrak{S}(i,j)$. More generally, a $(i_1,\ldots,i_k)$-\emph{shuffle} is a permutation $\sigma\in \mathfrak{S}_{i_1+\cdots+i_k}$ such that $$\begin{array}{c}
    \sigma(1)<\cdots< \sigma(i_1 ) \\
     \sigma(i_1+1)<\cdots <\sigma(i_1 +i_2),\\
\vdots\\
\sigma(i_1 +i_2 +\cdots+i_{j-1}+1)<\cdots<\sigma(i_1 +i_2 +\cdots+i_j),\\\vdots\\\sigma(i_1 +i_2 +\cdots+i_{k-1}+1)<\cdots<\sigma(i_1 +i_2 +\cdots+i_k)
\end{array}$$The set of
all $(i_1,\ldots i_k)$-shuffles is denoted by $\mathfrak{S}(i_1,\ldots, i_k)$.
\end{itemize}

\section{Graded symmetric co-algebras and their morphisms}
Graded co-algebra structures are the dual version of graded algebra structures, i.e., they are obtained  by reversing all arrows and permutes the order of composition. Let us recall the definition of a co-algebra structure on an $\mathcal O$-module. We refer the reader to, e.g \cite{Loday-Vallette,Kassel,Marco-Manetti} or Chapter 8 of  \cite{ManettiMarco2005Lodo} for more details on this topic.

\begin{definition}\label{def:coproduct}
    A \emph{coassociative graded cocommutative co-algebra} structure on an graded $\mathcal{O}$-module $\mathcal{C}=\displaystyle{\oplus_{i\in \mathbb{Z}}\mathcal{C}_i}$ is \begin{itemize}
    
    \item a linear map of degree zero called \emph{(graded) coproduct} $\Delta\colon\mathcal{C}\longrightarrow \mathcal{C}\otimes \mathcal{C}$ with $$\Delta(\mathcal{C}_k)\subset \bigoplus_{i+j=k}\mathcal{C}_i\otimes \mathcal{C}_j$$ is such that $\Delta(\mathcal{C}_k)$ has non-zero\footnote{Note that this condition is automatically satisfied when the module is concentrated in positive degree.} intersection with only finitely many spaces $\mathcal{C}_i\otimes \mathcal{C}_j$

     \item $(\Delta \otimes \mathrm{id} )\circ\Delta = (\mathrm{id} \otimes \Delta)\circ\Delta \colon \mathcal{C} \rightarrow \mathcal{C} \otimes \mathcal{C} \otimes \mathcal{C}$,\, called \emph{graded coassociativity}.
       
       \item  $\tau\circ \Delta = \Delta$, called \emph{graded cocommutativity},\item[] where $\tau\colon \mathcal{C}\otimes\mathcal{C}\rightarrow \mathcal{C}\otimes\mathcal{C}$ is defined by $\tau(a\otimes b)= (-1)^{|a||b|}b\otimes a$.
\end{itemize}
    Equivalently, the following diagrams commute
\begin{align*}
    \xymatrix{\mathcal{C}\ar[d]_\Delta\ar[rr]^\Delta&&\mathcal{C}\otimes \mathcal{C}\ar[d]^{\Delta\otimes \mathrm{id}}\\\mathcal{C}\otimes\mathcal{C}\ar[rr]^{\mathrm{id}\otimes\Delta}&&\mathcal{C}\otimes\mathcal{C}\otimes\mathcal{C}}\qquad\qquad \xymatrix{\mathcal{C}\ar[d]_{\Delta}\ar[dr]^\Delta&\\\mathcal{C}\otimes\mathcal{C}\ar[r]^\tau&\mathcal{C}\otimes\mathcal{C}.}
\end{align*} The pair $(\mathcal C, \Delta)$ is called \emph{graded $\mathcal O$-co-algebra}. We say that $(\mathcal C, \Delta)$ is \emph{counital} if there is a morphism  of graded vector spaces $u\colon \mathcal{C}\rightarrow \mathbb K$ such that $(u\otimes\mathrm{id})\circ \Delta=(\mathrm{id}\otimes u)\circ\Delta=\mathrm{id}$ i.e. the diagram below commutes

$$ \xymatrix{\mathcal{C}\ar[drr]^{\mathrm{id}}\ar[d]_\Delta\ar[rr]^\Delta&&\mathcal{C}\otimes \mathcal{C}\ar[d]^{\mathrm{id}\otimes u}\\\mathcal{C}\otimes\mathcal{C}\ar[rr]<2pt>^{u\otimes\mathrm{id}}&&\mathbb{K}\otimes\mathcal{C}\simeq \mathcal{C}\simeq \mathcal{C}\otimes\mathbb{K}.}$$
\end{definition}

\begin{prop}
    Let $(\mathcal{C},\Delta)$ and $(\mathcal{C}',\Delta')$ be two co-algebras over $\mathcal{O}$. Then, the tensor product $\left(\mathcal{C}\otimes \mathcal{C}', (\mathrm{id}_\mathcal{C}\otimes\tau\otimes \mathrm{id}_{\mathcal{C}'})\circ \Delta\otimes\Delta'\right)$ is a co-algebra  over $\mathcal{O}$.
\end{prop}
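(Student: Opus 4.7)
The plan is to verify the three defining properties of a graded cocommutative coassociative co-algebra on $\mathcal{C}\otimes\mathcal{C}'$ for the coproduct
\[
\Delta_{\mathcal{C}\otimes\mathcal{C}'} := (\mathrm{id}_\mathcal{C}\otimes\tau\otimes \mathrm{id}_{\mathcal{C}'})\circ (\Delta\otimes\Delta').
\]
First, I would check that $\Delta_{\mathcal{C}\otimes\mathcal{C}'}$ is well-defined, of degree zero, and satisfies the local finiteness condition of Definition~\ref{def:coproduct}. Degree zero is clear because $\Delta$ and $\Delta'$ are so by hypothesis and $\tau$ only introduces Koszul signs. The finiteness condition for $\Delta_{\mathcal{C}\otimes\mathcal{C}'}(c\otimes c')$ follows from the same property of $\Delta(c)$ and $\Delta'(c')$, since the tensor product of two finite sums indexed by the graded decomposition is again finite.

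Next, I would verify graded cocommutativity. Using Sweedler notation $\Delta(c)=\sum c_{(1)}\otimes c_{(2)}$ and $\Delta'(c')=\sum c'_{(1)}\otimes c'_{(2)}$, one obtains
\[
\Delta_{\mathcal{C}\otimes\mathcal{C}'}(c\otimes c')=\sum (-1)^{|c_{(2)}||c'_{(1)}|}(c_{(1)}\otimes c'_{(1)})\otimes (c_{(2)}\otimes c'_{(2)}).
\]
Applying the outer twist $\tau_{\mathcal{C}\otimes\mathcal{C}',\,\mathcal{C}\otimes\mathcal{C}'}$ contributes a Koszul sign $(-1)^{(|c_{(1)}|+|c'_{(1)}|)(|c_{(2)}|+|c'_{(2)}|)}$, and then invoking the cocommutativity of $\Delta$ and of $\Delta'$ separately swaps $c_{(1)}\leftrightarrow c_{(2)}$ and $c'_{(1)}\leftrightarrow c'_{(2)}$ with signs $(-1)^{|c_{(1)}||c_{(2)}|}$ and $(-1)^{|c'_{(1)}||c'_{(2)}|}$. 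A direct check shows that all four exponents collapse modulo $2$ to the original sign $(-1)^{|c_{(2)}||c'_{(1)}|}$, recovering $\Delta_{\mathcal{C}\otimes\mathcal{C}'}(c\otimes c')$.

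The main work is graded coassociativity. Expanding both $(\Delta_{\mathcal{C}\otimes\mathcal{C}'}\otimes \mathrm{id})\circ \Delta_{\mathcal{C}\otimes\mathcal{C}'}$ and $(\mathrm{id}\otimes \Delta_{\mathcal{C}\otimes\mathcal{C}'})\circ \Delta_{\mathcal{C}\otimes\mathcal{C}'}$ applied to $c\otimes c'$ yields a six-fold tensor obtained by applying $\Delta$ twice to $c$, $\Delta'$ twice to $c'$, and then rearranging the six slots into the pattern $\mathcal{C}\otimes\mathcal{C}'$ thrice via a sequence of graded transpositions. Coassociativity of $\Delta$ and of $\Delta'$ separately identifies the iterated Sweedler components on both sides, so the only remaining content is a Koszul sign comparison between two different compositions of twists acting on the same six-slot space. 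The hard (and really only delicate) part is bookkeeping these signs, and the cleanest way I would carry it out is to recognize $\Delta_{\mathcal{C}\otimes\mathcal{C}'}$ as the intrinsic tensor-product coproduct in the symmetric monoidal category of graded $\mathcal{O}$-modules, so that coherence of the symmetry guarantees that any two braidings with the same source and target produce the same sign; I would then spell out one explicit chain of transpositions to confirm that the convention chosen in the statement is the right one for later use.
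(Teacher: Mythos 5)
The paper states this proposition without proof (it is treated as a standard fact about graded co-algebras), so there is no argument of the authors' to compare yours against; what matters is whether your blind proof stands on its own, and it does. Your formula
\[
\Delta_{\mathcal{C}\otimes\mathcal{C}'}(c\otimes c')=\sum (-1)^{|c_{(2)}||c'_{(1)}|}\,(c_{(1)}\otimes c'_{(1)})\otimes (c_{(2)}\otimes c'_{(2)})
\]
is the correct expansion under the paper's sign conventions, and the cocommutativity check does close: expanding $(|c_{(1)}|+|c'_{(1)}|)(|c_{(2)}|+|c'_{(2)}|)$ and adding $|c_{(2)}||c'_{(1)}|$ kills the cross term $|c'_{(1)}||c_{(2)}|$ modulo $2$, leaving exactly the signs produced by cocommutativity of $\Delta$ and $\Delta'$ after relabelling Sweedler components. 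For coassociativity, your reduction to the statement that both composites land on the same six-fold Sweedler tensor with signs given by two braidings realizing the same permutation, followed by an appeal to coherence of the symmetry in graded $\mathcal{O}$-modules, is a legitimate and complete argument; the one thing I would insist you actually write out is the promised explicit chain of transpositions (the common sign is the Koszul sign of the shuffle sending $(c_{(1)},c_{(2)},c_{(3)},c'_{(1)},c'_{(2)},c'_{(3)})$ to $(c_{(1)},c'_{(1)},c_{(2)},c'_{(2)},c_{(3)},c'_{(3)})$), since "coherence" alone does not check that the two composites realize the \emph{same} underlying permutation --- that identification is the small but genuine content of the step. You might also flag that the paper never specifies whether $\mathcal{C}\otimes\mathcal{C}'$ is taken over $\mathcal{O}$ or over $\mathbb{K}$; your argument is insensitive to this choice, but it is worth a remark.
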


     \begin{example}
     The polynomial ring $\mathbb{K}[t]$ in one indeterminate $t$ is a co-algebra with the coproduct $\Delta$ given on monomials\, $t^n,n\geq 0$\, by:
 $$ \Delta (t^{n})=\sum _{k=0}^{n}{\dbinom {n}{k}}t^{k}\otimes t^{n-k}$$and extends by linearity.
     \end{example}

The graded symmetric algebra is also an example of co-algebra.
\begin{lemma}\label{lemma:coprod}Let $\E$ be a $\mathcal{O}$-module. Both $\bigodot^\bullet\E$ and $S^\bullet _\mathbb K (\mathcal E) $ admit natural co-algebra structures with respect to the deconcatenation $\Delta$ defined by:\begin{equation}
\label{eq:coproduct}\Delta(x_1\odot\cdots\odot x_n)=\sum_{i=1}^{n-1}\sum_{\sigma\in\mathfrak{S}({i,n-i})}\epsilon(\sigma)x_{\sigma(1)}\odot\cdots \odot x_{\sigma(i)}\otimes x_{\sigma(i+1)}\odot\cdots\odot x_{\sigma(n)}\end{equation}for every $x_1,\ldots, x_n\in \E$. 
\end{lemma}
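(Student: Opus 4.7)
The plan is to treat both cases simultaneously since the argument only involves the universal property of the graded symmetric tensor product and the Koszul sign rule; the ring of scalars ($\mathcal O$ versus $\mathbb K$) plays no role. I first define $\Delta$ on the tensor algebra by the same formula (using arbitrary permutations would be redundant because of the shuffle condition) and show it descends to the quotient by the graded-symmetric ideal. Concretely, I check that if one swaps two adjacent factors $x_\ell$ and $x_{\ell+1}$ in the input, the shuffles of the new sequence are in bijection with the shuffles of the old one by post-composition with the transposition $(\ell,\ell+1)$, and the combined Koszul signs differ exactly by $(-1)^{|x_\ell||x_{\ell+1}|}$, so both expressions agree in $\bigodot^\bullet\mathcal E\otimes\bigodot^\bullet\mathcal E$ (resp. $S^\bullet_\mathbb K(\mathcal E)\otimes S^\bullet_\mathbb K(\mathcal E)$). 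This also shows that the formula is independent of the representative chosen for $x_1\odot\cdots\odot x_n$.

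For coassociativity, I expand both $(\Delta\otimes\mathrm{id})\circ\Delta$ and $(\mathrm{id}\otimes\Delta)\circ\Delta$ applied to $x_1\odot\cdots\odot x_n$. Each side produces a sum indexed by triples $(i,j,k)$ with $i+j+k=n$ and $i,j,k\geq 1$, and by $(i,j,k)$-shuffles $\sigma\in\mathfrak{S}(i,j,k)$, giving
\begin{equation*}
\sum_{\substack{i+j+k=n\\ i,j,k\geq 1}}\sum_{\sigma\in\mathfrak{S}(i,j,k)}\epsilon(\sigma)\,x_{\sigma(1)}\odot\cdots\odot x_{\sigma(i)}\otimes x_{\sigma(i+1)}\odot\cdots\odot x_{\sigma(i+j)}\otimes x_{\sigma(i+j+1)}\odot\cdots\odot x_{\sigma(n)}.
\end{equation*}
The left-hand side realizes a $(i,j,k)$-shuffle as an $(i+j,k)$-shuffle followed by an $(i,j)$-shuffle of the first block, while the right-hand side realizes it as an $(i,j+k)$-shuffle followed by a $(j,k)$-shuffle of the second block; in both decompositions the product of Koszul signs equals $\epsilon(\sigma)$ by the usual multiplicativity of the sign with respect to composition of permutations. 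The two sides therefore coincide.

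For cocommutativity, I observe that $\mathfrak{S}(i,n-i)\to\mathfrak{S}(n-i,i)$, $\sigma\mapsto\sigma\circ\rho_{i,n-i}$, where $\rho_{i,n-i}$ is the block transposition that swaps the first $i$ entries with the last $n-i$ entries, is a bijection. Applying $\tau$ to the term indexed by $\sigma$ in \eqref{eq:coproduct} produces the sign $(-1)^{(|x_{\sigma(1)}|+\cdots+|x_{\sigma(i)}|)(|x_{\sigma(i+1)}|+\cdots+|x_{\sigma(n)}|)}$, which is exactly the Koszul sign needed to convert $\epsilon(\sigma)$ into $\epsilon(\sigma\circ\rho_{i,n-i})$. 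Reindexing the sum along this bijection, and letting $i$ run over $\{1,\ldots,n-1\}$ replaced by $n-i$, recovers $\Delta$.

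The only genuinely delicate step is the sign bookkeeping, and in particular the compatibility of the Koszul sign with the factorization of $(i,j,k)$-shuffles used in the coassociativity check; everything else is a direct unfolding of the definitions. Once this is in place, the counit is not needed for the statement, so I do not verify it.
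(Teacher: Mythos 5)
Your proof is correct, and it is the standard shuffle-combinatorics verification; the paper itself gives no proof of this lemma, treating it as classical and deferring to the references on co-algebras (Loday--Vallette, Kassel, Manetti), so there is nothing in the text to compare against. One small imprecision: post-composing a shuffle with the transposition $(\ell,\ell+1)$ does not literally yield a shuffle, so the bijection in your descent step should be phrased at the level of the induced partition of $\{1,\dots,n\}$ into the two blocks (the terms agree after re-symmetrizing within each block, which is where the Koszul sign $(-1)^{|x_\ell||x_{\ell+1}|}$ appears); the rest of the argument, including the unique factorization of $(i,j,k)$-shuffles used for coassociativity and the block-swap bijection for cocommutativity, is sound.
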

\begin{example}
 For small $n$. The formula of Equation \eqref{eq:coproduct} on homogeneous elements $x,y,z\in \E$ means that

\begin{itemize}
    \item $\Delta(x)=0$
    
    \item $\Delta(x\odot y)=x\otimes y + (-1)^{|x||y|}y\otimes x$
    
    \item $\Delta(x\odot y\odot z)=x\odot y \otimes z + (-1)^{|x||z|}x\odot z \otimes y  +(-1)^{|x|(|z|+|y|)} y\odot z \otimes x +\\ \phantom{xxxxxxxxxxxxxxxxxxxxxxxxxxxx}x\otimes y \odot z +(-1)^{|x||y|}y\otimes x \odot z+(-1)^{|z|(|x|+|y|)}z\otimes x \odot y$
\end{itemize}
\end{example}
\begin{remark}
According to Definition \ref{def:coproduct}, the coproduct $\Delta$ is graded with respect to the positive grading given by the "polynomial-degree" of monomials in  the symmetric algebras $\bigodot^\bullet\E$ and $S^\bullet_\mathbb{K}(\E)$. 
\end{remark}

\begin{example}
Consider $M=\mathbb R^d$ with global  coordinates $(x_1,\ldots,x_d)$. The algebra of differential forms $(\Omega(M), \wedge)$ over $M$ has a co-algebra structure $\Delta_\Omega\colon \Omega(M)\rightarrow \Omega(M)\otimes \Omega(M) $ given as in Lemma \ref{lemma:coprod}. Namely, 
\begin{equation*}
\Delta(dx_1\wedge\cdots\wedge dx_n)=\sum_{i=1}^{n-1}\sum_{\sigma\in\mathfrak{S}({i,n-i})}\epsilon(\sigma)dx_{\sigma(1)}\wedge\cdots\wedge dx_{\sigma(i)}\otimes dx_{\sigma(i+1)}\wedge\cdots\wedge dx_{\sigma(n)}.\end{equation*}
This matches \eqref{eq:coproduct} if we consider $\mathcal{E}=\Omega^1(M)$ is concentrated in degree $+1$.
\end{example}

\subsection{Co-morphisms and co-derivations}
We recall the definitions of co-morphisms, co-derivations of  graded co-algebra structures. 
\begin{definition}\label{def:co-morphism}
Let $(\mathcal C', \Delta')$ and $(\mathcal C, \Delta)$ be two graded co-algebras.  
\begin{enumerate}
    \item A \emph{morphism of co-algebras} or \emph{co-morphism} from $(\mathcal C', \Delta')$ to $(\mathcal C, \Delta)$ is a linear map $\Phi\colon \mathcal C'\longrightarrow \mathcal C$ of degree $0$ such that the following diagram commutes
\begin{equation}
    \xymatrix{\mathcal C'\ar[d]_{\Delta'}\ar[r]^\Phi& \mathcal C\ar[d]^{\Delta}\\\mathcal C'\otimes\mathcal C'\ar[r]^{\Phi\otimes \Phi}&\mathcal C\otimes\mathcal C}
\end{equation}
In formula: $\Delta\circ \Phi=(\Phi\otimes \Phi)\circ \Delta'$.

\item Let $\Phi\colon \mathcal C'\longrightarrow \mathcal C $ be a co-morphism. A \emph{$\Phi$-co-derivation of degree $l$}  is a linear map $\mathcal{H}\colon \mathcal C'\longrightarrow \mathcal C$ of degree $l$ such that the following diagram commutes,
\begin{equation}
    \xymatrix{&\mathcal C'\ar[d]_{\Delta'}\ar[rr]^{\mathcal H}&& \mathcal C\ar[d]^{\Delta}\\&\mathcal C'\otimes\mathcal C'\ar[rr]^{\mathcal H\otimes \Phi + \Phi \otimes\mathcal H}&&\mathcal C\otimes\mathcal C}
\end{equation}
that is, the so-called \emph{co-Leibniz identity} is satisfied:
\begin{equation} \label{eq:Phicoder} 
	\Delta \circ \mathcal H = (\mathcal H\otimes \Phi + \Phi \otimes\mathcal H) \circ \Delta'.
\end{equation}When $\mathcal{C'}=\mathcal{C}$ and $\Phi=\mathrm{id}$ we speak of a \emph{co-derivation} (of degree $l$).

\end{enumerate}
\end{definition}
\begin{remark}
It is easily checked that the composition of two co-morphisms $\mathcal C''\stackrel{\Psi}{\rightarrow} \mathcal C'\stackrel{\Phi}{\rightarrow} \mathcal{C}$ is again a co-morphism.
\end{remark}

\begin{prop}\label{prop:Coderbracket}
  We denote by $\mathrm{CoDer}(\mathcal{C})$ the linear space of co-derivations of $(\mathcal C, \Delta)$. The set $\mathrm{CoDer}(\mathcal{C})$ is a graded Lie sub-algebra of $\mathrm{Hom}(\mathcal{C},\mathcal{C})$ when equipped with the graded commutator.
 \end{prop}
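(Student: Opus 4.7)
The plan is to verify the three things implicit in the statement: (i) $\mathrm{CoDer}(\mathcal C)$ is a graded vector subspace of $\mathrm{Hom}(\mathcal C,\mathcal C)$, (ii) the graded commutator of two co-derivations is again a co-derivation, (iii) the graded Jacobi identity and graded antisymmetry hold. Points (i) and (iii) are essentially formal: co-Leibniz is a linear condition on $D$, so the sum of two co-derivations of the same degree and any scalar multiple remain co-derivations; and $(\mathrm{Hom}(\mathcal C,\mathcal C),[\cdot,\cdot])$ is already a graded Lie algebra under the graded commutator $[D_1,D_2]:=D_1\circ D_2-(-1)^{|D_1||D_2|}D_2\circ D_1$, so it suffices to show that this bracket preserves the co-derivation property.

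The heart of the argument is (ii). Given homogeneous $D_1,D_2\in \mathrm{CoDer}(\mathcal C)$, I will apply the co-Leibniz rule twice. Recalling the Koszul sign rule for tensor products of maps, namely $(A\otimes B)\circ (C\otimes D)=(-1)^{|B||C|}(A\circ C)\otimes(B\circ D)$, I compute
\begin{align*}
\Delta\circ D_1\circ D_2
&=(D_1\otimes\mathrm{id}+\mathrm{id}\otimes D_1)\circ(D_2\otimes\mathrm{id}+\mathrm{id}\otimes D_2)\circ\Delta\\
&=\bigl(D_1D_2\otimes\mathrm{id}+\mathrm{id}\otimes D_1D_2+D_1\otimes D_2+(-1)^{|D_1||D_2|}D_2\otimes D_1\bigr)\circ\Delta,
\end{align*}
and symmetrically for $\Delta\circ D_2\circ D_1$. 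Taking the graded commutator, the cross terms
\[
D_1\otimes D_2+(-1)^{|D_1||D_2|}D_2\otimes D_1 \quad\text{and}\quad (-1)^{|D_1||D_2|}\bigl(D_2\otimes D_1+(-1)^{|D_1||D_2|}D_1\otimes D_2\bigr)
\]
cancel exactly, leaving
\[
\Delta\circ[D_1,D_2] = \bigl([D_1,D_2]\otimes\mathrm{id}+\mathrm{id}\otimes[D_1,D_2]\bigr)\circ\Delta,
\]
which is precisely the co-Leibniz identity for $[D_1,D_2]$ (of degree $|D_1|+|D_2|$).

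The main potential obstacle is purely bookkeeping: one has to be careful with the Koszul signs when commuting the tensor factors past each other, in particular in the term $(\mathrm{id}\otimes D_1)\circ(D_2\otimes\mathrm{id})=(-1)^{|D_1||D_2|}(D_2\otimes D_1)$, since this is the sign that makes the cross terms match up and cancel after antisymmetrization. Once the signs are handled, all three required properties follow, proving that $\mathrm{CoDer}(\mathcal C)$ is a graded Lie subalgebra of $\mathrm{Hom}(\mathcal C,\mathcal C)$.
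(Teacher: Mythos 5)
Your proof is correct and follows essentially the same route as the paper: both apply the co-Leibniz identity twice, expand the composition with the Koszul sign rule to obtain the cross terms $D_1\otimes D_2+(-1)^{|D_1||D_2|}D_2\otimes D_1$, and observe that these cancel upon taking the graded commutator. The sign bookkeeping in your expansion matches the paper's computation exactly.
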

\begin{proof}
For two co-derivations $\mathcal{H}_1,\mathcal{H}_2\in \mathrm{CoDer}(\mathcal{C})$ one has,\begin{align*}
    \Delta\circ \mathcal{H}_1\circ \mathcal{H}_2&=(\mathcal{H}_1\otimes \mathrm{id}+\mathrm{id}\otimes\mathcal{H}_1)\circ(\mathcal{H}_2\otimes \mathrm{id}+\mathrm{id}\otimes\mathcal{H}_2)\circ \Delta\\&=(\mathcal{H}_1\circ\mathcal{H}_2\otimes \mathrm{id}  +\mathcal{H}_1\otimes\mathcal{H}_2+(-1)^{|\mathcal{H}_1||\mathcal{H}_2|}\mathcal{H}_2\otimes\mathcal{H}_1+\mathrm{id}\otimes \mathcal{H}_1\circ\mathcal{H}_2)\circ \Delta. 
\end{align*}
Also, we obtain a similar equation for $(-1)^{|\mathcal{H}_1||\mathcal{H}_2|} \Delta\circ \mathcal{H}_2\circ \mathcal{H}_1$ by changing the roles of $\mathcal{H}_1$ and $\mathcal{H}_2$. Subtracting both terms, one get the co-Leibniz identity for $[\mathcal{H}_1,\mathcal{H}_2]= \mathcal{H}_1\circ \mathcal{H}_2 - (-1)^{|\mathcal{H}_1||\mathcal{H}_2|}\mathcal{H}_2\circ \mathcal{H}_1$.
\end{proof}

Let us describe Definition \ref{def:co-morphism} in the context of the thesis. Let $\E'$ and $\E$ be graded $\mathcal O$-modules. 
\begin{definition}
A linear map $\Phi\colon S^\bullet_\mathbb{K} (\E')\rightarrow S^\bullet_\mathbb{K}(\E)$ is said to be of \emph{{polynomial-degree}/degree $r\in \mathbb{Z}$}, if it sends polynomials of $S^{k}_\mathbb{K}(\E)$ to those of  $S^{k-r}_\mathbb{K} (\E)$/elements of $S^{\bullet}_\mathbb{K}(\E)_{|_d}$ to $S^{\bullet}_\mathbb{K}(\E)_{|_{d+r}}$. In formulas, for very $k\geq 0$, (resp. $d\in \mathbb{Z}$)  one has $$\Phi\left(S^{k}_\mathbb{K}(\E)\right)\subseteq S^{k-r}_\mathbb{K}(\E),\quad\text{resp.}\qquad\Phi\left(S^{\bullet}_\mathbb{K}(\E)_{|_d}\right)\subseteq S^{\bullet}_\mathbb{K}(\E)_{|_{d+r}}.$$\end{definition}

\begin{remark}
Any linear map $\Phi\colon S^\bullet_\mathbb{K} (\E')\rightarrow S^\bullet_\mathbb{K}(\E)$ of degree $l$ can be decomposed with respect to the polynomial-degree as formal sum:\begin{equation}\label{eq:arity-decomposition}
    \Phi=\sum_{r\in\mathbb{Z}}\Phi^{(r)}
\end{equation}
{where, for all $k\in \mathbb{N}_0$}, \,$\Phi^{(r)}\colon S^\bullet_\mathbb{K}(\E')\rightarrow S^{\bullet-r}_\mathbb{K}(\E)$ is a linear map of polynomial-degree $r$ and of degree $l$.\\

\begin{itemize}
\item It should be understood that $\Phi^{(r)}(v)=0$ for all $v\in S^{k}_\mathbb{K}(\E')$ with $k\leq r$. Also, the decomposition \eqref{eq:arity-decomposition} makes sense 
in particular when the polynomial-degrees of $\Phi$ are lower bounded that is, there is an integer $N\in \mathbb{Z}$, such that $\Phi^{(r)}=0$ for all $r< N$. In that case, for every $v\in S^k_\mathbb{K}(\E)$, $$\sum_{r\in\mathbb{Z}}\Phi^{(r)}(v)=\sum_{N\leq r<k}\Phi^{(r)}(v),$$ is a finite sum. Hence, Equation \eqref{eq:arity-decomposition} becomes $$\Phi=\sum_{r\geq N}\Phi^{(r)},$$
for some $N\in \mathbb{Z}$, since the l.h.s of \eqref{eq:arity-decomposition} is finite. Notice that a linear map $\Phi\colon S^\bullet_\mathbb{K} (\E')\rightarrow~S^\bullet_\mathbb{K}(\E)$ is of polynomial-degree $N$ if and only if $\Phi^{(N)}$ is the unique non-zero term, namely $\Phi^{(r)}=0$, for $r\neq N$.
    \item Also, for the composition of two linear maps $S_\mathbb{K}^\bullet (\E'')\stackrel{\Psi}{\rightarrow} S_\mathbb{K}^\bullet (\E')\stackrel{\Phi}{\rightarrow} S_\mathbb{K}^\bullet (\E)$,  we have \begin{equation}
    (\Phi\circ \Psi)^{(r)}=\sum_{i+j=r}\Phi^{(i)}\circ \Psi^{(j)}.
\end{equation}
for every $r\in \mathbb{Z}$. Here, although the sum is infinite, it becomes finite when applied to a given element in $S^\bullet_\mathbb{K}(\E')$.
\end{itemize}

\end{remark}

\subsubsection{Co-morphisms on symmetric algebras}
Let us have a discussion on co-morphisms from the symmetric graded co-algebras $(S_\mathbb{K}^\bullet (\E'),\Delta')$ and $(S_\mathbb{K}^\bullet (\E),\Delta)$, where $\Delta'$ and $\Delta$ are the respective coproducts like in Lemma \ref{lemma:coprod}. Given any linear map $F\colon S_\mathbb{K}^\bullet (\E')\rightarrow \E$. Denoting by $F_r\colon S_\mathbb{K}^{r+1} (\E')\rightarrow \E$ for $r\in \mathbb{N}_0$, the restriction of $F$ to $S_\mathbb{K}^{r+1} (\E')$. The linear map $F$ can be extended to a unique co-morphism $\Bar F\colon (S_\mathbb{K}^\bullet (\E'),\Delta')\rightarrow (S_\mathbb{K}^\bullet (\E),\Delta)$ by taking for $k\in \mathbb{N}$ the component on $S_\mathbb{K}^k (\E)$ to be, for any homogeneous  $x_1,\ldots,x_n\in \E'$
\begin{equation}\label{eq:co-morphism_component}
    \sum_{i_1 +\cdots+i_k =n}\,\sum_{\sigma\in \mathfrak{S}(i_1,\ldots,i_r )}\epsilon(\sigma)\frac{1}{k!}\prod_{j=1}^kF_{i_j}(x_{\sigma(i_{1}+\cdots+i_{j-1}+1)},\ldots, x_{\sigma(i_{1}+\cdots+i_j)}).
\end{equation}
where $\mathfrak{S}(i_1,\ldots,i_k )$ is the set of $(i_1,\ldots,i_k)$-shuffles, with $i_1,\ldots,i_k\in \mathbb{N}$.

\begin{example}
Let us compute $\Bar{F}\colon \left(S^{k}_\mathbb{K}(\E'), \Delta'\right)\longrightarrow \left(S^{\bullet}_\mathbb{K}(\E), \Delta\right)$ for $k=1,2,3$. For $x,y,z\in \E'$,
\begin{itemize}
    \item $\Bar{F}(x)=F_0(x)$.
    \item $\Bar{F}(x\cdot y)=F_1(x\cdot y)+ F_0(x)\cdot F_0(y)$.
    \item $\Bar{F}(x\cdot y\cdot z)=F_2(x\cdot y\cdot z)+ F_0(x)\cdot F_1(y\cdot z) + (-1)^{|x||y|}F_0(y)\cdot F_1(x\cdot z)+\\\phantom{cccccccccccccccccccccccccccccccccccccccc} (-1)^{(|x|+|y|)|z|}F_0(z)\cdot F_1(x\cdot y) + F_0(x)\cdot F_0(y)\cdot F_0(z)$.
\end{itemize}
We can easily check that $\Delta\circ \Bar{F}= (\Bar{F}\otimes \Bar{F})\circ \Delta'$, e.g

\begin{itemize}
    \item $\Delta\circ \Bar{F}(x)=\Delta\circ F_0(x)=0=(\Bar{F}\otimes \Bar{F})\circ \Delta'(x)$
    \item On one side, \begin{align*}
        \Delta\circ \Bar{F}(x\cdot y)&=\Delta\circ F_1(x\cdot y)+ \Delta(F_0(x)\cdot F_0(y))\\& =F_0(x)\otimes F_0(y) + (-1)^{|x||y|}F_0(y)\otimes F_0(x).
    \end{align*}
    On the other side, \begin{align*}
        (\Bar{F}\otimes \Bar{F})\circ \Delta'(x,y)&=(\Bar{F}\otimes \Bar{F})(x\otimes y + (-1)^{|x||y|}y\otimes x)\\&=(\Bar{F}\otimes \Bar{F})(x\otimes y) + (-1)^{|x||y|}(\Bar{F}\otimes \Bar{F})(y\otimes x) \\&=F_0(x)\otimes F_0(y) + (-1)^{|x||y|}F_0(y)\cdot F_0(x).
    \end{align*}
\end{itemize}
\end{example}

{The following proposition claims that every co-morphism from $S_\mathbb{K}^\bullet (\E')$ to $S_\mathbb{K}^\bullet (\E)$ is of the form described in \eqref{eq:co-morphism_component}. For a proof, see \cite{Loday-Vallette,Kassel,ManettiMarco2005Lodo}}.
\begin{prop}\label{prop:co-morphism} Let $\E', \E$ be two graded $\mathcal{O}$-modules. For any morphism of graded vector space $F\colon S_\mathbb{K}(\E')\longrightarrow \E$ there exists an (unique) morphism of graded co-algebras $\Bar{F}\colon (S_\mathbb{K}(\E'), \Delta')\longrightarrow (S_\mathbb{K}(\E), \Delta)$ that satisfies $\mathrm{pr}\circ \Bar{F}=F$.\\

\noindent
Here $\mathrm{pr}$ is the canonical projection onto the term of polynomial-degree $1$, i.e.,
$\mathrm{pr}\colon S^\bullet_\mathbb{K} (\mathcal E) \rightarrow S^{1}_\mathbb K (\mathcal E) \simeq \E$.
\end{prop}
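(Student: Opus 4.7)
My plan is to prove existence and uniqueness separately: establish existence by exhibiting the explicit co-morphism given by formula~\eqref{eq:co-morphism_component}, and prove uniqueness by induction on polynomial-degree.

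For \textbf{existence}, I would take $\Bar F\colon S^\bullet_\mathbb{K}(\E') \to S^\bullet_\mathbb{K}(\E)$ to be the map defined by formula~\eqref{eq:co-morphism_component}. The compatibility $\mathrm{pr}\circ \Bar F = F$ is immediate: the polynomial-degree-$1$ component of $\Bar F(x_1 \odot \cdots \odot x_n)$ is the single summand with $k=1$ and $i_1=n$, which equals $F(x_1 \odot \cdots \odot x_n)$, while summands with $k\geq 2$ contribute only to strictly higher polynomial-degrees. The substantive task is to verify $\Delta \circ \Bar F = (\Bar F \otimes \Bar F) \circ \Delta'$. Expanding both sides using~\eqref{eq:co-morphism_component} and the deconcatenation coproduct~\eqref{eq:coproduct}, the identity reduces to a standard combinatorial fact about shuffles: every $(p,q)$-shuffle of the $k$ output-blocks of a partition of $\{1,\ldots,n\}$ corresponds bijectively to a choice of an $(a,b)$-shuffle of $\{1,\ldots,n\}$ together with independent partitions of the two halves into $p$ and $q$ blocks respectively, and the Koszul signs multiply correctly under this bijection.

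For \textbf{uniqueness}, suppose two co-morphisms $\Bar F, \Bar G$ both satisfy $\mathrm{pr}\circ \Bar F = \mathrm{pr}\circ \Bar G = F$. I argue by induction on $n\geq 1$ that $\Bar F$ and $\Bar G$ agree on $S^n_\mathbb{K}(\E')$. When $n=1$ and $x\in \E'$, the coproduct $\Delta'(x)=0$ forces $\Delta(\Bar F(x))=0$, so $\Bar F(x)$ is primitive; since the primitives of the reduced cocommutative co-algebra $S^\bullet_\mathbb{K}(\E)$ are exactly $\E$, we obtain $\Bar F(x)=\mathrm{pr}(\Bar F(x))=F(x)=\Bar G(x)$. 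For the inductive step, fix $z\in S^n_\mathbb{K}(\E')$: since $\Delta'(z)\in \bigoplus_{i=1}^{n-1} S^i_\mathbb{K}(\E')\otimes S^{n-i}_\mathbb{K}(\E')$, the inductive hypothesis yields $(\Bar F\otimes \Bar F)(\Delta'(z))=(\Bar G\otimes \Bar G)(\Delta'(z))$, whence $\Delta(\Bar F(z)-\Bar G(z))=0$. Therefore $\Bar F(z)-\Bar G(z)$ is primitive, hence lies in $\E$, and applying $\mathrm{pr}$ gives $F(z)-F(z)=0$.

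The \textbf{main obstacle} is the combinatorial bookkeeping in the existence part: the double expansion of $\Delta\circ\Bar F$ and $(\Bar F\otimes \Bar F)\circ\Delta'$ produces large sums whose term-by-term matching requires careful attention to Koszul signs. An alternative route that bypasses the explicit formula is to mirror the uniqueness induction: assuming $\Bar F$ has been constructed as a co-morphism on $S^{<n}_\mathbb{K}(\E')$, one checks via coassociativity of $\Delta'$ and the partial co-morphism relation that $(\Bar F\otimes \Bar F)(\Delta'(z))$ satisfies $(\Delta\otimes \mathrm{id})[(\Bar F\otimes \Bar F)\Delta'(z)]=(\mathrm{id}\otimes \Delta)[(\Bar F\otimes \Bar F)\Delta'(z)]$, which allows one to lift it through $\Delta$; one then adjusts the primitive component of the lift to equal $F(z)$. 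Either route succeeds, but the closed-form formula has the advantage of being explicit.
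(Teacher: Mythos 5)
Your proposal is correct and follows exactly the route the paper takes (and defers to the cited references for): existence via the explicit formula \eqref{eq:co-morphism_component}, whose polynomial-degree-$1$ part is $F$ and whose co-morphism property is the standard shuffle bijection, and uniqueness by induction on polynomial-degree using that the primitives of the reduced symmetric coalgebra are exactly $S^1_\mathbb{K}(\E)\simeq\E$ (which holds here since $\mathbb{K}$ has characteristic zero). No gaps.
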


 \begin{remark}The following facts are crucial, and will be used in the sequel.
 
 \begin{enumerate}
 \item By Proposition \ref{prop:co-morphism}, a co-morphism $\Phi\colon S_\mathbb{K}^\bullet (\E')\rightarrow S_\mathbb{K}^\bullet (\E)$ is entirely determined by the collection indexed by $r \in \mathbb N_0 $ of maps called its \emph{$r$-th Taylor coefficients}:
\begin{equation}
\label{eq:Taylor}
    \Phi_r \colon S^{r+1}_\mathbb{K}(\E')\stackrel{\Phi}{\longrightarrow} S^\bullet_\mathbb{K}(\E)\stackrel{\text{pr}}{\longrightarrow} \E,
\end{equation}
 Notice that the component $\Phi^{(r)}$ of polynomial-degree $r\geq 0$ of $\Phi$ coincides with $r$-th Taylor coefficient $\Phi_r$ on $S^{r+1}_\mathbb{K} (\mathcal E')$. In fact we have, $\Phi_r=(\mathrm{pr}\circ \Phi^{(r)})_{|{S^{r+1}_\mathbb{K} (\mathcal E')}}$.
 \item By item 1, a co-morphism $\Phi\colon S_\mathbb{K}^\bullet (\E')\rightarrow S_\mathbb{K}^\bullet (\E)$ is completely determined by its components of polynomial-degree $r\geq 0$, hence it admits a polynomial decomposition of the form:\begin{equation}
    \Phi=\sum_{r\geq0}\Phi^{(r)}.
\end{equation}



 \item   Similar results as in Proposition \ref{prop:co-morphism} hold for $\Phi$-co-derivations or for co-derivations, e.g. see Corollary VIII.34
  in \cite{ManettiMarco2005Lodo}  or \cite{Loday-Vallette}. Given a co-morphism $\Phi\colon S^\bullet_\mathbb{K} (\E')\rightarrow S^\bullet_\mathbb{K} (\E)$ a $\Phi$-co-derivation $\mathcal H$ of degree $l$ is  entirely determined by the \emph{Taylor coefficients} defined as in \eqref{eq:Taylor} $$\mathcal H_r\colon S^{r+1}_\mathbb K (\E') \rightarrow \E, \;r\in\mathbb{N}_0.$$ For $k\in \mathbb{N}_0$ and $x_1,\ldots,x_n\in \E'$, the component on $S_\mathbb{K}^k (\E)$ of $\mathcal{H}(x_1\cdot\cdots\cdot x_n)$ takes the form
\begin{equation}\label{ex:formulacoder}
    \sum_{i_1+\cdots+i_k=n}\sum_{\sigma\in \mathfrak{S}(i_1,\ldots,i_{k})}\epsilon(\sigma)\frac{1}{k!}\mathcal{H}_{i_1}(x_{\sigma(1)},\cdots,x_{\sigma(i_1)})\cdot\prod_{j=1}^{k-1}\Phi_{i_j-1}(x_{\sigma(i_{1}+\cdots+i_{j-1}+1)},\ldots, x_{\sigma(i_{1}+\cdots+i_j)}).
\end{equation}
 \end{enumerate}
 
Also, $\mathcal{H}$ has a decomposition into polynomial-degree of the form
:\begin{equation}
    \mathcal{H}=\sum_{r\geq0}\mathcal H^{(r)}.
\end{equation}
\end{remark}
\begin{example}
Every linear map $H\colon S^\bullet_\mathbb{K} (\E)\rightarrow \E$ of degree $l$ admits an unique extension to a co-derivation $\Bar{H}\colon S^\bullet_\mathbb{K} (\E)\rightarrow S^\bullet_\mathbb{K} (\E) $ of degree $l$. The latter is given explicitly by the formula
\begin{equation}\label{coder:formula}
    \Bar{H}(x_1\cdot\cdots\cdot x_n)=\sum_{i=1}^n\sum_{\sigma\in \mathfrak{S}(i,n-i)} \epsilon(\sigma) H(x_{\sigma(1)}\cdot\cdots \cdot x_{\sigma(i)})\cdot x_{\sigma(i+1)}\cdot\cdots \cdot x_{\sigma(n)}.
\end{equation}
\end{example}

The following lemma-definition is helpful in the sequel.

\begin{lemma}[\textbf{Definition}]{}\label{Tay-C}
We say that a co-algebra morphism  $\Phi \colon S^\bullet_\mathbb{K} (\mathcal E') \rightarrow S^\bullet_\mathbb{K} (\mathcal E)$ is \emph{$\mathcal O $-multilinear} when  the equivalent conditions below are satisfied:
\begin{enumerate}
    \item[(i)] For every $n \geq 0$, the $n$-th Taylor coefficient $ \Phi^{(n)} \colon S^{n+1}_\mathbb K (\mathcal E' ) \longrightarrow \mathcal E$ of  $ \Phi$ is $\mathcal O$-multilinear
    \item[(ii)] There exists an induced co-algebra morphism $\Phi_\mathcal O\colon \bigodot^\bullet\mathcal E' \longrightarrow \bigodot^\bullet\mathcal E $ making the following diagram commutative~:
    $$ \xymatrix{  \ar[d] S^\bullet_\mathbb{K} (\mathcal E') \ar[r]^{\Phi} &   \ar[d]  S^\bullet_\mathbb{K} (\mathcal E) \\ \bigodot^\bullet\mathcal E'  \ar[r]^{\Phi_\mathcal O} & \bigodot^\bullet\mathcal E}  
    $$
\end{enumerate}
When it is the case we  still write $\Phi$ for $\Phi_\mathcal O$.
\end{lemma}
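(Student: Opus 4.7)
The plan is to leverage Proposition \ref{prop:co-morphism} together with its natural counterpart for the graded symmetric algebra over $\mathcal O$: any co-morphism $\Phi_\mathcal O \colon \bigodot^\bullet\mathcal E' \to \bigodot^\bullet\mathcal E$ is determined by a collection of $\mathcal O$-multilinear graded symmetric Taylor coefficients $\tilde\Phi^{(n)} \colon \bigodot^{n+1}\mathcal E' \to \mathcal E$, via the very same formula \eqref{eq:co-morphism_component} with $\otimes_\mathcal{O}$-products in place of $\cdot$. The universal property of the quotient $S^{n+1}_\mathbb{K}(\mathcal E') \twoheadrightarrow \bigodot^{n+1}\mathcal E'$ will provide the bridge between the two viewpoints, turning the lemma into essentially a formal consequence of these two universal facts.

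For the implication (ii) $\Rightarrow$ (i), I would denote by $q'$ and $q$ the canonical projections $S^\bullet_\mathbb{K}(\mathcal E') \to \bigodot^\bullet\mathcal E'$ and $S^\bullet_\mathbb{K}(\mathcal E) \to \bigodot^\bullet\mathcal E$, and observe that the commuting diagram gives
$$\Phi^{(n)} \;=\; \mathrm{pr}\circ q\circ \Phi\big|_{S^{n+1}_\mathbb{K}(\mathcal E')} \;=\; \mathrm{pr}_\mathcal O\circ \Phi_\mathcal O\circ q'\big|_{S^{n+1}_\mathbb{K}(\mathcal E')} \;=\; \tilde\Phi^{(n)}\circ q'\big|_{S^{n+1}_\mathbb{K}(\mathcal E')}.$$
Since $\Phi^{(n)}$ factors through $q'$, it is automatically $\mathcal O$-multilinear.

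For the converse (i) $\Rightarrow$ (ii), $\mathcal O$-multilinearity of each $\Phi^{(n)}$ means by universal property that it descends uniquely to $\tilde\Phi^{(n)} \colon \bigodot^{n+1}\mathcal E' \to \mathcal E$. The $\bigodot$-analogue of Proposition \ref{prop:co-morphism} then produces a unique co-morphism $\Phi_\mathcal O \colon \bigodot^\bullet\mathcal E' \to \bigodot^\bullet\mathcal E$ whose Taylor coefficients are exactly these $\tilde\Phi^{(n)}$. To check the diagram commutes, I would note that $q\circ \Phi$ and $\Phi_\mathcal O\circ q'$ are both $\mathbb K$-linear co-morphisms from $(S^\bullet_\mathbb K(\mathcal E'),\Delta')$ to $(\bigodot^\bullet\mathcal E,\Delta)$, and that they share the same Taylor coefficients (both equal to $\tilde\Phi^{(n)}$ after restriction). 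By the uniqueness half of Proposition \ref{prop:co-morphism}, they agree.

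The main obstacle is the justification of the $\bigodot$-analogue of Proposition \ref{prop:co-morphism} itself: one must verify that the deconcatenation coproduct of Lemma \ref{lemma:coprod} descends from $S^\bullet_\mathbb{K}(\mathcal E)$ to $\bigodot^\bullet\mathcal E$, i.e.\ that the ideal generated by $fx\otimes y - x\otimes fy$ is a coideal. This is a direct calculation on formula \eqref{eq:coproduct}: each term in $\Delta(fx_1\odot\cdots\odot x_n)$ pairs $fx_i$ with siblings via $\otimes_\mathcal O$, where the scalar $f$ can be freely moved across the tensor factor. Once this is in place, the explicit formula \eqref{eq:co-morphism_component} with $\mathcal O$-multilinear Taylor coefficients manifestly descends through the quotients on both source and target, completing the proof.
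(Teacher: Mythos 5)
Your proof is correct and is precisely the ``straightforward'' verification the paper leaves to the reader: its entire proof of this lemma is the single line ``The equivalence is straightforward,'' and your argument via the quotient maps $q,q'$ and the uniqueness half of Proposition \ref{prop:co-morphism} is the natural way to fill it in. The one point you flag as the main obstacle --- that the deconcatenation coproduct descends to $\bigodot^\bullet\mathcal E$ --- is already asserted by the paper in Lemma \ref{lemma:coprod}, so you may simply cite it instead of re-verifying it.
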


\begin{proof}
The equivalence is straightforward.
\end{proof}
\begin{remark}
Similarly, the formula \eqref{ex:formulacoder} shows that the Taylor coefficients of $\mathcal H $ are $ \mathcal O$-multilinear if and only if $\mathcal H$ induces a $ {\Phi_\mathcal O}$-co-derivation ${\mathcal H}_\mathcal{O}\colon \bigodot^\bullet\E'\longrightarrow \bigodot^\bullet\E$.
\end{remark}

\subsection{Richardson-Nijenhuis bracket and co-derivations}
We need to use the Richardson-Nijenhuis bracket to express our results and explain some proofs in the coming chapters. This bracket was defined mainly to understand Lie algebra structures on a vector space and their deformations \cite{Richardson-cohomology} as well as Poisson structures and their generalizations. Let us recall the definition in our context.\\

Let $\E$ be an $\mathcal{O}$-module. 
For $k\in \mathbb{N}_0$, homomorphisms of  degree $j$ from $ S_\mathbb{K}^{k+1}(\E)$ to $\E$ shall be, by definition, the space
  $$ \text{Hom}_{\mathbb K}^{j} \left( S^{k+1}_\mathbb K  (\E) \,  , \E \right) := \oplus_{m \in \mathbb Z}\text{Hom}_{\mathbb K} \left( S^{k+1}_\mathbb K ( \E )\, _{|_{m-j}} , \E_{m}  \right).$$
 We refer the reader for example to \cite{MR1202431,YKS} for the following definition.
  \begin{definition}\label{def:RN}
The \emph{Richardson-Nihenhuis bracket} is a $\mathbb K$-bilinear map $$[\,\cdot\,,\cdot\,]_\mathrm{RN}\colon \mathrm{Hom}_{\mathbb K}^{i} \left( S^{p+1}_\mathbb K  (\E) \,  , \E \right)\otimes  \mathrm{Hom}^{j}_{\mathbb K} \left( S^{q+1}_\mathbb K  (\E) \,  , \E \right)\rightarrow  \mathrm{Hom}_{\mathbb K}^{i+j} \left( S^{p+q+1}_\mathbb K  (\E) \,  , \E \right)$$ which is defined on homogeneous elements $P\in \text{Hom}_{\mathbb K}^{\bullet} \left( S^{p+1}_\mathbb K  (\E) \,  , \E \right)$  and $R\in \text{Hom}_{\mathbb K}^{\bullet} \left( S^{q+1}_\mathbb K  (\E) \,  , \E \right)$ by \begin{equation}[P,R]_{\hbox{\tiny{RN}}}=P\circ R-(-1)^{|P||R|}R\circ P,\end{equation} with the understanding that $P\circ R$ is defined on $x_0\cdot\cdots x_{p+q}\in S^{p+q+1}_\mathbb K  (\E)$ by
 \begin{equation}( P\circ R) (x_0\cdot\cdots x_{p+q} ):=\sum_{\sigma \in \mathfrak{S}(q+1,p)} \epsilon( \sigma )P( R ( x_{\sigma ( 0)}\cdots x_{\sigma (q)} )\cdot x_{\sigma ( q+1 )}\cdot \cdots x_{\sigma ( p+q)} ). \end{equation}
 The bracket is extended by bilinearity.
  \end{definition}
  
  \begin{remark}
  The bracket $\lb_\mathrm{RN}$ should not be confused with the graded commutator $\lb$ of the graded vector space $\mathrm{Hom}_{\mathbb K}\left( S^\bullet_\mathbb K  (\E),S^{\bullet}_\mathbb K  (\E)\right)$. In fact, the graded commutator of two elements $P,R\in \text{Hom}_{\mathbb K}^{\bullet} \left( S^{\bullet}_\mathbb K  (\E), S^{\bullet}_\mathbb K  (\E) \right) $ is  $[P,R]=P\circ R-(-1)^{|P||R|}R\circ P$, but here $P\circ R$ is the usual composition of homomorphism.
  \end{remark}
 It is easy to check that
 \begin{lemma}
The graded vector space $\mathrm{Hom}_{\mathbb K}^{\bullet} \left( S^{\bullet}_\mathbb K  (\E) \,  , \E \right)$ together with the Richardson-Nihenhuis bracket $[\,\cdot\,,\cdot\,]_\mathrm{RN}$ is  a graded Lie algebra structure.
 \end{lemma}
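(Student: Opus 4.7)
The plan is to establish the two defining axioms of a graded Lie algebra, namely graded antisymmetry and graded Jacobi, by first proving the auxiliary (and stronger) fact that the insertion product $\circ$ defines a \emph{graded pre-Lie} structure on $\mathrm{Hom}_{\mathbb K}^{\bullet}(S^{\bullet}_\mathbb K(\E),\E)$.

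First, graded antisymmetry is immediate from the definition: for homogeneous $P,R$,
\[
[R,P]_{\mathrm{RN}} \;=\; R\circ P - (-1)^{|R||P|}P\circ R \;=\; -(-1)^{|P||R|}\bigl(P\circ R - (-1)^{|P||R|}R\circ P\bigr) \;=\; -(-1)^{|P||R|}[P,R]_{\mathrm{RN}},
\]
so the only real content is the graded Jacobi identity.

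The key observation is the following \textbf{graded right-symmetry} of the associator of $\circ$. For homogeneous $P\in\mathrm{Hom}^{|P|}(S^{p+1}_\mathbb K(\E),\E)$, $R\in\mathrm{Hom}^{|R|}(S^{q+1}_\mathbb K(\E),\E)$, $S\in\mathrm{Hom}^{|S|}(S^{s+1}_\mathbb K(\E),\E)$, denote the associator by $(P,R,S):=(P\circ R)\circ S - P\circ(R\circ S)$. I would show that
\[
(P,R,S) \;=\; (-1)^{|R||S|}\,(P,S,R).
\]
To see this, expand $((P\circ R)\circ S)(x_0\cdots x_{p+q+s})$ as a sum over $(s{+}1,p{+}q)$-shuffles followed by $(q{+}1,p)$-shuffles: each term is of one of two mutually exclusive types, depending on whether $S$ is inserted inside the argument of $R$ or whether the output of $S$ lies among the $x$'s that, together with the output of $R$, feed into $P$. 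Terms of the first type reassemble (after recombining the two shuffles into a single $(q{+}s{+}1,p)$-shuffle composed with a $(s{+}1,q)$-shuffle) into exactly $P\circ(R\circ S)$, and therefore cancel in $(P,R,S)$. The surviving terms of the second type are those in which $R$ and $S$ are both inserted as independent arguments into $P$; this expression is manifestly invariant, up to the sign $(-1)^{|R||S|}$ coming from swapping $R$ past $S$, under the interchange $R\leftrightarrow S$. This gives the claimed identity.

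Granting right-symmetry, the graded Jacobi identity is then a purely formal manipulation. Writing $[P,R]_{\mathrm{RN}}=P\circ R-(-1)^{|P||R|}R\circ P$ and expanding the double bracket $[[P,R]_{\mathrm{RN}},S]_{\mathrm{RN}}$ (and its two cyclic partners with the appropriate Koszul signs) gives twelve terms, grouped into six pairs of the form $(X,Y,Z)-(-1)^{|Y||Z|}(X,Z,Y)$; each such pair vanishes by graded right-symmetry. This yields
\[
[[P,R]_{\mathrm{RN}},S]_{\mathrm{RN}} \;=\; [P,[R,S]_{\mathrm{RN}}]_{\mathrm{RN}} - (-1)^{|P||R|}[R,[P,S]_{\mathrm{RN}}]_{\mathrm{RN}},
\]
which together with antisymmetry is the graded Jacobi identity. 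Finally, the degree count in Definition~\ref{def:RN} shows $[\,\cdot\,,\cdot\,]_{\mathrm{RN}}$ is of total degree $0$ as a bracket on $\mathrm{Hom}_{\mathbb K}^{\bullet}(S^{\bullet}_\mathbb K(\E),\E)$, so we obtain a graded Lie algebra.

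The main obstacle is the bookkeeping in the right-symmetry step: one must correctly match the compositions of two successive shuffles with a single shuffle, and track the Koszul signs produced when the outputs of $R$ and $S$ are reordered among themselves and among the remaining $x_i$. Once this combinatorial identity is in hand, everything else is formal.
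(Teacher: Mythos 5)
The paper itself offers no proof of this lemma (it is prefaced only by ``It is easy to check that''), so there is nothing to compare against; your argument is the classical Nijenhuis--Richardson/Gerstenhaber one and it is correct. Graded antisymmetry is indeed immediate, and reducing graded Jacobi to the graded right-symmetry of the associator of the insertion product $\circ$ (i.e.\ to the statement that $\circ$ is a graded right pre-Lie product) is exactly the right move: in $(P\circ R)\circ S$ the terms where $S$ lands inside an argument of $R$ reassemble into $P\circ(R\circ S)$, and what survives in the associator is the sum over simultaneous independent insertions of $R$ and $S$ into $P$, which is symmetric in $R\leftrightarrow S$ up to the Koszul sign. Two small remarks. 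First, your count is slightly off: the twelve terms of the cyclic Jacobi expression group into \emph{six} associators, hence \emph{three} cancelling pairs of the form $(X,Y,Z)-(-1)^{|Y||Z|}(X,Z,Y)$, not six such pairs. Second, in the right-symmetry step the sign bookkeeping is genuinely the crux: when you swap the two inserted blocks $R(x_A)$ and $S(x_B)$ inside $P$'s (graded symmetric) argument list you pick up $(-1)^{(|R|+|x_A|)(|S|+|x_B|)}$, and you must check that the $x$-dependent part of this sign is exactly compensated by the change in the Koszul signs $\epsilon(\sigma)$ of the recombined shuffles, leaving only $(-1)^{|R||S|}$. You flag this but do not carry it out; writing out that one cancellation explicitly would make the proof complete rather than a (correct) blueprint.
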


 For a given $i \in \mathbb Z $, and a given $P = \sum_{k \geq 0} P_{k}$ with $P_{k}\in {\mathrm{Hom}}^i_\mathbb K  \left( S^{k+1}_\mathbb K (\E ), \E \right) $, we denote by $\Bar{P}\in\mathrm{CoDer}(S^{\bullet}_\mathbb K (\E )) $ the unique co-derivation of degree $i$ with Taylor coefficients $(P_{k} )_{k \in\mathbb{N}_0} $. The relation between the Richardson-Nijenhuis bracket an co-derivations is described in the following lemma:

\begin{lemma} \label{lem:RN}
The map $P\mapsto \Bar{P}$ is a graded Lie algebra morphism that is, for every $P,R$ of degrees $l,r$ as above, we have
 $$\widebar{[P,R]}_{\mathrm{\tiny{RN}}}=[\Bar{P},\Bar{R}]= \Bar{P} \circ \Bar{R} - (-1)^{lr}\Bar{R}  \circ \Bar{P}.$$
\end{lemma}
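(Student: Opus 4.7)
The plan is to reduce the equality of co-derivations to the equality of their Taylor coefficients. By Proposition~\ref{prop:Coderbracket}, the graded commutator $[\bar{P},\bar{R}] = \bar{P}\circ\bar{R} - (-1)^{lr}\bar{R}\circ\bar{P}$ is itself a co-derivation of $(S^\bullet_\mathbb{K}(\E),\Delta)$, of degree $l+r$. On the other hand, $\widebar{[P,R]_{\mathrm{RN}}}$ is a co-derivation by construction. Since a co-derivation $\mathcal{H}$ of $(S^\bullet_\mathbb{K}(\E),\Delta)$ is entirely determined by the family of its Taylor coefficients $\mathrm{pr}\circ\mathcal{H}|_{S^{k+1}_\mathbb{K}(\E)}$, it therefore suffices to prove the equality $\mathrm{pr}\circ[\bar{P},\bar{R}] = [P,R]_{\mathrm{RN}}$ of linear maps $S^\bullet_\mathbb{K}(\E)\to\E$.

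The key computation is $\mathrm{pr}\circ\bar{P}\circ\bar{R}$ on an element $x_0\cdot\ldots\cdot x_{p+q}\in S^{p+q+1}_\mathbb{K}(\E)$. Since $R$ is homogeneous of polynomial-degree $q+1$, formula~\eqref{coder:formula} keeps only the terms with $i=q+1$, giving
\[
\bar{R}(x_0\cdot\ldots\cdot x_{p+q}) = \sum_{\sigma\in\mathfrak{S}(q+1,p)}\epsilon(\sigma)\,R(x_{\sigma(0)}\cdot\ldots\cdot x_{\sigma(q)})\cdot x_{\sigma(q+1)}\cdot\ldots\cdot x_{\sigma(p+q)}\in S^{p+1}_\mathbb{K}(\E).
\]
By the same observation applied to the homogeneous $P$, on $S^{p+1}_\mathbb{K}(\E)$ only the term $i=p+1$ survives the projection~$\mathrm{pr}$ in formula~\eqref{coder:formula}, so $\mathrm{pr}\circ\bar{P}|_{S^{p+1}_\mathbb{K}(\E)}=P$. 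Combining these two facts reproduces exactly the formula that defines $P\circ R$ in Definition~\ref{def:RN}:
\[
\mathrm{pr}\circ\bar{P}\circ\bar{R}(x_0\cdot\ldots\cdot x_{p+q}) = (P\circ R)(x_0\cdot\ldots\cdot x_{p+q}).
\]
An identical argument with the roles of $P$ and $R$ swapped gives $\mathrm{pr}\circ\bar{R}\circ\bar{P} = R\circ P$ on $S^{p+q+1}_\mathbb{K}(\E)$. Taking the graded commutator with sign $(-1)^{lr}$ yields $[P,R]_{\mathrm{RN}}$ on $S^{p+q+1}_\mathbb{K}(\E)$, while on any other $S^{n}_\mathbb{K}(\E)$ the composition $\bar{P}\circ\bar{R}$ shifts polynomial-degree by $-(p+q)$ and lands in $S^{n-p-q}_\mathbb{K}(\E)$, which is killed by $\mathrm{pr}$ unless $n=p+q+1$. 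This exactly matches the fact that the co-derivation $\widebar{[P,R]_{\mathrm{RN}}}$ has a single nonzero Taylor coefficient $[P,R]_{\mathrm{RN}}$ in polynomial-degree $p+q+1$, concluding the proof.

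There is no real obstacle beyond bookkeeping of polynomial-degrees; the apparent subtlety --- namely, that composing two co-derivations should a priori introduce Koszul signs from permuting factors --- is already absorbed into the shuffle signs $\epsilon(\sigma)$ appearing in formula~\eqref{coder:formula}, so no extra signs arise. The argument is essentially formal and works verbatim for any reasonable variant (e.g.\ if one had worked with $\bigodot^\bullet$ instead of $S^\bullet_\mathbb{K}$, via Lemma~\ref{Tay-C}).
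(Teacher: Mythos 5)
Your proof is correct and follows essentially the same route as the paper's: reduce to Taylor coefficients via the fact that a co-derivation is determined by $\mathrm{pr}\circ(\cdot)$, then evaluate $\mathrm{pr}\circ\Bar{P}\circ\Bar{R}$ using formula \eqref{coder:formula} and match it term-by-term against the defining formula for $P\circ R$ in Definition \ref{def:RN}. The only cosmetic difference is that you treat $P$ and $R$ as homogeneous in polynomial-degree (the general case following by bilinearity, which the paper secures by first checking linearity of $P\mapsto\Bar{P}$), whereas the paper carries the sums $P=\sum_i P_i$, $R=\sum_j R_j$ through the same computation.
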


\begin{proof}
Linearity follows from uniqueness of the extension $\Bar{P}$ described in Proposition \ref{prop:co-morphism}. Indeed, for $P, R$ as above, one should have $\mathrm{pr}\circ (\Bar{P}+ \Bar{Q})= P+R=\mathrm{pr}\circ(\widebar{P+R})$. By unicity, we must have $\widebar{P+R}=\Bar{P}+\Bar{R}$. Likewise, one has for any $\lambda\in \mathbb{K}$,\, $\widebar{\lambda P}=\lambda \Bar{P}$.\\

\noindent
To show that it is a graded Lie algebra morphism, it suffices to check that $\mathrm{pr}\circ[\Bar{P},\Bar{R}]=[P,R]_{\mathrm{RN}}$. For $x=x_0\cdot\cdots\,x_{r}\in S^{r+1}_\mathbb{K}(\E)$

\begin{equation}\label{eq:commutator&Richardson}
\left(\mathrm{pr}\circ[\Bar{P},\Bar{R}]\right)_r(x)=P\circ\Bar{R}(x)-(-1)^{|P||R|}R\circ\Bar{P}(x)
\end{equation}
By using  Formula \eqref{coder:formula}, we obtain for example
\begin{align*}
   P\circ\Bar{R}(x) &=\sum_{j=0}^{k}\sum_{\sigma\in \mathfrak{S}(j+1,k-j)}\epsilon(\sigma) P(R_{j}(x_{\sigma(0)}\cdot\cdots \cdot x_{\sigma(j)})\cdot x_{\sigma(j+1)}\cdot\cdots \cdot x_{\sigma(k)})\\&=\sum_{\tiny{\begin{array}{c}
    i+j=k\\ i,j\geq 0
   \end{array}}}\sum_{\sigma\in \mathfrak{S}(j+1,i)}\epsilon(\sigma) P_i(R_{j}(x_{\sigma(0)}\cdot\cdots \cdot x_{\sigma(j)})\cdot x_{\sigma(j+1)}\cdot\cdots \cdot x_{\sigma(r)})\\&=  \sum_{\tiny{\begin{array}{c}
    i+j=r\\ i,j\geq 0
   \end{array}}} (P_i\circ R_j)(x)
\end{align*}
with the understanding that $P_i(x_0\cdot\cdots\, x_{m})=0$ for $m\neq i$. As a consequence, the right-hand side of Equation \eqref{eq:commutator&Richardson} becomes

\begin{align*}
         \sum_{\tiny{\begin{array}{c}
    i+j=r\\ i,j\geq 0
   \end{array}}}\left((P_i\circ R_j)(x)-(-1)^{|P||R|}(R_j\circ P_i)(x)\right)&=\sum_{\tiny{\begin{array}{c}
    i+j=r\\ i,j\geq 0
   \end{array}}}[P_i,R_j]_{\tiny{{\mathrm{RN}}}}(x)\\&=\left([P,R]_{\tiny{{\mathrm{RN}}}}\right)_r(x).
\end{align*}
\end{proof}
\vspace{2cm}
\begin{tcolorbox}[colback=gray!5!white,colframe=gray!80!black,title=Conclusion:]
In this chapter, we recalled the co-algebra language, in particular comorphisms, Richardson-Nijenhuis bracket. An important point is not to confuse the tensor products $\otimes_\mathbb{K}$ and $\otimes_\mathcal{O}$, hence the graded symmetric algebra $S^\bullet_\mathbb{K}(\E)$ and $S^\bullet_\mathcal{O}(\E)$, which is denoted by $S^\bullet(\E)$. This introduction is completed in Appendix \ref{appendix:tensor}.
\end{tcolorbox}

\chapter{Lie $\infty$-algebroids and their morphisms}\label{chap:oid-alg-ver}
 
\section{Definitions}
In this section, we present the notion Lie $\infty$-algebroid over the algebra $\mathcal{O}$. We also give geometrical examples. We refer the reader to Appendix \ref{appendix:mod} for some generalities on graded modules.\\ 

By definition, Lie $\infty$-algebras on a graded vector space $V$ are co-derivations of degree $-1$ squaring to $0$ of the graded symmetric algebra $S(V)$ (e.g see \cite{Stasheff,Ryvkin}). Lie $\infty$-algebroids generalize Lie $\infty$-algebras and Lie
algebroids, but the situation is more sophisticated, because the $2$-ary bracket is not $\mathcal{O}$-linear. Let us make some preparations before introducing the definition.

\vspace{1cm}
\begin{definition}
A \emph{derivation of $\mathcal O$} is a $\mathbb K$-linear map $X\colon\mathcal{O}\rightarrow \mathcal{O}$ that satisfies the so-called Leibniz identity$$X(fg)=X(f)g+ fX(g)$$
for all $f,g\in \mathcal{O}$. The set of all derivations of $\mathcal{O}$ inherits a Lie $\mathbb{K}$-algebra structure from the Lie algebra $\mathrm{End}_{\mathbb K}(\mathcal{O})$, whose Lie bracket is the commutator, that is, $$[X,Y]=X\circ Y-Y\circ X,$$ 
for all $X,Y$ derivations of $\mathcal{O}$.\\

We denote by $\mathrm{Der}(\mathcal O)$ the  Lie algebra of derivations of $\mathcal{O}$. Also, for $X\in\mathrm{Der}(\mathcal O)$, $X[f]$
stands for the derivation $X$ applied to $f\in \mathcal{O}$.
\end{definition}

\begin{example}
Geometrically, derivations of $\mathcal{O}$ can be understood as follows:
\begin{enumerate}
    \item when $\mathcal{O}=C^\infty(M)$ is the algebra of functions of a smooth manifold $M$, as vector fields on a smooth manifold $M$;
    \item when $\mathcal{O}$ is the algebra of holomorphic functions on an open subset $\mathcal{U}$ of $\mathbb{C}^d$, as vector fields on $\mathcal{U}$; 
    \item  when $\mathcal{O}=\mathcal O_W$ is the coordinate ring of  an affine variety $W$, as vector fields on the affine variety~$W$.
\end{enumerate}
\end{example}

\subsection{$dg$-almost Lie algebroids over $\mathcal{O}$}
\begin{definition}
\cite{LavauSylvain, LLS}
\label{def:almost}
A \emph{(negatively graded) almost differential graded Lie algebroid $(\E_\bullet,\ell_1, \ell_2, \rho)$ over $\mathcal{O}$} is a complex 
$$(\E, \ell_1, \rho): \quad\cdots\stackrel{\ell_1}{\longrightarrow}\mathcal E_{-3}\stackrel{\ell_1}{\longrightarrow} \mathcal E_{ -2}  \stackrel{\ell_1}{\longrightarrow}  \mathcal E_{ -1} \stackrel{\rho}{\longrightarrow} \mathrm{Der}(\mathcal{O}).$$
of projective $\mathcal O $-modules equipped with a graded symmetric degree $+1$ $\mathbb{K}$-bilinear \emph{bracket} $$ \ell_2 = [\cdot\,, \cdot]\colon \odot^2\E\rightarrow \E $$ such that:
\begin{enumerate}
\item  $\ell_2$ satisfies the \emph{Leibniz identity} with respect to  $\rho \colon \mathcal E_{-1} \longrightarrow {\mathrm{Der}}(\mathcal O) $, i.e.
\begin{equation}
    \ell_2(x, fy) = f\ell_2(x,y)+\rho(x)[f]y
\end{equation}
for all $x\in \E_{-1},y\in \E$ and $f\in\mathcal{O}$.
\item $\ell_1$ is degree $+1 $-derivation of $\ell_2 $, i.e. for all  $x \in \mathcal E_{-i}, y \in \mathcal E$:
$$ \ell_1 (\ell_2(x,y)) +  \ell_2( \ell_1( x),y) + (-1)^{i}  \ell_2( x  , \ell_1( y)) = 0,$$ 

\item $\rho$ is a morphism, i.e. for all  $x,y \in \mathcal E_{-1}$
$$ \rho (\ell_2 (x,y)) = [\rho (x), \rho (y)].$$
 \end{enumerate}
The $\mathcal{O}$-linear map $\rho$ is called the \emph{anchor map}, and $\ell_1$ the \emph{differential}.
\end{definition}

\begin{remark}\label{rmk:basicLR}
For any almost graded Lie-algebroid $(\E_\bullet,\ell_1, \ell_2, \rho)$ define the \emph{Jacobiator} \begin{align}
    Jac(x,y,z)
    =\ell_2(\ell_2(x,y), z) +(-1)^{|y||z|} \ell_2(\ell_2(x,z), y) + (-1)^{|x|(|y|+|z|)}\ell_2(\ell_2(y,z), x), \quad x,y,z\in \E
\end{align} For any triple of elements $x,y,z\in \E_{-1}$ of degree $-1$. Equivalently: $$Jac(x,y,z)=\ell_2(\ell_2(x,y), z)+\circlearrowleft(x,y,z),$$where $\circlearrowleft(x,y,z)$ means that we sum on the circular permutations of $x,y,z$ with Koszul signs. It is graded symmetric of degree $-2$. By axiom 3, the Jacobiator takes values in the kernel of the anchor map, since
\begin{align*}
\rho(Jac(x,y,z))&=\rho(\ell_2(\ell_2(x,y), z)) + \circlearrowleft(x,y,z)\\&=[\rho(\ell_2(x,y)), \rho(z)]+ \circlearrowleft(x,y,z)\\&=[[\rho(x),\rho(y)],\rho(z)]+ \circlearrowleft(x,y,z)=0,\quad\text{since $\lb$ satisfies Jacobi identity}.
\end{align*}
It is easy to see that the bracket $\ell_2$ goes to the quotient and endows, $\frac{\E_{-1}}{\ker\rho}$ and $\frac{\E_{-1}}{\ell_1(\E_{-2})}$ with a Lie $\mathbb{K}$-algebra bracket $\bar{\ell_2}$.
\end{remark}

\begin{remark}
The map $(x,y,z)\mapsto Jac(x,y,z)$ is $\mathcal{O}$-trilinear: indeed, it is clear if $x,y,z\in \E_{\leq -2}$. By graded symmetry, we need to verify this point when the at least one of the variables is of degree $-1$. Let $f\in \mathcal O$, \\

\noindent
\textbf{Case 1:} $x\in \E_{-1}$ and $y,z\in \mathcal{E}_{\leq -2}$
\begin{align*}
    Jac(x,y,fz)&=\ell_2(\ell_2(x,y), fz) + (-1)^{|z||y|}\ell_2(\ell_2(x,fz), y) + (-1)^{|z|+|y|}\ell_2(\ell_2(y,fz),x)\\&= f\ell_2(\ell_2(x,y), z) + (-1)^{|z||y|}(\ell_2(f\ell_2(x,z), y) +\ell_2(\rho(x)[f]z,y))+ (-1)^{|z|+|y|}f\ell_2(\ell_2(y,z),x)+\\&\quad(-1)^{|z|+|y|}(-1)^{|z|+|y|+1}\rho(x)[f]\ell_2(y,z)\\&=fJac(x,y,z)+\cancel{(-1)^{|z||y|}\rho(x)[f]\ell_2(z,y)} -\cancel{\rho(x)[f]\ell_2(y,z)}
\end{align*}

\noindent
\textbf{Case 2:} $x,y\in \E_{-1}$ and $z\in \mathcal{E}_{\leq -2}$\begin{align*}
    Jac(x,y,fz)&= \ell_2(\ell_2(x,y), fz)+(-1)^{|z|}\ell_2(\ell_2(x,fz),y) + (-1)^{|z| +1}\ell_2(\ell_2(y,fz),x)\\&= f\ell_2(\ell_2(x,y), z) + \rho(\ell_2(x,y))[f]z+(-1)^{|z|}\ell_2(f\ell_2(x,z)+ \rho(x)[f]z,y)) +\\&\hspace{8.5cm}(-1)^{|z|+1} \ell_2(f\ell_2(y,z)+\rho(y)[f]z,x)\\&\\&=fJac(x,y,z)+{\rho(\ell_2(x,y))[f]z}+(-1)^{|z|}((-1)^{|z|}\cancel{\rho(y)[f]\,\ell_2(x,z)}+ \cancel{\rho(x)[f]\ell_2(z,y)}\\&\qquad+(-1)^{|z|}{\rho(y)[\rho(x)[f]]z})+\cancel{(-1)^{|z|+1}((-1)^{|z|}\rho(x)[f]\,\ell_2(y,z))}+ \cancel{\rho(y)[f]\ell_2(z,x)}\\&\qquad +(-1)^{|z|}\rho(x)[\rho(y)[f]]z)\\&\\&=fJac(x,y,z) +\cancel{(\rho(\ell_2(x,y))-[\rho(x), \rho(y)])}[f]z \\&=fJac(x,y,z).
\end{align*}
  
 \noindent
\textbf{Case 3:} for $x,y,z\in \E_{-1}$, the proof is identical to case $2$.

\end{remark}


\begin{example}\label{ex:DGLA1}\label{ex:AGLA}
A \emph{differential graded Lie algebra (DGLA)} (see e.g \cite{ManettiMarco2005}) is a graded vector space ${\displaystyle \mathfrak{g}=\bigoplus_{j\in \mathbb{Z}} \mathfrak{g}_{j}}$ over $\mathbb{K}$ together with a bilinear map called \emph{graded Lie bracket} $\lb\colon \mathfrak{g}_i\times \mathfrak{g}_j\rightarrow \mathfrak{g}_{i+j}$ and a \emph{differential map} ${\displaystyle \dd\colon \mathfrak{g}_{i}\to \mathfrak{g}_{i-1}}$ such that for all homogeneous elements $v_1, v_2, v_3\in \mathfrak{g}$ has the properties\\
\begin{itemize}
    \item\textbf{graded commutativity}: ${\displaystyle [v_1,v_2]=-(-1)^{|v_1||v_2|}[v_2,v_1],}$

\item
\textbf{Jacobi's identity}: ${\displaystyle (-1)^{|v_1||v_3|}[v_1,[v_2,v_3]]+(-1)^{|v_2||v_1|}[v_2,[v_3,v_1]]+(-1)^{|v_3||v_2|}[v_3,[v_1,v_2]]=0,}$

\item
\textbf{Leibniz's identity}: ${\displaystyle \dd[v_1,v_2]=[\dd v_1,v_2]+(-1)^{|v_1|}[v_1,\dd v_2]}$.\end{itemize}

DGLAs are examples of almost differential graded Lie algebroid (ADGLA) with $\mathcal{O}=\mathbb{K}$\, (and $\mathfrak{g}_j=0$ for $j\geq 0$). We take $\E_{-i}:=\mathfrak{g}_{i}[1]$ for $i\geq 1$, $\ell_1(x):=-\dd(x)$, and $\ell_2(x,y):=(-1)^{|x|}[x,y]$. Of course, here $\rho=0$.  Note that ADGLAs are more general than DGLAs, since it does not impose Jacobi identity.

\end{example}
\begin{example}
\cite{HuebschmannJohannes2003HhaM} An \emph{almost-Lie algebroid over a manifold $M$} is a triple $(A\rightarrow M, \lb_A,\rho_A)$ made of a vector bundle $A\rightarrow M$, a skew-symmetric bracket
$\lb_A\colon \Gamma(A)\times \Gamma(A)\rightarrow \Gamma(A)$, fulfilling the Leibniz identity, i.e.  for all $a, b\in \Gamma(A), f\in C^\infty(M )$
$$ [a, f b]_A = f [a,b]_A + \rho_A(a)[f]b,$$ and
a vector bundle morphism $\rho\colon A \rightarrow T M$, 
that satisfies,
$$\rho([a, b]_A
) = [\rho_A(a), \rho_A(b)].$$We say that $\rho_A$ is the \emph{anchor} of $(A\rightarrow M, \lb_A,\rho_A)$. When $\lb_A$ satisfies Jacobi's identity, we speak of a \emph{Lie algebroid over $M$} \cite{Mackenzie}.\\

Almost Lie algebroids over a manifold $M$  give examples of almost differential graded Lie algebroid over $\mathcal{O}=C^\infty(M )$ with $\E_{-1}=\Gamma(A)$, $\E_{-i}=0$ for $i\neq 1$,  $\ell_2=\lb_A$, and the anchor is $\rho_A$.
\end{example}

\subsection{Lie $\infty$-algebroids over $\mathcal{O}$}
Lie $\infty $-algebroids over manifolds were introduced (explicitly or implicitly) by various authors, e.g. \cite{MR2966944}, \cite{Voronov2}, and \cite{SeveraTitle}. We refer to Giuseppe Bonavolont\`a and Norbert Poncin for a complete overview of the matter \cite{Poncin}. It extends the notion of almost differential graded Lie algebroids.
\begin{definition}\label{def:Linfty}
\cite{Stasheff} A \emph{negatively graded Lie $ \infty$-algebroid over $ \mathcal O$} is a collection $\E=(\mathcal E_{-i})_{i \geq 1} $ of projective $ \mathcal O$-modules, equipped with:
\begin{enumerate}
    \item a collection of linear maps $\ell_i:S_\mathbb{K}^i (\E) \longrightarrow \E$ of degree $ +1$ called $i$-ary \emph{brackets}
    \item a $\mathcal O$-linear map $\E_{-1}  \longrightarrow {\mathrm{Der}}(\mathcal O) $ called \emph{anchor map}
\end{enumerate}
satisfying the following axioms~:
\begin{enumerate}\label{def:Jacobi}
    \item[$(i)$] the \emph{higher Jacobi identity}:
    \begin{equation} \sum_{i=1}^{n}\sum_{\sigma\in \mathfrak{S}_{i,n-i}}{{\epsilon(\sigma)}} \, \, \ell_{n-i+1}(\ell_i( x_{\sigma(1)},\ldots,x_{\sigma(i)}),x_{\sigma(i+1)},\ldots,x_{\sigma(n)})=0, \end{equation} for all $n\geq1$ and homogeneous elements $x_1,\ldots,x_n\in\E$,
    \item[$(ii)$] for $ i \neq 2$, the bracket $\ell_i $ is $\mathcal O $-linear, while for $ i=2$,
     $$   \ell_2 ( x,  f y )= \rho (x ) [f] \, y + f \ell_2(x,y)  \hbox{ for all $ x,y \in \mathcal E , f \in \mathcal O$ } ,$$
     where, by convention, $  \rho_{\mathcal E}$ is extended by zero on $ \mathcal E_{-i}$ for all $i \geq 2$,
     \item[$(iii)$]  $\rho\circ\ell_1=0$ on $\E_{-2}$,
     \item[$(iv)$] $\rho$ is a morphism of brackets, i.e., $\rho(\ell_2(x, y)) = [\rho(x), \rho(y)]$ for all $x,y\in\E_{-1}$.
\end{enumerate}
We denote it by $(\E_\bullet, \ell_\bullet, \rho)$. 
\end{definition}
\begin{convention}
From now on, we simply say \textquotedblleft Lie $\infty$-algebroid\textquotedblright\,for \textquotedblleft negatively graded Lie $ \infty$-algebroid\textquotedblright.
\end{convention}
Let us explain these axioms. 
\begin{remark}It follows from Definition \ref{def:Linfty} that: 
\begin{enumerate}
\item The higher Jacobi identity is equivalent to
 $$ \sum_{i=1}^n [\ell_i, \ell_{n+1-i}]_{\hbox{\tiny{RN}}} =0,$$
for all positive integer $n$. See Definition \ref{def:RN} a definition of $\lb_{\hbox{\tiny{RN}}}$.
    \item  For $n=1$, higher Jacobi identity yields $\ell_1\circ \ell_1=0$. Thus, 
  \begin{equation}
      \label{eq:oid-complex}
       \cdots\stackrel{\ell_1}{\longrightarrow} \mathcal E_{ -3}\stackrel{\ell_1}{\longrightarrow} \mathcal E_{-2}  \stackrel{\ell_1}{\longrightarrow}  \mathcal E_{ -1} 
  \end{equation}
  is a complex of projective $\mathcal O$-modules. 
  \item Higher Jacobi identity for $n=2$ reads ${\displaystyle \ell_1(\ell_2(x,y))+\ell_2(\ell_1( x),y)+(-1)^{|x|}\ell_2(x,\ell_1(y))}=0$. 
  \item The third and the fourth axiom are  consequences of item $(i)$, and $(ii)$ if $\mathcal{O}$ has no zero divisors\footnote{i.e. for every $f\in \mathcal{O}$ the linear map $\xymatrix{\E_{-1}\ar[r]^{f}&\E_{-1}}$ given by multiplication by $f$, is injective.} on $\E_{-1}$. Indeed, for all $x\in \E_{-2}$ and $y\in\E_{-1}$ and $f\in \mathcal{O}$.
  Higher Jacobi identity specialized at $n=2$ and Leibniz identity read \begin{align*}
      \ell_1(\ell_2(x,fy))&=\ell_2(\ell_1(x), fy)\\&=f\ell_2(\ell_1(x),y)+ \rho(\ell_1(x))[f]y
  \end{align*}
 Using Leibniz identity again, the left-hand side of the equation above also reads $\ell_1(\ell_2(x,fy))=f\ell_1(\ell_2(x,y))$. Hence: $$\rho(\ell_1(x))[f]y=0.$$ 
 
 If $\mathcal{O}$ has no zero divisors on $\E_{-1}$, then $\rho(\ell_1(x))[f]=0$. Since $x$ and $f$ are arbitrary, we have  ${\rho\circ\ell_1}_{|_{\E_{-2}}}=0$.\\

 Also, by writing higher Jacobi for $n=3$ on elements $x,y,z\in \E_{-1}$ and $f\in\mathcal{O}$ while using Leibniz identity we get
 \begin{align*}
     0&=\ell_1(\ell_3(x,y,fz))+\ell_2(\ell_2(x,y),fz)- \ell_2(\ell_2(x,fz),y)+ \ell_2(\ell_2(y,fz),x)\\&=f\ell_1(\ell_3(x,y,z))+f\ell_2(\ell_2(x,y),z)+\rho(\ell_2(x,y))[f]z-\ell_2(f\ell_2(x,z),y)-\ell_2(\rho(x)[f]z,y)\\&\hspace{8,8cm}+ \ell_2(f\ell_2(y,z),x)+\ell_2(\rho(y)[f]z,x) \\&=f\underbrace{(\ell_1(\ell_3(x,y,fz))+\ell_2(\ell_2(x,y),z)-\ell_2(\ell_2(x,z),y)+ \ell_2(\ell_2(y,z),x))}_{=0}+ \rho(\ell_2(x,y))[f]z\\& \quad+\rho(y)[\rho(x)[f]]z-\rho(x)[\rho(y)[f]]z+ \cancel{\rho(y)[f]\ell_2(x,z)}+ \cancel{\rho(y)[f]\ell_2(z,x)} -\cancel{\rho(x)[f]\ell_2(z,y)}\\&\quad-\cancel{\rho(x)[f]\ell_2(y,z)}.
 \end{align*}
 This implies, $(\rho(\ell_2(x,y))[f]-[\rho(x),\rho(y)][f])z=0$. Therefore, when $\mathcal{O}$ has no zero divisors on $\E_{-1}$, we obtain that $$\rho(\ell_2(x,y))=[\rho(x),\rho(y)].$$
\end{enumerate}
\end{remark}
\begin{definition}
   A Lie $\infty $-algebroid is said to be \emph{acyclic} if the complex \eqref{eq:oid-complex} has no cohomology in degree $\leq -2 $.

\end{definition}

\begin{example}
When $\mathcal{O}=\mathbb{K}$  we have only the axiom $(i)$ of \ref{def:Linfty}, since the other axioms are trivial. Therefore, we recover the axioms that define Lie $\infty$-algebras \cite{Jim-Stasheff,Stasheff}.
\end{example}
\begin{remark}\label{rmk:NGLA}
Geometrically, we are in the case where the projective modules $(\E_{-i})_{i\geq 1}$ of Definition \ref{def:Linfty} are finitely generated. Serre-Swan theorem \cite{SwanRichardG} assures  for every $i\geq 1$, $\mathcal{E}_{-i}=\Gamma(E_{-i})$ for some vector bundle $E_{-i}\rightarrow M$ over a manifold $M$. Hence, Definition \ref{def:Linfty} is rewritten word by word as follows: A \emph{(finitely generated) negatively graded Lie $\infty$-algebroid} $\left(E,(\ell_k)_{k\geq 1}, \rho\right)$  over a manifold $M$ is
\begin{enumerate}
    \item a collection of vector bundles $E =(E_{-i})_{i\geq 1}$ over $M$
    \item together with a sheaf of Lie $\infty$-algebra
structures $(\ell_k)_{k\geq 1}$ over the sheaf of sections of $E$

\item   that comes  with a vector bundle morphism $\rho\colon E_{-1}\longrightarrow TM$, called the \emph{anchor}, 
\item such that the $k$-ary-brackets are all $\mathcal O$-multilinear  except when $k=2$ and at least one of the arguments is of degree $-1$. The $2$-ary bracket  satisfies the Leibniz identity

\begin{equation}
    \ell_2(x, f y) = \rho(x)[f]y + f\ell_2(x, y), x \in \Gamma(E_{-1}), y\in\Gamma(E).
\end{equation}
\end{enumerate}
\end{remark}

\section{Lie $\infty$-algebroids as homological co-derivations}\label{sec:co-version}
When it comes to manipulating morphisms of Lie $\infty$-alegbroids, it quickly becomes quite tedious. Therefore, it is very useful to dualize in order to make the notion of morphisms clearer.

In the finite dimensional case \cite{Voronov2}, 
it is usual to see it as a derivation of the symmetric algebra of the dual, i.e. as a $Q$-manifold (see Chapter \ref{Q-manifold}). The duality
finite rank Lie $\infty$-algebroids  and $Q$-manifolds is especially efficient to deal with morphisms.

In infinite dimension case, we cannot  dualize Lie $\infty$-algebroids in the sense of T. Voronov \cite{Voronov2, Poncin} anymore, since the identification of $S(V)^*$ with $S(V^*)$ does not hold. What I do to deal with this problem in the infinite case, is to stay in the world of squared to zero co-derivations and impose some particular additionnal properties on their Taylor coeficients, related to $\mathcal{O}$-linearity and the Leibniz rule. \\

Now we give an alternative description of Lie $ \infty$-algebroids in terms of co-derivation (extending the usual \cite{Voronov, Voronov2, Poncin} correspondence between Lie $ \infty$-algebroids and $Q$-manifolds in the finite rank case).


\begin{definition}
  A co-derivation $Q\in \mathrm{CoDer}(S_\mathbb{K}(\E))$ of degree $+1$ is said to be an \emph{homological co-derivation} or \emph{co-differential} when $Q^2=0$.\\
  
  Given such a co-derivation $Q$, the triplet $(S_\mathbb{K}(\E), \Delta, Q)$ is then called a \emph{differential graded co-algebra}.

\end{definition}
The following lemma is important.
\begin{lemma}
    A co-derivation $Q\in \mathrm{CoDer}(S_\mathbb{K}(\E))$ of degree $+1$ is an {homological co-derivation} if and only if $\mathrm{pr}\circ Q^2=0$.
\end{lemma}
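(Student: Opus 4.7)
The plan is to use the fact that co-derivations are uniquely determined by their Taylor coefficients, combined with the observation that $Q^2$ is itself a co-derivation.

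First, the direction $Q^2 = 0 \Longrightarrow \mathrm{pr} \circ Q^2 = 0$ is trivial. For the nontrivial direction, I would first observe that since $Q$ has odd degree $+1$, we have the identity
\begin{equation*}
Q^2 = \tfrac{1}{2}[Q,Q],
\end{equation*}
where $[\cdot,\cdot]$ is the graded commutator. By Proposition \ref{prop:Coderbracket}, the graded commutator of two co-derivations is again a co-derivation, so $Q^2 \in \mathrm{CoDer}(S_\mathbb{K}(\E))$ is a co-derivation of degree $+2$.

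Next, I would invoke the uniqueness part of the extension result for co-derivations mentioned in item 3 of the remark following Proposition \ref{prop:co-morphism} (the analogue for co-derivations of Proposition \ref{prop:co-morphism}, given that $\Phi = \mathrm{id}$ here is a co-morphism). It asserts that any co-derivation $\mathcal{H}$ of $S_\mathbb{K}(\E)$ is entirely determined by the collection of its Taylor coefficients $\mathcal{H}_r = (\mathrm{pr} \circ \mathcal{H})_{|S^{r+1}_\mathbb{K}(\E)}$ for $r \in \mathbb{N}_0$, via the explicit formula \eqref{ex:formulacoder}. Applied to $\mathcal{H} = Q^2$, the hypothesis $\mathrm{pr} \circ Q^2 = 0$ means that all Taylor coefficients of $Q^2$ vanish, hence $Q^2 = 0$ by uniqueness.

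There is no real obstacle here; the only subtle point is noting that $Q^2$ is a co-derivation (not a co-morphism), which is precisely why we can apply the reconstruction formula \eqref{ex:formulacoder} to it. Once this is done, the result is immediate.
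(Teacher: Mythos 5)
Your proof is correct and follows essentially the same route as the paper: both write $Q^2=\tfrac12[Q,Q]$, invoke Proposition \ref{prop:Coderbracket} to conclude that $Q^2$ is a co-derivation, and then use the fact that a co-derivation is entirely determined by its Taylor coefficients, i.e. by $\mathrm{pr}\circ Q^2$. Your version merely spells out the appeal to the uniqueness/reconstruction formula \eqref{ex:formulacoder} a bit more explicitly than the paper does.
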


\begin{proof}
By Proposition \ref{prop:Coderbracket}, the bracket $$[Q,Q]= Q\circ Q- (-1)^{|Q||Q|}Q\circ Q= 2Q^2$$ is a co-derivation, since co-derivations of a graded co-algebra is closed under the graded commutator bracket $\lb$. Therefore, $Q^2=\frac{1}{2}[Q,Q]$ is a co-derivation, and it is completely determined by $\mathrm{pr}\circ Q^2$.
\end{proof}
\begin{remark}
The composition of two co-derivations is not a co-derivation in general. In particular, the composition of a co-derivation with itself is not a co-derivation unless it is of odd degree.
\end{remark}


For a good understanding of the next proposition, see notations of Taylor coefficients in Equation \eqref{eq:Taylor}.

 \begin{proposition}\label{co-diff}
  Given a collection $\E=(\mathcal E_{-i})_{i \geq 1} $ of projective $ \mathcal O$-modules, there is a one-to-one correspondence between Lie $\infty$-algebroid structures $(\E_\bullet,\ell_\bullet,\rho)$ on $\E$ and pairs $(Q_\E, \rho)$ made of an homological co-derivation $Q_\E\colon S^\bullet_\mathbb{K} (\mathcal E) \rightarrow S^\bullet_\mathbb{K} (\mathcal E)$ 
  and a $\mathcal O$-linear morphism, $\rho\colon\E_{-1}\rightarrow \emph{Der}(\mathcal O)$ called the \emph{anchor}, such that 
 \begin{enumerate}
     \item for $k\neq 1$ the $k$-th Taylor coefficient {$Q_\E^{(k)}\colon S_\mathbb{K}^{k+1}(\E')\longrightarrow \E$} of $Q_\E$ is $\mathcal O$-multilinear,
     \item for all $x,y \in \mathcal E$ and $ f \in \mathcal O$, we have, $ Q_\mathcal E^{(1)} (x\cdot  fy) = f   Q_\mathcal E^{(1)} (x\cdot y)  + \rho(x)[f] \, y $,\; 
     \item $\rho \circ Q_\mathcal E^{(0)} =0 $ on $\mathcal E_{-2} $,
     \item {$\rho\circ Q^{(1)}_\E(x\cdot y)=[\rho(x),\rho(y)]$, for all $x,y \in \mathcal E_{-1}$}.
 \end{enumerate}
 The correspondence consists in assigning to a Lie $\infty $-algebroid $(\E_\bullet, \ell_\bullet, \rho)=(\ell_1,\ell_2,\ell_3,\cdots\,)$ the co-derivation $Q_\E$ whose  $k$-th Taylor coefficient  is the $k$-ary bracket $\ell_{k+1} $ for all $k \in \mathbb{N}_0$.
 \end{proposition}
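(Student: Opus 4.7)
The strategy is to invoke the universal property of the symmetric coalgebra. By the co-derivation analogue of Proposition \ref{prop:co-morphism} (giving the explicit formula \eqref{ex:formulacoder}), degree $+1$ co-derivations on $S^\bullet_\mathbb{K}(\E)$ stand in bijection with collections of $\mathbb{K}$-multilinear degree $+1$ maps $(\ell_{k+1})_{k \geq 0}$, where $\ell_{k+1} \colon S^{k+1}_\mathbb{K}(\E) \to \E$, via the identification $\ell_{k+1} = Q_\E^{(k)}$. I would build the correspondence along this bijection and then check that the remaining data (the anchor $\rho$, the four numbered conditions, and the squaring-to-zero property) translate into the axioms of a Lie $\infty$-algebroid.

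The matching of the algebraic axioms is then straightforward under $\ell_{k+1} \leftrightarrow Q_\E^{(k)}$. Axiom $(ii)$ of Definition \ref{def:Linfty}, which requires $\mathcal{O}$-multilinearity of $\ell_k$ for $k \neq 2$, becomes condition $(1)$ of the proposition (since $k \neq 2 \Leftrightarrow k-1 \neq 1$), while the Leibniz rule for $\ell_2$ becomes condition $(2)$ because $Q_\E^{(1)} = \ell_2$. Axioms $(iii)$ and $(iv)$ on $\rho$ are just conditions $(3)$ and $(4)$, using $Q_\E^{(0)} = \ell_1$ and $Q_\E^{(1)} = \ell_2$.

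The heart of the proof is the equivalence of the higher Jacobi identity $(i)$ with $Q_\E^2 = 0$. Since $Q_\E$ has odd degree, $Q_\E^2 = \tfrac{1}{2}[Q_\E, Q_\E]$ is itself a co-derivation, so it vanishes iff $\mathrm{pr} \circ Q_\E^2 = 0$, that is, iff all its Taylor coefficients vanish. By Lemma \ref{lem:RN}, the assignment $P \mapsto \Bar{P}$ intertwines the Richardson--Nijenhuis bracket with the graded commutator of co-derivations. Writing $Q_\E = \sum_{k \geq 0} \Bar{\ell}_{k+1}$ and expanding, I obtain
\begin{equation*}
[Q_\E, Q_\E] \;=\; \sum_{i,j \geq 1} [\Bar{\ell}_i, \Bar{\ell}_j] \;=\; \sum_{n \geq 2}\, \overline{\sum_{i+j=n}[\ell_i,\ell_j]_{\mathrm{RN}}},
\end{equation*}
so that $Q_\E^2 = 0$ is equivalent to $\sum_{i+j=n}[\ell_i,\ell_j]_{\mathrm{RN}} = 0$ for every $n \geq 2$. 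Since each $\ell_i$ has odd degree, $[\ell_i,\ell_j]_{\mathrm{RN}} = \ell_i \circ \ell_j + \ell_j \circ \ell_i$, and unfolding these operations via Definition \ref{def:RN} recovers precisely (twice) the shuffle sum in the higher Jacobi identity. The main technical obstacle is the bookkeeping of shuffles and Koszul signs in this unfolding, but once Lemma \ref{lem:RN} is applied the verification is mechanical. The converse direction — starting from a pair $(Q_\E, \rho)$ as in the statement and reading off a Lie $\infty$-algebroid via $\ell_{k+1} := Q_\E^{(k)}$ — is then just the reverse reading of the same dictionary.
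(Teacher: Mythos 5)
Your proposal is correct and follows essentially the same route as the paper: identify the Taylor coefficients $Q_\E^{(k)}$ with the brackets $\ell_{k+1}$, observe that conditions (1)--(4) are verbatim translations of the algebroid axioms, and use Lemma \ref{lem:RN} to convert $Q_\E^2=\tfrac12[Q_\E,Q_\E]=0$ into the vanishing of $\sum_{i+j=n+1}[\ell_i,\ell_j]_{\mathrm{RN}}$, which is the higher Jacobi identity. The only difference is that you spell out the unfolding of the Richardson--Nijenhuis brackets into the shuffle sum, a point the paper delegates to the remark following the definition of Lie $\infty$-algebroids.
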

 \begin{proof}
Take the $i$-th Taylor coefficient of co-derivation $Q_\E$ that satisfies the requirements 1. 2. 3. and 4. of Proposition \ref{co-diff} as, $\mathrm{pr}\circ Q_\E^{(i)}=\ell_{i+1}\colon S^{i+1}_\mathbb{K}(\E)\rightarrow \E$. By Lemma \ref{lem:RN} we have \begin{align*}
   2Q_\E^2= [Q_\E,Q_\E]=\sum_{n\geq 1}\sum_{i+j=n+1} \widebar{[\ell_i,\ell_j]}_{\hbox{\tiny{RN}}}.
\end{align*}Thus, by uniqueness of the extension as co-derivation, $Q_\E^2=0$ is equivalent to
 $$0=\sum_{i=1}^n [\ell_i, \ell_{n+1-i}]_{\hbox{\tiny{RN}}}\in   \mathrm{Hom}_{\mathbb K}^{2}\left( S^{n}_\mathbb K  (\E) \,  , \E \right).$$
For all integer $n\geq 1$.
\end{proof}
\begin{remark}\label{rmk:co-diff}
Note that, e.g. if $\mathcal{O}=\mathbb{K}$, we  recover the equivalence between Lie $\infty$-algebras and co-differentials. 
\end{remark}
\begin{convention}
 From now, when relevant, we sometime denote an underlying structure of Lie $\infty$-algebroid $(\E_\bullet,\ell_\bullet,\rho)$ on $\E$ by  $(\E,Q_\E,\rho)$ instead. 
\end{convention}

 \begin{remark}
 Notice that $Q_\E\colon S^\bullet_\mathbb{K}(\E)\longrightarrow S^\bullet_\mathbb{K}(\E)$ does not induce a co-derivation on $\bigodot^\bullet\E$ unless $\rho=0$.
 \end{remark}

Let us make the correspondence given by Proposition \ref{co-diff} explicit on the following examples.

\begin{example}{\it{Differential Graded Lie Algebras.}}\label{ex:dgla}
We come back to Example \ref{ex:DGLA1}. Any differential graded Lie algebra $(\mathfrak{g}=\oplus_{\in \mathbb{Z}}\mathfrak{g}_i,\lb, \dd)$ is Lie $\infty$-algebroid with trivial anchor, where the unary bracket $\ell_1$ and the binary bracket $\ell_2$ are obtained by adding a sign to the differential map $\dd$ and the graded skew-symmetric Lie bracket $\lb$ respectively, and the other brackets $\ell_k$ for $k\geq 3$ vanish. For $k\geq 3$, the $k$-th Taylor coefficient  of the corresponding co-derivation $Q_\mathfrak{g}$ is zero, hence  it can be written as  $Q_\mathfrak{g}=Q_\mathfrak{g}^{(0)}+Q_\mathfrak{g}^{(1)}$. For every homogeneous monomial $x_1\cdots x_k\in S^{k}\mathfrak{g}$

\begin{align*}
    Q_\mathfrak{g}^{(0)}(x_1\cdots x_k)&=\sum_{i=1}^k 
    (-1)^{(|x_1|+\cdots+|x_{i-1}|)|x_i|}\ell_1(x_i)\cdot x_1 \cdots\widehat{x}_i\cdots x_k,\\Q_\mathfrak{g}^{(1)}(x_1\cdots x_k)&=\sum_{1\leq i<j\leq k}(-1)^{(|x_1|+\cdots+|x_{j-1}|)|x_j|+(|x_1|+\cdots+|x_{i-1}|)|x_i|}\ell_2(x_i,x_j)\cdot x_1 \cdots\widetilde{x}_i\cdots \widehat{x}_j\cdots x_k.
\end{align*}
Proposition \ref{co-diff} and Remark \ref{rmk:co-diff} say that  $(\mathfrak{g}=\oplus_{\in \mathbb{Z}}\mathfrak{g}_i,\lb, \dd)$ is a DGLA if and only if
\begin{itemize}
\item $Q_\mathfrak{g}^{2}=0$;
\item []or
    \item  $(Q_\mathfrak{g}^{(0)})^{2}=(Q_\mathfrak{g}^{(1)})^2 = 0$\; and\; $Q_\mathfrak{g}^{(0)}\circ Q_\mathfrak{g}^{(1)} +Q_\mathfrak{g}^{(1)}\circ Q_\mathfrak{g}^{(0)}= 0$.
\end{itemize}

\end{example}

\section{Morphisms of Lie $\infty$-algebroids and homotopies}
This section extends Section 3.4 of \cite{LLS} to the infinite dimensional setting.
\subsection{Morphisms of Lie $\infty$-algebroids}
\begin{definition}	
\label{def:morph}
	A \emph{Lie $\infty$-algebroid morphism (or Lie $\infty$-morphism)} from a Lie $\infty$-algebroid $(\E', Q_{\E'},\rho')$ to a Lie $\infty$-algebroid $(\E, Q_\E,\rho)$, is a co-morphism $\Phi\colon S^\bullet_\mathbb{K} (\mathcal E') \longrightarrow S^\bullet_\mathbb{K} (\mathcal E)$ such that
	\begin{equation}\label{def:LM}
	\Phi\circ Q_{\E'}=Q_{\E}\circ\Phi
	\end{equation}
	and
	\begin{enumerate}
	
	    \item  all Taylor coefficients are $\mathcal O $-multilinear,
	    \item which satisfies  $\rho\circ \Phi_0=\rho'$ on $\E'_{-1}$.
	\end{enumerate}
	Above, $\Phi_0=\Phi^{(0)}_{|_{\E'}}\colon \mathcal E'\to\E$ is the $0$-th Taylor coefficient of $\Phi$. Notice that item 1. is equivalent to saying $\Phi$ is $\mathcal{O}$-multilinear in the sense of Lemma \ref{Tay-C}.

\end{definition}	

\begin{remark} Recall from \cite{Stasheff} that Lie $ \infty$-algebra morphisms from $(S^\bullet_\mathbb{K}(\E), Q_\E)$ 
to $(S^\bullet_\mathbb{K}(\E'), Q_{\E'})$ are defined to be co-algebra morphisms $ \Phi\colon S^\bullet_\mathbb{K} (\mathcal E') \longrightarrow S^\bullet_\mathbb{K} (\mathcal E)$ that satisfy \eqref{def:LM}. Definition \ref{def:morph} adds two additional assumptions to turn a Lie $\infty$-algebr\underline{a} morphism into a Lie $\infty$-algebr\underline{oid} morphism.
\end{remark}

\begin{remark}
For a Lie $\infty$-algebroid morphism $\Phi\colon S^\bullet_\mathbb{K}(\mathcal E')\longrightarrow S^\bullet_\mathbb{K}(\mathcal E)$, the $0$-th Taylor coefficient $\Phi_0\colon(\E',\ell'_1)\longrightarrow(\E,\ell_1)$ of $\Phi$ is the chain map, that is, the following diagram commutes 
\begin{equation}
\xymatrix{\cdots\ar[r]&\E_{-3}'\ar[d]^{\Phi_0}\ar[r]^{\ell_1'}&\E_{-2}'\ar[d]^{\Phi_0} \ar[r]^{\ell_1'}&\E_{-1}'\ar[d]^{\Phi_0}\ar[r]^{\rho'}&\mathrm{Der}(\mathcal{O})\ar[d]^{\mathrm{id}}\\
\cdots\ar[r]&\E_{-3}\ar[r]^{\ell_1}&\E_{-2} \ar[r]^{\ell_1}&\E_{-1}\ar[r]^{\rho}&\mathrm{Der}(\mathcal{O})}
\end{equation}
namely,  $\Phi_0\circ \ell_1'=\ell_1\circ \Phi_0$.\\

Also, by going a bit further, it satisfies for all $x,y \in \E'$
 \begin{equation}
\Phi_{0}\circ\ell'_2(x,y)+\Phi_{1}\circ\ell_1'(x\odot y)=\ell_1\circ \Phi_{1}(x\odot y)+\ell_2(\Phi_{0}(x),\Phi_{0}(y)).
\end{equation}	
\end{remark}

\begin{example}
Let $(A, \lb_A, \rho_A)$ and $(B,\lb_B, \rho_B)$ be two Lie algebroids over a manifold $M$. A Lie algebroid morphism $\phi\colon A\longrightarrow B$ over the identity \cite{Mackenzie} induces a Lie $\infty$-algebroid morphism $S^\bullet(\phi)\colon S^\bullet(\Gamma(A))\rightarrow S^\bullet(\Gamma(B))$ in the section level: where corresponding co-algebra morphism $S^\bullet(\phi)$ is defined as $$a_1\cdots\cdot a_n \mapsto \phi(a_1)\cdots\cdot\phi(a_n),$$ for all $a_1,\ldots, a_n \in A$. 
\end{example}

\begin{example}{\it{Lie $\infty$-morphisms of Differential Graded Lie Algebras.}}\label{ex:dgla-morph}
Let us consider $(\mathfrak{g},\lb_\mathfrak{g}, \dd_\mathfrak{g})$ and $(\mathfrak{h},\lb_\mathfrak h, \dd_\mathfrak h)$ two differential graded Lie algebras (see Example \ref{ex:AGLA}), where $Q_\mathfrak{g}$ and $Q_\mathfrak{h}$ denote their respective associated co-derivations. Let $\Phi\colon (S^\bullet(\mathfrak{g}[1]),Q_\mathfrak{g})\longrightarrow (S^\bullet(\mathfrak{h}[1]),Q_\mathfrak{h})$ be a Lie $\infty$-morphism, i.e. one has \begin{equation}\label{eq:dgla-morph}
    \Phi\circ Q_\mathfrak g = Q_\mathfrak{h}\circ \Phi.
\end{equation}In particular, $\Phi_0 \circ \dd_\mathfrak g = \dd_\mathfrak h\circ \Phi_0$. Let us write down what the restriction of Equation \eqref{eq:dgla-morph} to low Taylor coefficients: Let $x,y\in \mathfrak{g}$ be two homogeneous elements. One has, \begin{align*}
    \Phi(x\cdot y)&=\Phi_1(x\cdot y) +\Phi_0(x)\cdot \Phi_0(y),\\ \Phi(x)&=\Phi_0(x).
\end{align*}A direct computation of the LHS of Equation \eqref{eq:dgla-morph} applied to $x\cdot y$ gives,

\begin{align*}
  \Phi\circ Q_\mathfrak{g}(x\cdot y)&= \Phi\left(-\dd_\mathfrak g(x)\cdot y)-(-1)^{(|x|-1)(|y|-1)}\dd_\mathfrak g(y)\cdot x +(-1)^{|x|}[x,y]_\mathfrak g\right)\\&=-\Phi_1(\dd_\mathfrak g(x)\cdot y) -\Phi_0(\dd_\mathfrak g(x))\cdot \Phi_0(y)-(-1)^{(|x|-1)(|y|-1)}\left(\Phi_1(\dd_\mathfrak g(y)\cdot x) -\Phi_0(\dd_\mathfrak g(y))\cdot \Phi_0(x)\right) \\&\quad+ (-1)^{|x|}\Phi_0([x,y]_\mathfrak g)
\end{align*}Also, the RHS of Equation \eqref{eq:dgla-morph} applied to $x\cdot y$ gives,
\begin{align*}
    Q_\mathfrak{h}\circ\Phi(x\cdot y)&= Q_\mathfrak h(\Phi_1(x\cdot y) +\Phi_0(x)\cdot \Phi_0(y))\\&=-\dd_\mathfrak h(\Phi_1(x))\cdot y-\dd_\mathfrak h(\Phi_0(x))\cdot\Phi_0(y) -(-1)^{(|x|-1)(|y|-1)}\dd_\mathfrak h(\Phi_0(y))\cdot\Phi_0(x)\\&\quad+(-1)^{|x|}[\Phi_0(x), \Phi_0(y)]_\mathfrak{h}
\end{align*}Since both sides are equal, and $\Phi_0 \circ \dd_\mathfrak g=\dd_\mathfrak h\circ \Phi_0$ we obtain,
\begin{align*}
    \Phi_0([x,y]_\mathfrak{g})-[\Phi_0(x),\Phi_0(y)]_\mathfrak{h}=(-1)^{(\lvert x\rvert -1)\lvert y\rvert)}\Phi_1(\dd_\mathfrak g(y)\cdot x)-(-1)^{\lvert x\rvert}\left(\Phi_1(\dd_\mathfrak{g}(x)\cdot y)-\dd_\mathfrak{h}(\Phi_1(x\cdot y))\right).
\end{align*}
\end{example}
\subsection{Homotopies}\label{sec:Homtopies1}
In this section, we define homotopy between Lie $\infty$-morphisms, and between Lie $\infty$-algebroids. This extends \cite{LLS} from finite dimensional $Q$-manifolds to arbitrary Lie $\infty $-algebroids.

\subsubsection{A practical definition}
Let us consider  $(\Omega^{\bullet}([a,b]),\wedge, \dd_{dR})$ the differential graded algebra made of the forms on $[a,b]$ together with the wedge product and  the Rham differential. If we denote by $t$ the coordinate on $[a,b]$, we have $\Omega^{\bullet}([a,b])= C^\infty([a,b])\oplus C^\infty([a,b])\,dt$. We equip $\Omega^{\bullet}([a,b])$ with a co-associative co-algebra structure $\Delta\colon \Omega^{\bullet}([a,b])\rightarrow \Omega^{\bullet}([a,b])\otimes_{\Omega^{\bullet}([a,b])} \Omega^{\bullet}([a,b]) $, given by $\Delta_\Omega(1)= 1\otimes 1$, where we extend by $\Omega^{\bullet}([a,b])$-linearity. \\

Let $(\E',Q_{\E'},\rho')$ and $(\E,Q_{\E},\rho)$ be Lie $\infty$-algebroids over $\mathcal{O}$. 

\begin{lemma}\label{lemma:dg-colagebra}
The triplet $$\left( S_\mathbb{K}^\bullet(\E)\otimes_{\mathbb{K}}\Omega^{\bullet}([a,b]), \bar{\Delta}=(\mathrm{id}\otimes \tau \otimes \mathrm{id})\circ \Delta\boxtimes \Delta_\Omega, \, \partial= Q_{\E'}\otimes \mathrm{id}+\mathrm{id}\otimes \dd_{{dR}}\right)$$ is a differential graded co-algebra.
\end{lemma}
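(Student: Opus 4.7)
The plan is to verify the three required properties (coassociativity, cocommutativity of $\bar\Delta$, and that $\partial$ is a square-zero co-derivation) by reducing each to the corresponding properties already known for the factors $(S^\bullet_\mathbb{K}(\E),\Delta,Q_{\E'})$ and $(\Omega^\bullet([a,b]),\Delta_\Omega,\dd_{dR})$. This is the standard \emph{tensor product of differential graded co-algebras}; the content of the lemma is thus essentially bookkeeping, but the Koszul signs must be handled with care since both $Q_{\E'}$ and $\dd_{dR}$ have degree $+1$.

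First I would check that $\bar\Delta$ is a coassociative cocommutative coproduct. The map $\mathrm{id}\otimes\tau\otimes\mathrm{id}$ is precisely the braiding isomorphism needed to turn $\Delta\boxtimes\Delta_\Omega$ into a coproduct on the tensor product; coassociativity of $\bar\Delta$ follows from coassociativity of $\Delta$ and $\Delta_\Omega$ together with the naturality of $\tau$ (equivalently, the hexagon axiom of the symmetric braiding). Cocommutativity reduces to the cocommutativity of each factor combined with $\tau\circ\tau=\mathrm{id}$ and the fact that odd--odd swaps produce the sign absorbed by Koszul's rule. This step is already stated in the preceding proposition on tensor products of co-algebras, so I would simply invoke it.

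Next I would show $\partial^2=0$. Expanding,
\begin{equation*}
\partial^2=(Q_{\E'}\otimes\mathrm{id}+\mathrm{id}\otimes\dd_{dR})^2=Q_{\E'}^2\otimes\mathrm{id}+\mathrm{id}\otimes\dd_{dR}^2+Q_{\E'}\otimes\dd_{dR}+(Q_{\E'}\otimes\dd_{dR})\circ\sigma,
\end{equation*}
where the last two terms come from the two orders of composition and $\sigma$ is the Koszul sign $(-1)^{1\cdot 1}=-1$ picked up when $\mathrm{id}\otimes\dd_{dR}$ is applied after $Q_{\E'}\otimes\mathrm{id}$ on an odd element of $S^\bullet_\mathbb{K}(\E)$. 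The two cross terms cancel exactly because both operators have degree $+1$, and the first two terms vanish by $Q_{\E'}^2=0$ (Proposition~\ref{co-diff}) and $\dd_{dR}^2=0$.

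Finally I would verify the co-Leibniz identity for $\partial$ with respect to $\bar\Delta$. Since $Q_{\E'}$ is a co-derivation of $\Delta$ and $\dd_{dR}$ is a co-derivation of $\Delta_\Omega$, the operator $\partial$ is a co-derivation of $\Delta\boxtimes\Delta_\Omega$ on the naked tensor product (this is a general fact: sums of co-derivations of the factors, extended by $\mathrm{id}$, give co-derivations of the tensor-product coproduct). Composing with $\mathrm{id}\otimes\tau\otimes\mathrm{id}$ to obtain $\bar\Delta$ again only reshuffles factors and introduces Koszul signs that match those in the co-Leibniz rule. The main obstacle in writing this carefully is keeping track of those signs when $\tau$ acts between an odd element of $S^\bullet_\mathbb{K}(\E)$ and the one-form $dt\in\Omega^1([a,b])$; I would record the sign conventions once at the outset and then let the braiding axioms handle the rest.
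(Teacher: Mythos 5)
Your proposal is correct, and the overall strategy coincides with the paper's: everything is reduced to the corresponding properties of the two factors $(S^\bullet_\mathbb{K}(\E),\Delta,Q_{\E})$ and $(\Omega^\bullet([a,b]),\Delta_\Omega,\dd_{dR})$. The one place where you genuinely diverge is the co-Leibniz identity. You invoke the general principle that $Q\otimes\mathrm{id}+\mathrm{id}\otimes\dd_{dR}$ is automatically a co-derivation of the tensor-product coproduct and that conjugating by the braiding $\mathrm{id}\otimes\tau\otimes\mathrm{id}$ preserves this; the paper instead verifies the identity by an explicit computation in Sweedler notation, evaluating $\bar\Delta\circ\partial$ and $(\partial\boxtimes\mathrm{id}+\mathrm{id}\boxtimes\partial)\circ\bar\Delta$ separately on elements of the form $v\otimes\alpha$ and $v\otimes\alpha\,dt$ and matching the Koszul signs term by term. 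Your route is shorter and more conceptual, but it pushes exactly the delicate part --- the sign bookkeeping when $\tau$ swaps an odd element of $S^\bullet_\mathbb{K}(\E)$ past $dt$ --- into an unproved ``general fact''; the paper's computation is precisely the verification of that fact in this instance. Your treatment of $\partial^2=0$ via cancellation of the cross terms $(Q_{\E}\otimes\mathrm{id})\circ(\mathrm{id}\otimes\dd_{dR})+(\mathrm{id}\otimes\dd_{dR})\circ(Q_{\E}\otimes\mathrm{id})$ is the standard argument and matches what the paper delegates to its appendix on tensor products of complexes. If you were to write this up in full, I would ask you to either cite a precise reference for the tensor-product-of-dg-coalgebras statement with the sign conventions of this paper, or carry out the two-line Sweedler computation as the paper does.
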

\begin{proof}One should understand that for $\alpha\in C^{\infty}([a,b])$ and  for an homogeneous element  $v\in S_\mathbb{K}^\bullet(\E)$, 
\begin{align*}
    (Q_{\E}\otimes\text{id}+\text{id}\otimes \dd_{{dR}})(v\otimes \alpha)=Q_{\E}(v)\otimes\alpha+(-1)^{|v|}v\otimes \alpha'dt
\end{align*}also,

\begin{align*}
    (Q_{\E}\otimes\text{id}+\text{id}\otimes \dd_{{dR}})(v\otimes \alpha dt)&=Q_{\E}(v)\otimes\alpha dt + v\otimes \underbrace{\dd_{dR}(\alpha dt)}_{=0}\\&=Q_{\E}(v)\otimes\alpha dt.
\end{align*} 
Clearly, $\partial^2=0$, see the definition of tensor product of complexes, Appendix \ref{appendix:mod}. Let us check that $\partial$ is indeed a co-derivation w.r.t the co-product $\bar{\Delta}$. Take $\alpha\in C^\infty([a,b])$ and a homogeneous element  $v\in S_\mathbb{K}^\bullet(\E)$, we will use the Sweedler notation, $\Delta(v)=v_{(1)}\otimes v_{(2)}$, to avoid a long useless text. On one hand,

\begin{align*}
   \bar{\Delta}\circ \partial(v\otimes\alpha)&=(\mathrm{id}\otimes \tau \otimes \mathrm{id})\circ \Delta\boxtimes \Delta_\Omega(Q_{\E}(v)\otimes\alpha+(-1)^{|v|}v\otimes \alpha'dt)\\&=\mathrm{id}\otimes \tau \otimes \mathrm{id}\left(\Delta\circ Q_\E(v)\boxtimes \alpha\otimes 1 +(-1)^{|v|}\Delta(v)\boxtimes \alpha'dt\otimes 1\right)\\&=(\mathrm{id}\otimes \tau \otimes \mathrm{id})\circ \left( (Q_\E\otimes\mathrm{id}+\mathrm{id}\otimes Q_\E)\circ \Delta(v)\boxtimes \alpha \otimes 1+ (-1)^{|v|}\Delta(v)\boxtimes \alpha'dt\otimes 1\right)\\&= Q_\E(v_{(1)})\otimes \alpha\boxtimes v_{(2)}\otimes 1 +(-1)^{|v_{(1)}|} v_{(1)}\otimes\alpha\boxtimes Q_\E(v_{(2)})\otimes 1+\\&\hspace{7.8cm} (-1)^{|v|+|v_{(2)}|}v_{(1)}\otimes (\alpha'dt)\boxtimes v_{(2)}\otimes 1.
\end{align*}
 On the other hand, 
 
\begin{align*}
    (\partial\boxtimes \mathrm{id} + \mathrm{id}\boxtimes \partial)\circ \bar{\Delta}(v\otimes \alpha)&=(\partial\boxtimes \mathrm{id} + \mathrm{id}\boxtimes \partial)\circ (\mathrm{id}\otimes \tau \otimes \mathrm{id})(\Delta(v)\boxtimes \alpha\otimes 1)\\&= (\partial\boxtimes \mathrm{id}+ \mathrm{id}\boxtimes \partial)(v_{(1)}\otimes\alpha\boxtimes v_{(2)}\otimes 1)\\&=\partial(v_{(1)}\otimes\alpha)\boxtimes v_{(1)}\otimes 1+ (-1)^{|v_{(1)}|}v_{(1)}\otimes\alpha\boxtimes\partial(v_{(2)}\otimes 1)\\&= \left(Q_\E(v_{(1)}\otimes \alpha +(-1)^{|v_{(1)}|}v_{(1)}\otimes\alpha'dt)\right)\boxtimes v_{(2)}\otimes 1+\\&\hspace{6cm}(-1)^{|v_{(1)}|}v_{(1)}\otimes \alpha\boxtimes Q_\E(v_{(2)})\otimes 1.
\end{align*}
Both sides are obviously equal. Likewise, a straightforward computation shows that $\bar{\Delta}\circ \partial(v\otimes \alpha dt)=(\partial\boxtimes \mathrm{id} + \mathrm{id}\boxtimes \partial)\circ \bar{\Delta}(v\otimes \alpha dt)$. Hence

 $$\bar{\Delta}\circ \partial=(\partial\boxtimes \mathrm{id} + \mathrm{id}\boxtimes \partial)\circ \bar{\Delta}$$ on $S_\mathbb{K}^\bullet(\E)\otimes_{\mathbb{K}}\Omega^{\bullet}([a,b])$.
\end{proof}

\begin{remark}
Elements of degree $k$ of $S_\mathbb{K}^\bullet(\E)\otimes_{\mathbb{K}}\Omega^{\bullet}([a,b])$ are $\mathbb{K}$-linear combinations of elements of the form $v\otimes \alpha$ and $  w\otimes \beta dt$, with $\alpha,\beta\in C^{\infty}([a,b])$ and $v\in S_\mathbb{K}^\bullet(\E)_{|k} $ and $w\in S_\mathbb{K}^\bullet(\E)_{|k-1} $. 
The latter can be seen as maps,  $t\in [a,b]\mapsto \alpha(t)v\otimes 1$ and $t\in [a,b]\mapsto \beta(t)w\otimes dt$. Hence, elements of degree $k$ of this complex can be considered as element of $S_\mathbb K(\E)\otimes_\mathbb{K} \Omega^\bullet([a,b])$  that depend on $t$, i.e. as  $t\in [a,b]\mapsto v_t\otimes 1+ w_t\otimes dt$, with  $v_t\in S_\mathbb{K}^\bullet(\E')_{|k} $ and $w_t\in S_\mathbb{K}^\bullet(\E')_{|k-1}$.
\end{remark}

The following is a temporary definition. 
It  will be generalized later to another more practical for gluing homotopies.

\begin{definition}\label{def:general-idea-homotopy}
   A \emph{homotopy} that joins two Lie $\infty$-algebroid morphisms $\Phi,\Psi\colon S_\mathbb{K}^\bullet(\E')\rightarrow S_\mathbb{K}^\bullet(\E)$ is the datum made of an interval $[a,b]\subset \mathbb{R}$ and a chain map\begin{align*}
    (S_\mathbb{K}^\bullet(\E'), Q_{\E'})&\stackrel{\mathcal{H}}{\longrightarrow}( S_\mathbb{K}^\bullet(\E)\otimes_\mathbb{K}\Omega^{\bullet}([a,b]), Q_{\E}\otimes \text{id}+\text{id}\otimes \dd_{{dR}})\\v&\longmapsto \left(t\in[a,b]\mapsto J_t(v)\otimes 1-(-1)^{|v|}H_t(v)\otimes d t\right)
\end{align*}
    \begin{enumerate}
        \item which is a co-algebra morphism, 
        
        \item and that coincides with $\Phi$ and $\Psi$ at $t=a$ and $b$ respectively, i.e. for all $v\in S_\mathbb{K}^\bullet(\E')$, one has $\mathcal{H}(v)(a)=\Phi(v)\otimes 1$ and $\mathcal{H}(v)(b)=\Psi(v)\otimes 1$.
    \end{enumerate} 
\end{definition}

\begin{remark}In the definition above, the map $\mathcal{H}$ induces for every $t\in [a,b]$ two different $\mathcal{O}$-multilinear maps $$\begin{cases} J_t\colon S_\mathbb{K}^\bullet(\E')\longrightarrow  S_\mathbb{K}^\bullet(\E)\\H_t\colon S_\mathbb{K}^\bullet(\E')\longrightarrow  S_\mathbb{K}^\bullet(\E).
     \end{cases}.$$Since $\mathcal{H}$ is of degree $0$, one of the maps is of degree zero and the other one of degree $-1$ respectively. By using respectively the property of co-algebra morphisms and chain map  property,  we obtain the following for every $t\in[a,b]$: 
\begin{enumerate}

    \item Let $\Bar{\Delta}$ be the co-product on $S_\mathbb{K}^\bullet(\E)\otimes_{\mathbb K}\Omega^{\bullet}([a,b])$ like in Lemma \ref{lemma:dg-colagebra}. We have, $$\Bar{\Delta}\circ \mathcal{H}(v)=\mathcal{H}\boxtimes \mathcal{H}\circ\Delta'(v).$$ Let $v=v_1\cdots v_n\in S^\bullet_\mathbb{K}(\E')$, a direct computation of  gives us\begin{align*}
       \Delta\boxtimes \Delta_\Omega\circ\mathcal{H}(v)(t)&=\Delta\circ J_t(v)\boxtimes 1\otimes 1 -(-1)^{|v|}\Delta\circ H_t(v)\boxtimes dt\otimes 1.
    \end{align*}
 Let us use the Sweedler notation just like in Lemma \ref{lemma:dg-colagebra} to compute  $\mathcal{H}\boxtimes \mathcal{H}\circ\Delta'(v)(t)$. We have,
 \begin{align*}
     \mathcal{H}\boxtimes \mathcal{H}\circ\Delta'(v)(t)&=\mathcal{H}\boxtimes \mathcal{H}(v_{(1)}\otimes v_{(2)})(t)\\&=\mathcal{H}(t)(v_{(1)})\boxtimes \mathcal{H}(v_{(1)})(t)\\&=\left(J_t(v_{(1)})\otimes 1-(-1)^{|v_{(1)}|}H_t(v_{(1)})\otimes dt\right)\boxtimes\left(J_t(v_{(2)})\otimes 1-(-1)^{|v_{(2)}|}H_t(v_{(2)})\otimes dt\right)
 \end{align*}
 

Hence,  \begin{align*}
    (\mathrm{id}\otimes \tau \otimes \mathrm{id})^{-1}\circ(\mathcal{H}\otimes& \mathcal{H})\circ\Delta'(v)=\\&(J_t\otimes J_t)\circ \Delta'(v)\boxtimes 1\otimes 1 + \cancel{H_t\otimes H_t\circ\Delta'(v)\boxtimes dt\otimes dt}\\ &+ (-1)^{|v|}\left(H_t\otimes J_t\circ\Delta'(v) +J_t\otimes H_t\circ \Delta'(v)\right)\boxtimes {dt\times 1}
    \end{align*}
    By equating both sides, we obtain equations that say that $J_t$ is  a co-morphism and $H_t$ a $J_t$-co-derivation.
    
    \item and for any homogeneous element  $v\in S^\bullet_\mathbb{K}(\E')$ we have: in one side \begin{align*}
        \mathcal{H}\circ Q_{\E'}(v)(t)=J_t\circ Q_{\E'}(v)\otimes 1-(-1)^{|v|+1}H_t(v)\circ Q_{\E'}(v)\otimes dt.   
            \end{align*}

     to the other side\footnote{It should be understood that e.g,  $(Q_{\E}\otimes\text{id}+\text{id}\otimes \dd_{{dR}})(\alpha(t)v\otimes 1)=\left(Q_{\E}(
     v)\otimes \alpha+v\otimes \dd_{{dR}}\alpha\right)(t)$.},   \begin{align*}
         (Q_{\E}\otimes\text{id}+\text{id}\otimes \dd_{{dR}})\circ\mathcal{H}(v)(t)&=(Q_{\E}\otimes \text{id}+\text{id}\otimes \dd_{{dR}})\circ\left(J_t(v)\otimes 1-(-1)^{|v|}H_t(v)\otimes d t\right)\\&=Q_{\E}\circ J_t(v)\otimes 1+ (-1)^{|v|}\frac{d J_t}{dt}(v)\otimes dt - (-1)^{|v|}Q_\E\circ H_t(v)\otimes dt
     \end{align*}
By equating both sides we see that $J_t$ and $H_t$ satisfy the following  condition
\begin{equation}\label{eq:hom-reformulation}
    \begin{cases}J_t\circ Q_{\E'}(v)=Q_{\E}\circ J_t(v)\\\frac{\dd J_t}{\dd t}(v)=Q_\E\circ H_t(v)+H_t\circ Q_{\E'}(v).
\end{cases}
\end{equation}
\end{enumerate}
\end{remark}
In addition, we have for $v\in \E_{-1}'$ that \begin{equation*}\label{eq-diffRecursion}
		\frac{\dd J_t^{(0)}}{\dd t}(v)=Q_{\E}^{(0)}\circ H_t^{(0)}(v)+\underbrace{H_t^{(0)}\circ Q_{\E'}^{(0)}}_{=0}
		(v)\end{equation*}and 
		
		\begin{align} \nonumber J_t^{(0)}(v)&=\Phi^{(0)}(v)+\int_a^t\ell_1\circ H_s^{(0)}(v)\dd s\\ \label{sol}&=\Phi^{(0)}(v)+\ell_1\circ\int_a^t
		H_s^{(0)}(v)\dd s\end{align}
This implies that $\rho\circ J_t^{(0)}(v)=\rho\circ \Phi^{(0)}(v)=\rho'(v)$, since $\rho\circ \ell_1=0$.\\
		
Gluing homotopies as defined in Definition \ref{def:general-idea-homotopy} is however not so easy. We can reformulate the definition of homotopies between Lie $\infty$-morphisms in the following manner. Before we do this, we would like to fix some vocabulary.\\

Let us recall some facts on vector-valued functions. Let $V$ be a vector space. Unless a topology on $V$ is chosen, the notion of $V$-valued continuous or differentiable or smooth function, and the concept of a limit on an interval  $ I =[a,b] \subset \mathbb R$ do not make any sense. However, we can always define the following  notion

\begin{definition}\label{def:piecewise}
   A vector-valued map $\gamma \colon I \longrightarrow V$ is said to be a   \emph{piecewise rational path on $I$} if there exists a finite increasing sequence $a=t_0 \leq \dots \leq t_N=b   $ of \emph{gluing points}, such that for all $i=0, \dots, N-1$  the restriction $\gamma^i$ of $\gamma$ to $[t_i,t_{i+1}] $ is of the form  $$\gamma^i(t)=\sum_{j=1}^n\beta^i_j(t) v_j,\quad \text{for some}\; n\in\mathbb{N}$$ where for every $j=1,\ldots,n$, $v_j \in V$ and $\beta^i_j\colon I\rightarrow \mathbb{R}$ a real rational function on $[t_i,t_{i+1}]$ that has no pole on $[t_i,t_{i+1}]$.

   \begin{enumerate}
       \item We say that $\gamma$ is \emph{continuous} on $I$, if for all $i=1,\ldots, N-1$, $\gamma_i$  and $ \gamma_{i+1}$ coincide at the gluing point $ t_{i+1}$.
       
       \item When $V$ is a space of linear maps between the vector spaces $S$ and $T$,  we shall say that a $V$-valued map $\Xi\colon I\rightarrow V$ is a \emph{piecewise rational (continuous)} if the map  $(t\in I\mapsto \Xi_t(s))$ is a piecewise rational (continuous) $T$-valued path for all  $s \in S$.
       
       \item We define the limit of $\gamma$ at $u\in [t_i,t_{i+1}]$ to be equal to $$\sum_{j=1}^n\lim_{t\to u}\beta^i_j(t) v_j$$
       
       when, for every $j=1,\ldots, n$, $\beta^i_j$ admits a limit a $u$. One can assume that $\{v_1, v_2,v_3\ldots\,\}$ is a basis of $V$.
   \end{enumerate}
   \end{definition}

 Let us recall the following.
\begin{remark}\label{rmk:Riemann-integral}
It is a very classical fact that the integral of a piecewise-$C^1$ function $\beta \colon I\longrightarrow \mathbb{R}$ on a compact interval $I=[a,b]\subset \mathbb{R}$ which is subordinate to a subdivision $a=t_0<\cdots<t_N=b$  admits  primitives which  are piecewise-$C^1$ on the same subdivision  $a=t_0<\cdots<t_N=b$. An important fact is that continuous piecewise-$C^1$ functions $\beta \colon I\longrightarrow \mathbb{R}$, i.e. piecewise-$C^1$ functions which are also continuous (even at the junction points) admit piecewise continuous derivatives $\beta'(t)$,  and  $\beta(b)-\beta(a)=\int_a^b\beta'(t)dt$.
\end{remark}
   
Definition \ref{def:piecewise} has this important consequence.
\begin{lemma} \label{lem:primitivesDerivatives}
A piecewise rational continuous function $\gamma\colon I\rightarrow V$ is differentiable at every point which is not a gluing point, and the latter is again  piecewise rational on $I$. 

Conversely, every piecewise rational functions admit a piecewise rational continuous primitive, unique up to a constant. 
\end{lemma}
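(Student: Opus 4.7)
The lemma comprises two independent assertions, and I would treat them in turn using only elementary piecewise calculus, exploiting the explicit form of the functions involved.

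For the differentiation direction, I would fix a subdivision $a = t_0 < \cdots < t_N = b$ subordinating $\gamma$, so that $\gamma|_{[t_i,t_{i+1}]}(t) = \sum_{j=1}^n \beta_j^i(t)\, v_j$, with each $\beta_j^i = P_j^i/Q_j^i$ a rational function whose denominator $Q_j^i$ has no root in $[t_i,t_{i+1}]$. Since $Q_j^i$ is continuous and non-vanishing on a compact subinterval, $\beta_j^i$ is $C^\infty$ on $(t_i,t_{i+1})$, and the quotient rule gives $(\beta_j^i)' = (P_j^i{}' Q_j^i - P_j^i Q_j^i{}')/(Q_j^i)^2$, a rational function whose denominator $(Q_j^i)^2$ is still pole-free on $[t_i,t_{i+1}]$. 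Hence $\gamma'(t) = \sum_{j=1}^n (\beta_j^i)'(t)\, v_j$ exists at every $t \notin \{t_0,\dots,t_N\}$ and is piecewise rational relative to the same subdivision.

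For the converse, I would work piece by piece. On each $[t_i,t_{i+1}]$, partial-fraction decomposition of the pole-free rational function $\beta_j^i$ and term-by-term integration yield a primitive $B_j^i$ of $\beta_j^i$; setting $\Gamma^i(t) = \sum_{j=1}^n B_j^i(t)\, v_j$, I would then build a global primitive $\Gamma\colon I \to V$ by successively adjusting additive constants $C^i \in V$: pick $C^0$ arbitrarily, and define $C^{i+1}$ inductively by imposing $\Gamma^i(t_{i+1}) + C^i = \Gamma^{i+1}(t_{i+1}) + C^{i+1}$, which is a single vector equation in the unknown $C^{i+1}$ and forces continuity at each gluing point. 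The resulting $\Gamma$ is continuous, piecewise rational on the chosen subdivision, and satisfies $\Gamma'(t) = \gamma(t)$ at every non-gluing point. Uniqueness up to a constant then falls out of the first part: two such primitives $\Gamma_1,\Gamma_2$ differ by a continuous piecewise rational map with vanishing derivative off the gluing points, so on each open subinterval the difference is constant, and continuity at each $t_i$ forces a single constant globally.

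The main subtlety, and the only genuinely non-routine step, is ensuring that the primitives $B_j^i$ still belong to the admissible class: strictly speaking antiderivatives of pole-free rational functions may involve logarithmic or arctangent terms, so either one implicitly enlarges the building blocks of Definition~\ref{def:piecewise} to include such elementary pole-free functions (which is harmless for all later homotopy-gluing arguments), or one invokes the classical Riemann-integral construction of piecewise $C^1$ primitives recalled in Remark~\ref{rmk:Riemann-integral} and verifies by inspection of the partial-fraction expansion that the resulting primitive retains the structural form needed in the sequel. Once this point is settled, the rest of the proof is a straightforward piecewise calculus argument.
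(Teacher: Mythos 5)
Your treatment of the differentiation half is correct and is essentially the (one-line) argument the paper has in mind: derivatives of pole-free rational functions are again pole-free rational functions on the same subintervals, so the derivative exists off the gluing points and stays in the class. Your construction of a primitive by integrating piece by piece and adjusting the constants $C^i$ inductively to enforce continuity at each gluing point, together with the uniqueness argument deduced from the first half, is also the intended route; the paper's own proof says no more than that the claim is ``immediate''.

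The subtlety you flag at the end is, however, not cosmetic: it is a genuine gap in the second assertion of Lemma \ref{lem:primitivesDerivatives} as literally stated, and the paper's proof does not address it either. The scalar function $\beta(t)=1/(1+t^2)$ has no pole on any real interval, yet its primitive $\arctan t$ is not rational on any nondegenerate subinterval (a rational function agreeing with $\arctan$ on an interval would equal it everywhere, by analyticity), so the class of piecewise rational paths of Definition \ref{def:piecewise} is simply not closed under taking primitives, no matter how the subdivision is refined. Your two proposed repairs are the right ones: either enlarge the admissible building blocks to a class of elementary pole-free functions closed under both differentiation and integration --- which is harmless, since the later gluing arguments only use continuity, piecewise differentiability and the fundamental theorem of calculus as in Remark \ref{rmk:Riemann-integral} --- or check that in the actual applications (Proposition \ref{prop:justify} and the proof of Theorem \ref{th:universal}) the integrands occurring at each recursive step are polynomial in $t$ on each piece, or become rational after the explicit reparametrizations $t\mapsto t/(1-t)$, so that the primitives needed there do remain in the class. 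Until one of these fixes is made explicit, neither your proof nor the paper's establishes the lemma in the generality in which it is stated.
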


\begin{proof}
The derivative of $\gamma$ can be defined in the usual way using the item 3 of Definition \ref{def:piecewise}. Likewise, for their primitives. The proof is then immediate.
\end{proof}

We can now give the following definition.
\begin{definition}
   A family $ ( J_t)_{t \in I}$ of co-algebra morphisms are said to be \emph{piecewise rational continuous } if its Taylor coefficients $  J_t^{(n)}$ are piecewise rational continuous for all $n\in\mathbb{N}$.

   For such a family $( J_t)_{t\in I}$, a family $(H_t)_{t\in I} $ made of $  J_t$-co-derivations is said to be \emph{piecewise rational} if all its Taylor coefficients are.
\end{definition}

 We are now ready to define formulate the definition of homotopies as follows and extend Definition 3.53 in \cite{LLS} to the infinite rank case.

\begin{definition}
\label{def:homotopy}
	Let $\Phi$ and $\Psi$ be Lie $\infty$-algebroid morphisms from $(\E',Q_{\E'},\rho' )$ to $(\E, Q_{\E},\rho)$. A \emph{homotopy between} or that \emph{joins} $\Phi$ and $\Psi$ is a pair $( J_t , H_t)_{t\in[a,b]}$ consisting of:
	\begin{enumerate}
		\item  a piecewise rational continuous  path $t\mapsto J_t$ valued in Lie $\infty$-algebroid morphisms between $S^\bullet_\mathbb{K}(\E')$ and $S^\bullet_\mathbb{K}(\E)$ satisfying the boundary condition:
		$$\Phi_a = \Phi\quad
		\text{and}\quad
		\Phi_b = \Psi,$$
		\item a piecewise rational  path $t\mapsto H_t$, with $ H_t$ a $J_t$-co-derivations of degree $-1$ from $S^\bullet_\mathbb{K}(\E')$ to $S^\bullet_\mathbb{K}(\E)$, such that the
		following equation:
		\begin{equation}\label{eq-diff}
		\frac{\dd J_t}{\dd t}=Q_{\E}\circ H_t+H_t\circ Q_{\E'}
		\end{equation}
		holds for every $t \in ]a\,, b[$ where it is defined (that is, not a gluing point for the Taylor coefficients). More precisely, for every  $v \in S^{\leq n}_\mathbb{K} (\mathcal E')$, \begin{equation}\label{eq-diffv}
		\frac{\dd J_t}{\dd t}(v)=Q_{\E}\circ H_t(v)+H_t\circ Q_{\E'}(v)
		\end{equation}for all $t$ which is not a gluing point of the Taylor coefficient of $ J_t^{(k)},H_t^{(k)}$ for $k=0,\ldots,n$.
	\end{enumerate}
	
	When these conditions are satisfied, we  say that $\Phi$ and $\Psi$ are \emph{homotopy equivalent}, and we write  $\Phi\sim \Psi$.
\end{definition}
\begin{remark}
This clearly extends \eqref{eq:hom-reformulation}.
\end{remark}

\begin{remark}
In the above definitions, it is not required that the gluing points of the various Taylor coefficients $  J_t^{(n)}$ or $H^{(n)}_t $ to be the same for all $n\in\mathbb{N}_0$. 
\end{remark}

The following Proposition shows that the notion of homotopy given in Definition \ref{def:homotopy} implies the usual notion of homotopy between chain maps (see Appendix \ref{appendix:mod}).
\begin{proposition}\label{prop:homotpy-usual}
Let $\Phi$ and $\Psi$ be Lie $\infty$-algebroid morphisms from $(\E',Q_{\E'},\rho' )$ to $(\E, Q_{\E},\rho)$ which are homotopic. Then, \begin{equation}\label{eq:comp-homtopy}
    \Psi-\Phi=Q_{\E}\circ H+H\circ Q_{\E'}
\end{equation}for some $\mathcal{O}$-linear map $H\colon S_\mathbb{K}^\bullet(\E')\longrightarrow S^\bullet_\mathbb{K}(\E)$ of degree $-1$.
\end{proposition}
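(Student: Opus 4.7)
The plan is to set $H := \int_a^b H_t\, dt$, understood Taylor coefficient by Taylor coefficient, and then obtain the identity \eqref{eq:comp-homtopy} by integrating Equation~\eqref{eq-diff} from $a$ to $b$ using the fundamental theorem of calculus. The boundary conditions $J_a = \Phi$ and $J_b = \Psi$ will then produce the left hand side $\Psi - \Phi$.

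More precisely, first I would fix $n\in\mathbb{N}_0$ and work with the Taylor coefficients $H_t^{(n)}\colon S^{n+1}_\mathbb{K}(\E') \to \E$. By assumption, $t\mapsto H_t^{(n)}$ is a piecewise rational path, so for each $v\in S^{n+1}_\mathbb{K}(\E')$ the map $t\mapsto H_t^{(n)}(v)$ is a piecewise rational $\E$-valued path on $[a,b]$ in the sense of Definition~\ref{def:piecewise}. By Lemma~\ref{lem:primitivesDerivatives}, such a map admits a piecewise rational continuous primitive, and in particular a Riemann-type integral $\int_a^b H_t^{(n)}(v)\,dt \in \E$ is well defined. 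Define
\begin{equation*}
   H^{(n)}\colon S^{n+1}_\mathbb{K}(\E') \longrightarrow \E,\qquad v \longmapsto \int_a^b H_t^{(n)}(v)\,dt,
\end{equation*}
and let $H\colon S^\bullet_\mathbb{K}(\E') \to S^\bullet_\mathbb{K}(\E)$ be the $\mathcal{O}$-linear map whose $n$-th Taylor coefficient is $H^{(n)}$. Since each $H_t$ is a $J_t$-co-derivation of degree $-1$ whose Taylor coefficients are $\mathcal{O}$-multilinear (cf.\ the discussion after Lemma~\ref{Tay-C}), so is $H^{(n)}$, as $\mathcal{O}$-multilinearity is preserved under pointwise limits of real linear combinations. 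The degree of $H$ is $-1$ because each $H_t$ has degree $-1$.

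Next I would integrate Equation~\eqref{eq-diffv} in each Taylor coefficient. Fix $n$ and $v\in S^{\leq n+1}_\mathbb{K}(\E')$. The path $t\mapsto J_t^{(n)}(v)$ is piecewise rational continuous, and at every non-gluing point $t\in (a,b)$ we have
\begin{equation*}
   \frac{d}{dt}J_t^{(n)}(v) = \bigl(Q_\E \circ H_t\bigr)^{(n)}(v) + \bigl(H_t \circ Q_{\E'}\bigr)^{(n)}(v).
\end{equation*}
The right hand side is a finite sum of terms involving the $k$-th Taylor coefficients of $Q_\E$, $H_t$ and $Q_{\E'}$ for $k\leq n$, hence is itself piecewise rational. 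By Remark~\ref{rmk:Riemann-integral} applied to the piecewise rational continuous path $t\mapsto J_t^{(n)}(v)$, we can integrate from $a$ to $b$ and obtain
\begin{equation*}
   J_b^{(n)}(v) - J_a^{(n)}(v) = \int_a^b \Bigl(Q_\E \circ H_t + H_t\circ Q_{\E'}\Bigr)^{(n)}(v)\,dt.
\end{equation*}
Since $Q_\E$ and $Q_{\E'}$ are $t$-independent and the integration is componentwise and commutes with composition by such fixed linear maps (this is where we use that all the operators involved are $\mathbb{K}$-linear and act only on a finite-dimensional subspace of polynomial degrees $\leq n+1$ when applied to $v$), this equals $\bigl(Q_\E\circ H + H\circ Q_{\E'}\bigr)^{(n)}(v)$. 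Using $J_a = \Phi$, $J_b=\Psi$, and collecting all $n$, we obtain $\Psi - \Phi = Q_\E \circ H + H \circ Q_{\E'}$.

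The main obstacle, as I see it, is bookkeeping the discontinuities of $H_t$ at its gluing points: $H_t$ is only piecewise rational, not piecewise rational \emph{continuous}, so a priori $\int_a^b H_t\,dt$ could fail to relate cleanly to the values of a primitive at $a$ and $b$. The key point that rescues the argument is that $J_t$ \emph{is} continuous, so the primitives of its derivative (on each piece of the subdivision refining the gluing points of both $J_t^{(n)}$ and $H_t^{(n)}$) glue into a continuous function whose total variation from $a$ to $b$ is exactly $J_b^{(n)}(v)-J_a^{(n)}(v)$. In other words, we apply the fundamental theorem of calculus piecewise and telescope, using the continuity of $J_t$ at the interior gluing points; no hypothesis on the continuity of $H_t$ is needed.
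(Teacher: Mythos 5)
Your proof is correct and follows essentially the same route as the paper: both define $H:=\int_a^b H_t\,dt$ and obtain \eqref{eq:comp-homtopy} by integrating Equation \eqref{eq-diff} over $[a,b]$, using the continuity of $t\mapsto J_t$ (Remark \ref{rmk:Riemann-integral}) to apply the fundamental theorem of calculus piecewise despite the possible discontinuities of $H_t$. Your version merely spells out the Taylor-coefficient bookkeeping that the paper leaves implicit.
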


\begin{proof}
Take $( J_t , H_t)_{t\in[a,b]}$ a {homotopy between} $\Phi$ and $\Psi$ as in Definition \ref{def:homotopy}. By definition, $t\mapsto J_t$ is piecewise rational continuous, therefore it is continuous on $[a,b]$ (even at the junctions points), we can use Remark  \ref{rmk:Riemann-integral} and write
\begin{align*}
    \Phi_b-\Phi_a&=\int_a^b	\frac{\dd J_t}{\dd t}\,dt\\\Psi-\Phi&=\int_a^b\left( Q_{\E}\circ H_t+H_t\circ Q_{\E'}\right) dt\\&=Q_{\E}\circ \left(\int_a^b H_t\,dt\right) + \left(\int_a^b H_t\,dt\right)\circ Q_{\E'}.
\end{align*}
Thus, the $\mathcal{O}$-multilinear map $H:=\int_a^b H_t\,dt$  satisfies Equation \eqref{eq:comp-homtopy}.\end{proof}

\begin{definition}
   We say that  two Lie $\infty$-algebroids $(\E',Q_{\E'},\rho' )$ to $(\E, Q_{\E},\rho)$ over $\mathcal O$ are \emph{homotopy equivalent} or \emph{homotopic} if there exists two Lie $\infty$-algebroid morphisms
  \begin{equation*}
    \xymatrix{(\E',Q_{\E'},\rho' )\ar@<2pt>[r]^\Phi&(\E,Q_{\E},\rho )\ar@<2pt>[l]^\Psi}
\end{equation*} 
   such that  $\Phi\circ\Psi\colon (\E,Q_{\E},\rho )\rightarrow (\E, Q_{\E},\rho)$ and $\Psi\circ\Phi\colon (\E',Q_{\E'},\rho' )\rightarrow (\E', Q_{\E'},\rho')$ are homotopy equivalent to the identity map of respective space.\end{definition}

The following proposition justifies Definition \ref{def:homotopy}.

\begin{prop}
\label{prop:justify}
Let $\Phi $ be a Lie $\infty $-algebroid morphism from $(\E',Q_{\E'},\rho' )$ to $(\E,Q_{\E},\rho)$. For all $t\in [a\,,b]$, let $ H_t^{(n)}\colon S^{n+1}_\mathbb K (\mathcal E') \rightarrow ~\mathcal E$ be a family $\mathcal O $-multilinear piecewise rational maps indexed by $n\in\mathbb{N}_0$. Then,
\begin{enumerate}
    \item There exists a unique piecewise rational continuous family of co-algebra morphisms $ J_t $ such that
      \begin{enumerate}
          \item $J_a = \Phi $ 
          \item $( J_t,H_t )$ is a solution of the differential equation \eqref{eq-diff}, where $  H_t$ is the $ J_t $-co-derivation whose $n$-th Taylor coefficient is $H_t^{(n)} $ for all $ n \geq 0$.  
      \end{enumerate}
    \item Moreover, for all $ t \in [a,b] $, $(J_s, H_s)_{s\in[a,t]}$ is a homotopy that joins  $\Phi $ and $ J_t$.
 \end{enumerate}
\end{prop}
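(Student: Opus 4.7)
The plan is to construct $J_t$ recursively, one polynomial-degree at a time, by exploiting the fact that the differential equation \eqref{eq-diff}, once projected to $\E$, decouples into ODEs for the Taylor coefficients $J_t^{(n)}$. Applying $\mathrm{pr}$ to \eqref{eq-diffv} and restricting to $S^{n+1}_\mathbb{K}(\E')$ yields, for each $n \geq 0$, a first-order ODE
\[
\frac{d J_t^{(n)}}{dt} \;=\; \mathrm{pr}\circ(Q_\E \circ H_t + H_t \circ Q_{\E'})\big|_{S^{n+1}_\mathbb{K}(\E')}.
\]
The crucial observation is that, when expanded via the formula \eqref{ex:formulacoder} for the $J_t$-co-derivation $H_t$ and the analogous formula for $Q_\E$, the right-hand side involves only the data $H_t^{(0)},\ldots,H_t^{(n)}$ (given, piecewise rational in $t$), the fixed Taylor coefficients of $Q_\E$ and $Q_{\E'}$, and $J_t^{(0)},\ldots,J_t^{(n-1)}$. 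Indeed, in every summand of $\mathrm{pr}\circ Q_\E \circ H_t$ one factor $H_t^{(i_1-1)}$ must consume at least one argument, forcing any accompanying $J_t^{(i_j-1)}$ to have index $i_j - 1 \leq n-1$. By induction on $n$, once $J_t^{(0)},\ldots,J_t^{(n-1)}$ are piecewise rational continuous, the right-hand side is piecewise rational in $t$; Lemma \ref{lem:primitivesDerivatives} then produces a unique piecewise rational continuous primitive, and the initial condition $J_a^{(n)} = \Phi^{(n)}$ fixes the constant of integration. $\mathcal{O}$-multilinearity of $J_t^{(n)}$ is inherited from the data.

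The main obstacle is to verify that this family $J_t$ of co-algebra morphisms consists in fact of Lie $\infty$-algebroid morphisms, i.e.\ that $J_t\circ Q_{\E'} = Q_\E\circ J_t$ for all $t$ and that the anchor condition is preserved. The key idea is to introduce the $J_t$-co-derivation of degree $+1$
\[
\Xi_t \;:=\; J_t\circ Q_{\E'} \;-\; Q_\E\circ J_t,
\]
for which $\Xi_a = 0$ since $\Phi$ is a Lie $\infty$-algebroid morphism. Differentiating in $t$ and substituting \eqref{eq-diff}, the cross-terms $\pm\,Q_\E H_t Q_{\E'}$ cancel and the squares $Q_\E^2 = Q_{\E'}^2 = 0$ give
\[
\frac{d\Xi_t}{dt} \;=\; H_t\circ Q_{\E'}^2 \;-\; Q_\E^2\circ H_t \;=\; 0.
\]
At each polynomial-degree $n$, the Taylor coefficient $\mathrm{pr}\circ\Xi_t\big|_{S^{n+1}_\mathbb{K}(\E')}$ is a polynomial expression in the $J_t^{(k)}$ for $k \leq n$, hence piecewise rational continuous in $t$; its derivative vanishes on every piece, so continuity across gluing points and the vanishing initial value force $\Xi_t \equiv 0$ throughout $[a,b]$.

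For the anchor condition, restrict the ODE to $\E'_{-1}$ and apply $\rho$: since $Q_{\E'}^{(0)} = \ell'_1$ vanishes on $\E'_{-1}$ and $H_t^{(0)}$, being of degree $-1$, sends $\E'_{-1}$ into $\E_0 = 0$, the right-hand side is zero, so $\rho\circ J_t^{(0)}\big|_{\E'_{-1}}$ is constant, equal to $\rho\circ\Phi^{(0)}\big|_{\E'_{-1}} = \rho'$. This establishes part (1); uniqueness is built into the recursion via uniqueness of primitives together with the initial conditions. Part (2) is then immediate from Definition \ref{def:homotopy}: for any $t\in[a,b]$, the pair $(J_s,H_s)_{s\in[a,t]}$ is by construction a piecewise rational continuous path of Lie $\infty$-algebroid morphisms paired with piecewise rational $J_s$-co-derivations, satisfying \eqref{eq-diff} with boundary values $J_a = \Phi$ and endpoint $J_t$, which is precisely the data of a homotopy joining $\Phi$ and $J_t$.
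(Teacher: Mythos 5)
Your proof is correct and follows essentially the same route as the paper's: recursive integration of the Taylor coefficients, using that $J_t^{(n)}$ does not occur in the polynomial-degree-$n$ part of $Q_\E\circ H_t + H_t\circ Q_{\E'}$, followed by showing that the defect $J_t\circ Q_{\E'}-Q_\E\circ J_t$ has vanishing $t$-derivative (via $Q_\E^2=Q_{\E'}^2=0$) and vanishes at $t=a$. One small slip in the anchor argument: $H_t^{(0)}$ has degree $-1$, so it sends $\E'_{-1}$ into $\E_{-2}$, not into $\E_0=0$; the term $\rho\circ Q_\E^{(0)}\circ H_t^{(0)}$ on $\E'_{-1}$ still vanishes, but the correct reason is $\rho\circ\ell_1=0$ on $\E_{-2}$, which is exactly how the paper argues via Equation \eqref{sol}.
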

\begin{proof}
Let us show item 1. We claim that equation \eqref{eq-diff} is a differential equation that can be solved recursively. In polynomial-degree zero, it reads,\begin{equation}\label{eq-diffRecursion2}
		\frac{\dd J_t^{(0)}}{\dd t}=Q_{\E}^{(0)}\circ H_t^{(0)}+H_t^{(0)}\circ Q_{\E'}^{(0)}
		\end{equation}and \begin{equation}\label{sol2} J_t^{(0)}=\Phi^{(0)}+\int_a^t\left(Q_{\E}^{(0)}\circ H_s^{(0)}+H_s^{(0)}\circ Q_{\E'}^{(0)}\right)\dd s\end{equation} is defined for all $t\in[a,b]$. Also, $\frac{\dd}{\dd t} J_t^{(n+1)}\colon S^{n+2}(\E')\rightarrow\E$ is an algebraic expression of $Q_\E^{(0)},\ldots,Q_\E^{(n+1)}$, $Q_{\E'}^{(0)},\ldots, Q_{\E'}^{(n+1)}$ $ J_t^{(0)},\ldots, J_t^{(n)},H_t^{(0)},\ldots,H_t^{(n+1)}$. But $ J_t^{(n+1)}$ does not appear in the $(n+1)$-th Taylor coefficient of $Q_{\E}\circ H_t+H_t\circ Q_{\E'}$ by Equation \eqref{ex:formulacoder}. 
By Lemma \ref{lem:primitivesDerivatives}, there exists a unique piecewise rational continuous solution $ J_t^{(n+1)} $ such that $J_a^{(n+1)} = \Phi^{(n+1)}$. The construction of the Taylor coefficients of the co-algebra morphisms $  J_t$ then goes by recursion. Recursion formulas also show that $  J_t$ is unique.\\

\noindent
Let us show item 2. i.e. that $ J_t $ is a $\mathcal O $-multilinear chain map for all $ t \in [a,b]$: For the same reason as in Equation \eqref{sol}, Equation \eqref{sol2} implies that $\rho\circ{ J_t^{(0)}}_{|_{\E'}}=\rho'$. The function given by $$\Lambda_k(t)=(Q_{\E}\circ  J_t- J_t\circ Q_{\E'})^{(k)}\quad\text{for all}\quad t\in[a,b],\, k\in\mathbb{N}_0$$ are differentiable at all points $t$ except for a finitely many  $t \in [a,b]$ and  are piecewise rational continuous. The map $\frac{\dd J_t}{\dd t}$ is a Lie $\infty$-morphism because $ Q_\E^2=0$ and $ Q_{\E'}^2=0$, hence $\Lambda'(t)=0$. By continuity, $\Lambda_k(t)$ is constant over the interval $[a,b]$. Since $J_a=\Phi$ is a Lie $\infty$-algebroid morphism, we have $\Lambda_k(a)=0$. Thus, $\Lambda_k(t)=0$ and, $$Q_{\E}\circ  J_t= J_t\circ Q_{\E'},\quad \text{for all}\; t\in[a,b].$$ \end{proof}

Let us show that homotopy in the sense above defines an equivalence relation $\mathtt{\sim}$ between Lie $\infty$-morphisms. We have the following lemma.

\begin{lemma}\label{Homotopy-lemma}{
 A pair $( J_t , H_t)$ is a homotopy between Lie $\infty$-algebroid morphisms $J_a$ and $J_b $  if and only if for all rational function, $g\colon[a,b]\rightarrow[c,d]$ without poles on $[a,b]$, the pair $(J_{g(t)} , g'(t)H_{g(t)})$ is a homotopy between $ J_{g(a)}$ and $ J_{g(b)}$.}
 
\end{lemma}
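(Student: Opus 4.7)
The plan is to reduce everything to a chain-rule computation and a book-keeping check for piecewise rationality. The converse direction is immediate: specializing $g$ to be the identity $\mathrm{id}_{[a,b]}$ (which is rational, has no poles, and satisfies $g'(t)=1$) gives back the original pair $(J_t,H_t)$, so it suffices to prove the forward direction.

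First I would check the regularity conditions. Since $g\colon [a,b]\to [c,d]$ is rational with no poles on $[a,b]$, it is in particular continuous and piecewise monotonic. If $c=s_0<\cdots <s_N=d$ are the gluing points of a given Taylor coefficient $J_s^{(n)}$, then $g^{-1}(\{s_0,\dots,s_N\})\cap [a,b]$ is a finite subset of $[a,b]$, and between two consecutive such points the composition $J_{g(t)}^{(n)}$ is a $\mathbb{K}$-linear combination of products of rational functions of $t$ (namely the $\beta^i_j \circ g$), hence rational without poles on that subinterval. Continuity at the new gluing points is inherited from continuity of $t\mapsto J_t^{(n)}$ together with continuity of $g$. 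The same argument, combined with the obvious fact that the product of a rational function and a piecewise rational function is piecewise rational, shows that $g'(t)H_{g(t)}^{(n)}$ is piecewise rational for every $n\in\mathbb{N}_0$.

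Next I would verify the algebraic properties required by Definition~\ref{def:homotopy}. For each $t\in[a,b]$, $J_{g(t)}$ is a Lie $\infty$-algebroid morphism because it equals the morphism $J_s$ evaluated at $s=g(t)\in[a,b]$. Likewise $H_{g(t)}$ is a $J_{g(t)}$-co-derivation of degree $-1$; since the space of $J_{g(t)}$-co-derivations is a module over $\mathbb{K}$-valued functions of $t$, multiplying by the scalar $g'(t)$ preserves this property. The boundary conditions are trivially satisfied:
\[
J_{g(t)}\big|_{t=a}=J_{g(a)},\qquad J_{g(t)}\big|_{t=b}=J_{g(b)}.
\]

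Finally, the differential equation \eqref{eq-diff} for the pair $(J_{g(t)},\,g'(t)H_{g(t)})$ follows from the chain rule. At any $t\in\,]a,b[\,$ which is not a gluing point of $J_s^{(n)}$ (for the relevant $n$) under $s=g(t)$ and at which $g'$ is defined,
\begin{align*}
\frac{\mathrm{d}}{\mathrm{d}t} J_{g(t)}
&= g'(t)\,\left.\frac{\mathrm{d}J_s}{\mathrm{d}s}\right|_{s=g(t)}\\
&= g'(t)\bigl(Q_\E\circ H_{g(t)}+H_{g(t)}\circ Q_{\E'}\bigr)\\
&= Q_\E\circ\bigl(g'(t)H_{g(t)}\bigr)+\bigl(g'(t)H_{g(t)}\bigr)\circ Q_{\E'},
\end{align*}
where the middle equality uses Equation \eqref{eq-diff} for $(J_s,H_s)$ at $s=g(t)$, and the last equality uses $\mathbb{K}$-linearity of $Q_\E$ and $Q_{\E'}$. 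The exceptional set of $t$'s is finite, so this verifies Definition~\ref{def:homotopy}. The only mildly technical step is the bookkeeping of gluing points in the piecewise rationality check, which is handled by the fact that $g^{-1}$ sends finite sets to finite sets.
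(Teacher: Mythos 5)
Your proof is correct and follows essentially the same route as the paper's: the forward direction is the chain-rule computation applied to Equation \eqref{eq-diff}, and the converse is obtained by taking $g=\mathrm{id}$. The only difference is that you spell out the piecewise-rationality and boundary bookkeeping, which the paper leaves implicit as a ``straightforward computation.''
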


\begin{proof}
 Let $g\colon [a,b]\rightarrow[c,d]$  be a rational function without poles on $[a, b]$. A straightforward computation  gives:
 $$
 \begin{array}{rrcll}
    &\dfrac{\dd J_t}{\dd t}&=&Q_\E\circ H_t+H_t\circ Q_{\E'}&\hspace{.2cm} \hbox{(by definition}) \\ 
    \Rightarrow&\frac{\dd J}{\dd t}(g(t))&=&Q_\E\circ H_{g(t)}+H_{g(t)}\circ Q_{\E'}& \hspace{.2cm}  \hbox{(by replacing $t$ by $g(t)$)} \\ \Rightarrow& \frac{\dd J_{g(t)}}{\dd t}&=&Q_\E\circ \left(g'(t)H_{g(t)}\right)+\left(g'(t)H_{g(t)}\right)\circ Q_{\E'} &  \hspace{.2cm} \hbox{(by multiplying both sides by $g'(t)$)}.
 \end{array}
 $$
  The last equation means that $(J_{g(t)} , g'(t)H_{g(t)})$ is a homotopy between $ J_{g(a)}$ and $J_{g(b)} $. {The backward implication is obvious, it suffices to consider $a=c$, $b=d $ and $g={\mathrm{id}}$}.
\end{proof}
\begin{proposition}
Homotopy between Lie $\infty$-morphisms is an equivalence relation. In addition, it is compatible with composition, that is, if $\Phi,\Psi\colon S^\bullet_\mathbb{K}(\E')\rightarrow S^\bullet_\mathbb{K}(\E)$ are homotopic Lie $\infty$-algebroid morphisms and $ \hat{\Phi},\hat{\Psi}\colon S^\bullet_\mathbb{K}(\E)\rightarrow S^\bullet_\mathbb{K}(\E'')$ are homotopic Lie $\infty$-algebroid morphisms, then, so are their compositions $\hat{\Phi}\circ\Phi$  and $\hat{\Psi}\circ\Psi$.
\end{proposition}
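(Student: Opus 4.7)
The plan is to check reflexivity, symmetry, transitivity, and then compatibility with composition, in that order.

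\emph{Reflexivity and symmetry.} For any Lie $\infty$-morphism $\Phi$, the constant pair $(J_t,H_t):=(\Phi,0)$ on $[a,b]$ trivially satisfies \eqref{eq-diff} and realizes a homotopy from $\Phi$ to itself. For symmetry, given a homotopy $(J_t,H_t)_{t\in[a,b]}$ from $\Phi$ to $\Psi$, I would apply Lemma \ref{Homotopy-lemma} to the rational reparametrization $g(t)=a+b-t$, which has $g'(t)=-1$. The resulting pair $(J_{a+b-t},-H_{a+b-t})$ is a homotopy from $\Psi=J_b$ to $\Phi=J_a$.

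\emph{Transitivity.} Suppose $(J^{1}_t,H^{1}_t)_{t\in[a_1,b_1]}$ is a homotopy from $\Phi$ to $\Psi$ and $(J^{2}_t,H^{2}_t)_{t\in[a_2,b_2]}$ is a homotopy from $\Psi$ to $\Xi$. Using Lemma \ref{Homotopy-lemma} with an affine rational reparametrization, we may assume $b_1=a_2$. Then define, on $[a_1,b_2]$,
\begin{equation*}
J_t:=\begin{cases} J^{1}_t, & t\in[a_1,b_1],\\ J^{2}_t, & t\in[a_2,b_2],\end{cases}\qquad H_t:=\begin{cases} H^{1}_t, & t\in[a_1,b_1],\\ H^{2}_t, & t\in[a_2,b_2].\end{cases}
\end{equation*}
Each Taylor coefficient $J^{(n)}_t$ is piecewise rational on each half and continuous at the junction $t=b_1=a_2$ because $J^{1}_{b_1}=\Psi=J^{2}_{a_2}$. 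The Taylor coefficients of $H_t$ are piecewise rational, and the differential equation \eqref{eq-diff} holds on each half; the junction $b_1=a_2$ only adds a new gluing point, which is allowed by Definition \ref{def:homotopy}.

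\emph{Compatibility with composition.} By transitivity it suffices to establish the two special cases $\hat\Phi\circ\Phi\sim\hat\Phi\circ\Psi$ and $\hat\Phi\circ\Psi\sim\hat\Psi\circ\Psi$. For the first, given a homotopy $(J_t,H_t)_{t\in[a,b]}$ between $\Phi$ and $\Psi$, I would set
\begin{equation*}
\widetilde J_t:=\hat\Phi\circ J_t,\qquad \widetilde H_t:=\hat\Phi\circ H_t.
\end{equation*}
Composition of co-morphisms is a co-morphism, hence $\widetilde J_t$ is a path of Lie $\infty$-morphisms; and a direct computation using the co-Leibniz identity \eqref{eq:Phicoder} for $H_t$ and the co-morphism property for $\hat\Phi$ shows that $\widetilde H_t$ is a $\widetilde J_t$-co-derivation. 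Taylor coefficients remain piecewise rational (resp.\ continuous) because post-composition with the fixed $\hat\Phi$ is $\mathcal{O}$-multilinear and $t$-independent. Finally, using $\hat\Phi\circ Q_\E=Q_{\E''}\circ\hat\Phi$, we get
\begin{equation*}
\frac{\dd\widetilde J_t}{\dd t}=\hat\Phi\circ\frac{\dd J_t}{\dd t}=\hat\Phi\circ(Q_\E\circ H_t+H_t\circ Q_{\E'})=Q_{\E''}\circ\widetilde H_t+\widetilde H_t\circ Q_{\E'},
\end{equation*}
which is \eqref{eq-diff}. Analogously, pre-composing a homotopy $(\hat J_t,\hat H_t)$ between $\hat\Phi$ and $\hat\Psi$ with the fixed morphism $\Psi$ yields a homotopy $(\hat J_t\circ\Psi,\hat H_t\circ\Psi)$ between $\hat\Phi\circ\Psi$ and $\hat\Psi\circ\Psi$. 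The main technical point to be careful about is the verification that $\hat\Phi\circ H_t$ is genuinely a $(\hat\Phi\circ J_t)$-co-derivation; everything else reduces to a direct application of Lemma \ref{Homotopy-lemma} and of the definitions.
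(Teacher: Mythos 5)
Your proof of reflexivity, symmetry and transitivity coincides with the paper's: the constant pair, the reparametrization $g(t)=1-t$ (you use $g(t)=a+b-t$, same idea) via Lemma \ref{Homotopy-lemma}, and gluing at a new junction point, which Definition \ref{def:homotopy} explicitly tolerates. Where you genuinely diverge is the compatibility with composition. The paper runs both homotopies simultaneously: it takes the single pair $\bigl(\hat{J}_t\circ J_t,\ \hat{H}_t\circ J_t+\hat{J}_t\circ H_t\bigr)$ and verifies the differential equation by the product rule, leaving the reader to check that $\hat{H}_t\circ J_t+\hat{J}_t\circ H_t$ is a $(\hat{J}_t\circ J_t)$-co-derivation. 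You instead factor the problem through transitivity, $\hat\Phi\circ\Phi\sim\hat\Phi\circ\Psi\sim\hat\Psi\circ\Psi$, moving only one factor at a time while the other stays fixed. Both routes are correct. Yours buys a simpler co-Leibniz verification: since $\hat\Phi$ is a degree-zero co-morphism, $\Delta''\circ(\hat\Phi\circ H_t)=(\hat\Phi\otimes\hat\Phi)\circ(H_t\otimes J_t+J_t\otimes H_t)\circ\Delta'$ gives the $(\hat\Phi\circ J_t)$-co-derivation property with no sign bookkeeping, and likewise for $\hat H_t\circ\Psi$; the price is that you must invoke transitivity first and perform two separate (though symmetric) checks, whereas the paper's one-shot homotopy also records the useful explicit formula for the composed homotopy data. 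Neither version has a gap; do make sure, as you note, to record that piecewise rationality and $\mathcal{O}$-multilinearity of Taylor coefficients survive composition with the fixed, $t$-independent morphism, and that $\rho''\circ(\hat\Phi\circ J_t)_0=\rho'$ on $\E'_{-1}$, which follows from the corresponding conditions for $\hat\Phi$ and $J_t$.
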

\begin{proof}We first show that this notion of homotopy is an equivalence relation.
 Let $\Phi,\Psi$ and $\Xi\colon S^\bullet_\mathbb{K}(\E')\longrightarrow S^\bullet_\mathbb{K}(\E)$ be three Lie $\infty$-morphisms of algebroids. \begin{itemize}
	\item [$\bullet$] Reflexivity: The pair $( J_t=\Phi,H_t=0)_{t\in[0,1]}$ defines a homotopy between $\Phi$ and $\Phi$.
	\item [$\bullet$]Symmetry: Let $( J_t,H_t)_{t\in[0,1]}$ be a homotopy between $\Phi$ to $\Psi$. By applying Lemma \ref{Homotopy-lemma} with $g(t)=1-t$, we obtain a homotopy between $\Psi$ and $\Phi$ via the pair $(\Phi_{1-t},-H_{1-t})_{t\in[0,1]}$.
	\item [$\bullet$]Transitivity: Assume $\Phi\mathtt{\sim}\Psi$ and $\Psi\mathtt{\sim}\Xi$ and let  $( J_t,H_{1,t})_{t\in[0,\frac{1}{2}]}$ be a homotopy between $\Phi$ and $\Psi$ and let $(\bar{J}_t,H_{2,t})_{t\in[\frac{1}{2},1]}$ be a homotopy between $\Psi$ and $\Xi$. By gluing $ J_t$ and $\bar{J}_t$, respectively $H_{1t}$ and $H_{2,t}$ we obtain a homotopy $(\tilde{J}_t,H_t)_{t\in[0,1]}$ between $\Phi$ and $\Xi$.
\end{itemize}  

We then show it is compatible with composition. Let us denote by $( J_t , H_t )$ the homotopy between $\Phi$ and $\Psi$, and $( \hat{J}_t , \hat{H}_t )$ the homotopy between $\hat{\Phi}$ and $\hat{\Psi}$ . We obtain, \begin{align*}
    \frac{\dd  \hat{J}_t\circ J_t}{\dd t}&=\frac{\dd  \hat{J}_t }{\dd t}\circ J_t+ \hat{J}_t\circ\frac{\dd  \hat{J}_t }{\dd t}\\&=Q_{\E''}\circ\left(\hat{H}_t\circ J_t+ \hat{J}_t\circ H_t \right) +\left(\hat{H}_t\circ J_t+ \hat{J}_t\circ H_t \right)\circ Q_{\E'}.
\end{align*}Hence, $\hat{\Phi}\circ\Phi$  and $\hat{\Psi}\circ\Psi$ are homotopic via the pair $(  \hat{J}_t\circ J_t,\hat{H}_t\circ J_t+ \hat{J}_t\circ H_t)$ which is easily checked to satisfy all axioms. This concludes the proof.
\end{proof}

We conclude this section with a lemma that will be useful in the sequel. Notice that this is the lemma that forces to extend Definition \ref{def:general-idea-homotopy}, for it would not be true anymore in the smooth setting.


\begin{lemma}\label{gluing-lemma}
 Let $( J_t ,  H_t)_{t\in[c,+\infty[}$ be a homotopy such that for all $n\in\mathbb{N}_0$ and for every $t\geq n$, $H_t^{(n)}=0$. Then the $n$-th Taylor coefficient
  $ J_t^{(n)}$ is constant on $[n, +\infty [$ and
 the co-algebra morphism $J_\infty$ whose $n$-th Taylor coefficient is  $ J_t^{(n)}$ for any $n\in\mathbb{N}_0$ and $t \in [n, +\infty[$  is a Lie $\infty $-algebroid morphism.
 
 {Moreover, for $ g:[a,b[\rightarrow[c,+\infty[$ a rational function with no pole on $[a,b[$ and such that $\displaystyle\lim_{t\to b}g(t)=+\infty$, the pair $(J_{g(t)} , g'(t)H_{g(t)})$ is a homotopy between $J_c$ and $J_\infty$. }
\end{lemma}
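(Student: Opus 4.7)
The heart of the argument is the triangular structure of the differential equation \eqref{eq-diff}: as already noted in the proof of Proposition~\ref{prop:justify}, the $n$-th Taylor coefficient of $Q_\E\circ H_t + H_t\circ Q_{\E'}$ is an algebraic expression involving only $Q_\E^{(k)}$, $Q_{\E'}^{(k)}$ for $k\le n+1$, the $J_t^{(j)}$ for $j<n$, and the $H_t^{(k)}$ for $k\le n$. In particular, for $t\ge n$ every index $k\le n$ satisfies $t\ge k$, so by hypothesis $H_t^{(k)}=0$. Hence $\frac{\dd J_t^{(n)}}{\dd t}=0$ on $[n,+\infty[$ away from finitely many gluing points. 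Since $t\mapsto J_t^{(n)}$ is piecewise rational continuous by definition of a homotopy, it is constant on each subinterval of regularity, and continuity forces it to be constant on the whole of $[n,+\infty[$. This proves the first assertion.

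Define $J_\infty^{(n)}$ to be this stabilized value, and let $J_\infty$ denote the unique co-algebra morphism whose Taylor coefficients are the $J_\infty^{(n)}$, $n\ge 0$. Each $J_\infty^{(n)}$ inherits $\mathcal O$-multilinearity from $J_t^{(n)}$ with $t\ge n$. To verify the chain condition $Q_\E\circ J_\infty=J_\infty\circ Q_{\E'}$, observe that its $n$-th Taylor coefficient depends only on $Q_\E^{(k)}$, $Q_{\E'}^{(k)}$, and $J_\infty^{(j)}$ for $j\le n$; picking $t$ large enough so that $J_t^{(j)}=J_\infty^{(j)}$ for every such $j$, this expression equals the $n$-th Taylor coefficient of $Q_\E\circ J_t - J_t\circ Q_{\E'}$, which vanishes because $J_t$ is already a Lie $\infty$-algebroid morphism. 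The anchor condition follows from the fact (see \eqref{sol}) that $\rho\circ J_t^{(0)}_{|\E'_{-1}}=\rho'$ for every $t\in[c,+\infty[$, hence also in the limit.

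For the last assertion, set $\tilde J_t := J_{g(t)}$ and $\tilde H_t := g'(t)H_{g(t)}$ for $t\in[a,b[$, and extend $\tilde J_b := J_\infty$, $\tilde H_b := 0$. By Lemma~\ref{Homotopy-lemma} applied on each compact subinterval of $[a,b[$, the differential equation \eqref{eq-diff} holds for the reparametrized pair on $[a,b[$. Because $g$ is continuous and $g(t)\to+\infty$, for each $n\ge 0$ there exists $t_n\in[a,b[$ such that $g(t)\ge n$ for all $t\in[t_n,b]$; on that interval $H_{g(t)}^{(n)}=0$, so $\tilde H_t^{(n)}=0$, and $J_{g(t)}^{(n)}=J_\infty^{(n)}$ is constant, so the equation holds trivially. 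On $[a,t_n]$ the Taylor coefficient $\tilde J_t^{(n)}$ is the composition of the rational function $g$ with the piecewise rational continuous map $s\mapsto J_s^{(n)}$, hence piecewise rational continuous; continuity at $t_n$ matches the constant value $J_\infty^{(n)}=J_n^{(n)}$. Similarly $\tilde H_t^{(n)}=g'(t)H_{g(t)}^{(n)}$ is piecewise rational on $[a,b]$. The boundary conditions $\tilde J_a=J_{g(a)}=J_c$ and $\tilde J_b=J_\infty$ hold by construction, giving the desired homotopy.

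The main technical obstacle is the bookkeeping of piecewise rational continuity under the reparametrization by $g$: one must verify that the finite partitions of $[c,+\infty[$ underlying the Taylor coefficients $J_t^{(n)}$ pull back, together with the point $t_n$ where $g$ crosses $n$, to a finite partition of $[a,b]$, and that continuity at every gluing point (including the new one at $t_n$ and at the endpoint $b$) is preserved. This is however immediate from the recursive triangular structure noted above, which ensures that only finitely many of the Taylor coefficients are non-stationary on any given compact sub-interval of $[a,b[$.
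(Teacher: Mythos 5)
Your proof is correct and follows essentially the same route as the paper's: the triangular dependence of $(Q_\E\circ H_t+H_t\circ Q_{\E'})^{(n)}$ on the low-order $H_t^{(k)}$ gives constancy of $J_t^{(n)}$ on $[n,+\infty[$, the morphism property of $J_\infty$ is read off degree by degree from $J_t$ for $t$ large, and the reparametrized pair is checked to be a homotopy by splitting $[a,b[$ at the point $t_n$ where $g$ crosses $n$ and gluing the pulled-back piecewise rational data with the constant tail. The extra explicit checks you include (the anchor condition via \eqref{sol} and the appeal to Lemma~\ref{Homotopy-lemma} on compact subintervals) are consistent with, and slightly more detailed than, the paper's argument.
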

\begin{proof}
Since the $n$-th Taylor coefficient of the $ J_t $-co-derivation $\frac{\dd  J_t^{(n)}}{\dd t}=(Q_\E\circ H_t +H_t\circ Q_{\E'})^{(n)}$ depends only on $H_t^{(i)} $ for $i=0, \dots, n-1 $, 
we have by assumption  $\frac{\dd  J_t^{(n)}}{\dd t}=0$ for all $t\geq n$. As a consequence, $ J_t^{(n)} $ is constant on $[n, +\infty[ $.   
It follows from Proposition \ref{prop:justify} that $J_\infty $ is a Lie $\infty $-algebroid morphism since for every $n\in\mathbb{N}_0$ and $t\in[n,+\infty[$\begin{align*}
    (Q_\E\circ J_\infty-J_\infty\circ Q_{\E'})^{(n)}&=\sum_{i+j=n}(Q_\E^{(i)}\circ J^{(i)}_t- J_t^{(j)}\circ Q_{\E'}^{(j)})\\&=0\hspace{.4cm}\hbox{(since $ J_t$ is a Lie $\infty$-algebroid morphism)}.
\end{align*}

Let us prove the last part of the statement. By assumption, there exists $a \leq b_n \leq b $ such that for all $t \in [b_n ,b]$, we have $g(t)\geq n$, so that $J_{g(t)}^{(n)}=J_\infty^{(n)}$ and $ g'(t)H_{g(t)}^{(n)}=0$ on $[b_n,b]$. The function $J_{t}^{(n)}$ (resp. $H_t^{(n)} $) being  piecewise rational continuous (resp. piecewise rational) on $[c,n]$, the same holds for $J_{g(t)}^{(n)}$ (resp. $g'(t)H_{g(t)}^{(n)}$) on $[a,b_n]$. By gluing with a constant function $J_\infty$ (resp. with $0$), we see that all Taylor coefficients of $J_{g(t)} $ (resp. $g'(t)H_{g(t)}$) are piecewise rational continuous (resp. piecewise rational) with finitely many gluing points. This completes the proof.
\end{proof}

\begin{remark}
 Lemma \ref{gluing-lemma} explains how to glue infinitely many homotopies, at least when for a given $n\in \mathbb{N}_0$, only finitely of them affects the $n$-th Taylor coefficient. This would not be possible using only Definition \ref{def:general-idea-homotopy}.
\end{remark}

\subsection{More technical lemmas and propositions}
Let us state and prove these technical assertions for later use.\\

Let $(\E',Q_{\E'}, \rho')$ and $(\E, Q_\E, \rho)$ be a Lie $\infty$-algebroid over $\mathcal{O}$.
\begin{proposition}\label{linearity} Let $ \Phi\colon S^\bullet_\mathbb{K} (\E')\rightarrow S^\bullet_\mathbb{K} (\E)$ be a  Lie $\infty $-algebroid morphism.
For $\mathcal H\colon S^\bullet_\mathbb{K} (\E')\rightarrow S^\bullet_\mathbb{K} (\E)$ a $\mathcal O $-multilinear $ \Phi$-co-derivation of degree $k\leq -1$, 
$\mathcal H \circ Q_{\mathcal E'} -(-1)^k  Q_{\mathcal E} \circ \mathcal H $ is a $\mathcal O $-multilinear $ \Phi$-co-derivation of degree $ k+1$.
\end{proposition}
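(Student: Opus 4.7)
The plan proceeds in three steps. \emph{Degree:} $|\mathcal K|=k+1$ is immediate from $|Q_\E|=|Q_{\E'}|=+1$ and $|\mathcal H|=k$. \emph{Co-Leibniz identity:} I would expand $\Delta\circ\mathcal H\circ Q_{\E'}$ by first applying the $\Phi$-co-derivation identity $\Delta\circ\mathcal H=(\mathcal H\otimes\Phi+\Phi\otimes\mathcal H)\circ\Delta$ and then the co-derivation identity for $Q_{\E'}$; symmetrically expand $\Delta\circ Q_\E\circ\mathcal H$ by applying the co-derivation identity for $Q_\E$ first and the $\Phi$-co-derivation identity for $\mathcal H$ second. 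Using the Koszul rule $(A\otimes B)\circ(C\otimes D)=(-1)^{|B||C|}(AC)\otimes(BD)$ and the Lie $\infty$-morphism relation $Q_\E\circ\Phi=\Phi\circ Q_{\E'}$, the eight resulting terms rearrange so that the two mixed-position families $\mathcal H\otimes(\Phi\circ Q_{\E'})$ and $(\Phi\circ Q_{\E'})\otimes\mathcal H$ appear with exactly opposite signs in $\mathcal H\circ Q_{\E'}$ versus $(-1)^k Q_\E\circ\mathcal H$ and cancel, leaving precisely $\Delta\circ\mathcal K=(\mathcal K\otimes\Phi+\Phi\otimes\mathcal K)\circ\Delta$.

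\emph{$\mathcal O$-multilinearity.} Only the $2$-ary brackets $\ell_2,\ell_2'$ can obstruct $\mathcal O$-linearity, since by Proposition \ref{co-diff} every other Taylor coefficient of $Q_\E,Q_{\E'}$ is already $\mathcal O$-multilinear. The crucial structural observation, which is where the hypothesis $k\leq -1$ enters essentially, is that every Taylor coefficient of $\mathcal H$ takes values in $\E_{\leq-2}$: indeed $|\mathcal H_r(x_1\cdots x_{r+1})|=|x_1|+\cdots+|x_{r+1}|+k\leq -(r+1)+k\leq -2$. In particular $\rho\circ\mathcal H_r=0$, and by formula \eqref{ex:formulacoder} each summand of $\mathcal H(x_1\cdots x_n)$ is a monomial in $S^\bullet_\mathbb K(\E)$ containing exactly one $\mathcal H$-factor lying in $\E_{\leq -2}$ together with several $\Phi$-factors (which may sit in any degree, including $-1$).

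I would then compute the defect $\mathcal K_n(x_1\cdots(fx_i)\cdots x_n)-f\mathcal K_n(x_1\cdots x_n)$ for $f\in\mathcal O$ and show it vanishes; it is already zero unless $|x_i|=-1$, since otherwise all $\rho$ and $\rho'$ annihilate $f$. The surviving contributions form two families: from $\mathcal H\circ Q_{\E'}$, a $\rho'(x_j)[f]x_i$-defect produced by the Leibniz rule for $\ell_2'$ and then fed through $\mathcal H$ (which is $\mathcal O$-multilinear); from $(-1)^k Q_\E\circ\mathcal H$, a $\rho(\cdot)[f]$-defect from the Leibniz rule for $\ell_2$ acting on a pair inside $\mathcal H(\cdots)$. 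By the structural remark, the $\E_{-1}$-slot of the offending $\ell_2$ cannot be an $\mathcal H$-factor, so it must be a $\Phi_0$-image $\Phi_0(x_j)$ with $x_j\in\E'_{-1}$. The Lie $\infty$-morphism axiom $\rho\circ\Phi_0=\rho'$ on $\E'_{-1}$ then identifies these two families termwise, and the sign $-(-1)^k$ together with the Koszul signs produced by the shuffle formula \eqref{ex:formulacoder} make them cancel in pairs.

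The main obstacle is the Koszul and shuffle bookkeeping of the last step. The cleanest path is to verify the case $n=1$ explicitly as a template: for $x\in\E'_{-1}$ the $\mathcal H\circ Q_{\E'}$ side contributes $\mathcal H_0(\rho'(x)[f]y)=\rho'(x)[f]\mathcal H_0(y)$, while the $(-1)^k Q_\E\circ\mathcal H$ side contributes $-(-1)^k(-1)^{|x||y|}\bigl[\ell_2(f\mathcal H_0(y),\Phi_0(x))-f\ell_2(\mathcal H_0(y),\Phi_0(x))\bigr]$; using graded symmetry, the Leibniz rule, $\rho\circ\Phi_0=\rho'$, and $|x|=-1$, this equals $-\rho'(x)[f]\mathcal H_0(y)$, and the two contributions cancel. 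For arbitrary $n$, the same identification applies shuffle-by-shuffle: each offending pair on one side has a unique matching partner on the other side, so the pairwise cancellation persists.
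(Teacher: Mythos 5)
Your proof is correct and follows essentially the same route as the paper: the co-Leibniz identity is obtained by expanding $\Delta\circ\mathcal H\circ Q_{\E'}$ and $(-1)^k\Delta\circ Q_\E\circ\mathcal H$ and cancelling the mixed terms via $Q_\E\circ\Phi=\Phi\circ Q_{\E'}$, and $\mathcal O$-multilinearity reduces to the cancellation of exactly two anchor terms, $\mathcal H^{(n-1)}(\ell_2'(x_i,fx_j),x_{I^{ij}})$ against $-(-1)^k\ell_2(\Phi_0(x_i),f\mathcal H^{(n-1)}(x_{I^i}))$, using $\rho\circ\Phi_0=\rho'$. Your explicit degree count showing that $\mathcal H$-factors land in $\E_{\leq-2}$ (hence cannot occupy the degree $-1$ slot of $\ell_2$) only makes explicit what the paper leaves implicit when it asserts there are precisely two offending terms.
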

\begin{proof}
We first check that    $\mathcal H \circ Q_{\mathcal E} - (-1)^k  Q_{\mathcal E'} \circ \mathcal H$ is  a $ \Phi$-co-derivation: \begin{align*}
   \Delta\circ\mathcal H \circ Q_{\E'}&=(\mathcal H\otimes\Phi+\Phi\otimes\mathcal H)\circ\Delta'\circ Q_\E',\;\text{by definition of $\mathcal H$}\\&=(\mathcal H\otimes\Phi+\Phi\otimes\mathcal H)\circ(Q_{\E'}\otimes \text{id}+\text{id}\otimes Q_{\E'})\circ\Delta',\;\text{by definition of $Q_\E'$}\\&=(\mathcal H\circ Q_{\E'}\otimes\Phi+(-1)^k\Phi\circ Q_{\E'}\otimes\mathcal H+\mathcal H\otimes\Phi\circ Q_{\E'}+\Phi\otimes\mathcal H\circ Q_{\E'})\circ\Delta'
\end{align*}
Subtracting a similar equation for $(-1)^k\Delta\circ Q_{\E}\circ\mathcal H$ and using \eqref{def:LM}, one obtains the $\Phi$-co-derivation
property for $\mathcal H \circ Q_{\mathcal E'} - (-1)^k  Q_{\mathcal E'} \circ \mathcal H$.
We now check that $\mathcal H \circ Q_{\mathcal E'} -(-1)^k  Q_{\mathcal E} \circ \mathcal H$ is $ \mathcal O$-multilinear, for which it suffices to check that its Taylor coefficients are $\mathcal{O}$-multilinear by Lemma \ref{Tay-C}. Let $x_1,\ldots,x_n\in\E'$ be homogeneous elements. Assume $x_i\in\E_{-1}'$ 
(if we have more elements of degree $-1$ the same reasoning holds). To verify $\mathcal{O}$-multilinearity it suffices to check that for all $f\in \mathcal O $:
\begin{align*}\text{pr}\circ(\mathcal H \circ Q_{\mathcal E'} -(-1)^k  Q_{\mathcal E} \circ \mathcal H) (x_1,\ldots,&x_i, \ldots, f x_j \ldots,x_n)=\\&  f \text{pr}\circ(\mathcal H \circ Q_{\mathcal E'} -(-1)^k  Q_{\mathcal E} \circ \mathcal H) (x_1,\ldots,x_i, \ldots,  x_j \ldots,x_n).\end{align*}
Only the terms where the $2$-ary bracket with a degree $-1 $ element on one-side and  $f $ on the other side may forbid $f$ to go in front. There are two such terms:
$$  \epsilon(x_i, x_j, x_{I^{ij}})   \mathcal H^{(n-1)} (\ell_2'(x_i, f x_j), x_{I^{ij}}  ) \hbox{ and } 
-(-1)^k  \epsilon(x_i,  x_{I^{i}})\ell_2(\Phi_{0}(x_i), f\mathcal H^{(n-1)} ( x_{I^{i}}  ))
$$ 
where $x_{I^i}$ and $ x_{I^{ij}}$ stand for the list $ x_1, \dots, x_n$ where $x_i$ and $x_i,x_j $ are missing, respectively, and $ \mathcal H^{(n-1)}$ is the $(n-1)$-th Taylor coefficient of $\mathcal H$. Since  $\rho\circ\Phi_{0}=\rho'$, in both terms $\rho(x_i)[f] $ appears, and these two terms add up to zero.
\end{proof}
\begin{remark}
If the degree of $\mathcal H$ is non-negative, then $\mathcal H \circ Q_{\mathcal E'} -(-1)^k  Q_{\mathcal E} \circ \mathcal H $ may not be $\mathcal O$-multilinear anymore, since there may exist extra terms where the anchor map appears, e.g. terms of the form $\ell_2(\mathcal H( x_{I^j} ) , f \Phi^{(0)} (x_j) ) $.
\end{remark}

\begin{lemma}\label{O-linearity-lemma}
Let $\Phi\colon S_{\mathbb K}^\bullet ( \mathcal E')\rightarrow S_{\mathbb K}^\bullet ( \mathcal E )$ be a co-algebra morphism such that
\begin{enumerate}
     \item $\Phi$ is $\mathcal O $-multilinear,
    \item $\rho \circ \Phi_{0} = \rho'$ on $\E'$,
\end{enumerate}
If for every \footnote{$\Phi\circ Q_{\E'}-Q_{\E}\circ\Phi$ being a $\Phi$-co-derivation, its component of polynomial-degree $i$ is zero for $0\leq i\leq n$ if only if its $i$-th Taylor coefficient is zero for $0\leq i\leq n$.} $n \in \mathbb{N}_0$, $(\Phi\circ Q_{\E'}-Q_{\E}\circ\Phi)^{(i)}=0$  for each $0 \leq i \leq n$, then the map $S_{\mathbb K} ( \mathcal E' )\rightarrow  S_{\mathbb K} ( \mathcal E)$ given by:
$$ (\Phi\circ Q_{\E'}-Q_{\E}\circ\Phi)^{(n+1)}  $$ 
\begin{enumerate}
    \item is a $\Phi^{(0)} $-co-derivation of degree $+1$,
    \item is $\mathcal O $-multilinear,
    \item and the induced  $\Phi^{(0)} $-co-derivation $ \left(\bigodot^\bullet \mathcal E', Q_{\E'}^{(0)}\right) \longrightarrow \left(\bigodot^\bullet\mathcal E,Q_{\E}^{(0)}\right)$ satisfies:
     $$    Q^{(0)}_\E\circ(\Phi\circ Q_{\E'}-Q_{\E}\circ\Phi)^{(n+1)}  =
          (Q_{\E}\circ\Phi-\Phi\circ Q_{\E'})^{(n+1)}\circ Q_{\E'}^{(0)} . $$
\end{enumerate}
\end{lemma}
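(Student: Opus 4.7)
Set $D := \Phi \circ Q_{\E'} - Q_\E \circ \Phi$. A standard co-algebra computation, using the morphism identity $\Delta \circ \Phi = (\Phi \otimes \Phi) \circ \Delta'$ together with the co-derivation identities for $Q_{\E'}$ and $Q_\E$, shows that both $\Phi \circ Q_{\E'}$ and $Q_\E \circ \Phi$ satisfy the $\Phi$-co-Leibniz rule, hence so does $D$, of degree $+1$. Decomposing the resulting identity $\Delta \circ D = (D \otimes \Phi + \Phi \otimes D) \circ \Delta'$ by polynomial-degree (using that $\Delta, \Delta'$ preserve polynomial-degree), the component of polynomial-degree $r$ reads
\begin{equation*}
\Delta \circ D^{(r)} = \sum_{i+j = r} \bigl(D^{(i)} \otimes \Phi^{(j)} + \Phi^{(j)} \otimes D^{(i)}\bigr) \circ \Delta'.
\end{equation*}
At $r = n+1$, the hypothesis $D^{(i)} = 0$ for $i \leq n$ kills every term except $(i,j) = (n+1, 0)$, so $\Delta \circ D^{(n+1)} = \bigl(D^{(n+1)} \otimes \Phi^{(0)} + \Phi^{(0)} \otimes D^{(n+1)}\bigr) \circ \Delta'$, which is exactly item 1.

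For item 2, note that $D^{(n+1)}$ lowers polynomial-degree by exactly $n+1$, so it is determined by the single nontrivial Taylor coefficient $D_{n+1} \colon S^{n+2}_\mathbb{K}(\E') \to \E$. Since $\Phi^{(0)}$ is $\mathcal O$-multilinear (because $\Phi_0$ is $\mathcal O$-linear, by hypothesis~1), the analog of Lemma~\ref{Tay-C} for $\Phi^{(0)}$-co-derivations (via formula~\eqref{ex:formulacoder}) reduces the $\mathcal O$-multilinearity of $D^{(n+1)}$ to that of $D_{n+1}$. The latter is verified by direct substitution $x_j \leadsto f x_j$ with $f \in \mathcal O$: the only terms that obstruct pulling $f$ out arise from the Leibniz rule for the $2$-ary brackets, producing on one side contributions $\rho'(x_k)[f]\,(\cdots)$ coming from $\ell'_2$ inside $Q_{\E'}^{(1)}$, and on the other side contributions $\rho(\Phi_0(x_k))[f]\,(\cdots)$ coming from $\ell_2$ inside $Q_\E^{(1)}$ acting after $\Phi_0$. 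Hypothesis~2, $\rho \circ \Phi_0 = \rho'$, forces these two families to cancel pairwise; all other summands carry $f$ through cleanly by $\mathcal O$-linearity of $\ell'_1, \ell_1$, of $\ell'_k, \ell_k$ for $k \geq 3$, and of the Taylor coefficients of $\Phi$.

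For item 3, the identities $Q_\E^2 = 0$ and $Q_{\E'}^2 = 0$ yield
\begin{equation*}
Q_\E \circ D + D \circ Q_{\E'} = Q_\E \circ \Phi \circ Q_{\E'} - Q_\E^2 \circ \Phi + \Phi \circ Q_{\E'}^2 - Q_\E \circ \Phi \circ Q_{\E'} = 0.
\end{equation*}
Extracting the polynomial-degree $n+1$ component and using again $D^{(i)} = 0$ for $i \leq n$, every summand vanishes except $Q_\E^{(0)} \circ D^{(n+1)}$ and $D^{(n+1)} \circ Q_{\E'}^{(0)}$, leaving $Q_\E^{(0)} \circ D^{(n+1)} = -D^{(n+1)} \circ Q_{\E'}^{(0)}$, i.e.\ item~3. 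That both sides descend to the quotients $\bigodot^\bullet \E'$, $\bigodot^\bullet \E$ follows from item~2 together with the $\mathcal O$-linearity of $\ell'_1, \ell_1$, which ensures that $Q_{\E'}^{(0)}$ and $Q_\E^{(0)}$ themselves descend.

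The main obstacle is the combinatorial step in item~2: the pairwise cancellation of the anchor contributions to $D_{n+1}(x_1 \cdots f x_j \cdots x_{n+2})$. The underlying mechanism is transparent—it is dictated entirely by $\rho \circ \Phi_0 = \rho'$—but writing out the shuffle signs for arbitrary $n$ is the only place where bookkeeping becomes heavy.
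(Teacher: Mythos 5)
Your proposal is correct and follows essentially the same route as the paper's proof: the co-Leibniz identity for $D=\Phi\circ Q_{\E'}-Q_\E\circ\Phi$ decomposed by polynomial-degree for item 1, the pairwise cancellation of the two anchor terms via $\rho\circ\Phi_0=\rho'$ for item 2, and extraction of the polynomial-degree $n+1$ component of the vanishing anticommutator $Q_\E\circ D+D\circ Q_{\E'}=0$ for item 3. The only cosmetic difference is that you check $\mathcal O$-multilinearity on the single Taylor coefficient $D_{n+1}$ rather than on the full map $D$, which the paper's Lemma \ref{Tay-C} makes equivalent.
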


\begin{proof}
A straightforward computation yields:
\begin{align*}
    \Delta(\Phi\circ Q_{\E'}-Q_{\E}\circ\Phi)&=(\Phi\otimes\Phi)\circ\Delta'\circ Q_{\E'}-(Q_{\E}\otimes \text{id}+\text{id}\otimes Q_{\E})\circ\Delta\circ\Phi\\&=\left((\Phi\circ Q_{\E'}-Q_{\E}\circ\Phi)\otimes\Phi+\Phi\otimes(\Phi\circ Q_{\E'}-Q_{\E}\circ\Phi) \right)\circ \Delta'.
\end{align*}
Now, $\Delta$ preserves polynomial-degree, i.e. $\displaystyle{\Delta:S^n_\mathbb K(\E)\longrightarrow \oplus_{i+j=n} S^i_\mathbb K(\E)\otimes S^j_\mathbb K(\E)}$ and so does $ \Delta'$. 
Taking into account the assumption  $(\Phi\circ Q_{\E'}-Q_{\E}\circ\Phi)^{(i)}=0$  for every $0 \leq i \leq n$, we obtain:
  \begin{eqnarray*}\Delta \circ (\Phi\circ Q_{\E'}-Q_{\E}\circ\Phi)^{(n+1)}\\ = 
   \left((\Phi\circ Q_{\E'}-Q_{\E}\circ\Phi)^{(n+1)}\otimes\Phi^{(0)}+\Phi^{(0)}\otimes(\Phi\circ Q_{\E'}-Q_{\E}\circ\Phi)^{(n+1)}\right)\circ \Delta'.\end{eqnarray*} 
   All the other terms disappear for polynomial-degree reasons Hence, $(\Phi\circ Q_{\E'}-Q_{\E}\circ\Phi)^{(n+1)}$ is a $\Phi^{(0)}$-co-derivation. 

Let us prove that it is $\mathcal{O}$-linear. It suffices to check  $\mathcal{O}$-linearity of\, $T_\Phi:= \Phi\circ Q_{\E'}-Q_{\E}\circ\Phi$. Let us choose homogeneous elements $x_1,\ldots, x_{N}\in\E'$ and let us assume that $x_i\in\E'_{-1}$ is the only term of degree $-1$: The proof in the case where there is more than one such an homogeneous element of degree $-1$ is identical. We choose $j \neq i$ and we compute $T_\Phi (x_1,\ldots,x_i,\ldots,fx_j,\ldots, x_{N})$ for some $f\in\mathcal{O}$. The only terms in the previous expression which are maybe non-linear in $f$ are those for which the $2$-ary brackets of a term containing $fx_j$ with $x_i$ or $\Phi_{0}(x_i)$ appear (since $\Phi $ and all other brackets are $\mathcal O$-linear). There are two such terms. The first one appears when we apply $Q_{\E'} $  first, and then $\Phi $: this forces
$\Phi\left(\ell'_2(x_i,fx_j),x_{I^{ij}}\right)$ to appear, and the non-linear term is then:
\begin{align}\label{term1}
 \epsilon (\sigma_i) \rho'(x_i)[f] \,   \Phi(x_{I^i})
\end{align}
with $ \sigma_i$ the permutation that let $i$ goes in front and leave the remaining terms unchanged. There is a second term that appears when one applies $ \Phi$ first, then $ Q_\E$. Since it is a co-morphism, $\Phi ( x_1 \ldots x_i \ldots,fx_j \ldots x_{N} )  $ is the product of several terms among which only one is of degree $-1$, namely the term 
 $$ \epsilon (x,\sigma_i)\Phi_{0}(x_i ) \Phi(f x_{I^i} ).$$
 Applying $Q_\E $ to this term yields the non-linear term 
  \begin{align}\label{term2}
      \epsilon (\sigma_i)\rho( \Phi_{0} (x_i))  [f] \, \Phi(x_{I^i}),
  \end{align} 

\noindent
where $I^i$ and $I^{ij}$ are as in Proposition \ref{linearity}. Since $\rho\circ\Phi_{0}=\rho'$, we see that the terms \eqref{term1} and \eqref{term2} containing an anchor add up to zero. 

Let us check that $(\Phi\circ Q_{\E'}-Q_{\E}\circ\Phi)^{(n+1)}$ is a chain map, in the sense that it satisfies item 3).
Considering again $T_\Phi:= \Phi\circ Q_{\E'}-Q_{\E}\circ\Phi$, we have that $T_\Phi^{(k)}=0$, for all $k=0,\ldots,n$. Since $T_\Phi\circ Q_{\E'}=Q_\E\circ T_\Phi$, one has
\begin{align*}
0=\left( T_\Phi\circ Q_{\E'}+Q_{\E}\circ T_\Phi\right)^{(n+1)}= T_\Phi^{(n+1)}\circ Q_{\E'}^{(0)}+Q_{\E}^{(0)}\circ T_\Phi^{(n+1)}+\sum_{\overset{i+j=n+1}{i,j\geq 1}}\underbrace{\left(T_\Phi^{(i)}\circ Q_{\E'}^{(j)}+Q_{\E}^{(j)}\circ T_\Phi^{(i)}\right)}_0
\end{align*}
By consequent, the $\mathcal O$-linear map $(\Phi\circ Q_{\E'}-Q_{\E}\circ\Phi)^{(n+1)}$ satisfies item 3.
\end{proof}

\begin{lemma}\label{imp-lemma}
Let $\Phi,\Xi: S^\bullet_\mathbb{K} (\E') \longrightarrow S^\bullet_\mathbb{K}(\E)$ be $\mathcal O $-linear Lie $\infty$-algebroid morphisms. Let $n \in \mathbb{N}_0$.
If $\Xi^{(i)}= \Phi^{(i)}$ for every $0 \leq i \leq n$, then
$(\Xi-\Phi)^{(n+1)} \colon S^\bullet_\mathbb{K} (\E') \longrightarrow S^\bullet_\mathbb{K} (\E )$
 \begin{enumerate}
     \item  is a $\Phi^{(0)}$-co-derivation
     \item is $\mathcal O $-multilinear
     \item  and the induced $\Phi^{(0)}$-co-derivation $\left(\bigodot^\bullet\E',Q_{\E'}^{(0)}\right) \longrightarrow \left(\bigodot^\bullet\E,Q_\E^{(0)}\right)$ satisfies:
      $$   Q^{(0)}_\E \circ(\Xi-\Phi)^{(n+1)}= (\Xi-\Phi)^{(n+1)} \circ Q_{\E'}^{(0)} .$$
 \end{enumerate}
\end{lemma}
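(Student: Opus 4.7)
The strategy is to mirror the proof of Lemma \ref{O-linearity-lemma} almost verbatim, with the roles of $\Phi\circ Q_{\E'}-Q_\E\circ\Phi$ played by $T:=\Xi-\Phi$. By hypothesis, $T^{(i)}=0$ for all $0\le i\le n$, and in particular $\Xi^{(0)}=\Phi^{(0)}$; this sparsity will force every computation below to collapse onto a single term of polynomial-degree $n+1$.

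\emph{For item 1 (co-derivation property).} Both $\Xi$ and $\Phi$ are co-algebra morphisms, so applying $\Delta$ to their difference and adding and subtracting a mixed term gives
$$\Delta\circ T \;=\; (\Xi\otimes\Xi-\Phi\otimes\Phi)\circ\Delta' \;=\; \bigl(T\otimes \Xi+\Phi\otimes T\bigr)\circ\Delta'.$$
Since $\Delta$ and $\Delta'$ preserve polynomial-degree, extracting the polynomial-degree $n+1$ component and using $T^{(i)}=0$ for $i\le n$ together with $\Xi^{(0)}=\Phi^{(0)}$ leaves only the terms where the second tensor factor has polynomial-degree $0$:
$$\Delta\circ T^{(n+1)} \;=\; \bigl(T^{(n+1)}\otimes\Phi^{(0)}+\Phi^{(0)}\otimes T^{(n+1)}\bigr)\circ\Delta',$$
which is the defining co-Leibniz identity for a $\Phi^{(0)}$-co-derivation.

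\emph{For item 2 ($\mathcal O$-multilinearity).} By the analogue of Lemma \ref{Tay-C} for $\Phi^{(0)}$-co-derivations, it suffices to check that the Taylor coefficient of $T^{(n+1)}$ is $\mathcal O$-multilinear. But this Taylor coefficient is by definition $\mathrm{pr}\circ T^{(n+1)}\big|_{S^{n+2}_{\mathbb K}(\E')}=\Xi_{n+1}-\Phi_{n+1}$, a difference of two $\mathcal O$-multilinear maps (both $\Xi$ and $\Phi$ being $\mathcal O$-linear Lie $\infty$-morphisms). Hence $T^{(n+1)}$ factors through $\bigodot^\bullet\E'\to\bigodot^\bullet\E$.

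\emph{For item 3 (chain map property).} Since $\Xi$ and $\Phi$ both intertwine $Q_{\E'}$ and $Q_\E$, so does their difference: $T\circ Q_{\E'}=Q_\E\circ T$. Extracting the polynomial-degree $n+1$ component yields
$$\sum_{i+j=n+1}T^{(i)}\circ Q_{\E'}^{(j)} \;=\; \sum_{i+j=n+1}Q_\E^{(i)}\circ T^{(j)}.$$
All terms with $i\le n$ vanish because $T^{(i)}=0$ there, so only the $(i,j)=(n+1,0)$ contribution survives on each side, giving exactly
$$Q^{(0)}_\E\circ T^{(n+1)} \;=\; T^{(n+1)}\circ Q^{(0)}_{\E'}.$$
No step presents a real obstacle; the only delicate point is the systematic bookkeeping of polynomial-degrees, which is what makes every sum collapse onto its single surviving term.
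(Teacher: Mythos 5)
Your proof is correct and follows essentially the same route as the paper: the same degree-counting collapse for the co-Leibniz identity, the same observation that the $(n+1)$-th Taylor coefficient is a difference of $\mathcal O$-multilinear maps, and the same extraction of the polynomial-degree $n+1$ component of $\Xi\circ Q_{\E'}-Q_\E\circ\Xi=\Phi\circ Q_{\E'}-Q_\E\circ\Phi=0$. The only cosmetic difference is your splitting $\Xi\otimes\Xi-\Phi\otimes\Phi=T\otimes\Xi+\Phi\otimes T$ versus the paper's $(\Xi-\Phi)\otimes\Phi+\Xi\otimes(\Xi-\Phi)$, which lead to the same identity at degree $n+1$.
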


 \begin{proof}
  For all $x_1,\ldots,x_k\in\E'$, one has: \begin{align*}
\Delta(\Xi-\Phi)(x_1\odot\cdots\odot x_k)&=\sum^k_{j=1}\sum_{\sigma\in\mathfrak{S}_{(j,k-j)}}\epsilon(\sigma)  \Xi(x_{\sigma(1)}\odot\cdots\odot x_{\sigma(j)})\otimes\Xi(x_{\sigma(j+1)}\odot\cdots\odot x_{\sigma(k)})\\&-\sum^k_{j=1}\sum_{\sigma\in\mathfrak{S}_{(j,k-j)}}  \epsilon(\sigma) ( \Phi(x_{\sigma(1)}\odot\cdots\odot x_{\sigma(j)})\otimes\Phi(x_{\sigma(j+1)}\odot\cdots\odot x_{\sigma(k)})\\&=\left( (\Xi-\Phi)\otimes\Phi+\Xi\otimes(\Xi-\Phi)\right)\circ\Delta'(x_1\odot\cdots\odot x_k).
\end{align*}
Since $\Delta$ has polynomial-degree $0$ and $(\Xi-\Phi)^{(i)}=0$ for all $0\leq i\leq n$, we obtain\begin{equation}
\Delta(\Xi-\Phi)^{(n+1)}(x_1\odot\cdots\odot x_k)=\left( (\Xi-\Phi)^{(n+1)}\otimes\Phi^{(0)}+\Phi^{(0)}\otimes(\Xi-\Phi)^{(n+1)}\right)\circ\Delta'(x_1\odot\cdots\odot x_k).
\end{equation} This proves the first item. 
Since both $\Phi $ and $\Xi $ are $\mathcal O$-multilinear, $(\Xi-\Phi)^{(n+1)}$ is $\mathcal O$-multilinear. This proves the second item.  Since, $\Phi$ and $\Xi$ are  Lie ${\infty}$-morphisms:\begin{equation}
(\Xi-\Phi)\circ Q_{\E'}-Q_{\E}\circ(\Xi-\Phi)=0.
\end{equation}
By looking at the component of polynomial-degree $n+1$, one obtains, $(\Xi-\Phi)^{(n+1)}\circ Q_{\E'}^{(0)}-Q_{\E}^{(0)}\circ(\Xi-\Phi)^{(n+1)}=0.$ This proves the third item.
\end{proof}
\vspace{2cm}
\begin{tcolorbox}[colback=gray!5!white,colframe=gray!80!black,title=Conclusion:]
This chapter recapitulates classical definition of Lie $\infty$-algebroids, and in particular describes it as co-derivation, which is not usual. Morphisms and homotopies are described. About homotopies, two versions are given: one uses smooth maps, and is way easier (see Definition \ref{def:general-idea-homotopy}). Unfortunately, to allow infinitely many gluings, we have to introduce a more complicated notion of homotopy, which uses continuous locally $C^1$-maps (see Definition \ref{def:homotopy}).
\end{tcolorbox}

\chapter{Lie-Rinehart algebras and their morphisms}\label{chap-Lie-Rinehrt}

Except for Remark \ref{loc:res}, this section is essentially a review of the literature on the subject, see, e.g. \cite{MR2075590,MR1625610}. 
\section{Definitions}
 Lie-Rinehart algebras are the algebraic encoding of the notion of Lie algebroids over a manifold. We owe this concept to \cite{RinehartGeorgeS}.
\begin{definition}
A \emph{Lie-Rinehart algebra} over an algebra  $ \mathcal O$ is a triple $(\mathcal A , [\cdot, \cdot]_\mathcal A , \rho_{\mathcal A})$ with $ \mathcal A$ an $ \mathcal O$-module, $[\cdot, \cdot]_\mathcal A  $ a Lie algebra bracket on  $\mathcal A $, and $ \rho_\mathcal A \colon \mathcal A \longrightarrow  {\mathrm{Der}}(\mathcal O)$ an $ \mathcal O$-linear Lie algebra morphism called \emph{anchor map}, satisfying the so-called \emph{Leibniz identity}:
 $$   [  a,  f b ]_\mathcal A  = \rho_\mathcal A (a ) [f] \, b + f [a,b]_\mathcal A  \hbox{ for all $ a,b \in \mathcal A, f \in \mathcal O$}.$$

 \begin{enumerate}
     

\item
  Let $\eta\colon\mathcal O\longrightarrow\mathcal{O}'$ be an algebra morphism.  A \emph{Lie-Rinehart algebra morphism over $\eta$} is a Lie algebra morphism $\phi\colon\mathcal A\longrightarrow\mathcal{A}'$  such that for every $a\in\mathcal A$ and $f\in\mathcal O$:\begin{enumerate}
    \item $\phi(fa)=\eta(f)\phi(a)$
    \item $\eta(\rho_\mathcal A(a)[f])=\rho_{\mathcal A'}(\phi(a)[\eta(f)]$,
\end{enumerate}When $\mathcal O =\mathcal O'$ and $\eta=\text{id}$, we say that $\phi$ is a \emph{Lie-Rinehart algebra morphism} \emph{over $\mathcal{O}$}.
\item A submodule $\mathcal{B}\subseteq \mathcal{A}$ is a said to be a \emph{Lie-Rinehart subalgebra of $\mathcal A$} if it carries a Lie-Rinehart algebra structure over $\mathcal{O}$ whose Lie bracket and anchor map are the restriction of the bracket $\lb_A$ and the anchor $\rho_A$ respectively to $\mathcal{B}$. 
\end{enumerate}

\end{definition}
\vspace{0.3cm}
Lie-Rinehart algebras over $\mathcal{O}$ form a category that we denote by \textbf{Lie-Rhart-alg}/$\mathcal O$
\begin{remark}
A Lie-Rinehart algebra is said to be a \emph{Lie algebroid} if $ \mathcal A$ is a projective $ \mathcal O$-module. See Example \ref{ex:LieAlg} for the relation with usual Lie algebroid as vector bundles.
\end{remark}
\begin{remark}\label{rmk:basic}
For every Lie $\infty$-algebroid $(\E_\bullet,\ell_\bullet,\rho)$ over $\mathcal O$, the quotient space $\frac{\E_{-1}}{\ell_1(\E_{-2})}$ comes equipped with a natural Lie-Rinehart algebra over $\mathcal O$. The $2$-ary bracket $\ell_2$ goes to quotient to $\frac{\E_{-1}}{\ell_1(\E_{-2})}$ to define a Lie algebra since for all $x\in\E_{-2}$ and  $y\in \E_{-1}$ we have $$\ell_2(\ell_1(x),y)=\ell_1(\ell_2(x,y)).$$ Also, the Jacobi identity holds, since $$\ell_2(\ell_2(x,y),y)+\circlearrowleft(x,y,z) =-\ell_1(\ell_3(x,y,z))\qquad \forall x,y,z\in\E_{-1}.$$In addition, the anchor map $\rho$ goes to quotient to a Lie algebra morphism $\frac{\E_{-1}}{\ell_1(\E_{-2})}\rightarrow \mathrm{Der}(\mathcal O)$, since $\rho\circ \ell_1=0$. This Lie-Rinehart algebra  is called the \emph{basic} Lie-Rinehart algebra of $(\E_\bullet,\ell_\bullet,\rho)$.

To summarize, every Lie $\infty$-algebroid induces a Lie-Rinehart algebra. The opposite direction, that is, wondering whether a Lie-Rinehart algebra $(\mathcal{A},\lb_\mathcal A,\rho)$ over $\mathcal{O}$ is the basic Lie-Rinehart algebra of some  almost differential graded Lie algebroid or more generally a Lie $\infty$-algebroid over $\mathcal O$ is part of the questions that I discussed in this thesis (see Chapter \ref{Chap:main}). This extends the main results of \cite{LLS} from locally real analytic finitely generated singular foliations (see Example \ref{ex:singfoliation}) to arbitrary Lie-Rinehart algebras. 
\end{remark}

Let us fix some vocabulary that will be used in the sequel.
\begin{definition}
We say that an almost differential  graded Lie algebroid $(\E_\bullet,\ell_1, \ell_2, \rho)$ or a Lie $\infty$-algebroid $(\E_\bullet,\ell_\bullet,\rho)$ over $\mathcal O$

\begin{enumerate}
    \item \emph{covers (through a hook $\pi$)} a Lie-Rinehart algebra $(\mathcal A,  [\cdot\,, \cdot]_\mathcal A, \rho_\mathcal{A})$, if there exists a morphism of brackets $\pi\colon \E_{-1}\rightarrow \mathcal A$ such that $\pi\circ \rho_\mathcal{A}=\rho$ and  $\pi(\E_{-1})=\mathcal{A}$,
    
    \item \emph{terminates in $\mathcal{ A}$
through the hook $\pi$}, when $\pi(\E_{-1})\subseteq\mathcal{A}$.
\end{enumerate}
\end{definition}
According to Remark \ref{rmk:basic}, any Lie $\infty$-algebroid $(\E_\bullet,\ell_\bullet,\rho)$ covers its basic Lie-Rinehart algebra through the hook $\pi$ which is given by the projection $\pi \colon \E_{-1} \longrightarrow \mathcal E_{-1} / \ell_1 (\mathcal E_{-2})$, since $\pi$ respects the brackets and $\pi(\E_{-1})=\mathcal E_{-1} / \ell_1 (\mathcal E_{-2})$.
\begin{definition}
  Let $(\E_\bullet',\ell'_\bullet,\rho')$ and $(\E_\bullet,\ell_\bullet,\rho)$ and be Lie $\infty$-algebroids over $\mathcal O$ that terminate in a Lie-Rinehart algebra $(\mathcal A,  [\cdot\,, \cdot]_\mathcal A, \rho_\mathcal{A})$ through the hooks $\pi'$ and, $\pi$ respectively.  We say that a Lie $\infty $-algebroid morphism $\Phi\colon S_\mathbb{K}^\bullet(\E')\rightarrow S_\mathbb{K}^\bullet(\E)$  is \emph{over}  $\mathcal A $, if $\pi \circ \Phi_{0}= \pi'$.
\end{definition}
We will need the following lemma.
\begin{lemma}
Let $(\E'_\bullet,\ell_\bullet',\rho')$ and $(\E_\bullet,\ell_\bullet,\rho)$ be Lie $\infty$-Lie algebroids that terminate in some Lie-Rinehart algebra  $(\mathcal A,\lb_A,\rho_A)$ through hooks $\pi'$ and $\pi$. Let $(J_t,H_t)_{t\in[a,b]}$ be a homotopy that joins $J_a$ and $J_b$. If $J_a$ is Lie $\infty$-algebroid morphism that terminates at $\mathcal A$ (i.e., $\pi \circ {J_a^{(0)}}_{|_{\E'}}=\pi'$), then so is the $\infty$-algebroid morphism $ J_t$ for all $t\in[a,b]$.
\end{lemma}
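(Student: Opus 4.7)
The claim reduces to showing that, for every $v\in\E'_{-1}$, the function $t\mapsto \pi\bigl(J_t^{(0)}(v)\bigr)$ is constant on $[a,b]$ and equal to $\pi'(v)$. The strategy is to differentiate this expression in $t$ using the homotopy equation \eqref{eq-diff}, observe that every term on the right-hand side is annihilated by $\pi$, and then invoke the piecewise rational continuity of $t\mapsto J_t^{(0)}$ together with the initial condition at $t=a$.

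Concretely, taking the $0$-th Taylor coefficient of \eqref{eq-diff} and restricting to $v\in\E'_{-1}$ gives
\begin{equation*}
\frac{\dd J_t^{(0)}}{\dd t}(v)=Q_{\E}^{(0)}\bigl(H_t^{(0)}(v)\bigr)+H_t^{(0)}\bigl(Q_{\E'}^{(0)}(v)\bigr).
\end{equation*}
Since $H_t$ has degree $-1$, we have $H_t^{(0)}(v)\in\E_{-2}$, so $Q_{\E}^{(0)}\bigl(H_t^{(0)}(v)\bigr)=\ell_1\bigl(H_t^{(0)}(v)\bigr)\in\ell_1(\E_{-2})$. Moreover $Q_{\E'}^{(0)}|_{\E'_{-1}}=\ell_1'|_{\E'_{-1}}=0$, since the complex \eqref{eq:oid-complex} has no term in degree $0$. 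Hence $\tfrac{\dd J_t^{(0)}}{\dd t}(v)=\ell_1\bigl(H_t^{(0)}(v)\bigr)$, which lies in $\ell_1(\E_{-2})$. Because $\pi$ is the hook through which $(\E_\bullet,\ell_\bullet,\rho)$ terminates in $\mathcal{A}$, it factors through the basic Lie-Rinehart algebra $\E_{-1}/\ell_1(\E_{-2})$ (as in Remark \ref{rmk:basic}), so $\pi\circ\ell_1=0$. Applying $\pi$ therefore yields
\begin{equation*}
\frac{\dd}{\dd t}\,\pi\bigl(J_t^{(0)}(v)\bigr)=\pi\circ\ell_1\bigl(H_t^{(0)}(v)\bigr)=0
\end{equation*}
at every $t\in\,]a,b[\,$ which is not a gluing point for $J_t^{(0)}$ or $H_t^{(0)}$.

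Finally, since $t\mapsto J_t^{(0)}$ is piecewise rational continuous by Definition \ref{def:homotopy}, so is $t\mapsto \pi\bigl(J_t^{(0)}(v)\bigr)$. A continuous piecewise rational function whose derivative vanishes outside finitely many points is constant on $[a,b]$ (Lemma \ref{lem:primitivesDerivatives} and Remark \ref{rmk:Riemann-integral}). Evaluating at $t=a$ gives $\pi\bigl(J_t^{(0)}(v)\bigr)=\pi\bigl(J_a^{(0)}(v)\bigr)=\pi'(v)$ for every $v\in\E'_{-1}$ and every $t\in[a,b]$, which is exactly the statement that $J_t$ terminates in $\mathcal{A}$. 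The only subtle step is identifying the relevant part of \eqref{eq-diff} and using that the hook kills $\ell_1(\E_{-2})$; everything else is a direct application of the definitions.
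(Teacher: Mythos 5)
Your proof is correct and follows essentially the same route as the paper: both arguments use the polynomial-degree-zero part of the homotopy equation \eqref{eq-diff} restricted to $\E'_{-1}$, observe that the right-hand side lands in $\ell_1(\E_{-2})\subseteq\ker\pi$ (the paper phrases this by integrating, as in Equation \eqref{sol2}, while you differentiate $\pi\circ J_t^{(0)}$ and conclude constancy), and then evaluate at $t=a$. The only point the paper adds that you omit is a one-line remark that $\mathcal O$-multilinearity of $J_t$ is preserved via Proposition \ref{linearity}, but since Definition \ref{def:homotopy} already requires $J_t$ to be a Lie $\infty$-algebroid morphism, your argument captures the essential new content.
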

\begin{proof}
This is a direct consequence of Equation \eqref{sol2},
since ${Q_{\mathcal E}^{(0)}}_{|\E}= \ell_1 : \mathcal E_{-2} \to \mathcal E_{-1} $ and ${Q_{\mathcal E'}^{(0)}}_{|\E'}= \ell_1' :  \mathcal E_{-2}' \to \mathcal E_{-1}' $ are valued in the kernels of $ \pi$ and, $ \pi'$ respectively.

Last, $\mathcal{O}$-multilinearity of $ J_t$ follows from the $\mathcal{O}$-multilinearity of $Q_{\E}\circ H_t+H_t\circ Q_{\E'}$, which is granted by Proposition \ref{linearity}. This completes the proof.
\end{proof}

\section{Algebraic and geometric examples}\label{sec:Alg-Geo-ex}
\begin{example}
For every commutative $\mathbb K $-algebra $ \mathcal O$, the Lie algebra $\mathcal A= {\mathrm Der}({\mathcal O}) $ of derivations of a commutative algebra $ \mathcal O$ is a Lie-Rinehart algebra over $ \mathcal O$, with the identity as an anchor map. This Lie-Rinehart algebra is a terminal object in the category \textbf{Lie-Rhart-alg}/$\mathcal O$: for every Lie-Rinehart $(\mathcal{A},\lb_\mathcal{A}, \rho_\mathcal{A})$ the anchor $\mathcal A \overset{ \rho_\mathcal A}{\longrightarrow}  \mathrm{Der}(\mathcal O)$ is obviously a Lie-Rinehart algebra morphism over $\mathcal{O}$.

In particular, vector fields on a smooth or Stein manifold or an affine variety are examples of
Lie-Rinehart algebras over their respective natural algebras of functions. 	
\end{example}

\begin{example} \label{ex:LieAlg}
Let $M$ be a smooth manifold. By Serre-Swan Theorem, Lie algebroids over $M$ are precisely Lie-Rinehart algebras over $C^\infty(M)$
of the form $(\Gamma(A), [\cdot, \cdot], \rho) $
	where $A$ is a vector bundle over $M$ and  $\rho : A\rightarrow T M$ is a vector bundle morphism.
\end{example}
\begin{example}
Sections of a Lie algebroid that have values in the kernel of the anchor map form a Lie-Rinehart algebra ${\mathrm{Ker}}(\rho_A) $ for which the anchor map is zero.
\end{example}

The next two examples are part of our the main source of motivation to study Lie-Rinehart algebras. The second is more general than the first one.
\begin{example}{\it{Singular foliations.}}\label{ex:singfoliation}
 \cite{AndroulidakisIakovos,AndroulidakisZambon,Cerveau, Debord,LLS,LLL} A \emph{singular foliation} on a   smooth, real analytic, or complex manifold $M$ or Zariski open subset $\mathcal{U} \subseteq \mathbb C^d$  is a subsheaf $\mathfrak{F}\subseteq\mathfrak{X}(M)$ that fulfills the following conditions
\begin{itemize}
     \item \textbf{Stability under Lie bracket}: $[\mathfrak{F},\mathfrak{F}]\subseteq \mathfrak{F}$.
     \item \textbf{Locally finitely generateness}: every $m\in M$ admits an
open neighborhood $\mathcal{U}$ together with a finite number of vector fields  $X_1, \ldots, X_r \in \mathfrak{X}(\mathcal U)$ such that for
every open subset $\mathcal V \subseteq \mathcal U$ the vector fields ${X_1}_{|_\mathcal{V}} ,\ldots, {X_r}_{|_\mathcal{V}}$ generates $\mathfrak{F}$ on $\mathcal{V}$ as a $C^{\infty}(\mathcal{V})$-module.
 \end{itemize}
 
There are several other ways to define singular foliations on a manifold $M$ \cite{AZ,Cerveau,Dazord,Debord}.
All these definitions have in common to define then as Lie-Rinehart sub-algebras $\mathfrak F$ of the Lie-Rinehart algebra $\X (M) $ of vector fields on $M$ (or compactly supported vector fields $\X_c(M) $ on $M$). Here are some important consequences of the above definition.

\begin{enumerate}
     \item Singular foliation admits leaves: there exists a partition of $M$ into submanifolds called leaves such that for all $m\in M$, the image of the evaluation map $\mathfrak F \to T_m M  $  is the tangent space of the leaf through $m$. When $\mathfrak F $ coincides with the space of vector fields tangent to all leaves at all points, we shall speak of a \emph{\textquotedblleft Stefan-Sussman singular foliation\textquotedblright}.
     \item \emph{Singular foliations are self-preserving }: the flow of vector fields in $\mathfrak F$, whenever defined, preserves $\mathfrak F$ \cite{AndroulidakisIakovos,GarmendiaAlfonso}.
\end{enumerate}

Notice that in Remark \ref{rmk:NGLA}, 
 $\mathfrak{F}:=\rho(\Gamma(E_{-1}))$ is a singular foliation  in the sense above, called the \emph{basic singular foliation} of $\left(E,(\ell_k)_{k\geq 1}, \rho\right)$. We say, then the Lie $\infty$-algebroid $\left(E,(\ell_k)_{k\geq 1}, \rho\right)$ is \emph{over $\mathfrak{F}$}.
\end{example}

\begin{example}\cite{ZambonMarco2,ZambonMarco3,ZambonMarco}
Let $(A,\lb_A, \rho_A)$ be a Lie algebroid over a manifold $M$. A	\emph{singular subalgebroid $\mathcal{B}$ of $A$}, is a $C^{\infty}(M)$-submodule of $\Gamma(A)$ that is  locally finitely generated  and  involutive i.e.   stable under the  Lie bracket $\lb_A$. This notion is a generalization of singular foliations in the sense of Example \ref{ex:singfoliation}, since  singular foliations on $M$ are singular subalgebroids of $T M$.

A singular subalgebroid $\mathcal B$ is an  example of Lie-Rinehart algebra: its bracket and anchor are the restrictions of $\lb_A$ and $\rho_A$ to $\mathcal{B}$ respectively.

\end{example}

\begin{example}
	For a singular foliation $\mathfrak{F}$ on a manifold $M$, consider $\mathcal{S}:=\left\lbrace X\in\mathfrak{X}(M)\mid [X,\mathfrak{F}]\subseteq\mathfrak{F} \right\rbrace $ (i.e. infinitesimal symmetries of $\mathfrak F$) and $$\mathcal{C}:=\left\lbrace f\in\mathcal{C}^\infty(M)\mid Y[f]=0, \;\text{for all}\;Y\in\mathfrak{F}\right\rbrace $$ (that can be thought of as functions constant along the leaves of $\mathfrak F$). The quotient $\frac{\mathcal{S}}{\mathfrak{F}}$ is Lie-Rinehart algebra over  $\mathcal{C}$.
\end{example}

\begin{example} [\emph{Poisson manifold}]\label{ex:OverCasimir} \cite{CPA} We recall that a \emph{Poisson manifold} is a manifold $M$ together with a biderivation $\{\cdot\,,\cdot\}$ on its algebra of smooth functions $C^\infty(M)$ that satisfies Jacobi's identity, i.e. 

\begin{enumerate}
    \item $(C^\infty(M),\{\cdot\,,\cdot\})$ is Lie algebra.
    \item The biderivation $\{\cdot\,,\cdot\}$ is compatible with the product of functions in the following sense: for all $f,g,h\in C^\infty(M)$ $$\{f.g,h\}=f.\{g,h\}+g.\{f,h\}.$$
\end{enumerate}

This is equivalent to giving a bivector field $\pi\in \Gamma(\wedge^2 TM)$ such that the Schouten-Nijenhuis bracket with itself vanishes, that is, $[\pi,\pi]_{SN}=0$.\\

For every a Poisson manifold $(M,\pi)$, the operator $\delta_\pi\colon \Gamma(\wedge^\bullet TM)\rightarrow \Gamma(\wedge^{\bullet+1}TM), P\mapsto -[P,\pi]_{SN}$ defines a complex 

$$\xymatrix{\cdots\ar[r] &\Gamma(\wedge^{p-1}TM)\ar[r]^{\delta^{p-1}_\pi}&\Gamma(\wedge^{p}TM)\ar[r]^{\delta_\pi^p }&\Gamma(\wedge^{p+1}TM)\ar[r]&\cdots},$$

since $[\pi,\pi]_{SN}=0$ implies $\delta_\pi\circ \delta_\pi=0$. For every $p\in\mathbb{N}_0$, the quotient $H^p_\pi(M):= \frac{\ker \delta_\pi^p}{\mathrm{Im}\delta_\pi^{p-1}}$ is called the \emph{$p$-th Poisson cohomology of $(M,\pi)$}. We define $\mathcal{A}:=\text{H}_\pi^1(M)$ to be the first Poisson cohomology of $\pi$ and $\mathcal{O}:=\text{H}_\pi^0(M)=\text{Cas}(\pi)$ to be the algebra of Casimir functions. The bracket of vector fields makes $\mathcal A $ a Lie-Rinehart algebra over $\text{Cas}(\pi)$.
\end{example}

\subsection{Basic constructions}
\label{LR-construction}
\label{ex:vanishing}\label{loc:res}
Let $(\mathcal A,\lb_\mathcal A,\rho_\mathcal A)$ a Lie-Rinehart algebra over an algebra $\mathcal O$ and $\mathcal I\subset\mathcal O$ be an ideal. The submodule $\mathcal I\mathcal A$ is a Lie-Rinehart subalgebra of $\mathcal A$ and its anchor is given by the restriction of $\rho_\mathcal A$ over $\mathcal I \mathcal A\subset\mathcal A$. This follows easily from $$ [ f a, g b]_\mathcal A = fg [a,b]_\mathcal A + f \rho_\mathcal A (a) [g] \, b - g \rho_\mathcal A (b) [f] \, a  \hbox{ for all $a,b \in \mathcal A, f,g \in \mathcal I$}.$$

\begin{enumerate}
    \item \emph{Restriction.}
Consider a Lie-Rinehart algebra $(\mathcal A , [\cdot, \cdot]_\mathcal A , \rho_{\mathcal A})$  over $\mathcal O $.
For every \emph{Lie-Rinehart ideal} $\mathcal I \subset \mathcal O$, i.e. any ideal
such that
$$ \rho_\mathcal A (\mathcal A) [\mathcal I] \subset \mathcal I$$
the quotient space $\mathcal A / \mathcal I \mathcal A $ inherits a natural Lie-Rinehart algebra structure over $\mathcal O / \mathcal I$. We call this Lie-Rinehart algebra the \emph{restriction w.r.t the Lie-Rinehart ideal $ \mathcal I$}. In the context of affine varieties, when $\mathcal I$ is the ideal of functions vanishing on an affine sub-variety $W$, we shall denote $\frac{\mathcal A}{\mathcal{I}\mathcal A}$ by $\mathfrak i_W^* \mathcal A $. 
    \item \emph{Localization.}\label{def:localisation} Localizing w.r.t  a multiplicative subset $S\subset\mathcal O$ (i.e.  $S\subseteq \mathcal{O}\setminus \{0\}$ is closed under multiplication and $1\in S$) is a very powerful tool in commutative algebra and in algebraic geometry to deal with "global" problems by reducing them to "local" ones. Let us recall the construction (e.g. see \cite{stacks-project}, Section 10.9  or \cite{A.Gathmann} for details). \begin{enumerate}
        \item Let $\mathcal V$ be a $\mathcal{O}$-module. The \emph{localization of $\mathcal V$ at $S$} is defined as follows: consider the equivalence relation on $S\times \mathcal V$ that is given by
        $$(s,v)\sim (s',v') \Longleftrightarrow \exists u\in S,\; u(s'v-sv')=0.$$The class of an element $(s,v)\in S\times \mathcal  V$ by $\frac{v}{s}$, and by $S^{-1}\mathcal V:=S\times \mathcal V/_\sim$ the set of equivalence classes.\\
        
        In particular, the \emph{localization of $\mathcal{O}$ at $S$} is the localization of
        $\mathcal{O}$ as a $\mathcal{O}$-module. In that case, $S^{-1}\mathcal O$ is a $\mathbb{K}$-algebra together with the addition and multiplication defined by

        \begin{align*}
            \frac{f}{s}+\frac{g}{s'}:=\frac{fs'+sg}{ss'}\quad \text{and}\quad \frac{f}{s}\frac{g}{s'}:=\frac{fg}{ss'}.
        \end{align*}
      Similarly, $S^{-1}\mathcal V$ is a $S^{-1}\mathcal O$-module with addition and scalar product multiplication defined in an obvious way. Also, we have $S^{-1}\mathcal V\simeq  S^{-1}\mathcal{O}\otimes_\mathcal{O}\mathcal V$.
        \item A derivation $D\in \mathrm{Der}(\mathcal{O})$ admits a unique extension to a derivation $S^{-1}D\in\mathrm{Der}(S^{-1}\mathcal O)$ which given for $(s,f)\in S\times \mathcal{O}$ by the classical formula
        
        $$(S^{-1}D)\left(\frac{f}{s}\right):= \frac{D(f)s-D(s)f}{s^2}.$$
        
    \end{enumerate}

Let $(\mathcal{A},\lb_\mathcal{A},\rho)$ be a Lie-Rinehart algebra over $\mathcal O$. The localization module $S^{-1}\mathcal A= S^{-1}\mathcal O\otimes_\mathcal O\mathcal A$ comes equipped with a natural structure of Lie-Rinehart algebra over the localization algebra $S^{-1}\mathcal O$. The new anchor map is defined by $$\frac{a}{s}\in S^{-1}\mathcal{A}\mapsto \frac{1}{s}S^{-1}(\rho_\mathcal{A}(a))\in\mathrm{Der}(S^{-1}\mathcal O),$$ and the new Lie algebra bracket is given by $$\left[\frac{1}{s}a,\frac{1}{u} b\right]_{S^{-1}\mathcal{A}} =\frac{1}{su}[a, b]_\mathcal{A}- \frac{\rho_{\mathcal{A}}(a) [u]}{su^2} \, b + \frac{\rho_{\mathcal{A}}(b) [s]}{s^2u} \, a$$
for $a,b\in\mathcal{A}, (s,u)\in S^2$. The localization map $\mathcal A\hookrightarrow S^{-1}\mathcal A$ is a Lie-Rinehart algebra morphism over the localization map $\mathcal O\hookrightarrow S^{-1}\mathcal O$.
    Since localization exists, the notion of sheaf of Lie-Rinehart algebras \cite{VillatoroJoel} over a projective variety, or a scheme, makes sense.
      \item \emph{Algebra extension.} Assume that the algebra $\mathcal O $  has no zero divisor, and let $\mathbb O $ be its field of fraction. For any subalgebra $\tilde{\mathcal O}$ with $\mathcal O \subset \tilde{\mathcal O} \subset \mathbb O$ such that $\rho(a) $ is for any $a \in \mathcal A$ valued in derivations of $ {\mathbb O}$ that preserves $ \tilde{\mathcal O} $, there is a natural Lie-Rinehart algebra structure  over  $\tilde{\mathcal O} $ on the space $ \tilde{\mathcal O}\otimes_{\mathcal O}\mathcal A$. 
\item {\emph{Blow-up at the origin.}} Let us consider a particular case of the previous construction, when $\mathcal O $ is the algebra $\mathbb C [x_1, \dots, x_N] $. If  the anchor map of a Lie-Rinehart algebra $\mathcal A $ over $\mathcal O $ takes values in vector fields on $\mathbb C^N $ vanishing at the origin, then for all $i=1, \dots, N$,   the polynomial algebra ${\mathcal O}_{U_i} $ generated by $ \frac{x_1}{x_i} , \dots,\frac{x_{i-1}}{x_i},$ $  x_i, \frac{x_{i+1}}{x_i},\dots, \frac{x_N}{x_i} $ satisfies the previous condition, and  ${\mathcal O}_{U_i} \otimes_\mathcal O \mathcal A$ comes equipped with a Lie-Rinehart algebra. Geometrically, this operation corresponds to taking the blow-up of $\mathbb C^N $ at the origin, then looking at the $i$-th natural chart $U_i $ on this blow-up: $\mathcal O_{U_i} $ are the polynomial functions on $U_i $. The family $ {\mathcal O}_{U_i} \otimes_{\mathcal O}\mathcal A$ (for $i=1, \dots,N$) is therefore an atlas for a sheaf of Lie-Rinehart algebras (in the sense of \cite{VillatoroJoel}) on the blow-up of $\mathbb C^N $ at the origin, referred to as the \emph{blow-up of $\mathcal A $ at the origin}.
\end{enumerate}

\subsection{On free resolutions of length $\leq 2$ and Lie-Rinehart algebras}\label{Lie 2-algebras}

In this section, we discuss the case when a Lie-Rinehart algebra $\mathcal{A}$ admits a free resolution of length $1$ and $2$. In those cases, we claim that there are Lie algebra-like structures on them. See \ref{app:complexes-modules} for the notion of free resolution of modules. These results will be soon generalized.\\

I start with the following remark (owed to Marco Zambon).

\begin{remark}\label{rmk:almost}
Any Lie-Rinehart algebra $(\mathcal A, \lb_\mathcal{A}, \rho_\mathcal{A})$ is the image of an almost differential graded Lie algebroid $(\mathcal{E}_{-1}, \rho)$ concentrated in degree $-1$: to see this, let $\{a_i\in \mathcal{A}\mid i\in I\}$ be a set of generators of $\mathcal{A}$ (take all elements of $\mathcal A$ if necessary). There exists elements $u^k_{ij}\in\mathcal{O}$, such that for given indices $i,j$, the coefficient $u^k_{ij}$ is zero except for finitely many indices $k$, together with
\begin{equation} 
\left[ a_i,a_j\right] _{\mathcal{A}}=\sum_{k\in  I}u^k_{ij} a_k \hspace{.5cm} \forall i,j \in I.
\end{equation} 
The coefficients $u_{ij}^k$ can be chosen to satisfy the skew-symmetry condition $u^k_{ij}=-u^k_{ji}$: by skew-symmetry of the bracket $\lb_\mathcal{A}$ on has \begin{align*}
    [a_i, a_j]_\mathcal{A}&=\frac{1}{2}\left([a_i, a_j]_\mathcal{A}-[a_j, a_i]_\mathcal{A}\right)\\&=\sum_{k\in I}\frac{1}{2}(u^k_{ij}-u^k_{ji})a_k.
\end{align*}Thus, one can replace $u_{ij}^k$ by $\frac{1}{2}(u^k_{ij}-u^k_{ji})$ if necessary.\\

Now, choose $\E_{-1}$ to be the free $\mathcal O$-module generated by the symbols $(e_i)_{i\in I}$ together with the surjective map  $$\pi\colon \E_{-1}\rightarrow \mathcal{A}, \; e_i\mapsto a_i.$$\\
\noindent
We now define: 
\begin{enumerate}
\item an anchor map by $\rho(e_i):=\rho_{\mathcal{A}}(a_i$), for all $i\in I$, i.e. $\rho=\rho_\mathcal{A}\circ\pi$,
\item a skew-symmetric operation $\lb_{\E_{-1}} $ on $ \E$ as follows:
$$[e_i,e_j]_{\E_{-1}} =\sum_{k\in I}u^k_{ij}e_k\;\text{for  all $i,j\in I$}.$$
\end{enumerate}
We extend by $\mathcal{O}$-linearity and Leibniz identity. By construction $(\mathcal{E}_{-1}, {\lb_{\E}}_{-1}, \rho$) is an almost Lie algebroid over $\mathcal{O}$ whose image through the anchor map is $\mathcal{A}$. 
\end{remark}

\begin{remark}In general, this bracket does not satisfy the Jacobi identity. If $\mathcal{E}_{-1}\simeq \mathcal{A}$, this bracket is a Lie algebroid.
\end{remark}

\subsubsection{On free resolutions of length $1$}
Lie-Rinehart algebra $\mathcal{A} $ admits a free resolution of length $1$ if and only if $\mathcal{A}$ is free. In that case, the almost Lie algebroid bracket 
$[\,\cdot, \cdot]_{\E_{-1}}$ is a Lie algebroid bracket. In conclusion: free resolutions of length $1$ admit a Lie algebroid structure.

\subsubsection{Free resolutions of length $2$}
Let $(\mathcal A, \lb_\mathcal{A}, \rho_\mathcal{A})$ be a Lie-Rinehart algebra that admits a free resolution of length $2$, namely an exact sequence of the form
 \begin{equation}\label{eq:length2}
    \xymatrix{0\ar[r]&\E_{-2}\ar[r]^{\ell_1}&\E_{-1}\ar@{.>>}@/_2pc/[rr]_\rho\ar@{->>}[r]^{\pi}&\mathcal{A}\ar@{-->}[r]^{\rho_\mathcal A}&\mathrm{Der}(\mathcal{O})}
\end{equation}with $\E_{-1},\, \E_{-2}$ free modules.\\
\noindent
Since Equation \eqref{eq:length2} is a free resolution of $\mathcal{A}$, the map $\pi$ is in particular surjective. By Remark \ref{rmk:almost},  $\E_{-1}$ can be endowed with an almost Lie algebroid bracket, such that $\pi$ is a morphism of brackets. We can extend this bracket to sections of degree $-2$ to obtain an almost differential graded Lie algebroid. Let us compute it:\\

\noindent
Let  $(e'_i)_{i\in {I}'}$ and  $(e_j)_{i\in{I}}$ be a basis of  $\E_{-2}$ and $\E_{-1}$, respectively. For all $i,j,k\in I\cup I'$ we have

\begin{enumerate}
    \item 

\begin{equation*}
        \pi([\ell_1e'_i,e_j]_{\E_{-1}})=[\pi\circ\ell_1(e'_i),\pi(e_j)]=0,\; \hbox{(since   $\pi\circ\ell_1\equiv 0$).}
    \end{equation*}
In other words, $[\ell_1(e'_i),e_j]_{\E_{-1}}\in \ker \pi$. By exactness of the complex \eqref{eq:length2} there exists an element  $\nabla_{e'_i}e_j\in \E_{ -2}$ such that \begin{equation}\label{eq:nabla}
    \pi(\nabla_{e'_i}e_j)=[\ell_1(e'_i),e_j]_{\E_{-1}}.
\end{equation}
Equation \eqref{eq:nabla} allows defining a bilinear map:
     $$\begin{array}{lll} \E_{-1} \otimes \E_{-2} &\to & \E_{-2}\\ \hspace{0.8cm}(x,y) & \mapsto & \nabla_x y \end{array}$$
     
     by extending the $\nabla_{e'_i}e_j$'s by linearity and Leibniz identity with the understanding that the anchor map $\rho$ vanishes on  $\E_{-2}$ in order to have 
     
     \begin{enumerate}
         \item $\ell_1(\nabla_{x}y)=[\ell_1(x),y]_{\E_{-1}}$,\;$\forall x\ \E_{ -2},\, y\in  \E_{ -1}$,
         \item for all $f\in \mathcal{O}$:
      $ \nabla_x fy =  f \nabla_x y + \rho(x)[f] \, y$ and $ \nabla_{fx} y = f  \nabla_{x}y$, \text{for  all}\, $x \in  \E_{-1}, y \in \E_{-2}$,
\end{enumerate}

\item Remember that $$\mathrm{Jac}(e_i,e_j,e_k):=[e_i,[e_j,e_k]_2]_2 +[e_j,[e_k,e_i]_2]_2+[e_k,[e_i,e_j]_2]_2\in \ker \pi.$$ By using again exactness of the complex \eqref{eq:length2} there is an element that we denote by $[e_i,e_j,e_k]_{E_{-1}}\in\E_{-2}$ that satisfies \begin{equation}
    \ell_1\left([e_i,e_j,e_k]_{\E_{-1}}\right)=\mathrm{Jac}(e_i,e_j,e_k).
\end{equation} Thus, we can define a skew-symmetric trilinear map:
     $$[\cdot\,,\cdot\,,\cdot]_{\E_{-1}}\colon\E_{-1}\wedge\E_{-1}\wedge\E_{-1}\longrightarrow \E_{-2}$$ such that 
     $$\ell_1([x,y,z]_{\E_{-1}})= [x,[y,z]_2]_2 +[y,[z,x]_2]_2+[z,[x,y]_2]_2, \; \forall x,y, z\in\E_{-1}. $$\end{enumerate}

The following Proposition concludes the discussion above.
\begin{proposition}\label{prop:2-Lie-algebroid}
Let $(\mathcal{A}, \lb_\mathcal A, \rho_\mathcal A)$ be a Lie-Rinehart algebra that admits a free resolution of length $2$ as in \eqref{eq:length2}


\begin{enumerate}
\item $\E_{-1}$ admits an almost Lie algebroid structure $(\E_{-1},[\cdot\,,\cdot]_{E_{-1}},\rho)$.
    \item There is 
     a bilinear map:
     $$   \begin{array}{lll} \E_{-1} \otimes \E_{-2} &\to & \E_{-2}\\ \hspace{0.8cm}(x,y) & \mapsto & \nabla_x y \end{array}    $$ 
     \item[] and a skew-symmetric trilinear map:
     $$[\cdot\,,\cdot\,,\cdot]_{\E_{-1}}\colon\E_{-1}\wedge\E_{-1}\wedge\E_{-1}\longrightarrow \E_{-2}$$ 
     \item[] such that  for all $f\in\mathcal{O}$:
     \begin{enumerate}
         \item  $ \nabla_x fy =  f \nabla_x y + \rho(x)[f] \, y$ and $ \nabla_{fx} y = f  \nabla_{x}y$, \text{for  all}\, $x \in  \E_{-1}, y \in \E_{-2}$,
         \item $[fx,y,z]_{\E_{-1}} = f [x,y,z]_{\E_{-1}}$ for  all\, $x,y,z \in  \E_{-1}$,
     \end{enumerate}
     such that the \emph{$2$-ary bracket} on $\E_{-1} \oplus \E_{-2} $ defined by: 
     $$ [x,y]_2 = \left\{  \begin{array}{ll} [x,y]_{\E_{-1}}  & \hbox{ for}\quad x,y \in \E_{-1}\\ 
     \nabla_x y  & \hbox{ for}\quad x \in \E_{-1},\, y \in \E_{-2} \\ 
     \nabla_y x  & \hbox{ for}\quad x \in \E_{-2},\, y \in \E_{-1}\\ 
     0  & \hbox{ for}\quad x,\,y \in \E_{-2}\\ 
     \end{array} \right. $$
     together with the \emph{$3$-ary bracket} on $\E_{-1} \oplus \E_{-2}$ defined by $[x,y,z]_3=[x,y,z]_{\E_{-1}}  $ if $x,y,z \in \E_{-1}$ and zero otherwise,
    satisfies
    \begin{enumerate}
    \item  for all $x\in\E_{-2}, y\in\E_{-1}$, \begin{equation}
        \ell_1([x,y]_2)+[\ell_1(x),y]_2=0,
        \end{equation}
        \item for all $x,y,z \in\E_{-1}$
     $$ \ell_1([x,y,z]_3)+  [x,[y,z]_2]_2 +[y,[z,x]_2]_2+[z,[x,y]_2]_2=0
    $$ 
    \item for all $x,y\in\E_{-1}$ and $ z \in \E_{-2}$
   $$
      [x,y,\ell_1(z)]_3 + [x,[y,z]_2]_2 +[y,[z,x]_2]_2+[z,[x,y]_2]_2=0.
   $$
    \end{enumerate}
      \end{enumerate}
\end{proposition}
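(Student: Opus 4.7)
The plan is to build the structure in three stages, using exactness of the resolution at $\E_{-1}$ and $\E_{-2}$ as the key tool, and freeness of $\E_{-1},\E_{-2}$ to make choices on generators.

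\emph{Stage 1 (almost Lie algebroid on $\E_{-1}$).} I would first apply Remark \ref{rmk:almost} directly to the surjection $\pi\colon\E_{-1}\twoheadrightarrow\mathcal A$: choosing a free basis $(e_j)_{j\in I}$ of $\E_{-1}$ and writing $[\pi(e_i),\pi(e_j)]_\mathcal A=\sum_k u_{ij}^k\pi(e_k)$ with $u_{ij}^k=-u_{ji}^k$, set $[e_i,e_j]_{\E_{-1}}:=\sum_k u_{ij}^k e_k$ and extend by $\mathcal O$-bilinearity and the Leibniz rule with anchor $\rho:=\rho_\mathcal A\circ\pi$. By construction $\pi$ is a morphism of brackets, and Leibniz plus skew-symmetry are built in; the anchor is a morphism of brackets modulo $\ker\pi=\ell_1(\E_{-2})$, but since $\rho$ factors through $\pi$ this is enough.

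\emph{Stage 2 (definition of $\nabla$ and of $[\,\cdot,\cdot,\cdot]_{\E_{-1}}$).} For $x\in\E_{-2}$ and $y\in\E_{-1}$, one has $\pi([\ell_1(x),y]_{\E_{-1}})=[\pi\ell_1(x),\pi(y)]_\mathcal A=0$, so $[\ell_1(x),y]_{\E_{-1}}\in\ker\pi=\mathrm{Im}\,\ell_1$. Since $\ell_1$ is injective (length-$2$ resolution), there is a unique $\nabla_x y\in\E_{-2}$ with $\ell_1(\nabla_x y)=[\ell_1(x),y]_{\E_{-1}}$; this gives $\nabla$ canonically, with no choice to make. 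The Leibniz rules in $x$ and $y$ follow from those of $[\,\cdot,\cdot]_{\E_{-1}}$ by applying $\ell_1$ and using injectivity. Similarly, for $x,y,z\in\E_{-1}$ the Jacobiator $\mathrm{Jac}(x,y,z)$ lies in $\ker\pi$ because $\lb_\mathcal A$ is a Lie bracket and $\pi$ is a morphism of brackets; hence there is a unique $[x,y,z]_3\in\E_{-2}$ with $\ell_1([x,y,z]_3)=\mathrm{Jac}(x,y,z)$. Skew-symmetry comes for free from skew-symmetry of $\mathrm{Jac}$. For $\mathcal O$-linearity of $[\,\cdot,\cdot,\cdot]_3$ in each argument, I would check it on one slot using $\pi(\rho(x)[f]\,y)=\rho_\mathcal A(\pi x)[f]\,\pi y$ and the Leibniz rule, and then inject via $\ell_1$; this is a standard computation where the curvature of the anchor disappears modulo $\ell_1$.

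\emph{Stage 3 (the compatibility identities (a), (b), (c)).} Identity (a) is literally the defining equation of $\nabla$: $\ell_1(\nabla_x y)+\ell_1(x),y]_{\E_{-1}}=0$ up to signs, i.e.\ $\ell_1([x,y]_2)=-[\ell_1(x),y]_2$. Identity (b) is the defining equation of $[\,\cdot,\cdot,\cdot]_3$ after applying $\ell_1$. Identity (c) is the real content: for $x,y\in\E_{-1}$ and $z\in\E_{-2}$, I want
\[
[x,y,\ell_1(z)]_3+[x,\nabla_y z]_2+[y,\nabla_z x]_2+[z,[x,y]_2]_2=0,
\]
rewritten in the $(\nabla,\ell_1)$ notation. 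Applying $\ell_1$ to the left-hand side and using (a) and (b) turns every term into a nested $[\,\cdot,\cdot]_{\E_{-1}}$-bracket containing $\ell_1(z)$; the resulting expression is exactly $\mathrm{Jac}(x,y,\ell_1(z))$ plus its two cyclic permutations cancelled by (b), so $\ell_1$ of the desired combination equals $0$. Injectivity of $\ell_1$ then yields (c). The main obstacle will be this last identity: all compatibilities follow after application of $\ell_1$, and injectivity of $\ell_1$ (which is exactly the assumption that the resolution has length $2$) is what allows us to promote those identities from $\E_{-1}$ to $\E_{-2}$. Checking the Koszul signs carefully in the cyclic sums is the only nontrivial bookkeeping.
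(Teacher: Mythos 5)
Your proposal is correct and follows essentially the same route as the paper: build the almost Lie algebroid on $\E_{-1}$ via Remark \ref{rmk:almost}, then use exactness at $\E_{-1}$ to produce $\nabla$ and the $3$-ary bracket as preimages under $\ell_1$ of $[\ell_1(\cdot),\cdot]_{\E_{-1}}$ and of the Jacobiator, and verify the compatibility identities (including $\mathcal O$-linearity) by applying $\ell_1$ and invoking its injectivity. The only cosmetic difference is that the paper defines $\nabla$ and $[\cdot,\cdot,\cdot]_3$ on a free basis and extends by Leibniz/linearity, whereas you exploit injectivity of $\ell_1$ (which holds precisely because the resolution has length $2$) to get these maps canonically — the two constructions coincide here.
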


The structure $(\E_\bullet,\ell_1, \rho,[\cdot\,,\cdot]_2, [\cdot\,,\cdot, \cdot]_3)$ described in Proposition \ref{prop:2-Lie-algebroid} is called \emph{Lie $2$-algebroid} \cite{BaezJohnC2003HAVL,LeanMadeleineJotz2017L2am, ShengYunhe2011HEoL}, i.e. a Lie $\infty$-algebroid with $\E_{-i}=0$ for $i\geq 3$.

A generalization of this construction on free resolutions of higher (even infinite) length is the object of the next Chapter.

\vspace{2cm}

\begin{tcolorbox}[colback=gray!5!white,colframe=gray!80!black,title=Conclusion:]
We describe Lie-Rinehart algebras, give examples and several constructions. Section \ref{Lie 2-algebras} is a pedagogical section, which presents an elementary case of the general case that we will study.
\end{tcolorbox}

\chapter{Main results of Part I}\label{Chap:main}
\section{Presentation of the problem}
In order to understand the geometry of affine varieties or some similar problems related to singular foliations using the Lie algebra of their vector fields which is in fact  Lie-Rinehart algebras, have motivated us to understand Lie-Rinehart algebras in general from another point of view.\\

We have seen in the Chapter \ref{chap-Lie-Rinehrt}, Remark \ref{rmk:basic} that any Lie $\infty$-algebroid over $\mathcal{O}$ induces a Lie-Rinehart algebra which we call its basic Lie-Rinehart algebra. In this chapter, we are investigating the opposite direction, i.e., we study the following questions: given a Lie-Rinehart algebra $\mathcal{A}$ over $\mathcal O$, can we find a Lie $\infty$-algebroid over $\mathcal O$ whose basic Lie-Rinehart algebra is $\mathcal{A}$? Also, in case where it exists, do we have uniqueness?\\

Now let us formalize that in a categorical language.

\noindent
We denote by {\textbf{Lie-$\infty $-alg-oids}/$\mathcal O$} the quotient category where \begin{enumerate}
    \item the objects are Lie $\infty$-algebroids over $\mathcal O$,
    \item arrows are homotopy equivalence classes of morphisms of Lie $\infty$-algebroids over $\mathcal O$.
\end{enumerate}and by \textbf{Lie-Rhart-alg}/$\mathcal O$ the category of Lie-Rinehart algebra over $\mathcal O$.
We want to study the functor,
$$\begin{tabular}{|c|c|c|}
  {\textbf{Lie-$\infty $-alg-oids}/$\mathcal O $}   & $\overset{\mathfrak{F}}{\longrightarrow}$ &  \textbf{Lie-Rhart-alg}/$\mathcal O$\\
     & $\overset{\exists !?}{\longleftarrow}$ &
\end{tabular}$$
The question now turn out to be: when does $\mathfrak{F}$ admit a left/right inverse?\\

In the next section, we present in detail the main results related to this question.
\section{Main results}
\label{sec:main}

\subsubsection{An existence Theorem}

The results below appeared in my first article \cite{CLRL} entitled  "Lie-Rinehart algebra $\simeq$ acyclic Lie $\infty$-algebroid" co-written with my supervisor C. Laurent-Gengoux.\\

This section extends the main results of \cite{LLS} from locally real analytic finitely generated singular foliations to arbitrary Lie-Rinehart algebras.\\

Here is our first main result. It states that universal Lie $ \infty$-algebroids over a given Lie-Rinehart algebra exist. 
  It extends Theorem 2.8 in \cite{LLS}. We are convinced that it may be deduced using the methods of semi-models categories as in Theorem 4.2 in \cite{Fregier}, but does not follow from a simple homotopy transfer argument.

\begin{theorem}\label{thm:existence}
Let $(\mathcal A, \lb_\mathcal A, \rho_\mathcal{A})$  be a Lie-Rinehart algebra over $ \mathcal O$.
Any resolution  of $\mathcal A $ by free $\mathcal O $-modules 
\begin{equation}
\cdots \stackrel{\ell_1} \longrightarrow\E_{-3} \stackrel{\ell_1}{\longrightarrow} \E_{-2} \stackrel{\ell_1}{\longrightarrow} \E_{-1} \stackrel{\pi}{\longrightarrow} \mathcal A \end{equation}
  comes equipped with a Lie $\infty $-algebroid structure whose unary bracket is $\ell_1 $ and that covers $\mathcal A $  through the hook $ \pi$.
\end{theorem}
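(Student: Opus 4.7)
The strategy is to use Proposition \ref{co-diff} and build the Lie $\infty$-algebroid as a homological $\mathcal{O}$-multilinear co-derivation $Q_\mathcal{E}$ on $S^\bullet_\mathbb{K}(\mathcal{E})$, constructed inductively one polynomial-degree at a time. The unary piece is forced: $Q_\mathcal{E}^{(0)}|_{\mathcal{E}_{\leq -2}} = \ell_1$ is the differential of the resolution, and the anchor $\rho : \mathcal{E}_{-1} \to \mathrm{Der}(\mathcal{O})$ must be $\rho := \rho_\mathcal{A} \circ \pi$ in order that axiom (iv) of Definition \ref{def:Linfty} hold and that the hook $\pi$ intertwine anchors. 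The idea is then to solve the master equation $\tfrac{1}{2}[Q_\mathcal{E}, Q_\mathcal{E}] = 0$ order by order in polynomial-degree, using the Richardson--Nijenhuis bracket reformulation from Lemma \ref{lem:RN}.

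\textbf{Low orders.} I would start by building $\ell_2$ explicitly along the lines of Remark \ref{rmk:almost}: pick a free generating set $(e_i)$ of $\mathcal{E}_{-1}$ and choose skew structure constants $u_{ij}^k \in \mathcal{O}$ so that $\pi([e_i, e_j]_{\mathcal{E}_{-1}}) = [\pi(e_i), \pi(e_j)]_\mathcal{A}$, extending by Leibniz with respect to $\rho$. Mixed brackets $\mathcal{E}_{-1} \otimes \mathcal{E}_{-k} \to \mathcal{E}_{-k}$ are produced by the device of Section \ref{Lie 2-algebras}: $[\ell_1(x), y]_{\mathcal{E}_{-1}}$ lies in $\ker\pi = \mathrm{Im}\,\ell_1$, so freeness of $\mathcal{E}_{-1}$ lets me pick preimages and define a covariant derivative $\nabla$, hence $\ell_2$ on all of $\mathcal{E}$. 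The Jacobiator of this $\ell_2$ takes values (by Remark \ref{rmk:basicLR}) in $\ker\rho \cap \ker(\text{descent to }\mathcal{A}) = \ker\pi = \mathrm{Im}\,\ell_1$, and the key point (already used in Proposition \ref{prop:2-Lie-algebroid}) is that it is $\mathcal{O}$-trilinear. A graded symmetric $\mathcal{O}$-trilinear lift along $\ell_1$ then defines $\ell_3$ on $S^3(\mathcal{E}_{-1})$, and similarly on the other tridegrees by acyclicity.

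\textbf{Inductive step.} Assume that $\mathcal{O}$-multilinear brackets $\ell_1, \dots, \ell_n$ have been chosen so that the truncated co-derivation $Q^{\leq n} := \sum_{k \leq n} \bar{\ell}_k$ satisfies the master equation modulo Taylor coefficients of polynomial-degree $\geq n$. Define the obstruction
\[
R_{n+1} \;:=\; -\tfrac{1}{2} \sum_{\substack{i+j=n+2\\ 2\leq i,j \leq n}} [\ell_i, \ell_j]_{\mathrm{RN}} \;:\; S^{n+1}_\mathbb{K}(\mathcal{E}) \longrightarrow \mathcal{E}.
\]
By the graded Jacobi identity of $[\cdot,\cdot]_{\mathrm{RN}}$ applied to the truncated master equation, together with the inductive hypothesis, $[\ell_1, R_{n+1}]_{\mathrm{RN}} = 0$, so $R_{n+1}$ is a chain map from $(S^{n+1}_\mathbb{K}(\mathcal{E}), \bar{\ell}_1)$ into the complex $(\mathcal{E},\ell_1)$ and, evaluating through $\pi$, is even valued in $\ker\pi = \mathrm{Im}\,\ell_1$. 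Because each $\mathcal{E}_{-i}$ is \emph{free} over $\mathcal{O}$, the search for an $\mathcal{O}$-multilinear graded symmetric $\ell_{n+1}$ of degree $+1$ solving
\[
\ell_1 \circ \ell_{n+1} + \ell_{n+1} \circ \bar{\ell}_1 \;=\; R_{n+1}
\]
reduces to choosing, for each free generator of $S^{n+1}_\mathbb{K}(\mathcal{E})$, a preimage in the next module of the resolution; acyclicity of the resolution in negative degrees guarantees that this is always possible. Setting this $\ell_{n+1}$ closes the induction and produces the full Lie $\infty$-algebroid.

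\textbf{Main obstacle.} The delicate point, and the reason no off-the-shelf homotopy transfer theorem applies, is $\mathcal{O}$-multilinearity. Because $\ell_2$ obeys Leibniz rather than being $\mathcal{O}$-linear in degree $-1$ inputs, each bracket $[\ell_i, \ell_j]_{\mathrm{RN}}$ a priori carries anchor-correction terms, and the resolution admits no $\mathcal{O}$-linear section that could be used to lift them away uniformly. Verifying that the anchor corrections collected in $R_{n+1}$ cancel -- so that $R_{n+1}$ is genuinely $\mathcal{O}$-multilinear, allowing the lifting step to be performed in the category of $\mathcal{O}$-modules -- is the crux. I expect to handle it precisely with Proposition \ref{linearity} and Lemma \ref{O-linearity-lemma}, whose role here is exactly to compensate anchor-generated obstructions by systematically exploiting $\rho \circ \ell_1 = 0$ and $\rho \circ \ell_2 = [\rho,\rho]$, reducing the lifting problem to the free $\mathcal{O}$-module setting where freeness plus exactness do the rest.
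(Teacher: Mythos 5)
Your proposal follows essentially the same route as the paper's proof: you first build an almost Lie algebroid bracket $\ell_2$ from generators and correct it using exactness, then obtain the higher brackets recursively by lifting the Richardson--Nijenhuis obstruction $\sum_{i+j=n+2,\,i,j\geq 2}[\ell_i,\ell_j]_{\mathrm{RN}}$ through $\ell_1$ after checking it is $\mathcal{O}$-multilinear (the anchor terms cancel) and $[\ell_1,\cdot\,]_{\mathrm{RN}}$-closed via the graded Jacobi identity, which is exactly what the paper does with the bicomplex $\mathfrak{Page}^{(k)}(\mathcal E)$ and Proposition \ref{bicomplex}. The only cosmetic difference is that your lifting step is phrased as choosing preimages on free generators rather than as the column-by-column diagram chase of Proposition \ref{bicomplex}, but the content is identical.
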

Since any module admits free resolutions (see Proposition \ref{prop:free-resol}), Theorem \ref{thm:existence} implies that:

\begin{corollary}\label{cor:existence}
Any Lie-Rinehart algebra $\mathcal A$ over $\mathcal{O}$ is the basic Lie-Rinehart algebra of an acyclic Lie $\infty$-algebroid over $\mathcal{O}$.
\end{corollary}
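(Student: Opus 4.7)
The plan is to reformulate the conclusion in the language of co-derivations via Proposition \ref{co-diff}: I want to produce a squared-to-zero co-derivation $Q_{\mathcal{E}}$ of degree $+1$ on $S^{\bullet}_{\mathbb{K}}(\mathcal{E})$ whose $n$-th Taylor coefficient will be the $(n+1)$-ary bracket $\ell_{n+1}$. The anchor is forced by the hook condition: $\rho := \rho_{\mathcal{A}} \circ \pi$. I would build the Taylor coefficients $Q_{\mathcal{E}}^{(n)}$ by induction on the polynomial-degree $n$, with $Q_{\mathcal{E}}^{(0)} = \ell_1$ as base case (the differential of the resolution, which already squares to zero), and at each stage I would solve one equation of the form $[\ell_1, Q^{(n)}]_{\mathrm{RN}} = T_n$, where $T_n$ is determined by the previously constructed brackets.

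For $n = 1$ (the Leibniz-type bracket $\ell_2$), I would pick an $\mathcal{O}$-basis $\{e_i\}$ of the free module $\mathcal{E}_{-1}$, define $\ell_2(e_i, e_j) \in \mathcal{E}_{-1}$ to be any lift through $\pi$ of the Lie-Rinehart bracket $[\pi(e_i), \pi(e_j)]_{\mathcal{A}}$, and extend by skew-symmetrization and the Leibniz rule with respect to $\rho$. To extend $\ell_2$ onto $\mathcal{E}_{-1} \odot \mathcal{E}_{-k}$ for $k \geq 2$, I would work again on basis elements: higher Jacobi at $n=2$ forces $\ell_1(\ell_2(x,z)) = \pm \ell_2(x, \ell_1(z))$, and the right-hand side is known and lies in $\mathrm{Im}(\ell_1) = \ker(\pi)$ because $\pi \circ \ell_1 = 0$, so any preimage supplies $\ell_2(x,z)$.

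For the inductive step with $n \geq 2$, assume the identities $\sum_{i+j=k}[Q^{(i)}, Q^{(j)}]_{\mathrm{RN}} = 0$ hold for all $k < n$ and that $Q^{(i)}$ is $\mathcal{O}$-multilinear for $i \neq 1$. Set
\begin{equation*}
T_n := -\frac{1}{2}\sum_{\substack{i+j=n \\ i,j \geq 1}}[Q^{(i)}, Q^{(j)}]_{\mathrm{RN}}.
\end{equation*}
The crux of the induction rests on two verifications: first, $T_n$ is $[\ell_1, \cdot]_{\mathrm{RN}}$-closed, which is a formal consequence of the graded Jacobi identity for the Richardson--Nijenhuis bracket (Lemma \ref{lem:RN}) applied to the inductive hypothesis; second, $T_n$ is $\mathcal{O}$-multilinear, so that it defines an element of $\mathrm{Hom}_{\mathcal{O}}(\bigodot^{n+1}\mathcal{E}, \mathcal{E})$. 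I would then construct $Q^{(n)}$ by lifting $T_n$ to an $\mathcal{O}$-multilinear $[\ell_1,\cdot]_{\mathrm{RN}}$-primitive. The enabling algebraic fact is that $\bigodot^{n+1}\mathcal{E}$ is a \emph{free} $\mathcal{O}$-module, as a direct sum of tensor products (over $\mathcal{O}$) of graded symmetric powers of the free modules $\mathcal{E}_{-k}$; hence the functor $\mathrm{Hom}_{\mathcal{O}}(\bigodot^{n+1}\mathcal{E}, -)$ preserves exactness of the resolution $\cdots \to \mathcal{E}_{-2} \to \mathcal{E}_{-1} \to \mathcal{A}$, and the required primitive can be produced by a straightforward diagram chase. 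This substitutes for the usual homological perturbation formulas, which are unavailable in the absence of an $\mathcal{O}$-linear section of $\pi$.

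The main obstacle I expect is the $\mathcal{O}$-multilinearity of $T_n$: since $\ell_2$ only satisfies a Leibniz rule, its iterated RN composition with itself and with the higher $Q^{(i)}$ produces spurious anchor terms of the form $\rho(-)[f]$ on the generators, and one must show that each such term is cancelled by a twin term generated by the graded symmetry of the RN composition. The cancellation mechanism mimics the one already used in Proposition \ref{linearity} and Lemma \ref{O-linearity-lemma}: the identity $\rho \circ Q^{(1)}(x \cdot y) = [\rho(x), \rho(y)]$ (item 4 of Proposition \ref{co-diff}) forces the accumulated anchor contributions to organize into a graded Lie bracket of derivations that vanishes. Once this is checked at the first non-trivial order, the remaining inductive steps are formal bookkeeping, and the existence of $Q_{\mathcal{E}}$ — hence of the desired Lie $\infty$-algebroid structure — follows.
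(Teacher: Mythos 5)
Your proposal is correct and follows essentially the same route as the paper: the corollary is obtained by taking any free resolution (Proposition \ref{prop:free-resol}) and equipping it with brackets exactly as in the proof of Theorem \ref{thm:existence}, namely a recursion on polynomial-degree in which the obstruction $-\tfrac12\sum_{i+j=n}[Q^{(i)},Q^{(j)}]_{\mathrm{RN}}$ is shown to be an $\mathcal O$-multilinear $[\ell_1,\cdot]_{\mathrm{RN}}$-cocycle (anchor terms cancelling as in Lemma \ref{lem1} and Step 2) and then killed using the acyclicity of $\mathrm{Hom}_{\mathcal O}(\bigodot^{n+1}\mathcal E,\mathcal E_\bullet)$, which is precisely Proposition \ref{bicomplex} on the bicomplex $\mathfrak{Page}^{(n)}(\mathcal E)$. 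The only differences are cosmetic (co-derivation language versus brackets, and a direct lift for $\ell_2$ on $\mathcal E_{-1}\odot\mathcal E_{-k}$ in place of the correction term $\tau_2$), so no gap remains.
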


While proving Theorem \ref{thm:existence}, we will see that if $\E_{-1} $ can be equipped with a Lie algebroid bracket (i.e. a bracket whose Jacobiator is zero), then all $k$-ary brackets of the universal Lie $ \infty$-algebroid structure may be chosen to be zero on $\E_{-1} $:

\begin{proposition} \label{prop:lksontNuls}
Let $(\E, \ell_1, \pi)$ be a free resolution of a Lie-Rinehart algebra $\mathcal{A}$. If $\E_{-1}$ admits a Lie algebroid bracket $[\cdot, \cdot] $ such that $\pi: \mathcal E_{-1} \to \mathcal A$ is a Lie-Rinehart morphism, then there exists a structure of universal Lie $\infty$-algebroid $(\mathcal E_\bullet, \ell_\bullet, \rho)$  that covers  $\mathcal{A}$ whose $2$-ary bracket  coincides with $[\cdot, \cdot]$ on $\mathcal E_{-1} $ and such that for every $k\geq 3$ the $k$-ary bracket $ \ell_k$ vanishes on $\bigodot^k\E_{-1}$.\end{proposition}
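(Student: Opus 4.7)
The strategy is to carry out the inductive construction of Theorem~\ref{thm:existence}, but arrange at each step that the restriction of $\ell_k$ to $\bigodot^k \E_{-1}$ is zero for $k\geq 3$. Set the anchor to be $\rho:=\rho_\mathcal A\circ\pi$; since $\pi$ is a Lie-Rinehart morphism, this is a Lie algebra morphism $\E_{-1}\to\mathrm{Der}(\mathcal O)$, and the given bracket $[\cdot,\cdot]$ on $\E_{-1}$ already satisfies the Leibniz rule with respect to $\rho$. Define $\ell_2$ on $\bigodot^2\E_{-1}$ to be $[\cdot,\cdot]$, and extend it to the mixed components $\E_{-1}\otimes\E_{-i}$, $i\geq 2$, by the lift-through-$\ell_1$ procedure of Theorem~\ref{thm:existence}, which is possible since $\pi\bigl([x,\ell_1(\eta)]\bigr)=[\pi(x),0]_\mathcal A=0$.

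Now proceed by induction on $k\geq 3$: assume that $\ell_2,\dots,\ell_{k-1}$ have been constructed, satisfying all higher Jacobi identities of order $<k$ and vanishing on $\bigodot^j\E_{-1}$ for $3\leq j\leq k-1$. The higher Jacobi identity at order $k$, applied to $x_1,\dots,x_k\in\E_{-1}$ and using that $\ell_1\equiv 0$ on $\E_{-1}$ by degree reasons, reads
\[
\ell_1\bigl(\ell_k(x_1,\dots,x_k)\bigr)\;+\;\mathrm{Obs}_k(x_1,\dots,x_k)\;=\;0,
\]
where $\mathrm{Obs}_k$ is the sum, over $2\leq i\leq k-1$ and $\sigma\in\mathfrak S(i,k-i)$, of terms $\epsilon(\sigma)\,\ell_{k-i+1}\bigl(\ell_i(x_{\sigma(1)},\dots,x_{\sigma(i)}),\,x_{\sigma(i+1)},\dots,x_{\sigma(k)}\bigr)$. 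For $i\geq 3$, the inner $\ell_i$ is applied to $i$ elements of $\E_{-1}$ and therefore vanishes by the induction hypothesis. For $i=2$, the inner $\ell_2(x_{\sigma(1)},x_{\sigma(2)})$ lies in $\E_{-1}$, so the outer $\ell_{k-1}$ is applied to $k-1$ elements of $\E_{-1}$: if $k\geq 4$ this vanishes by the induction hypothesis; if $k=3$, this reduces to the Jacobiator of $[\cdot,\cdot]$ on $\E_{-1}$, which vanishes because $[\cdot,\cdot]$ is a Lie algebroid bracket. Thus $\mathrm{Obs}_k\equiv 0$ on $\bigodot^k\E_{-1}$, and we may consistently set $\ell_k|_{\bigodot^k\E_{-1}}:=0$.

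Having fixed the $\bigodot^k\E_{-1}$ component of $\ell_k$ to zero, extend $\ell_k$ to the remaining mixed components of $\bigodot^k\E$ by the same procedure as in Theorem~\ref{thm:existence}, using the acyclicity of the resolution to lift the corresponding obstructions through $\ell_1$. Since $\bigodot^k\E_{-1}$ is a direct summand of $\bigodot^k\E$, forcing it to zero does not interfere with the construction on the other summands. The resulting $(\E_\bullet,\ell_\bullet,\rho)$ is then a Lie $\infty$-algebroid covering $\mathcal A$ through $\pi$ with the required properties. The main new input compared to Theorem~\ref{thm:existence} is purely bookkeeping: the vanishing of $\mathrm{Obs}_k$ on $\bigodot^k\E_{-1}$, which follows directly from the Jacobi identity for $[\cdot,\cdot]$ propagated by induction. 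The hard part, if any, is just making sure at each stage that the obstruction computation genuinely decouples from the mixed components, which it does thanks to the direct-summand decomposition of $\bigodot^k\E$ by multi-degree.
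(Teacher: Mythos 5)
Your proof is correct and follows essentially the same inductive scheme as the paper's: at each stage the obstruction restricted to $\bigodot^k\E_{-1}$ vanishes (by the Jacobi identity of $[\cdot,\cdot]$ when $k=3$, and by the inductive hypothesis together with the fact that $\ell_2$ preserves $\E_{-1}$ when $k\geq 4$), so $\ell_k$ may be chosen to vanish there. The only thin spot is your appeal to ``$\bigodot^k\E_{-1}$ is a direct summand'' to justify that killing this component is compatible with solving for the remaining ones --- the total differential of $\mathfrak{Page}^{(k-1)}(\E)$ does couple the components through precomposition with $\ell_1$, and the precise statement needed here is item 3 of Proposition \ref{bicomplex} (a primitive of a cocycle concentrated in columns $\leq -n$ can be chosen concentrated in columns $\leq -n-1$), which is exactly what the paper's proof invokes.
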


 \subsubsection{Universality of Theorem \ref{thm:existence} and its corollaries}

Here is our second main result.  It is related to Proposition 2.1.4 in \cite{Fregier} (but morphisms are not the same), and extends Theorem 2.9 in \cite{LLS}. 

\begin{theorem} \label{th:universal}
Let $(\mathcal A, \lb_\mathcal A, \rho_\mathcal{A})$ be a Lie-Rinehart algebra over $ \mathcal O$.  Given, 
\begin{enumerate}
    \item[a)] a Lie $ \infty$-algebroid $(\mathcal E_\bullet', \ell'_\bullet, \rho')$ that covers $\mathcal A $ through a hook $\pi'$, and
    \item[b)] any acyclic Lie $\infty $-algebroid $(\mathcal E_\bullet, \ell_\bullet, \rho)$ that covers $\mathcal A $ through a hook $\pi$,
\end{enumerate} 
then
\begin{enumerate} 
\item there exists a morphism of Lie $\infty $-algebroids from $(\mathcal E'_\bullet, \ell'_\bullet, \rho')$ to $(\mathcal E_\bullet, \ell_\bullet, \rho)$ over $\mathcal A$.
\item and any two such morphisms  are homotopic.  
\end{enumerate}
\end{theorem}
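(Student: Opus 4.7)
The plan is to construct $\Phi$ by induction on the polynomial-degree of its Taylor coefficients $\Phi_n \colon S^{n+1}_\mathbb{K}(\mathcal E') \to \mathcal E$, using the acyclicity of $(\mathcal E_\bullet, \ell_1)$ to solve the obstruction equations at each stage, while exploiting Lemmas \ref{O-linearity-lemma} and \ref{imp-lemma} to guarantee that every obstruction is $\mathcal O$-multilinear and satisfies a chain-map identity. For item 2, homotopies will be built by an entirely analogous inductive procedure and then glued via Lemma \ref{gluing-lemma}.

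For existence (item 1), I first produce the $0$-th Taylor coefficient $\Phi_0 \colon \mathcal E' \to \mathcal E$ as a chain map over $\mathcal A$. On $\mathcal E'_{-1}$, projectivity together with surjectivity of $\pi \colon \mathcal E_{-1} \to \mathcal A$ provides an $\mathcal O$-linear lift $\Phi_0$ of $\pi'$ along $\pi$, which secures $\pi \circ \Phi_0 = \pi'$. Extension to $\mathcal E'_{-k}$ for $k \geq 2$ proceeds by the standard comparison argument: $\Phi_0(\ell'_1 x) \in \mathcal E_{-k+1}$ is a cycle, a boundary by acyclicity, and projectivity of $\mathcal E'_{-k}$ allows choosing $\Phi_0(x)$ with $\ell_1 \Phi_0(x) = \Phi_0(\ell'_1 x)$. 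For the higher Taylor coefficients, assume $\Phi_0, \ldots, \Phi_{n-1}$ are constructed so that the Taylor coefficients of $\Phi \circ Q_{\mathcal E'} - Q_\mathcal E \circ \Phi$ of polynomial-degree $< n$ vanish. By Lemma \ref{O-linearity-lemma}, the next obstruction $T^{(n)} := (\Phi \circ Q_{\mathcal E'} - Q_\mathcal E \circ \Phi)^{(n)}$ is an $\mathcal O$-multilinear $\Phi_0$-co-derivation descending to $\bigodot^\bullet\mathcal E'$ and satisfies $Q_\mathcal E^{(0)} \circ T^{(n)} = T^{(n)} \circ Q_{\mathcal E'}^{(0)}$. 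Its top Taylor coefficient is a cocycle in the complex of $\mathcal O$-multilinear maps from the symmetric power $\bigodot^{n+1}\mathcal E'$ (which is projective, as a direct summand of the tensor power of the projective modules $\mathcal E'_{-k}$) to the acyclic complex $(\mathcal E, \ell_1)$, hence a coboundary: there exists an $\mathcal O$-multilinear $\Phi_n$ whose contribution cancels $T^{(n)}$, closing the induction.

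For uniqueness up to homotopy (item 2), let $\Phi, \Psi$ be two morphisms over $\mathcal A$. I argue by a parallel induction, producing a sequence of homotopies whose composition joins $\Phi$ to $\Psi$. The base case uses $(\Psi_0 - \Phi_0)(\mathcal E'_{-1}) \subseteq \ker \pi$, which by acyclicity equals $\ell_1(\mathcal E_{-2})$, to produce an initial $H^{(0)}$. Suppose inductively that we have a morphism $\tilde\Psi$ homotopic to $\Phi$ and agreeing with $\Psi$ in Taylor coefficients of polynomial-degree $< n$. By Lemma \ref{imp-lemma}, the difference $(\Psi - \tilde\Psi)^{(n)}$ is an $\mathcal O$-multilinear $\Phi_0$-co-derivation and a chain map, so by the same acyclicity argument equals $\ell_1 \circ H^{(n)} + H^{(n)} \circ \ell'_1$ (the latter extended as a co-derivation) for some $\mathcal O$-multilinear $H^{(n)}$ of degree $-1$. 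Taking $H_t$ to be a family of $J_t$-co-derivations whose only non-vanishing Taylor coefficient is piecewise rational, supported on $[n, n+1]$ and integrating to $H^{(n)}$, Proposition \ref{prop:justify} yields a piecewise rational continuous family $J_t$ whose $n$-th Taylor coefficient at $t = n+1$ matches $\Psi^{(n)}$ while lower Taylor coefficients are unchanged. Since beyond $t = n+1$ we have $H_t^{(k)} = 0$ for all $k \leq n$, the successive homotopies satisfy the hypothesis of Lemma \ref{gluing-lemma}, which assembles them into a single homotopy from $\Phi$ to $\Psi$ defined on $[0, +\infty]$. The condition of being over $\mathcal A$ is preserved throughout because, as in equation \eqref{sol2}, the modifications to the $0$-th Taylor coefficient lie in $\ell_1(\mathcal E_{-2}) \subseteq \ker \pi$.

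The main technical obstacles are: (i) preserving $\mathcal O$-multilinearity at each inductive step, which is non-trivial because the $2$-ary bracket of a Lie $\infty$-algebroid is not $\mathcal O$-linear, and which is exactly what Lemmas \ref{O-linearity-lemma} and \ref{imp-lemma} are designed to handle via the cancellation of anchor-dependent terms; (ii) the solvability of the lifting problems, which requires that the symmetric powers $\bigodot^{n+1}\mathcal E'$ interact well with the acyclicity of the target (guaranteed by the projectivity of the $\mathcal E'_{-k}$ in characteristic zero); and (iii) the need to glue infinitely many homotopies in part 2, which is precisely why Definition \ref{def:homotopy} is phrased in terms of piecewise rational continuous paths rather than the simpler Definition \ref{def:general-idea-homotopy}, so that Lemma \ref{gluing-lemma} can be invoked.
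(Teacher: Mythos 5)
Your overall strategy is exactly the paper's: build the Taylor coefficients $\Phi^{(n)}$ by induction, use Lemma \ref{O-linearity-lemma} to see that each obstruction is an $\mathcal O$-multilinear $\Phi^{(0)}$-co-derivation commuting with the unary brackets, kill it by exactness and projectivity (this is Proposition \ref{bicomplex} applied to the bicomplex $\mathfrak{Page}^{(n)}(\E',\E)$), and for uniqueness run the parallel induction with Lemma \ref{imp-lemma}, Proposition \ref{prop:justify} and the gluing Lemma \ref{gluing-lemma}. The homotopy part of your argument is essentially identical to the paper's (which packages your inductive step as Lemma \ref{imp-lemma2} and reparametrizes $[0,+\infty[$ to $[0,1[$ by $t\mapsto t/(1-t)$).

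There is, however, one concrete gap in the existence part. You assert that the obstruction is ``a cocycle in the complex of $\mathcal O$-multilinear maps from $\bigodot^{n+1}\mathcal E'$ to the acyclic complex $(\mathcal E,\ell_1)$, hence a coboundary.'' But $(\mathcal E,\ell_1)$ truncated at $\mathcal E_{-1}$ is \emph{not} exact in degree $-1$: its cohomology there is $\mathcal E_{-1}/\ell_1(\mathcal E_{-2})\simeq\mathcal A$. So for the component of the obstruction valued in $\mathcal E_{-1}$ (equivalently, the bottom of the relevant column of $\mathfrak{Page}^{(n)}(\E',\E)$), being a cocycle for $\ell_1$ is not enough; one must also check that this component takes values in $\ker\pi=\ell_1(\mathcal E_{-2})$ before it can be lifted. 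For polynomial-degree $\geq 2$ this is automatic for degree reasons (the last column of the bicomplex vanishes), but for the first nontrivial step — the defect $\Phi^{(0)}\circ\ell_2'(x,y)-\ell_2(\Phi^{(0)}x,\Phi^{(0)}y)$ on $x,y\in\mathcal E'_{-1}$ — it is a genuine computation: one needs $\pi\bigl(\Phi^{(0)}\ell_2'(x,y)-\ell_2(\Phi^{(0)}x,\Phi^{(0)}y)\bigr)=[\pi'(x),\pi'(y)]_{\mathcal A}-[\pi'(x),\pi'(y)]_{\mathcal A}=0$, which uses both $\pi\circ\Phi^{(0)}=\pi'$ and the fact that $\pi,\pi'$ are morphisms of brackets onto $\mathcal A$ (this is step B in the paper's proof). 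Without this verification the lifting of $\Phi^{(1)}$ through $\ell_1\colon\mathcal E_{-2}\to\mathcal E_{-1}$ is not justified. Once you add this check — and note that it is the only place where the hypothesis that the hooks respect the Lie-Rinehart bracket enters beyond the definition of $\Phi^{(0)}$ — your proof coincides with the paper's.
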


Here is an immediate corollary of Theorem \ref{th:universal}.

\begin{corollary}
\label{cor:unique} 
Any two acyclic Lie $\infty $-algebroids that cover a given Lie-Rinehart algebra are homotopy equivalent. 
This homotopy equivalence, moreover, is unique up to homotopy.
\end{corollary}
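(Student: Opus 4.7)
The plan is to deduce the corollary as a direct consequence of Theorem \ref{th:universal}, using its two items in a completely formal way, very much in the spirit of how one deduces the uniqueness of projective resolutions from their universal property in homological algebra. Since the statement claims both (i) existence of a homotopy equivalence between any two acyclic Lie $\infty$-algebroids covering $\mathcal{A}$, and (ii) uniqueness (up to homotopy) of this equivalence, both items of Theorem \ref{th:universal} are needed: item 1 for existence of morphisms in both directions, item 2 both to show the compositions are homotopic to the identity and to compare any two candidate equivalences.

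Let $(\E_\bullet, \ell_\bullet, \rho)$ and $(\E'_\bullet, \ell'_\bullet, \rho')$ be two acyclic Lie $\infty$-algebroids covering $\mathcal{A}$ through hooks $\pi$ and $\pi'$. First, I would apply Theorem \ref{th:universal} with (a)$\,=\,(\E'_\bullet, \ell'_\bullet, \rho', \pi')$ and (b)$\,=\,(\E_\bullet, \ell_\bullet, \rho, \pi)$ (which is acyclic) to obtain a Lie $\infty$-algebroid morphism $\Phi \colon S^\bullet_\mathbb{K}(\E') \to S^\bullet_\mathbb{K}(\E)$ over $\mathcal{A}$. Swapping the roles of $\E$ and $\E'$ (both being acyclic and covering $\mathcal{A}$), the same theorem yields $\Psi \colon S^\bullet_\mathbb{K}(\E) \to S^\bullet_\mathbb{K}(\E')$ over $\mathcal{A}$.

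Next, I would observe that the compositions $\Psi \circ \Phi$ and $\mathrm{id}_{S^\bullet_\mathbb{K}(\E')}$ are both Lie $\infty$-algebroid morphisms from $(\E'_\bullet, \ell'_\bullet, \rho')$ to itself over $\mathcal{A}$; applying item 2 of Theorem \ref{th:universal} with (a)$\,=\,$(b)$\,=\,\E'$ (which is acyclic) gives $\Psi \circ \Phi \sim \mathrm{id}_{\E'}$. Exchanging roles, $\Phi \circ \Psi \sim \mathrm{id}_{\E}$. This establishes the existence of a homotopy equivalence.

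For uniqueness up to homotopy, suppose $(\widetilde{\Phi}, \widetilde{\Psi})$ is another pair of Lie $\infty$-morphisms over $\mathcal{A}$ whose compositions are homotopic to the identities. Then $\Phi$ and $\widetilde{\Phi}$ are both Lie $\infty$-morphisms from $\E'$ to $\E$ over $\mathcal{A}$, and item 2 of Theorem \ref{th:universal} forces $\Phi \sim \widetilde{\Phi}$; the same argument yields $\Psi \sim \widetilde{\Psi}$. The only mild subtlety to note is that the composition of Lie $\infty$-algebroid morphisms is a Lie $\infty$-algebroid morphism, and that homotopy is an equivalence relation compatible with composition (both facts have already been established in Chapter \ref{chap:oid-alg-ver}); beyond these, there is no real obstacle, the proof being essentially formal.
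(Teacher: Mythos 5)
Your proof is correct and is precisely the argument the paper intends: the corollary is stated there as an immediate consequence of Theorem \ref{th:universal}, obtained exactly by applying item 1 in both directions and item 2 to the compositions and to any two candidate equivalences, using the already-established compatibility of homotopy with composition.
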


We will prove that the morphism that appears in Theorem \ref{th:universal} can be made trivial upon choosing a \textquotedblleft big enough\textquotedblright\,universal Lie $\infty$-algebroid:
\begin{proposition}\label{univ:precise}
Let $(\mathcal A, \lb_\mathcal A, \rho_\mathcal{A})$ be a Lie-Rinehart algebra over $ \mathcal O$.  Given a Lie $ \infty$-algebroid structure $(\mathcal E'_\bullet, \ell_\bullet', \rho')$ that terminates in $\mathcal A $ through a hook $\pi'$, 
then there exist an acyclic Lie $ \infty$-algebroid $(\mathcal E_\bullet, \ell_\bullet, \rho)$ that covers $\mathcal A $ through a hook $\pi$ such that
\begin{enumerate}
\item $\mathcal E $ contains $\mathcal E' $ as a subcomplex,  
\item the Lie $ \infty$-algebroid morphism from $\mathcal E' $ to $\mathcal E $ announced in Theorem \ref{th:universal} can be chosen to be the inclusion map  $\mathcal E' \hookrightarrow \mathcal E $ (i.e. a Lie $\infty$-morphism  where the only non-vanishing Taylor coefficient is the inclusion $\mathcal E' \hookrightarrow \mathcal E $).
\end{enumerate}
 \end{proposition}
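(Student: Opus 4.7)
The plan is to produce $\E$ by enlarging $\E'$ degree-wise with free modules until we obtain an acyclic resolution of $\mathcal A$, and then to extend the brackets from $\E'$ to $\E$ by the same homological-obstruction inductive argument that powers Theorem \ref{thm:existence}, but with $\ell'_\bullet$ fixed as a boundary condition.

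First, I would construct the underlying graded $\mathcal O$-module and differential of $\E$. Choose a free $\mathcal O$-module $F_{-1}$ together with a map $F_{-1}\to \mathcal A$ lifting an arbitrary surjection onto the cokernel of $\pi'\colon \E'_{-1}\to \mathcal A$, and set $\E_{-1}:=\E'_{-1}\oplus F_{-1}$ with hook $\pi:=\pi'+(\text{chosen lift})$, which is now surjective onto $\mathcal A$. Then, for $i\geq 2$, inductively pick a free $F_{-i}$ together with a map into $\E_{-i+1}=\E'_{-i+1}\oplus F_{-i+1}$ that surjects onto the cycles of the partially built complex in degree $-i+1$ modulo the image of $\ell'_1|_{\E'_{-i}}$. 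After all degrees are processed, the complex $(\E_\bullet,\ell_1)$ is an acyclic free resolution of $\mathcal A$ containing $(\E'_\bullet,\ell'_1)$ as a subcomplex. Extend the anchor by $\rho:=\rho_\mathcal A\circ\pi$; since $\pi|_{\E'_{-1}}=\pi'$, one has $\rho|_{\E'_{-1}}=\rho'$, so the anchor condition for the inclusion is automatic.

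Next, I would construct the higher brackets $\ell_k$ ($k\geq 2$) on $\E$ by a double induction on $k$ and on the internal degree, mimicking the procedure in Theorem \ref{thm:existence}, but imposing at every step the additional boundary condition $\ell_k|_{\bigodot^k\E'}=\ell'_k$. Assuming $\ell_1,\dots,\ell_{k-1}$ already extend $\ell'_1,\dots,\ell'_{k-1}$ and that higher Jacobi of arity $<k$ holds, arity-$k$ Jacobi becomes the equation $[\ell_1,\ell_k]_\mathrm{RN}=-J_k$, where $J_k:=\sum_{i=2}^{k-1}[\ell_i,\ell_{k+1-i}]_\mathrm{RN}$ is already determined. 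A standard computation shows $J_k$ is $\ell_1$-closed. On $\bigodot^k\E'$ the choice $\ell_k=\ell'_k$ solves this equation, precisely because $(\E'_\bullet,\ell'_\bullet)$ already satisfies arity-$k$ Jacobi. To extend $\ell_k$ to monomials involving at least one factor from the adjoined free summands $F_{-i}$, one uses freeness of these summands together with acyclicity of $\E$: the restriction of the closed map $J_k+[\ell_1,\ell'_k]_\mathrm{RN}$ to such monomials is an $\ell_1$-cycle which, by acyclicity, admits a primitive, and freeness allows one to choose this primitive on generators without disturbing the restriction to $\bigodot^k\E'$.

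Finally, the co-algebra morphism $S^\bullet_\mathbb{K}(\iota)\colon S^\bullet_\mathbb{K}(\E')\to S^\bullet_\mathbb{K}(\E)$ whose only non-vanishing Taylor coefficient is the inclusion $\iota\colon \E'\hookrightarrow \E$ is automatically a Lie $\infty$-algebroid morphism: the identity $S^\bullet_\mathbb{K}(\iota)\circ Q_{\E'}=Q_{\E}\circ S^\bullet_\mathbb{K}(\iota)$ is equivalent to the equalities $\ell_k(\iota(x_{i_1}),\dots,\iota(x_{i_k}))=\iota(\ell'_k(x_{i_1},\dots,x_{i_k}))$ for all $x_{i_j}\in \E'$, which are exactly the boundary conditions imposed throughout the construction, and the anchor condition holds by the end of the first step. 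The main obstacle is the inductive extension of the brackets: at each arity $k$, one must verify that the primitive produced by acyclicity can be chosen to agree with the pre-existing $\ell'_k$ on $\bigodot^k\E'$. This is precisely what makes the argument work cleanly: $\ell'_k$ itself serves as a local primitive for $J_k$ on $\bigodot^k\E'$, and the freeness of the $F_{-i}$'s combined with the acyclicity of $\E$ provides enough flexibility to spread this local primitive into a global solution on all of $\bigodot^k\E$.
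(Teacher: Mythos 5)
Your overall strategy coincides with the paper's: embed $\E'$ into an acyclic free resolution $\E=\E'\oplus\mathcal V$ with $\mathcal V$ a graded free complement, set $\rho=\rho_{\mathcal A}\circ\pi$ so that the anchor condition for the inclusion is automatic, and then extend the brackets arity by arity subject to the boundary condition $\ell_k|_{\bigodot^k\E'}=\ell_k'$, which is exactly what makes the inclusion a strict Lie $\infty$-morphism. The only genuine difference in the first step is that you adjoin free summands degree by degree, whereas the paper (Lemma \ref{lem:inFactInclusion}) uses a mapping-cone construction; both yield the decomposition $\E=\E'\oplus\mathcal V$ that the rest of the argument needs, so this is a harmless variation.

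Two points in your second step are asserted where the actual work lies. First, the claim that the primitive of the closed obstruction ``can be chosen on generators without disturbing the restriction to $\bigodot^k\E'$'' is precisely the paper's Lemma \ref{rest:univ}, and freeness of the complement alone does not deliver it: in the equation $[\ell_1,\mu]_{\mathrm{RN}}=P$ the vertical part of the differential couples the values of $\mu$ on monomials containing an $F$-factor to its values on $\bigodot^k\E'$, because $\ell_1$ applied to a generator of $F_{-i}$ may have a component in $\E'$. What saves the argument is that the maps vanishing on $\bigodot^\bullet\E'$ form a subcomplex of the relevant $\mathfrak{Page}$-complex whose quotient is $\mathfrak{Page}^{(\bullet)}(\E',\E)$; since the total complex and this quotient are both acyclic (Proposition \ref{bicomplex}), the subcomplex is acyclic, so a closed $P$ vanishing on $\bigodot^k\E'$ admits a primitive vanishing there as well. (The paper proves this by an explicit column-by-column correction of an arbitrary primitive.) This step must be proved, not inferred from freeness. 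Second, your uniform induction ``arity-$k$ Jacobi becomes $[\ell_1,\ell_k]_{\mathrm{RN}}=-J_k$'' does not apply verbatim at $k=2$: the $2$-ary bracket is not $\mathcal O$-bilinear, hence is not an element of the Page complex. As in Lemma \ref{lem1}, one must first build an almost Lie algebroid bracket $\tilde\ell_2$ on $\E$ extending $\ell_2'$ by prescribing it on a basis of the free complement and extending via the Leibniz rule with respect to $\rho$ (also arranging $\pi\circ\tilde\ell_2=[\pi(\cdot),\pi(\cdot)]_{\mathcal A}$, the ``last column'' condition), and only the $\mathcal O$-bilinear defect $[\ell_1,\tilde\ell_2]_{\mathrm{RN}}$ is then corrected by the homological argument above. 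With these two repairs your proof matches the paper's.
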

 
The following Corollary follows immediately from Proposition \ref{univ:precise}: 
\begin{corollary}
Let $\mathcal{A}$ be a Lie-Rinehart algebra over $\mathcal{O}$ and $\mathcal{B}$ be a Lie-Rinehart subalgebra of $\mathcal{A}$. Any universal Lie $\infty$-algebroid of $\mathcal B$ can be contained in a  universal Lie $\infty$-algebroid of $\mathcal A$. \end{corollary}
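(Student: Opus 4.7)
\noindent
The plan is to deduce this corollary directly from Proposition \ref{univ:precise}, using the inclusion $\iota\colon \mathcal{B} \hookrightarrow \mathcal{A}$ as the bridge. First, I would start from a universal Lie $\infty$-algebroid $(\mathcal E'_\bullet, \ell'_\bullet, \rho')$ of $\mathcal{B}$, which comes with a hook $\pi'_\mathcal{B}\colon \mathcal E'_{-1} \twoheadrightarrow \mathcal B$ making $(\mathcal E'_\bullet, \ell'_\bullet, \rho')$ an acyclic Lie $\infty$-algebroid covering $\mathcal B$. Since $\mathcal B$ is a Lie-Rinehart subalgebra of $\mathcal A$ and $\iota$ is by definition a Lie-Rinehart morphism over $\mathcal O$, the composition $\pi' := \iota \circ \pi'_\mathcal{B}\colon \mathcal E'_{-1} \to \mathcal A$ has image contained in $\mathcal A$ (in fact, equal to $\mathcal B \subseteq \mathcal A$), and it intertwines $\rho'$ with $\rho_\mathcal A$. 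Therefore $(\mathcal E'_\bullet, \ell'_\bullet, \rho')$ is a Lie $\infty$-algebroid that \emph{terminates in} $\mathcal A$ through the hook $\pi'$, in the precise sense defined earlier.

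Second, I would apply Proposition \ref{univ:precise} to this Lie $\infty$-algebroid terminating in $\mathcal{A}$. This yields an acyclic Lie $\infty$-algebroid $(\mathcal E_\bullet, \ell_\bullet, \rho)$ covering $\mathcal A$ through some hook $\pi\colon \mathcal E_{-1} \twoheadrightarrow \mathcal A$ such that $\mathcal E'$ sits inside $\mathcal E$ as a subcomplex and the canonical inclusion $\mathcal E' \hookrightarrow \mathcal E$ is itself a Lie $\infty$-morphism whose only non-zero Taylor coefficient is the inclusion. Concretely, this is the statement that all the higher brackets $\ell'_k$ of $\mathcal E'$ coincide with the restrictions of the $\ell_k$ of $\mathcal E$ to $\bigodot^k \mathcal E'$, so $\mathcal E'$ is honestly a Lie $\infty$-sub-algebroid of $\mathcal E$.

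Finally, since $(\mathcal E_\bullet, \ell_\bullet, \rho)$ is itself an acyclic Lie $\infty$-algebroid covering $\mathcal A$, it qualifies as a universal Lie $\infty$-algebroid of $\mathcal A$ in the sense of Corollary \ref{cor:unique}, i.e.\ it represents the homotopy class canonically attached to $\mathcal A$. Combining the two previous paragraphs, the given universal Lie $\infty$-algebroid of $\mathcal B$ is contained, as a Lie $\infty$-sub-algebroid, in this universal Lie $\infty$-algebroid of $\mathcal A$, which is the claim.

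The only real verification to make is the first step, namely that composing the hook of a universal Lie $\infty$-algebroid of $\mathcal B$ with the inclusion $\iota$ produces a Lie $\infty$-algebroid terminating in $\mathcal A$ in the sense required by Proposition \ref{univ:precise}; this is an immediate consequence of the fact that $\iota$ preserves the anchor and the brackets. No further homotopy-theoretic bookkeeping is needed, since Proposition \ref{univ:precise} already delivers the inclusion as a strict Lie $\infty$-morphism (no higher Taylor terms), so no obstruction can appear.
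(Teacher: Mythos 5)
Your proof is correct and follows exactly the route the paper intends: the paper states this corollary as an immediate consequence of Proposition \ref{univ:precise}, and your argument simply spells out the (easy) verification that a universal Lie $\infty$-algebroid of $\mathcal B$ terminates in $\mathcal A$ once its hook is composed with the inclusion $\mathcal B\hookrightarrow\mathcal A$, after which Proposition \ref{univ:precise} delivers the containing universal Lie $\infty$-algebroid of $\mathcal A$ with the inclusion as a strict morphism. No discrepancy with the paper's reasoning.
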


\subsubsection{Induced Lie $\infty $-algebroids structures on ${\mathrm{Tor}}_{\mathcal O}(\mathcal A, \mathcal O/\mathcal I) $.}\label{ssec:Tor}

 Let $(\mathcal A, \lb_\mathcal A, \rho_\mathcal{A})$ be a Lie-Rinehart algebra over $\mathcal O $.

 \begin{definition}
    We say that an ideal $\mathcal I \subset \mathcal O $ is a \emph{Lie-Rinehart ideal of $\mathcal{A}$} if
  $ \rho_\mathcal A (a) [\mathcal I] \subset  \mathcal I$ for all $a \in \mathcal A $.
 \end{definition}

 \begin{remark}For any Lie-Rinehart ideal $\mathcal I\mathcal{O}$ of $\mathcal{A}$,
 \begin{enumerate}
     \item  we have $[\mathcal I \mathcal A , \mathcal A]_\mathcal{A} \subset \mathcal I \mathcal A $. Therefore, the quotient space $ \mathcal A /\mathcal I \mathcal{A}$ comes equipped with a natural Lie-Rinehart algebra structure over $ \mathcal O/ \mathcal I$.
     
     \item  For $(\mathcal E_\bullet, \ell_\bullet, \rho) $ an acyclic Lie  $\infty$-algebroid that covers $\mathcal A $, the quotient space $\mathcal E_\bullet / \mathcal I \simeq \mathcal O/\mathcal I \otimes_\mathcal O \mathcal E_\bullet $ comes equipped with an induced Lie $\infty $-algebroid structure: the $n$-ary brackets for $n \neq 2$ go to quotient by linearity, while for $n =2$, the $2$-ary bracket goes to the quotient in view of the relation $\rho_{\mathcal E} (\mathcal E_{-1}) [\mathcal I] \subset \mathcal I$.
 Also, $\pi $ goes to the quotient to a Lie-Rinehart algebra morphism $ \mathcal E_{-1}/\mathcal I \to \mathcal A/\mathcal I \mathcal A $. 
 \end{enumerate}
 \end{remark}

 \begin{definition}
     Let $\mathcal A $ be a Lie-Rinehart algebra over $\mathcal O $. For every Lie-Rinehart ideal $\mathcal I \subset \mathcal O $, we call  \emph{Lie $\infty$-algebroid of $\mathcal I $} the quotient Lie $\infty$-algebroid  $\mathcal E_\bullet/\mathcal I $, with $(\mathcal E_\bullet, \ell_\bullet, \rho) $  a universal Lie $\infty $-algebroid that covers $\mathcal A $.
 \end{definition}
 
 \begin{remark}
 \label{rmk:Tor}
  \normalfont
  The complexes on which the Lie $\infty$-algebroids of the ideal $\mathcal I $ are defined compute ${\mathrm{Tor}}_\mathcal O (\mathcal A, \mathcal O/\mathcal I) $ by construction (see \ref{app:tor}). 
 \end{remark}

 Moreover,  for any two universal Lie  $\infty$-algebroids  of $\mathcal A $, defined on $\mathcal E, \mathcal E'$ the homotopy equivalences $\Phi: \mathcal E' \to \mathcal E $ and $\Psi: \mathcal E \to \mathcal E'$,  whose existence is granted by  Corollary \ref{cor:unique}, go to the quotient and induce an homotopy equivalences between $ \mathcal O /\mathcal I \otimes_\mathcal O \mathcal E_\bullet  \simeq \mathcal E_\bullet / \mathcal I$ and $\mathcal O /\mathcal I \otimes_\mathcal O  \mathcal E_\bullet' \simeq  \mathcal E_\bullet' / \mathcal I$.
 The following corollary is then an obvious consequence of Theorem \ref{th:universal}.
 
 \begin{corollary}
 \label{cor:LRideal}
  Let $\mathcal A $ be a Lie-Rinehart algebra over $\mathcal O $. Let $\mathcal I \subset \mathcal O $ be a Lie-Rinehart ideal. Then any two Lie $\infty $-algebroids of $\mathcal I $ are homotopy equivalent, and there is a distinguished class of homotopy equivalences between them.
 \end{corollary}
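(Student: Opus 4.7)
\medskip

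\noindent\textbf{Proof sketch.} The plan is to show that the entire homotopy-equivalence data from Corollary \ref{cor:unique} is $\mathcal O$-multilinear and hence descends to the quotient $\mathcal O/\mathcal I$, which will give the desired homotopy equivalence together with its uniqueness class.

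First I would fix two acyclic Lie $\infty$-algebroids $(\mathcal E_\bullet, \ell_\bullet, \rho)$ and $(\mathcal E'_\bullet, \ell'_\bullet, \rho')$ covering $\mathcal A$ through hooks $\pi, \pi'$, and invoke Corollary \ref{cor:unique} to obtain Lie $\infty$-algebroid morphisms $\Phi\colon S^\bullet_\mathbb K(\mathcal E') \to S^\bullet_\mathbb K(\mathcal E)$ and $\Psi\colon S^\bullet_\mathbb K(\mathcal E) \to S^\bullet_\mathbb K(\mathcal E')$, both over $\mathcal A$, together with homotopies $(J_t, H_t)$ joining $\Phi\circ\Psi$ to $\mathrm{id}_{\mathcal E}$ and $(J'_t, H'_t)$ joining $\Psi\circ\Phi$ to $\mathrm{id}_{\mathcal E'}$. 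By Definition \ref{def:morph}, every Taylor coefficient of $\Phi$, $\Psi$, $J_t$, $J'_t$ and every Taylor coefficient of the $J_t$-co-derivation $H_t$ (resp.\ $J'_t$-co-derivation $H'_t$) is $\mathcal O$-multilinear.

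Next, I would push everything through the quotient. Since $\mathcal I$ is a Lie-Rinehart ideal, the modules $\mathcal E_\bullet/\mathcal I \simeq \mathcal O/\mathcal I \otimes_\mathcal O \mathcal E_\bullet$ and $\mathcal E'_\bullet/\mathcal I$ carry the induced Lie $\infty$-algebroid structures described in Section~\ref{ssec:Tor}. Any $\mathcal O$-multilinear Taylor coefficient descends to an $\mathcal O/\mathcal I$-multilinear Taylor coefficient on the quotient, so by Lemma \ref{Tay-C} both $\Phi$ and $\Psi$ induce co-algebra morphisms $\bar\Phi, \bar\Psi$ between $\bigodot^\bullet(\mathcal E'/\mathcal I)$ and $\bigodot^\bullet(\mathcal E/\mathcal I)$. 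The identity $\Phi\circ Q_{\mathcal E'} = Q_{\mathcal E}\circ\Phi$ and the anchor compatibility $\rho\circ \Phi_0 = \rho'$ pass verbatim to the quotient (using that for $f\in\mathcal I$ and $x\in\mathcal E_{-1}$ one has $\rho(x)[f]\in\mathcal I$, so the non-$\mathcal O$-linear term in $\ell_2$ vanishes modulo $\mathcal I\mathcal E$). Hence $\bar\Phi$ and $\bar\Psi$ are Lie $\infty$-algebroid morphisms over the quotient Lie-Rinehart algebra $\mathcal A/\mathcal I\mathcal A$.

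The same descent applied to $(J_t, H_t)$ and $(J'_t, H'_t)$ produces piecewise rational continuous paths $(\bar J_t, \bar H_t)$ and $(\bar J'_t, \bar H'_t)$ satisfying Equation \eqref{eq-diff} on the quotient, hence genuine homotopies in the sense of Definition \ref{def:homotopy}. Therefore $\bar\Phi\circ \bar\Psi \sim \mathrm{id}_{\mathcal E/\mathcal I}$ and $\bar\Psi\circ \bar\Phi \sim \mathrm{id}_{\mathcal E'/\mathcal I}$, so $\bar\Phi$ is a homotopy equivalence. Finally, for the \emph{distinguished class}: if $\tilde\Phi, \tilde\Psi$ is another choice produced by Corollary \ref{cor:unique}, the uniqueness clause there provides an $\mathcal O$-multilinear homotopy between $\Phi$ and $\tilde\Phi$ (respectively $\Psi$ and $\tilde\Psi$), which descends by the same argument and shows that $\bar\Phi$ and $\tilde{\bar\Phi}$ lie in the same homotopy class. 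There is no genuine obstacle here; the only point requiring care is the verification that the Leibniz-type term involving the anchor disappears modulo $\mathcal I\mathcal E$, which is exactly the defining property $\rho_\mathcal A(\mathcal A)[\mathcal I]\subset\mathcal I$ of a Lie-Rinehart ideal. \qed
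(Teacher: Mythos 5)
Your proposal is correct and follows essentially the same route as the paper: the text immediately preceding the corollary observes that the homotopy equivalences $\Phi,\Psi$ furnished by Corollary \ref{cor:unique} are $\mathcal O$-multilinear and therefore "go to the quotient," and then declares the corollary an obvious consequence of Theorem \ref{th:universal}. Your write-up simply fills in the details of that descent (including the check that the anchor term in $\ell_2$ vanishes modulo $\mathcal I\mathcal E$ and that the homotopies themselves descend), which is exactly the intended argument.
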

 
 Taking under account Remark \ref{rmk:Tor}, here is an alternative manner to restate this corollary.
  
 \begin{corollary}
 \label{cor:LRideal2}
  Let $\mathcal A $ be a Lie-Rinehart algebra over $\mathcal O $. Let $\mathcal I \subset \mathcal O $ be a Lie-Rinehart ideal. Then the complex computing $ {\mathrm{Tor}}_\mathcal O^\bullet (\mathcal A, \mathcal O/\mathcal I) $ comes equipped with a natural Lie $\infty $-algebroid structure over $\mathcal O/ \mathcal I $, and any two such structures are homotopy equivalent in a unique up to homotopy manner.
\end{corollary}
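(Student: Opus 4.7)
The plan is to assemble the statement as a direct consequence of the earlier machinery: Theorem \ref{thm:existence} (existence of a universal acyclic Lie $\infty$-algebroid on a free resolution), Corollary \ref{cor:unique} (uniqueness up to homotopy, unique up to homotopy), and the observation in Remark \ref{rmk:Tor} that a free $\mathcal O$-resolution of $\mathcal A$, once tensored with $\mathcal O/\mathcal I$, becomes the standard complex computing $\mathrm{Tor}_\mathcal O^\bullet(\mathcal A, \mathcal O/\mathcal I)$.

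First I would pick a free resolution $\cdots\to \mathcal E_{-2}\to \mathcal E_{-1}\to \mathcal A$ and invoke Theorem \ref{thm:existence} to endow it with a Lie $\infty$-algebroid structure $(\mathcal E_\bullet,\ell_\bullet,\rho)$ covering $\mathcal A$ through a hook $\pi$. Since $\mathcal I$ is a Lie-Rinehart ideal of $\mathcal A$, and since $\rho=\rho_\mathcal A\circ\pi$ on $\mathcal E_{-1}$ (with $\rho$ vanishing in lower degrees), one has $\rho(\mathcal E)[\mathcal I]\subset\mathcal I$. I would then check that all structure descends to $\mathcal E_\bullet/\mathcal I\simeq(\mathcal O/\mathcal I)\otimes_\mathcal O\mathcal E_\bullet$: the $n$-ary brackets for $n\neq 2$ are $\mathcal O$-multilinear and descend trivially, the $2$-ary bracket descends thanks to $\rho(\mathcal E_{-1})[\mathcal I]\subset\mathcal I$ combined with the Leibniz identity, and the anchor descends to a map $\mathcal E_{-1}/\mathcal I\to\mathrm{Der}(\mathcal O/\mathcal I)$. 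This produces a Lie $\infty$-algebroid of $\mathcal I$ in the sense of Subsection \ref{ssec:Tor}; by construction its underlying complex is $(\mathcal O/\mathcal I)\otimes_\mathcal O\mathcal E_\bullet$, which computes $\mathrm{Tor}_\mathcal O^\bullet(\mathcal A,\mathcal O/\mathcal I)$ because $\mathcal E_\bullet\to\mathcal A$ is a free $\mathcal O$-resolution.

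For the uniqueness part, given two such structures built from free resolutions $\mathcal E_\bullet$ and $\mathcal E'_\bullet$, Corollary \ref{cor:unique} yields Lie $\infty$-morphisms $\Phi\colon\mathcal E'\to\mathcal E$ and $\Psi\colon\mathcal E\to\mathcal E'$ over $\mathcal A$, and homotopies between $\Phi\circ\Psi$, $\Psi\circ\Phi$ and the identities. Because all Taylor coefficients are $\mathcal O$-multilinear, the morphisms, the homotopy piecewise-rational data $(J_t,H_t)$, and the identity, all descend to the $(\mathcal O/\mathcal I)$-tensored complexes. I would verify (using Proposition \ref{prop:justify} to see that the descended $(J_t,H_t)$ still solves \eqref{eq-diff} over $\mathcal O/\mathcal I$) that we obtain a mutually inverse pair of Lie $\infty$-morphisms between the two Lie $\infty$-algebroids on the $\mathrm{Tor}$-complex, homotopic to the identity. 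Finally, if two such homotopy equivalences are given, Theorem \ref{th:universal} applied to $\mathcal A$ shows that any two lifts are homotopic upstairs, and the homotopy again descends modulo $\mathcal I$, giving uniqueness up to homotopy.

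The only real subtlety, and what I would expect to be the main technical point to double-check, is that the notion of homotopy of Definition \ref{def:homotopy} is genuinely preserved by reduction modulo $\mathcal I$: one needs that $\mathcal O$-multilinearity of $J_t$ and of $J_t$-co-derivations $H_t$ passes to $(\mathcal O/\mathcal I)$-multilinearity, and that the piecewise rational continuous data remain well-defined (which is immediate, since the rational functions of $t$ are $\mathbb K$-valued and commute with base change). Once that is in place, the corollary is a clean consequence of Corollary \ref{cor:LRideal} interpreted through Remark \ref{rmk:Tor}.
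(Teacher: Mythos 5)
Your proposal is correct and follows essentially the same route as the paper: the paper obtains this corollary as a restatement of Corollary \ref{cor:LRideal} via Remark \ref{rmk:Tor}, i.e.\ by quotienting a universal Lie $\infty$-algebroid on a free resolution modulo $\mathcal I$ (using $\rho(\mathcal E_{-1})[\mathcal I]\subset\mathcal I$ for the $2$-ary bracket) and observing that the homotopy equivalences of Corollary \ref{cor:unique} descend to the tensored complexes. Your extra care about the piecewise rational homotopy data surviving base change is a point the paper leaves implicit, but it does not change the argument.
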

 
 When, in addition to being a Lie-Rinehart ideal, $\mathcal I $ is a maximal ideal,  then $\mathbb K:= \mathcal O/\mathcal I $ is a field and Lie $\infty $-algebroids of $\mathcal I $ are a homotopy equivalence class of Lie $\infty $-algebras.  In particular their common cohomologies, which is easily seen to be identified to ${\mathrm{Tor}}_\mathcal O^\bullet(\mathcal A, \mathbb K) $ comes equipped with a graded Lie algebra structure. In particular,  ${\mathrm{Tor}}_\mathcal O^{-1}(\mathcal A, \mathbb K) $ is a Lie algebra, ${\mathrm{Tor}}_\mathcal O^{-2}(\mathcal A, \mathbb K) $ is a representation of this algebra, and the $3$-ary bracket defines a class in the third Chevalley-Eilenberg cohomology of  ${\mathrm{Tor}}_\mathcal O^{-1}(\mathcal A, \mathbb K) $  valued in ${\mathrm{Tor}}_\mathcal O^{-2}(\mathcal A, \mathbb K) $. This class does not depend on any choice made in its construction by the previous corollaries, and trivially extends the class called NMRLA-class in \cite{LLS}. If it is not zero, then there is no Lie algebroid of rank $r$ equipped with a surjective Lie-Rinehart algebra morphism onto $\mathcal A $, where $r$ is the rank of $\mathcal A $ as a module over $\mathcal O $. All these considerations can be obtained by repeating verbatim Section 4.5.1 in \cite{LLS} (where non-trivial examples are given).
 

 \subsubsection{A categorical approach of the results}
Theorem  \ref{th:universal} means that acyclic Lie $\infty $-algebroids that covers Lie-Rinehart algebra  $(\mathcal A, \lb_\mathcal{A}, \rho_\mathcal{A})$ are terminal objects in the subcategory of Lie-$\infty $-alg-oids/$\mathcal O $ whose objects are Lie $\infty $-algebroids that terminate in $\mathcal A $. Whence, acyclic Lie $\infty $-algebroids over $ \mathcal O$ that cover $(\mathcal A, \lb_\mathcal{A}, \rho_\mathcal{A})$ are deserved to be called "\emph{universal $\infty $-algebroids of $\mathcal{A}$}". From now on, we call them by this name.\\

\noindent 
Let us re-state Corollary \ref{cor:unique} differently.
By associating to any Lie $\infty $-algebroid its basic Lie-Rinehart algebra one obtains therefore a natural functor:
 \begin{itemize}
\item  from the category {\textbf{Lie-$\infty $-alg-oids}/$\mathcal O$},
\item to the category  \textbf{Lie-Rhart-alg}/$\mathcal O$
 \end{itemize}
 
\noindent 
Theorem   \ref{thm:existence} gives a right inverse of this functor. In particular,  this functor becomes  an equivalence of categories when restricted to homotopy equivalence classes of \underline{acyclic} Lie $\infty $-algebroids over $ \mathcal O$, i.e:

\begin{corollary}
There is an equivalence of categories between:
\begin{enumerate}
    \item[(i)] Lie-Rinehart algebras over $ \mathcal O$, 
    \item[(ii)] acyclic Lie $\infty $-algebroids over $ \mathcal O$.
\end{enumerate}
\end{corollary}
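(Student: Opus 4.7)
The plan is to establish the equivalence by exhibiting mutually quasi-inverse functors between the two categories, using the three main results of the chapter (Theorems \ref{thm:existence}, \ref{th:universal} and Corollary \ref{cor:unique}) as the essential inputs.

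First, I would define the functor $\mathfrak F$ from the category of acyclic Lie $\infty$-algebroids (modulo homotopy) to \textbf{Lie-Rhart-alg}$/\mathcal O$ by sending $(\mathcal E_\bullet,\ell_\bullet,\rho)$ to its basic Lie-Rinehart algebra $\mathcal E_{-1}/\ell_1(\mathcal E_{-2})$ of Remark \ref{rmk:basic}. On a morphism $\Phi\colon(\mathcal E'_\bullet,Q_{\mathcal E'},\rho')\to(\mathcal E_\bullet,Q_\mathcal E,\rho)$, the $0$-th Taylor coefficient $\Phi_0$ restricted to $\mathcal E'_{-1}$ is a chain map, hence descends to a linear map on quotients; the fact that $\Phi$ is a Lie $\infty$-morphism forces this descended map to respect the anchor and the Lie bracket, so it is a Lie-Rinehart morphism. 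Well-definedness on homotopy classes follows from Proposition \ref{prop:homotpy-usual}: if $\Phi\sim\Psi$, then $\Psi_0-\Phi_0=\ell_1\circ H^{(0)}$ on $\mathcal E'_{-1}$, so the induced maps on $\mathcal E_{-1}/\ell_1(\mathcal E_{-2})$ coincide.

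Next, I would construct a quasi-inverse $\mathfrak G$. On objects, for each Lie-Rinehart algebra $\mathcal A$, choose a free $\mathcal O$-resolution (which exists in general) and apply Theorem \ref{thm:existence} to endow it with an acyclic Lie $\infty$-algebroid structure covering $\mathcal A$. Corollary \ref{cor:unique} ensures the resulting object is well-defined up to canonical homotopy equivalence. On morphisms, given $\phi\colon\mathcal A\to\mathcal A'$ and chosen universal covers $(\mathcal E_\bullet,\ell_\bullet,\rho)$ and $(\mathcal E'_\bullet,\ell'_\bullet,\rho')$ with hooks $\pi,\pi'$, observe that $\mathcal E_\bullet$ now terminates in $\mathcal A'$ through the composite hook $\phi\circ\pi$. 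Since $(\mathcal E'_\bullet,\ell'_\bullet,\rho')$ is acyclic and covers $\mathcal A'$, Theorem \ref{th:universal} produces a Lie $\infty$-algebroid morphism $\mathcal E_\bullet\to\mathcal E'_\bullet$ over $\mathcal A'$, unique up to homotopy. This defines $\mathfrak G(\phi)$ as a class in the quotient category.

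Third, I would verify functoriality of $\mathfrak G$. The identity of $\mathcal A$ lifts trivially to the identity Lie $\infty$-morphism (which is one of the required lifts, hence equal to $\mathfrak G(\mathrm{id}_{\mathcal A})$ up to homotopy by the uniqueness clause of Theorem \ref{th:universal}). For composition, given $\phi\colon\mathcal A\to\mathcal A'$ and $\psi\colon\mathcal A'\to\mathcal A''$, both $\mathfrak G(\psi)\circ\mathfrak G(\phi)$ and any direct lift of $\psi\circ\phi$ are Lie $\infty$-morphisms over $\mathcal A''$; the uniqueness part of Theorem \ref{th:universal} forces them to be homotopy equivalent, giving $\mathfrak G(\psi\circ\phi)=\mathfrak G(\psi)\circ\mathfrak G(\phi)$ in the homotopy category.

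Finally, I would verify the natural isomorphisms $\mathfrak F\circ\mathfrak G\simeq\mathrm{id}$ and $\mathfrak G\circ\mathfrak F\simeq\mathrm{id}$. The first is tautological: the basic Lie-Rinehart algebra of the chosen free resolution of $\mathcal A$ is $\mathcal E_{-1}/\ell_1(\mathcal E_{-2})=\mathrm{Im}(\pi)=\mathcal A$ by exactness, and the naturality square for $\phi$ commutes because $\mathfrak G(\phi)_0$ descends precisely to $\phi$. The second is exactly the content of Corollary \ref{cor:unique}: for any acyclic Lie $\infty$-algebroid $\mathcal E_\bullet$ covering $\mathcal A=\mathfrak F(\mathcal E_\bullet)$, the composite $\mathfrak G(\mathfrak F(\mathcal E_\bullet))$ and $\mathcal E_\bullet$ are two acyclic covers of the same Lie-Rinehart algebra, so there is a distinguished homotopy class of homotopy equivalences between them, and this class is natural by the same uniqueness-up-to-homotopy argument. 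The main obstacle will be the bookkeeping in step three: showing that the two lifts (the compositional one and the direct one) are really two instances of the situation covered by Theorem \ref{th:universal} and hence fall under its uniqueness clause, which requires carefully matching hooks and verifying $\mathcal O$-multilinearity along the way.
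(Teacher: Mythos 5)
Your proposal is correct and follows exactly the route the paper intends: the paper treats this corollary as an immediate consequence of Theorem \ref{thm:existence} (right inverse on objects), Theorem \ref{th:universal} (lifting morphisms uniquely up to homotopy), and Corollary \ref{cor:unique} (the natural isomorphism $\mathfrak G\circ\mathfrak F\simeq\mathrm{id}$), which is precisely the skeleton you flesh out. The only detail worth recording explicitly is the one you implicitly use in defining $\mathfrak F$ on morphisms, namely that the identity $\Phi_0\circ\ell_2'(x,y)-\ell_2(\Phi_0(x),\Phi_0(y))=\ell_1\circ\Phi_1(x\odot y)$ for $x,y\in\mathcal E'_{-1}$ guarantees the descended map respects brackets modulo $\ell_1(\mathcal E_{-2})$.
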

This corollary justifies the title of the first part of the thesis.

\begin{remark}
 \normalfont
In the language of categories, Corollary \ref{cor:LRideal}  means that there exists a functor from Lie-Rinehart ideals of a Lie-Rinehart algebra over $ \mathcal O$, to the category of Lie $\infty $-algebroids, mapping a Lie-Rinehart ideal $\mathcal I $ to an equivalence class of Lie $\infty $-algebroids over $\mathcal O / \mathcal I $. 
\end{remark}

\section{Proof of main results}

\subsection{A crucial bi-complex: ${\mathfrak{Page}}^{(n)}(\E',\E)$}\label{interpretaion-bicomplex}

\subsubsection{Description of ${\mathfrak{Page}}^{(n)}(\E',\E)$}

\label{bi-com}
Let $ \mathcal V$ be an $ \mathcal O$-module, and let 
 $ (\E, \dd , \pi) $ and $ (\E', \dd' , \pi') $ be complexes of projective $\mathcal O $-modules that terminates at $ \mathcal V$:
\begin{equation}
  \label{eq:EE'}\cdots\xrightarrow{\dd}\E_{-2}\xrightarrow{\dd}\E_{-1}\xrightarrow{\pi} \mathcal V, \quad \cdots\xrightarrow{\dd'}\E'_{-2}\xrightarrow{\dd'}\E'_{-1}\xrightarrow{\pi'} \mathcal V.
\end{equation}

  For every $k \geq 1$, the $(k+1)$-th graded symmetric power  $ \bigodot^{k+1} \E'$ of $ \mathcal E'$ over $ \mathcal O$ is a projective $\mathcal O $-module, and comes with a natural grading induced by the grading on $\E' $. 

\begin{definition}\label{def:bi-com}
Let $ k \in \mathbb{N}_0$. We call \emph{page number $k$ of $(\E,\dd,\pi) $ and $ (\E, \dd',\pi')$}  the bicomplex of $\mathcal O$-modules on the upper left quadrant $ \mathbb Z^- \times \mathbb{N}_0$ defined by:
\begin{align}
\label{eq:jgeq0}
\text{Page}^{(k)}(\E', \E)_{j,m}&:= \text{Hom}_{\mathcal O} \left( \bigodot^{k+1}  \E' \, _{|_{-k-m-1}} \, , \, \E_{j}\right),& \hspace{0.2cm }\hbox{  for $ m \geq 0  $ and $ j \leq -1 $}
\\
\label{eq:j=1}
\text{Page}^{(k)}(\E',\E)_{0,m}&:=\text{Hom}_{\mathcal O} \left( \bigodot^{k+1}  \E' \, _{|_{-k-m-1}}\, ,\,  \mathcal V \right),  & \hspace{0.2cm }\hbox{ for $ m \geq 0  $,}
\end{align}
together with the vertical differential $D^v$ defined 
for any one  of the two $\mathcal O$-modules \eqref{eq:jgeq0} or \eqref{eq:j=1} by 
$$\begin{array}{cccllll}
     D^v\colon \text{Page}^{(k)}(\E', \E)_{j,m} &\longrightarrow& \text{Page}^{(k)}(\E', \E)_{j,m+1}\\&&&&&\\
    \Phi & \longmapsto &D^v(\Phi)\colon \bigodot^{k+1}\E'& \longrightarrow \E_j\\&&&&\\& &\phantom{ccc} x_1\odot\ldots\odot x_{k+1}& \mapsto \Phi\circ\dd'\,(x_1\odot\cdots\odot x_{k+1})
\end{array}$$
where $\dd'$ acts as an $\mathcal O$-derivation on $x_1\odot\ldots\odot x_{k+1}\in\bigodot^k\E'$ (and is 0 on  $\E_{-1}'$). The horizontal differential, $D^h\colon \text{Page}^{(k)}(\E', \E)_{j,m} \longrightarrow \text{Page}^{(k)}(\E', \E)_{j+1,m}$, is given by $$\Phi \mapsto \dd \circ \Phi\, \,\hbox{ or }\,\, \Phi \mapsto \pi\circ\Phi$$depending on whether $\Phi$ is of type \eqref{eq:jgeq0} with $j \leq -2 $ or the type \eqref{eq:jgeq0} with $j=-1$. It is zero on elements of type \eqref{eq:j=1}. We denote by $\left(\mathfrak{Page}^{(k)}_\bullet(\E',\E), D\right) $ its associated total complex. When $\E'=\E$ we shall write $\mathfrak{Page}^{(k)}_\bullet(\E)$ instead of $\mathfrak{Page}^{(k)}_\bullet(\E,\E)$.
\end{definition}

The following diagram recapitulates the whole picture of $\mathfrak{Page}^{(k)}_\bullet(\E',\E)$:

\begin{equation}\label{recap}
\scalebox{0.8}{ \hbox{$
	\begin{array}{ccccccccccc}
		& & \vdots & & \vdots & & \vdots & & 
		\\ 
		& & \uparrow & & \uparrow & & \uparrow & & 
		\\ 
		\cdots& \rightarrow & \text{Hom}_\mathcal O\left(\bigodot^{k+1} \E'\,_{|_{-k-3}},\E_{-2}\right) & \overset{D^h}{\rightarrow} & \text{Hom}_\mathcal O\left(\bigodot^{k+1} \E'\,_{|_{-k-3}},\E_{-1}\right) 
		& \overset{D^h}{\rightarrow} & \text{Hom}_\mathcal O\left(\bigodot^{k+1} \E'\,_{|_{-k-3}},\mathcal{V}\right) & \rightarrow & 0
			\\ 
		& & D^v\uparrow & & D^v\uparrow & & D^v\uparrow & & 
		\\ 
		\cdots& \rightarrow & \text{Hom}_\mathcal O\left(\bigodot^{k+1} \E'\,_{|_{-k-2}},\E_{-2}\right) & \overset{D^h}{\rightarrow} & \text{Hom}_\mathcal O\left(\bigodot^{k+1} \E'\,_{|_{-k-2}},\E_{-1}\right) 
		& \overset{D^h}{\rightarrow} & \text{Hom}_\mathcal O\left(\bigodot^{k+1}\E'\,_{|_{-k-2}},\mathcal{V}\right) & \rightarrow& 0
		\\ 
		& & D^v\uparrow & & D^v\uparrow & & D^v\uparrow & &  
		\\ 
		\cdots& \rightarrow & \text{Hom}_\mathcal O\left(\bigodot^{k+1} \E'\,_{|_{-k-1}},\E_{-2}\right) & \overset{D^h}{\rightarrow} & \text{Hom}_\mathcal O\left(\bigodot^{k+1}\E'\,_{|_{-k-1}},\E_{-1}\right) 
		& \overset{D^h}{\rightarrow}& \text{Hom}_\mathcal O\left(\bigodot^{k+1}\E'\,_{|_{-k-1}},\mathcal{V}\right) & \rightarrow & 0
	\\
	& & \uparrow & & \uparrow & & \uparrow & & 
	\\ 
	& & 0 & & 0 & & 0 & &  \\
	& & \hbox{\small{\texttt{"-$2$ column"}}} & & \hbox{\small{\texttt{"-$1$ column"}}} & & \hbox{\small{\texttt{"last column"}}} & &  	
\end{array}
$}}
\end{equation}

For later use, we spell out the meaning of being $D$-closed.

\begin{remark}
\label{lem:beingClosed}
An element $P \in \mathfrak{Page}^{(k)}_j(\E',\E)$  in $ \oplus_{ i \geq 1 }\mathrm{Hom}_\mathcal O\left(\bigodot^{k+1}\E'\,_{|_{-j-i}},\E_{-i}\right)$ is $D$-closed if and only if:
\begin{enumerate}
    \item the component $ P_{-1}\colon \bigodot^{k+1} \mathcal E'\,_{|_{-j-1}} \to \mathcal E_{-1}$ is valued in the kernel of $ \pi \colon \E_{-1} \to \mathcal V$,
    \item the following diagram commutes:
     $$ \xymatrix{\bigodot^{k+1} \mathcal E' \,_{|_{-j-i}}\ar[rr]^{(-1)^j\dd'} \ar[d]_{P_{-i}} &&\bigodot^{k+1} \mathcal E' \,_{|_{-j-i+1}} \ar[d]^{P_{-i+1}}\\ \E_{-i}  \ar[rr]_\dd && \E_{-i+1}}$$ 
     with $P_{-i} $ being the component of $P$ in $\mathrm{Hom}_\mathcal O\left(\bigodot^{k+1}\E'\,_{|_{-j-i}},\E_{-i}\right) $. 
\end{enumerate}
For $(\E, \dd) = (\E', \dd')$, the second condition above also reads $[\dd,P]_{\tiny{\mathrm{RN}}}=0$. Here is an important  technical result.
\end{remark}
\begin{proposition}
\label{bicomplex}
Let $ (\E, \dd, \pi)$ be a resolution of $ \mathcal V$ in the category of $ \mathcal O$-modules. Then,
for every $k \geq 0$,
\begin{enumerate}
    \item  the cohomology of the complex $(\mathfrak{Page}^{(k)}_\bullet(\E) , D) $  for the total differential $D_\bullet:=D^h-(-1)^\bullet D^v$ is zero in all degrees;
    \item\label{2} Moreover, a $D$-closed element whose component on the \textquotedblleft    last column\textquotedblright\,of the diagram above is zero is the image through $D$ of some element whose  two last components are also zero.
   \item\label{3}
    More generally, for all $n \geq 1 $, for a $D$-closed element $P \in \mathfrak{Page}^{(k)}_j(\E)$  of the form $$ \bigoplus_{ i \geq n } \mathrm{Hom}_\mathcal O\left(\bigodot^{k+1}\E'\,_{|_{-j-i}},\E_{-i}\right),$$ one has $P=D(R) $ and $R \in  \mathfrak{Page}^{(k)}_{j-1}(\E) $ can be chosen in $\bigoplus_{ i \geq n+1 } \mathrm{Hom}_\mathcal O\left(\bigodot^{k+1}\E'\,_{|_{-j-i+1}},\E_{-i} \right) $.
\end{enumerate}
\end{proposition}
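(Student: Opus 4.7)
The strategy is to establish horizontal exactness of the rows of the bicomplex and then deduce all three items by a zigzag argument, with the column-indexed refinements in \ref{2} and \ref{3} coming out of the explicit induction.

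The crucial preliminary observation is that the $\mathcal{O}$-module $\bigodot^{k+1}\mathcal{E}_{|_{-k-m-1}}$ is projective for every $m\ge 0$: since $\mathbb{K}$ has characteristic zero, the graded symmetrization operator $\frac{1}{(k+1)!}\sum_{\sigma\in\mathfrak{S}_{k+1}}\epsilon(\sigma)\,\sigma$ realizes $\bigodot^{k+1}\mathcal{E}$ as a direct summand of the $(k+1)$-fold $\mathcal{O}$-tensor power $\mathcal{E}^{\otimes_\mathcal{O}(k+1)}$; the latter is projective as a tensor product of projectives, hence so is each polynomial-degree piece $\bigodot^{k+1}\mathcal{E}_{|_{-k-m-1}}$. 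Consequently $\mathrm{Hom}_\mathcal{O}(\bigodot^{k+1}\mathcal{E}_{|_{-k-m-1}},-)$ is exact, and applied to the given resolution $\cdots\to\mathcal{E}_{-2}\xrightarrow{\dd}\mathcal{E}_{-1}\xrightarrow{\pi}\mathcal{V}\to 0$ it makes each row of the bicomplex \eqref{recap} into an exact sequence.

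For items \ref{2} and \ref{3}, take a $D$-closed element $P\in\mathfrak{Page}^{(k)}_j(\mathcal{E})$ with $P_{-i}=0$ for $i<n$ and build the primitive $R$ inductively, one column at a time starting from the rightmost nonzero one. Closedness of $P$ at column $-(n-1)$ reads, by Remark~\ref{lem:beingClosed}, as $\dd\circ P_{-n}=0$ when $n\ge 2$ and as $\pi\circ P_{-1}=0$ when $n=1$; in either case, row exactness in polynomial-degree $-j-n$ produces an element $R_{-(n+1)}\in\mathrm{Hom}_\mathcal{O}(\bigodot^{k+1}\mathcal{E}_{|_{-j-n}},\mathcal{E}_{-(n+1)})$ with $\dd\circ R_{-(n+1)}=P_{-n}$. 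The corrected element $P^{(1)}:=P-D R_{-(n+1)}$ is still $D$-closed, has vanishing column-$(-n)$ component, and modifies $P_{-(n+1)}$ only by a term of the form $\pm R_{-(n+1)}\circ\dd'$; so it is supported in columns $i\ge n+1$ and the induction continues. Iterating column by column produces $R=\sum_{i\ge n+1}R_{-i}$ with $DR=P$; since each $R_{-i}$ is fixed after exactly one step, the procedure is well-defined componentwise. This yields \ref{3}, and \ref{2} is the case $n=1$.

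For \ref{1}, a general $D$-closed $P$ may carry a nonzero last-column component $P_0\in\mathrm{Hom}_\mathcal{O}(\bigodot^{k+1}\mathcal{E}_{|_{-j}},\mathcal{V})$; surjectivity of $\pi_*$ on the corresponding row (a direct consequence of row exactness with terminal term $0$) allows us to choose $R_{-1}\in\mathrm{Hom}_\mathcal{O}(\bigodot^{k+1}\mathcal{E}_{|_{-j}},\mathcal{E}_{-1})$ with $\pi\circ R_{-1}=P_0$, after which $P-D R_{-1}$ is supported in $i\ge 1$ and \ref{3} with $n=1$ finishes the job. The only real conceptual point is the projectivity of $\bigodot^{k+1}\mathcal{E}_{|_{-k-m-1}}$; everything else amounts to standard sign and index bookkeeping for the zigzag on a bicomplex with exact rows, the signs coming from the convention $D=D^h-(-1)^{\bullet}D^v$ being absorbed routinely into the expressions.
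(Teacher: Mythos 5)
Your proof is correct and takes essentially the same route as the paper's: projectivity of the graded symmetric powers (which the paper merely asserts and you justify via the characteristic-zero symmetrization splitting of the tensor power) gives exactness of the rows, and all three items then follow from the column-by-column diagram chase that the paper delegates to its appendix on the acyclic assembly lemma. The one point worth keeping in mind is the one you already flag: the chase fixes each component of the primitive $R$ after finitely many steps but need not terminate globally, so $R$ is in general an infinite formal sum over the columns --- a convention the paper itself uses implicitly (e.g.\ when writing $\ell_3=\sum_{j\geq 2}\ell_3^{j}$).
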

\begin{proof} 
Since $ \bigodot^k \mathcal E' |_{j+m-k} $ is a projective $ \mathcal O$-module for all $(j,m)\in\mathbb{Z^-}\times\mathbb{N}_0$, and  $ (\E, \dd, \pi)$ is a resolution, all the lines of the above bicomplex are exact. This proves the first item. The second and the third are obtained by diagram chasing (see Appendix \ref{app:proj-res} for more details).
\end{proof}

\subsubsection{Interpretation of $\mathfrak{Page}^{(n)}(\E',\E)$ in our context}
We denote by $Q^{(0)}_\E $ and $ Q_{\E'}^{(0)}$ the differentials of polynomial-degree $0$ on $\bigodot^\bullet\mathcal E$ and $ \bigodot^\bullet\mathcal E'$ induced by $\dd $ and $\dd'$. Whence, $\left(\mathrm{Hom}_\mathcal{O}(\bigodot^\bullet\mathcal E',\bigodot^\bullet\mathcal E), \partial\right)$ with $$\partial\colon H \mapsto Q_\E^{(0)} \circ H - (-1)^{|H|}  H \circ Q_{\E'}^{(0)}$$ is a complex of $\mathcal{O}$-modules (see Lemma \ref{lemma:hom-complex}).

\begin{lemma}\label{subcomplex:coder}Let $\phi\colon (\mathcal E', \dd') \to (\E, \dd)  $ be a chain map,
and let $\Phi^{(0)}\colon\bigodot^\bullet\mathcal E' \to  \bigodot^\bullet  \mathcal E $ be its extension to a co-algebra morphism, namely:
 $$ \Phi^{(0)} (x_1 \cdot \dots \cdot x_n):= \phi (x_1) \cdot \dots \cdot \phi(x_n).$$

\begin{enumerate}

\item We have, $\Phi^{(0)}\circ Q_{\E'}^{(0)}=Q_\E^{(0)}\circ \Phi^{(0)}$.
    \item $\Phi^{(0)} $-co-derivations 
    form a sub-complex of $\left(\mathrm{Hom}_\mathcal{O}(\bigodot^\bullet\mathcal E',\bigodot^\bullet\mathcal E), \partial\right)$.
\end{enumerate}

\end{lemma}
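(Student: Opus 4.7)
For item 1, my plan is to evaluate both sides on a generic homogeneous monomial $x_1 \cdots x_n \in \bigodot^n \mathcal{E}'$ using the explicit shuffle formula \eqref{coder:formula} that characterises the co-derivation extension of a degree-$+1$ map to $\mathcal{E}$. On one hand,
\[
\Phi^{(0)} \circ Q_{\mathcal{E}'}^{(0)}(x_1 \cdots x_n) = \sum_{i=1}^{n} \epsilon_i\, \phi(\dd'(x_i)) \cdot \phi(x_1) \cdots \widehat{\phi(x_i)} \cdots \phi(x_n),
\]
where $\epsilon_i = (-1)^{|x_i|(|x_1|+\cdots+|x_{i-1}|)}$ is the Koszul sign produced by bringing $x_i$ to the front. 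On the other hand,
\[
Q_{\mathcal{E}}^{(0)} \circ \Phi^{(0)}(x_1 \cdots x_n) = \sum_{i=1}^{n} \epsilon_i\, \dd(\phi(x_i)) \cdot \phi(x_1) \cdots \widehat{\phi(x_i)} \cdots \phi(x_n),
\]
with the \emph{same} Koszul signs, because $\phi$ is degree-preserving, so $|\phi(x_j)|=|x_j|$. The chain-map identity $\phi \circ \dd' = \dd \circ \phi$ then identifies the two expressions term by term, proving item 1.

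For item 2, I need to show (a) that $\partial$ sends $\Phi^{(0)}$-co-derivations to $\Phi^{(0)}$-co-derivations, and (b) that $\partial^2 = 0$ on this subspace. Let $\mathcal{H}$ be a $\Phi^{(0)}$-co-derivation of degree $k$. The plan is to push the coproduct $\Delta$ through $\partial\mathcal{H} = Q_{\mathcal{E}}^{(0)}\circ \mathcal{H} - (-1)^k\, \mathcal{H}\circ Q_{\mathcal{E}'}^{(0)}$ using the three identities
\[
\Delta\circ Q_{\mathcal{E}}^{(0)} = (Q_{\mathcal{E}}^{(0)}\otimes \mathrm{id} + \mathrm{id}\otimes Q_{\mathcal{E}}^{(0)})\circ \Delta, \qquad \Delta'\circ Q_{\mathcal{E}'}^{(0)} = (Q_{\mathcal{E}'}^{(0)}\otimes \mathrm{id} + \mathrm{id}\otimes Q_{\mathcal{E}'}^{(0)})\circ \Delta',
\]
\[
\Delta\circ \mathcal{H} = (\mathcal{H}\otimes \Phi^{(0)} + \Phi^{(0)}\otimes \mathcal{H})\circ \Delta'.
\]
Composing and expanding with the correct Koszul signs produces eight terms. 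Four of them combine into $(\partial\mathcal{H}\otimes \Phi^{(0)} + \Phi^{(0)}\otimes \partial\mathcal{H})\circ \Delta'$, i.e. exactly the co-Leibniz identity in degree $k+1$. The remaining four pair up, thanks to the sign $-(-1)^k$ in front of $\mathcal{H}\circ Q_{\mathcal{E}'}^{(0)}$, into
\[
(-1)^{k}\, \mathcal{H}\otimes \bigl(Q_{\mathcal{E}}^{(0)}\Phi^{(0)} - \Phi^{(0)}Q_{\mathcal{E}'}^{(0)}\bigr)\circ \Delta' \;+\; \bigl(Q_{\mathcal{E}}^{(0)}\Phi^{(0)} - \Phi^{(0)}Q_{\mathcal{E}'}^{(0)}\bigr)\otimes \mathcal{H}\circ \Delta',
\]
both of which vanish by item 1. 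This gives (a). Assertion (b), namely $\partial^{2}=0$, is immediate from $(Q_{\mathcal{E}}^{(0)})^2 = 0$ and $(Q_{\mathcal{E}'}^{(0)})^{2}=0$ (consequences of $\dd^2 = (\dd')^2 = 0$), and already holds on the ambient complex $(\mathrm{Hom}_{\mathcal{O}}(\bigodot^\bullet \mathcal{E}',\bigodot^\bullet \mathcal{E}),\partial)$ referenced before the lemma, so it restricts to the sub-space of $\Phi^{(0)}$-co-derivations.

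The only real obstacle is the sign bookkeeping in the four-term cancellation at the heart of item 2, but it is entirely formal: the crucial input is the commutation relation from item 1, and once that is in hand the result is a direct consequence of the graded Leibniz rule satisfied by the ambient differential $\partial$ of the $\mathrm{Hom}$-complex.
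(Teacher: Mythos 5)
Your proof is correct and follows essentially the route the paper intends: the paper dispatches item 1 as "easily checked" and for item 2 simply points to the proof of Proposition \ref{linearity}, whose argument (push $\Delta$ through $\partial\mathcal{H}$ via the co-Leibniz identity and the co-derivation property of $Q^{(0)}$, then cancel the cross terms using the intertwining relation) is exactly the computation you carry out. The only remark worth adding is that preservation of $\mathcal O$-multilinearity under $\partial$ is automatic here because $Q^{(0)}_{\mathcal E}$ and $Q^{(0)}_{\mathcal E'}$ are the polynomial-degree-zero parts, hence $\mathcal O$-linear, so no anchor terms arise.
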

\begin{proof}
The first item can be easily checked. The second item follows exactly the same pattern as Lemma \ref{linearity}.
\end{proof}
 The following proposition is very important.
\begin{proposition}
\label{prop:interpretation}
For every $k \in \mathbb{N}_0$, and $\phi\colon (\E', \dd') \to (\E,\dd) $ be a chain map as in Lemma \ref{subcomplex:coder}. The sub-complex of $\Phi^{(0)}$-co-derivations of polynomial-degree $k$ is isomorphic to the complex $\widehat{\mathfrak{Page}}^{(k)}(\E',\E)$ obtained from  $\mathfrak{Page}^{(k)}(\E',\E)$ by crossing its \textquotedblleft last column\textquotedblright, see diagram \eqref{recap}.
\end{proposition}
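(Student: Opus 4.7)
The proof proceeds by constructing an explicit degree-shifted bijection between the graded $\mathcal O$-module of $\Phi^{(0)}$-co-derivations of polynomial-degree $k$ and the underlying module of $\widehat{\mathfrak{Page}}^{(k)}(\E',\E)$, and then verifying that this bijection intertwines the two differentials. The whole argument reduces to unpacking the Taylor-coefficient description of $\mathcal O$-multilinear $\Phi^{(0)}$-co-derivations given after Proposition \ref{prop:co-morphism}, together with bookkeeping on degrees and signs.

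\textbf{Step 1: the bijection.} Since $\Phi^{(0)}$ is $\mathcal O$-multilinear, an $\mathcal O$-multilinear $\Phi^{(0)}$-co-derivation $\mathcal H$ of polynomial-degree $k$ and (internal) degree $d$ is determined uniquely by its $k$-th Taylor coefficient, an $\mathcal O$-linear map of degree $d$
$$\mathcal H_k:\bigodot^{k+1}\E'\longrightarrow\E,$$
via the formula \eqref{ex:formulacoder}. Decomposing $\mathcal H_k$ according to where it lands in the direct sum $\E=\bigoplus_{i\geq 1}\E_{-i}$ gives
$$\mathcal H_k=\sum_{i\geq 1}\mathcal H_{k,-i},\quad\mathcal H_{k,-i}\colon \bigodot^{k+1}\E'\,_{|_{-i-d}}\longrightarrow\E_{-i},$$
and each $\mathcal H_{k,-i}$ is precisely an element of $\mathrm{Page}^{(k)}(\E',\E)_{-i,\,d+i-k-1}$. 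All these bidegrees satisfy $j+m=d-k-1$, so the family $(\mathcal H_{k,-i})_{i\geq 1}$ lies on a single antidiagonal of the bicomplex. The fact that $\mathcal H_k$ is $\E$-valued (never $\mathcal V$-valued) translates exactly into the condition that no component appears in the \emph{last column} $j=0$. Conversely, any such antidiagonal family in $\widehat{\mathfrak{Page}}^{(k)}$ extends by formula \eqref{ex:formulacoder} to a unique $\mathcal O$-multilinear $\Phi^{(0)}$-co-derivation of polynomial-degree $k$. This defines a bijection
$$\iota\colon\{\Phi^{(0)}\text{-co-derivations of polynomial-degree }k,\text{ degree }d\}\;\xrightarrow{\;\sim\;}\;\widehat{\mathfrak{Page}}^{(k)}_{d-k-1}(\E',\E).$$

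\textbf{Step 2: matching the differentials.} For $\mathcal H$ of polynomial-degree $k$ and degree $d$, compute the $k$-th Taylor coefficient of $\partial\mathcal H = Q_\E^{(0)}\circ\mathcal H-(-1)^d\mathcal H\circ Q_{\E'}^{(0)}$. Since $\mathcal H$ lands in $\bigodot^1\E=\E$ and $Q_\E^{(0)}$ coincides with $\dd$ there, while $Q_{\E'}^{(0)}$ acts on $\bigodot^{k+1}\E'$ as the graded derivation extension of $\dd'$, one gets on $\bigodot^{k+1}\E'\,_{|_{-i-d-1}}$ the identity
$$(\partial\mathcal H)_{k,-i}=\dd\circ\mathcal H_{k,-i-1}-(-1)^d\,\mathcal H_{k,-i}\circ\dd'.$$
The right-hand side is exactly $D^h(\mathcal H_{k,-i-1})-(-1)^d D^v(\mathcal H_{k,-i})$, which coincides with the $(-i)$-th component of the total differential $D=D^h-(-1)^\bullet D^v$ of $\widehat{\mathfrak{Page}}^{(k)}$ evaluated on $\iota(\mathcal H)$, up to the constant sign $(-1)^{k+1}$ coming from the relation $(-1)^d=(-1)^{(d-k-1)+k+1}$. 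This constant is absorbed by redefining $\iota$ within each fixed polynomial-degree $k$ (e.g.\ multiplying by $(-1)^{(k+1)n}$ on the total-degree-$n$ part), giving the desired isomorphism of complexes.

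\textbf{Main obstacle.} There is no deep technical difficulty; the whole point is that the correspondence between co-derivations and their Taylor coefficients, and the way a co-derivation splits along the grading of $\E$, is tautologically the data of a column of the bicomplex. The one subtlety is sign bookkeeping: one must correctly relate $(-1)^d$ (natural for co-derivations) to $(-1)^{j+m}$ (natural for the total complex) and verify that the last-column-removal in $\widehat{\mathfrak{Page}}^{(k)}$ matches the constraint that a co-derivation into $\bigodot^\bullet\E$ has Taylor coefficients valued in $\E$ rather than in its quotient $\mathcal V$. Both points are handled by the direct computation outlined above.
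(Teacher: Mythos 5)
Your proof is correct and is essentially the paper's own argument: the isomorphism is $\mathcal H\mapsto\mathrm{pr}\circ\mathcal H$, sending a $\Phi^{(0)}$-co-derivation to its unique Taylor coefficient viewed as an antidiagonal family in $\widehat{\mathfrak{Page}}^{(k)}(\E',\E)$, and the chain-map property is exactly the computation you give in Step~2. The only remark worth making is that the sign you absorb at the end is an artifact of grading the total complex by $j+m$: the paper grades $\mathfrak{Page}^{(k)}_j$ by the degree $j$ of the map itself (so that $|H_k|=|H|$ and $(-1)^{|H|}$ matches the sign in $D=D^h-(-1)^{\bullet}D^v$ on the nose), and with that convention no rescaling is needed — and note that were a rescaling needed, it would have to depend on a single bidegree (e.g.\ $(-1)^{(k+1)m}$) rather than on the total degree, since a total-degree-dependent sign rescales $D^h$ and $D^v$ by the same factor.
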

\begin{proof}
The chain isomorphism $\delta\colon \mathrm{Hom}_\mathcal{O}(\bigodot^\bullet\mathcal E',\bigodot^\bullet\mathcal E)\rightarrow\mathrm{Hom}_\mathcal{O}(\bigodot^\bullet\mathcal E',\mathcal E)$ consists in mapping a $\Phi^{(0)} $-co-derivation $H $ of polynomial-degree $k$ and degree $j$ to its unique Taylor coefficient $H_k\in{\mathfrak{Page}}_j^{(k)}(\E',\E)$, i.e. $\delta(H)=\mathrm{pr}\circ H$. Let us check that this map is indeed a chain map: for every $x=~x_1\odot\cdots\odot x_{k+1}\in \bigodot^{k+1}\E'$ one has,\begin{align*}
    \delta\circ(Q_\E^{(0)} \circ H - (-1)^{|H|}  &H \circ Q_{\E'}^{(0)})(x)\\&=\ell_1\circ H_k(x)(-1)^{|H|}  H_k \left(\sum_{i=1}^{k+1}-(-1)^{(|x_1|+\cdots+|x_{i-1}|)|x_i|} \ell_1'(x_i)\odot x_1\odot\cdots\odot x_{k+1}\right)\\&=D^h(H_k)(x)-(-1)^{|H_k|}D^v(H_k)(x),\quad \text{by definition  of $D^h$ and $D^v$}\\&=D(H_k)(x)\\&=D\circ \delta(H)(x),\quad\text{by definition of $\delta$}.
\end{align*}
\end{proof}

Here is another type of interpretation for $\widehat{\mathfrak{Page}}^{\bullet}(\E',\E)$ involving the Richardson-Nijenhuis bracket.

\begin{prop}\label{rem:bicom-rch}
\cite{zbMATH03130695}
When $\E' = \E $, $\widehat{\mathfrak{Page}}^{\bullet}(\E',\E)$ with no $0$-column is the bi-graded complex  of \emph{exterior forms on $\E $} and the differential $D$ of $\widehat{\mathfrak{Page}}^{\bullet}(\E',\E)$ is  $D(\cdot) =[\mathrm d ,\cdot\,]_{\hbox{\tiny{\emph{RN}}}}$.\end{prop}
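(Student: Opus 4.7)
The plan is to combine Proposition \ref{prop:interpretation} (applied with $\E'=\E$ and $\phi=\mathrm{id}_\E$) with Lemma \ref{lem:RN}. First, specializing Proposition \ref{prop:interpretation} to this setting, so that $\Phi^{(0)}=\mathrm{id}$, I would identify $\widehat{\mathfrak{Page}}^{(k)}(\E,\E)$ with the sub-complex of $\mathcal O$-multilinear co-derivations of $\bigodot^\bullet\E$ of polynomial-degree $k$, via the map $\delta\colon H\mapsto H_k:=\mathrm{pr}\circ H$. A co-derivation of polynomial-degree $k$ is uniquely determined by its single non-zero Taylor coefficient $H_k\in\mathrm{Hom}_\mathcal O(\bigodot^{k+1}\E,\E)$. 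Under the standard degree shift $\E[1]$, graded symmetric maps on $\E$ correspond to graded skew-symmetric (i.e. alternating) maps on $\E[1]$, so such Taylor coefficients are precisely vector-valued graded exterior $(k+1)$-forms on $\E$ valued in $\E$. This yields the identification claimed in the first half of the proposition, together with its bi-grading (polynomial-degree on $\bigodot^\bullet\E$ and internal degree on $\E$).

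For the second half, observe that under $\delta$, the total differential $D$ on $\widehat{\mathfrak{Page}}^{(k)}(\E,\E)$ is the commutator
\[ \partial H \;=\; Q_\E^{(0)}\circ H-(-1)^{|H|}H\circ Q_\E^{(0)} \;=\; [Q_\E^{(0)},H], \]
as was established in the proof of Proposition \ref{prop:interpretation}. Since $\mathrm d=\ell_1$ (viewed as a Taylor coefficient of polynomial-degree $0$) satisfies $\overline{\mathrm d}=Q_\E^{(0)}$, and since by construction $H=\overline{H_k}$, Lemma \ref{lem:RN} (which applies verbatim in the $\mathcal O$-multilinear setting thanks to Lemma-Definition \ref{Tay-C}) asserts that the extension map $P\mapsto\overline P$ is a graded Lie algebra morphism for the Richardson-Nijenhuis bracket, so
\[ [Q_\E^{(0)},H] \;=\; [\overline{\mathrm d},\overline{H_k}] \;=\; \overline{[\mathrm d,H_k]_{\hbox{\tiny{RN}}}}. \]
Applying the projection $\mathrm{pr}$ to both sides gives $D(H_k)=\mathrm{pr}\circ[Q_\E^{(0)},H]=[\mathrm d,H_k]_{\hbox{\tiny{RN}}}$, which is exactly the second half of the statement.

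There is essentially no obstacle here: the entire proof is an unwinding of definitions, using previously established results. The only point requiring mild care is to verify that Lemma \ref{lem:RN}, originally formulated for $S^\bullet_{\mathbb K}(\E)$ with $\mathbb K$-multilinear Taylor coefficients, descends to $\bigodot^\bullet\E$ with $\mathcal O$-multilinear Taylor coefficients; but this is precisely the content of Lemma-Definition \ref{Tay-C}, since the Richardson-Nijenhuis bracket preserves $\mathcal O$-multilinearity when both arguments are $\mathcal O$-multilinear.
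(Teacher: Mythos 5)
Your argument is correct, but note that the paper itself offers no proof of this proposition: it is stated with a citation to the classical Nijenhuis--Richardson reference and left as an identification the reader is expected to accept (the only related remark in the text is the observation, in Remark \ref{lem:beingClosed}, that for $\E=\E'$ the closedness condition reads $[\dd,P]_{\hbox{\tiny{RN}}}=0$). Your derivation supplies a self-contained proof entirely from the paper's own machinery: specializing Proposition \ref{prop:interpretation} to $\phi=\mathrm{id}$ identifies $\widehat{\mathfrak{Page}}^{(k)}(\E)$ with polynomial-degree $k$ co-derivations of $\bigodot^\bullet\E$ equipped with $H\mapsto[Q_\E^{(0)},H]$, and Lemma \ref{lem:RN} (together with the observation that the Richardson--Nijenhuis bracket of $\mathcal O$-multilinear maps is again $\mathcal O$-multilinear, so the lemma descends from $S^\bullet_{\mathbb K}(\E)$ to $\bigodot^\bullet\E$) converts that commutator into $[\dd,\cdot\,]_{\hbox{\tiny{RN}}}$ after applying $\mathrm{pr}$. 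The identification with exterior forms via the degree shift $\E[1]$ is the standard décalage and is correctly invoked. This is a legitimate and arguably preferable route, since it makes the proposition a corollary of results already established in the text rather than an external import.
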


We finish the section with the following lemma that will be important to prove Proposition \ref{univ:precise}. It uses the  consequence of the cone construction (see Appendix \ref{appendix:mod}).
\begin{lemma} \label{lem:inFactInclusion}Let $(\mathcal R,\dd^\mathcal R,\pi^\mathcal R)$ be an arbitrary complex of projective $\mathcal O$-module that terminates in a $\mathcal O$-module $\mathcal V$.
There exists a projective resolution $(\mathcal E,\dd^{\mathcal E}, \pi^{\mathcal E})$ of $\mathcal V $, which contains $(\mathcal R,\dd^\mathcal R,\pi^\mathcal R)$ as a sub-complex. Moreover, we can assume that $\mathcal R$ admits a projective submodule in $\E$ in direct sum.
\end{lemma}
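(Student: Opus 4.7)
The strategy will be to extend $\mathcal R$ degree by degree into a full projective resolution of $\mathcal V$ by grafting in free modules that kill the obstructions to exactness, while keeping $\mathcal R$ intact as a direct summand in each degree. Morally this is a hand-built version of the mapping cylinder of a chain map $\mathcal R \to \mathcal P$ into a chosen projective resolution $\mathcal P$ of $\mathcal V$; doing it directly makes the splitting transparent.

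First I will handle degree $-1$. Choose a free $\mathcal O$-module $F_{-1}$ together with a map $\tilde\pi \colon F_{-1} \to \mathcal V$ whose image surjects onto the cokernel $\mathcal V / \pi^\mathcal R(\mathcal R_{-1})$, and set $\mathcal E_{-1} := \mathcal R_{-1} \oplus F_{-1}$ with augmentation $\pi^\mathcal E := \pi^\mathcal R + \tilde\pi$; this is surjective by construction. Then, inductively, having defined $\mathcal E_{-j} = \mathcal R_{-j} \oplus F_{-j}$ and the differentials for $j < i$ so that the complex is exact at $\mathcal E_{-j+1}$, I will consider the kernel $K \subset \mathcal E_{-i+1}$ of the previous differential (or of $\pi^\mathcal E$ when $i = 2$). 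Since $\mathcal R_\bullet$ is itself a complex, $\dd^\mathcal R(\mathcal R_{-i}) \subset K$. I will then choose a free $\mathcal O$-module $F_{-i}$ and a lift $\tilde\sigma \colon F_{-i} \to K$ (using projectivity of $F_{-i}$) whose image, together with $\dd^\mathcal R(\mathcal R_{-i})$, generates $K$. Setting $\mathcal E_{-i} := \mathcal R_{-i} \oplus F_{-i}$ with $\dd^\mathcal E(r,f) := \dd^\mathcal R r + \tilde\sigma(f)$, exactness at $\mathcal E_{-i+1}$ is then automatic, and $(\dd^\mathcal E)^2 = 0$ because $\tilde\sigma(F_{-i}) \subset K$ is annihilated by the next differential and $(\dd^\mathcal R)^2 = 0$.

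Iterating produces a complex $(\mathcal E_\bullet, \dd^\mathcal E, \pi^\mathcal E)$ which is a free (hence projective) resolution of $\mathcal V$, with $\mathcal R$ sitting inside as a subcomplex. The graded $\mathcal O$-submodule $\mathcal C := \bigoplus_{i \geq 1} F_{-i}$ is projective (being a direct sum of free modules) and provides a direct-sum decomposition $\mathcal E = \mathcal R \oplus \mathcal C$ of graded $\mathcal O$-modules; note that $\mathcal C$ need not be a subcomplex, since $\tilde\sigma$ generally carries $F_{-i}$ into $K \subset \mathcal E_{-i+1}$, mixing the $\mathcal R$ and $F$ components. No real obstacle appears: the argument reproduces, position by position, the standard inductive construction of a projective resolution, with the only novelty being that at each step the new free module is adjoined to the already-present slot $\mathcal R_{-i}$ rather than replacing it.
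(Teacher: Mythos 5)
Your proof is correct, but it takes a different route from the paper's. You build the resolution slot by slot: in each degree you adjoin a free module $F_{-i}$ to $\mathcal R_{-i}$ whose image in $\mathcal E_{-i+1}$, together with $\dd^{\mathcal R}(\mathcal R_{-i})$, fills out the kernel of the previous differential. The key verifications — that $\dd^{\mathcal R}(\mathcal R_{-i})$ lands in that kernel because $\mathcal R$ is a complex, that $(\dd^{\mathcal E})^2=0$ because $\tilde\sigma(F_{-i})$ is by construction a cycle, and that the complement $\bigoplus_i F_{-i}$ is a graded projective (indeed free) submodule that need not be a subcomplex — are all present and sound. The paper instead fixes a projective resolution $(\mathfrak F,\dd^{\mathfrak F},\pi^{\mathfrak F})$ of $\mathcal V$, a chain map $\phi\colon\mathcal R\to\mathfrak F$, and takes the mapping cone of $x\mapsto(x,\phi(x))\colon\mathcal R\to\mathcal R\oplus\mathfrak F$, yielding $\mathcal E_{-i}=\mathcal R_{-i+1}\oplus\mathcal R_{-i}\oplus\mathfrak F_{-i}$ with an explicit closed-form differential, exactness being delegated to the general fact that the cone of a quasi-isomorphism is acyclic (plus a truncation in degree $-1$). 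The cone argument is shorter and gives the whole complex and its splitting in one stroke, at the price of invoking the existence of $\phi$ and the acyclicity of cones; your induction is more elementary and self-contained, avoids those inputs, and produces a genuinely free complement (which is in fact the form the paper uses later in Lemma \ref{rest:univ}), at the price of some bookkeeping. Either argument establishes the lemma.
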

\begin{proof}Resolutions of an $\mathcal O $-modules $\mathcal V $ are universal objects in the category of complexes of projective $\mathcal O $-modules. In particular, for every projective resolution $(\mathfrak F,\dd^\mathfrak F, \pi^\mathfrak F)$ of $\mathcal V $, there exist a (unique up to homotopy) chain map:
 $$  \phi \colon (\mathcal R,\dd^\mathcal R,\pi^\mathcal R) \to (\mathfrak F,\dd^\mathfrak F, \pi^\mathfrak F)  . $$
We apply the cone construction (see, e.g. \cite{Weibel}, Section 1.5) to: 
\begin{enumerate}
    \item the complex  $(\mathcal R,\dd^\mathcal R,\pi^\mathcal R)$
    \item the direct sum of the complexes $(\mathcal R,\dd^\mathcal R)$ and $(\mathfrak F,\dd^\mathfrak F)$  namely, $\left(\mathcal R\oplus\mathfrak F,\dd^\mathcal R\oplus\dd^\mathfrak F, \pi^\mathcal R\oplus\pi^\mathfrak F \right) $ 
    \item \label{chain:cone} the chain map obtained by mapping any $ x  \in \mathcal R$ to $ (x, \phi(x)) \in \mathcal R\oplus\mathfrak F$.
\end{enumerate}
The differential is given by \begin{align}
   \dd^{\E}(x, y, z)=(-\dd^\mathcal Rx , \dd^\mathcal Ry-x , \dd^\mathfrak F z-\varphi(x)) 
\end{align} for all $(x,y,z)\in\E_{-i}=\mathcal R_{-i+1}\oplus\mathcal R_{-i}\oplus\mathfrak F_{-i}$, $i\geq 2$. Since the chain given in item \ref{chain:cone} is a quasi-isomorphism, its cone is an exact complex. We truncate the latter at degree $-1$ without destroying its exactness by replacing the cone differential at degree $-1$ as follows:  $\pi^{\E}\colon\mathcal R_{-1}\oplus\mathfrak F_{-1}\rightarrow \mathcal V,\;(r,e)\mapsto\pi^{\mathfrak F}(e)-\pi^{\mathcal R}(r)$. For a visual description, see Equation \eqref{sub:resol} below: the resolution of $\mathcal V $ described in Lemma \ref{lem:inFactInclusion} is defined by:
\begin{equation}\label{sub:resol}
\xymatrix{ \cdots\ar[r] & \mathfrak F_{-3} \ar^{\dd^\mathfrak F}[rr]&&\mathfrak F_{-2} \ar^{\dd^\mathfrak F}[rr] && \mathfrak F_{-1} \ar^{\pi^\mathfrak F}[rr]&&  \mathcal V\\\cdots \ar[r]& \mathcal R_{-3} \ar^{\dd^\mathcal R}[rr]&& \mathcal R_{-2} \ar^{\dd^\mathcal R}[rr]&& \mathcal R_{-1}  \ar_{\pi^\mathcal R}[rru] && \\\cdots\ar[r] &\mathcal R_{-2} \ar_{\mathrm{id}}[urr] \ar_{\dd^\mathcal R}[rr] \ar@/_/^<<<<<{\phi}[uurr] && \mathcal R_{-1} \ar_{\mathrm{id}}[urr] \ar@/_/^<<<<<{\phi}[uurr]&& &&  \\}
\end{equation}
The proof of the exactness of this complex is left to the reader.


\noindent
The henceforth defined complex $(\E,\dd^{\E}, \pi^{\E})$ is a resolution of $\mathcal V $, and obviously contains $(\mathcal R,\dd^\mathcal R,\pi^{\mathcal R})$  as a sub-chain complex of $\mathcal O $-modules. 
\end{proof}

Let $(\mathcal E,\dd^\mathcal E, \pi^\mathcal E)$ be a free resolution of $\mathcal V$ and $(\mathcal R,\dd^\mathcal R,\pi^\mathcal R)$ a subcomplex of projective $\mathcal O$-modules, as in Lemma \ref{lem:inFactInclusion}. We say that  $P \in \mathfrak{Page}^{(k)}_j(\E)$  of the form $ \oplus_{ i \geq n } \text{Hom}_\mathcal O\left(\bigodot^{k+1}\mathcal E\,_{|_{-j-i}},\E_{-i}\right) $ \emph{preserves} $\mathcal R $ if $\bigodot^{k+1}\mathcal R\,_{|_{-j-i}}$ is mapped by $P$  to $\mathcal R_{-i}$ for all possible indices.
In such case, it defines by restriction to $\bigodot^\bullet\mathcal R$ an element $\iota^*_\mathcal R P$ in the graded $\mathcal{O}$-module $\mathfrak{Page}^{(k)}_j(\mathcal R):= \oplus_{ i \geq n } \text{Hom}_\mathcal O\left(\bigodot^{k+1}\mathcal R\,_{|_{-j-i}},\mathcal R_{-i}\right) 
$. For the sake of clarity, let us denote by $D^\E $ and $D^{\mathcal R} $ the respective differentials of the bi-complexes $ \mathfrak{Page}^{(k)}_{j}(\E)$
and $  \mathfrak{Page}^{(k)}_{j}(\mathcal R)$ and by $D^\E_h$, $D^\mathcal R_h $ and $D^{\mathcal R}_v$, $D^\mathcal R_v $ the horizontal differential resp. vertical differential, of their associated bi-complexes. Also, $\iota^*_\mathcal R P$  stands for the restriction of $ P \in \mathfrak{Page}^{(k)}_j(\E)$ to $\bigodot^\bullet\mathcal R$ (a priori it is not valued in $\mathcal R$ but in $\E$).

\begin{lemma}\label{rest:univ}
Let $(\E,\dd^\E, \pi^\E)$  be a free resolution of $\mathcal V$. Let $\mathcal R \subset \E$ be a subcomplex made of free sub-$\mathcal O$-modules such that there exists a graded free $\mathcal O $-module $\mathcal V $ such that $\E =\mathcal R \oplus \mathcal V$.
\begin{enumerate}
    \item  For every $k\geq 0$, a $D^\E$-cocycle   $P \in \mathfrak{Page}^{(k)}_j(\E)$ which preserves $\mathcal R$ is the image through $D^\E$ of some element $Q \in \mathfrak{Page}^{(k)}_{j-1}(\E) $ which preserves $\mathcal R$ if and only if its restriction $\iota^*_\mathcal R P \in \mathfrak{Page}^{(k)}_j(\mathcal R)$ is a $D^\mathcal R$-coboundary.
    \item In particular, if the restriction of $\dd^\E $ and $\pi^\mathcal E$ to $\mathcal R$  makes it a resolution of $\pi^\E(\mathcal R_{-1}) \subset \mathcal V$, then any $D^\E$-cocycle   $P \in \mathfrak{Page}^{(k)}_j(\E)$ which preserves $\mathcal R$ is the image through $D^\E$ of some element $Q \in \mathfrak{Page}^{(k)}_{j-1}(\E) $ which preserves $\mathcal R $.
\end{enumerate}
\end{lemma}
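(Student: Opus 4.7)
The plan is to exploit the hypothesis that the inclusion $\mathcal R\hookrightarrow\E$ admits an $\mathcal O$-linear section: writing $\E=\mathcal R\oplus\mathcal C$ with $\mathcal C$ graded free, I obtain a decomposition
$$ \bigodot^{k+1}\E \;=\; \bigodot^{k+1}\mathcal R\;\oplus\;\mathcal W,\qquad \mathcal W:=\bigoplus_{\substack{i+j=k+1\\ j\geq 1}}\bigodot^i\mathcal R\odot\bigodot^j\mathcal C, $$
in which $\mathcal W$ is itself graded free and $\iota^*_\mathcal R$ corresponds to projection onto the first summand. A short verification (using that $\mathcal R$ is a subcomplex, hence $\dd'(\bigodot^{k+1}\mathcal R)\subseteq\bigodot^{k+1}\mathcal R$) shows that $\mathrm{Hom}_\mathcal O(\mathcal W,\E)$ is preserved by both $D^h$ and $D^v$, so it sits inside $\mathfrak{Page}^{(k)}(\E)$ as a sub-bicomplex, namely the kernel of the restriction chain map $\iota^*_\mathcal R$.

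The \emph{only if} direction of item~1 is immediate: if $P=D(Q)$ with $Q$ preserving $\mathcal R$, then since $\dd^\E|_\mathcal R=\dd^\mathcal R$ and $\dd'(\bigodot^{k+1}\mathcal R)\subseteq\bigodot^{k+1}\mathcal R$, restriction to $\bigodot^{k+1}\mathcal R$ commutes with the differentials, giving $\iota^*_\mathcal R P=D^\mathcal R(\iota^*_\mathcal R Q)$. For the \emph{if} direction, given $R_0\in\mathfrak{Page}^{(k)}_{j-1}(\mathcal R)$ with $D^\mathcal R(R_0)=\iota^*_\mathcal R P$, I would first extend $R_0$ to $\tilde R\in\mathfrak{Page}^{(k)}_{j-1}(\E)$ by setting $\tilde R$ equal to $R_0$ on $\bigodot^{k+1}\mathcal R$ (viewed as valued in $\E$ through the inclusion $\mathcal R\hookrightarrow\E$) and $\tilde R=0$ on $\mathcal W$. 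By construction $\tilde R$ preserves $\mathcal R$, and $P':=P-D(\tilde R)$ is then a $D$-cocycle which still preserves $\mathcal R$ and which satisfies $\iota^*_\mathcal R P'=0$; in other words $P'$ lies in the sub-bicomplex $\mathrm{Hom}_\mathcal O(\mathcal W,\E)$.

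The key remaining step is to produce a primitive of $P'$ inside this sub-bicomplex. For this I would re-run the diagram-chasing argument behind Proposition~\ref{bicomplex} directly on $\mathrm{Hom}_\mathcal O(\mathcal W,\E)$. Since $\mathcal W|_{-k-m-1}$ is projective for every $m$ (as $\mathcal W$ is graded free) and $(\E,\dd^\E,\pi^\E)$ is a resolution of $\mathcal V$, each row of $\mathrm{Hom}_\mathcal O(\mathcal W,\E)$ is exact, and the same inductive lifting as in the proof of Proposition~\ref{bicomplex} produces, for every $D$-cocycle of the form $\bigoplus_{i\geq n}\mathrm{Hom}_\mathcal O(\mathcal W|_{-j-i},\E_{-i})$ with $n\geq 1$, a primitive of the form $\bigoplus_{i\geq n+1}\mathrm{Hom}_\mathcal O(\mathcal W|_{-j-i+1},\E_{-i})$. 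Applying this to $P'$ yields $S'\in\mathrm{Hom}_\mathcal O(\mathcal W,\E)$ with $D(S')=P'$, and then $Q:=\tilde R+S'$ preserves $\mathcal R$ (since $\tilde R$ does and $S'$ vanishes on $\bigodot^{k+1}\mathcal R$) and satisfies $D(Q)=P$, finishing item~1.

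Item~2 is then immediate from item~1: if the restrictions of $\dd^\E$ and $\pi^\E$ to $\mathcal R$ make it a resolution of $\pi^\E(\mathcal R_{-1})$, Proposition~\ref{bicomplex} applied to $\mathcal R$ forces the $D^\mathcal R$-cocycle $\iota^*_\mathcal R P$ to be a $D^\mathcal R$-coboundary, and item~1 supplies the desired primitive. The hardest part of writing this out will be cleanly reproducing the diagram chasing of Proposition~\ref{bicomplex} inside $\mathrm{Hom}_\mathcal O(\mathcal W,\E)$ while keeping track of the bounded form $\bigoplus_{i\geq n}$; no new conceptual input is required beyond the observation that $\mathcal W$ is graded free so its rows remain exact, and that $D^h$ and $D^v$ both preserve the kernel of $\iota^*_\mathcal R$.
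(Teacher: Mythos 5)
Your proof is correct and follows essentially the same route as the paper's: subtract $D^\E$ of the extension-by-zero of the given primitive on $\bigodot^{k+1}\mathcal R$, then find a primitive of the remaining cocycle that vanishes on $\bigodot^{k+1}\mathcal R$ by diagram chasing with exact rows. The only difference is organizational: the paper first produces a primitive $\tau$ in the ambient bicomplex via Proposition \ref{bicomplex} and then corrects it column by column so that $\iota^*_{\mathcal R}\tau=0$, whereas you run the acyclicity argument directly inside $\ker\iota^*_{\mathcal R}\cong\mathrm{Hom}_{\mathcal O}(\mathcal W,\E)$, whose rows are exact because $\mathcal W$ is graded free --- the same splitting and projectivity input in either case.
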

\begin{proof}
Let us decompose the element $P\in\mathfrak{Page}^{(k)}_{j}(\E)$ as $P=\sum_{i\geq 1}P_i$ with, for all $i \geq 1$, $P_i $ in $\mathrm{Hom}_\mathcal O\left(\bigodot^{k+1}\mathcal E\,_{|_{-j-i}},\E_{-i}\right)$. Assume $P\in \mathfrak{Page}^{(k)}_{j}(\E)$ is a $D^\E$-cocycle which preserves $\mathcal R$.

Let us prove one direction of item 1. If $P$ is the image through $D^\E$ of some element $Q\in\mathfrak{Page}^{(k)}_{j-1}(\E)$ which preserves $\mathcal R$, then $D^{\mathcal R} (\iota^*_\mathcal R Q)=\iota^*_\mathcal R D^\E (Q)= \iota^*_\mathcal R P$, with $\iota^*_\mathcal R Q\in\mathfrak{Page}^{(k)}_{j-1}(\mathcal R)$. Thus, the restriction $\iota^*_\mathcal R P \in \mathfrak{Page}^{(k)}_j(\mathcal R)$ of $P$ is a $D^\mathcal R$-coboundary.

Conversely, let us assume that  $\iota^*_\mathcal R P \in \mathfrak{Page}^{(k)}_j(\mathcal R)$ is a $D^{\mathcal R}$-coboundary, i.e. 
 $ \iota^*_\mathcal R P= D^{\mathcal R} Q_{\mathcal R} $ for some $Q_{\mathcal R}\in\mathfrak{Page}^{(k)}_{j-1}(\mathcal R)$.
Take $\hat{Q}\in \mathfrak{Page}^{(k)}_{j-1}(\E)$ any extension of $Q_{\mathcal R}$ (e.g. define $\hat Q$ to be $0$ as soon as one element in $\mathcal V$ is applied to it).
Then $P- D^\E (\hat Q):\bigodot^{k+1}\E\longrightarrow\E$ is zero on $ \bigodot^{k+1} \mathcal R$. We have to check that it is a $D^\E$-coboundary of a map with the same property.
\noindent
Put $\kappa=P-D^\E (\hat Q)$. By Proposition \ref{bicomplex}, item 1, there exists $\tau\in \mathfrak{Page}^{(k)}_{j-1}(\E)$ such that $D^\E(\tau)= \kappa$. The equation $D^\E(\tau)=\kappa$ is equivalent to the datum of a collection of equations\begin{align}\label{eq:tau}
    D^\E_v(\tau_i)+D^\E_h(\tau_{i+1})=\kappa_{i+1}, i\geq 1,\quad \text{and}\quad D^\E_h(\tau_1)=\kappa_1,
\end{align}
{with, $\tau_i\in\mathrm{Hom}_\mathcal O\left(\bigodot^{k+1}\mathcal E\,_{|_{-j-i+1}},\E_{-i}\right)$ and  $\kappa_i\in\mathrm{Hom}_\mathcal O\left(\bigodot^{k+1}\mathcal E\,_{|_{-j-i}},\E_{-i}\right)$ for every $i \geq 1$.
Since $\iota^*_\mathcal R\kappa_1=0$, we have that $D^\mathcal E_h(\iota^*_\mathcal R\tau_1)=\iota^*_\mathcal R \left(D^\E_h(\tau_1)\right)=0$, (with the understanding that $\iota^*_\mathcal R{\tau_1}_{|_\mathcal V}\equiv 0$). Using the exactness of the horizontal differential $D^\E_h$, there exists $C_1\in \mathfrak{Page}^{(k)}(\mathcal E)$ such that $D^\mathcal E_h(C_1)=\iota^*_\mathcal R\tau_1$. We now change $\tau_1$ to $\tau_1'$ and $\tau_2$ to $\tau'_2$ by putting $\tau_1':=\tau_1-\iota^*_\mathcal R\tau_1$ and $\tau_2':=\tau_2+D^\mathcal E_v(C_1)$. One can easily check that Equation \eqref{eq:tau} still holds  under these changes, i.e.,   $$D^\mathcal E_v( \tau'_1)+D^\mathcal E_h( \tau'_2)=\kappa_2\quad \text{and} \quad D^\mathcal E_h(\tau'_2)+D^\mathcal E_v(\tau_3)=\kappa_3.$$ We can therefore choose $\tau$ such that $\iota^*_\mathcal R\tau_1=0$.} We then iterate this procedure, which allows us to choose $\tau \in \mathfrak{Page}^{(k)}_{j-1}(\E)$ such that $\iota^*_\mathcal R \tau=0$ and $D^\E(\tau)=\kappa$.
By construction, $Q:=\tau+\hat Q$  preserves $\mathcal R$, while $ \iota_{\mathcal R}^* Q =Q_\mathcal R$, and $D^\mathcal E (Q)=P$. The second item follows from the first one.
\end{proof}

\subsection{Proof on the existence}\label{thm:existence-proof}
In this section, we prove Theorem \ref{thm:existence}.\\

Let $(\mathcal{A}, \lb_\mathcal{A}, \rho_\mathcal{A})$ be a Lie-Rinehart algebra. Consider $ (\E, \dd= \ell_1, \pi) $ a  resolution of $ \mathcal A$ by free $ \mathcal O$-modules: such resolutions always exist, see Proposition \ref{prop:free-resol}.
To start, we define a binary bracket $\ell_2$. The pair $ (\dd, \ell_2)$ will obey the axioms of the object that we now introduce.

\begin{definition}\cite{LavauSylvain}
\label{def:al-oid}
An \emph{almost differential  graded Lie algebroid of a Lie-Rinehart algebra} $ (\mathcal A, \rho_\mathcal A,  [\cdot\,, \cdot]_\mathcal A)$ is a complex 
$$\cdots\stackrel{\dd}{\longrightarrow} \mathcal E_{-3}\stackrel{\dd}{\longrightarrow} \mathcal E_{ -2}  \stackrel{\dd}{\longrightarrow}  \mathcal E_{ -1} \stackrel{\pi}{\longrightarrow}    \mathcal A$$
of projective $\mathcal O $-modules equipped a graded almost differential graded Lie algebroid $(\E_\bullet,\ell_1, \ell_2, \rho)$ over $\mathcal{O}$ such that
\begin{enumerate}
\item  $\rho= \rho_{\mathcal A} \circ \pi \colon \mathcal E_{-1} \longrightarrow {\mathrm{Der}}(\mathcal O) $, 
\item $\pi $ is a morphism, i.e. for all  $x,y \in \mathcal E_{-1}$
$$ \pi (\ell_2 (x,y)) = [\pi (x), \pi (y)]_\mathcal A .$$
 \end{enumerate}
\end{definition}

We start by proving this lemma.
\begin{lemma}\label{lem1}
Every free resolution $ (\E, \dd, \pi )$ of a Lie-Rinehart algebra $(\mathcal A , [ \cdot\,, \cdot]_\mathcal{A}, \rho_\mathcal A) $ comes equipped with a binary bracket $\ell_2$ that makes it an almost differential  graded Lie algebroid of $\mathcal A $.
\end{lemma}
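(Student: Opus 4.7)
\medskip

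The plan is to build $\ell_2$ by induction on the total internal degree of its arguments, using freeness of each $\E_{-i}$ to define $\ell_2$ on a free generating set and extending by the required $\mathcal O$-linear or Leibniz property. At the top level we import the Lie bracket of $\mathcal A$, and at lower levels we use exactness of the resolution to choose compatible lifts. This is essentially the length-$2$ argument of Proposition \ref{prop:2-Lie-algebroid} pushed recursively through the whole resolution.

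\emph{Step 1 (bidegree $(-1,-1)$).} Pick a free generating set $(e_\alpha)_{\alpha \in I}$ of $\E_{-1}$ and set $a_\alpha := \pi(e_\alpha)$. For each ordered pair $(\alpha,\beta)$ choose $c_{\alpha\beta}\in\E_{-1}$ with $\pi(c_{\alpha\beta}) = [a_\alpha,a_\beta]_{\mathcal A}$; after skew-symmetrization we may assume $c_{\alpha\beta} = -c_{\beta\alpha}$. Define $\ell_2(e_\alpha,e_\beta):=c_{\alpha\beta}$ on the generators and extend to $\E_{-1}\odot\E_{-1}$ by
$$\ell_2(fe_\alpha, g e_\beta) \;:=\; fg\,c_{\alpha\beta} + f\,\rho(e_\alpha)[g]\,e_\beta - g\,\rho(e_\beta)[f]\,e_\alpha, \qquad f,g\in\mathcal O.$$
Freeness makes this well defined, graded skew-symmetry and Leibniz are tautological, and $\pi\circ\ell_2=[\pi(\cdot),\pi(\cdot)]_{\mathcal A}$ by construction, which is axiom~(3) of Definition~\ref{def:al-oid}.

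\emph{Step 2 (bidegrees $(-1,-i)$, $i\geq 2$).} Induct on $i$. Fix a free basis $(\epsilon^i_\gamma)_\gamma$ of $\E_{-i}$. For each pair $(e_\alpha,\epsilon^i_\gamma)$ the element $\ell_2(e_\alpha,\dd\epsilon^i_\gamma)\in\E_{-i+1}$ is already defined by the previous step. The derivation identity secured at bidegree $(-1,-i+1)$ together with $\dd^2=0$ (or, for $i=2$, together with $\pi\circ\dd=0$) forces this element to lie in $\ker\dd=\mathrm{Im}\,\dd$ (respectively in $\ker\pi=\mathrm{Im}\,\dd$). Exactness of the resolution therefore provides a preimage, which we declare to be $\ell_2(e_\alpha,\epsilon^i_\gamma)$. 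Extend $\mathcal O$-linearly in the first slot and by Leibniz in the second: the compatibility of these two extension rules with the equation $\dd\ell_2(x,y)=\ell_2(x,\dd y)$ follows from $\rho=\rho_{\mathcal A}\circ\pi$, $\mathcal O$-linearity of $\dd$, and the Leibniz rule at the previous inductive stage. Graded symmetry then transports the result to $\E_{-i}\odot\E_{-1}$.

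\emph{Step 3 (bidegrees $(-i,-j)$ with $i,j\geq 2$).} Now $\ell_2^{i,j}$ is required to be $\mathcal O$-bilinear, so we are in the setting of the page bicomplex $\mathfrak{Page}^{(1)}(\E)$ of Section~\ref{bi-com}. Induct on $i+j$. On a pair of basis elements the relation
$$\dd\,\ell_2(\epsilon^i_\gamma,\epsilon^j_\delta) \;=\; -\ell_2(\dd\epsilon^i_\gamma,\epsilon^j_\delta) - (-1)^{i}\ell_2(\epsilon^i_\gamma,\dd\epsilon^j_\delta)$$
presents the right-hand side as a known $\mathcal O$-linear map; a direct calculation using $\dd^2=0$ and the derivation property already established at lower bidegree shows that this right-hand side is a $D$-cocycle in $\mathfrak{Page}^{(1)}(\E)$. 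By acyclicity of the page bicomplex (Proposition~\ref{bicomplex}) and the freeness of $\E_{-i}\otimes\E_{-j}$ one extracts a preimage, which we take as the value of $\ell_2$ on the basis pair and extend by $\mathcal O$-bilinearity. Graded symmetry is preserved by symmetrizing the choice on unordered pairs.

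The main obstacle, and the reason a naive induction is not immediate, is the hybrid nature of Step~2: in one slot $\ell_2$ is $\mathcal O$-linear while in the other it satisfies Leibniz, and these two rules must remain compatible with the derivation equation. The verification of closedness of the obstruction in Step~3 likewise leans on the previously established Leibniz behaviour at bidegree $(-1,-k)$. Once the sign bookkeeping is carried out carefully, the construction closes up and yields an $\ell_2$ with all the required properties.
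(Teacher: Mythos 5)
Your construction is correct, but it is organized differently from the paper's. The paper makes a single global guess $\tilde\ell_2$ --- the lift of $[\cdot\,,\cdot]_{\mathcal A}$ on degree $-1$ generators, declared to be \emph{zero} on every pair of generators involving $\E_{\leq -2}$, extended by $\mathcal O$-bilinearity and Leibniz --- and then observes that the total defect $[\dd,\tilde\ell_2]_{\mathrm{RN}}$ is an $\mathcal O$-bilinear, $D$-closed element of degree $+2$ in $\mathfrak{Page}^{(1)}(\E)$ with no component on the last column; acyclicity of that total complex (Proposition \ref{bicomplex}) then produces a single correction $\tau_2$ supported in the bidegrees with target $\E_{-j}$, $j\geq 2$, and $\ell_2:=\tilde\ell_2+\tau_2$ does the job. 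You instead unwind that acyclicity by hand: your Steps 2 and 3 are precisely the staircase chase that proves Proposition \ref{bicomplex}, run bidegree by bidegree on the defect, with your Step 1 coinciding with the paper's choice of $u^k_{ij}$ almost verbatim. Both proofs therefore rest on the same two ingredients --- freeness of the $\E_{-i}$ (to prescribe values on generators and extend by the hybrid $\mathcal O$-linear/Leibniz rule) and exactness of the resolution (to solve $\dd(?)=$ known cocycle, resp.\ $\pi(?)=0$ at the bottom). What the paper's packaging buys is that the delicate verifications in your version --- compatibility of the two extension rules with the derivation identity in Step 2 (which, as you note, hinges on $\rho\circ\dd=\rho_{\mathcal A}\circ\pi\circ\dd=0$), and closedness of the obstruction in Step 3 --- are performed once, in checking that $[\dd,\tilde\ell_2]_{\mathrm{RN}}$ is $\mathcal O$-bilinear and $D$-closed. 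What your version buys is transparency about where exactness is used: in particular it makes visible that for the bidegrees $(-i,-j)$ with $i,j\geq 2$ plain exactness of $(\E,\dd)$ applied to each basis pair already suffices, and the page bicomplex you invoke there is not strictly needed. Both arguments are valid.
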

\begin{proof}
For all $ k \geq 1$, let us denote by 
$(e_i^{(-k)})_{i \in I_k}$
a family of generators of the free $ \mathcal O$-module~$\E_{-k} $. 
By construction $\{a_i=\pi(e_i^{(-1)})\in \mathcal{A}\mid i\in I_{1}\}$ is a set of generators of $\mathcal{A}$. In particular, there exists elements $u^k_{ij}\in\mathcal{O}$, such that for given indices $i,j$, the coefficient $u^k_{ij}$ is zero except for finitely many indices $k$, 
and satisfying the skew-symmetry condition $u^k_{ij}=-u^k_{ji}$ together with
\begin{equation} 
\left[ a_i,a_j\right] _{\mathcal{A}}=\sum_{k\in I}u^k_{ij} a_k \hspace{.5cm} \forall i,j \in I_{1} 
\end{equation}
We now define: 
\begin{enumerate}
\item an anchor map by $\rho(e_i^{(-1)})=\rho_{\mathcal{A}}(a_i$) for all $i\in I$,
\item a degree $ +1$ graded symmetric operation $\tilde{\ell}_2 $ on $ \E$ as follows:
\begin{enumerate}
    \item $\tilde{\ell}_2\left(e_i^{(-1)},e_j^{(-1)} \right) =\sum_{k\in I}u^k_{ij}e_k^{(-1)}$ for all $i,j\in I_{-1}$.
    \item 
    $\tilde{\ell}_2 \left(e_i^{(-k)},e_j^{(-l)} \right) =0$  for all $ i \in I_{k}, j \in I_l$ with $ k \geq 2$ or  $ l \geq 2$.  
\item we extend $\tilde{\ell}_2$ to $\E $ using $\mathcal O$-bilinearity and Leibniz identity with respect to the anchor $\rho $.
\end{enumerate} 
\end{enumerate} 

By construction, $\tilde{\ell}_2$ satisfies the Leibniz identity with respect to the anchor $ \rho_\mathcal E$. Also, $\rho\circ\dd=0$ on $\E_{-2}$. The map defined for all homogeneous $x,y\in \mathcal{E}$ by$$
 [\dd, \tilde{\ell}_2]_{\hbox{\tiny{RN}}}(x,y) = \dd \circ \tilde \ell_2 \left( x,y\right)+ \tilde \ell_2 \left(\dd x,y \right) +(-1)^{\lvert x\rvert} \tilde \ell_2 \left(x,\dd y \right),
$$
is a graded symmetric degree $ +2$ operation $ (\E \otimes \E)_\bullet \longrightarrow \E_{\bullet +2} $, and $[\dd, \tilde{\ell}_2]_{{\hbox{\tiny{RN}}}_{|_{\E_{-1}}}}=0$.
Let us check that it is $\mathcal O$-bilinear, i.e. for all $f \in \mathcal O, x,y \in \E$:
 $$  [\dd, \tilde{\ell}_2]_{\hbox{\tiny{RN}}}(x,f y )- f  [\dd, \tilde{\ell}_2]_{\hbox{\tiny{\text{RN}}}}(x,y )=0.$$
 
\begin{enumerate}
    \item if $x \in \E_{-1}$, this quantity is zero in view of
     \begin{align*}
     [ \dd, \tilde{\ell}_2]_{\hbox{\tiny{RN}}}(x,f y )&=f[\dd,\tilde{\ell}_2](x,y)+\underbrace{\dd\rho(x)[f]\, y-\rho(x)[f] \, \dd y}_{=0} 
\end{align*}
    \item if $x \in \E_{-2}$, one has
     \begin{align*}
      [ \dd, \tilde{\ell}_2]_{\hbox{\tiny{RN}}}(x,f y ) - f [ \dd, \tilde{\ell}_2](x, y )  = \tilde \ell_2 (\dd x , fy) - f \tilde \ell_2 (\dd x , y) 
      = \rho  (\dd x)(f) \, y = 0 
     \end{align*}
     since $ \rho \circ \dd =\rho_{\mathcal A} \circ \pi \circ \dd =0 $,
    \item if $ x \in \E_{-i} $ with $ i \geq 3$, it is obvious by $\mathcal O $-linearity of $\tilde \ell_2 $ on the involved spaces.
\end{enumerate}
\noindent
As a consequence, $ [\dd, \tilde{\ell}_2]_{\hbox{\tiny{RN}}}$ is a degree $+2$ element in the total complex $\mathfrak{Page}^{(1)}(\E)$. By construction $ [\dd, \tilde{\ell}_2]_{\hbox{\tiny{RN}}}$ has no component on the last column. Since $ \pi ( [\dd, \tilde{\ell}_2]_{{\hbox{\tiny{RN}}} _{|_{\E_{-1}}}}) =0$ and also $[ \dd, [\dd, \tilde{\ell}_2]_{\hbox{\tiny{RN}}}]_{{\hbox{\tiny{RN}}}_{|_{\E_{\leq -2}}}}=0$, the $\mathcal O$-bilinear operator $ [\dd, \tilde{\ell}_2]_{\hbox{\tiny{RN}}}$ is $D$-closed in  $\mathfrak{Page}^{(1)}(\E)$.

By virtue of the first item of Proposition \ref{bicomplex}, the operator $ [\dd, \tilde{\ell}_2]_{\hbox{\tiny{RN}}}$ is then a $D$-coboundary, so there exists  $\tau_2 \in \oplus_{j\geq 2} \text{Hom}_\mathcal O\left(\bigodot^2\E_{|_{-j-1}},\E_{-j}\right)$ such as $D(\tau_2)= -[\dd, \tilde{\ell}_2]_{\hbox{\tiny{RN}}}.$  Upon replacing  $ \tilde{\ell}_2$  by $\tilde{\ell}_2+\tau_2$ we obtain a 2-ary bracket $\ell_2$ of degree +1 which satisfies all items of Definition \ref{def:al-oid}.
\end{proof} 
\begin{proof}[Proof (of Theorem \ref{thm:existence})]
Lemma \ref{lem1} gives the existence of an almost differential graded Lie algebroid with differential $\ell_1=\dd$ and binary bracket $\ell_2$. We have to construct now the higher brackets $\ell_k$ for $k\geq 3$. 

\noindent
{\textbf{Step 1:
Construction of the 3-ary bracket $\ell_3 $.}} (Its construction being different from the one of the higher brackets, we put it apart).  We first notice that the graded Jacobiator defined for all $x,y,z \in \E$ by 
$$ \text{Jac}(x,y,z):= \ell_2(\ell_2(x,y),z)+(-1)^{\lvert y\rvert\lvert z\rvert}\ell_2(\ell_2(x,z),y)+(-1)^{\lvert x\rvert\lvert y\rvert+\lvert x\rvert\lvert z\rvert}\ell_2(\ell_2(y,z),x)  $$is $ \mathcal O$-linear in each variable, hence is a degree $ +2$ element in $\bigoplus_{j \geq 1} \text{Hom}_\mathcal O(\bigodot^3\E\, _{|_{-j-2}},\E_{-j}) \subset  \mathfrak{Page}^{(2)}(\E)$. For degree reason, its component on the last column of diagram \eqref{recap} is zero, i.e. it belongs to $\widehat{\mathfrak{Page}}^{(1)}(\E)$.

Let us check that it is $D$-closed: for this purpose we have to check that both conditions in Lemma \ref{lem:beingClosed} hold:
\begin{enumerate}
\item Since $\pi $ is a morphism from $(\E_{-1}, \ell_2)  $ to $(\mathcal A , [\cdot, \cdot]_{\mathcal A},\rho_\mathcal{A}) $, and since $ [\cdot, \cdot]_{\mathcal A} $ satisfies the Jacobi identity, one has for all $x,y,z\in\E_{-1}$: $$\text{Jac}(x,y,z)\in \ker\pi .$$ 
\item Furthermore, a direct computation of $[\text{Jac}, \dd]_{\hbox{\tiny{RN}}}$ gives in view item 2 of Definition \ref{def:almost}: \begin{equation*}
\dd \text{Jac}(x,y,z)=\text{Jac}(\dd x,y,z)+(-1)^{\lvert x\rvert}\text{Jac}(x,\dd y,z)+(-1)^{\lvert x\rvert+\lvert y\rvert}\text{Jac}(x,y,\dd z)
\end{equation*}\text{for all}\;$x,y,z \in \E$. 
\end{enumerate}

\noindent
Thus, $D(\text{Jac})=0$. By Proposition \ref{bicomplex}, item 2, $\text{Jac}$ is a $D$-coboundary, and, more precisely, there exists an element $\ell_3=\sum_{j\geqslant 2} \ell_3^{j}\in\widehat{\mathfrak{Page}}_1^{(2)}(\E)$ with $\ell_3^{j}\in\text{Hom}(\bigodot^3\E\, _{|_{-j-1}},\E_{-j})$ such that \begin{equation}
D(\ell_3) =-\text{Jac}\quad\text{i.e.}\quad\left[\dd,\ell_3 \right]_{\hbox{\tiny{RN}}} =-\text{Jac}.
\end{equation}
We choose the $3$-ary bracket to be $\ell_3$.

\noindent
{\textbf{Step 2:
Recursive construction of the $k$-ary brackets $\ell_k $ for $k \geq 4$.}}
Let us recapitulate: $ \ell_1=\dd$
, $\ell_2 $ and $\ell_3 $ are constructed and
the lowest polynomial-degree terms of 
 $ [\ell_1 + \ell_2 +\ell_3, \ell_1 + \ell_2 +\ell_3]_{\hbox{\tiny{RN}}}$
satisfy
\begin{enumerate}
    \item  $[\ell_1, \ell_1]_{\hbox{\tiny{RN}}} =0 $ (since $\dd^2=0$),
    \item  $[\ell_1, \ell_2]_{\hbox{\tiny{RN}}} =0 $ (since $\dd=\ell_1$ and $\ell_2$ define an almost Lie algebroid structure).
    \item $ [\ell_2, \ell_2]_{\hbox{\tiny{RN}}} + 2 [\ell_3, \ell_1]_{\hbox{\tiny{RN}}} = 2( \mathrm{Jac} +  [\ell_3 ,\ell_1]_{\hbox{\tiny{RN}}}) =0$ 
    by definition of $ \ell_3$, and because $[\ell_2, \ell_2]_{\hbox{\tiny{RN}}} = 2 \mathrm{Jac}$.
\end{enumerate}
However, the following term of degree $+2$ and polynomial-degree $3 $ may not be equal to zero:
\begin{equation}
[\ell_3 , \ell_2]_{\hbox{\tiny{RN}}}\in\bigoplus\text{Hom}_\mathcal O\left(\bigodot^4\E_{j+1},\E_{-j}\right)=\widehat{\mathfrak{Page}}_1^{(3)}(\E).\end{equation}
Let us check that this term is indeed a $\mathcal{O}$-multilinear map: For $x_1\in \E_{-1},x_2,x_3,x_4\in\E_{\leq -2}$ and $f\in\mathcal O$, the only terms of $(\ell_3\circ \ell_2+\ell_2\circ \ell_3)(x_1,fx_2,x_3,x_4)$ where the anchor shows up are: 
$$\begin{cases}
\ell_3(\ell_2(x_1,fx_2),x_3,x_4)&=\rho(x_1)[f]\ell_3(x_2,x_3,x_4)+f(\ell_3(\ell_2(x_1,x_2),x_3,x_4))\\ (-1)^{|x_2|+|x_3|+|x_4|}\ell_2(f\ell_3(x_2,x_3,x_4),x_1)&=-\rho(x_1)[f]\ell_3(x_2,x_3,x_4) \\ &+f((-1)^{|x_2|+|x_3|+|x_4|}\ell_2(f\ell_3(x_2,x_3,x_4),x_1))
\end{cases}$$
The terms containing the anchor map add up to zero. When there are  more elements in $\E_{-1}$, the computation follows the same line. Moreover, by graded Jacobi identity of the Richardson-Nijenhuis bracket:
 $$ [[\ell_1 + \ell_2 +\ell_3, \ell_1 + \ell_2 +\ell_3]_{\hbox{\tiny{RN}}}, \ell_1 + \ell_2 +\ell_3]_{\hbox{\tiny{RN}}}  =0 $$ 
The term of polynomial-degree $4$ in the previous expression gives
$[[\ell_3, \ell_2]_{\hbox{\tiny{RN}}} , \ell_1]_{\hbox{\tiny{RN}}}=0$. Hence, by Proposition \ref{rem:bicom-rch}, $[\ell_3, \ell_2]_{\hbox{\tiny{RN}}}$ is a $D$-cocycle in the complex $\mathfrak{Page}^{(3)}(\E)$, whose components on the last column and the column $-1$ are zero. 
It is therefore a coboundary by Proposition  \ref{bicomplex} item 3: we can continue a step further and define $\ell_4\in \oplus_{j \geq 3} \text{Hom}\left(\bigodot^4\E _{|_{-j-1}},\E_{-j}\right)$ such  that: \begin{equation}
-\left[\ell_2,\ell_3 \right]_{\hbox{\tiny{RN}}}
=
\left[\ell_1,\ell_4 \right]_{\hbox{\tiny{RN}}}
=
\left[\dd,\ell_4 \right]_{\hbox{\tiny{RN}}} .
\end{equation}

We choose the $4$-ary bracket to be $\ell_4$. We now proceed by recursion. We assume that we have constructed all the $k$-ary brackets, $\ell_k$ such as : \begin{equation}\label{Maurer}
\left[\dd,\ell_k \right]_{\hbox{\tiny{RN}}} =-\sum_{\overset{i+j=k+1}{i\leq j}}\left[\ell_i,\ell_j \right]_{\hbox{\tiny{RN}}}=-\frac{1}{2}\sum_{\overset{i+j=k+1}{i,j\geq 1}}\left[\ell_i,\ell_j \right]_{\hbox{\tiny{RN}}}
\end{equation} for every $k=1,\ldots,n$ with $n\geq 4$. The ($n+1$)-ary bracket is constructed as follows. First, the operator $\sum_{\overset{i+j=k+1}{i,j\geq 1}}\left[\ell_i,\ell_j \right]_{\hbox{\tiny{RN}}}$ is checked to be $\mathcal O$-linear as before. Now, we have \begin{align*}
\sum_{\overset{i+j=n+2}{i,j\geq 1}}\left[\dd,\left[ \ell_i,\ell_j \right]_{\hbox{\tiny{RN}}}\right]_{\hbox{\tiny{RN}}} 
&= -2\sum_{\overset{i+j=n+2}{i,j\geq 1}}\left[\ell_i,\left[ \dd,\ell_j \right]_{\hbox{\tiny{RN}}}\right]_{\hbox{\tiny{RN}}}\quad\text{(by graded Jacobi identity)}
.\end{align*}
Since $\ell_j$ satisfies Equation \eqref{Maurer} up to order $n$, we obtain\begin{equation*}
\sum_{\overset{i+j=n+2}{i,j\geq 1}}\left[\dd,\left[ \ell_i,\ell_j \right]_{\hbox{\tiny{RN}}}\right]_{\hbox{\tiny{RN}}}=\sum_{\overset{i+j+k=n+3}{i,j,k\geq 1}}\left[  \ell_i,\left[ \ell_j,\ell_k\right]_{\hbox{\tiny{RN}}} \right]_{\hbox{\tiny{RN}}} =0,
\end{equation*}
\noindent
where we used the graded Jacobi identity of the  Nijenhuis-Richardson bracket in the last step. Therefore, $\sum_{\overset{i+j=n+2}{i,j\geq 1}}\left[\ell_i,\ell_j \right]_{\hbox{\tiny{RN}}}$, seen as an element in $\mathfrak{Page}^{(i+j-2)}(\E)$ by Remark \ref{rem:bicom-rch}, is a cocycle  and for degree reason it has no element on the last column, and the columns $-1,\ldots,3-n$ in \ref{recap}. The third item of Proposition \ref{bicomplex} gives the existence of an $(n+1)$-ary bracket $\ell_{n+1}$ such as\begin{equation*}
\left[\dd,\ell_{n+1} \right]_{\hbox{\tiny{RN}}} =-\sum_{\overset{i+j=n+2}{i\leq j}}\left[\ell_i,\ell_j \right]_{\hbox{\tiny{RN}}}.
\end{equation*}
{This completes the proof.}
\end{proof}

\subsubsection{Proof of Proposition \ref{prop:lksontNuls} and Proposition \ref{univ:precise}}

\begin{proof}[Proof (of Proposition \ref{prop:lksontNuls})]
This is a consequence of Proposition \ref{thm:existence} and the third item of the Proposition \ref{bicomplex}: If the component of $\text{Jac}$ on the column $-1$ is zero, we can choose $\ell_3$ with no component on the last column and in column $-1$ (see Proposition \ref{bicomplex}), i.e. the restriction of $\ell_3$ to $\bigodot^3\E_{-1}$ is zero. Then $\ell_3$ has no component on the last column, the column $-1$ and the column $-2$. so $[\ell_2,\ell_3]_{\hbox{\tiny{RN}}}$ has no component in the last column, $-1$ and $-2$ columns as well. Hence, $\ell_4$ can be chosen with no component on column $-1$, $-2$ and $-3$ by the third item of Proposition \ref{bicomplex}. The proof continues by recursion.
\end{proof}
We finish this section with a proof of Proposition \ref{univ:precise}.
\begin{proof}[Proof (of Proposition \ref{univ:precise})]We prove this Proposition in two steps.
\begin{enumerate}
    \item Lemma \ref{lem:inFactInclusion} guarantees the existence a free resolution $(\E,\dd,\pi)$ of the Lie-Rinehart algebra $\mathcal A$ such that $\mathcal E$ contains $\mathcal E'$ and such that there exists a graded free module $\mathcal V$ with $\E ' \oplus \mathcal V = \E $.
    \item Let $D^\E$ and $D^{\E'}$ be as in the proof of Lemma \ref{rest:univ}. We construct the $n$-ary brackets on $\E$ by extending the ones of $(\mathcal E', (\ell_k')_{k\geq 1}, \rho_{\mathcal E'}, \pi')$ in the following way:
    \begin{enumerate}
        \item We first construct an almost Lie algebroid bracket $\tilde{\ell_2}$ on $ \E_{-1}$ that extends the $2$-ary bracket of $ \E_{-1}'$. Since the $2$-ary bracket is determined by its value on a basis, the existence of a free module $\mathcal V_{-1}$ such that $\mathcal E_{-1} ' \oplus \mathcal V_{-1} = \mathcal E_{-1} $ allows to construct $\tilde{\ell_2} $ on $\E $ such that its restriction to $ \mathcal E' $ is $\ell_2' $ and such that it satisfies the Leibniz identity.
        
        As in the proof of Theorem \ref{thm:existence} (to be more precise: Lemma \ref{lem1}), we see that $[ \tilde \ell_2 , \dd^\E ]_{\hbox{\tiny{RN}}}$  is $\mathcal O $-linear, hence belongs to ${\mathfrak{Page}}^{(2)}_{2} (\E) $ and is a $D^\E$-cocycle. Since $\E' $ is a Lie $ \infty$-algebroid, its restriction to $\bigodot^2\mathcal E' $ is zero. Lemma  \ref{rest:univ} allows to change $\tilde{\ell}_2 $ to an $2$-ary bracket $\ell_2:=\tilde{\ell}_2+\tau_2$ with $\tau_2=0$ on $\bigodot^2 \E'$. Hence, $\ell_2$ defines a graded almost Lie algebroid bracket, whose restriction to $\E'$ is still $\ell_2' $.  
        
        \item Since $\ell_2$  is an extension of  $\ell_2'$, its Jacobiator $\text{Jac}\in \mathfrak{Page}^{(2)}_2(\E)$ of the $2$-ary bracket $\ell_2$ preserves $\E'$. Also, its restriction $\iota^*_{\E'}\text{Jac}\in \mathfrak{Page}^{(2)}_2({\E'})$ is the Jacobiator of $\ell_2' $, and the latter is the $D^{\E'}$-coboundary of $\ell_3'$ in view of the higher Jacobi identity of $\E'$. Since $\text{Jac}\in \mathfrak{Page}^{(2)}_2(\E)$ is a $D^\E$-cocycle, Lemma \ref{rest:univ} assures that $\text{Jac}$ is the image through $D^\E$ of some element $\ell_3\in\mathfrak{Page}^{(2)}_1(\E)$  which preserves $\E'$ and whose restriction to $ \bigodot^3\mathcal E'$ is $ \ell_3'$. The proof continues by recursion: at the $n$-th step, we use Lemma  \ref{rest:univ} to construct an $n$-ary bracket for $\E $ that extends the $n$-ary bracket of $\E' $.
    \end{enumerate}
\end{enumerate}
By construction, the inclusion map $\iota\colon \E'\hookrightarrow \E$ is a morphism for the $n$-ary brackets for all $n \geq 1$.
\end{proof}

\subsection{Proof of universality}
Before proving the universal character of the construction, we need to do some preparations.

Let us prove the following lemma,
\begin{lemma} \label{imp-lemma2}
Let $\Psi,\Xi: S^\bullet_\mathbb{K} (\E') \rightarrow S^\bullet_\mathbb{K}(\E)$ be $\mathcal O $-linear Lie $\infty$-algebroid morphisms. Let $n \in \mathbb{N}_0$.
If $\Xi^{(i)}= \Psi^{(i)}$ for every $0 \leq i \leq n$,
 there exists 
 \begin{enumerate}
     \item a Lie $\infty$-morphism of algebroids $J_1\colon  S_{\mathbb K}(\E')\rightarrow S_{\mathbb K}(\E)$ 
     \item and a homotopy $(J_t,H_t)_{[0,1]}$ joining  $\Psi$ and $J_1$, 
 \end{enumerate}
 such that 
 \begin{enumerate}
     \item the components of polynomial-degree less or equal to $n$ of $H_t$ vanish, 
     \item $J_1^{(i)}=\Xi^{(i)}$ for every $0 \leq i \leq n+1$.
 \end{enumerate}
 \end{lemma}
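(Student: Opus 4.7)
The plan is to reduce the construction of the homotopy to the existence of a single coderivation $H$ that represents $(\Xi-\Psi)^{(n+1)}$ as a cohomological coboundary, and then let $H_t$ be constant in $t$ and supported only in polynomial-degree $n+1$.

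First, I would apply Lemma \ref{imp-lemma} to the input data. It tells us immediately that $T := (\Xi-\Psi)^{(n+1)}$ is an $\mathcal{O}$-multilinear $\Psi^{(0)}$-coderivation of polynomial-degree $n+1$ and degree $0$, and satisfies $Q_\E^{(0)}\circ T = T\circ Q_{\E'}^{(0)}$. Under the identification of Proposition \ref{prop:interpretation}, this precisely says that $T$ is a degree-$0$ cocycle in the complex $\widehat{\mathfrak{Page}}^{(n+1)}(\E',\E)$. In the context relevant to Theorem \ref{th:universal}, the target complex $\E$ is acyclic, so the proof of Proposition \ref{bicomplex} carries over verbatim to $\mathfrak{Page}^{(n+1)}(\E',\E)$: the rows are exact because $\bigodot^{n+2}\E'$ is projective, and the columns are exact because $\E\to\mathcal{A}$ is a resolution. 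Consequently $T$ is a coboundary, i.e. there exists an $\mathcal{O}$-multilinear $\Psi^{(0)}$-coderivation $H$ of polynomial-degree $n+1$ and degree $-1$ with
\begin{equation*}
Q_\E^{(0)}\circ H + H\circ Q_{\E'}^{(0)} \;=\; T.
\end{equation*}

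Next, I would define the Taylor coefficients of the desired homotopy term by $(H_t)_i := 0$ for $i\neq n+1$ and $(H_t)_{n+1} := H_{n+1}$ (the unique nonzero Taylor coefficient of $H$), constant in $t\in[0,1]$. Proposition \ref{prop:justify}, applied with the initial condition $J_0=\Psi$, then produces a unique piecewise rational continuous family $(J_t)_{t\in[0,1]}$ of Lie $\infty$-algebroid morphisms such that $(J_t,H_t)$ solves \eqref{eq-diff}. Setting $J_1$ to be the endpoint of this path, $(J_t,H_t)_{t\in[0,1]}$ is by construction a homotopy joining $\Psi$ to $J_1$, and by design all Taylor coefficients of $H_t$ of polynomial-degree $\leq n$ vanish.

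It remains to check that $J_1^{(i)}=\Xi^{(i)}$ for $0\leq i\leq n+1$, which I would do by a recursion on polynomial-degree using the formulas for Taylor coefficients of compositions. Since the Taylor coefficient of index $i$ of the right-hand side of \eqref{eq-diff} only involves $(H_t)_j$ with $j\leq i$, and these vanish for $i\leq n$, we obtain $(J_t)_i \equiv \Psi_i = \Xi_i$ for $0\leq i\leq n$. For $i=n+1$, the unique surviving term at the level of the $(n+1)$-th Taylor coefficient is $Q_\E^{(0)}\circ H_{n+1} + H_{n+1}\circ Q_{\E'}^{(0)} = T$, which is constant in $t$; integrating from $0$ to $1$ yields $(J_1)_{n+1} = \Psi_{n+1} + T = \Xi_{n+1}$. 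Since two co-morphisms agreeing on Taylor coefficients of index $\leq n+1$ agree on their polynomial-degree components up to $n+1$, this gives $J_1^{(n+1)}=\Xi^{(n+1)}$.

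The main obstacle is the first step: producing $H$ with all the required structural properties (being a $\Psi^{(0)}$-coderivation, $\mathcal{O}$-multilinear, of degree $-1$ and polynomial-degree $n+1$) is where the full bicomplex machinery of Section \ref{interpretaion-bicomplex} is essential and where the acyclicity assumption on $\E$ is used; without it, $T$ need not be a coboundary and the lemma would fail. Once $H$ is in hand, Proposition \ref{prop:justify} furnishes the whole homotopy automatically, and verification of the two target properties reduces to routine polynomial-degree bookkeeping.
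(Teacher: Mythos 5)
Your proposal is correct and follows essentially the same route as the paper: identify $(\Xi-\Psi)^{(n+1)}$ via Lemma \ref{imp-lemma} and Proposition \ref{prop:interpretation} as a degree-$0$ cocycle in $\mathfrak{Page}^{(n+1)}(\E',\E)$, use exactness (Proposition \ref{bicomplex}) to write it as $Q_\E^{(0)}\circ H_{n+1}+H_{n+1}\circ Q_{\E'}^{(0)}$, then feed the $J_t$-coderivation with sole Taylor coefficient $H_{n+1}$ into the flow equation of Proposition \ref{prop:justify} and read off the polynomial-degree components. The only cosmetic difference is that you phrase $H_t$ as "constant in $t$," whereas strictly only its Taylor coefficient is constant (the full $J_t$-coderivation varies with $J_t$), which does not affect the argument.
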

\begin{proof} Let us consider $(\Xi-\Psi)^{(n+1)}: \bigodot^\bullet\mathcal E' \longrightarrow  \bigodot^\bullet\mathcal E $. By assumption, $ (\Xi-\Psi)^{(i)}=0 $ for all $i \leq n$, so that in view of Lemma \ref{imp-lemma}
\begin{enumerate}
    \item 
$(\Xi-\Psi)^{(n+1)}$ is a $\Psi^{(0)} $-co-derivation. 
    \item Proposition \ref{prop:interpretation} means that the map the restriction of the map
$(\Xi-\Psi)^{(n+1)}$
to $\bigodot^{n+2}\mathcal E'$ corresponds to a closed element of degree $0$ in ${\mathfrak{Page}}^{(n+1)} (\E',\E)$ equipped with differentials $\ell_1,\ell_1'$.
\end{enumerate}
 Proposition \ref{bicomplex} implies that there exists  $\mathcal O$-linear map $H_{n+1}\colon \bigodot^{n+2}(\E')\longrightarrow \E$, a degree $ -1$ and of polynomial-degree $n+1$, i.e. an element in $\mathfrak{Page}^{(n+1)}(\E',\E)$, such that, 
\begin{equation}
(\Xi-\Psi)^{(n+1)}=Q^{(0)}_{\E}\circ H_{n+1}+H_{n+1}\circ Q_{\E'}^{(0)}.
\end{equation}
We denote its extension
to a $\Psi^{(0)}$-co-derivation of degree $-1$ by $H^{(n+1)}$. We now consider the following differential equation for $t \in [0,1]$: 
\begin{equation}\label{homotpy}
\frac{\dd J_t}{\dd t}=Q_\E\circ H_t+H_t\circ Q_{\E'},\quad \text{and}\quad J_0=\Psi,
\end{equation}where $H_t$ is the unique $J_t$-co-derivation of degree $-1$  whose unique non-zero Taylor coefficient is $H_{n+1}$. The existence of a solution for the differential equation \eqref{homotpy} is granted by Proposition \ref{prop:justify}. By considering the component of polynomial-degree $1, \dots, n, n+1$ in Equation \eqref{homotpy}, we find
 $$ \left\{ \begin{array}{rcl}
 \frac{\dd J_t^{(i)}}{\dd t} &  = &0 \, \, \, \hbox{ for $i=0, \dots, n$ ,} \\
 \frac{\dd J_t^{(n+1)}}{\dd t}&=&Q_\E^{(0)}\circ H^{(n+1)}+H^{(n+1)}\circ Q_{\E'}^{(0)}  =   (\Xi-\Psi)^{(n+1)} 
 \end{array}\right.$$
Hence:
\begin{align*}
 \left\{ \begin{array}{rcl}
  {J_t^{(i)}} &  = & \Psi^{(i)} \, \, \, \hbox{ for $i=0, \dots, n$ ,} \\
J_t^{(n+1)}&=&
\Phi^{(n+1)}+t(\Xi-\Psi)^{(n+1)}. 
\end{array} \right.
\end{align*}
Therefore, applying $t=1$ to the previous relation, one finds $$
 \left\{ \begin{array}{rcl}
{J_t^{(i)}} &  = & \Psi^{(i)} \, \, \, \hbox{ for $i=0, \dots, n$ ,} \\
 J^{(n+1)}_1&=&\Psi^{(n+1)}+(\Xi-\Psi)^{(n+1)}= \Xi^{(n+1)} 
\end{array}\right. 
$$
This completes the proof.
\end{proof}

\subsubsection{Construction of the Lie $\infty$-morphism}
\begin{proof}[Proof (of Theorem \ref{th:universal})]	
Let us prove item 1. 
We construct  the Taylor coefficients of the Lie $\infty $-algebroid $ \Phi$ by recursion.

The Taylor coefficient of polynomial-degree $0$ is obtained out of classical properties of projective resolutions of $\mathcal O $-modules.
Given any complex $(\E', \rho', \ell'_{1},\pi')$ which terminates in $\mathcal A$ through $\pi $, for every free resolution $(\E, \rho, \ell_{1},\pi)$ of $\mathcal A$, there exists   a chain map  $\Phi^{(0)}\colon (\E',\ell_1') \to (\E, \ell_1)$ as in Equation (\ref{eq:EE'}), and any two such chain maps are homotopic. We still denote by $ \Phi^{(0)} $ its extension to an polynomial-degree $ 0$ co-morphism $\bigodot^\bullet \E' \to \bigodot^\bullet \E $.

To construct the second Taylor coefficient,  let us consider the map:
\begin{equation} \label{degre2Obs} \begin{array}{rcl}   
  S^2_\mathbb K (\E')& \to  & \E \\
 (x,y)&\mapsto&\Phi^{(0)} \circ \ell_2'(x,y)-\ell_2(\Phi^{(0)}(x),\Phi^{(0)}(y)).\end{array}\end{equation}
 This map is in fact $\mathcal O $-bililinear, i.e. belongs to $\text{Hom}_\mathcal{O}(\bigodot^2\E',\E)$, hence to ${\mathfrak{Page}}^{(1)}(\E',\E)$, see Equation \eqref{recap}. Let us check that it is a $D$-cocycle:
 \begin{enumerate}
     \item[A.]  If either one of the homogeneous elements $x \in \E'$ or $y \in \E'$ is not of degree $-1$, a straightforward computation  gives:
     \begin{align*}
          \ell_1\circ\left( \Phi^{(0)}\circ\ell'_2(x,y)-\ell_2\left(\Phi^{(0)}(x),\Phi^{(0)}(y)\right)\right)&=\Phi^{(0)}\circ\ell_1'\circ\ell'_2( x,y)+\ell_2\left(\Phi^{(0)}\circ\ell_1'(x),\Phi^{(0)}(y)\right)\\&\hspace{2.1cm}+(-1)^{\lvert x\rvert}\ell_2\left(\Phi^{(0)}(x),\Phi^{(0)}\circ\ell_1'(y)\right)\\&=\left(\Phi^{(0)}\circ\ell'_2-\ell_2\left(\Phi^{(0)},\Phi^{(0)}\right)\right)\circ\ell_1'(x\odot y).
     \end{align*}
     \item[B.] If both $x,y \in \E'$ are of degree $-1 $:
     \begin{eqnarray*}\pi \left( \Phi^{(0)}\ell'_2(x,y)-\ell_2(\Phi^{(0)}x,\Phi^{(0)}y)\right) &=&  \pi'  \circ \ell_2'(x,y)- \pi \circ \ell_2\left(\Phi^{(0)}x,\Phi^{(0)}y\right)\\
     & =&   [\pi'(x), \pi'(y)]- \left[\pi \left(\Phi^{(0)}x\right), \pi\left(\Phi^{(0)}x\right) \right] \\&=& [\pi'(x), \pi'(y)]-[\pi'(x), \pi'(y)]\\ &=& 0.\end{eqnarray*}
 \end{enumerate}
By Proposition \ref{bicomplex} item 2), there exists $\Phi^{(1)}\in \text{Hom}_\mathcal{O}\left(\bigodot^2 \E',\E\right)$, of degree $0$, so that \begin{equation}\label{phi1}
\Phi^{(0)}\circ\ell'_2(x,y)+\Phi^{(1)}\circ\ell_1'(x\odot y)=\ell_1\circ \Phi^{(1)}(x,y)+\ell_2(\Phi^{(0)}(x),\Phi^{(0)}(y)) \hspace{1cm} \hbox{for all $x,y \in \E'$.}
\end{equation}	
 $\Phi^{(0)}$ is a chain map and Relation \eqref{phi1} can be rewritten in terms of $Q_\E$ and $Q_{\E'}$ as follows\begin{align}  \left\{ \begin{array}{rcl} Q^{(0)}_{\E}\circ\Phi^{(0)}&=&\Phi^{(0)}\circ Q^{(0)}_{\E'}\\Q^{(0)}_{\E}\circ\Phi^{(1)}-\Phi^{(1)}\circ Q^{(0)}_{\E'}&=&\Phi^{(0)}\circ Q^{(1)}_{\E'}-Q^{(1)}_{\E}\circ\Phi^{(0)} \end{array} \right.\end{align}

The construction of the morphism $\Phi$ announced in Theorem \ref{th:universal} is then done by recursion. The recursion assumption is that we have already defined a $\mathcal{O}$-multilinear co-morphism  $\Phi:S_\mathbb{K}^\bullet(\E')\rightarrow S^\bullet_\mathbb{K}(\E)$ with $$\left( \Phi\circ Q_{\E'}-Q_{\E}\circ\Phi\right)^{(k)}=0 \quad\text{for all}\quad 0\leq k\leq n.$$
The co-morphism $\Phi:S_\mathbb{K}^\bullet(\E')\rightarrow S^\bullet_\mathbb{K}(\E)$ with Taylor coefficients $\Phi^{(0)} $ and $\Phi^{(1)} $ satisfies the recursion assumption for $n=1$.

Assume now that we have a co-morphism $\Phi $ that satisfies this assumption for some $n \in \mathbb{N}$, and
 consider the map $T_\Phi:= \Phi\circ Q_{\E'}-Q_{\E}\circ\Phi$.  Lemma \ref{O-linearity-lemma}  implies that $T_\Phi^{(n+1)}$ is a $\mathcal{O}$-multilinear $ \Phi^{(0)}$-co-derivation, and that it corresponds to a $D$-closed element\footnote{The following remark is crucial. 
Under the assumptions of Lemma \ref{O-linearity-lemma}, $(\Phi\circ Q_{\E'}-Q_{\E}\circ\Phi)^{(n+1)} $ corresponds to a $D$-closed element of degree $+1$ in  the bi-complex $\mathfrak{Page}^{(n+1)}(\E',\E)$ through the chain isomorphism described in Proposition \ref{prop:interpretation}.
Here, $\E,\E' $ are equipped with the differentials $\ell_1, \ell_1' $ which are the restriction of the components $Q^{(0)}_\E,Q^{(0)}_{\E'}$.} in $\mathfrak{Page}^{(n+1)}(\E',\E)$. Since it has no component on the last column for degree reason, Proposition \ref{bicomplex} implies that  $T_\Phi^{(n+1)}$ is a coboundary: That is to say that there is a $\Phi^{(0)} $-co-derivation $\Theta\in \mathfrak{Page}^{(n+1)}(\E',\E)$ (of polynomial-degree $n+1$ and degree $0$) which can be seen as a map $\Theta:\bigodot^{n+2} (\E')\rightarrow\E$ such that: $$T_\Phi^{(n+1)}=Q^{(0)}_{\E}\circ\Theta-\Theta\circ Q_{\E'}^{(0)}.$$
 Consider now the co-morphism $\tilde{\Phi}$ whose Taylor coefficients are those of $\Phi $ in polynomial-degree $0, \dots,n$ and  $ \Phi^{(n+1)}+\Theta$ in polynomial-degree $n+1$:  \begin{equation}\tilde{\Phi}^{(i)}:=\begin{cases} \Phi^{(i)}&\text{if $0\leq i\leq n$,} \\\Phi^{(n+1)}+\Theta &\text{if $i=n+1$}\end{cases}
\end{equation}
This is easily seen to satisfy the recursion relation for $n+1$. This concludes the recursion. The Taylor coefficients obtained by recursion define a Lie $\infty$-algebroid $\Phi\colon S^\bullet_{\mathbb K}(\E')\longrightarrow S^\bullet_{\mathbb K}(\E)$ which is compatible by construction with the hooks $\pi,\pi'$.

By continuing this procedure, we construct a Lie $\infty$-morphism from $S^\bullet_{\mathbb K}(\E')$ to $S^\bullet_{\mathbb K}(\E)$. This proves the  first item of Theorem \ref{th:universal}.

\subsubsection{Construction of a  homotopy that joins two such morphisms}
Let us prove the second item in Theorem \ref{th:universal}.
Notice that in the proof of the existence of the Lie $\infty$-morphism between $S^\bullet_{\mathbb K}(\E')$ and $S^\bullet_{\mathbb K}(\E)$ obtained in the first item, we made many choices, since we have chosen a coboundary at each step of the recursion.

Let $\Phi,\Psi$ be two such Lie $\infty$-morphisms between $S^\bullet_{\mathbb K}(\E')$ and $S^\bullet_{\mathbb K}(\E)$. The polynomial-degree $0$ component of the co-morphisms $\Phi$ and $\Psi$ restricted to $\E'$ are chain maps:
$$
\xymatrix{ \cdots\ar[rr]& & \ar[rr]  \mathcal E_{-2}'\ar@{-->}[lldd]_h\ar@<3pt>[dd]^{\Phi^{(0)}} \ar@<-3pt>[dd]_{\Psi^{(0)}}  & & \mathcal E_{-1} '\ar@<3pt>[dd]^{\Phi^{(0)}} \ar@<-3pt>[dd]_{\Psi^{(0)}}  \ar@{-->}[lldd]_h \ar[dr]^{\pi'} \\ & & & & & \mathcal A \\ \cdots\ar[rr] & & \mathcal E_{-2}  \ar[rr] & &\mathcal E_{-1} \ar@{->>}[ru]_\pi & } 
$$
which are homotopy equivalent in the usual sense because $(\E,\ell_1) $ is a projective resolution of $\mathcal A $:
said differently, there exists a degree $ -1$ $\mathcal O$-linear map $h\colon \E' \to \E$ such that \begin{equation}\Psi^{(0)} -\Phi^{(0)} =\ell_1 \circ h+h\circ\ell_1'\quad\text{on}\;\,\E'.\end{equation}
Let us consider the following differential equation:\begin{equation}\label{eq-diff1}
    \begin{cases}
    \frac{\dd J_t}{\dd t}=Q_\E\circ H_t(J_t)+H_t(J_t)\circ Q_{\E'},&\text{for $t\in[0,1]$}\\J_0=\Phi.
\end{cases}\end{equation}
with $H_t( J_t)$ being a $J_t$-co-derivation of degree $-1$ whose Taylor coefficient of polynomial-degree $0$ is $h$. This equation does admit solutions in view of Proposition~\ref{prop:justify}.

By looking at the component polynomial-degree $0$ of Equation \eqref{eq-diff1} on $\E'$, one has: \begin{align*}
\frac{\dd J^{(0)}_t}{\dd t}&=\ell_1 \circ h+h\circ \ell_1'\\&=\Psi^{(0)}-\Phi^{(0)}.
\end{align*}
Hence, $J^{(0)}_t=\Phi^{(0)}+t\left( \Psi^{(0)}-\Phi^{(0)}\right)$ is a solution such that $J^{(0)}_1=\Psi^{(0)}$. By construction, $J_1$ is homotopic to $\Phi$ via the pair $\left(J_t,H_t \right) $ over $[0,1]$, and its polynomial-degree $0$ Taylor coefficient coincides with the Taylor coefficient of $ \Psi$. 

From there, the construction goes by recursion using Lemma \ref{imp-lemma2}. 
Indeed, this lemma allows constructing
recursively a sequence of Lie $\infty $-algebroids morphism $ (\Psi_n)_{n \geq 0} $ and homotopies  $(J_{n,t},H_{n,t})$
(with $t \in [n,n+1] $) between $\Psi_n $ and $\Psi_{n+1} $ such that:  $H_{n,t}^{(i)}$ is zero for $t\geq n$ and $i\neq n+1$. By Lemma \ref{imp-lemma2}, all these homotopies are compatible with the hooks. These homotopies are glued in a homotopy $(J_t,H_t)_{[0,+\infty[}$ such that for every $n\in\mathbb{N}_0$, the components of polynomial-degree $n$ of the Lie $\infty$-algebroids morphism $J_t^{(n)}$ are constant and equal to $\Psi^{(n)}$ for $t\geq n$. By Lemma \ref{gluing-lemma}, these homotopies can be glued to a homotopy on $[0,1]$. Explicitly, since $t\mapsto\frac{t}{1-t}$ maps $[0,1[$ to $[0,+\infty[$ and by Lemma \ref{gluing-lemma}, the pair $\left(J_{\frac{t}{1-t}},\frac{1}{(1-t)^2}H_{k,\frac{t}{1-t}}\right)$ is a homotopy between $\Phi$ and $\Psi$.
This proves the second item of the Theorem \ref{th:universal}.

\end{proof}

\section{Examples of universal Lie $\infty$-algebroids of  Lie-Rinehart algebras}

\label{sec:examples}
\subsection{New constructions from old ones}

In this section, we explain how to construct universal Lie $\infty$-algebroids of some Lie-Rinehart algebra which is derived from a second one through one of natural constructions as in Section \ref{LR-construction} (localization, germification, restriction), when a universal Lie $\infty$-algebroid of the latter is already known.
\label{constructions}

\subsubsection{Localization}\label{ssec:loc}

Localization is an useful algebraic tool, specially in algebraic geometry. When $\mathcal O $ is an algebra of functions, it corresponds to study local properties of a space, or germs of functions. \\

Let $(\mathcal A,\lb_\mathcal A,\rho_\mathcal A)$ be a Lie-Rinehart algebra over $\mathcal O$. Let $S\subset\mathcal O$ be a multiplicative-closed subset containing no zero divisor. We recall from item 2, Example \ref{loc:res} that the localization $S^{-1}\mathcal A\cong\mathcal A\otimes_\mathcal O S^{-1}\mathcal O$ of $\mathcal A$ at $S$ comes equipped with  a natural structure of Lie-Rinehart algebra over the localization algebra $S^{-1}\mathcal O$. Recall that for $\varphi\colon \E \longrightarrow \mathcal T$ a homomorphism of $\mathcal O$-modules, there is a well-defined homomorphism of $\mathcal O$-modules, $$\varphi\otimes\text{id}\colon \mathcal E\otimes_\mathcal O S^{-1}\mathcal O\longrightarrow \mathcal T\otimes_\mathcal O S^{-1}\mathcal O,\hspace{0.2cm}\varphi\otimes\text{id} \, (x\otimes\frac{f}{s}):=\varphi(x)\otimes\frac{f}{s}$$ that can be considered as a $S^{-1}\mathcal O$-module homomorphism $$S^{-1}\varphi\colon S^{-1}\E\longrightarrow S^{-1}\mathcal T\hspace{0.2cm}\hbox{with}\hspace{0.2cm}S^{-1}\varphi\left(\frac{x}{s}\right):=\frac{\varphi(x)}{s},\,\;x\in\E,\,(f,s)\in\mathcal O\times S,$$ called the \emph{localization of $\varphi$}.
	
\noindent	
Given a Lie $\infty$-algebroid structure $(\E_\bullet,\ell_\bullet,\rho)$ that covers $\mathcal A$ through $\pi$. The triplet $(\E_\bullet',\ell_\bullet',\rho')$ is a Lie $\infty$-algebroid structure that covers $S^{-1}\mathcal A$ through the hook $\pi'$ where
\begin{enumerate}
\item $\E'=S^{-1}\E$;
\item  The anchor map $\rho'$ is defined by
\begin{align*}
  \begin{array}{rcrrcl} \rho' \colon S^{-1}\E_{-1}&\longrightarrow &  \text{Der}(S^{-1}\mathcal O) & & & \\\label{der_ext} 
    \frac{x}{s}&\longmapsto & \rho'\left(\frac{x}{s}\right): &S^{-1}\mathcal O&\longrightarrow &S^{-1}\mathcal O\\
&  & & \frac{f}{u}&\longmapsto&\frac{1}{s}\cdot\left(\frac{\rho(x)[f]u-f\rho(x)[u]}{u^2}\right) \end{array}
\end{align*}for $x\in\E,f\in\mathcal O,(s,u)\in S\times S$;
\item $\ell'_k=S^{-1}\ell_k$, for all $k\in\mathbb N\setminus\{2\}$;
\item The binary bracket is more complicated because of the anchor map: we
set $$ \ell_2'\left(\frac{1}{s} x,\frac{1}{u} y\right) =\frac{1}{su}\ell_2(x, y)- \frac{\rho(x) [u]}{su^2} \, y + \frac{\rho(y) [s]}{s^2u} \, x$$
for $x,y\in\E, (s,u)\in S^2$
(with the understanding that $\rho\equiv0 $ on $\E_{-i}$ with $ i \geq 2 $);
\item  $\pi'=S^{-1}\pi$.
\end{enumerate}
 One can check that these operations above are well-defined and for all $z\in S^{-1}\E_{-1}$ the map $\rho'(z)$ is  indeed a derivation on $S^{-1}\mathcal O$. The previously defined structure is also a Lie $ \infty$-algebroid that we call \emph{localization of the  Lie $\infty$-algebroid  $(\E_\bullet,\ell_\bullet,\rho)$ with respect to $S$}. 

\begin{proposition}
\label{prop:localisation}
Let $S \subset \mathcal O$ be a multiplicative subset containing no zero divisor. 
The localization of a universal Lie $\infty $-algebroid of a Lie-Rinehart algebra $ \mathcal A$ is a  universal Lie $\infty $-algebroid of $ S^{-1}\mathcal A$.
\end{proposition}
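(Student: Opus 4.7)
The plan is to verify that the localized structure $(S^{-1}\mathcal{E}_\bullet, S^{-1}\ell_\bullet, \rho')$ satisfies the two defining properties of a universal Lie $\infty$-algebroid of $S^{-1}\mathcal{A}$: \emph{(i)} it is a Lie $\infty$-algebroid over $S^{-1}\mathcal{O}$ covering $S^{-1}\mathcal{A}$ through a hook $\pi' = S^{-1}\pi$, and \emph{(ii)} it is acyclic, i.e. the underlying complex is a free resolution of $S^{-1}\mathcal{A}$. By Corollary \ref{cor:unique}, this is enough to identify it with \emph{the} universal Lie $\infty$-algebroid of $S^{-1}\mathcal{A}$ up to the distinguished homotopy equivalence class.

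\medskip

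\textbf{Step 1 (acyclicity via flatness of localization).} Since $S$ contains no zero divisor and localization $\mathcal{O} \to S^{-1}\mathcal{O}$ is a flat ring extension, the functor $S^{-1}(\cdot) = S^{-1}\mathcal{O} \otimes_\mathcal{O} (\cdot)$ is exact. Applying it to the acyclic complex
\[
\cdots \xrightarrow{\ell_1} \mathcal{E}_{-3} \xrightarrow{\ell_1} \mathcal{E}_{-2} \xrightarrow{\ell_1} \mathcal{E}_{-1} \xrightarrow{\pi} \mathcal{A}
\]
yields an acyclic complex of $S^{-1}\mathcal{O}$-modules. Moreover, localization sends free modules to free modules: if $\mathcal{E}_{-i} \simeq \bigoplus_{j \in J_i} \mathcal{O}$, then $S^{-1}\mathcal{E}_{-i} \simeq \bigoplus_{j \in J_i} S^{-1}\mathcal{O}$. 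Therefore $(S^{-1}\mathcal{E}_\bullet, S^{-1}\ell_1, S^{-1}\pi)$ is a free resolution of $S^{-1}\mathcal{A}$ over $S^{-1}\mathcal{O}$, and in particular $S^{-1}\pi$ is surjective, so the localized structure covers $S^{-1}\mathcal{A}$.

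\medskip

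\textbf{Step 2 (Lie $\infty$-algebroid axioms).} Here the task is to verify that the formulas given just before the statement of the proposition actually define a Lie $\infty$-algebroid structure. This amounts to checking three things:
\begin{enumerate}
    \item[(a)] the $k$-ary brackets $S^{-1}\ell_k$ (for $k \neq 2$) are well-defined $S^{-1}\mathcal{O}$-multilinear operators, which follows from $\mathcal{O}$-multilinearity of $\ell_k$ together with the universal property of localization of tensor products;
    \item[(b)] the anchor $\rho'$ is well-defined, valued in $\mathrm{Der}(S^{-1}\mathcal{O})$, and $S^{-1}\mathcal{O}$-linear: the key point is that $\rho(x) \in \mathrm{Der}(\mathcal{O})$ admits a unique extension to $S^{-1}\mathcal{O}$ via the quotient rule, as recalled in Section \ref{def:localisation}, and the formula for $\rho'(\tfrac{x}{s})$ is the composition of this extension with multiplication by $\tfrac{1}{s}$;
    \item[(c)] the modified binary bracket $\ell_2'$ satisfies the Leibniz identity with respect to $\rho'$ and higher Jacobi identities.
\end{enumerate}
Once (a)--(c) are verified, the higher Jacobi identities $\sum_{i+j=n+1}[\ell_i', \ell_j']_{\mathrm{RN}} = 0$ follow automatically by $\mathbb{K}$-linear extension from the identities satisfied by $(\ell_k)_{k \geq 1}$, because both sides are images under the canonical map $\mathrm{Hom}_\mathcal{O}(S^n_\mathbb{K}(\mathcal{E}),\mathcal{E}) \to \mathrm{Hom}_{S^{-1}\mathcal{O}}(S^n_\mathbb{K}(S^{-1}\mathcal{E}), S^{-1}\mathcal{E})$ of the corresponding identities upstairs.

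\medskip

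\textbf{Step 3 (conclusion).} Combining Steps 1 and 2, $(S^{-1}\mathcal{E}_\bullet, S^{-1}\ell_\bullet, \rho')$ is an acyclic Lie $\infty$-algebroid over $S^{-1}\mathcal{O}$ covering $S^{-1}\mathcal{A}$ through $S^{-1}\pi$. By definition this is a universal Lie $\infty$-algebroid of $S^{-1}\mathcal{A}$, and by Corollary \ref{cor:unique} it is canonically homotopy equivalent to any other such choice.

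\medskip

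The main technical obstacle is Step 2(c): checking that the modified binary bracket $\ell_2'$ satisfies the Leibniz rule $\ell_2'(x, fy) = f\ell_2'(x,y) + \rho'(x)[f]\,y$ for $f \in S^{-1}\mathcal{O}$, and is compatible with higher Jacobi through the recursive correction terms coming from the quotient rule. All other verifications are essentially formal consequences of flatness of $S^{-1}\mathcal{O}$ and functoriality of localization.
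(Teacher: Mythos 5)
Your proposal is correct and follows essentially the same route as the paper: the paper's own proof simply asserts that the localized structure is a Lie $\infty$-algebroid terminating in $S^{-1}\mathcal{A}$ and concludes universality from the exactness of localization, which is exactly your Steps 1--3. The only difference is one of exposition — you make explicit (and rightly flag as the remaining technical point) the verification that the anchor-corrected binary bracket $\ell_2'$ satisfies Leibniz and the higher Jacobi identities, which the paper likewise leaves to the reader.
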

\begin{proof}
The object $((\E_\bullet',\ell_\bullet',\rho'))$ described above is also a Lie $\infty$-algebroid terminating in $S^{-1}\mathcal A$ through $\pi'$. It is universal because localization preserves exact sequences \cite{A.Gathmann}.
\end{proof}

\subsubsection{Restriction}\label{ssec:res}
When $\mathcal O_Y$ is the ring of functions of an affine variety $Y$ (see Section \ref{sec:reminder-affine-variety}), to every subvariety $X\subset Y$ corresponds its zero locus, which is an ideal $\mathcal I_X \subset \mathcal O_Y $. A Lie $\infty$-algebroid or a Lie-Rinehart algebra over $\mathcal O_Y$ may not restrict to a Lie-Rinehart algebra over $\mathcal O_X$: it only does so when one can quotient all brackets by $\mathcal I_X$, which geometrically means that the anchor map takes values in vector fields tangent to $X$. We can then \textquotedblleft restrict\textquotedblright, i.e. replace $\mathcal O_Y $ by $\mathcal O_Y / \mathcal I_X $. This operation has already been defined in Section \ref{ssec:Tor},
and here is an immediate consequence of Corollary \ref{cor:LRideal2}:

\begin{proposition}\label{prop:restriction}
Let $ \mathcal I \subset \mathcal O$ be a Lie-Rinehart ideal, i.e. an ideal such that $ \rho_{\mathcal A}(\mathcal A)[\mathcal I] \subset \mathcal I$.
The quotient of a universal Lie $\infty$-algebroid of $ \mathcal A$ with respect to an ideal $ \mathcal I$ is a   Lie $\infty $-algebroid that terminates in $ \mathcal A/ \mathcal I \mathcal A$. It is universal if and only if
$((\frac{\E_{-i}}{\mathcal I\E_{-i}})_{i\geq 1},\Bar{\ell_1},\overline{\pi})$ is exact, i.e. if
$ {\mathrm{Tor}}_{\mathcal O}^\bullet (\mathcal A , \mathcal O/ \mathcal I)=0$.
\end{proposition}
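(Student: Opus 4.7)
The plan is to verify successively that passing to the quotient by $\mathcal{I}$ preserves every defining datum of a Lie $\infty$-algebroid, then identify the obstruction to universality with the vanishing of the relevant $\mathrm{Tor}$ modules.

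First I would show that the quotient $\E_\bullet / \mathcal{I}\E_\bullet \simeq \mathcal{O}/\mathcal{I} \otimes_\mathcal{O} \E_\bullet$ inherits a well-defined Lie $\infty$-algebroid structure over $\mathcal{O}/\mathcal{I}$. Since $\rho_\mathcal{A}(\mathcal{A})[\mathcal{I}] \subset \mathcal{I}$ and $\rho = \rho_\mathcal{A} \circ \pi$, one has $\rho(\E_{-1})[\mathcal{I}] \subset \mathcal{I}$, so $\rho$ descends to an anchor $\bar\rho \colon \E_{-1}/\mathcal{I}\E_{-1} \to \mathrm{Der}(\mathcal{O}/\mathcal{I})$. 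The brackets $\ell_k$ for $k \neq 2$, being $\mathcal{O}$-multilinear, descend tautologically. For $\ell_2$, the Leibniz identity $\ell_2(x, fy) = \rho(x)[f]\,y + f\ell_2(x,y)$ shows that whenever $f \in \mathcal{I}$, the term $\rho(x)[f]\,y$ lies in $\mathcal{I}\E$, so $\ell_2$ maps $\mathcal{I}\E + \E \odot \mathcal{I}\E$ into $\mathcal{I}\E$ and descends to a bracket $\bar\ell_2$ satisfying the Leibniz identity over $\mathcal{O}/\mathcal{I}$ with respect to $\bar\rho$. The higher Jacobi identities pass to the quotient by $\mathbb{K}$-linearity. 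Likewise, the hook $\pi$ descends to a surjective morphism of Lie-Rinehart algebras $\bar\pi \colon \E_{-1}/\mathcal{I}\E_{-1} \to \mathcal{A}/\mathcal{I}\mathcal{A}$ intertwining $\bar\rho$ and $\rho_{\mathcal{A}/\mathcal{I}\mathcal{A}}$. Thus $(\E_\bullet/\mathcal{I}\E_\bullet, \bar\ell_\bullet, \bar\rho)$ is a Lie $\infty$-algebroid terminating in $\mathcal{A}/\mathcal{I}\mathcal{A}$ through $\bar\pi$.

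Next I would analyze universality. By definition, the quotient Lie $\infty$-algebroid is universal for $\mathcal{A}/\mathcal{I}\mathcal{A}$ precisely when its underlying complex
\[
\cdots \xrightarrow{\bar\ell_1} \E_{-2}/\mathcal{I}\E_{-2} \xrightarrow{\bar\ell_1} \E_{-1}/\mathcal{I}\E_{-1} \xrightarrow{\bar\pi} \mathcal{A}/\mathcal{I}\mathcal{A}
\]
is exact, and $\bar\pi$ is already surjective because $\pi$ is. Now, since $(\E_\bullet, \ell_1, \pi)$ is a free resolution of $\mathcal{A}$ over $\mathcal{O}$, tensoring with $\mathcal{O}/\mathcal{I}$ yields the complex $\mathcal{O}/\mathcal{I} \otimes_\mathcal{O} \E_\bullet$ whose homology is by definition $\mathrm{Tor}_\mathcal{O}^\bullet(\mathcal{A}, \mathcal{O}/\mathcal{I})$, augmented by $\mathcal{A} \otimes_\mathcal{O} \mathcal{O}/\mathcal{I} \simeq \mathcal{A}/\mathcal{I}\mathcal{A}$ in degree $0$. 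Exactness of the augmented complex is therefore equivalent to the vanishing of $\mathrm{Tor}_\mathcal{O}^i(\mathcal{A}, \mathcal{O}/\mathcal{I})$ for all $i \geq 1$.

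The argument is essentially a concatenation of routine verifications; the only step that requires care is checking that $\ell_2$ descends, since it is the unique non-$\mathcal{O}$-linear bracket. The identification of the homology with $\mathrm{Tor}$ is classical, and the Lie-Rinehart ideal condition is exactly what is needed to guarantee all the compatibilities. I would also briefly record the alternative formulation already noted in Section \ref{ssec:Tor} and Corollary \ref{cor:LRideal2}, to make clear that this proposition is the direct ``universality version'' of those results.
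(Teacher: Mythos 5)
Your proof is correct and follows essentially the same route as the paper, which establishes the quotient Lie $\infty$-algebroid structure in the remark of Section \ref{ssec:Tor} (the only subtlety being that $\ell_2$ descends thanks to $\rho(\E_{-1})[\mathcal I]\subset\mathcal I$) and then reads off universality as exactness of $\mathcal O/\mathcal I\otimes_\mathcal O\E_\bullet$, i.e.\ the vanishing of ${\mathrm{Tor}}_{\mathcal O}^{\bullet}(\mathcal A,\mathcal O/\mathcal I)$, as recorded in Remark \ref{rmk:Tor} and Corollary \ref{cor:LRideal2}. Nothing is missing.
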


\begin{remark}
Note that the anchor map $\rho\colon\mathcal E_{-1}\rightarrow \mathrm{Der}(\mathcal O)$ goes to quotient to $\frac{\E_{-1}}{\mathcal I \mathcal  E_{-1}}\rightarrow \mathrm{Der}(\mathcal O)$ as  an $\mathcal O$-linear map,  but needs the extra condition $\rho(\mathcal{E}_{-1})[\mathcal{I}]\subset \mathcal I$ to induce an $\mathcal O/\mathcal I$- linear map $\frac{\mathcal E_{-1}}{\mathcal I \mathcal E_{-1}}\rightarrow \mathrm{Der}(\mathcal O/\mathcal I)$.
\end{remark}



\subsubsection{Germification}\label{sec:germification}
Let $W\subseteq\mathbb C^{N} $ be an affine variety and $\mathcal O_W$ its coordinates ring (see Section \ref{sec:reminder-affine-variety}). For $a\in W$,  consider $\mathcal O_{W,a}$ the local ring at $a$. Note that $\mathcal O_{W,a}\simeq(\mathcal O_W)_{\mathfrak m_{a}}$ \cite{Hartshorne},  where $\mathfrak m_{a}=\{f\in \mathcal O_W\mid f(a)=0 \}$ and $(\mathcal O_W)_{\mathfrak m_{a}}$ is the localization w.r.t the complement of $\mathfrak m_{a}$,	Proposition \ref{prop:localisation} implies the following statement:

		\begin{prop}
		\label{prop:germ}
		Let $W$ be an affine variety with functions $ \mathcal O_W$. For every point $a\in W$ and any Lie-Rinehart $\mathcal A $ over $\mathcal O_W $, the germ at $a $ of the universal Lie $\infty$-algebroid of $\mathcal A $ is the universal Lie $\infty$-algebroid of the germ of $\mathcal A $ at $a$. 
	\end{prop}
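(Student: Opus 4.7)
The plan is to recognize that ``germification'' is nothing but a particular instance of the localization procedure already treated in Section \ref{ssec:loc}, and then to invoke Proposition \ref{prop:localisation} verbatim. The only thing to check is that the hypothesis of that proposition, namely the existence of a multiplicative subset containing no zero divisor whose localization yields the germ, is met.

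First I would unpack the statement: the germ of $\mathcal{A}$ at $a$ is by definition $\mathcal{A}_a := \mathcal{A} \otimes_{\mathcal{O}_W} \mathcal{O}_{W,a}$, equipped with the Lie-Rinehart algebra structure over $\mathcal{O}_{W,a}$ described in item~2 of Section \ref{LR-construction}. By Hartshorne (as recalled just before the statement), $\mathcal{O}_{W,a} \simeq (\mathcal{O}_W)_{\mathfrak{m}_a}$, so setting
\[ S := \mathcal{O}_W \setminus \mathfrak{m}_a = \{ f \in \mathcal{O}_W \mid f(a) \neq 0 \}, \]
one has $\mathcal{O}_{W,a} = S^{-1}\mathcal{O}_W$ and $\mathcal{A}_a = S^{-1}\mathcal{A}$ as Lie-Rinehart algebras over $S^{-1}\mathcal{O}_W$. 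The set $S$ is multiplicatively closed because $\mathfrak{m}_a$ is a (prime, maximal) ideal, and contains $1$.

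Next I would address the ``no zero divisor'' condition. When $W$ is irreducible, $\mathcal{O}_W$ is an integral domain, so every non-zero element of $\mathcal{O}_W$ is a non-zero-divisor, and the assumption of Proposition \ref{prop:localisation} is trivially satisfied for $S$. (In the reducible case, one restricts to the unique irreducible component through $a$ at the level of germs, or works component by component; since the germ only sees functions near $a$, this reduction is harmless.)

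With this identification in hand, the conclusion is immediate: let $(\mathcal{E}_\bullet, \ell_\bullet, \rho)$ be a universal Lie $\infty$-algebroid of $\mathcal{A}$ with hook $\pi$. By Proposition \ref{prop:localisation}, its localization $(S^{-1}\mathcal{E}_\bullet, S^{-1}\ell_\bullet, \rho')$ with hook $S^{-1}\pi$ is a universal Lie $\infty$-algebroid of $S^{-1}\mathcal{A} = \mathcal{A}_a$ over $S^{-1}\mathcal{O}_W = \mathcal{O}_{W,a}$. But $S^{-1}\mathcal{E}_\bullet$ is exactly the termwise germ of $\mathcal{E}_\bullet$ at $a$, and the localized brackets, anchor and hook are by construction the germs of $\ell_\bullet$, $\rho$ and $\pi$. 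Hence the germ at $a$ of the universal Lie $\infty$-algebroid of $\mathcal{A}$ coincides with a universal Lie $\infty$-algebroid of the germ $\mathcal{A}_a$, as claimed. The only conceptual step worth highlighting is the identification of ``germification'' with ``localization at $S = \mathcal{O}_W \setminus \mathfrak{m}_a$''; once this is made explicit, no further work is required and the statement is a direct corollary of Proposition \ref{prop:localisation}.
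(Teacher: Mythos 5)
Your proof is correct and is exactly the paper's argument: the paper likewise identifies the germ at $a$ with the localization at $S=\mathcal O_W\setminus\mathfrak m_a$ (via $\mathcal O_{W,a}\simeq(\mathcal O_W)_{\mathfrak m_a}$) and then applies Proposition \ref{prop:localisation}. Your additional discussion of the no-zero-divisor hypothesis for $S$ (trivial when $W$ is irreducible, requiring a word in the reducible case) is a point the paper passes over in silence, so if anything you are slightly more careful than the source.
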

	
	\noindent
	Therefore, the germ at $a$ of a Lie-Rinehart algebra or a Lie $\infty$-algebroid is simply its  localization w.r.t the complement of $\mathfrak m_{a}$.

\subsection{Sections vanishing on a codimension $1$ subvariety} 

\label{sec:codim1} 

Let $(\mathcal{A},\lb,\rho_\mathcal{A})$ be an arbitrary Lie-Rinehart algebra over $ \mathcal O$. For any ideal $\mathcal I  \subset \mathcal O$, $\mathcal I \mathcal A $ is also a Lie-Rinehart algebra (see Example \ref{ex:vanishing}). When $ \mathcal O$ are functions on a variety $M$, $\mathcal I $ are functions vanishing on a subvariety $X$ and $\mathcal A $ is a $\mathcal O $-module of sections over $M$, $\mathcal I \mathcal A$ corresponds geometrically to sections vanishing along $X$. It is not an easy task.
In codimension $1$, i.e. when $\mathcal I $ is generated by one element, the construction can be done by hand.

\begin{prop}Let $(\mathcal A,\lb_\mathcal A,\rho_\mathcal A)$ be a Lie-Rinehart algebra over a commutative algebra $\mathcal O$. Let $(\E,\ell_k=\{\cdots\}_{k\geq 1},\rho)$ be a Lie $\infty$-algebroid that terminates in $\mathcal A$ through a hook $\pi$. For any element $\chi\in \mathcal O$, the $\mathcal O$-module $\mathcal A'=\chi\mathcal A\subseteq\mathcal A$ is closed under the Lie bracket, so the triple  $(\chi \mathcal A, [\cdot, \cdot]_\mathcal A, \rho_\mathcal A) $ is a Lie-Rinehart algebra over $\mathcal O$. A Lie $\infty$-algebroid $(\mathcal E'=\E,\ell_k'=\{\cdots\}_{k \geq 1}', \rho') $ hooked in $\chi \mathcal A $ through $\pi'$ can be defined as follows:
	\begin{enumerate}
	   \item The brackets are given by
	 \begin{enumerate}
	    \item $\{\cdot\}_1' = \{\cdot\}_1 $,
	    \item the $2$-ary bracket: \begin{equation}
	\{x,y\}'_{2}:=\chi\{x,y\}_{2}+\rho(x)[\chi]\,y+(-1)^{|x||y|}\rho(y)[\chi]\, x,
	\end{equation}
	for all $x,y \in \mathcal E_\bullet$, with the understanding that $\rho=0 $ on $\mathcal E_{\leq -2} $, 
	    \item $\{\cdots\}_k' = \chi^{k-1}\{\cdots\}_k$ for all $k \geq 3$,
	    \end{enumerate}
        \item $\rho_{\E '}= \chi \rho $,
	    \item $\pi' = \chi \pi $.
	    \end{enumerate}
\end{prop}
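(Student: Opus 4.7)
My plan is to verify directly that $(\E,\ell_\bullet',\rho',\pi')$ satisfies the Lie $\infty$-algebroid axioms of Definition \ref{def:Linfty} and that it terminates in $\chi\mathcal A$ through $\pi'=\chi\pi$. The verification splits into elementary low-order axioms and a structural trick for the higher Jacobi identities.

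First I would dispatch the routine checks. Closure of $\chi\mathcal A$ under the bracket of $\mathcal A$ follows from
\begin{equation*}
[\chi a,\chi b]_{\mathcal A} = \chi^{2}[a,b]_{\mathcal A} + \chi\rho_{\mathcal A}(a)[\chi]\,b - \chi\rho_{\mathcal A}(b)[\chi]\,a \in \chi\mathcal A,
\end{equation*}
the anchor of $\chi\mathcal A$ being the restriction of $\rho_{\mathcal A}$. On the candidate Lie $\infty$-algebroid $(\E,\ell_\bullet',\rho')$, $\mathcal O$-multilinearity of $\rho'=\chi\rho$ and of $\ell_k'=\chi^{k-1}\ell_k$ for $k\neq 2$ is immediate; the Leibniz rule for $\ell_2'$ with respect to $\rho'$ is a direct expansion using the product rule for $\rho(x)[\chi f]$ and the Leibniz rule for $\ell_2$; and $\rho'\circ \ell_1' = \chi\rho\circ\ell_1 = 0$ on $\E_{-2}$. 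For $x,y\in\E_{-1}$ both sides of $\rho'(\ell_2'(x,y)) = [\rho'(x),\rho'(y)]$ expand to $\chi^2[\rho(x),\rho(y)]+\chi\rho(x)[\chi]\rho(y)-\chi\rho(y)[\chi]\rho(x)$. Finally $\pi'=\chi\pi$ is $\mathcal O$-linear, satisfies $\rho_{\mathcal A}\circ\pi'=\rho'$, and respects the binary bracket by the same computation transported to $\mathcal A$ via $\pi$.

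The main step is the higher Jacobi identities. My key observation is the scaling identity
\begin{equation*}
\ell_k(\chi x_1,\ldots,\chi x_k) = \chi\,\ell_k'(x_1,\ldots,x_k) \quad \text{for all } k\geq 1,
\end{equation*}
trivial for $k=1$ and $k\geq 3$ by $\mathcal O$-multilinearity, while for $k=2$ a double application of the Leibniz rule yields $\ell_2(\chi x,\chi y)=\chi^2\ell_2(x,y)+\chi\rho(x)[\chi]y+(-1)^{|x||y|}\chi\rho(y)[\chi]x=\chi\ell_2'(x,y)$. In the co-algebra language of Section \ref{sec:co-version}, let $\Psi\colon S^\bullet_{\mathbb K}(\E)\to S^\bullet_{\mathbb K}(\E)$ be the $\mathcal O$-linear co-morphism with $\Psi_0(x)=\chi x$ and vanishing higher Taylor coefficients, and let $Q,Q'$ be the co-derivations associated to $\ell_\bullet$ and $\ell_\bullet'$. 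The scaling identity says exactly $\mathrm{pr}\circ\Psi\circ Q' = \mathrm{pr}\circ Q\circ\Psi$; since both $\Psi\circ Q'$ and $Q\circ\Psi$ are $\Psi$-co-derivations agreeing on Taylor coefficients, they coincide, and therefore $\Psi\circ (Q')^{2} = Q^{2}\circ\Psi = 0$. Projecting to $\E$ yields $\chi\,J_n'(x_1,\ldots,x_n)=0$ for the higher Jacobiator of $\ell_\bullet'$ at every arity $n$.

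The hard part is then promoting $\chi J_n'=0$ to $J_n'=0$ when $\chi$ may be a zero divisor. I would run the whole construction over the polynomial extension $\tilde{\mathcal O}=\mathcal O[T]$: extend $\rho$ to $\tilde\rho\colon\tilde\E_{-1}\to\mathrm{Der}(\mathcal O[T])$ by declaring $\tilde\rho(x)[T]:=0$, base-change the brackets to $\tilde\E=\mathcal O[T]\otimes_{\mathcal O}\E$, and replace $\chi$ by $T+\chi\in\mathcal O[T]$ in the definition of the new brackets. Specialization at $T=0$ recovers the original $(\ell_\bullet',\rho',\pi')$. Applying the morphism argument in this extended setting yields $(T+\chi)\,\tilde J_n'(T)=0$ in $\tilde\E$. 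Since $T+\chi$ has unit leading coefficient, multiplication by it is injective on $\mathcal O[T]\otimes_{\mathcal O}\E$ by the standard polynomial-degree argument, whence $\tilde J_n'(T)=0$; specialization $T\to 0$ gives $J_n'=0$. The hook condition $\pi'(\E_{-1})=\chi\mathcal A$ is manifest, completing the proof.
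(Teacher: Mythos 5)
The paper's own ``proof'' of this proposition is literally \emph{``We leave it to the reader,''} so there is no argument to compare yours against; what you have written supplies the missing verification, and it is correct. Your route is more structural than the direct term-by-term expansion that the paper's style elsewhere suggests (compare the proof of Proposition \ref{prop-koszul}): the scaling identity $\ell_k(\chi x_1,\ldots,\chi x_k)=\chi\,\ell_k'(x_1,\ldots,x_k)$ is exactly the right observation, and encoding it as $\Psi\circ Q'=Q\circ\Psi$ for the rescaling co-morphism $\Psi$ packages all the higher Jacobi identities into the single relation $\chi\cdot\mathrm{pr}\circ(Q')^2=0$. The price of this elegance is that it only yields $\chi J_n'=0$, so the generic-element argument over $\mathcal O[T]$ is genuinely needed when $\chi$ is a zero divisor --- a case the proposition does not exclude --- whereas a direct expansion of $\sum_{i+j=n+1}[\ell_i',\ell_j']_{\hbox{\tiny{RN}}}$ would exhibit the cancellation identically and avoid the detour. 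Two steps of your polynomial extension deserve an explicit sentence each: first, that $\mathcal O[T]\otimes_{\mathcal O}\E$ with $\tilde\rho(x)[T]:=0$ is again a Lie $\infty$-algebroid, which follows because the Jacobiator of the extended brackets is $\mathcal O[T]$-multilinear (the same anchor-cancellation computation as in the proof of Theorem \ref{thm:existence}) and vanishes on $1\otimes\E$; second, that the Jacobiator of the $(T+\chi)$-deformed brackets, evaluated on $1\otimes\E$ and written in $\bigoplus_k T^k\E$, has constant term equal to $J_n'$, which holds because each $\tilde\ell_k'$ is congruent to $\ell_k'$ modulo $T$ and $\tilde\rho(\cdot)[T^k]=0$. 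With those two remarks made explicit, the argument is complete, and the remaining checks (Leibniz rule for $\ell_2'$ with respect to $\chi\rho$, the anchor and hook conditions) are carried out correctly.
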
	
\begin{proof}
We leave it to the reader.
 \end{proof}
 	\begin{prop}
 	If $\chi $ is not a zero-divisor in $\mathcal O$, and $(\E,\{\cdots\}_{k\geq 1},\rho,\pi)$ is a universal Lie $\infty$-algebroid of $\mathcal A$, then the Lie $\infty $-structure described in the four items of \ref{constructions} is the universal Lie $\infty $-algebroid of $\chi \mathcal A $. 
 \end{prop}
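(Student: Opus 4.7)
The plan is to invoke the preceding proposition (which has already built a Lie $\infty$-algebroid structure on $\E$ covering $\chi\mathcal{A}$ through $\pi' = \chi\pi$) and to verify only what is missing, namely acyclicity of the underlying complex, so that $(\E', \ell'_\bullet, \rho', \pi')$ becomes an acyclic Lie $\infty$-algebroid covering $\chi\mathcal{A}$. By definition, this is precisely what \textquotedblleft universal\textquotedblright\ means, and uniqueness up to homotopy will then follow for free from Corollary~\ref{cor:unique}.

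First I would observe that, since $\E' = \E$ and $\ell'_1 = \ell_1$, the portion of the augmented complex
\[
\cdots \xrightarrow{\ell_1} \E_{-3} \xrightarrow{\ell_1} \E_{-2} \xrightarrow{\ell_1} \E_{-1} \xrightarrow{\pi'} \chi\mathcal{A}
\]
in degrees $\leq -2$ coincides verbatim with the corresponding portion of the free resolution of $\mathcal{A}$ provided by the universal Lie $\infty$-algebroid $(\E, \ell_\bullet, \rho, \pi)$. Acyclicity in those degrees is therefore automatic. Surjectivity of $\pi' = \chi\pi$ onto $\chi\mathcal{A}$ is also immediate from surjectivity of $\pi$ onto $\mathcal{A}$, since every $\chi a = \chi \pi(x) = \pi'(x)$ for some $x \in \E_{-1}$.

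The one place where genuine work is required is exactness at $\E_{-1}$: one must show $\ker(\chi\pi) = \ell_1(\E_{-2})$. The inclusion $\ell_1(\E_{-2}) \subset \ker(\chi\pi)$ is obvious from $\pi \circ \ell_1 = 0$. For the reverse inclusion, given $x \in \E_{-1}$ with $\chi\pi(x) = 0$ in $\mathcal{A}$, the hypothesis that $\chi$ is not a zero-divisor is used to force $\pi(x) = 0$, after which $x \in \ker\pi = \ell_1(\E_{-2})$ follows from exactness of the original resolution. Here the hypothesis really has to be read as saying that multiplication by $\chi$ is injective on $\mathcal{A}$; this is implied by $\chi$ being a non-zero-divisor in $\mathcal{O}$ as soon as $\mathcal{A}$ is torsion-free over $\mathcal{O}$, a condition satisfied in every geometric situation of interest (in particular whenever $\mathcal{A}$ embeds into a free $\mathcal{O}$-module).

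Combining these steps, $(\E', \ell'_\bullet, \rho')$ is an acyclic Lie $\infty$-algebroid covering $\chi\mathcal{A}$ through $\pi'$, hence a universal Lie $\infty$-algebroid of $\chi\mathcal{A}$ in the sense of Section~\ref{sec:main}. The main obstacle, then, is not constructive but conceptual: isolating the precise form of the non-zero-divisor assumption that guarantees injectivity of multiplication by $\chi$ on $\mathcal{A}$ itself, rather than merely on $\mathcal{O}$. Once this is granted, the argument is a short diagram-level verification together with a direct appeal to the preceding proposition and to the uniqueness statement in Corollary~\ref{cor:unique}.
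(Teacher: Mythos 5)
Your proof is correct and follows essentially the same route as the paper's: both reduce the claim to checking that $\ker(\chi\pi)=\ker\pi=\ell_1(\E_{-2})$, so that $(\E,\ell_1,\chi\pi)$ is still a resolution, using injectivity of multiplication by $\chi$ on $\mathcal A$. The paper's own (one-line) proof even restates the hypothesis in exactly the form you isolate --- that $a\mapsto\chi a$ is an injective endomorphism of $\mathcal A$ --- so your observation about the gap between ``non-zero-divisor in $\mathcal O$'' and ``non-zero-divisor on $\mathcal A$'' is precisely the reading the paper adopts.
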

\begin{proof}
		If $\chi $ is not a zero-divisor in $\mathcal A $ (i.e. if $a \mapsto \chi a $ is an injective endomorphism of $\mathcal A $), then the kernel of $\pi' $ coincides with the kernel of $\pi $, i.e. with the image of $\{\cdot\}_1=\{\cdot\}_1$, so that $(\E,\ell_1,\chi\pi)$ is a resolution of $\chi\mathcal A$.
\end{proof}

\subsection{Algebra extension}\label{blow-up-algebra}

 Recall that for $\mathcal O $ a unital algebra with no zero divisor,  
derivations of $\mathcal O$ induce derivations of its field of fractions $\mathbb O $.

 \begin{prop}\label{prop:blwoup}
 Let $\mathcal O $ be an unital algebra with no zero divisor, $\mathbb O $ its field of fractions, and  $\tilde{\mathcal O} $ an algebra with $\mathcal O \subset \tilde{\mathcal O} \subset \mathbb O$.   
 For every Lie-Rinehart algebra $\mathcal A $ over $\mathcal O$ whose anchor map takes values in derivations of $\mathcal O$ preserving $\tilde{\mathcal O} $, then
 \begin{enumerate}
     \item  any Lie $ \infty$-algebroid structure $ (\E, (\ell_k)_{k \geq 1}, \rho, \pi)$ that terminates at $\mathcal A $ extends for all $i =0, \dots, n $ to a Lie $ \infty$-algebroid structure on $\tilde{\mathcal O} \otimes_{ {\mathcal O}} \mathcal E  $, 
     \item and this extension $\tilde{\mathcal O} \otimes_\mathcal O \mathcal E  $ is a Lie $\infty $-algebroid that terminates at the Lie-Rinehart algebra $\tilde{\mathcal O} \otimes_\mathcal O \mathcal A $.
 \end{enumerate}
 \end{prop}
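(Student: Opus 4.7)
The plan is to adapt the localization construction of Section \ref{ssec:loc} (Proposition \ref{prop:localisation}) to this slightly more general setting. First I would observe that any derivation $D \in \mathrm{Der}(\mathcal{O})$ extends uniquely to a derivation of the field of fractions $\mathbb{O}$ via the quotient rule $\hat{D}(a/b) = (D(a)b - a D(b))/b^2$. By hypothesis, for $x \in \E_{-1}$ the derivation $\rho(x)$ preserves $\tilde{\mathcal{O}}$, so its extension to $\mathbb{O}$ restricts to a derivation of $\tilde{\mathcal{O}}$, which I still denote $\rho(x)$. This gives a well-defined meaning to expressions of the form $\rho(x)[g]$ for $g \in \tilde{\mathcal{O}}$.

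Next I would define the candidate Lie $\infty$-algebroid $(\tilde{\E}_\bullet, \tilde{\ell}_\bullet, \tilde{\rho})$ on $\tilde{\E} := \tilde{\mathcal{O}} \otimes_\mathcal{O} \E$ as follows. For $k \neq 2$, the $\mathcal{O}$-multilinear bracket $\ell_k$ extends uniquely to a $\tilde{\mathcal{O}}$-multilinear bracket $\tilde{\ell}_k$ on $\tilde{\E}$ by extension of scalars. For $k = 2$, mimicking the localization formula, I set
\begin{equation*}
\tilde{\ell}_2(f \otimes x, g \otimes y) := fg \otimes \ell_2(x,y) + f\,\rho(x)[g] \otimes y - (-1)^{|x||y|}\, g\, \rho(y)[f] \otimes x,
\end{equation*}
with the convention $\rho \equiv 0$ on $\E_{\leq -2}$. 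The anchor is $\tilde{\rho}(f \otimes x) := f \rho(x)$, where again $\rho(x)$ is the extension of the original anchor to $\tilde{\mathcal{O}}$, and the hook is $\tilde{\pi} := \mathrm{id}_{\tilde{\mathcal{O}}} \otimes \pi$, landing in the Lie-Rinehart algebra $\tilde{\mathcal{O}} \otimes_\mathcal{O} \mathcal{A}$ over $\tilde{\mathcal{O}}$ whose bracket and anchor are defined by the analogous formula.

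The first checks (well-definedness of $\tilde{\ell}_2$ with respect to the tensor product relations $fh \otimes x = f \otimes hx$ for $h \in \mathcal{O}$, and the Leibniz identity for $\tilde{\ell}_2$ relative to $\tilde{\rho}$) are direct computations using the Leibniz rule for the extended derivations; they go through verbatim as in the localization case. The map $\tilde{\pi}$ is a morphism of brackets by construction, since $\pi$ is one and all formulas are built by extension of scalars.

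The main obstacle is verifying the higher Jacobi identities of Definition \ref{def:Linfty}. The cleanest way to handle them is to factor through the field of fractions: since $\mathcal{O}$ has no zero divisors and the $\mathcal{O}$-modules $\E_{-i}$ are projective (hence torsion-free), the canonical maps $\tilde{\mathcal{O}} \otimes_\mathcal{O} \E_{-i} \hookrightarrow \mathbb{O} \otimes_\mathcal{O} \E_{-i}$ are injective. Applying Proposition \ref{prop:localisation} to the multiplicative set $S = \mathcal{O} \setminus \{0\}$ equips $\mathbb{O} \otimes_\mathcal{O} \E$ with a Lie $\infty$-algebroid structure over $\mathbb{O}$, and the formulas above identify $\tilde{\ell}_\bullet$ and $\tilde{\rho}$ with the restriction to $\tilde{\E}$ of the corresponding structure on $\mathbb{O} \otimes_\mathcal{O} \E$. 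The hypothesis that $\rho(x)$ preserves $\tilde{\mathcal{O}}$ is exactly what is needed to ensure that $\tilde{\E}$ is stable under all brackets and the anchor inside $\mathbb{O} \otimes_\mathcal{O} \E$; consequently $\tilde{\E}$ is a Lie $\infty$-sub-algebroid and inherits all the higher Jacobi identities from $\mathbb{O} \otimes_\mathcal{O} \E$. The same embedding argument applied to $\mathcal{A}$ shows that $\tilde{\mathcal{O}} \otimes_\mathcal{O} \mathcal{A}$ is a Lie-Rinehart algebra over $\tilde{\mathcal{O}}$ and that $\tilde{\pi}$ terminates in it, establishing both items.
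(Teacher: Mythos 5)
Your proof is correct in substance, but it takes a genuinely different (and more careful) route than the paper. The paper's own proof is two sentences: the hook, the anchor and the brackets $\ell_k$ for $k\neq 2$ are $\mathcal O$-multilinear and so extend by scalars, and the $2$-ary bracket "is easily seen to allow an extension using the Leibniz identity"; the higher Jacobi identities are not addressed at all. You instead realize $\tilde{\mathcal O}\otimes_{\mathcal O}\E$ as a sub-object of the localization $\mathbb O\otimes_{\mathcal O}\E = S^{-1}\E$ with $S=\mathcal O\setminus\{0\}$, which already carries a Lie $\infty$-algebroid structure by Proposition \ref{prop:localisation}, and you use flatness of the projective modules $\E_{-i}$ to embed $\tilde{\mathcal O}\otimes_{\mathcal O}\E_{-i}\hookrightarrow\mathbb O\otimes_{\mathcal O}\E_{-i}$ (note it is flatness, not torsion-freeness, that gives this injectivity directly). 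The hypothesis that the anchor preserves $\tilde{\mathcal O}$ is then exactly the statement that $\tilde{\mathcal O}\otimes_{\mathcal O}\E$ is stable under all brackets, so every higher Jacobi identity is inherited by restriction with no computation. This is a real gain over the paper's argument: since $\ell_2$ is not $\mathcal O$-multilinear, the Jacobi identities for $n=2,3$ do not extend by scalars for free, and your embedding disposes of this point cleanly.

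One small slip: in your displayed formula for $\tilde\ell_2(f\otimes x, g\otimes y)$ the last term should carry the coefficient $+(-1)^{|x||y|}$, not $-(-1)^{|x||y|}$. Indeed, graded symmetry together with the Leibniz identity forces
\begin{equation*}
\ell_2(fx,gy)=fg\,\ell_2(x,y)+f\,\rho(x)[g]\,y+(-1)^{|x||y|}g\,\rho(y)[f]\,x ,
\end{equation*}
and for $|x|=|y|=-1$ this sign is $-1$, whereas your formula gives $+1$. This does not damage the argument, because the bracket you actually use is the restriction of the localized one (equivalently, the unique graded-symmetric extension satisfying the Leibniz rule), but the displayed formula as written does not coincide with that restriction.
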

 \begin{proof}
Since they are $ \mathcal O$-linear, the  hook $\pi$, the anchor $ \rho$, and the brackets $\ell_k $  for $k \neq 2 $ are extended to $\tilde{\mathbb O} $-linear maps. Since the image of $ \rho$  is the image of $\rho_\mathcal A $,  it is made of derivations preserving $\mathbb O $, which is easily seen to allow an extension of $\ell_2 $ to $ \tilde{\mathcal O} \otimes_\mathcal O \mathcal E $ using the Leibniz identity. 
 \end{proof}

\begin{remark}
 Of course, the Lie $\infty $-algebroid structure obtained on $ \tilde{\mathcal O} \otimes_\mathcal O \mathcal E $ is not in general the universal Lie $\infty $-algebroid of $ \tilde{\mathcal O} \otimes_\mathcal O \mathcal A $, because the complex $ (\tilde{\mathcal O} \otimes_\mathcal O \mathcal E , \ell_1, \pi)$
 may not be a resolution of $ \tilde{\mathcal O} \otimes_\mathcal O \mathcal A $ (see  Example \ref{ex:counter}). 
\end{remark}

\begin{remark}
\normalfont
Since any module over a field is projective, any Lie-Rinehart algebra over a field is a Lie algebroid. If we choose $\tilde{\mathcal O}  = \mathbb O $ therefore,
the Lie-Rinehart algebra  $ {\mathbb O} \otimes_\mathcal O \mathcal A $  is a Lie algebroid, so is homotopy equivalent to any of its universal Lie $\infty $-algebroid. 
 Unless $\mathcal A $ is a Lie algebroid itself, the Lie $\infty $-algebroid in Proposition \ref{prop:blwoup} will not be homotopy equivalent to a Lie $\infty $-algebroid whose underlying complex is of length one, and is therefore not a universal Lie $ \infty$-algebroid of $ {\mathbb O} \otimes_\mathcal O \mathcal A $.
 \end{remark}

\subsection{Blow-up}
Let us investigate the behavior of the universal Lie $\infty$-algebroids under blow-up.\\

\noindent
We recall that the blowup of $\mathbb C^{N+1}$ at the origin is given by, $B_0 (\mathbb C^{N+1} ) =\{(x,\ell)\in\mathbb C^{N+1} \times \mathbb P^{N} \mid x \in\ell\}$
together with the map
\begin{align*}
\pi\colon B_0(\mathbb C^{N+1})&\longrightarrow \mathbb C^{N+1}\\(x,\ell)&\mapsto x. 
\end{align*}
 

For $\mathcal O=\mathbb{C}[z_0,\ldots,z_N]$ the coordinate ring of $\mathbb{C}^{N+1}$, the blow-up of $\mathbb{C}^{N+1}$ at the origin is covered by affine charts: in the $i$-th affine chart $U_i$, the coordinate ring is $$\mathcal O_{U_i}=\mathbb{C}[z_0/z_{i}, \dots, z_i , \dots, z_{N}/z_{i} ].$$ 
 By Remark \ref{loc:res}, 
and Proposition \ref{prop:blwoup} 
for any
 Lie-Rinehart algebra $\mathcal A $  whose anchor map takes values in vector fields vanishing at $0 \in \mathbb C^{N+1}$,
 we obtain a Lie $\infty $-algebroid of $\mathcal O_{U_i} \otimes_\mathcal O \mathcal A $ that we call \emph{blow-up of at $0$ in the chart $ U_i$}. Proposition \ref{prop:blwoup} then says that the blow-up at $0$ of the universal Lie $\infty $-algebroid of $\mathcal A $, in each chart, is a Lie $\infty $-algebroid that terminates in the blow-up of $\mathcal A $  (as defined in remark \ref{loc:res}). It may not be the universal one, see Example \ref{ex:counter}.

\begin{example}[Universal Lie-$ \infty$-algebroids and blow-up: a counter example]\label{ex:counter}
Consider
the polynomial function in $N+1$ variables $\varphi=\sum_{i=0}^{N}z_i^3$. 
Let us consider  the singular foliation $\mathfrak F_\varphi\subset{\mathfrak{X}(\mathbb{C}^N)}$  as in Example \ref{koszul}. Its generators are $\Delta_{ij}:= z_i^2 \tfrac{\partial}{\partial z_j}-z_j^2 \tfrac{\partial}{\partial z_i} $, for $0\leq i < j\leq N$. 

Let us consider its blow-up in the chart $U_N$.
Geometrically speaking, $\widetilde{\mathfrak F_\varphi}= \mathcal O_{U_N} \otimes_{\mathcal O}\mathfrak F_\varphi$ is the $\mathcal{O}_{ U_N}$-module generated  by the blown-up vector fields $\widetilde{\Delta }_{ij}=z_N\left( z^2_i\frac{\partial}{\partial z_j}-z_j^2\frac{\partial}{\partial z_i}\right),\,j\neq N$, and $\widetilde{\Delta}_{iN}=z_N\left( z_Nz^2_i\frac{\partial}{\partial z_N}-\frac{\partial}{\partial z_i} -z^2_i\sum_{j=0}^{N-1}z_j\frac{\partial}{\partial z_j}\right)$ of the vector fields $\Delta_{ij}$, for $i,j\in\{0,\ldots,N\}$, w.r.t affine chart $U_N$. 
The vector fields $\widetilde{\Delta}_{ij},\,j\neq N$, belong to the $\mathcal{O}_{ U_N}$-module generated by the vector fields $\widetilde{\Delta}_{iN}$, (explicitly $\widetilde{\Delta}_{ij} =z_j^2 \widetilde{\Delta}_{iN} - z_i^2  \widetilde{\Delta}_{jN}$). Said differently, the vector fields $\widetilde{\Delta}_{iN},\;i=0,\ldots N-1$  are generators of $\widetilde{\mathfrak F_\varphi}$.
Since they are independent, the singular foliation $\widetilde{\mathfrak F_\varphi}$ is a free $\mathcal O_{ U_N} $-module. A universal Lie $\infty $-algebroid for it is therefore concentrated in degree $-1$ and is given by the $\mathcal{O}_{U_N}$-module generated by some set $\left( e_i,\,i=0\ldots N-1\right) $, equipped with the Lie bracket:\begin{equation}
\left[e_i,e_j \right]:= 2z_N\left(z^2_ie_j-z^2_je_i \right)
\end{equation}  together with the anchor map  $\rho$ which assigns $e_i$ to $\widetilde{\Delta}_{iN}$, for $i=1\ldots N-1$. 

On the other hand, the blow-up of the universal Lie $\infty $-algebroid of $\mathfrak F_\varphi $ is not homotopy equivalent to a Lie algebroid. Let us show this point. In this case, the Lie $\infty $-algebroid is given in Section  \ref{koszul}: notice that $ E_{-i} \neq 0$ for $i=1, \dots, N $ and that $\ell_1|_0 = 0$. The pull-back Lie $\infty $-algebroid $(\tilde{E}, (\tilde{\ell}_k)_{k \geq 1}, \tilde{\rho}, \tilde{\pi}) $ verifies by construction that  $ \tilde{E}_{-i} \neq 0$ for $i=1, \dots, N $ and that $\tilde{\ell}_1|_x=0 $ for every $x$ in the inverse image of zero, and such a complex can not be homotopy equivalent to a complex of length $1$.
In other words,  $\mathcal O_{U_i} \otimes_\mathcal O \cdot$ is not an exact functor in this case (which is a classical fact in algebraic geometry).\\

This example tells us that the blow-up of the universal Lie $\infty $-algebroid of an affine variety $W$ may not be the universal Lie $\infty$-algebroid of its blow-up, (even locally).

\end{example}

\subsection{Universal Lie $\infty$-algebroids  of some singular foliations}Singular foliation is defined in Section \ref{sec:Alg-Geo-ex}.
The coming example uses the notion "multi-derivation", it is useful to recall the definition and write down some basic operations on them. We refer the reader to Section 3.1 of \cite{CPA} for more details on this notion.\\

\begin{definition}
   A skew symmetric $k$-multilinear map $P\in \mathrm{Hom}_\mathbb{K}(\mathcal O^k, \mathcal{O})$ is a said to a \emph{$k$-multi-derivation of $\mathcal{O}$} if $P$ is a derivation in each of its argument, i.e., for every $i\in \{1,\ldots,k\}$ and for all $f,g\in\mathcal{O}$ we have 
   
   \begin{equation}\label{eq:multi-der}
       P[f_1, \ldots,\underbrace{fg}_{i-th\; \text{slot}}\ldots, f_k]=P[f_1, \ldots,\underbrace{f}_{i-th\; \text{slot}}\ldots, f_k]\,g+ f\,P[f_1, \ldots,\underbrace{g}_{i-th\; \text{slot}}\ldots, f_k]
   \end{equation}

   When $\mathcal{O}$ happen to be the coordinate ring of some affine variety $W$, $k$-multi-derivations of $\mathcal{O}$ are called \emph{$k$-multi-vector fields on $W$}, and denote by $\mathfrak{X}^k(W)$ the module of $k$-multi-vector fields on $W$. In general, it is denoted by $\mathfrak{X}^k(\mathcal O)$.  
\end{definition}
\begin{remark}
By skew-symmetry argument, it is enough that the relation \eqref{eq:multi-der} holds for the first slot, i.e.  for $i=1$.
\end{remark}
\begin{lemma}
The graded module $\mathfrak{X}^\bullet(\mathcal{O}):=\bigoplus_{k\geq 0}\mathfrak{X}^k(\mathcal{O})$ comes equipped with graded algebra structure whose product $\wedge$ is defined as follows,

$$(P\wedge R)[f_1,\ldots,f_{k+l}]:=
\sum_{\sigma \in\mathfrak S(k,l)}\epsilon(\sigma)P[f_{\sigma(1)},\ldots,f_{\sigma(k)}]R[f_{\sigma(k+1)},\ldots,f_{\sigma(k+l)}],$$
for $P\in\mathfrak{X}^k(\mathcal{O}),\;R\in \mathfrak{X}^l(\mathcal{O})$ and $f_1, \ldots, f_{k+l}\in \mathcal{O}$. By convention $\mathfrak{X}^0(\mathcal{O})=\mathcal{O}$. Also, $f\wedge X:=fX$ for all $f\in \mathcal O.$
\end{lemma}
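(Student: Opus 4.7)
The plan is to verify four things in order: skew-symmetry of $P\wedge R$, its multilinearity, the derivation property in each argument (which, by skew-symmetry, reduces to the first slot), and finally associativity. The statement that $\wedge$ is a product on $\mathfrak X^\bullet(\mathcal O)$ encapsulates all of these; graded commutativity $P\wedge R=(-1)^{kl}R\wedge P$ is a natural by-product that I would also record.

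First, I would observe that skew-symmetry and $\mathbb K$-multilinearity are formal consequences of the definition: the sum runs over $(k,l)$-shuffles with Koszul signs, so transposing two arguments $f_i\leftrightarrow f_{i+1}$ induces an involution on $\mathfrak S(k,l)$ (or sends a shuffle to one obtained by reindexing) whose net effect is multiplication by $-1$; multilinearity is immediate since each term is multilinear. This step is short and routine.

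The core of the proof is the derivation property. By skew-symmetry it suffices to check \eqref{eq:multi-der} for $i=1$. Given $f,g,f_2,\dots,f_{k+l}\in\mathcal O$, I would split the shuffles $\sigma\in\mathfrak S(k,l)$ into two disjoint classes according to whether $\sigma^{-1}(1)\leq k$ (so $f_1=fg$ enters $P$) or $\sigma^{-1}(1)\geq k+1$ (so $f_1=fg$ enters $R$); because $\sigma$ is a shuffle, in both cases $\sigma^{-1}(1)=1$ on the relevant block, so one can use the derivation property of $P$ or $R$ in its \emph{own} first slot to write $P[fg,\dots]=P[f,\dots]g+fP[g,\dots]$, and similarly for $R$. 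The remaining combinatorial task is to recognize that after expanding, the terms group exactly into the two shuffle sums defining $(P\wedge R)[f,f_2,\dots]\,g$ and $f\,(P\wedge R)[g,f_2,\dots]$; this uses the canonical bijection between $(k,l)$-shuffles of $\{1,\dots,k+l\}$ fixing $1$ in either the first or the second block, and $(k,l)$-shuffles of $\{1,\dots,k+l\}$ (with the signs matching because no inversions are created). This bookkeeping is the main obstacle, but it is the standard computation that shows the exterior algebra of a derivation-module is closed under wedge; I do not expect any conceptual difficulty, only care with the signs.

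Associativity $(P\wedge R)\wedge S=P\wedge(R\wedge S)$ follows from the identity
\[
\sum_{\sigma\in\mathfrak S(k+l,\,m)}\sum_{\tau\in\mathfrak S(k,l)}\epsilon(\sigma)\epsilon(\tau)=\sum_{\sigma\in\mathfrak S(k,l,m)}\epsilon(\sigma)
\]
(and symmetrically on the right-hand side), which is the classical decomposition of a $(k,l,m)$-shuffle into a $(k,l)$-shuffle followed by a $(k+l,m)$-shuffle; both expressions then evaluate to the same sum over $\mathfrak S(k,l,m)$. Graded commutativity follows from the bijection $\mathfrak S(k,l)\to\mathfrak S(l,k)$ sending $\sigma$ to the shuffle obtained by swapping the two blocks, which contributes exactly $kl$ inversions. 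Finally, I would note that $\mathcal O=\mathfrak X^0(\mathcal O)$ is the unit, so $(\mathfrak X^\bullet(\mathcal O),\wedge)$ is indeed a unital graded commutative algebra, as claimed.
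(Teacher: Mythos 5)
The paper states this lemma without any proof at all: it is treated as a classical fact (the shuffle product on skew multi-derivations), and the only place where its combinatorics surface explicitly is in the proof of the \emph{next} lemma, where the author invokes the observation that every $\sigma\in\mathfrak S(k,l)$ satisfies either $\sigma(1)=1$ or $\sigma(k+1)=1$ --- exactly the splitting you use for the derivation property. So there is nothing to compare your argument against; judged on its own terms, your proof is correct and covers the right checklist. Two small points worth tightening if you were to write it out in full. First, for skew-symmetry the transposition $f_i\leftrightarrow f_{i+1}$ does not act on $\mathfrak S(k,l)$ by a single clean involution: you need the case split (both indices in the $P$-block or both in the $R$-block, where the sign comes from skew-symmetry of that factor and the shuffle is unchanged; one index in each block, where two shuffles pair up with opposite Koszul signs). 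Your phrasing ``or sends a shuffle to one obtained by reindexing'' gestures at this but glosses over it. Second, in the Leibniz computation the regrouping works because the partition of $\mathfrak S(k,l)$ into the two classes depends only on $\sigma$ and not on the entry $f_1=fg$, and because $\mathcal O$ is commutative so the factor $g$ can be pulled out of each product; both facts are used silently. Your associativity and graded-commutativity arguments via the shuffle decomposition $\mathfrak S(k,l,m)\cong\mathfrak S(k,l)\times\mathfrak S(k+l,m)$ and the block swap are standard and fine, and noting that $1\in\mathcal O=\mathfrak X^0(\mathcal O)$ is the unit is a reasonable addition even though the lemma does not ask for it.
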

\begin{lemma}
For every $f\in\mathcal{O}$, the contraction by $f$, $\iota_f\colon \mathfrak{X}^k(\mathcal{O})\rightarrow \mathfrak{X}^{k-1}(\mathcal{O}), \forall\; k\geq 1$,   which is defined as follows\footnote{It should be understood that $\iota_f(\mathcal{O}):=0$.}\begin{align}
    P\mapsto\iota_f(P)[f_1,\ldots,f_{k-1}]:= P[f,f_1,\ldots,f_{k-1}],\quad \text{for all }\;\; f_1,\ldots,f_{k-1}\in \mathcal{O}
\end{align}
satisfies 
\begin{itemize}
    \item $\iota_f\circ \iota_f=0$,
    \item for all $P\in\mathfrak{X}^k(\mathcal{O})$ and $R\in\mathfrak{X}^l(\mathcal{O})$, \begin{equation}
        \iota_f(P\wedge R)= \iota_f(P)\wedge R +(-1)^k P\wedge \iota_f(R).
    \end{equation}
    \end{itemize}
\end{lemma}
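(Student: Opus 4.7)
The plan is to establish both identities by unfolding the definitions of $\iota_f$ and $\wedge$, and in the second case performing a combinatorial split of the shuffle sum.

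For the first identity $\iota_f\circ\iota_f=0$, I would proceed directly. For $P\in\mathfrak{X}^k(\mathcal{O})$ with $k\geq 2$, one has
\[
(\iota_f\circ\iota_f)(P)[f_1,\ldots,f_{k-2}]=\iota_f(P)[f,f_1,\ldots,f_{k-2}]=P[f,f,f_1,\ldots,f_{k-2}],
\]
which vanishes by skew-symmetry of $P$ in its first two slots. For $k\leq 1$ the result is zero by convention. This is immediate and will not be the hard part.

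For the graded Leibniz rule, I would expand both sides from the definition of $\wedge$ and compare. Set $g_0:=f$ and $g_i:=f_i$ for $i=1,\ldots,k+l-1$; then
\[
\iota_f(P\wedge R)[f_1,\ldots,f_{k+l-1}]=(P\wedge R)[g_0,g_1,\ldots,g_{k+l-1}]=\sum_{\sigma\in\mathfrak{S}(k,l)}\epsilon(\sigma)\,P[g_{\sigma(1)},\ldots,g_{\sigma(k)}]\,R[g_{\sigma(k+1)},\ldots,g_{\sigma(k+l)}],
\]
where all the $g_i$ have degree $0$, so $\epsilon(\sigma)$ reduces to the ordinary sign. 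The key combinatorial step is to split this sum according to whether the position of $g_0$ lies in the first block $\{\sigma(1),\ldots,\sigma(k)\}$ or the second block $\{\sigma(k+1),\ldots,\sigma(k+l)\}$; since shuffles preserve order within each block and $0$ is the smallest index, $g_0$ must occupy the first slot of its block.

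In the first case one has $\sigma(1)=0$, and $\sigma$ restricted to $\{2,\ldots,k+l\}$ yields a $(k-1,l)$-shuffle $\tau$ of $\{1,\ldots,k+l-1\}$ with $\epsilon(\sigma)=\epsilon(\tau)$. Summing these contributions exactly reproduces $(\iota_f(P)\wedge R)[f_1,\ldots,f_{k+l-1}]$. In the second case $\sigma(k+1)=0$, and restricting gives a $(k,l-1)$-shuffle $\tau$ of $\{1,\ldots,k+l-1\}$; moving $g_0$ from position $k+1$ to the front of the second block costs nothing, but recognising the resulting expression as $(P\wedge\iota_f(R))[f_1,\ldots,f_{k+l-1}]$ requires comparing with the $(k,l-1)$-shuffle expansion, in which $f$ is evaluated first in $R$ rather than in the full wedge. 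The sign $(-1)^k$ appears because $\sigma$ with $\sigma(k+1)=0$ differs from a permutation placing $0$ in position $1$ (and then shuffling the rest as a $(k,l-1)$-shuffle) by a cyclic shift across the $k$ entries of the first block. Putting the two cases together yields the graded Leibniz rule.

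The main obstacle is the sign bookkeeping in the second case: verifying that the cost of moving $g_0$ past the $k$ arguments of $P$ is exactly $(-1)^k$, and that the remaining sum is genuinely the $(k,l-1)$-shuffle expansion of $P\wedge\iota_f(R)$. Once this is checked on a small example (say $k=l=1$, where both sides reduce to $P[f]R[f_1]-R[f]P[f_1]$), the general case follows by the same accounting.
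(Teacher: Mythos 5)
Your proof is correct and follows essentially the same route as the paper: the first identity by skew-symmetry in the repeated argument $f$, and the Leibniz rule by splitting the $(k,l)$-shuffle sum according to whether $f$ lands in the $P$-block or the $R$-block (forced to be the first slot of that block by order preservation), with the $(-1)^k$ accounted for by the $k$ inversions of $f$ with the first block. No gaps.
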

\begin{proof}
The first item is true by skew-symmetry of multi-derivations. Now let us prove the second item. For $f_{1},\ldots ,f_{k+l-1}\in \mathcal{O}$, we have \begin{align*}
\iota_{f}(P \wedge R )[f_{1},\ldots ,f_{k+l-1}]&=(P \wedge R) [f,f_{1},\cdots ,f_{k+l-1}]\\&=\sum _{\sigma\in \mathfrak S(k-1,l)}\epsilon(\sigma)P [f,f_{\sigma(1)},\ldots ,f_{\sigma(k-1})]R[f_{\sigma(k)},\ldots ,f_{\sigma(k+l-1)}]+\\&\quad(-1)^k\sum _{\sigma\in \mathfrak S(k,l-1)}\epsilon(\sigma)\underbrace{P[f_{\sigma(1)},\ldots ,f_{\sigma(k)}]R[f,f_{\sigma(k+1)},\ldots ,f_{\sigma(k+l-1)})}_{\text{here, the inversion number with}\;f\;\text{is $k$}}\\&=\iota_f(P)\wedge R +(-1)^k P\wedge \iota_f(R).\end{align*}
We have used the fact that for every $\sigma\in \mathfrak S(k,l)$, one has that either $\sigma(1) = 1$ or $\sigma(k+1) = 1$.
\end{proof}

\begin{remark}Notice the following:
\begin{itemize}
\item For all $f\in\mathcal{O}$ and $X\in \mathrm{Der}(\mathcal
O)$, one has $\iota_f(X)=X[f]$.
    \item For any two derivation $X,Y\in\mathrm{Der}(\mathcal O)$ we have $$(X\wedge Y)[f,g]=X[f] Y[g]- Y[g]X[f].$$ \item For a family of derivations $X_1, \ldots X_k\in \mathrm{Der}(\mathcal O)$, the $k$-multi-derivation $X_1\wedge\cdots\wedge X_k$ applied to $f_1, \cdots, f_k\in \mathcal
    O$ is equal to  the determinant  $$ \begin{vmatrix} X_1[f_1]& \cdots& X_1[f_k]\\ \vdots && \vdots\\X_k[f_1] & \cdots &X_k[f_k]\end{vmatrix}.$$
    
    \item When $\mathcal{O}$ is the algebra of polynomial functions in $d$ variables $\mathbb K[x_1, \ldots,x_d]$, a $k$-multi-derivation $P$  admits the coordinate expression
$$P =\sum_{1\leq i_1<\cdots<i_k\leq d} P[x_{i_1},\ldots,x_{i_k}]\frac{\partial}{\partial x_{i_1}}\wedge \cdots \wedge \frac{\partial}{\partial x_{i_k}}.$$

\end{itemize}
\end{remark}

\subsubsection{Lie derivative}
For $P\in \mathfrak{X}^k(\mathcal{O})$ and $X\in\mathrm{Der}(\mathcal O)$, the \emph{Lie derivative} $\mathcal{L}_XP\in \mathfrak{X}^k(\mathcal O)$ of $P$ along $X$ is defined as \begin{equation}\label{formula:Lie-der}(\mathcal{L}_X P)[f_1, \cdots , f_k]:=X\left[P[f_1, \cdots, f_k]\right]-\sum_{j=1}^kP[f_1, \ldots, X [f_j],\ldots, f_k].\end{equation}

There is another important operation on multi-derivations, the so-called "Schouten bracket" which is a generalization of the commutator of derivations, also Lie derivative of multi-derivation along a vector field. We will not recall how it is defined here, since we do not really  make use of it. We refer the reader to, e.g \cite{CPA} for this notion.
\subsection{Vector fields annihilating a Koszul function $\varphi$ }\label{koszul}

This universal Lie $\infty $-algebroid was already described in Section 3.7 of \cite{LLS}, where the brackets were simply checked to satisfy the higher Jacobi identities - with many computations left to the reader. Here, we give a theoretical explanation of the construction presented in \cite{LLS}.

Let $\mathcal O$ be the algebra of all polynomials on $V:=\mathbb{C}^{d}$. A function $\varphi \in \mathcal O$ 
is said to be a \emph{Koszul polynomial}, if the \emph{Koszul complex} 
\begin{equation}
    \label{eq:KoszulComplex}
\ldots\xrightarrow{\iota_{\varphi}}\X^3 (V) \xrightarrow{\iota_{\varphi}}\X^2 (V)\xrightarrow{\iota_{\varphi}}\X(V) \xrightarrow{\iota_{\varphi}}\mathcal O \longrightarrow 0\end{equation}
is exact in all degree, except in degree $0$.
By virtue of a theorem of Koszul \cite{zbMATH00704831}, see \cite{Matsumura} Theorem 16.5 $(i)$, $\varphi$ is \emph{Koszul} if $\left(\frac{\partial\varphi}{\partial x_1} ,\cdots,\frac{\partial\varphi}{\partial x_d}\right) $ is a regular sequence. 

From now on, we choose $\varphi $ a Koszul function, and consider the Lie-Rinehart algebra (which is a singular foliation) \begin{equation}\label{varphi}
\mathfrak{F}_\varphi:=\{X\in\mathfrak{X}(V):X[\varphi]=0\} = {\mathrm{Ker}}( \iota_{\varphi} )\colon \X(V) \xrightarrow{\iota_{\varphi}}\mathcal O 
.\end{equation} 
 The Koszul complex \eqref{eq:KoszulComplex} truncated of its degree $0$ term  gives  a free resolution $(\E,\dif,\rho)$ of $\mathfrak{F}_\varphi$, with $\E_{-i}:=\X^{i+1}(V)$, $\dd:=\iota_{\varphi}$, and $\rho:=-\iota_{\varphi}$.
 
\begin{remark}
Exactness of the Koszul complex implies in particular that $\mathfrak{F}_\varphi$ is generated by the vector fields
: \begin{equation}\left\lbrace  \frac{\partial\varphi}{\partial x_i}\frac{\partial}{\partial x_j}-\frac{\partial\varphi}{\partial x_j}\frac{\partial\varphi}{\partial x_i},\mid 1\leq i<j\leq d\right\rbrace .
\end{equation}
\end{remark} 

In \cite{LLS}, this resolution is equipped with a Lie $\infty $-algebroid structure, whose brackets we now recall. 
 
\begin{prop}\label{prop-koszul}
A universal Lie $\infty $-algebroid of $\mathfrak{F}_\varphi\subset\mathfrak{X}(V)$ is given on the free resolution $\left(\E_{-\bullet} = \X^{\bullet+1}(V),\text{\emph{d}}=\iota_{\varphi},\rho= -\iota_{\varphi} \right)$ 
by defining the following $n$-ary brackets:
\begin{equation}
\label{eq:nary}
\left\lbrace\partial_{I_{1}},\cdots, \partial_{I_{n}}\right\rbrace_{n}:=\sum_{i_{1}\in I_{1},\ldots,i_{n}\in I_{n}} \epsilon(i_{1},\ldots,i_{n})\varphi_{i_{1}\cdots i_{n}}\partial_{I_{1}^{i_{1}}\bullet\cdots\bullet I_{n}^{i_{n}}};
\end{equation}and the anchor map given for all $ i,j \in \{1, \dots,n\}$ by\begin{equation}
\rho\left(\frac{\partial}{\partial x_i}\wedge\frac{\partial}{\partial x_j}\right) :=\frac{\partial\varphi}{\partial x_j}\frac{\partial}{\partial x_i}-\frac{\partial\varphi}{\partial x_i}\frac{\partial}{\partial x_j}.
\end{equation}Above, for every multi-index $J=\left\lbrace j_1,\ldots ,j_n\right\rbrace\subseteq\left\lbrace 1,\ldots,d\right\rbrace$ of length $n$, $\partial_J$ stands for the $n$-vector field $\frac{\partial}{\partial x_{j_1}}\wedge\cdots\wedge\frac{\partial}{\partial x_{j_n}}$ and $\varphi_{j_{1}\cdots j_{n}}:=\frac{\partial^{n}\varphi}{\partial x_{j_1}\cdots\partial x_{j_n}}$ . Also, $I_{1}\bullet\cdots\bullet I_{n}$ is a multi-index obtained by concatenation of $n$ multi-indices $I_{1},\ldots,I_{n}$. For every $i_1\in I_1,\ldots,i_n\in I_n$,\;$\epsilon(i_1,\ldots,i_n)$ is the signature of the permutation which brings $i_1,\ldots,i_n$ to the first $n$ slots of $I_{1}\bullet\cdots\bullet I_{n}$. Last, for $i_s\in I_s$, we define $I_{s}^{i_s}:=I_s\backslash i_s$.\end{prop}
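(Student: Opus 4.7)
The plan is to verify the defining axioms of a Lie $\infty$-algebroid structure given in Definition \ref{def:Linfty}, then invoke Theorem \ref{th:universal} together with the acyclicity of the Koszul resolution to conclude universality.

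First I would check the structural axioms. Since the coefficients $\varphi_{i_1\cdots i_n}=\tfrac{\partial^n\varphi}{\partial x_{i_1}\cdots\partial x_{i_n}}$ depend only on the base point and not on the multi-vector arguments, formula \eqref{eq:nary} extends by $\mathcal O$-multilinearity from monomials $\partial_{I_k}$ to arbitrary elements of $\mathfrak X^{|I_k|}(V)$ whenever $n\neq 2$. For $n=2$, restricting to $\mathcal E_{-1}=\mathfrak X^2(V)$ in the first slot, a direct expansion of $\{\partial_i\wedge\partial_j,\,f\partial_K\}_2$ produces $f\{\partial_i\wedge\partial_j,\,\partial_K\}_2+(\varphi_j\partial_i[f]-\varphi_i\partial_j[f])\,\partial_K$, which is exactly the Leibniz rule for the anchor $\rho(\partial_i\wedge\partial_j)=\varphi_j\partial_i-\varphi_i\partial_j$. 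The condition $\rho\circ\ell_1=0$ on $\mathcal E_{-2}$ follows from $\iota_\varphi\circ\iota_\varphi=0$, and the anchor morphism property $\rho(\ell_2(x,y))=[\rho(x),\rho(y)]$ for $x,y\in\mathcal E_{-1}$ is a direct computation using the chain rule on the coefficients $\varphi_i$.

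Second, I would package the proposed $n$-ary brackets into a single degree $+1$ co-derivation $Q$ on $S^\bullet_{\mathbb K}(\mathcal E)$ by means of Proposition \ref{co-diff}. The higher Jacobi identity is equivalent to $Q^2=0$, i.e.\ to $\sum_{i+j=n+1}[\ell_i,\ell_j]_{\mathrm{RN}}=0$ for all $n\geq 1$. Rewriting \eqref{eq:nary} in the compact form
\[
\ell_n(P_1,\ldots,P_n)=\sum_{k_1,\ldots,k_n}\varphi_{k_1\cdots k_n}\,\iota_{k_1}(P_1)\wedge\cdots\wedge\iota_{k_n}(P_n),
\]
where $\iota_k$ is the degree $-1$ graded derivation of $\mathfrak X^\bullet(V)$ removing the factor $\partial_k$ from a monomial, each Richardson--Nijenhuis bracket $[\ell_i,\ell_j]_{\mathrm{RN}}$ becomes a sum of terms of the form $\varphi_{k_1\cdots k_i}\cdot\varphi_{l_1\cdots l_j}$ times wedges of iterated $\iota$-contractions. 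The total symmetry of $\varphi_{k_1\cdots k_n}$ in its indices, combined with the graded commutativity of the $\iota_k$, pairs up the summands into cancelling contributions.

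Third, universality is then immediate. By Koszul's theorem, the regularity of $(\varphi_1,\ldots,\varphi_d)$ makes the truncated Koszul complex a free resolution of $\mathfrak F_\varphi=\ker\iota_\varphi$, so $(\mathcal E,\ell_1,-\iota_\varphi)$ is acyclic in degrees $\leq -2$ and covers $\mathfrak F_\varphi$ through the hook $-\iota_\varphi$; Corollary \ref{cor:unique} then identifies the resulting Lie $\infty$-algebroid as \emph{the} universal one, up to homotopy. The main obstacle is the combinatorial verification in the second step: organizing the many terms in $[Q,Q]_{\mathrm{RN}}$ so that the cancellation is transparent. The cleanest route is likely to introduce formal even variables $t_k$ dual to the odd contractions $\iota_k$ and to recognize $Q$ as encoding the Taylor expansion of $\varphi(x+t)$, so that $Q^2=0$ reduces to the associativity of Taylor expansion $\varphi((x+s)+t)=\varphi(x+(s+t))$ interpreted through the shuffle structure of $[\cdot,\cdot]_{\mathrm{RN}}$.
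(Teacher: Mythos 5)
Your steps 1 and 3 are fine: the Leibniz extension, $\rho\circ\ell_1=-\iota_{\varphi}\circ\iota_{\varphi}=0$, the anchor-morphism check, and the appeal to exactness of the truncated Koszul complex plus Theorem \ref{th:universal} to get universality are all as in the paper. The gap is in your step 2, which is the heart of the matter and which you yourself flag as "the main obstacle". The cancellation in $\sum_{i+j=n+1}[\ell_i,\ell_j]_{\hbox{\tiny{RN}}}$ is \emph{not} produced by the symmetry of $\varphi_{k_1\cdots k_n}$ together with graded commutativity of the contractions: if all brackets were $\mathcal O$-multilinear and given by your compact formula, the higher Jacobi identities would hold essentially for free (brackets of generators land in $\mathcal O$, and every bracket kills arguments in $\mathcal O$), with no pairing of summands needed. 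The real difficulty is that $\ell_2$ is \emph{not} $\mathcal O$-bilinear when a degree $-1$ argument (a bivector field) is present: the inner bracket $\ell_j(\partial_{I_2},\ldots,\partial_{I_{j+1}})$ has the non-constant coefficients $\varphi_{i_2\cdots i_{j+1}}$, so applying the outer $\ell_2$ with a bivector argument produces anchor terms $\rho(\partial_i\wedge\partial_j)[\varphi_{i_2\cdots i_{n}}]\,\partial_{I_2^{i_2}\bullet\cdots\bullet I_n^{i_n}}$ that your $\mathcal O$-multilinear expression simply does not see, and the Taylor-expansion heuristic $\varphi((x+s)+t)=\varphi(x+(s+t))$ does not account for them either.

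The paper's proof closes exactly this hole by a different device: it first defines an auxiliary family of genuinely $\mathcal O$-multilinear graded symmetric poly-derivations $\{\cdots\}_k'$ on all of $\X^\bullet(V)$ with $\{\partial_{i_1},\ldots,\partial_{i_k}\}_k'=\varphi_{i_1\cdots i_k}$ (Lemma \ref{lem:Poisson}), for which the higher Jacobi identities hold trivially, and then compares $\{\cdots\}'$ with the actual brackets $\{\cdots\}$. The two families agree except in the terms where the unary bracket $\{\cdot\}_1'=\iota_{\varphi}$ or the binary bracket is applied to a bivector field; those exceptional terms are computed explicitly and, using $\rho=-\iota_{\varphi}$, are shown to add up to precisely the anchor contributions required by the higher Jacobi identity for the new brackets. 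If you want to salvage your direct RN-bracket computation, you would have to isolate and cancel those anchor terms by hand — which amounts to redoing the paper's comparison argument — so the cleanest repair is to adopt the auxiliary Poisson $\infty$-structure as an intermediate step rather than relying on index symmetry.
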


\noindent
To understand this structure, let us first define a sequence of degree $ +1$ graded symmetric poly-derivations on $\X^\bullet (V) $ (by convention, $i$-vector fields are of degree $-i+1 $) by:
\begin{equation}
\left\lbrace \partial_{i_1},\ldots,\partial_{i_k}\right\rbrace _k':=\frac{\partial^{k}\varphi}{\partial x_{i_1}\cdots\partial x_{i_k}}.
\end{equation}
We extend them to a graded poly-derivation of $\X^\bullet (V)$.

\begin{lemma}
\label{lem:Poisson}
The poly-derivations $(\left\lbrace \cdots \right\rbrace_k' )_{k \geq 1} $ are $\mathcal O $-multilinear and equip $\X^\bullet (V) $ with a (graded symmetric) Poisson $\infty$-algebra structure. 
Also, $\{\cdot \}_1'=\iota_{\varphi}$.
\end{lemma}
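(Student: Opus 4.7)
The plan is to verify the four defining features of a graded-symmetric Poisson $\infty$-algebra structure on $(\mathfrak{X}^\bullet(V),\wedge)$: that each $\{\cdots\}_k'$ is a graded poly-derivation in each slot (which yields $\mathcal{O}$-multilinearity as a by-product), that $\{\cdot\}_1'=\iota_\varphi$, and that the higher Jacobi identities hold. I repeatedly exploit the fact that $\{\cdots\}_k'$ is obtained from its values on the $\wedge$-generators of $\mathfrak{X}^\bullet(V)$ by graded Leibniz in each argument, together with the defining vanishing $\{\cdots,f,\cdots\}_k'=0$ whenever $f\in\mathcal{O}=\mathfrak{X}^0(V)$ and $k\geq 2$, which is built into formula \eqref{eq:nary} (the sum is empty as soon as one of the $I_s$ is empty) and corresponds, for $k=1$, to the fact that $\iota_\varphi$ is zero on $\mathcal{O}$ for dimensional reasons.

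First, $\{\cdot\}_1'=\iota_\varphi$ is immediate: both are degree-$+1$ graded derivations of $(\mathfrak{X}^\bullet(V),\wedge)$ agreeing on the $\wedge$-generators, since $\{\partial_i\}_1'=\varphi_i=\iota_\varphi(\partial_i)$ and both vanish on $\mathcal{O}$, so they coincide by uniqueness of the derivation extension. Second, the $\mathcal{O}$-multilinearity of $\{\cdots\}_k'$ with $k\geq 2$ follows from the Leibniz rule: for $f\in\mathcal{O}$,
\begin{equation*}
\{fx_1,x_2,\ldots,x_k\}_k'=\{f,x_2,\ldots,x_k\}_k'\wedge x_1+f\wedge\{x_1,\ldots,x_k\}_k'=f\,\{x_1,\ldots,x_k\}_k',
\end{equation*}
the first summand vanishing by the convention above; one may also just read it off \eqref{eq:nary}.

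Third, for the higher Jacobi identities, one uses the classical extension principle: if each $\{\cdots\}_k'$ is a graded poly-derivation in every slot, then for each $n$ the full Jacobiator
\begin{equation*}
J_n(x_1,\ldots,x_n):=\sum_{i+j=n+1}\sum_{\sigma\in\mathfrak{S}(i,n-i)}\epsilon(\sigma)\,\{\{x_{\sigma(1)},\ldots,x_{\sigma(i)}\}_i',x_{\sigma(i+1)},\ldots,x_{\sigma(n)}\}_j'
\end{equation*}
is itself a graded poly-derivation in each of its $n$ arguments. It therefore suffices to prove $J_n\equiv 0$ when each $x_s=\partial_{i_s}$ is a generator. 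On such an input every inner bracket evaluates to the function $\varphi_{i_{\sigma(1)}\cdots i_{\sigma(k)}}\in\mathcal{O}$, so every outer bracket is zero: either because it has arity $\geq 2$ and receives a function as an argument, or because it is $\{\cdot\}_1'=\iota_\varphi$ applied to a function. Each term of $J_n(\partial_{i_1},\ldots,\partial_{i_n})$ thus vanishes individually, and by the poly-derivation character of $J_n$ the vanishing extends to all of $\mathfrak{X}^\bullet(V)$.

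The main obstacle I anticipate is the extension principle invoked in Step 3, namely the assertion that $J_n$ is again a graded poly-derivation in each slot; this generalizes the classical statement that the Jacobiator of a biderivation is a triderivation, and is a non-trivial exercise in graded-Leibniz sign bookkeeping combined with the shuffle combinatorics of the $L_\infty$ axiom. Should the direct combinatorial verification prove unwieldy, one can package all the brackets $(\{\cdots\}_k')_{k\geq 1}$ as the Taylor coefficients of a single degree-$+1$ coderivation $Q$ on the cocommutative coalgebra $S^\bullet_\mathbb{K}(\mathfrak{X}^\bullet(V)[1])$ (as in Proposition \ref{co-diff}), observe that $Q^2$ is a coderivation which, by the $\wedge$-poly-derivation character of $Q$, is determined by its restriction to the $\wedge$-generators, and read off from the computation above that $Q^2=0$ on these generators.
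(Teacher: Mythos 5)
Your proof is correct and follows essentially the same route as the paper's: $\mathcal{O}$-multilinearity from the degree-forced vanishing of brackets with an argument in $\mathcal{O}=\X^0(V)$, and the higher Jacobi identities by reducing to generators (via the poly-derivation property of the Jacobiator), where every inner bracket lands in $\mathcal{O}$ and every outer bracket annihilates $\mathcal{O}$. The paper states this reduction in one line ("it is clear that..."); you simply make explicit the extension principle it relies on.
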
   
\begin{proof} 
For degree reason, $\left\lbrace F,  X_1, \dots, X_{k-1}  \right\rbrace_k'= 0$ for all $X_1, \dots, X_{k-1}\in \X^\bullet (V)$ and all $ F \in \X^0(V) = \mathcal O$. This implies the required $\mathcal O $-multilinearity. It is clear that the higher Jacobi identities hold since brackets of generators $\{\delta_{i_1}, \cdots, \delta_{i_n}\}' $ are elements in $ \mathcal O $, and all brackets are zero when applied an element in $ \mathcal O$.
\end{proof}

\begin{proof} [Proof (of Proposition \ref{prop-koszul})]
The brackets introduced in Proposition \ref{prop-koszul} are modifications of the Poisson ${\infty}$-algebra described in Lemma \ref{lem:Poisson}. By construction, $\left\lbrace \cdots\right\rbrace^{'}_{n}=\left\lbrace
\cdots\right\rbrace_{n}$ when all arguments are generators of the form $ \partial_{I}$ for some $I \subset \{1 \dots, n\} $ of cardinal $\geq 2 $. By $\mathcal O $-multilinearity, this implies  $\left\lbrace \cdots\right\rbrace^{'}_{n}=\left\lbrace
\cdots\right\rbrace_{n}$ when $n \geq 3$, or when $n=2$ and no argument is a bivector-field, or when  $n=1$ and the argument is not a bivector field. As a consequence, all higher Jacobi identities hold when applied to $n$-vector fields with $n \geq 3$.

Let us see what happens when one of the arguments is a bivector field, i.e. in the case where we deal with at least an element of degree $-1$. Let us assume that there is one such element, i.e. $Q_1=\partial_i\wedge\partial_j,\,Q_2=\partial_{I_{2}},\,\ldots,Q_n=\partial_{I_{n}}$ with $\lvert I_{j}\rvert\geq 3,\; j=2,\ldots,n$. Then, in view of the higher Jacobi identity for the Poisson $\infty$-brackets $ (\{\cdots\}_k')_{k\geq 1}$ gives:
\begin{align}
0=\sum_{2 \leq k  \leq n-2}&\label{a}\sum_{\sigma\in S_{k,n-k}}\epsilon(\sigma)
\left\lbrace \left\lbrace  Q_{\sigma(1)},\ldots,Q_{\sigma(k)}\right\rbrace_{k}',Q_{\sigma(k+1)},\ldots,Q_{\sigma(n)}\right\rbrace_{n-k+1}'\\&\label{b}+\sum_{\sigma\in S_{n-1,1}, \sigma(n) \neq 1}\epsilon(\sigma)\left\lbrace \left\lbrace  Q_{\sigma(1)},\ldots,Q_{\sigma(n-1)}\right\rbrace_{n-1}',Q_{\sigma(n)}\right\rbrace _{2}' \\&
\label{bb}+\sum_{\sigma\in S_{1,n-1}, \sigma(1) \neq 1 }\epsilon(\sigma)  \left\lbrace \left\lbrace  Q_{\sigma(1)}\right\rbrace_{1}',Q_{\sigma(2)}\ldots,Q_{\sigma(n)}\right\rbrace _{n}'
\\&\label{c}+(-1)^{\sum_{k=2}^n\lvert\partial_{I_k}\rvert}\left\lbrace \left\lbrace  Q_{2},\ldots,Q_{n}\right\rbrace_{n-1}',Q_{1}\right\rbrace _{2}'
\\&\label{d}+  \left\lbrace \left\lbrace  Q_{1}\right\rbrace_{1}',Q_{\sigma(2)}\ldots,Q_{\sigma(n)}\right\rbrace _{n}'.
\end{align}
In lines \eqref{a}-\eqref{b}-\eqref{bb} above, we have $\{\cdots \}'=\{\cdots \} $ for all the terms involved. This is not the case for \eqref{c}-\eqref{d}.
Indeed: \begin{align*}
\left\lbrace\left\lbrace \partial_{I_{2}},\cdots,\partial_{I_{n}}\right\rbrace^{'}_{n-1} ,\partial_i\wedge\partial_j\right\rbrace^{'}_2
&=
\left\lbrace\left\lbrace \partial_{I_{2}},\cdots,\partial_{I_{n}}\right\rbrace_{n-1} ,\partial_i\wedge\partial_j\right\rbrace_2
\\&-  \sum_{i_{2}\in I_{2},\ldots,i_{n}\in I_{n}}\epsilon(i_{2},\ldots,i_{n})\rho(\partial_i\wedge\partial_j)[\varphi_{i_{2}\cdots i_{n}} ] \, \partial_{I_{2}^{i_{2}}\bullet\cdots\bullet I_{n}^{i_{n}}}
\end{align*}
and
\begin{align*}
\left\lbrace\left\lbrace \partial_i\wedge\partial_j\right\rbrace _{1}',\partial_{I_{2}}\ldots,\partial_{I_{n}} \right\rbrace _{n}'&=(-1)^{\sum_{k=2}^n\lvert\partial_{I_k}\rvert+1}\left( \varphi_i\left\lbrace\partial_j, \partial_{I_{2}}\ldots,\partial_{I_{n}}\right\rbrace_{n}' -\varphi_j\left\lbrace\partial_i, \partial_{I_{2}}\ldots,\partial_{I_{n}}\right\rbrace_{n}'\right) \\&=-(-1)^{\sum_{k=2}^n\lvert\partial_{I_k}\rvert}\sum_{i_{2}\in I_{2},\ldots,i_{n}\in I_{n}}\epsilon(i_{2},\ldots,i_{n})\iota_{\varphi}(\partial_i\wedge\partial_j)[\varphi_{i_{2}\cdots i_{n}} ]\,\partial_{I_{2}^{i_{2}}\bullet\cdots\bullet I_{n}^{i_{n}}} \\ 
& = (-1)^{\sum_{k=2}^n\lvert\partial_{I_k}\rvert}\sum_{i_{2}\in I_{2},\ldots,i_{n}\in I_{n}}\epsilon(i_{2},\ldots,i_{n})
\rho(\partial_i\wedge\partial_j)[\varphi_{i_{2}\cdots i_{n}} ] \,\partial_{I_{2}^{i_{2}}\bullet\cdots\bullet I_{n}^{i_{n}}}
\end{align*} since $\rho=-\iota_{\varphi}$.
Hence, the quantities in lines \eqref{d} and  \eqref{c} add up, when we re-write them in terms of the new brackets $\left\lbrace \cdots\right\rbrace^{} _{k}$,  to yield precisely  the higher Jacobi identity for this new bracket. It is then not difficult to see this is still the case if there is more than one bivector field, by using many times the same computations.
\end{proof}
\subsection{Restriction to $\varphi=0 $ of vector fields annihilating $\varphi $}

We keep the convention and notations of the previous section.
Let us consider  the restriction $\mathfrak i_W^* \mathfrak{F}_\varphi$ of the Lie-Rinehart algebra $\mathfrak{F}_\varphi$ to the zero-locus $W$ of a Koszul polynomial $\varphi $. 
Since all vector fields in $\mathfrak F_\varphi$ are tangent to $W$, this restriction is now a Lie-Rinehart algebra over $\mathcal{O}_W = \tfrac{\mathcal O}{\mathcal O \varphi}$,
see Section \ref{loc:res} (1).

\begin{proposition}
\label{prop:univphiIsZero}
Let $\varphi$ be a Koszul Polynomial. The restriction of the universal Lie $\infty$-algebroid of Proposition \ref{prop-koszul}
to the zero-locus $W$ of $\varphi $ is
a universal Lie $\infty$-algebroid of the Lie-Rinehart algebra $\mathfrak i_W^* \mathfrak{F}_\varphi$. 
\end{proposition}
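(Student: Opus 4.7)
The strategy is to invoke Proposition \ref{prop:restriction}: the quotient by a Lie-Rinehart ideal of a universal Lie $\infty$-algebroid is again universal if and only if the underlying quotient complex is still a resolution, i.e., if and only if $\mathrm{Tor}^{\bullet}_{\mathcal O}(\mathfrak F_\varphi, \mathcal O/\mathcal O\varphi)$ vanishes in positive degrees. So the plan is first to set up the Lie-Rinehart ideal, then to compute these Tor-modules by choosing a clever second resolution.

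First I would verify that $\mathcal I := \mathcal O\varphi$ is a Lie-Rinehart ideal of $\mathfrak F_\varphi$: any $X\in\mathfrak F_\varphi$ satisfies $X[\varphi]=0$, hence $X[f\varphi]=X[f]\,\varphi\in\mathcal I$ for all $f\in\mathcal O$. Consequently the quotient $\mathfrak F_\varphi/\varphi\mathfrak F_\varphi = \mathfrak i_W^*\mathfrak F_\varphi$ inherits a Lie-Rinehart algebra structure over $\mathcal O_W=\mathcal O/\mathcal O\varphi$, and the Lie $\infty$-algebroid of Proposition \ref{prop-koszul}, whose complex is the truncated Koszul complex $\bigl(\mathfrak X^{\bullet+1}(V),\iota_{\mathrm d\varphi}\bigr)$, passes to the quotient to a Lie $\infty$-algebroid terminating at $\mathfrak i_W^*\mathfrak F_\varphi$.

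The main point is then to establish exactness of the quotient complex $\bigl(\mathfrak X^{\bullet+1}(V)/\varphi\,\mathfrak X^{\bullet+1}(V),\,\overline{\iota_{\mathrm d\varphi}}\bigr)$. By the Tor-interpretation recalled in Remark \ref{rmk:Tor} and Section \ref{ssec:Tor}, its cohomology is $\mathrm{Tor}^{\bullet}_{\mathcal O}(\mathfrak F_\varphi,\mathcal O/\mathcal O\varphi)$. Rather than computing this through the resolution of $\mathfrak F_\varphi$, I would compute it through a resolution of $\mathcal O/\mathcal O\varphi$: since $\mathcal O=\mathbb C[x_1,\dots,x_d]$ is an integral domain and $\varphi\neq 0$, multiplication by $\varphi$ is injective on $\mathcal O$, and
\[
0\longrightarrow \mathcal O\xrightarrow{\ \varphi\ }\mathcal O\longrightarrow \mathcal O/\mathcal O\varphi\longrightarrow 0
\]
is a free resolution of length one. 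Tensoring with $\mathfrak F_\varphi$ shows that $\mathrm{Tor}^{k}_{\mathcal O}(\mathfrak F_\varphi,\mathcal O/\mathcal O\varphi)=0$ for $k\geq 2$, while $\mathrm{Tor}^{1}_{\mathcal O}(\mathfrak F_\varphi,\mathcal O/\mathcal O\varphi)$ is the $\varphi$-torsion in $\mathfrak F_\varphi$. But $\mathfrak F_\varphi$ is a submodule of the free module $\mathfrak X(V)$ over the domain $\mathcal O$, so it is torsion-free, and this Tor vanishes as well.

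With all positive Tors vanishing, Proposition \ref{prop:restriction} immediately yields that the restricted Lie $\infty$-algebroid is a universal Lie $\infty$-algebroid of $\mathfrak i_W^*\mathfrak F_\varphi$. The only step that requires genuine care is the identification of the cohomology of the quotient of the Koszul-type resolution with $\mathrm{Tor}$; once one agrees to compute Tor by resolving the second argument instead, the argument collapses to the elementary torsion-freeness observation above, and no use of the explicit higher brackets \eqref{eq:nary} is needed.
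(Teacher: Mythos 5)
Your proof is correct, but it follows a genuinely different route from the paper's. The paper does not pass through $\mathrm{Tor}$ at all: it reduces the statement to the claim that the restricted Koszul complex $\cdots\xrightarrow{\iota_\varphi}\mathfrak i_W^*\X^2(V)\xrightarrow{\iota_\varphi}\mathfrak i_W^*\mathfrak F_\varphi$ is exact, and proves that by a direct element chase — given $P$ with $\iota_\varphi P=0$, any extension $\tilde P$ satisfies $\iota_\varphi\tilde P=\varphi U$, whence $\iota_\varphi U=0$, so $U=\iota_\varphi\tilde Q$ and $\tilde P-\varphi\tilde Q$ is a genuinely $\iota_\varphi$-closed extension, which exactness of the ambient Koszul complex then exhibits as $\iota_\varphi\tilde R$ modulo $\varphi$. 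You instead identify the cohomology of the quotient complex with $\mathrm{Tor}^\bullet_{\mathcal O}(\mathfrak F_\varphi,\mathcal O/\mathcal O\varphi)$ (Remark \ref{rmk:Tor}), then switch to the length-one resolution $0\to\mathcal O\xrightarrow{\varphi}\mathcal O\to\mathcal O/\mathcal O\varphi\to 0$ of the second argument, so that everything collapses to the observation that $\mathfrak F_\varphi\subset\X(V)$ is torsion-free over the domain $\mathcal O$. Both arguments are valid; yours is shorter and more conceptual, but it leans on the balancing/symmetry of $\mathrm{Tor}$ (items 4 and 5 of the appendix proposition on $\mathrm{Tor}$, stated there without proof), whereas the paper's chase is self-contained, using only the exactness of the ambient Koszul complex. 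Your preliminary verification that $\mathcal O\varphi$ is a Lie-Rinehart ideal, so that the brackets of Proposition \ref{prop-koszul} descend, matches the paper's remark that the anchor takes values in vector fields tangent to $W$, and your appeal to Proposition \ref{prop:restriction} is exactly the intended mechanism.
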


Since the image of its anchor map are vector fields tangent to $W $,
it is clear that the universal Lie $\infty$-algebroid of Proposition \ref{prop-koszul} restricts to $W$. To prove Proposition \ref{prop:univphiIsZero}, it suffices to check that the restriction $\mathfrak i_W^* \X (V) $ to $W $ of the Koszul complex is still exact, except in degree $0$. 

\begin{lemma}The restriction to the zero locus $W$ of $ \varphi $ of the Koszul complex \eqref{eq:KoszulComplex}, namely the complex,
$$\ldots\xrightarrow{\iota_{\varphi}}\mathfrak i_W^* \X^{3}(V)\xrightarrow{\iota_{\varphi}}\mathfrak i_W^* \X^{2}(V)\xrightarrow{\iota_{\varphi}}\mathfrak i_W^* \mathfrak F  $$
is a free resolution of $\mathfrak i_W^* \mathfrak F$ in the category of $\mathcal O_W $-modules.
\end{lemma}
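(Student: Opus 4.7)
The strategy is to interpret the problem as a Tor-vanishing statement and exploit the fact that $\mathcal{O}_W = \mathcal{O}/(\varphi)$ has a very short projective resolution. Write $\mathcal{E}_\bullet$ for the truncated Koszul complex $\cdots \xrightarrow{\iota_\varphi} \mathfrak{X}^3(V) \xrightarrow{\iota_\varphi} \mathfrak{X}^2(V)$, which is a free resolution of $\mathfrak{F}_\varphi$ by Koszul's theorem. Since each $\mathfrak{X}^{i+1}(V) = \wedge^{i+1}\mathcal{O}^d$ is a free $\mathcal{O}$-module, the modules $\mathfrak{i}_W^*\mathfrak{X}^{i+1}(V) = \mathcal{O}_W \otimes_\mathcal{O} \mathfrak{X}^{i+1}(V)$ are free $\mathcal{O}_W$-modules, and the homology of the complex $\mathcal{O}_W \otimes_\mathcal{O} \mathcal{E}_\bullet$ computes $\mathrm{Tor}^\mathcal{O}_i(\mathfrak{F}_\varphi, \mathcal{O}_W)$. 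The task therefore reduces to showing $\mathrm{Tor}^\mathcal{O}_i(\mathfrak{F}_\varphi, \mathcal{O}_W) = 0$ for all $i \geq 1$.

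First I would compute these Tor groups from the other side, using a projective resolution of $\mathcal{O}_W$. Because $\mathcal{O} = \mathbb{C}[x_1,\ldots,x_d]$ is an integral domain and $\varphi$ is a nonzero element, multiplication by $\varphi$ is injective on $\mathcal{O}$, so
\begin{equation*}
0 \longrightarrow \mathcal{O} \xrightarrow{\;\cdot\varphi\;} \mathcal{O} \longrightarrow \mathcal{O}_W \longrightarrow 0
\end{equation*}
is a length-$1$ free resolution of $\mathcal{O}_W$. Tensoring with $\mathfrak{F}_\varphi$ and taking homology gives $\mathrm{Tor}^\mathcal{O}_0(\mathfrak{F}_\varphi,\mathcal{O}_W) = \mathfrak{F}_\varphi/\varphi\mathfrak{F}_\varphi$, $\mathrm{Tor}^\mathcal{O}_1(\mathfrak{F}_\varphi,\mathcal{O}_W) = \{X \in \mathfrak{F}_\varphi : \varphi X = 0\}$, and $\mathrm{Tor}^\mathcal{O}_i(\mathfrak{F}_\varphi,\mathcal{O}_W) = 0$ for $i \geq 2$ automatically.

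The remaining point, which is the crux, is to check that multiplication by $\varphi$ is injective on $\mathfrak{F}_\varphi$. This is easy: $\mathfrak{F}_\varphi$ is by definition an $\mathcal{O}$-submodule of the free module $\mathfrak{X}(V) = \mathcal{O}^d$, and $\varphi$ is not a zero divisor in $\mathcal{O}^d$ (since $\mathcal{O}$ is an integral domain). Hence $\mathrm{Tor}^\mathcal{O}_1(\mathfrak{F}_\varphi,\mathcal{O}_W) = 0$ as well.

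Combining these observations, the complex $\mathcal{O}_W \otimes_\mathcal{O} \mathcal{E}_\bullet$ is exact in all positive degrees and has $H_0 = \mathfrak{F}_\varphi/\varphi\mathfrak{F}_\varphi = \mathfrak{i}_W^*\mathfrak{F}$. Together with the freeness of each $\mathfrak{i}_W^*\mathfrak{X}^{i+1}(V)$ over $\mathcal{O}_W$ noted above, this is exactly the statement that the restricted Koszul complex is a free resolution of $\mathfrak{i}_W^*\mathfrak{F}$ in the category of $\mathcal{O}_W$-modules. I expect no real obstacle beyond correctly identifying that $\varphi$ is a non-zero-divisor on $\mathfrak{F}_\varphi$, which follows from $\mathfrak{F}_\varphi \hookrightarrow \mathcal{O}^d$; the rest is a standard application of Proposition \ref{prop:restriction} / Corollary \ref{cor:LRideal2}.
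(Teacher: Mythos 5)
Your argument is correct, but it takes a genuinely different route from the paper. The paper proves exactness by a direct diagram chase in the ambient Koszul complex: given $P \in \mathfrak i_W^* \X^{k}(V)$ with $\iota_{\varphi}P=0$, it lifts $P$ to $\tilde P \in \X^k(V)$, writes $\iota_{\varphi}\tilde P = \varphi U$, uses exactness of the Koszul complex once to write $U = \iota_{\varphi}\tilde Q$ (so that $\tilde P - \varphi\tilde Q$ is an honest cocycle extending $P$), and a second time to produce a primitive $\tilde R$ whose restriction to $W$ is the desired preimage. Your proof instead invokes the balancing of $\mathrm{Tor}$: since the truncated Koszul complex is a free resolution of $\mathfrak F_\varphi$ and the $\mathfrak i_W^*\X^{i}(V)$ are free over $\mathcal O_W$, the statement is equivalent to $\mathrm{Tor}^{\mathcal O}_{i}(\mathfrak F_\varphi,\mathcal O_W)=0$ for $i\geq 1$, which you compute from the length-one resolution $0\to\mathcal O\xrightarrow{\cdot\varphi}\mathcal O\to\mathcal O_W\to 0$; everything then reduces to $\varphi$ being a non-zero-divisor on $\mathfrak F_\varphi\subset\mathcal O^d$. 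Your approach is the more conceptual one and fits the framework the paper itself sets up (Remark \ref{rmk:Tor}, Proposition \ref{prop:restriction}, Corollary \ref{cor:LRideal2}): it makes transparent that the only input is torsion-freeness of the module being restricted, and hence generalizes verbatim to any torsion-free Lie-Rinehart algebra restricted to a hypersurface. What the paper's chase buys in exchange is self-containedness — it uses nothing beyond exactness of the Koszul complex, at the cost of obscuring why the statement is true. Both proofs are complete; the one standard fact you rely on beyond the paper's toolkit is that $\mathrm{Tor}$ may be computed from a projective resolution of either argument.
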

\begin{proof}
Let $k \geq 2$ be an integer.
For a given $P \in \mathfrak i_W^* \X^{k}(V) $, the relation $\iota_{\varphi} P=0$ means that for any $ \tilde{P} \in \X^{k}(V)$ extending $P$,  there exists $U\in\X^{k-1} (V)$ such that $\iota_{\varphi}\tilde{P}=\varphi U$. By exactness of the Koszul complex (\ref{eq:KoszulComplex}), one has, $U=\iota_{\varphi} \tilde Q$ for some $\tilde Q\in\X^{k}(V)$. Hence, $\tilde{P}-\varphi \tilde Q$ is an extension of the bivector field $P$ such that $\iota_{\varphi}(\tilde{P}-\varphi \tilde  Q)=0$. Using one more time exactness of the Koszul complex (\ref{eq:KoszulComplex}), we construct $\tilde R \in \X^{k+1}(V)$ such that $\tilde{P}=\varphi Q+\iota_{\varphi} \tilde R$ . Thus, $ P = \mathfrak i_W^* \tilde{P}=\iota_{\varphi} \mathfrak i_W^* \tilde  R = \iota_{\varphi} R $.
\end{proof}

\subsection{Vector fields vanishing on subsets of a vector space} 
\label{ideal}
Let $ \mathcal O$ be the algebra of smooth or holomorphic or polynomial or formal functions on $\mathbb K^d $, and $\mathcal I \subset \mathcal O$ be an ideal.
Then $ \mathcal I {\mathrm{Der}} (\mathcal O) $, i.e. vector fields of the form: 
 $  \sum_{i=1}^d f_i  \frac{\partial}{\partial x_i} $, with $f_1, \dots, f_d \in \mathcal I$, is a Lie-Rinehart algebra (It is also a singular foliation). 

\begin{remark}
 Geometrically, when $\mathcal I $ corresponds to functions vanishing on a sub-variety $N \subset \mathbb K^n$, 
  $ \mathcal I {\mathrm{Der}} (\mathcal O) $ must be interpreted as vector fields vanishing along $N$.
\end{remark}

Let us describe a Lie $\infty $-algebroid that terminates at $ \mathcal I {\mathrm{Der}} (\mathcal O)$, then discuss when it is universal. 
Let $(\varphi_i)_{i \in I}$  
be generators of $\mathcal I $. Consider the free graded algebra $ \mathcal K = \mathcal O [(\mu_i)_{i \in I} ] $
generated by variables $(\mu_i )_{i \in I} $ of degree $-1 $. The degree $-1 $ derivation 
$\partial:=\sum_{i\in I} \varphi_i\frac{\partial}{\partial\mu_i}$ squares to zero. 
The $\mathcal O$-module $\mathcal K_{-j}$ of elements degree $j$ in $\mathcal K_\bullet $ is made of all sums
$\sum_{i_1 , \dots , i_j \in I }   f_{i_1 \dots i_j}   \mu_{i_1}\cdots\mu_{i_j} $
with $ f_{i_1 \dots i_j} \in \mathcal O$. Consider the complex of free $\mathcal O $-modules
\begin{equation}\label{eq:resolIX} \cdots\stackrel{\partial\otimes_\mathcal O \text{id}}{\longrightarrow} \mathcal K_{-2} \otimes_\mathcal O {\mathrm{Der}}(\mathcal O)   \stackrel{\partial\otimes_\mathcal O \text{id}}{\longrightarrow} \mathcal K_{-1}\otimes_\mathcal O{\mathrm{Der}}(\mathcal O)\end{equation}

\begin{proposition} \label{prop:existsLieInfty}
The complex \eqref{eq:resolIX} comes equipped with a Lie $\infty $-algebroid structure that terminates in $\mathcal I {\mathrm{Der}} (\mathcal O)$ through the anchor map  given by $\mu_i\frac{\partial}{\partial x_j}   \mapsto \varphi_i \frac{\partial}{\partial x_j} $ for all $i \in I$, and $ j \in 1, \dots,d$. 
\end{proposition}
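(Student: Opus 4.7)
The plan is to reduce the statement to an application of Theorem \ref{thm:existence} after some preparation. First, one notices that $\mathcal I\,\mathrm{Der}(\mathcal O)$ is a Lie-Rinehart subalgebra of $\mathrm{Der}(\mathcal O)$: for $\varphi,\psi\in\mathcal I$ and $X,Y\in\mathrm{Der}(\mathcal O)$, the computation
\begin{equation*}
[\varphi X,\psi Y] \;=\; \varphi\psi\,[X,Y] \;+\; \varphi X[\psi]\,Y \;-\; \psi Y[\varphi]\,X
\end{equation*}
keeps each summand in $\mathcal I\,\mathrm{Der}(\mathcal O)$. The elementary data are then checked: $\ell_1 := \partial\otimes\mathrm{id}$ squares to zero because $\partial^2 = 0$ on the Koszul algebra $\mathcal K$, the anchor $\rho\colon\mu_i\otimes \tfrac{\partial}{\partial x_j}\mapsto \varphi_i\tfrac{\partial}{\partial x_j}$ has image exactly $\mathcal I\,\mathrm{Der}(\mathcal O)$, and $\rho\circ\ell_1 = 0$ on $\mathcal E_{-2}$ follows from $\rho\bigl(\partial(\mu_i\mu_j)\otimes X\bigr) = (\varphi_j\varphi_i-\varphi_i\varphi_j)X = 0$.

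Next, since $\mathcal E_{-1}$ is free with basis $\bigl(\mu_i\otimes\tfrac{\partial}{\partial x_k}\bigr)_{i\in I,\,1\le k\le d}$, I would lift the Lie bracket of $\mathcal I\,\mathrm{Der}(\mathcal O)$ to $\mathcal E_{-1}$ by the natural formula
\begin{equation*}
\ell_2\Bigl(\mu_i\otimes\tfrac{\partial}{\partial x_k},\,\mu_j\otimes\tfrac{\partial}{\partial x_l}\Bigr) \;:=\; \tfrac{\partial\varphi_j}{\partial x_k}\,\mu_i\otimes\tfrac{\partial}{\partial x_l} \;-\; \tfrac{\partial\varphi_i}{\partial x_l}\,\mu_j\otimes\tfrac{\partial}{\partial x_k},
\end{equation*}
then extend it by graded skew-symmetry, by the Leibniz rule relative to $\rho$, and by $\mathcal O$-bilinearity on the higher-degree parts of $\mathcal E$. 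A direct computation — exactly as in Lemma \ref{lem1} — verifies $\rho\circ\ell_2 = [\rho(\cdot),\rho(\cdot)]$ on $\mathcal E_{-1}\otimes\mathcal E_{-1}$, so that $\pi := \rho$ is a morphism of brackets and $(\mathcal E_\bullet,\ell_1,\ell_2,\rho)$ is an almost differential graded Lie algebroid terminating in $\mathcal I\,\mathrm{Der}(\mathcal O)$.

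The higher brackets are constructed by the recursive procedure of Theorem \ref{thm:existence}. When $(\varphi_i)$ is a regular sequence, the Koszul complex $\mathcal K_\bullet$ truncated at degree $-1$ is a free resolution of $\mathcal I$, so \eqref{eq:resolIX} is a free resolution of $\mathcal I\,\mathrm{Der}(\mathcal O)$ and Theorem \ref{thm:existence} applies directly on $\mathcal E$. In the general case, I would first embed \eqref{eq:resolIX} as a sub-complex of an honest free resolution $(\mathcal R,\ell_1^{\mathcal R},\pi^{\mathcal R})$ of $\mathcal I\,\mathrm{Der}(\mathcal O)$ via Lemma \ref{lem:inFactInclusion}, with a splitting $\mathcal R = \mathcal E \oplus \mathcal V$ by graded free modules, extend $\ell_2$ arbitrarily to an almost-Lie-algebroid bracket on $\mathcal R$, and run Theorem \ref{thm:existence} there. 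The hard part — the real crux of the argument — is then to arrange the induction so that every bracket $\ell_k$ preserves $\mathcal E$ as a sub-complex: I would do this at each step by invoking Lemma \ref{rest:univ}(1), showing that the $D^\mathcal R$-cocycle obstructions (which already preserve $\mathcal E$ by the inductive hypothesis) restrict to $D^\mathcal E$-coboundaries in $\mathfrak{Page}(\mathcal E)$. This restriction statement is not automatic, since $\mathcal E$ need not itself be a resolution; the key input is the explicit structure of the Koszul algebra $\mathcal K$ and the degree $+1$ graded derivations $\iota_X := \sum_i X[\varphi_i]\,\partial/\partial\mu_i$ of $\mathcal K$, which together with the wedge product supply concrete contracting primitives for the restricted obstructions. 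The brackets so obtained, restricted to $\mathcal E$, yield the Lie $\infty$-algebroid structure claimed by the proposition.
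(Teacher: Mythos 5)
Your argument works in the special case where $(\varphi_i)$ is a regular sequence, since then \eqref{eq:resolIX} is genuinely a free resolution of $\mathcal I\,{\mathrm{Der}}(\mathcal O)$ and Theorem \ref{thm:existence} applies verbatim. But the proposition makes no regularity assumption, and in the general case your argument has a real gap at the step you yourself flag. Lemma \ref{rest:univ}(1) is an equivalence: a $D^{\mathcal R}$-cocycle preserving the subcomplex $\mathcal E$ is the coboundary of something preserving $\mathcal E$ \emph{if and only if} its restriction to $\bigodot^\bullet\mathcal E$ is already a coboundary inside $\mathfrak{Page}(\mathcal E)$; and item (2), which would make this automatic, requires $\mathcal E$ to be a resolution of its image — exactly the hypothesis that fails here, since the truncated Koszul complex of a non-regular sequence has cohomology. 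So the whole burden of the proof is to exhibit, at every stage of the recursion, explicit primitives for the restricted obstruction cocycles inside the non-acyclic complex $\mathcal E$. Your sentence asserting that the Koszul derivations ``supply concrete contracting primitives'' is precisely the missing content, not a justification of it: writing those primitives down in closed form is the proof, and nothing in the general machinery of Chapter \ref{Chap:main} produces them for you.

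The paper sidesteps the recursion entirely and is fully constructive: it defines a $\mathcal O$-linear Poisson $\infty$-algebra on the free graded algebra generated by the $\mu_i$ and the $\partial/\partial x_j$ by declaring $\{\mu_i,\partial_{x_{j_1}},\ldots,\partial_{x_{j_r}}\}'_{r+1}=\partial^r\varphi_i/\partial x_{j_1}\cdots\partial x_{j_r}$ (all other brackets of generators vanishing), observes that the complex \eqref{eq:resolIX} is stable under the induced derived brackets \eqref{eq:onGenMinusALl}, and then corrects the terms where $\{\cdot\}_1'$ hits a degree $-1$ generator so that they are absorbed by the anchor in the $2$-ary bracket — exactly the mechanism of Proposition \ref{prop-koszul}. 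All higher Jacobi identities are then inherited from the Poisson $\infty$-structure and checked on generators; no acyclicity is ever invoked. If you want to keep your route, you would have to prove the required exactness-on-the-subcomplex statement by hand, and the natural way to do that is to produce the very formulas the paper starts from; at that point the explicit construction is both shorter and stronger, since it also covers the non-regular case where the abstract existence theorem gives you nothing on $\mathcal E$ itself.
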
 
\begin{proof}
 First, one defines a $\mathcal O$-linear Poisson-$\infty $-algebra structure on the free algebra generated by $(\mu_i)_{i\in I} $ (in degree $-1$) and $  \left( \frac{\partial}{\partial x_j} \right)_{j=1}^d $ (in degree $0$) and $1$ by:
     \begin{equation}
         \label{PoissonInfty}
\left\lbrace\mu_{i},{  \frac{\partial}{\partial x_{j_1}}},\ldots,\frac{\partial}{\partial x_{j_r}} \right\rbrace_{r+1}':=   \frac{\partial^r \varphi_i}{\partial x_{i_1} \dots \partial x_{i_r} }   
 \end{equation}
all other brackets of generators being equal to $0$. Since the brackets of generators take values in $\mathcal O$, and since an $n$-ary bracket where an element of $\mathcal O $ appears is zero, this is easily seen to be a Poisson $\infty $-structure. The general formula is
\begin{equation}
\label{eq:onGenMinusALl}
\left\lbrace\mu_{I_1}\otimes_\mathcal O\partial_{x_{a_1}},\ldots,\mu_{I_n}\otimes_\mathcal O\partial_{x_{a_n}} \right\rbrace_n':= \sum_{\hbox{\scalebox{0.5}{$
\begin{array}{c} j=1 , \dots, n \\ 
i_j \in I_j \end{array}$}}
} \epsilon
\frac{\partial^{n-1}\varphi_{i_j}}{\partial x_{a_1}\cdots\widehat{\partial x}_{a_j}\cdots\partial x_{a_n}}  \mu_{I_1} \cdots \mu_{I_j}^{i_j} \cdots \mu_{I_n\otimes_{\mathcal O} \partial_{x_{a_j}}},
\end{equation}    
 where $\mu_J = \mu_{j_1} \dots \mu_{j_s}$ for every list $J= \{j_1, \dots, j_s\} $, 
where $ \epsilon$ is the Koszul sign, and where for a list $J$ containing $j$, $J^j $ stands for the list $J$ from which the element $j$ is crossed out, as in Equation \eqref{eq:nary}.

The $\mathcal O $-module generated by $\mu_{i_1} \cdots \mu_{i_k } \otimes_{\mathcal O}  \partial_{ x_{a}}  $ , i.e.
the complex \eqref{eq:resolIX} is easily seen to be stable under the brackets $\{\cdots \}_k' $
for all $k \geq 1$, so that we can define on $\mathcal K \otimes_{\mathcal O}  {\mathrm{Der}} (\mathcal O)$ a sequence of brackets
$(\ell_k = \{\cdots\}_k)_{k \geq 1} $ by letting them coincide with the previous brackets on the generators, i.e. $\{\cdots\}_n$ is given by Equation \eqref{eq:onGenMinusALl} for all $n \geq 1$. The brackets are then extended by derivation, $\mathcal O$-linearity or Leibniz identity with respect to the given anchor map, depending on the degree.
\noindent 
In particular, $\{\cdots \}'_k= \{\cdots \}_k $ 
for $k\geq 2$. For $k=1$,  $\{\cdots \}'_1= \{\cdots \}_1 $ on  $\oplus_{i \geq 2} \mathcal K_{-i} \otimes_\mathcal O  {\mathrm{Der}} (\mathcal O) $.
For $k=2$, we still have $\{\cdots \}'_2= \{\cdots \}_2 $ on  $\oplus_{i,j \geq 2} \mathcal K_{-i} \odot \mathcal K_{-j} \otimes_\mathcal O  {\mathrm{Der}} (\mathcal O) $.\\

Let us verify that all required axioms are satisfied.  
For $n=2$, Equation
\eqref{eq:onGenMinusALl} specializes to:
 \begin{align*} 
 \ell_2 ( \mu_i \otimes_\mathcal O \partial_{x_a} , \mu_j \otimes_\mathcal O \partial_{x_b} ) = \frac{\partial \varphi_j}{\partial x_a} \,  \mu_i \otimes_\mathcal O \partial_{x_b} - \frac{\partial \varphi_i}{\partial x_b} \,  \mu_j \otimes_\mathcal O \partial_{x_a} 
 \end{align*}
 which proves that the anchor map is a morphism when compared with the relation:
  $$  \left[ \varphi_i \partial_{x_a}  \, , \, \varphi_j \partial_{x_b}  \right] = 
  \frac{\partial \varphi_j}{\partial x_a} \,  \varphi_i  \, \partial_{x_b} - \frac{\partial \varphi_i}{\partial x_b} \,  \varphi_j  \, \partial_{x_a}.
  $$
  The higher Jacobi identities are checked on generators as follows:
 \begin{enumerate}
     \item When there are no degree $-1$ generators, it follows from the higher Jacobi identities of the Poisson $\infty$-structure \eqref{PoissonInfty} and the $ \mathcal O$-multilinearity of all Lie $\infty $-algebroid brackets involved.
     \item When generators of degree $-1 $ are involved, the higher Jacobi identities are obtained by doing the same procedure as in the proof of Proposition \ref{lem:Poisson}, that is, we first consider the higher Jacobi identities for the Poisson $\infty $-structure \eqref{PoissonInfty}, and we put aside the terms where $\{\cdot\}_1 $ is applied to these degree $-1 $ generators. We then check that the latter terms are exactly the terms coming from an anchor map when the $2$-ary bracket is applied to generators of degree $-1$ and the $(n-1)$-ary brackets of the remaining generators. \end{enumerate}\end{proof}

\subsection{Vector fields vanishing on a complete intersection}
\begin{proposition}\label{complete} 
Let $W \subset   \mathbb C^n$ be an affine variety defined by a regular sequence $\varphi_1, \dots, \varphi_k \in \mathcal{O}$. Then, the Lie $ \infty$-algebroid described in Proposition \ref{prop:existsLieInfty} is the universal Lie $\infty$-algebroid of the singular foliation of vector fields vanishing along $W$.
\end{proposition}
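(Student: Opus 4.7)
The plan is to reduce the statement to a classical fact about Koszul complexes. Proposition \ref{prop:existsLieInfty} already endows the complex \eqref{eq:resolIX} with a Lie $\infty$-algebroid structure terminating in $\mathcal{I}\,{\mathrm{Der}}(\mathcal{O})$, so by Corollary \ref{cor:unique} the only thing left to verify is that this complex is actually a \emph{free resolution} of $\mathcal{I}\,{\mathrm{Der}}(\mathcal{O})$, i.e.\ that it is acyclic in degrees $\leq -2$ and that the anchor $\pi: \mu_i\otimes\partial_{x_a}\mapsto \varphi_i\partial_{x_a}$ has image exactly $\mathcal{I}\,{\mathrm{Der}}(\mathcal{O})$ and kernel equal to $\partial(\mathcal{K}_{-2}\otimes_{\mathcal{O}}{\mathrm{Der}}(\mathcal{O}))$.

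First I would recognize that the graded algebra $\mathcal{K}=\mathcal{O}[\mu_1,\dots,\mu_k]$ together with the differential $\partial=\sum_i \varphi_i\,\partial/\partial\mu_i$ is precisely the Koszul complex associated to the sequence $\varphi_1,\dots,\varphi_k\in\mathcal{O}$. Since $(\varphi_1,\dots,\varphi_k)$ is by hypothesis a regular sequence, the classical Koszul theorem (see, e.g., Matsumura, Theorem 16.5) gives that the augmented complex
\[
\cdots\xrightarrow{\partial}\mathcal{K}_{-2}\xrightarrow{\partial}\mathcal{K}_{-1}\xrightarrow{\partial}\mathcal{O}\longrightarrow \mathcal{O}/\mathcal{I}\longrightarrow 0
\]
is a free resolution of $\mathcal{O}/\mathcal{I}$ in the category of $\mathcal{O}$-modules.

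Next, since ${\mathrm{Der}}(\mathcal{O})=\bigoplus_{a=1}^d\mathcal{O}\,\partial/\partial x_a$ is a free $\mathcal{O}$-module, tensoring the Koszul resolution by ${\mathrm{Der}}(\mathcal{O})$ preserves exactness. Thus the complex
\[
\cdots\xrightarrow{\partial\otimes{\mathrm{id}}}\mathcal{K}_{-2}\otimes_{\mathcal{O}}{\mathrm{Der}}(\mathcal{O})\xrightarrow{\partial\otimes{\mathrm{id}}}\mathcal{K}_{-1}\otimes_{\mathcal{O}}{\mathrm{Der}}(\mathcal{O})\xrightarrow{\partial\otimes{\mathrm{id}}}{\mathrm{Der}}(\mathcal{O})
\]
is exact, and the image of the last arrow is $\sum_i\varphi_i\,{\mathrm{Der}}(\mathcal{O})=\mathcal{I}\,{\mathrm{Der}}(\mathcal{O})$. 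In other words, the truncated complex \eqref{eq:resolIX} together with the hook $\pi$ is a free resolution of $\mathcal{I}\,{\mathrm{Der}}(\mathcal{O})$. Concretely, if $\sum_i\mu_i\otimes X_i\in\mathcal{K}_{-1}\otimes_{\mathcal{O}}{\mathrm{Der}}(\mathcal{O})$ satisfies $\sum_i\varphi_i X_i=0$, writing $X_i=\sum_a f_{ia}\partial_{x_a}$ gives $\sum_i\varphi_i f_{ia}=0$ for each $a$; regularity of the sequence then produces elements in $\mathcal{K}_{-2}$ whose $\partial$-image is $\sum_i f_{ia}\mu_i$, and collecting over $a$ exhibits the preimage in $\mathcal{K}_{-2}\otimes_{\mathcal{O}}{\mathrm{Der}}(\mathcal{O})$. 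The same argument lifted one step higher handles every degree.

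Finally, since \eqref{eq:resolIX} is now known to be a free resolution of the Lie-Rinehart algebra $\mathcal{I}\,{\mathrm{Der}}(\mathcal{O})$, and carries by Proposition \ref{prop:existsLieInfty} a Lie $\infty$-algebroid structure that covers $\mathcal{I}\,{\mathrm{Der}}(\mathcal{O})$ through $\pi$, this Lie $\infty$-algebroid is acyclic and covering, and so is a universal Lie $\infty$-algebroid of $\mathcal{I}\,{\mathrm{Der}}(\mathcal{O})$ by Corollary \ref{cor:unique}. The main (and only nontrivial) obstacle is the exactness check, which is where the regularity hypothesis on $(\varphi_1,\dots,\varphi_k)$ is essential — without it the Koszul complex has nontrivial higher homology and the argument breaks down; everything else is a direct invocation of results already established earlier in the text.
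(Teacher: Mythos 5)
Your proof is correct and follows essentially the same route as the paper: identify $(\mathcal K_\bullet,\partial)$ as the Koszul complex of the regular sequence, hence a free resolution of the ideal, tensor with the free (hence flat) module $\mathfrak X(\mathbb C^d)$ to get a free resolution of $\mathcal I_W\mathfrak X(\mathbb C^d)$, and conclude universality from Proposition \ref{prop:existsLieInfty} together with the uniqueness results. The extra element-chasing verification you include is not needed but does no harm.
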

\begin{proof}
In the notation of the proof of Proposition \ref{prop:existsLieInfty}, $ \mathcal K_\bullet$ equipped with the derivation $ \partial=\sum_{i=1}^k \varphi_i \frac{\partial}{\partial \mu_k} $ is a free $ \mathcal O$-resolution of the ideal $\mathcal I_W  $ of functions vanishing along $W$, since $\varphi_1, \dots, \varphi_k $ is  a regular sequence. Since $\mathfrak X(\mathbb{C}^d)$ is a flat $\mathcal O $-module,  the sequence \begin{equation}\label{eq:resolIXX}
\xymatrix{\cdots\ar[rr]^-{\partial\otimes_\mathcal O \text{id}} & & \mathcal K_{-2} \otimes_\mathcal O \mathfrak X(\mathbb{C}^d) \ar^{\partial\otimes_\mathcal O \text{id}}[rr]& & \mathcal K_{-1}\otimes_\mathcal O \mathfrak X(\mathbb{C}^d) \ar^ {\partial\otimes _\mathcal O\text{id}}[rr] & & \mathcal I_W \mathfrak X(\mathbb{C}^d).
}
\end{equation} is a free $\mathcal O $-resolution of the singular foliation $\mathcal I_W \mathfrak X(\mathbb{C}^d)$.
The Lie $\infty $-algebroid structure of Proposition \ref{prop:existsLieInfty} is therefore universal.
\end{proof}

\begin{example}
As a special case of the Proposition \ref{complete},  let us consider a complete intersection defined by \emph{one} function, i.e. an affine variety $W$ whose ideal $\langle\varphi\rangle$ is generated by a regular polynomial $\varphi \in \mathbb{C}[X_1,\ldots,X_d]$. One has a free resolution of the space of vector fields vanishing on $W$ given as follows: 
$$ 
\xymatrix{0\ar[r] & \mathcal{O}\mu \otimes_\mathcal O \X(\mathbb C^d) \ar^{\varphi\frac{\partial}{\partial\mu}\otimes_\mathcal O \text{id}}[rr] & & I_W \mathfrak X({\mathbb C^d}}),
$$
where $\mu$ is a degree $ -1$ variable, so that $\mu^2=0$. The universal Lie $\infty$-algebroid structure over that resolution is given on the set of generators by :$$\{\mu\otimes_\mathcal O \partial_{x_a},\mu\otimes_\mathcal O \partial_{x_b}\}_2:=\frac{\partial\varphi}{\partial x_a}\mu\otimes_\mathcal O \partial_{x_b}-\frac{\partial\varphi}{\partial x_b}\mu\otimes_\mathcal O \partial_{x_a}$$ and $\{\cdots\}_k:=0$ for every $k\geq 3$. It is a Lie algebroid structure. Notice that this construction could be also be recovered using Section \ref{sec:codim1}.
\end{example}

    \vspace{2cm}

\begin{tcolorbox}[colback=gray!5!white,colframe=gray!80!black,title=Conclusion:]
This chapter described the $1$-$1$ correspondence "Lie-Rinehart algebras $\longleftrightarrow$ Lie $\infty$-algebroids on acyclic complexes". It extends greatly \cite{LLS} for singular foliations. The functor "$\longleftarrow$" consists in the $1$-truncation of the  Lie $\infty$-agebroid structure. The converse functor consists in taking any free resolution, and constructing the brackets by recursion.\\

\phantom{cc}We prove that it is unique by proving it is universal. Notice that we need the "complicated" notion of homotopy given in Definition \ref{def:homotopy}.\\

\phantom{cc}Last, some examples of \cite{LLS} are conceptually understood, and new examples are given. Some algebraic constructions (blow-up, localization, germs, quotient) are also given.\\

\phantom{cc}Obstruction classes to the existence of a Lie algebroid with a surjective
morphism onto the Lie-Rinehart algebra are also described.\\

\begin{center}
    We never assume finite rank here!
\end{center}

\end{tcolorbox}

\part{Geometric Applications}
\chapter{Universal Lie $\infty$-algebroids of affine varieties}
\label{chap:5}
In this chapter, we apply the results of Chapter \ref{Chap:main} to answer some elementary but open questions that have to do with algebraic geometry, such as the interaction between the singularities of an affine variety and its Lie algebra of vector fields.

Notice that the Theorem 2.1 of Chapter \ref{sec:main} only says that it is possible to associate a  Lie $\infty$-algebroid structure to an affine variety by considering the Lie-Rinehart algebra made of its vector fields. But the construction can be extremely complicated, see e.g. Section \ref{example-oid}. We only have an existence theorem. This leads to the natural question:\\

\noindent
{\textbf{Question}.\emph{ How is the geometry of an affine variety related to its universal Lie $\infty$-algebroid?}}\\

For instance, in view of the construction of Section 3.10. It is also relevant to ask about the effect of blow-ups on this construction. We will see in Example \ref{ex:counter} that blow-ups may change a universal Lie
$\infty$-algebroid to a Lie algebroid. But this example does not tell us really how the higher brackets
disappear under the effect of blow-ups. One of the important question we may ask is about
the description of “the big theorem” of Hironaka \cite{KollarJanos} in terms of the universal Lie $\infty$-algebroids
obtained at each step while resolving singularities. This question remains open, but there are several other problems about which we are able to make some progresses. Those are quite modest, but, at least, we want to have the question clarified. Application to "blowup-up" will be discusses in Section \ref{blow-up-procedure}.

\section{Background on  affine varieties and some constructions}\label{sec:reminder-affine-variety}
We recall definitions and some main properties of the notion of affine variety in order to fix notations. Our main references for this chapter are \cite{Hartshorne,zbMATH00704831,lakshmibai2015grassmannian}.

In this chapter we will sometimes see $\mathbb{C}^d$ as the $d$-dimensional affine space which is commonly denoted by $\mathbb{A}^d_\mathbb{C}$, forget about its vector space structure, but here we will not make any notational distinctions. The latter is equipped  with the  Zariski topology  that is, the topology whose closed subsets are  the zero set of some ideal $\mathcal I\subseteq{\mathbb C[x_1,\ldots, x_d]=:\mathcal O}$, i.e. which are of the form $\{a=(a_1,\ldots,a_d)\in \mathbb{C}^d\mid f(a)=0, \forall f\in \mathcal{I} \}.$  One can check that these subsets indeed define a topology on $\mathbb{C}^d$ \cite{Smith-Karen-E,Hartshorne,lakshmibai2015grassmannian}.

\begin{definition}\label{def:affine-v}
   An affine variety $W\subseteq{\mathbb{C}^d}$ is a  the zero locus of an ideal $\mathcal I\subseteq{ \mathcal O}$, i.e., $W:=Z(\mathcal I):=\{a=(a_1,\ldots,a_d)\in \mathbb{C}^d\mid f(a)=0, \forall f\in \mathcal{I}\}.$  It admits a topology, induced by the Zariski topology in $\mathbb C^d$.
\begin{remark}
   In  Definition \ref{def:affine-v}, we do not exclude irreducible varieties, e.g.  $W=\{(x,y)\in \mathbb C^2\mid xy=0\}$ is an affine variety.
\end{remark}
 \end{definition}
The following facts and remarks are important. For $W\subseteq \mathbb{C}^d$ an affine variety in the notation of Definition \ref{def:affine-v},
\begin{enumerate}
    \item we denote by $\mathcal{I}_W$ the vanishing ideal of $W$, namely, $$\mathcal{I}_W=\{ f\in \mathcal{O} \mid f(x)=0,\;\forall \,x\in W\}.$$In general, we have $\mathcal{I}_W\neq \mathcal{I}$ because the vanishing ideal of the affine variety ${W}=\{x^2=0\}$ is the ideal $\mathcal{I}_W=\langle x\rangle\neq \langle x^2\rangle$. Notice that $\mathcal{I}_W$ can be defined for any arbitrary subset $W\subset \mathbb{C}^d$.

 It is easy to check that
 \begin{enumerate}
     \item for  $S\subseteq T\subseteq{\mathcal O}$, one has $Z(S)\supseteq Z(T)$,
     \item for $U\subseteq V\subseteq{\mathbb{C}^d}$, one has $\mathcal{I}_U\supseteq \mathcal{I}_V$.
 \end{enumerate}
 
 \item The ideal $\mathcal{I}_W$ is larger than $\mathcal I$. Hilbert's Nullstellensatz theorem (e.g. see Theorem 1.6  of \cite{zbMATH00704831}) claims that $\mathcal{I}_W=\sqrt{\mathcal{I}}:=\{f\in \mathcal{O}\mid f^N\in \mathcal{I},\;\text{for some}\; N\in \mathbb N\}$.

 \item We have, $W=Z(\mathcal I_W)$
: if $x\in W$, then by definition of $\mathcal{I}_W$, one has  $f(x)=0$ for all $f\in\mathcal{I}_W$. Whence, $W\subseteq Z(\mathcal I_W)$. 
Conversely, it is clear that $\mathcal I\subseteq\mathcal{I}_W$. This fact proves the other inclusion.
 

 \item Also, by Noetheriality of the polynomial ring $\mathcal{O}$,  the ideal $\mathcal{I}_W\subset\mathcal{O}$ is generated by a finite number of generators. In the sequel, we shall define an affine variety $W$ as the zero locus of an ideal generated by a finite set of polynomials $\varphi_1,\ldots,\varphi_r\in \mathcal O$.
 
 \item The Zariski closure $\overline{V}$ of a subset $V\subseteq\mathbb{C}^d$ is equal to $Z(\mathcal{I}_V)$: by definition of $\mathcal{I}_V$, one has $V\subseteq Z(\mathcal{I}_V)$. Conversely, $V\subseteq \overline{V}=Z(\mathcal{I})$ for some ideal $\mathcal{I}\subseteq\mathcal{O}$. By item 1.(b), $\mathcal{I}_{\overline{V}}\subseteq \mathcal{I}_V$. In particular, $\mathcal{I}\subseteq \mathcal{I}_V$. By item 1.(a), this implies $Z(\mathcal{I}_V)\subseteq Z(\mathcal{I})=\overline{V}$.
\end{enumerate}

\begin{definition}Let $W\subseteq \mathbb{C}^d$ be an affine variety. 
   \begin{enumerate}
    \item A function $F\colon W \rightarrow \mathbb{C}$ is said to be a \emph{polynomial} if  $F (x) =f (x),\, \forall x \in W$, for some element $f\in  \mathcal O$. The set $\mathbb{C}[W]$ of polynomial functions on $W$ is, under the restriction map $f\in \mathcal O\mapsto f_{|_W}$,  isomorphic the quotient $ \mathcal O/\mathcal{ I}_W=:\mathcal{O}_W$, called the \emph{coordinate ring} of $W$.
    
    \item  Elements of the Lie algebra of $\mathbb{C}$-linear derivations, $\mathrm{Der}(\mathcal{O}_W)=:\mathfrak{X}(W)$, of  $\mathcal{O}_W$ are called \emph{vector fields on $W$}.
\end{enumerate}
\end{definition}

\begin{remark}Notice that

\begin{enumerate}
\item the coordinate ring $\mathcal{O}_W$ of an affine variety besides being a ring is also a vector space over $\mathbb C$, hence it is a $\mathbb{C}$-algebra. This algebra is generated by the images $\bar{x}_1,\ldots, \bar{x}_d$ in $\mathcal{O}_W$ through the projection map of the coordinate functions $x_1,\ldots, x_d \in\mathcal O$.

For each $a\in W$, the kernel of  the \emph{evaluation map} $\mathrm{ev}_a\colon \mathcal{O}_W \rightarrow \mathbb{C},\, F\mapsto F(a)$, is the maximal ideal $\mathfrak{m}_a:=\ker(\mathrm{ev}_a)$ made of all polynomial functions on $W$ that vanish at $a$.
   \item $W = \mathbb{C}^d$ is an affine variety with  $\mathcal{I}_W= \{0\}$, and $\mathcal{O}_W= \mathcal O$.
\item $W=\{a\}\subseteq{\mathbb{C}^d}$ is an affine variety with $\mathcal{I}_W= (x_1- x_1(a),\ldots,x_d- x_d(a))$ the maximal ideal of $a$, and $\mathcal{O}_W = \mathbb{C}$. 
\item For an affine variety $W\subset{\mathbb{C}^d}$ with corresponding ideal $\mathcal{I}_W$, we have $$\mathrm{Der}(\mathcal{O}_W)\simeq\frac{\{X\in \mathfrak{X}(\mathbb C^d)\mid  X[\mathcal{I}_W]\subset{\mathcal I_W}\}}{ \mathcal I _W \mathfrak{X}(\mathbb{C}^d)}.$$ 
\end{enumerate}
\end{remark}
\begin{lemma}
For every affine variety $W\subseteq{\mathbb{C}^d}$, the ring  $\mathcal{O}_W$ is Noetherian.
\end{lemma}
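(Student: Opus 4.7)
The plan is to deduce this from two classical results in commutative algebra: Hilbert's basis theorem and the fact that quotients of Noetherian rings are Noetherian. Both are standard, so the proof will really be a short invocation of the literature rather than a genuine argument.

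First I would recall that $\mathcal{O} = \mathbb{C}[x_1,\ldots,x_d]$ is Noetherian. This is exactly Hilbert's basis theorem: since $\mathbb{C}$ is a field (and hence trivially Noetherian), applying the basis theorem inductively $d$ times shows that the polynomial ring in $d$ variables over $\mathbb{C}$ is Noetherian. This is where the finite dimension of the ambient space $\mathbb{C}^d$ enters; without it, the argument would fail.

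Next, I would use the identification $\mathcal{O}_W \simeq \mathcal{O}/\mathcal{I}_W$, which was already established right before the statement. The key general fact is that if $R$ is a Noetherian ring and $\mathcal{J}\subset R$ is any ideal, then the quotient ring $R/\mathcal{J}$ is again Noetherian. This is immediate from the correspondence theorem: ideals of $R/\mathcal{J}$ correspond bijectively, via the projection $\pi\colon R \to R/\mathcal{J}$, to ideals of $R$ containing $\mathcal{J}$, and this correspondence preserves ascending chains. An ascending chain of ideals in $R/\mathcal{J}$ therefore pulls back to an ascending chain in $R$, which must stabilize, and hence so does the original chain.

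Combining these two observations yields the conclusion: $\mathcal{O}_W \simeq \mathcal{O}/\mathcal{I}_W$ is a quotient of the Noetherian ring $\mathcal{O}$, so it is Noetherian. There is no real obstacle here; the only thing to be careful about is to state explicitly that we rely on Hilbert's basis theorem (perhaps with a reference to \cite{Hartshorne} or \cite{zbMATH00704831}) rather than reproving it.
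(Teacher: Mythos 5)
Your proposal is correct and follows essentially the same route as the paper: both reduce to the fact that a quotient of the Noetherian ring $\mathcal{O}=\mathbb{C}[x_1,\ldots,x_d]$ by $\mathcal{I}_W$ is Noetherian, the only cosmetic difference being that the paper verifies this via finite generation of the pulled-back ideal $p^{-1}(\mathcal{I})$ while you use the equivalent ascending chain condition. Your explicit invocation of Hilbert's basis theorem for the Noetherianity of $\mathcal{O}$ is a harmless (and arguably welcome) addition the paper leaves implicit.
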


\begin{proof}
Let $\mathcal I$ be an ideal of $\mathcal O_W$. Denote by  $p\colon \mathcal{O} \rightarrow \mathcal{O}/\mathcal I_W$ be the quotient map. Then $p^{-1}(\mathcal I)$ is also an ideal of $\mathcal{O}$. By Noetheriality of $\mathcal O$,  $p^{-1}(\mathcal I)$ is finitely generated. In particular, $\mathcal{I}= p(p^{-1}(\mathcal I))$ is finitely generated. This shows that $\mathcal{O}_W$ is Noetherian. 
\end{proof}

\subsubsection[]{Germs} 	
Here we mention the notion of local rings. We refer the reader to \cite{Weibel,Matsumura,zbMATH00704831} for more details.\\

Let $W\subseteq\mathbb C^{N} $ be an affine variety and $\mathcal O_W$ its coordinates ring. We recall for $U\subseteq W$ an open subset, a function $f\colon U\longrightarrow \mathbb C$ is said to be \emph{regular} at $a\in U$ if there exists $g,h\in \mathcal O_W$ with $h(a)\neq 0$ such that $f=\frac{g}{h}$ in a neighborhood of $a$, namely there exists an open set $ V\subset U$ that contains $a$ such that $f|_V=\frac{g}{h}|_V$. A \emph{function germ} at a point $a\in W$ is an equivalence class $(f)_a$ of pairs $(U,f)$ with $a\in U\subset W$ an open subset containing $a$, and $f\colon U\longrightarrow \mathbb C$ is regular at $a$, under the relation equivalence: $(U,f)\sim(V,g)$ if $f|_\mathcal{W}=g|_\mathcal{W}$ on an open subset $\mathcal W\subseteq U\cap V$. The set of equivalence classes of the above equivalence relation inherits naturally an associative $\mathbb{C}$-algebra, that is called \emph{germs of regular functions at $a$} and is denoted by $\mathcal O_{W,a}$. Also, a function germ $(f)_a$ at $a\in W$ has a well-defined value at $a$, given by the image of any representative $(U,f)$  at $a$, namely  $(f)_a(a):=f(a)$. Since the map $$\mathcal O_{W,a}\longrightarrow (\mathcal O_W)_{\mathfrak m_{W,a}},\;(U,f)\mapsto f|_U=\frac{g}{h}$$ with $g,h\in \mathcal O_W$ and $h$ does not vanish on $U$, is a bijection. One has, $\mathcal O_{W,a}\simeq(\mathcal O_W)_{\mathfrak m_{W,a}}$ \cite{Hartshorne}.  Here $\mathfrak m_{W,a}=\{f\in \mathcal O_W\mid f(a)=0 \}$ and $(\mathcal O_W)_{\mathfrak m_{W,a}}$ is the localization w.r.t the complement of $\mathfrak m_{W,a}$. It is important to notice that $\mathcal O_{W,a}$ is a local ring. We denote the unique maximal ideal
of $\mathcal O_{W,a}$ again by $\mathfrak m_{W,a}$. Also, It is worth it to notice that $\mathcal O_{W,a}$ isomorphic to the quotient $\mathcal{O}_{a}/\mathcal{I}_a$ of the local ring $\mathcal{O}_{a}$ of $\mathbb{C}^d$ at $a$ by the
ideal $\mathcal I_a$ which is spanned by the ideal $\mathcal{I}_W$ in $\mathcal{O}_{a}$.


\subsubsection{The Zariski tangent space}
Let $a\in W\subseteq \mathbb{C}^d$ a point of an affine variety $W$. There are several equivalent descriptions of the tangent  space of $W$ variety at the point $a$. Here we define it as pointwise derivations of the local ring $\mathcal{O}_{W,a}$, see \cite{Hartshorne,Igor} or Appendix B.2 of \cite{CPA}, for more details on this topic.\\

A \emph{pointwise derivation} of $\mathcal{O}_{W}$ at $a$ is a $\mathbb{C}$-linear map
$$\delta_a\colon \mathcal{O}_{W,a}\rightarrow \mathbb{C}$$
satisfying the following Leibniz identity, $\delta_a(FG)=\delta_a(F)G(a) +F(a)\delta_a(G)$. In particular, if a regular function $f$ is constant in a neighborhood of $a$  then its germ $(f)_a$ at $a$ satisfies $\delta_a((f)_a)=0$, since \begin{align*}
    \delta_a(1\cdot 1)&=\delta_a(1)\cdot 1 +1\cdot \delta_a(1)=2\delta_a(1)\\\delta_a(1)&=0.
\end{align*}

It is not hard to check that the set of all pointwise derivations of $\mathcal{O}_W$ at $a$ is a $\mathbb{C}$-vector space.\\
\begin{remark}\label{rmk:pointwise-derivation}
Assume now that $W=\mathbb{C}^d$. Denote by  $(e_1, \ldots,e_d)$ the canonical basis of $\mathbb{C}^d$. For every $i\in\{1,\ldots,d\}$ $$(e_i)_a\colon \mathcal{O}_{W,a}\rightarrow \mathbb{C}, \,(f)_a\mapsto\lim_{t\to 0}\frac{f(a+te_i)-f(a)}{t}=:\frac{\partial f}{\partial x_i}(a),$$
is a well-defined pointwise derivation of $\mathcal{O}$ at $a$. One can show that (see e.g \cite{CPA}, Appendix B.2) any pointwise derivation $\delta_a$ of $\mathcal{O}$ at $a\in \mathbb{C}^d$ has the form \begin{equation}\label{eq:pointwise-derivation}\delta_a=\sum_{i=1}^d\delta_a((x_i)_a)\,(e_i)_a,\end{equation}i.e., \begin{equation}\delta_a(f)_a=\sum_{i=1}^d\frac{\partial f}{\partial x_i}(a)\,\delta_a(x_i)_a.\end{equation} Hence, pointwise derivations $(e_i)_a$, $i=1, \ldots,d$ form a basis for the vector space of pointwise derivation of $\mathcal O$ at $a$. 
\end{remark}

\begin{definition}
   The \emph{Zariski tangent space} $T_aW$ of $W\subseteq \mathbb{C}^d$ at $a\in W$ is the vector space of all pointwise derivations of $\mathcal{O}_W$ at $a$.
\end{definition}

\begin{proposition}\cite{Igor}
For $a\in W$, one has $T_aW\simeq\left(\mathfrak m_{W,a} /\mathfrak{m}^2_{W,a}\right)^* $. 
\end{proposition}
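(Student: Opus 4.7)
The plan is to construct explicit mutually inverse linear maps between $T_aW$ and $(\mathfrak{m}_{W,a}/\mathfrak{m}_{W,a}^2)^*$, exploiting the decomposition $\mathcal{O}_{W,a} = \mathbb{C} \oplus \mathfrak{m}_{W,a}$ as $\mathbb{C}$-vector spaces (a consequence of $\mathcal{O}_{W,a}/\mathfrak{m}_{W,a} \simeq \mathbb{C}$, since $\mathcal{O}_{W,a}$ is a local ring with residue field $\mathbb{C}$).

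First, I would define the forward map
$$\Phi \colon T_a W \longrightarrow (\mathfrak{m}_{W,a}/\mathfrak{m}_{W,a}^2)^*, \qquad \delta_a \longmapsto \overline{\delta_a|_{\mathfrak{m}_{W,a}}}.$$
The key observation making this well-defined is that $\delta_a$ vanishes on $\mathfrak{m}_{W,a}^2$: for any $f,g \in \mathfrak{m}_{W,a}$ the Leibniz identity gives $\delta_a(fg) = f(a)\delta_a(g) + g(a)\delta_a(f) = 0$ because $f(a)=g(a)=0$. Linearity of $\Phi$ in $\delta_a$ is immediate.

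Next, I would construct the inverse. Given $\lambda \in (\mathfrak{m}_{W,a}/\mathfrak{m}_{W,a}^2)^*$, define
$$\delta_\lambda \colon \mathcal{O}_{W,a} \longrightarrow \mathbb{C}, \qquad f \longmapsto \lambda\bigl(\,\overline{f - f(a)}\,\bigr).$$
The main thing to check here is the Leibniz rule. Writing $f = f(a) + f_0$ and $g = g(a) + g_0$ with $f_0, g_0 \in \mathfrak{m}_{W,a}$, a direct expansion gives
$$fg - (fg)(a) = f(a)\,g_0 + g(a)\,f_0 + f_0 g_0,$$
and since $f_0 g_0 \in \mathfrak{m}_{W,a}^2$ we obtain, modulo $\mathfrak{m}_{W,a}^2$, the identity $\overline{fg-(fg)(a)} = f(a)\,\overline{g_0} + g(a)\,\overline{f_0}$. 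Applying $\lambda$ yields $\delta_\lambda(fg) = f(a)\delta_\lambda(g) + g(a)\delta_\lambda(f)$, so $\delta_\lambda \in T_aW$.

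Finally, I would check that $\Phi$ and $\lambda \mapsto \delta_\lambda$ are mutually inverse. In one direction, $\Phi(\delta_\lambda)$ sends $f \in \mathfrak{m}_{W,a}$ to $\delta_\lambda(f) = \lambda(\bar f)$ (since $f(a)=0$), which is just $\lambda$. In the other, starting from $\delta_a \in T_aW$, setting $\lambda := \Phi(\delta_a)$ gives $\delta_\lambda(f) = \delta_a(f - f(a)) = \delta_a(f)$, using once more that $\delta_a$ vanishes on constants (as noted in the text following the definition of pointwise derivation). Since both assignments are manifestly $\mathbb{C}$-linear, this establishes the isomorphism. There is no serious obstacle in this argument; the only subtlety is the routine but essential verification that $\delta_\lambda$ satisfies Leibniz, which hinges on the absorption $f_0 g_0 \in \mathfrak{m}_{W,a}^2$.
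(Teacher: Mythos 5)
Your proof is correct and is the standard argument: the paper itself gives no proof of this proposition (it simply cites the reference), and your two explicit maps — restriction to $\mathfrak{m}_{W,a}$ using that derivations kill $\mathfrak{m}_{W,a}^2$, and $\lambda \mapsto \delta_\lambda$ with $\delta_\lambda(f) = \lambda(\overline{f-f(a)})$ — together with the Leibniz verification via $f_0 g_0 \in \mathfrak{m}_{W,a}^2$ constitute exactly the textbook proof being invoked. Nothing is missing.
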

The vector space $\mathfrak m_{W,a} /\mathfrak{m}^2_{W,a}$ is called the \emph{cotangent space} to $W$ at $a$.

\begin{remark}Notice that,
\begin{enumerate}
    \item by Remark \ref{rmk:pointwise-derivation}, we have $T_a\mathbb{C}^d\simeq \mathbb{C}^d$.

\item the tangent space $T_aW$ of $W\subseteq \mathbb{C}^d$ at $a\in W$  can be seen as  pointwise derivations $\delta_a$ of $\mathcal{O}$ at $a$ of the form \eqref{eq:pointwise-derivation} such that $\delta_a((f)_a)=0$ for all $f\in \mathcal{I}_W$. From this point of view, one has

$$T_aW\simeq \left\lbrace(v_1,\ldots,v_d)\in \mathbb{C}^d\, \middle|\,   \displaystyle{\sum_{i=1}^dv_i\frac{\partial f}{\partial x_i}(a)=0},\, \forall f\in \mathcal{I}_W\right\rbrace.$$
\end{enumerate}
\end{remark}

\subsubsection{Singularities of an affine variety $W$}
 In this section we recall some definitions and some facts on singularities on affine varieties and fix some notations. We refer the reader to \cite{Hartshorne,Igor} for the full theory.\\
	
There are various equivalent ways to define the dimension of an affine variety $W$, we refer the reader to  Page 4 of \cite{Hartshorne} also to the Chapter 11 of \cite{Harris-Joe}  for more details. The \emph{dimension} $\dim W$ of $W$ is defined to be the  maximal length $d$ of the chains ${\displaystyle W_{0}\subset W_{1}\subset \cdots \subset W_{d}}$ of distinct nonempty irreducible sub-varieties of $W$.  Notice that a chain of  irreducible sub-varieties corresponds to a chain of prime ideals in $\mathcal{O}_W$, by Noetheriality it must be of  finite length.
\begin{definition}
A point $a\in W$ is said to be  \emph{regular} if $\dim T_aW=
\dim W.$  Otherwise,  we say that $a$ is \emph{singular}.  The set of regular  points of $W$ is denoted by $W_{reg}$, and the singular ones by $W_{sing}$.\\

\noindent
We say that $W$ is \emph{regular} if $W_{sing}=\emptyset$. 
\end{definition}

\begin{proposition}\cite{Hartshorne, Igor}We have the following

\begin{enumerate}
    \item For every $a\in W$, $\dim T_aW\geq \dim W$ 
    \item There is an open dense  open subset of $W$ such that the map $a\mapsto \dim T_aW$ is constant. In particular, 
 \begin{itemize}
     \item regular points of $W$ form an open dense subset of $W$,
     \item and singular points a (closed) proper sub-variety of $W$.
 \end{itemize}
\end{enumerate}
 
\end{proposition}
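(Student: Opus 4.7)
The plan is to use the Jacobian description of the Zariski tangent space and combine it with standard commutative algebra facts about Krull dimension and upper semicontinuity of matrix rank.

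For item 1, I would first reduce to the case where $W$ is irreducible by working on each irreducible component separately (any $a\in W$ lies in some component $W'$, and $T_aW\supseteq T_aW'$ while $\dim W\geq \dim W'$, so proving the inequality component-by-component suffices if one is slightly careful; alternatively, one works locally at $a$ where only the components through $a$ matter). Fix generators $\varphi_1,\ldots,\varphi_r$ of $\mathcal I_W$ so that $W=Z(\mathcal I_W)\subseteq \mathbb C^d$ and $T_aW\simeq \ker J(a)$, where $J(a)=\bigl(\tfrac{\partial\varphi_i}{\partial x_j}(a)\bigr)$. Thus $\dim T_aW=d-\mathrm{rank}\,J(a)$. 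The key commutative-algebra input is that $\mathcal O_{W,a}$ is a Noetherian local ring with maximal ideal $\mathfrak m_{W,a}$, and $T_aW\simeq(\mathfrak m_{W,a}/\mathfrak m_{W,a}^2)^*$; by Krull's height theorem (or Nakayama's lemma for local Noetherian rings), the minimal number of generators of $\mathfrak m_{W,a}$, namely $\dim_{\mathbb C}\mathfrak m_{W,a}/\mathfrak m_{W,a}^2$, bounds the Krull dimension of $\mathcal O_{W,a}$ from above; and $\dim\mathcal O_{W,a}=\dim W$ when $W$ is irreducible (or $\dim W_a$, where $W_a$ is the maximal-dimensional component through $a$, in general). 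This gives the inequality.

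For item 2, the function $a\mapsto\dim T_aW=d-\mathrm{rank}\,J(a)$ is upper semicontinuous in the Zariski topology: the locus $\{a\in W:\mathrm{rank}\,J(a)\geq k\}$ is open because it is defined by the non-vanishing of at least one $k\times k$ minor of $J(a)$ (these are polynomials in the coordinates). Hence the sets $\{\dim T_aW\leq k\}$ are open in $W$, and in particular, the locus $U:=\{a\in W:\dim T_aW=\min_{b\in W}\dim T_bW\}$ is open and non-empty; since $W$ is the finite union of its irreducible components and each irreducible component admits an open dense subset on which the dimension is locally constant and equal to the minimum, $U$ is open and dense in $W$.

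It remains to identify this generic value of $\dim T_aW$ with $\dim W$. The clean way is to argue on a single irreducible component: pass to the function field $\mathrm{Frac}(\mathcal O_W)$, note that its transcendence degree over $\mathbb C$ equals $\dim W$, and use that the generic rank of the Jacobian $J(a)$ equals $d-\mathrm{trdeg}_{\mathbb C}\mathrm{Frac}(\mathcal O_W)$ (this is the Jacobian criterion in characteristic zero; it is where characteristic zero is used). Therefore $\dim T_aW=\dim W$ on a dense open subset, i.e.\ $W_{reg}$ is open and dense, and $W_{sing}$ is a proper Zariski-closed subvariety of $W$, cut out by the vanishing of the maximal-size minors of $J$ together with $\mathcal I_W$.

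The main obstacle is the last identification: the pointwise inequality of item 1 is easy from Krull's theorem, but matching the generic tangent-space dimension to $\dim W$ genuinely uses the Jacobian criterion and the characteristic-zero hypothesis. In positive characteristic, one would have to be careful about separability, but since we are over $\mathbb C$, the argument goes through.
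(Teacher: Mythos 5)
The paper offers no proof of this proposition at all: it is stated with a bare citation to Hartshorne and Shafarevich, so there is no internal argument to compare yours against. Your sketch is the standard one from those references, and its skeleton is sound: item 1 from $\dim T_aW=\dim_{\mathbb C}\bigl(\mathfrak m_{W,a}/\mathfrak m_{W,a}^2\bigr)\geq\dim\mathcal O_{W,a}$ via Krull's height theorem, and item 2 from upper semicontinuity of $a\mapsto\mathrm{rank}\,J(a)$ (openness of the loci where some $k\times k$ minor is nonzero) together with the characteristic-zero Jacobian criterion identifying the generic rank with $d-\dim W$ on an irreducible variety. You are also right that the separability input is the only genuinely nontrivial step.

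The one real gap is in your reduction of item 1 to irreducible components: it does not close. Arguing on the component $W'$ through $a$ gives $\dim T_aW\geq\dim T_aW'\geq\dim W'$, and since $\dim W'\leq\dim W$ the inequalities point the wrong way; no amount of care fixes this, because the printed inequality is simply false at a point lying only on a lower-dimensional component (take $W\subset\mathbb C^3$ the union of the plane $z=0$ and the $z$-axis, and $a=(0,0,1)$: then $\dim T_aW=1<2=\dim W$). The same example breaks the generic constancy of $a\mapsto\dim T_aW$ in item 2. What your local-ring argument actually proves is $\dim T_aW\geq\dim_aW$, the local dimension at $a$, and the proposition as printed is only literally correct for irreducible (or at least equidimensional) $W$ — which is the setting of the cited references, whereas this paper explicitly allows reducible varieties. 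You should either add that hypothesis or restate both items with $\dim_aW$; with that correction the rest of your argument goes through.
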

\begin{remark}
In particular,  $a\in W$ is a singular point of $W$ if only if $\dim T_aW>\dim W$. That is, $$W_{sing}=\{a \in W\,\mid \, \dim T_aW  > \dim W\}.$$
\end{remark}

\subsubsection{Local coordinates at a point}	

Let $a\in W\subseteq \mathbb{C}^d$. We recall that (see e.g \cite{Igor}) that a family of elements $t_1,\ldots, t_r\in \mathcal{O}_{W,a}$ are called \emph{local coordinates} of $W$ at $a$, if they vanish at $a$ (i.e.  $t_i\in \mathfrak{m}_{W,a}$\, for $i=1,\ldots,r$), and if the classes of $\overline{t}_1,\ldots,\overline{t}_r\in \mathfrak m_{W,a} /\mathfrak m^2_{W,a}$ form a basis.\\
\begin{example}
If $a = (a_1,\ldots, a_d) \in\mathbb{C}^d$, then $x_1-a_1,\ldots,x_d- a_d$ are local coordinates of $\mathbb{C}^d$ at $a$.
\end{example}
\begin{remark}Notice that:
\begin{enumerate}
    \item[] Local coordinates at $a\in \mathbb C^d$ generate the maximal ideal $\mathfrak{m}_a$ of $\mathcal{O}_{a}$. Indeed, let $t_1,\ldots, t_d\in \mathcal{O}_{a}$ be local coordinates at $a$. By applying Nakayama Lemma (\ref{Nakayama})  to $R=\mathcal{O}_{a}\supseteq \mathfrak{m}_{a}$ and $\mathcal{V}=\mathfrak{m}_{a}$: the  basis $\overline{t}_1, \ldots , \overline{t}_d\in{\mathfrak{m}_{a}}/{\mathfrak{m}^2_{a}}$ lifts
to a (minimal) generating set $t_1,\ldots, t_d$ for $\mathfrak{m}_{a}$.
    
\end{enumerate}
\end{remark}

The following proposition explains how an affine variety looks around a regular point (see \cite{HauserHerwig} Proposition 3.5,  also \cite{de-Jong-T-Pfister}).

\begin{proposition}\label{local-form-of-variety}
Let  $W\subseteq \mathbb{C}^d$ be affine variety of codimension $k$, (i.e., $k=d-\dim W$). Then, $W$ is regular at a point $a\in W$ if and
only if there exist local coordinates $y_1,\ldots, y_d$ of $\mathbb{C}^d$ at $a$ such that $W$ is locally of the form
 $$y_1 = \cdots = y_k = 0.$$
\end{proposition}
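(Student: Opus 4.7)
The plan is to translate the statement into a purely local-algebra statement about the local ring $\mathcal{O}_a$ of $\mathbb{C}^d$ at $a$ and its quotient $\mathcal{O}_{W,a}=\mathcal{O}_a/\mathcal{I}_a$, where $\mathcal{I}_a$ is the ideal generated by $\mathcal{I}_W$ in $\mathcal{O}_a$. Saying that local coordinates $y_1,\ldots,y_d$ at $a$ exist with $W$ locally given by $y_1=\cdots=y_k=0$ amounts, via the correspondence between germs of subvarieties and (radical) ideals in $\mathcal{O}_a$, to asserting that $\mathcal{I}_a=(y_1,\ldots,y_k)$ and that $y_1,\ldots,y_d$ form a $\mathbb{C}$-basis of $\mathfrak{m}_a/\mathfrak{m}_a^2$ (by Nakayama, equivalent to generating $\mathfrak{m}_a$). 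So the statement is an equivalence between regularity of $\mathcal{O}_{W,a}$ in the commutative-algebra sense and the existence of such a system of parameters in $\mathcal{O}_a$ cutting out $W$.

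For the direction ($\Rightarrow$), I would argue as follows. Regularity at $a$ means $\dim T_aW=\dim W=d-k$; dualising gives $\dim_\mathbb{C}\mathfrak{m}_{W,a}/\mathfrak{m}_{W,a}^2=d-k$. From the exact sequence
\[
\mathcal{I}_a/(\mathcal{I}_a\cap\mathfrak{m}_a^2)\hookrightarrow \mathfrak{m}_a/\mathfrak{m}_a^2\twoheadrightarrow \mathfrak{m}_{W,a}/\mathfrak{m}_{W,a}^2,
\]
the image of $\mathcal{I}_a$ in $\mathfrak{m}_a/\mathfrak{m}_a^2$ has dimension $k$. Pick $y_1,\ldots,y_k\in \mathcal{I}_a$ whose classes are a basis of this image, and extend to a full basis $y_1,\ldots,y_d$ of $\mathfrak{m}_a/\mathfrak{m}_a^2$. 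By Nakayama applied to $\mathfrak{m}_a$ as an $\mathcal{O}_a$-module, the $y_i$'s generate $\mathfrak{m}_a$ and are therefore local coordinates of $\mathbb{C}^d$ at $a$.

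It then remains to check that the ideal $J=(y_1,\ldots,y_k)\subset\mathcal{O}_a$ actually equals $\mathcal{I}_a$ and not merely sits inside it. Here I would use the standard regular-local-ring dimension argument: $\mathcal{O}_a/J$ has Krull dimension $d-k$ and a maximal ideal generated by the images of $y_{k+1},\ldots,y_d$, so it is a regular local ring of dimension $d-k$, hence an integral domain. Likewise $\mathcal{O}_a/\mathcal{I}_a=\mathcal{O}_{W,a}$ is regular of the same dimension $d-k$ by hypothesis, hence also a domain. The surjection $\mathcal{O}_a/J\twoheadrightarrow\mathcal{O}_a/\mathcal{I}_a$ between two integral domains of equal Krull dimension must have zero kernel (a nonzero quotient of a Noetherian domain by a nonzero ideal drops the dimension), forcing $J=\mathcal{I}_a$. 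I expect this is the delicate step, because it is exactly the place where the regularity of $\mathcal{O}_{W,a}$ is used in an essential way: without it, one only gets an inclusion $J\subset\mathcal{I}_a$ and no equality.

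For the converse ($\Leftarrow$), the work is essentially bookkeeping. Given local coordinates $y_1,\ldots,y_d$ of $\mathbb{C}^d$ at $a$ with $\mathcal{I}_a=(y_1,\ldots,y_k)$, the quotient $\mathcal{O}_{W,a}=\mathcal{O}_a/(y_1,\ldots,y_k)$ has maximal ideal generated by the images of $y_{k+1},\ldots,y_d$, which are linearly independent in $\mathfrak{m}_{W,a}/\mathfrak{m}_{W,a}^2$ because the full family $y_1,\ldots,y_d$ was a basis of $\mathfrak{m}_a/\mathfrak{m}_a^2$. Hence $\dim_\mathbb{C} T_aW=\dim_\mathbb{C}(\mathfrak{m}_{W,a}/\mathfrak{m}_{W,a}^2)^*=d-k$, which equals $\dim W$, so $a$ is regular. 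This concludes the equivalence.
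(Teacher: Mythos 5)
Your argument is correct. Note first that the paper does not actually prove this proposition: it is quoted with a pointer to Hauser (Proposition 3.5) and de Jong--Pfister, so there is no ``paper proof'' to match against, and your write-up supplies a self-contained argument where the thesis defers to the literature. The route you take is the standard local-algebra one and all the steps go through: the conormal exact sequence gives $\dim_\mathbb{C}\bigl(\mathcal{I}_a+\mathfrak{m}_a^2\bigr)/\mathfrak{m}_a^2=k$, Nakayama upgrades the chosen basis of $\mathfrak{m}_a/\mathfrak{m}_a^2$ to a generating set of $\mathfrak{m}_a$, and the equality $J=\mathcal{I}_a$ follows from Krull's height theorem (which gives $\dim\mathcal{O}_a/J=d-k$ and regularity, hence integrality, of $\mathcal{O}_a/J$) together with the fact that a proper quotient of a Noetherian local domain has strictly smaller Krull dimension. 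You correctly flag this as the step where regularity of $\mathcal{O}_{W,a}$ is essential. The only point worth making explicit is the identification of the paper's pointwise notion of regularity ($\dim T_aW=\dim W$) with regularity of the local ring $\mathcal{O}_{W,a}$ of dimension $d-k$: this uses $\dim\mathcal{O}_{W,a}=\dim W$, i.e.\ that the component of $W$ through $a$ has the full dimension $d-k$, which is implicit in the statement's hypothesis that $W$ has codimension $k$ (and is automatic once $a$ is regular and $W$ is equidimensional). The converse direction is, as you say, bookkeeping, and your verification that the images of $y_{k+1},\ldots,y_d$ are linearly independent in $\mathfrak{m}_{W,a}/\mathfrak{m}_{W,a}^2$ is exactly what is needed.
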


\subsection{Three main constructions}\label{three-main-constructions}

Consider an affine variety $W$ which is given by an ideal $\mathcal I_W \subset \mathcal O $,  with $ \mathcal O$ the algebra of polynomials in $d$-variables, and $\mathcal{O}_W = \mathcal O/\mathcal I_W$ the algebra of functions on $W$. 
There are three natural Lie-Rinehart algebras associated to $W$: 
\begin{enumerate}
    \item  The  $\mathcal O_W$-module $\mathfrak X(W) $ of vector fields on $W$ (i.e. derivations of $\mathcal O_W$) is  a Lie-Rinehart algebra over $\mathcal O_W$; its anchor map is the identity.
    \item The  $\mathcal O$-module $\mathfrak X_W (\mathbb C^d) $ of vector fields on $\mathbb C^d $ tangent to $W$ (i.e.  derivations of $\mathcal O$ preserving $ \mathcal I_W$) is  a Lie-Rinehart algebra with respect to the $\mathcal O$-module structure; its anchor map is the inclusion $\mathfrak X_W (\mathbb C^d) \hookrightarrow  \mathfrak X (\mathbb C^d) $.
    \item The  $\mathcal O$-module $\mathcal I_W \mathfrak X (\mathbb C^d) $ of vector fields on $\mathbb C^d $ vanishing at every point of $W$ (i.e. $ \mathcal I_W$-valued  derivations of $\mathcal O$) is  a Lie-Rinehart algebra with respect to the $\mathcal O$-module structure; its anchor map is again the inclusion $\mathcal I_W \mathfrak X(\mathbb C^d) \hookrightarrow  \mathfrak X (\mathbb C^d) $.
\end{enumerate}

These three Lie-Rinehart algebras are related:
\begin{enumerate}
    \item There is an inclusion $\mathcal I_W \mathfrak X (\mathbb C^d) \subset \mathfrak X_W (\mathbb C^d)$
    \item  the restriction of 
$\mathfrak X_W (\mathbb C^d)$ to $W$  coincides with  $\mathfrak X(W)$. Let us justifies this. Every vector field on $W$ extends to $\mathbb{C}^d$: to see that, let $\delta\in \mathfrak{X}(W)$, we have  $\delta(x_i + \mathcal{I}_W) = f_i + \mathcal{I}_W$ for some $f_i\in  \mathbb{C}[x_1, \ldots, x_d], i = 1, \ldots, d$. We define the vector field $$\widetilde \delta :=\sum_{i=1}^d f_i\frac{\partial}{\partial x_i}$$ 
on $\mathbb{C}^d$. The vector field $\widetilde \delta$ restricts to $\delta$ on $W$, since  for every  $f \in  \mathbb{C}[x_1,\ldots, x_n]$, $$\widetilde \delta(f)+ \mathcal{I}_W = \delta(f+\mathcal I_W).$$ In particular,  $\widetilde \delta(\mathcal{I}_W)\subset \mathcal{I}_W$.

\end{enumerate}
Note that the Lie-Rinehart algebras $\mathfrak{X}_W(\mathbb{C}^d), \mathcal{I}_W\mathfrak X(\mathbb C^d)\subset \mathfrak X(\mathbb C^d)$ are finitely generated as $\mathcal{O}$-modules, since $\mathcal{O}$ is Noetherian (Proposition  \ref{prop:noetherian}). Whence, these Lie-Rinehart algebras are singular foliations on the complex manifold $\mathbb C^d$ in the sense of Example \ref{ex:singfoliation}. 


\begin{remark}\label{rmk:tangent-distribution}
What happens if we take a look at the evaluation map at some point $a\in \mathbb{C}^d$? Any vector field $X=\displaystyle{\sum_{i=1}^dX[x_i]\frac{\partial}{\partial x_i}}\in\mathfrak X_W (\mathbb C^d)$ tangent to $W$ induces a pointwise derivation of $\mathcal{O}_{W}$ at $a\in W$ as follows: \begin{enumerate}
    \item We extend $X$ by localization at the maximal ideal $\mathfrak{m}_a$ to a derivation $(X)\in \mathrm{Der}(\mathcal{O}_{W,a})$.
    
    \item We define $X|_a((f)_a):=\displaystyle{\sum_{i=1}^dX[x_i](a)(e_i)_a((f)_a)}\in T_aW$.
\end{enumerate}
$X|_a$ is well-defined, since $X[\mathcal{I}_W]\subset \mathcal{I}_W$ for all $f\in\mathcal{I}_W$, in particular $X|_a((f)_a)=0$ for all $f\in\mathcal{I}_W$.\\

In fact, we do not need to localize $X$ to define $X|_a:=(X[x_1](a),\ldots,X[x_d](a))\in T_aW\hookrightarrow \mathbb C^d$.
\end{remark}

\begin{lemma}\label{lemma:Distribution-aff-v}
The image of the map
\begin{equation}
    \mathrm{Ev}_a\colon\mathfrak X_W(\mathbb{C}^d)\rightarrow \mathbb{C}^d,\;X\mapsto X|_a
\end{equation}
is denoted by  $T_a\mathfrak{X}_W(\mathbb{C}^d)$

\begin{enumerate}
\item if $a\in W$, then $T_a\mathfrak{X}_W(\mathbb{C}^d)\subseteq T_aW$.  \item If $a\not\in W$, then $T_a\mathfrak{X}_W(\mathbb{C}^d)=\mathbb{C}^d$. \item If $W$ is a complete intersection (i.e. $\mathcal{I}_W=(\varphi_1,\ldots,\varphi_r)$ and $\dim W= d-r$), then $T_a\mathfrak{X}_W(\mathbb{C}^d)=T_aW$ for all $a\in W_{reg}$.
\item More generally, for an arbitrary affine variety $W$, $T_a\mathfrak{X}_W(\mathbb{C}^d)=T_aW$ for all $a\in W_{reg}$.
\end{enumerate}
\end{lemma}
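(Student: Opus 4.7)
The plan is to treat the four items in order, reducing each to an explicit construction of a polynomial vector field on $\mathbb{C}^d$ tangent to $W$ with prescribed value at $a$. Item~1 is a direct definitional unpacking: for $X \in \mathfrak{X}_W(\mathbb{C}^d)$ and $f \in \mathcal{I}_W$, the relation $X[f] \in \mathcal{I}_W$ forces $X[f](a) = 0$ for $a \in W$, so the expansion $X[f](a) = \sum_j X[x_j](a)\,\partial f/\partial x_j(a)$ exhibits $X|_a$ as an element of $T_aW$. Item~2 uses the Nullstellensatz to pick $\varphi \in \mathcal{I}_W$ with $\varphi(a) \neq 0$: for every $v = (v_1, \ldots, v_d) \in \mathbb{C}^d$ the vector field $X := \varphi(a)^{-1}\varphi \sum_j v_j\, \partial/\partial x_j$ lies in $\mathcal{I}_W\mathfrak{X}(\mathbb{C}^d) \subseteq \mathfrak{X}_W(\mathbb{C}^d)$ and satisfies $X|_a = v$.

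For item~3 I would use the Jacobian criterion. Since $a \in W_{reg}$ and $\dim W = d - r$, the Jacobian $J(a) = (\partial\varphi_i/\partial x_j(a))$ has rank $r$; after reordering coordinates the minor $J_1 = (\partial\varphi_i/\partial x_j)_{i,j=1}^{r}$ satisfies $\det J_1(a) \neq 0$. For each $k = 1, \ldots, d-r$ I would set
\begin{equation*}
Z_k := \det(J_1)\, \frac{\partial}{\partial x_{r+k}} \;-\; \sum_{j=1}^{r}\sum_{i=1}^{r} \operatorname{adj}(J_1)_{ji}\, \frac{\partial \varphi_i}{\partial x_{r+k}}\, \frac{\partial}{\partial x_j},
\end{equation*}
and check via $\operatorname{adj}(J_1)\,J_1 = \det(J_1)\,I_r$ that $Z_k[\varphi_m] = 0$ for every $m$, so that $Z_k \in \mathfrak{X}_W(\mathbb{C}^d)$. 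The values $Z_1|_a, \ldots, Z_{d-r}|_a \in T_aW$ project onto $\det J_1(a)$ times the standard basis of the last $d-r$ coordinates, so they form a basis of the $(d-r)$-dimensional space $T_aW$; any $v \in T_aW$ is then a $\mathbb{C}$-linear combination of the $Z_k|_a$, providing the desired $X$.

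For item~4 I would route through the intermediate Lie-Rinehart algebra $\mathfrak{X}(W)$. First, the restriction map $\mathfrak{X}_W(\mathbb{C}^d) \twoheadrightarrow \mathfrak{X}(W)$ is surjective: every derivation $\delta$ of $\mathcal{O}_W$ is lifted by choosing $f_j \in \mathcal{O}$ with $\bar f_j = \delta(\bar x_j)$ and setting $\tilde X := \sum_j f_j\, \partial/\partial x_j$, and one verifies that $\tilde X|_a$ agrees with the pointwise derivation induced by $\delta$ at $a$. It therefore suffices to surject $\mathfrak{X}(W) \twoheadrightarrow T_aW$. Because $a$ is regular, the local ring $\mathcal{O}_{W,a}$ is regular of Krull dimension $\dim W$, so its module of derivations $\mathrm{Der}_\mathbb{C}(\mathcal{O}_{W,a}) \simeq \mathfrak{X}(W)_{\mathfrak{m}_{W,a}}$ is free of rank $\dim W = \dim T_aW$; dualizing a system of local parameters (as in Proposition~\ref{local-form-of-variety}) shows that evaluation at $a$ surjects onto $T_aW$. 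Any such germ has the form $Y/s$ with $Y \in \mathfrak{X}(W)$ and $s \in \mathcal{O}_W \setminus \mathfrak{m}_{W,a}$, and the \emph{constant} rescaling $s(a)^{-1} Y$ is then a genuine element of $\mathfrak{X}(W)$ with value $v$ at $a$, which we finally lift to $\mathfrak{X}_W(\mathbb{C}^d)$. The main subtlety throughout will be that we must produce \emph{global polynomial} vector fields on $\mathbb{C}^d$ rather than local or rational ones; in items~3 and~4 this is precisely what the multiplications by $\det J_1$ and by $s$, together with the subsequent rescaling by the nonzero constants $\det J_1(a)^{-1}$ and $s(a)^{-1}$, achieve.
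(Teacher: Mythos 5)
Your proof is correct, and for items 1--3 it is essentially the paper's argument: item 1 is the same definitional unpacking, item 2 is the identical rescaled vector field $\varphi(a)^{-1}\varphi\sum_j v_j\partial/\partial x_j$, and your $Z_k$ are (up to a sign $(-1)^r$ and notation) exactly the paper's determinant vector fields $H_j$, with the same tangency check that the relation $\operatorname{adj}(J_1)J_1=\det(J_1)I_r$ encodes the vanishing of a determinant with a repeated row. Two points differ enough to be worth recording. In item 3 you finish by observing that the $Z_k|_a$ are linearly independent elements of the $(d-r)$-dimensional space $T_aW$, hence a basis; the paper instead writes down the explicit combination $X=(-1)^r\sum_j \frac{v_j}{\mu(a)}H_j$ and verifies $X|_a=v$ by expanding the minors $\mu_i(a)$ along their last columns and invoking $\sum_j v_j\,\partial\varphi_s/\partial x_j(a)=0$ --- your dimension count buys a shorter finish at no cost, since both routes already know $\dim T_aW=d-r$ from the Jacobian criterion. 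In item 4 you factor through the surjection $\mathfrak X_W(\mathbb C^d)\twoheadrightarrow\mathfrak X(W)$ and the local freeness of $\mathrm{Der}(\mathcal O_{W,a})$ at a regular point, then clear the denominator $s$ by the constant $s(a)^{-1}$; the paper works directly in the ambient space, taking the local vector field $\sum_i v_i\,\partial/\partial y_{k+i}$ in the local parameters of Proposition~\ref{local-form-of-variety} and multiplying by $\frac{g_1\cdots g_r}{g_1(a)\cdots g_r(a)}$ to make the coefficients polynomial. These are the same two ingredients (local parameters at a regular point, plus rescaling by a unit at $a$ to pass from a rational germ to a global polynomial object), so the difference is organizational; your version has the mild advantage of isolating the statement that evaluation $\mathfrak X(W)\to T_aW$ is onto at smooth points, which is reusable, at the price of having to check that the lift to $\mathbb C^d$ preserves the value at $a$ --- which it does, since the lift is defined by choosing representatives $f_j$ of $\delta(\bar x_j)$.
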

\begin{proof}
Item 1. is given by the construction in item 2. of Remark \ref{rmk:tangent-distribution}. Let us show item 2: let $(v_1,\ldots,v_d)\in \mathbb{C}^d$. For $a\not\in W$, there exists $\varphi\in\mathcal{I}_W$ such that $\varphi(a)\neq 0$. The vector field $$X=\frac{\varphi}{\varphi(a)}\sum_{i=1}^dv_i\frac{\partial}{\partial x_i}$$ belongs to $\mathcal{I}_W\mathfrak{X}(\mathbb C^d)\subset\mathfrak{X}_W(\mathbb{C}^d)=\mathbb{C}^d$ and $X(a)=(v_1,\ldots,v_d)$.\\

\noindent
Now we prove item 3. Let $(v_1,\ldots,v_d)\in T_aW=\ker(J(a))$, with $ J:=\left( \frac{\partial\varphi_i}{\partial x_j}\right)_{i,j}$. By assumption, we have $\mathrm{rk}(J(a))=r$. Thus, $J$ admits  $(r,r)$-minor $\mu$ such that $\mu(a)\neq 0$. We can assume that $\mu$ is the determinant of the first $r$-columns of $J$. Consider the vector fields

$$H_j:=\begin{vmatrix} \frac{\partial}{\partial x_1} & \cdots&\frac{\partial}{\partial x_r} &\frac{\partial}{\partial x_j}\\\frac{\partial \varphi_1}{\partial x_1} & \cdots&\frac{\partial \varphi_1}{\partial x_r} &\frac{\partial \varphi_1}{\partial x_j}\\ \vdots & &\vdots&\vdots\\ \frac{\partial \varphi_r}{\partial x_1} & \cdots&\frac{\partial \varphi_r}{\partial x_r} &\frac{\partial \varphi_r}{\partial x_j}\end{vmatrix},\;\text{ for $j\in\{r+1,\ldots,d\}$},$$
understood as the cofactor expansion along the first row. Since for each $i\in \{1,\ldots,r\}$, $H_j[\varphi_i]$ has two repetitive lines, therefore it vanishes. Therefore $H_j$'s are tangent to $W$. We claim that the following vector fields does the job, namely
\begin{equation}
    X= (-1)^r\sum_{j=r+1}^d\frac{v_j}{\mu(a)}H_j.
\end{equation}
Indeed, if we denote by $\mu_1, \ldots,\mu_r$ the minors associated to the partials $ \frac{\partial}{\partial x_1},\ldots,\frac{\partial}{\partial x_r}$, respectively, then for every $j$ the decomposition of $H_j$ reads  $$H_j=\displaystyle{\sum_{i=1}^r(-1)^{i+1}\mu_i\frac{\partial}{\partial x_i}+(-1)^r\mu\frac{\partial}{\partial x_j}}.$$ Hence,
$$
    X=\sum_{j=r+1}^d\frac{v_j}{\mu(a)}\mu\frac{\partial}{\partial x_j} + \frac{(-1)^{r}}{\mu(a)}\left(\sum_{i=1}^{r}\sum_{j=1}^d(-1)^{i+1}v_j\mu_i\frac{\partial}{\partial x_i}\right)$$
But, by developing the minors $\mu_i(a)$'s along their last columns, it takes the form, $$\mu_i(a)=\pm\frac{\partial\varphi_1}{\partial x_j}(a)C_1\pm \cdots \pm \frac{\partial  \varphi_{r-1}}{\partial x_j}(a)C_{r-1}.$$ Notice that  the determinants $C_1,\ldots,C_{r-1}$ are the same for each $H_j$. 
Thus, 
\begin{align*}
    \sum_{j=1}^dv_j\mu_i(a)&= \sum_{s=1}^{r-1}\pm\underbrace{\left(\sum_{j=1}^d v_j\frac{\partial\varphi_s}{\partial x_j}(a)\right)}_{=0}C_s\\&=0.
\end{align*}

Item 4. is obtained as follows: the local ring at $a$ is by definition the localization $\mathcal O_a$ of $\mathbb C[x_1 \dots, x_d] $ with respect to the multiplicative set of all polynomials that do not vanish at $a$. By Proposition \ref{local-form-of-variety} $a\in W$ is a regular point if and only if there exists  "local coordinates" $y_1,\ldots, y_d\in \mathcal O_a $ such that $W$ is locally of the form
 $$y_1 = \cdots = y_k = 0,$$
 i.e. the localization of $\mathcal I_W $ is generated by these variables. 
 Hence, the tangent space at $m$ is the vector space, $span\{{\frac{\partial}{\partial y_i}}_{|_m},\, i\geq k+1\}$. Therefore, for $v\in T_aW$ the local vector field
 $$X=\sum_{i=1}^{\dim W}v_i\frac{\partial }{\partial y_{k+i}}$$ maps $ \mathcal{O}_{a}$ to $\mathcal{O}_{a}$, in particular it maps $\mathcal{O}$ to $\mathcal{O}_{a}$ and we have ${X}[\mathcal I_W]\subset ({\mathcal{I}_W})_{\mathfrak m_a}$. Therefore, for every, $i\in \{1,\ldots,d\}$ there exists  a  polynomial function $g_i$ that does not vanish at $a$ such that $g_iY[x_i]\in\mathbb{C}[x_1,\ldots, x_d]$. Hence, the vector field $\hat{X}=\frac{g_1\cdots g_{r}}{g_1(a)\cdots g_{r}(a)}X$ is tangent to $W$ satisfies $\hat{X}(a)=v$ and $\hat{X}[\mathcal I_W]\subset \mathcal{I}_W$.\\
  
This concludes the proof.
\end{proof}

\begin{remark}
We may not have equality in item 1. in Lemma \ref{lemma:Distribution-aff-v}. To see this, consider the cups, $$W=\{(x,y)\mid \mathbb{C}^2\mid x^3-y^2=0\}.$$ It is clear that the tangent space $T_0W$ of $W$ at $0\in W$ is the whole space $\mathbb{C}^2$. But the vector fields in $\mathfrak X_W(\mathbb{C}^2)$ vanish at zero, since it is spanned as a $\mathbb C[x,y]$-module by the Hamiltonian $2y\frac{\partial}{\partial x}+3x^2\frac{\partial}{\partial y}$ and the weighted Euler vector field $2x\frac{\partial}{\partial x} +3y\frac{\partial}{\partial y}$ (see Proposition \ref{prop:vectorfieldsonW}).
\end{remark}
The following lemma shows that the vector fields that are tangent to $W$ are also tangent to every strata of  the stratification that consists of by taking the singular locus $W_{sing}$ of the singular locus  of $W$ then the singular locus $({W_{sing}})_{sing}$ of the singular locus $W_{sing}$ and so on.... We obtain a sequence of inclusions of the form

\begin{equation}\label{eq:stratification}
    W\supset \underbrace{({W_{sing}})_{sing}}_{=:W_1}\supset \underbrace{(({W_{sing}})_{sing})_{sing}}_{=:W_2}\supset\cdots.
\end{equation}

\begin{lemma}\label{lemma:stratification}
We have the following inclusions

\begin{equation}\label{stratas-inclusions}
    \mathfrak X_W(\mathbb C^d)\subseteq \mathfrak X_{W_1}(\mathbb C^d)\subseteq\cdots\subseteq\mathfrak X_{W_i}(\mathbb C^d)\subseteq\cdots
\end{equation}
\end{lemma}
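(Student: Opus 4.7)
The plan is to reduce the claim to the single assertion that $\mathfrak{X}_W(\mathbb{C}^d)\subseteq \mathfrak{X}_{W_{sing}}(\mathbb{C}^d)$ for an arbitrary affine variety $W$, and then iterate: since $W_{i+1}$ is (obtained as an iterated) singular locus of $W_i$, the inclusions of \eqref{stratas-inclusions} follow from this single statement applied at each stage. So I will focus on the basic case.

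For the basic case, I would use the Jacobian criterion. Assume first that $W$ is equidimensional of codimension $k$, and choose generators $\mathcal I_W=(\varphi_1,\ldots,\varphi_r)$. Writing the Jacobian matrix $J=\bigl(\partial \varphi_i/\partial x_j\bigr)_{i,j}$ and letting $J_k\subset \mathcal O$ be the ideal generated by its $k\times k$ minors, the classical Jacobian criterion gives $\mathcal I_{W_{sing}}=\sqrt{\mathcal I_W+J_k}$. Fix $X=\sum_m X_m\partial/\partial x_m\in\mathfrak X_W(\mathbb C^d)$. By definition, $X[\varphi_i]=\sum_j a_{ij}\varphi_j$ for some $a_{ij}\in\mathcal O$. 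The key computation is to apply $\partial/\partial x_l$ to both sides and rewrite, which yields modulo $\mathcal I_W$ the identity
\begin{equation*}
X\!\left[\tfrac{\partial \varphi_i}{\partial x_l}\right]\equiv \sum_j a_{ij}\tfrac{\partial \varphi_j}{\partial x_l}-\sum_m\tfrac{\partial X_m}{\partial x_l}\tfrac{\partial \varphi_i}{\partial x_m}\pmod{\mathcal I_W}.
\end{equation*}
In matrix form, $X[J]\equiv AJ-JB\pmod{\mathcal I_W}$ with $A=(a_{ij})$ and $B=(\partial X_m/\partial x_l)$. Since a derivation acts on a $k\times k$ minor by the Leibniz rule one entry at a time, and since each replacement is a linear combination of another row (via $A$) or another column (via $B$), each such derivative is again a $\mathcal O$-linear combination of $k\times k$ minors of $J$. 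Hence $X[J_k]\subseteq \mathcal I_W+J_k$, and together with $X[\mathcal I_W]\subseteq \mathcal I_W$ we obtain $X[\mathcal I_W+J_k]\subseteq \mathcal I_W+J_k$.

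The final step invokes the classical fact (Seidenberg's lemma) that in a Noetherian $\mathbb Q$-algebra any derivation preserving an ideal preserves its radical; this upgrades the previous inclusion to $X[\mathcal I_{W_{sing}}]\subseteq \mathcal I_{W_{sing}}$, so $X\in\mathfrak X_{W_{sing}}(\mathbb C^d)$. Iterating gives the whole chain \eqref{stratas-inclusions}.

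The main technical obstacle is the non-equidimensional case: when $W$ has components of different dimensions, the formula $\mathcal I_{W_{sing}}=\sqrt{\mathcal I_W+J_k}$ with a single $k$ is not literally correct, and some of the minor ideals must be taken with respect to the local codimension. To handle this I would decompose $W$ into irreducible components $W=W^{(1)}\cup\cdots\cup W^{(s)}$ with $\mathcal I_W=\bigcap_\alpha \mathcal I_{W^{(\alpha)}}$, note that $X$ preserves each $\mathcal I_{W^{(\alpha)}}$ (again by Seidenberg, since minimal primes are invariant under derivations in characteristic zero), apply the equidimensional argument to each irreducible component separately to get $X[\mathcal I_{(W^{(\alpha)})_{sing}}]\subseteq \mathcal I_{(W^{(\alpha)})_{sing}}$, and finally combine these with the standard description $W_{sing}=\bigcup_\alpha (W^{(\alpha)})_{sing}\ \cup\ \bigcup_{\alpha\neq \beta} (W^{(\alpha)}\cap W^{(\beta)})$: the pairwise intersections are automatically preserved since $X$ preserves each $\mathcal I_{W^{(\alpha)}}$. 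This produces $X[\mathcal I_{W_{sing}}]\subseteq \mathcal I_{W_{sing}}$ in general and closes the proof.
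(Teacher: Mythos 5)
Your proposal is correct and its core is the same as the paper's: both proofs reduce to showing that a vector field $X$ tangent to $W$ preserves the ideal generated by $\mathcal I_W$ together with the $k\times k$ minors of the Jacobian, and then iterate along the chain of singular loci. The paper packages the minor computation differently: it writes each minor as $P[\varphi_{i_1},\ldots,\varphi_{i_k}]$ for a $k$-multi-derivation $P$ and uses the Lie-derivative identity $X\left[P[\varphi_{i_1},\ldots,\varphi_{i_k}]\right]=(\mathcal L_XP)[\varphi_{i_1},\ldots,\varphi_{i_k}]+\sum_j P[\ldots,X[\varphi_{i_j}],\ldots]$, which is exactly your matrix identity $X[J]\equiv AJ-JB$ in disguise (the $\mathcal L_XP$ term is your $-JB$, the $X[\varphi_{i_j}]$ terms your $AJ$). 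Where you genuinely go further is in the last two steps: the paper stops at invariance of the ideal $\langle \varphi_1,\ldots,\varphi_r, P[\varphi_{i_1},\ldots,\varphi_{i_k}]\rangle$ and tacitly identifies it with $\mathcal I_{W_{sing}}$, whereas you invoke Seidenberg's lemma to pass to the radical and you treat the non-equidimensional case by decomposing into irreducible components. Both of these points are elided in the paper's proof, so your version is the more complete one; the price is the appeal to Seidenberg, which the paper's (implicit) convention on $\mathcal I_{W_{sing}}$ lets it avoid.
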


\begin{proof}
Let us prove that if $X\in \mathfrak X(\mathbb C^d)$ is such that $X[\mathcal I_W]  \subset \mathcal I_W$
    then $X[\mathcal I_{W_{sing}}]  \subset \mathcal I_{W_{sing}}$, where $\mathcal I_{W_{sing}}$ is the ideal of functions on the singular part of $ W $. Since $\mathcal I_{W_{sing}}$ is obtained by considering the minors of order $ k = d-\dim W $ of $k$ elements chosen into the generators $\varphi_1, \dots, \varphi_r $. That is, $W_{sing}$ is given the ideal \begin{equation}\label{eq:sing-iadel}\left\langle\varphi_1,\cdots \varphi_r, P[\varphi_{i_1}, \cdots, \varphi_{i_k}],\; P\in\mathfrak X^k(\mathbb{C}^d),\;\text{for integers}\; \begin{array}{c}
         1\leq i_1<\cdots<i_k\leq r
    \end{array}
    \right\rangle\end{equation}
    
    Let us explain why the vector fields that tangent to $W$ are also tangent to its singular locus. 

For a vector field $X\in\mathfrak X_W(\mathbb C^d)$ one has by Formula \eqref{formula:Lie-der} that,
\begin{equation}\label{eq:Lie-derivative}X\left[P[\varphi_{i_1}, \cdots, \varphi_{i_k}]\right]=(\mathcal{L}_X P)[\varphi_{i_1}, \cdots , \varphi_{i_k}]+\sum_{j=1}^kP[\varphi_{i_1}, \ldots, X [\varphi_{i_j}],\ldots, \varphi_{i_k}].\end{equation}
Notice that  $(\mathcal{L}_X P)[\varphi_{i_1}, \cdots ,\varphi_{i_k}]\in \mathcal{I}_{sing}$ since $(\mathcal{L}_X P)\in \mathfrak X^k(\mathbb{C}^d)$. On the other hand, for every $j$ there exists polynomial functions $f_1,\ldots, f_r$  such that $X[\varphi_{i_j}]=\sum_{i=1}^rf_l\varphi_l$. Since $P$ is a multi-derivation, one has, \begin{align*}
    P[\varphi_{i_1}, \ldots, X [\varphi_{i_j}], \ldots, \varphi_{i_k}]&=\sum_{l=1}^r\varphi_lP[\varphi_{i_1}, \ldots, f_l, \ldots,\varphi_{i_k}]+\\&\qquad \sum_{i=1}^rf_lP[\varphi_{i_1}, \ldots, \varphi_l, \ldots,\varphi_{i_k}]
\end{align*}
It is now clear that the RHS of the equation \eqref{eq:Lie-derivative} is in the ideal $\mathcal I_{sing}$.  The proof goes by recursion.
\end{proof}

Here is a direct consequence of Lemma \ref{lemma:stratification}.
\begin{theorem}
Every vector field $X\in \mathfrak X(W)$ is tangent to the stratification \eqref{eq:stratification}, i.e. $X\in\mathfrak{X}(W_i)$ for each $i\geq 1$.
\end{theorem}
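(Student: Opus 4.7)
The strategy is to lift the problem to the ambient affine space $\mathbb{C}^d$, apply Lemma~\ref{lemma:stratification} there, and then push the conclusion back down to $W_i$ via the quotient $\mathcal{O} \twoheadrightarrow \mathcal{O}/\mathcal{I}_W = \mathcal{O}_W$. Nothing beyond what is already in the chapter should be needed.

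First I would use the observation recalled at the beginning of Section~\ref{three-main-constructions}: any derivation $X\in \mathfrak{X}(W) = \mathrm{Der}(\mathcal{O}_W)$ admits an extension $\widetilde{X} \in \mathfrak{X}_W(\mathbb{C}^d)$, that is, a derivation of $\mathcal{O}$ that preserves $\mathcal{I}_W$ and induces $X$ on the quotient $\mathcal{O}_W$. Concretely, one picks polynomial representatives $f_i \in \mathcal{O}$ of $X[x_i+\mathcal{I}_W]$ and sets $\widetilde{X} = \sum_i f_i \partial_{x_i}$. Next, I would apply Lemma~\ref{lemma:stratification} directly to $\widetilde{X}$: from $\widetilde{X}[\mathcal{I}_W]\subset \mathcal{I}_W$ the lemma yields, by iteration, $\widetilde{X}[\mathcal{I}_{W_i}]\subset \mathcal{I}_{W_i}$ for every $i\geq 1$.

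The remaining step is a descent argument. Since $W_i \subset W$, we have $\mathcal{I}_W \subset \mathcal{I}_{W_i}$, so $\mathcal{J}_i := \mathcal{I}_{W_i}/\mathcal{I}_W$ is a well-defined ideal of $\mathcal{O}_W$ and
\[
\mathcal{O}_W/\mathcal{J}_i \;\simeq\; \mathcal{O}/\mathcal{I}_{W_i} \;=\; \mathcal{O}_{W_i} .
\]
For $\bar{g} = g + \mathcal{I}_W \in \mathcal{J}_i$ with $g\in \mathcal{I}_{W_i}$, the relation $X[\bar{g}] = \widetilde{X}[g] + \mathcal{I}_W$ together with $\widetilde{X}[g]\in \mathcal{I}_{W_i}$ shows that $X[\bar{g}]\in \mathcal{J}_i$. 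Hence $X$ preserves $\mathcal{J}_i$, and therefore descends to a derivation of $\mathcal{O}_W/\mathcal{J}_i = \mathcal{O}_{W_i}$, i.e.\ to an element of $\mathfrak{X}(W_i)$, which (through the canonical restriction map $\mathfrak{X}(W)\to \mathfrak{X}(W_i)$ that is now seen to be well-defined) is identified with $X$ itself.

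The only issue one has to be slightly careful about is checking that the ideals $\mathcal{I}_{W_i}\subset \mathcal{O}$ used in the ambient statement of Lemma~\ref{lemma:stratification} are indeed the vanishing ideals of the subvarieties $W_i\subset W$ defined recursively by Equation~\eqref{eq:stratification} — but this is immediate, because taking the singular locus is an intrinsic operation, and the chain $W \supset W_1 \supset W_2 \supset \cdots$ of subvarieties of $\mathbb{C}^d$ is precisely the one to which the lemma applies. Thus no substantive obstacle arises; the content of the theorem lies entirely in Lemma~\ref{lemma:stratification}, and the present statement is its functorial translation from $\mathfrak{X}_W(\mathbb{C}^d)$ to $\mathfrak{X}(W)$.
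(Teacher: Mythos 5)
Your proposal is correct and follows exactly the route the paper intends: it presents the theorem as a direct consequence of Lemma~\ref{lemma:stratification}, relying on the extension $\mathfrak{X}(W)\to\mathfrak{X}_W(\mathbb{C}^d)$ recalled in Section~\ref{three-main-constructions} and the descent through $\mathcal{O}\twoheadrightarrow\mathcal{O}_W$, which is precisely what you spell out. The only detail you (and the paper) pass over lightly is that the lemma preserves the Jacobian ideal of \eqref{eq:sing-iadel} rather than its radical, but in characteristic zero a derivation preserving an ideal preserves its radical, so no gap results.
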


The coming example shows that the inclusions \eqref{stratas-inclusions} may be strict. This Example can also be found in the problem list \cite{CLRL-TA} of the lecture on singular foliations, Poisson 2022 \cite{LLL}.
\begin{example}
Let $ W = \{ (x,y,z) \in \mathbb C^3 \, | \, xy(x+y)(x+yz)=0\}\subset \mathbb{C}^3$. 

\begin{center}
        \includegraphics[height=7.5cm]{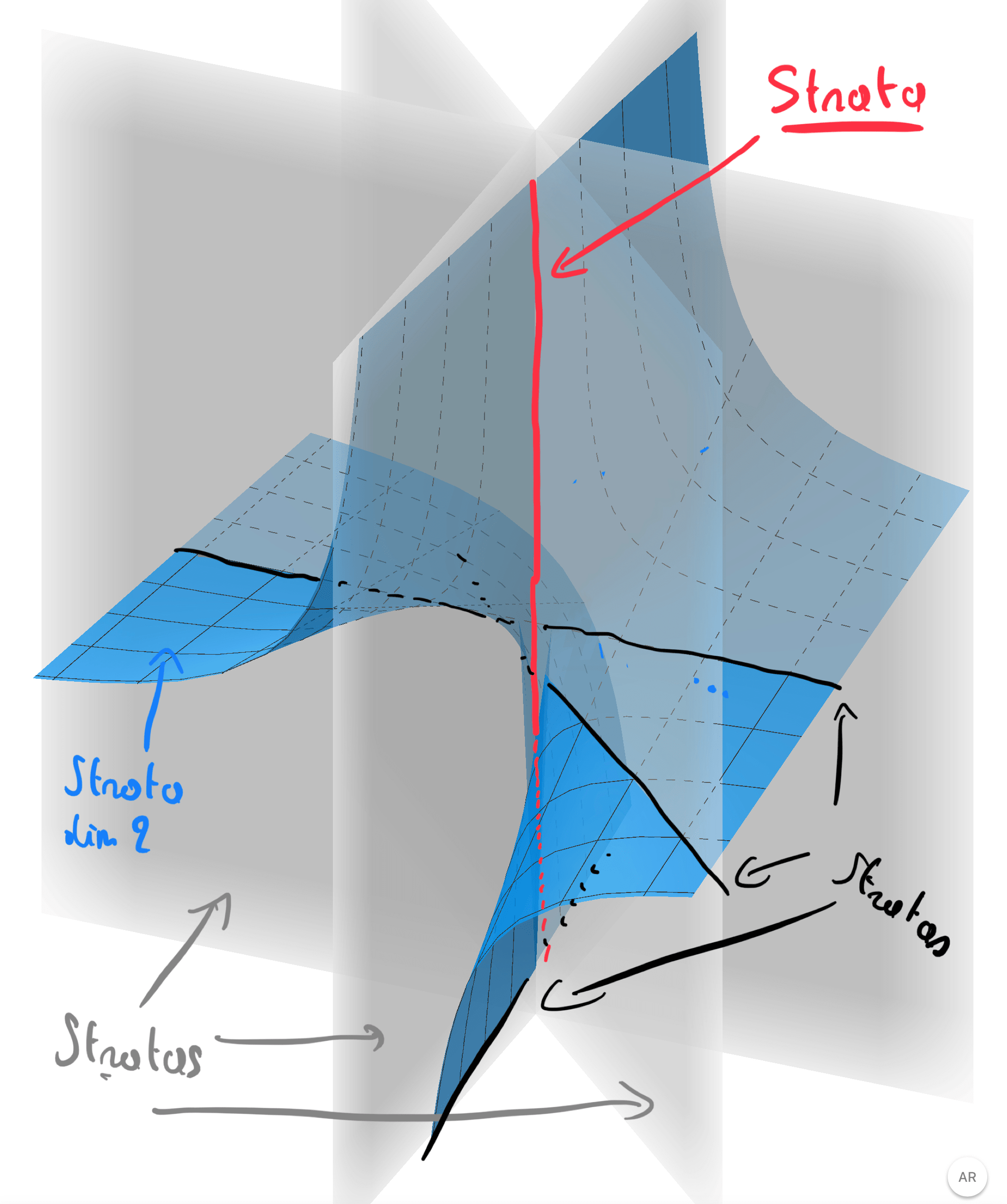}
\end{center}

The straight line $x=y=0$ is a strata of the previous affine variety $W$. Any vector field tangent to $W$ is tangent to this straight line. Let us show that it has to vanish at every point of this straight line. If not, its flow at time $t$ would map a point $(0,0,z_0)$ to a point $ (0,0,z_1)$ with $z_1 \neq z_0$.  Its differential then induce a linear automorphism of the normal bundle of that straight line that has to preserve the straight lines $x=0,y=0, x+y=0$. Since a linear endomorphism of $\mathbb C^2 $ preserving three straight lines has to be a multiple of the identity map, this differential cannot map the straight line $x+z_0y$ to the straight line $x+z_1y$.
\end{example}


\subsubsection{On their universal Lie $\infty$-algebroids}

Let $W$ be an affine variety. The following is  a direct consequence of the results of Chapter \ref{Chap:main}.
\begin{proposition}
Let $W$ be an affine variety as above.
\begin{enumerate}
    \item   $\mathfrak{X}(W)$ admits a universal Lie $ \infty$-algebroid 
    made of free $\mathcal O_W $-modules of finite rank.
    \item  $\mathcal I_W \mathfrak{X} (\mathbb C^d)$ and $\mathfrak{X}_W (\mathbb C^d)$ admit universal Lie $ \infty$-algebroids made of finitely many free $\mathcal O $-modules of finite rank.
\end{enumerate}
\end{proposition}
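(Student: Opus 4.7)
The plan is to combine the existence Theorem \ref{thm:existence} (together with Corollary \ref{cor:existence}) with standard commutative-algebra facts about free resolutions of finitely generated modules over Noetherian rings. The Lie $\infty$-algebroid structure is then automatic; only the finiteness statements about the underlying complex need to be verified.

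First I would check that the three Lie-Rinehart algebras are finitely generated modules over their respective base algebras. Since $\mathcal{O} = \mathbb{C}[x_1,\ldots,x_d]$ is Noetherian and $\mathfrak{X}(\mathbb{C}^d)$ is free of finite rank over $\mathcal{O}$, both of its submodules $\mathcal{I}_W \mathfrak{X}(\mathbb{C}^d) \subset \mathfrak{X}_W(\mathbb{C}^d)$ are finitely generated $\mathcal{O}$-modules (Proposition \ref{prop:noetherian}). For $\mathfrak{X}(W) = \mathrm{Der}_\mathbb{C}(\mathcal{O}_W)$, I would use the identification from Section \ref{three-main-constructions} which writes $\mathfrak{X}(W) \simeq \mathfrak{X}_W(\mathbb{C}^d)/\mathcal{I}_W \mathfrak{X}(\mathbb{C}^d)$; hence it is finitely generated over $\mathcal{O}_W$, and $\mathcal{O}_W$ itself is a Noetherian finitely generated $\mathbb{C}$-algebra.

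Next I would produce the free resolutions. For part 2, since $\mathcal{O}$ is a polynomial ring in $d$ variables, Hilbert's Syzygy Theorem ensures that every finitely generated $\mathcal{O}$-module admits a free resolution of length at most $d$ by free $\mathcal{O}$-modules of finite rank; I would apply this to $\mathcal{I}_W \mathfrak{X}(\mathbb{C}^d)$ and to $\mathfrak{X}_W(\mathbb{C}^d)$, giving resolutions of the form $0 \to \mathcal{E}_{-d} \to \cdots \to \mathcal{E}_{-1} \twoheadrightarrow \mathcal{A}$ with each $\mathcal{E}_{-i}$ free of finite rank. For part 1, since $\mathcal{O}_W$ is Noetherian, any finitely generated $\mathcal{O}_W$-module admits a free resolution whose modules are free of finite rank in every degree: pick finitely many generators, take their kernel (finitely generated by Noetherianity, cf.\ Appendix \ref{app:complexes-modules}), pick generators of the kernel, and iterate. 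This resolution need not be of finite length, because $\mathcal{O}_W$ generally has infinite global dimension when $W$ is singular.

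Finally, I would invoke Theorem \ref{thm:existence} to equip each of these free resolutions with a Lie $\infty$-algebroid structure hooked to the corresponding Lie-Rinehart algebra through the augmentation map. The resulting Lie $\infty$-algebroid is acyclic by construction, so it is a universal Lie $\infty$-algebroid in the sense of Section \ref{sec:main}, and its underlying graded module has exactly the finiteness properties claimed. The only subtlety, rather than a real obstacle, is keeping track of the distinction between the two statements: in part 2 the polynomial ring $\mathcal{O}$ has finite global dimension $d$, so only finitely many $\mathcal{E}_{-i}$ are nonzero; in part 1 we only control the rank in each degree, and the length of the resolution may genuinely be infinite whenever $W$ is singular (as illustrated by the Koszul-type examples of Section \ref{koszul}).
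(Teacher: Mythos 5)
Your proposal is correct and follows essentially the same route as the paper: establish finite generation of the three Lie-Rinehart algebras via Noetherianity, obtain free resolutions of finite rank (of finite length over $\mathcal O$ by Hilbert's Syzygy Theorem, of possibly infinite length over $\mathcal O_W$), and then invoke Theorem \ref{thm:existence} to equip these resolutions with universal Lie $\infty$-algebroid structures. The extra details you supply (the identification of $\mathfrak X(W)$ as a quotient, and the remark on infinite global dimension of $\mathcal O_W$) are consistent with the paper's discussion and do not change the argument.
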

\begin{proof}
Classical theorems of commutative algebras allows equipping the three Lie-Rinehart algebras above with resolutions of a certain type:
\begin{enumerate}
  
    \item Since $\mathcal O $ is a Noetherian regular ring,  $\mathfrak{X}_W (\mathbb C^d)$ and $\mathcal I_W \mathfrak{X} (\mathbb C^d)$  admit free resolutions by finitely generated $\mathcal{O}$-modules. By Hilbert Syzygy Theorem \ref{app:Syzygies}, those can be chosen to be of finite length.
     \item Since $\mathcal O_W $ is a Noetherian ring,  $\mathfrak{X}(W)$ admits a free resolution by finitely generated $\mathcal{O}_W$-modules (by Proposition \ref{prop:noetherian}).
\end{enumerate}
In view of Theorem \ref{thm:existence}, these three resolutions do admit Lie $\infty $-algebroid structures over their respective algebras, and those are universal Lie $\infty $-algebroids.  We denote by $\mathbb U_{\mathcal O_W} (\mathfrak X(W))$, 
  $\mathbb U_\mathcal O (\mathfrak X_W(\mathbb C^d) ) $ and $\mathbb U_\mathcal O (\mathcal I_W \mathfrak X(\mathbb C^d) ) $ the universal Lie $ \infty$-algebroids associated to the three Lie-Rinehart algebras above.
\end{proof}
\begin{remark}
There exist natural Lie $\infty $-algebroid morphisms between these structures:
\begin{enumerate}
    \item Since $ \mathcal I_W \mathfrak X(\mathbb C^d) \subset \mathfrak X_W(\mathbb C^d)$,   Theorem \ref{th:universal} implies the existence of a unique up to homotopy Lie $\infty $-algebroid morphism  $ \Psi \colon \mathbb  U_\mathcal O (\mathcal I_W \mathfrak X(\mathbb C^d)) \longrightarrow \mathbb U_\mathcal O (\mathfrak X_W(\mathbb C^d))$. In view of Proposition \ref{univ:precise}, the morphism $\Psi$ may be represented by the inclusion map for well-chosen representation of $U_\mathcal O (\mathcal I_W \mathfrak X(\mathbb C^d))$ and $ \mathbb U_\mathcal O (\mathfrak X_W(\mathbb C^d))$.
    \item The anchor map of $\mathbb U_\mathcal O (\mathfrak X_W(\mathbb C^d) ) $ is tangent to $W$, hence the restriction $ \mathfrak i_W^* \mathbb U_\mathcal O (\mathfrak X_W(\mathbb C^d) ) $ to $W $ of $\mathbb U_\mathcal O (\mathfrak X_W(\mathbb C^d) )$ exists, and is a Lie $\infty $-algebroid over $\mathfrak X(W) $. In general, it does not need to be a universal one, but Theorem \ref{th:universal} implies the existence of a unique up to homotopy Lie $\infty $-algebroid morphism:
$$ \Phi \colon \mathfrak i_W^*  \mathbb U_\mathcal O (\mathfrak X_W(\mathbb C^d) )  \longrightarrow  \mathbb U_{\mathcal O_W} (\mathfrak X (W) ).$$
Notice that this morphism is in general not a Lie $\infty$-quasi-isomorphism.
\end{enumerate}
\end{remark}

\section{Universal Lie $\infty$-algebroid of an affine variety}
We give the following definition,
\begin{definition}
A \emph{Lie $\infty$-algebroid of an affine variety $W$} is the homotopy class of Lie-$\infty$-algebroid associated to the  Lie-Rinehart algebra $\mathfrak{X}(W)$ over $\mathcal{O}_W$.
\end{definition}

\begin{example}[Vector fields on hyperelliptic curves]
We follow the notations of \cite{gYulyLaov}.
 Let us consider on $\mathbb{C}^2$ the hyperelliptic curve $\mathcal{H}$ given by the equation $y^2= 2h(x)$, where $h$ is a monic polynomial of odd degree $2\nu+ 1\geq 3$. Let $\mathcal{O}_\mathcal{H}=\frac{\mathbb{C}\left[ x,y\right] }{ \langle y^2-2h(x) \rangle}$ be the coordinate ring of  $\mathcal{H}$.
Let $\X(\mathcal H)=\text{Der}(\mathcal{O}_\mathcal{H})$ be the Lie-Rinehart algebra of derivations  of $\mathcal{O}_\mathcal{H}$, i.e. of vector fields on $\mathcal H $. As an $\mathcal{O}_\mathcal{H}$-module, $\X(\mathcal H)$ is the submodule
$$\left\lbrace f\frac{\partial}{\partial x}+g\frac{\partial}{\partial y}\mid f,g\in \mathcal{O}_\mathcal{H}, yg-h'(x)f= 0\right\rbrace\subset \mathcal{O}_\mathcal{H}\frac{\partial}{\partial x}\oplus \mathcal{O}_\mathcal{H}\frac{\partial}{\partial y}.$$
\noindent
The curve $\mathcal H$ is non-singular if and only if $\text{gcd}(h(x),h'(x))$ = 1. In that case, the Lie algebra of vector fields $\X(\mathcal H)$ is a free $\mathcal{O}_\mathcal{H}$-module of rank $1$ generated by the vector field $X=y\frac{\partial}{\partial x}+h'(x)\frac{\partial}{\partial y}$. A universal Lie $\infty$-algebroid is given by the Lie-Rinehart algebra $\E_{-1}= \mathcal{O}_\mathcal{H}X\subset \mathcal{O}_\mathcal{H}\frac{\partial}{\partial x}\oplus \mathcal{O}_\mathcal{H}\frac{\partial}{\partial y}$.  
In the singular case, i.e., $\text{gcd}(h(x),h'(x))=d(x)\neq 1$, the $\mathcal{O}_\mathcal{H}$ -module $\X(\mathcal H)$ is not free and has two generators, $X=y\frac{\partial}{\partial x}+h'(x)\frac{\partial}{\partial y}$ and $Y= 2\frac{h(x)}{d(x)}\frac{\partial}{\partial x}+y\frac{h'(x)}{d(x)}\frac{\partial}{\partial y}$ with a relation $yX=d(x)Y$ (see \cite{gYulyLaov} for more details). A free resolution is described as follows: $\E_{-1}$ is the $\mathcal{O}_\mathcal{H}$-module, generated by two elements that we denote by $\tau,\mu$. The anchor is defined then by \begin{equation*}
\rho(\tau)=X,\quad\rho(\mu) =Y.
\end{equation*}
Then we choose $\E_{-2}$ to be the $\mathcal{O}_\mathcal{H}$-module given by the generator $\eta$, and we set $\E_{-i}=0$ for $i\geq 3$. The differential map $ \ell_1= \mathrm d$ is chosen to be zero, except on degree $-2$ where it is the $\mathcal{O}_\mathcal{H}$-linear map $\dd\colon\E_{-2}\longrightarrow\E_{-1}$  given by \begin{equation*}
\dd(\eta)=y\tau-d(x)\mu.
\end{equation*}
Let us now describe a universal Lie $\infty $-algebroid structure: The $2$-ary bracket is defined on generators of $\E_{-1}$ by $$\left\lbrace \tau,\mu \right\rbrace _2=\frac{h'(x)}{d(x)}\tau -y\frac{d'(x)}{d(x)}\mu.$$ Then we extend this bracket to the whole space $\E_{-1}$ by $\mathcal{O}_\mathcal{H}$-linearity, skew-symmetric  and Leibniz identity. Notice that $\left\lbrace \dd\eta,\tau \right\rbrace_2=\left\lbrace \dd\eta,\mu \right\rbrace_2=0$, thus, one can define the $2$-bracket by  $\left\lbrace \eta,\tau \right\rbrace_2=\left\lbrace \eta,\mu \right\rbrace_2=0$. We extend all brackets using Leibniz identity. All $k$-ary brackets are zero for $k\geq 3$.
%
\end{example}

\noindent
\textbf{Lie $\infty$-algebra of an affine variety at a point}:
Let $\mathbb U_{\mathcal O_W} (\mathfrak X (W) )= (\E, \ell_\bullet, \rho)$ be a universal Lie algebroid of $W$. Let us choose $a \in W$. As stated in Section \ref{ssec:res}, the Lie $\infty$-algebroid structure of $W$ restricts at $a$ (i.e. goes to the quotient with respect to the maximal ideal  $ \mathfrak m_a $) if and only if $\rho[\mathfrak m_{a}]\subseteq \mathfrak m_{a}$, (i.e. $\mathfrak m_{a}$ is a Lie-Rinehart ideal).\\

\noindent
Define the $\mathcal{O}_W$-submodule $$\mathrm{Sker}_a(\rho):=\{e\in \E_{-1} \mid  \rho(e)[\mathcal{O}]\subseteq \mathfrak m_a\}\subseteq \E_{-1}.$$ 

The $k$-ary bracket $\ell_k, k\geq  1$ and  $\rho$ 
restrict to the exact complex

\begin{equation}\label{eq:strong-kernel}
    \xymatrix{\cdots \ar[r]&\E_{-3}\ar[r]^{\ell_1}&\E_{-2}\ar[r]^{\ell_1} &\mathrm{Sker}_a(\rho)\ar[r]^{\rho}& \mathrm{Der}(\mathcal O_W)}.
\end{equation}

Let us check that $\ell_2$ is well-defined: for all $e, e'\in \mathrm{Sker}_a(\rho)$, \begin{align*}
    \rho(\ell_2(e,e'))[\mathcal{O}]&=[\rho(e), \rho(e')](\mathcal{O}),\qquad\text{(since $\rho$ is morphism of brackets)}\\&\subset \mathfrak{m}_a,\;\;\,\quad\qquad\qquad\qquad\text{(by definition of the commutator $\lb$).}
\end{align*}

The latter Lie $\infty $-algebroid goes to quotient to a Lie $\infty $-algebroid \begin{equation}\label{eq:oid-at-a-point}
 \left((\oplus_{i\geq 2}\frac{\E_{-i}}{\mathcal \mathfrak{m}_a\E_{-i}})\oplus  \mathrm{ker}_a(\rho) ,\Bar{\ell}_\bullet,\overline{\rho}\right)   
\end{equation} over $\mathcal O_W/\mathfrak m_{a} $, where ${\ker}_a(\rho):=\frac{ \mathrm{Sker}_a(\rho)}{\mathfrak m_a  \mathrm{Sker}_a(\rho)}$. Since this quotient is the base field $ \mathbb{C}$, we obtain in fact a Lie $\infty $-algebra. 
\begin{remark}
In particular, if $a\in W$ is an isolated singular point, then $\E_{-1}=\mathrm{Sker}_a(\rho)$. In that case, $\mathbb U_{\mathcal O_W} (\mathfrak X (W) )$ is a universal of the Lie-Rinehart algebra $\mathcal{A}_a=\{\delta\in \mathfrak{X}(W)\mid \delta[\mathfrak{m}_a]\subset \mathfrak m_a\}={\mathfrak{X}(W)}$. By Section \ref{ssec:res} again, \eqref{eq:oid-at-a-point} is a Lie $\infty $-algebra on ${\mathrm{Tor}}_{\mathcal O_W} (\mathfrak X(W), \mathbb C)$. 
\end{remark}


\subsubsection{Lie $\infty$-algebroids on minimal resolutions}

The germ at $a$ of the Lie-Rinehart algebra of vector fields on $W$ is easily checked to coincide with the Lie-Rinehart algebra of derivations of $\mathcal O_{W,a}$.\\




Here is an immediate consequence of Proposition \ref{prop:germ}.

\begin{prop}
Let $W$ be an affine variety.
For every $a \in W$, the germ at $a$ of the universal Lie $\infty$-algebroid of $W$ is the universal Lie $\infty$-algebroid of
${\mathrm{Der}}(\mathcal O_{W,a})$.
\end{prop}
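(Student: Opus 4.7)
The plan is to deduce this proposition directly from Proposition \ref{prop:germ} (stated just above in the excerpt), which asserts that the germification (i.e.\ localization at $S=\mathcal O_W\setminus\mathfrak m_{W,a}$) of the universal Lie $\infty$-algebroid of any Lie-Rinehart algebra $\mathcal A$ over $\mathcal O_W$ is the universal Lie $\infty$-algebroid of $S^{-1}\mathcal A$ over $S^{-1}\mathcal O_W\simeq \mathcal O_{W,a}$. Applying this with $\mathcal A=\mathfrak X(W)={\mathrm{Der}}(\mathcal O_W)$, the only thing left to prove is the identification of Lie-Rinehart algebras over $\mathcal O_{W,a}$:
\[
S^{-1}{\mathrm{Der}}(\mathcal O_W)\;\simeq\;{\mathrm{Der}}(\mathcal O_{W,a}).
\]

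First I would recall that $\mathcal O_W$ is a finitely generated (hence finitely presented) $\mathbb C$-algebra, so ${\mathrm{Der}}(\mathcal O_W)$ is a finitely presented $\mathcal O_W$-module (it is the dual of the K\"ahler differentials $\Omega^1_{\mathcal O_W/\mathbb C}$, which is finitely presented by a standard commutative algebra argument, e.g.\ \cite{Matsumura}). For such modules, localization commutes with ${\mathrm{Hom}}$, so
\[
S^{-1}{\mathrm{Der}}(\mathcal O_W)=S^{-1}{\mathrm{Hom}}_{\mathcal O_W}(\Omega^1_{\mathcal O_W/\mathbb C},\mathcal O_W)\simeq {\mathrm{Hom}}_{\mathcal O_{W,a}}(S^{-1}\Omega^1_{\mathcal O_W/\mathbb C},\mathcal O_{W,a}).
\]
Since K\"ahler differentials also commute with localization, $S^{-1}\Omega^1_{\mathcal O_W/\mathbb C}\simeq \Omega^1_{\mathcal O_{W,a}/\mathbb C}$, and the right-hand side is precisely ${\mathrm{Der}}(\mathcal O_{W,a})$. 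Concretely, the isomorphism sends a fraction $\frac{X}{s}$ (with $X\in{\mathrm{Der}}(\mathcal O_W)$, $s\in S$) to the derivation $\frac{f}{u}\mapsto \frac{X[f]\,u-f\,X[u]}{s\,u^2}$ on $\mathcal O_{W,a}$, which is exactly the formula used in Section~\ref{ssec:loc} to define the localized anchor map.

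Once this isomorphism is established, it is automatically a Lie-Rinehart isomorphism: its anchor is the identity on both sides (after identification), and the induced Lie bracket on $S^{-1}{\mathrm{Der}}(\mathcal O_W)$ given by the Leibniz-extension formula of Section~\ref{ssec:loc} coincides with the commutator bracket of derivations of $\mathcal O_{W,a}$. Plugging this identification into Proposition~\ref{prop:germ} yields the claim.

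The only mildly subtle point is the finite presentation of ${\mathrm{Der}}(\mathcal O_W)$ needed to commute $S^{-1}$ with ${\mathrm{Hom}}_{\mathcal O_W}(-,\mathcal O_W)$; this is guaranteed because $\mathcal O_W$ is a finitely presented $\mathbb C$-algebra (being a quotient of a polynomial ring by a finitely generated ideal, as $\mathcal O$ is Noetherian) and hence $\Omega^1_{\mathcal O_W/\mathbb C}$ is finitely presented. No higher-homotopical machinery is needed beyond Proposition~\ref{prop:germ}; the statement is, in the end, an instance of germification applied to the tautological Lie-Rinehart algebra ${\mathrm{Der}}(\mathcal O_W)$.
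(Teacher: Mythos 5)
Your proof is correct and follows exactly the route the paper takes: the paper derives this proposition as an immediate consequence of Proposition \ref{prop:germ}, after noting that the germ at $a$ of ${\mathrm{Der}}(\mathcal O_W)$ is "easily checked" to coincide with ${\mathrm{Der}}(\mathcal O_{W,a})$. The only difference is that you supply the standard justification of that identification (finite presentation of $\Omega^1_{\mathcal O_W/\mathbb C}$, commutation of localization with ${\mathrm{Hom}}$), which the paper leaves implicit.
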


 To describe this structure, let us start with the following Lemma.

	\begin{lemma}\label{lem:finite}
		Let $a$ be a point of  an affine variety $W$.  
		The universal Lie $\infty$-algebroid of
${\mathrm{Der}}(\mathcal O_{W,a})$
		can be constructed on a resolution $((\E_{-i}^{a})_{i\geq 1}, \ell_1, \pi) $, with $\mathcal E_{-i}^{a} $  free $\mathcal O_{W,a}$-modules of finite rank for all $i \geq 1$, which is minimal in the sense that $\ell_1 (\E_{-i-1}) \subset \mathfrak m_{a}\E_{-i} $ for all $i \geq 1$. 
	\end{lemma}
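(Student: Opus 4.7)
The plan is to produce a minimal free resolution of $\mathrm{Der}(\mathcal O_{W,a})$ over the Noetherian local ring $\mathcal O_{W,a}$, built inductively via Nakayama's lemma, and then apply Theorem \ref{thm:existence} to equip it with a universal Lie $\infty$-algebroid structure.

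First I would check that all the required finiteness conditions hold. Since $\mathcal O_W$ is the quotient of a polynomial ring, it is Noetherian; hence so is its localization $\mathcal O_{W,a} \simeq (\mathcal O_W)_{\mathfrak m_a}$, which is moreover a local ring with maximal ideal $\mathfrak m_a$ (abusing notation). Because $\mathcal O_W$ is Noetherian, $\mathfrak X(W) = \mathrm{Der}(\mathcal O_W)$ is a finitely generated $\mathcal O_W$-module (it embeds in a finitely generated module via the inclusion into $\mathrm{Der}(\mathcal O)/\mathcal I_W\mathrm{Der}(\mathcal O)$-type arguments, or more cleanly by Noetherianity of $\mathcal O$ and stability under localization in Section \ref{ssec:loc}). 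Localizing, $\mathrm{Der}(\mathcal O_{W,a}) \simeq (\mathcal O_W)_{\mathfrak m_a}\otimes_{\mathcal O_W} \mathfrak X(W)$ is a finitely generated $\mathcal O_{W,a}$-module.

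Next, I would build the resolution by recursion. Put $M_0 := \mathrm{Der}(\mathcal O_{W,a})$. Assume inductively that $M_{i-1}$ has been produced as a finitely generated $\mathcal O_{W,a}$-module. Choose $(e^{(-i)}_1,\dots,e^{(-i)}_{r_i}) \in M_{i-1}$ whose classes in the finite-dimensional $\mathbb C$-vector space $M_{i-1}/\mathfrak m_a M_{i-1}$ form a basis. By Nakayama's lemma, these elements generate $M_{i-1}$; define $\E_{-i}^a := (\mathcal O_{W,a})^{r_i}$ and the surjection $p_i\colon \E_{-i}^a \twoheadrightarrow M_{i-1}$ sending the canonical basis to this minimal generating system. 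Set $\ell_1 := p_{i-1} \circ \iota_i$ (resp. $\pi := p_1$ in degree $-1$), where $\iota_i \colon M_i := \ker p_i \hookrightarrow \E_{-i}^a$; by Noetherianity, $M_i$ is again finitely generated, so the recursion continues. The minimality condition $\ell_1(\E_{-i-1}^a) \subset \mathfrak m_a \E_{-i}^a$ holds by construction: an element of $M_i \subset \E_{-i}^a$ projecting to a nonzero class in $\E_{-i}^a/\mathfrak m_a\E_{-i}^a$ would contradict the fact that the generators chosen at step $i$ project to a basis of $M_{i-1}/\mathfrak m_a M_{i-1}$ (its image under $p_i$ would be nonzero, contradicting $p_i \circ \iota_i = 0$ modulo $\mathfrak m_a$).

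Finally, I would invoke Theorem \ref{thm:existence} applied to the free resolution $((\E^a_{-i})_{i\geq 1}, \ell_1, \pi)$ of the Lie-Rinehart algebra $\mathrm{Der}(\mathcal O_{W,a})$ over $\mathcal O_{W,a}$: this produces a universal Lie $\infty$-algebroid structure on the given resolution, with the unary bracket prescribed to be $\ell_1$ and which covers $\mathrm{Der}(\mathcal O_{W,a})$ through $\pi$. The main subtlety lies not in the recursion itself (which is a classical construction of minimal free resolutions in commutative algebra) but in being careful that the target Lie-Rinehart algebra $\mathrm{Der}(\mathcal O_{W,a})$ really is finitely generated and that the localization/germification in Proposition \ref{prop:germ} is compatible with the minimal character of the resolution; this compatibility is automatic since Nakayama's lemma is applied directly over the local ring $\mathcal O_{W,a}$.
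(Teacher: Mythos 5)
Your proof is correct and follows essentially the same route as the paper: establish that $\mathcal O_{W,a}$ is a Noetherian local ring and that $\mathrm{Der}(\mathcal O_{W,a})$ is a finitely generated module over it, produce a minimal free resolution, and apply Theorem \ref{thm:existence}. The only difference is cosmetic — the paper cites an external reference for the existence of the minimal free resolution, whereas you unpack that citation into the standard Nakayama-lemma recursion, which is precisely the argument recorded in the paper's own Proposition \ref{prop:minimal-resol}.
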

	\begin{proof}
		Since Noetherian property is stable by localization, the ring $\mathcal O_{W,a}$ is a Noetherian local ring. Proposition 8.2 in \cite{CohenMacauley}  assures that $\mathcal O_{W,a}\otimes_{\mathcal O_W}\text{Der}(\mathcal O_{W})$ admits a free minimal resolution  by free finitely generated $\mathcal O_{W,a}$-modules. 
		Since $\mathcal O_{W,a}$ is a local ring with maximal idea  $ \mathfrak m_{a}$, we can assume that this resolution is minimal.
		In view of Theorem \ref{thm:existence}, there exists a Lie $\infty$-algebroid structure over this resolution, and the latter is an universal of ${\mathrm{Der}}(\mathcal O_{W,a})$.
	\end{proof}
	
By Theorem \ref{thm:existence}, a resolution of  ${\mathrm{Der}}(\mathcal O_{W,a})$ as in Lemma \ref{lem:finite} comes equipped with a universal Lie $\infty $-algebroid structure for  ${\mathrm{Der}}(\mathcal O_{W,a})$. The quotient with respect to $\mathfrak m_{a} $ is a
	Lie $\infty $-algebra of the isolated singular point $a$
	with trivial $1$-ary bracket.
Using Corollary \ref{cor:LRideal2} and its subsequent discussion, we can prove the next statement.
\begin{prop}
For any universal Lie $\infty$-algebroid structure on a resolution of
${\mathrm{Der}}(\mathcal O_{W,a})$ as in Lemma \ref{lem:finite},
the quotient with respect to the ideal $ \mathfrak m_{a}$ is a representative of the Lie $\infty $-algebra of the isolated singular point $a$, with trivial $1$-ary bracket, on a graded vector space canonically isomorphic to ${\mathrm{Tor}}_{\mathcal O_W} (\mathfrak X_W, \mathbb C) $
($\mathbb C$ being a $\mathcal O_{W}$-module through evaluation at $a$).

In particular, its $2$-ary bracket is a graded Lie bracket on ${\mathrm{Tor}}_{\mathcal O_W} (\mathfrak X_W, \mathbb C) $ which does not depend on any choice made in the construction, and its $3$-ary bracket is a Chevalley-Eilenberg cocycle whose class is also canonical. 
\end{prop}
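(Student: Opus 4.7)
The plan is to establish the four assertions in order, leveraging the minimality hypothesis $\ell_1(\E_{-i-1})\subset\mathfrak{m}_a\E_{-i}$ granted by Lemma \ref{lem:finite} together with the universality results of Chapter \ref{Chap:main}. First, the triviality of the induced differential $\bar\ell_1$ is immediate from minimality: the inclusion $\ell_1(\E_{-i-1})\subset\mathfrak{m}_a\E_{-i}$ guarantees that the map induced on $\E_{-i-1}/\mathfrak{m}_a\E_{-i-1}\to\E_{-i}/\mathfrak{m}_a\E_{-i}$ is zero. For the identification of the underlying graded vector space with $\mathrm{Tor}_{\mathcal{O}_W}(\mathfrak{X}_W,\mathbb{C})$, note that since $(\E,\ell_1,\pi)$ is a free resolution of $\mathfrak{X}_W$ by $\mathcal{O}_{W,a}$-modules, the complex $(\E\otimes_{\mathcal{O}_{W,a}}\mathbb{C},\bar\ell_1)$ computes $\mathrm{Tor}_{\mathcal{O}_{W,a}}(\mathfrak{X}_{W,a},\mathbb{C})$ (see Remark \ref{rmk:Tor} and Appendix \ref{app:tor}), which agrees with the germ of $\mathrm{Tor}_{\mathcal{O}_W}(\mathfrak{X}_W,\mathbb{C})$ at $a$. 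Since $\bar\ell_1=0$, the cohomology collapses and we obtain a canonical isomorphism $\E/\mathfrak{m}_a\E \simeq \mathrm{Tor}_{\mathcal{O}_W}(\mathfrak{X}_W,\mathbb{C})$. Since $a$ is an isolated singular point, $\mathrm{Sker}_a(\rho)=\E_{-1}$ by the remark preceding \eqref{eq:oid-at-a-point}, so no adjustment in degree $-1$ is needed.

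Second, I would derive the Jacobi identity for $\bar\ell_2$ and the cocycle condition for $\bar\ell_3$ directly from the higher Jacobi identities of the universal Lie $\infty$-algebroid expressed via the Nijenhuis–Richardson bracket. The identity at order $n=3$, namely $[\ell_2,\ell_2]_{\mathrm{RN}}+2[\ell_1,\ell_3]_{\mathrm{RN}}=0$, descends to $[\bar\ell_2,\bar\ell_2]_{\mathrm{RN}}=0$ because $\bar\ell_1=0$; this is precisely the graded Jacobi identity. Likewise the order-$4$ identity $[\ell_1,\ell_4]_{\mathrm{RN}}+[\ell_2,\ell_3]_{\mathrm{RN}}=0$ descends to $[\bar\ell_2,\bar\ell_3]_{\mathrm{RN}}=0$, which is the Chevalley–Eilenberg cocycle condition for the $3$-ary bracket with values in the adjoint representation of the graded Lie algebra $(\mathrm{Tor}_{\mathcal{O}_W}(\mathfrak{X}_W,\mathbb{C}),\bar\ell_2)$.

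Third, canonicity is extracted from Corollary \ref{cor:LRideal2}: any two universal Lie $\infty$-algebroids on minimal resolutions are related by a Lie $\infty$-algebroid morphism $\Phi$, unique up to homotopy, over the identity of $\mathfrak{X}_W$. The $0$-th Taylor coefficient $\Phi_0\colon(\E',\ell_1')\to(\E,\ell_1)$ is a chain map between minimal complexes that induces a quasi-isomorphism after tensoring with $\mathbb{C}$; by Nakayama-type reasoning it must induce a strict isomorphism $\bar\Phi_0\colon\E'/\mathfrak{m}_a\E'\xrightarrow{\simeq}\E/\mathfrak{m}_a\E$. Extracting the polynomial-degree $1$ component of $\Phi\circ Q_{\E'}=Q_\E\circ\Phi$ modulo $\mathfrak{m}_a$ gives $\bar\Phi_0\circ\bar\ell_2'=\bar\ell_2\circ(\bar\Phi_0\otimes\bar\Phi_0)$, so $\bar\ell_2$ is canonical as a graded Lie algebra. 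Extracting the polynomial-degree $2$ component yields $\bar\Phi_0\circ\bar\ell_3'-\bar\ell_3\circ\bar\Phi_0^{\odot 3}=[\bar\ell_2,\bar\Phi_1]_{\mathrm{RN}}$, identifying the two $3$-brackets modulo a Chevalley–Eilenberg coboundary. Likewise, two homotopic such morphisms differ on $\bar\Phi_1$ by a coboundary, so the cohomology class of $\bar\ell_3$ is intrinsic.

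The main obstacle I anticipate is precisely the bookkeeping at step three: one must verify that the higher Taylor coefficients $\Phi_r$ for $r\ge 2$ do not contribute to the induced maps on $\mathrm{Tor}$ in degrees where the $2$-ary and $3$-ary brackets live. Although polynomial-degree counting together with the $\mathcal{O}_{W,a}$-multilinearity of all $\Phi_r$ should suppress these contributions modulo $\mathfrak{m}_a$, the presence of the Leibniz identity for $\ell_2$ (with anchor $\rho$) requires care: one must check that $\rho$-terms vanish after quotient because $\rho(\E_{-1})[\mathcal{O}_{W,a}]\subset\mathfrak{m}_a$ at the isolated singular point. A parallel difficulty concerns showing that the choice of minimal resolution itself is unique up to (non-canonical) isomorphism of complexes, but this is a standard fact over a Noetherian local ring and allows one to transfer homotopy equivalences of universal Lie $\infty$-algebroids to strict morphisms of the minimal models, in the spirit of the minimal model theorem for $L_\infty$-algebras.
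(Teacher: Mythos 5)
Your proposal is correct and follows essentially the same route the paper takes: the paper's own justification is simply the pointer to Corollary \ref{cor:LRideal2} and the discussion following it (quotient by the Lie--Rinehart ideal $\mathfrak m_a$, minimality kills $\bar\ell_1$ so the underlying space is $\mathrm{Tor}$ itself, the higher Jacobi identities descend, and the unique-up-to-homotopy equivalence of universal Lie $\infty$-algebroids gives canonicity of $\bar\ell_2$ and of the class of $\bar\ell_3$), which is exactly what you spell out in detail. Your added care about the $\rho$-terms and the polynomial-degree bookkeeping of the Taylor coefficients is warranted but resolves as you expect, since $\mathfrak m_a$ is a Lie--Rinehart ideal at an isolated singular point and all Taylor coefficients are $\mathcal O_{W,a}$-multilinear.
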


\section{Some examples of universal Lie-algebroids over an affine variety}\label{example-oid}

\subsection{Vector fields tangent to $W$: a codimension one example}

The zero-set in $W\subset V = \mathbb C^d $ of a weight homogeneous polynomial function $\varphi \in \mathbb{C}[X_1,\cdots,X_d]$ admitting only an isolated singularity at the origin is one of the simplest possible example of a non-smooth affine variety $W\subset V =\mathbb{C}^d$. 
We give in this section a description of $\mathbb U_\mathcal O (\X (W)) $. Although our description is not complete, it will show how complex the universal Lie $\infty$-algebroids may be, even for simple objects.

First, let us describe $\X (W) = {\mathrm{Der}}(\mathcal O_W)$.
We denote by $\omega_1, \dots, \omega_d $ the weights of the variables $x_1, \dots, x_d $ and by  $\lvert\varphi\rvert$ the weighted degree of $\varphi $. 
Recall  that, by definition, $\mathcal{O}_W:=\frac{\mathbb{C}[X_1,\cdots,X_d]}{\langle\varphi\rangle}$.

\begin{proposition}
\label{prop:vectorfieldsonW}
As a $\mathcal O_W $-module, $ \mathfrak X(W):= {\mathrm{Der}}(\mathcal O_W)$ is generated by the restrictions to $W$ of the following vector fields in $\mathbb C^d$:
\begin{enumerate}
    \item the weighted Euler vector field $\overrightarrow{E} := \sum_{i=1}^d \omega_i x_i \partial_{x_i}  $
    \item the $d(d-1)/2$ vector fields given by:
     $$ X_{ij} :=  \frac{\partial \varphi}{\partial x_i}\partial_{x_j} -\frac{\partial \varphi}{\partial x_j} \partial_{x_i}    \hbox{ with $ 1 \leq i < j  \leq d.$}   $$
\end{enumerate}
\end{proposition}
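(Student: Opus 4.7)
The strategy is to lift derivations of $\mathcal O_W$ to derivations of $\mathcal O$ preserving the ideal $\langle\varphi\rangle$, then use the weighted Euler identity to separate out the \emph{Euler-like} component and reduce to Section \ref{koszul} (the Koszul case).

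First I would justify that $\mathrm{Der}(\mathcal O_W) \simeq \mathfrak X_W(\mathbb C^d)/\langle\varphi\rangle\,\mathfrak X(\mathbb C^d)$, where $\mathfrak X_W(\mathbb C^d)=\{X\in\mathfrak X(\mathbb C^d)\mid X[\varphi]\in\langle\varphi\rangle\}$. Since $\varphi$ has an isolated singularity at the origin, the analytic subset $V(\partial_{x_1}\varphi,\dots,\partial_{x_d}\varphi)$ reduces to $\{0\}$, hence has codimension $d$; by a standard Cohen--Macaulay / Koszul argument (applied e.g.\ as in Section \ref{koszul}), the sequence $(\partial_{x_1}\varphi,\dots,\partial_{x_d}\varphi)$ is regular in $\mathcal O$, so $\varphi$ itself is a Koszul polynomial.

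Next, for $X\in\mathfrak X_W(\mathbb C^d)$, write $X[\varphi]=g\varphi$ with $g\in\mathcal O$. The weighted Euler identity for weight homogeneous $\varphi$ of weighted degree $|\varphi|\neq 0$ yields
\[
\overrightarrow{E}[\varphi]=|\varphi|\,\varphi.
\]
Setting
\[
Y:=X-\tfrac{g}{|\varphi|}\,\overrightarrow{E}
\]
gives a vector field satisfying $Y[\varphi]=0$, i.e.\ $Y\in \mathfrak F_\varphi$ in the notation of Section \ref{koszul}.

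Now I would invoke the Koszul description recalled in Proposition \ref{prop-koszul} (and the exactness of \eqref{eq:KoszulComplex}): the kernel of $\iota_\varphi\colon \mathfrak X(\mathbb C^d)\to\mathcal O$ is the image of $\iota_\varphi\colon\mathfrak X^2(\mathbb C^d)\to\mathfrak X(\mathbb C^d)$, and the bivector fields $\partial_{x_i}\wedge\partial_{x_j}$ are sent precisely to $-X_{ij}$. Hence $\mathfrak F_\varphi$ is the $\mathcal O$-module generated by $\{X_{ij}\}_{1\le i<j\le d}$. Therefore $Y=\sum_{i<j} h_{ij} X_{ij}$ for some $h_{ij}\in\mathcal O$, which gives
\[
X=\tfrac{g}{|\varphi|}\,\overrightarrow{E}+\sum_{i<j}h_{ij}X_{ij}.
\]
Reducing modulo $\langle\varphi\rangle$ shows that the classes of $\overrightarrow{E}$ and of the $X_{ij}$ generate $\mathrm{Der}(\mathcal O_W)$ as an $\mathcal O_W$-module.

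The main obstacle is really the Koszul step (verifying that $\mathfrak F_\varphi$ has exactly those $X_{ij}$'s as generators): this rests on the regularity of the partial derivatives, which in turn relies on the hypothesis that the singularity of $\varphi$ is isolated. The weighted-homogeneous assumption is only used to have the Euler identity with a \emph{nonzero} scalar $|\varphi|$, enabling the rescaling $g/|\varphi|$; no other point of the argument needs weight homogeneity.
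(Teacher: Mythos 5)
Your proposal is correct and follows essentially the same route as the paper: the paper's proof subtracts the Euler multiple $\tfrac{f}{|\varphi|}\overrightarrow{E}$ (via its Lemma \ref{Euler-lemma} in the degree-$0$ case) to land in $\ker\iota_\varphi=\mathfrak F_\varphi$, and then uses exactness of the Koszul complex to write the remainder as $\iota_\varphi(P)$ for a bivector field $P$, i.e.\ as an $\mathcal O$-combination of the $X_{ij}$. Your version merely inlines that lemma and spells out the (correct) background facts — the identification $\mathrm{Der}(\mathcal O_W)\simeq \mathfrak X_W(\mathbb C^d)/\langle\varphi\rangle\mathfrak X(\mathbb C^d)$ and the regularity of the partial derivatives — which the paper takes as given.
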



We start with a lemma. 
Recall that a homogenous function $\varphi $ with an isolated singularity at $0$ is a Koszul function (see Example \ref{koszul}), so that the Koszul complex, i.e. the complex of poly-vector fields on $ V = \mathbb C^d$, equipped with  $\iota_{\varphi} $: 
  $$   \cdots \stackrel{ \iota_{\varphi}}{\longrightarrow} \mathfrak X^2(\mathbb C^d)  \stackrel{ \iota_{\varphi}}{\longrightarrow} \mathfrak X^1(\mathbb C^d) \stackrel{\iota_{\varphi}}{\longrightarrow} \mathcal O $$ 
 has no cohomology except in degree $0$. We denote it by $ (\X^\bullet, \iota_{\varphi})$.

\begin{lemma}\label{Euler-lemma}
 If $P \in \mathfrak X^{i+1}(\mathbb C^d)  , Q \in \mathfrak X^i(\mathbb C^d)$ satisfy $\iota_{\varphi} (P) = \varphi Q $, then there exists $R \in  \X^{i+2}(\mathbb C^d) $ such that
  $$ P = \frac{1}{|\varphi|} \overrightarrow{E} \wedge Q  +\iota_{\varphi} (R) $$
\end{lemma}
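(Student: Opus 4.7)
The plan rests on three ingredients: the weighted Euler identity, the fact that $\iota_\varphi$ is an $\mathcal O$-linear square-zero derivation of the wedge product, and the exactness of the Koszul complex (which holds because $\varphi$ is Koszul).

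First, since $\varphi$ is weighted homogeneous of weight $|\varphi|$, Euler's identity yields $\overrightarrow{E}[\varphi] = |\varphi|\,\varphi$, i.e.\ $\iota_\varphi(\overrightarrow E) = |\varphi|\,\varphi$. Second, I observe that the hypothesis $\iota_\varphi(P) = \varphi Q$ forces $\iota_\varphi(Q) = 0$: applying $\iota_\varphi$ to both sides and using $\iota_\varphi^2 = 0$ together with the $\mathcal O$-linearity of $\iota_\varphi$ (which is immediate from the definition $\iota_\varphi(P)[f_1,\dots] = P[\varphi, f_1,\dots]$) gives $0 = \iota_\varphi(\varphi Q) = \varphi\,\iota_\varphi(Q)$; since $\mathcal O = \mathbb C[x_1,\dots,x_d]$ has no zero divisors and the poly-vector fields are free $\mathcal O$-modules, this forces $\iota_\varphi(Q)=0$ (the case $i=0$ is automatic since $\iota_\varphi$ vanishes on $\mathcal O$).

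Next, I introduce the corrected candidate $\tilde P := P - \tfrac{1}{|\varphi|}\overrightarrow{E}\wedge Q$ and compute its image under $\iota_\varphi$. Using the graded Leibniz rule $\iota_\varphi(A\wedge B) = \iota_\varphi(A)\wedge B + (-1)^{|A|}A\wedge \iota_\varphi(B)$ with $A = \overrightarrow E$ (a $1$-vector field), together with the two previous observations,
\[
\iota_\varphi(\overrightarrow E\wedge Q) = \iota_\varphi(\overrightarrow E)\wedge Q - \overrightarrow E\wedge \iota_\varphi(Q) = |\varphi|\,\varphi\,Q - 0 = |\varphi|\,\varphi\,Q,
\]
so that $\iota_\varphi(\tilde P) = \varphi Q - \varphi Q = 0$. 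Hence $\tilde P \in \ker(\iota_\varphi\colon \mathfrak X^{i+1}(\mathbb C^d)\to \mathfrak X^{i}(\mathbb C^d))$.

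Finally, since $\varphi$ is a Koszul polynomial, the Koszul complex $(\mathfrak X^\bullet(\mathbb C^d),\iota_\varphi)$ is exact in all positive polynomial-degrees; in particular there exists $R\in\mathfrak X^{i+2}(\mathbb C^d)$ with $\iota_\varphi(R) = \tilde P$, which yields the desired decomposition
\[
P = \tfrac{1}{|\varphi|}\overrightarrow E\wedge Q + \iota_\varphi(R).
\]
The only slightly delicate step is the vanishing $\iota_\varphi(Q)=0$, which is the place where the Koszul/integral-domain hypothesis on $\varphi$ really enters; once this is in hand, the rest is a direct calculation with the Leibniz rule and an invocation of Koszul exactness.
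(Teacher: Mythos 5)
Your proof is correct and follows exactly the same route as the paper's: establish $\iota_\varphi(Q)=0$, compute $\iota_\varphi\bigl(\tfrac{1}{|\varphi|}\overrightarrow{E}\wedge Q\bigr)=\varphi Q$ via the Leibniz rule and the weighted Euler identity, and conclude by Koszul exactness. The only difference is that you spell out the "easily checked" vanishing $\iota_\varphi(Q)=0$ (via $\iota_\varphi^2=0$ and the absence of zero divisors), which the paper leaves implicit.
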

\begin{proof}
This follows from the easily checked fact that $\iota_{\varphi} (Q)=0 $, so that
 $$\iota_{\varphi}\left( \frac{1}{|\varphi|} \overrightarrow{E} \wedge Q \right) = \varphi Q .$$
This implies that
  $$\iota_{\varphi}\left( P -  \frac{1}{|\varphi|} \overrightarrow{E} \wedge Q \right) = 0 $$
  and the existence of $R$ now follows from the exactness of the Koszul complex.
\end{proof}

\begin{proof}[Proof (of Proposition \ref{prop:vectorfieldsonW})] 
Any $X \in \X (W)$ is the restriction to $W$ of a vector field $\tilde{X} $ in $V$ tangent to $W$, i.e. that satisfies $\tilde X [\varphi] = f \varphi $ 
for some $f \in \mathcal O $. 
By Lemma \ref{Euler-lemma} the vector field $\tilde X$ can be written as $\tilde X =\frac{f}{|\varphi|} \overrightarrow{E} +\iota_{\varphi} (P)$,
for some bivector field $P \in \mathfrak X^2 (\mathbb {C}^d) $. The restriction  to $W$ of the first (resp the second) term is tangent to $W$ and is of the first (resp. second) type described in Proposition \ref{prop:vectorfieldsonW}. This completes the proof.
\end{proof}

We now intend to construct a free resolution of $\X (W) $ in the category of $\mathcal O_W$.
\noindent
Let us denote by  $\wedge\overrightarrow{E} \colon \X^i(V)\rightarrow \X^{i+1}(V) $ the map $\omega\mapsto\omega\wedge\overrightarrow{E}$.
The map $\wedge\overrightarrow{E}$ is a chain map from the restriction $\mathfrak i_W^*\X^\bullet$ to $W$ of the Koszul complex, shifted by one to itself. More precisely,   $\wedge\overrightarrow{E}$ in the diagram below is a chain map:
 $$
\xymatrix{ \cdots\ar[r]^{\iota_{\varphi}}&  \ar[r]^{\iota_{\varphi}}\mathcal \mathfrak i_W^*\X^{2}(V)\ar@<3pt>[d]^{\wedge\overrightarrow{E}}&  \ar[r]^{i_{\varphi}} \mathfrak i_W^*\X(V)\ar@<3pt>[d]^{\wedge\overrightarrow{E}}&\mathcal O_W \ar@<3pt>[d]^{\wedge\overrightarrow{E}}&\\ \cdots\ar[r]^{\iota_{\varphi}}& \ar[r]^{\iota_{\varphi}} \mathfrak i_W^* \mathcal \X^{3}(V)& \mathfrak i_W^*\X^{2}(V)  \ar[r]^{\iota_{\varphi}}&\X(W)  & } 
$$ 
\begin{lemma}\label{cone}
The chain map $\cdot\,\wedge\overrightarrow{E}$ is a quasi-isomorphism.
\end{lemma}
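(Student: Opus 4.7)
I would prove the lemma by forming the mapping cone $C^\bullet$ of the chain map $\mu:=\cdot\wedge\vec E$ and showing that $C^\bullet$ is acyclic. Writing $T_\bullet$ for the top row (with $T_0=\mathcal O_W$, $T_k=\mathfrak i_W^*\X^k$ for $k\geq 1$) and $B_\bullet$ for the bottom row (with $B_0=\X(W)$, $B_k=\mathfrak i_W^*\X^{k+1}$ for $k\geq 1$), the cone has $C_n=T_{n-1}\oplus B_n$ with differential
\[
d_C(t,b)=(-\iota_\varphi t,\ \mu(t)+\iota_\varphi b).
\]
The fact that $\mu$ commutes with $\iota_\varphi$ on $\mathfrak i_W^*\X^\bullet$ (so $C^\bullet$ really is a complex) is the infinitesimal shadow of the identity
\[
\iota_\varphi(\vec E\wedge P)+\vec E\wedge \iota_\varphi(P)=\vec E[\varphi]\,P=|\varphi|\varphi\,P,
\]
which kills the right-hand side once we work modulo $\varphi$.

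For $H_0(C)$, observe that $\mathrm{im}\,d_C\colon C_1=\mathcal O_W\oplus \mathfrak i_W^*\X^2\to \X(W)$ is generated by elements of the form $f\vec E|_W$ (from the $\mu$-component) and $\iota_\varphi(\partial_{x_i}\!\wedge\partial_{x_j})|_W=X_{ij}|_W$ (from the $\iota_\varphi$-component). Proposition~\ref{prop:vectorfieldsonW} says precisely that these two families generate $\X(W)$ over $\mathcal O_W$, so $H_0(C)=0$.

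For $H_n(C)$ with $n\geq 1$, take a cycle $(Y,R)\in C_n$ and lift it to $\tilde Y,\tilde R$ on $V$. The cycle conditions give $\iota_\varphi(\tilde Y)=\varphi U$ and $\tilde Y\wedge\vec E+\iota_\varphi(\tilde R)=\varphi Z$ for some $U,Z$ on $V$. Applying Lemma~\ref{Euler-lemma} (Euler's lemma) to the first equation yields $\tilde Y=\tfrac{1}{|\varphi|}\vec E\wedge U+\iota_\varphi(V)$ for some $V$. Substituting into the second equation, using $\vec E\wedge\vec E=0$ and the identity $\iota_\varphi(V\wedge\vec E)=\iota_\varphi(V)\wedge\vec E+(-1)^{|V|}|\varphi|\varphi V$, reduces it to $\iota_\varphi(V\wedge\vec E+\tilde R)=\varphi(Z+(-1)^{|V|}|\varphi|V)$; a second application of Lemma~\ref{Euler-lemma} gives $\tilde R=-V\wedge\vec E+\tfrac{1}{|\varphi|}\vec E\wedge(Z+(-1)^{|V|}|\varphi|V)+\iota_\varphi(Y_0)$. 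The key algebraic point is that the element $U$ is itself a Koszul cycle ($\iota_\varphi(U)=0$ because $\varphi$ is a non-zero divisor), so for $n\geq 3$ one can write $U=\iota_\varphi(U')$ and then set $(A,B)\in T_n\oplus B_{n+1}=C_{n+1}$ using $V$, $U'$, $Y_0$ in a way dictated by the formulas above; a short computation shows $d_C(A,B)=(Y,R)$.

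The main obstacle I expect is the boundary cases $n=1$ and $n=2$, where $U$ lives in $\mathcal O$ (respectively $\X^1$) and cannot be written as $\iota_\varphi(U')$; these must be handled by hand, combining the Euler-lemma output with the explicit description of $\X(W)$ from Proposition~\ref{prop:vectorfieldsonW} to absorb the ``extra'' $\tfrac{1}{|\varphi|}\vec E\wedge U|_W$ term using generators of $\X(W)$. The bookkeeping of the Koszul signs and the fact that everything must be checked to descend cleanly from $V$ to $W$ (i.e.\ be independent of the chosen lifts $\tilde Y,\tilde R$) is the delicate part of the argument; once this is set up correctly, Lemma~\ref{Euler-lemma} does all the work.
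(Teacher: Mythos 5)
Your reformulation of the statement as acyclicity of the mapping cone $C$ is a legitimate strategy, and your computation of $H_0(C)$ reproduces the surjectivity half of the paper's argument. The injectivity half (the paper's computation: from $f\overrightarrow{E}=\iota_{\varphi}(\pi)+\varphi Q$ deduce $f=\frac{1}{|\varphi|}\iota_{\varphi}(Q)$ by applying $\iota_{\varphi}$) is exactly your deferred boundary case $n=1$, and that case does go through. The genuine gap is the other deferred case, $n=2$, and it is not a matter of sign bookkeeping: the cone is \emph{not} acyclic there. Consider the cycle $(Y,R)=(\overrightarrow{E}|_W,0)\in C_2=\mathfrak i_W^*\X(V)\oplus\mathfrak i_W^*\X^{3}(V)$; it is closed because $\iota_{\varphi}(\overrightarrow{E})=|\varphi|\varphi\equiv 0$ in $\mathcal O_W$ and $\overrightarrow{E}\wedge\overrightarrow{E}=0$. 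Running your own recipe on it, the first cycle condition gives $U=|\varphi|$, a nonzero constant, and to write $(Y,R)=d_C(A,B)$ you would need $\overrightarrow{E}\in\iota_{\varphi}\bigl(\X^{2}(V)\bigr)+\varphi\,\X(V)$; applying $\iota_{\varphi}$ to such an identity forces $|\varphi|=Q[\varphi]\in\langle \partial_{x_1}\varphi,\dots,\partial_{x_d}\varphi\rangle$, which is impossible since every $\partial_{x_i}\varphi$ vanishes at the origin while $|\varphi|$ is a unit. In fact $H_2(C)\cong \mathcal O/\langle \partial_{x_1}\varphi,\dots,\partial_{x_d}\varphi\rangle$, the Milnor algebra, which is nonzero precisely because $\varphi$ is singular at $0$.

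The underlying reason is that the top row of the diagram is exact at $\mathfrak i_W^*\X^{k}(V)$ only for $k\geq 2$ (this is what the earlier lemma on the restricted Koszul complex actually provides); at $k=1$ its cohomology is $\X(W)/\iota_{\varphi}\bigl(\mathfrak i_W^*\X^{2}(V)\bigr)$, which by Proposition \ref{prop:vectorfieldsonW} is generated by the class of $\overrightarrow{E}$ with annihilator the Jacobian ideal, hence is nonzero, while the bottom row has vanishing cohomology in that position. So the induced map on cohomology in degree $1$ is the zero map out of a nonzero module and cannot be an isomorphism. Your argument therefore cannot be completed as written; the case you postpone is the one where the statement itself breaks down. (Be aware that the paper's own one-line treatment of the degrees $i\geq 1$ — ``clear since the complex has no cohomology'' — silently assumes exactness of the top row in the one degree where it fails, and the complex \eqref{resol-W} built from this cone consequently misses a generator in degree $-3$ hitting $(\overrightarrow{E}|_W,0)$.)
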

\begin{proof}
It is quite clear for $i\geq 1$ since the complex $(\E'[1],\iota_{\varphi})$ has no cohomology. Let us check bijectivity in the last column. Surjectivity follows from Proposition \ref{prop:vectorfieldsonW}. To  check injectivity consider a function $f\in\mathcal O$ such that \begin{align}\label{eq:inj} f\overrightarrow{E}=\iota_{\varphi}(\pi)+\varphi Q,\end{align}
for some bivector field $\pi\in\X^2(V)$ and vector field $Q\in\X(V)$. Upon taking $\iota_{\varphi}$ to both sides, Equation \eqref{eq:inj} implies, $$ f \lvert\varphi\rvert\varphi=\varphi \iota_{\varphi}(Q).$$ Hence, we obtain $f=\frac{1}{|\varphi|}\iota_{\varphi}(Q)$. 
\end{proof}

The lemma \ref{cone} implies that the mapping cone construction provides a free resolution  of the $\mathcal O_W$-module of vector fields of $\X(W)$ on $W$ namely: \begin{align}\label{resol-W}
\left( \E_{-i}=\mathfrak i_W^* \mathcal \X^{i-1}(V)\oplus\mathfrak i_W^* \mathcal \X^{i+1}(V),\,i\geq 1;D=\begin{pmatrix}
-\iota_{\varphi}&0\\-\wedge\overrightarrow{E}&\iota_{\varphi}
	\end{pmatrix},\pi\right).
	\end{align}
In degree $-1$ we read $\E_{-1}=\mathcal O_W \oplus \mathfrak i_W^* \mathcal \X^{2}(V)$ and $\pi$ is defined on the generators of $\E_{-1}$ as $\pi(1\oplus 0):=\overrightarrow{E}$ and $$\pi(0\oplus \partial_{x_i}\wedge\partial_{x_j} ):=\frac{\partial \varphi}{\partial x_i}\partial_{x_j} -\frac{\partial \varphi}{\partial x_j} \partial_{x_i} ,\;\text{for all},\;1\leq i<j\leq d.$$ Let us now describe some of the $k$-ary brackets:

\begin{proposition}
The Lie-Rinehart algebra $\mathfrak X (W) $ of vector fields on $W$ admits a universal Lie $\infty $-algebroid  whose
 \begin{enumerate}
   \item underlying  complex is \eqref{resol-W} (which is a free resolution of $\mathfrak{X}(W)$),
    \item  $1$-ary bracket is given by the resolution \eqref{resol-W}.
    \item  anchor map $\rho:=\pi$. 
      \item In the case where the generator $1\oplus 0$ appears together with a bivector field on $W$, one can define  the $2$-ary bracket on elements of degree $-1$ which makes the anchor map a morphism as follows,$$\{1\oplus 0,0\oplus\partial_{x_i}\wedge\partial_{x_j}\}_2:=(\lvert\varphi\rvert -2)0\oplus\partial_{x_i}\wedge\partial_{x_j}.$$ 
    \item The $k$-ary brackets can be chosen to be the same as in the structure given by Proposition \ref{prop-koszul} on $\mathfrak i_W^* \mathcal \X^{i+1}(V)$ for $i\geq 1$.
     \item the $3$-ary bracket can be chosen to be zero when evaluated at $1\oplus 0$ and with two other  elements of $\mathfrak i_W^* \mathcal \X^{2}(V)$.
\end{enumerate} 
\end{proposition}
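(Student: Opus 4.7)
The plan is first to note that the underlying complex \eqref{resol-W}, constructed as a mapping cone along $\wedge \overrightarrow{E}$, is a free resolution of $\mathfrak X(W)$: this follows directly from Lemma \ref{cone} together with the acyclicity of the Koszul complex in positive degree. By Theorem \ref{thm:existence}, this resolution automatically admits \emph{some} universal Lie $\infty$-algebroid structure, so the real content is to verify that the specific brackets and anchor prescribed in items (4)--(6) can be realized during the recursive construction.

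For item (4), I would compute $[\overrightarrow{E},X_{ij}]$ directly using Euler's identity. Since $\partial\varphi/\partial x_k$ is weight-homogeneous of weighted degree $|\varphi|-\omega_k$, a short computation gives $[\overrightarrow{E},(\partial\varphi/\partial x_i)\partial_{x_j}]=(|\varphi|-\omega_i-\omega_j)(\partial\varphi/\partial x_i)\partial_{x_j}$, and hence $[\overrightarrow{E},X_{ij}]=(|\varphi|-\omega_i-\omega_j)X_{ij}$; in the ordinary homogeneous case $\omega_k\equiv 1$ this specializes to $(|\varphi|-2)X_{ij}$. Applying $\rho$ to the proposed $2$-ary bracket then produces exactly $[\rho(1\oplus 0),\rho(0\oplus\partial_{x_i}\wedge\partial_{x_j})]$, so axiom (iv) of Definition~\ref{def:Linfty} is satisfied; the bracket is then extended to all of $\E_{-1}$ by graded symmetry and Leibniz, with $\{1\oplus 0,1\oplus 0\}_2=0$ and with $\{0\oplus\alpha,0\oplus\beta\}_2$ taken to be the bracket of Proposition \ref{prop-koszul} on $\mathfrak i_W^* \X^2(V)$.

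For items (5) and (6), I would transport the entire Lie $\infty$-structure of Proposition \ref{prop-koszul} onto the sub-complex $\E'_{-i}:=\mathfrak i_W^* \X^{i+1}(V)$ of $\E_{-i}$, which admits the free complement $\mathcal V_{-i}:=\mathfrak i_W^* \X^{i-1}(V)$ containing the Euler generator $1\oplus 0$. Since the Koszul brackets take values in $\mathfrak i_W^* \X^{\bullet}(V)$ and are already compatible with $\iota_\varphi$, they furnish bona fide Taylor coefficients $\ell_k|_{\bigodot^k \E'}$ of the desired universal structure. The recursion of Theorem \ref{thm:existence} is then run on $\E$: at each step, after one verifies that the obstruction (a $D$-cocycle in ${\mathfrak{Page}}^{(k)}(\E)$) preserves $\E'$ and restricts there to a $D$-coboundary of the already-chosen Koszul bracket, Lemma \ref{rest:univ} provides a primitive which also preserves $\E'$. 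In particular, the Jacobiator of $\ell_2$ evaluated on $(1\oplus 0,0\oplus\alpha,0\oplus\beta)$ is checked by direct computation to be a $D$-coboundary with no $\mathcal V$-component, so one may choose $\ell_3$ vanishing on such triples, proving item (6).

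The main obstacle is the book-keeping at this last step. One must verify, using Euler's relation systematically, that the Jacobiator component on triples containing $1\oplus 0$ is not merely $D$-closed but is the $D$-boundary of an element that can be chosen in $\bigoplus_{i\geq 2}\mathrm{Hom}_\mathcal O(\bigodot^3 \E'|_{-i-1},\E'_{-i})$ --- that is, with no $\mathcal V$-contribution. This is the place where the specific eigenvalue structure of $\mathrm{ad}\,\overrightarrow{E}$ (through the factor $|\varphi|-2$) enters, ensuring that the relevant obstruction is solvable in the sub-complex. Once this is established, the remainder of the construction follows the pattern of Proposition \ref{prop:lksontNuls}, and Theorem \ref{th:universal} guarantees that any two such choices are homotopy equivalent.
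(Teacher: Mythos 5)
Your overall strategy is the one the paper intends: the paper in fact states this proposition without a written proof, relying on the preceding material, and your reconstruction assembles exactly the right ingredients --- Lemma \ref{cone} plus the mapping-cone construction for items (1)--(3), the Euler eigenvalue computation for item (4), and the extension mechanism of Lemma \ref{rest:univ} (in the spirit of Proposition \ref{univ:precise}) to graft the Koszul brackets of Proposition \ref{prop-koszul} onto the subcomplex $\E'_{-i}=\mathfrak i_W^*\X^{i+1}(V)$ for item (5). Your observation that $[\overrightarrow{E},X_{ij}]=(\lvert\varphi\rvert-\omega_i-\omega_j)X_{ij}$, so that the paper's coefficient $\lvert\varphi\rvert-2$ is the specialization to $\omega_k\equiv 1$, is correct and is actually more careful than the statement as printed.

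There is, however, a logical slip in your justification of item (6). For three arguments of degree $-1$ the higher Jacobi identity reads $\ell_1\bigl(\ell_3(x,y,z)\bigr)=-\mathrm{Jac}(x,y,z)$, with no compensating $\ell_3\circ\ell_1$ term; hence the choice $\ell_3(1\oplus 0,\,0\oplus\alpha,\,0\oplus\beta)=0$ is admissible \emph{only if} $\mathrm{Jac}$ vanishes identically on such triples --- knowing merely that it is a $D$-coboundary would force $\ell_3$ to equal a primitive of it, not zero. The vanishing does hold, but for the reason already implicit in your item (4) computation: $\ell_2(1\oplus 0,\cdot)$ acts on the generators $0\oplus\partial_{x_i}\wedge\partial_{x_j}$ as multiplication by $\lvert\varphi\rvert-\omega_i-\omega_j$, the Koszul coefficient $\varphi_{pq}$ appearing in $\ell_2(0\oplus\partial_{x_i}\wedge\partial_{x_j},0\oplus\partial_{x_k}\wedge\partial_{x_l})$ is weight-homogeneous of degree $\lvert\varphi\rvert-\omega_p-\omega_q$, and the two eigenvalues add up so that $\mathrm{ad}_{\overrightarrow{E}}$ is a derivation of the Koszul $2$-bracket with the stated weights; the three Jacobiator terms then cancel exactly. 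Replace ``is a $D$-coboundary with no $\mathcal V$-component'' by this direct vanishing and item (6) is proved; the rest of your recursion via Lemma \ref{rest:univ}, including the requirement that the primitives at each stage preserve $\E'$, goes through as you describe.
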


\begin{corollary}
$\mathfrak{X}(W)$ does not come from a Lie algebroid of rank $1+ \frac{d(d+1)}{2}$.
\end{corollary}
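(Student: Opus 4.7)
The plan is to argue by contradiction using the NMRLA-obstruction framework discussed after Corollary \ref{cor:LRideal2}. Suppose there exists a Lie algebroid $(A,[\cdot,\cdot]_A,\rho_A)$ of rank $r=1+\tfrac{d(d+1)}{2}$ together with a surjective Lie--Rinehart morphism $\phi\colon A\to\mathfrak X(W)$. Viewing $A$ as an acyclic Lie $\infty$-algebroid concentrated in degree $-1$ that terminates in $\mathfrak X(W)$ through the hook $\phi$, Theorem \ref{th:universal} produces a Lie $\infty$-morphism $\Phi\colon A\to \mathbb U_{\mathcal O_W}(\mathfrak X(W))$ over $\mathfrak X(W)$.

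First I would check that the maximal ideal $\mathfrak m_0\subset\mathcal O_W$ at the isolated singular point $0\in W$ is a Lie--Rinehart ideal. By Proposition \ref{prop:vectorfieldsonW}, $\mathfrak X(W)$ is generated by $\overrightarrow{E}$ and the vector fields $X_{ij}$, and since $\varphi$ is weight-homogeneous with an isolated singularity at $0$, its linear part at $0$ vanishes, so $X_{ij}(0)=0$; trivially $\overrightarrow{E}(0)=0$. Hence all generators vanish at $0$, so $\rho[\mathfrak m_0]\subseteq\mathfrak m_0$. Applying Proposition \ref{prop:restriction} to both sides yields a Lie $\infty$-morphism $\overline\Phi\colon A/\mathfrak m_0 A\to\mathbb U_{\mathcal O_W}(\mathfrak X(W))/\mathfrak m_0$ of Lie $\infty$-algebras over $\mathbb C$.

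Next I would show that the resolution \eqref{resol-W} is minimal at $0$. Indeed $\iota_\varphi$, $\wedge\overrightarrow{E}$ and $\pi$ all take values in $\mathfrak m_0\cdot\mathcal E$ (respectively because the partial derivatives of $\varphi$ vanish at $0$, because $\overrightarrow{E}$ vanishes at $0$, and by the first paragraph). Consequently, after reduction modulo $\mathfrak m_0$ the differentials of the complex $\mathcal E_\bullet/\mathfrak m_0\mathcal E_\bullet$ are zero, so
$$\mathrm{Tor}^{\mathcal O_W}_i\bigl(\mathfrak X(W),\mathbb C\bigr)\;\cong\;\mathcal E_{-i-1}/\mathfrak m_0\mathcal E_{-i-1}\qquad\hbox{for every }i\geq 0.$$
In particular $\mathrm{Tor}_{-2}\neq 0$ (it contains $\mathfrak i_W^*\mathfrak X(V)/\mathfrak m_0\cong\mathbb C^d$, reflecting the first syzygies between $\overrightarrow{E}$ and the $X_{ij}$). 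As explained after Corollary \ref{cor:LRideal2}, the $2$- and $3$-ary brackets of $\mathbb U_{\mathcal O_W}(\mathfrak X(W))/\mathfrak m_0$ then define a canonical class
$$[\,\bar\ell_3\,]\;\in\;H^3_{\mathrm{CE}}\bigl(\mathrm{Tor}_{-1};\,\mathrm{Tor}_{-2}\bigr).$$

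Finally, the existence of the restricted morphism $\overline\Phi$ from a Lie $\infty$-algebra concentrated in degree $-1$ to $\mathbb U_{\mathcal O_W}(\mathfrak X(W))/\mathfrak m_0$ would produce an explicit Chevalley--Eilenberg primitive for $[\bar\ell_3]$, forcing this class to be zero. The conclusion then follows from the general fact stated in Section \ref{ssec:Tor}: non-vanishing of $[\bar\ell_3]$ obstructs the existence of any Lie algebroid of rank equal to $\dim_\mathbb C\mathrm{Tor}_{-1}=1+\binom{d}{2}$ surjecting onto $\mathfrak X(W)$; the stated rank $1+\tfrac{d(d+1)}{2}$ corresponds precisely to the dimensional data $\dim\mathrm{Tor}_{-1}+\dim(\mathfrak i_W^*\mathfrak X(V)/\mathfrak m_0)$ of the minimal resolution at $0$, which is exactly the situation the obstruction rules out. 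The main obstacle is the explicit verification that $[\bar\ell_3]\neq 0$: this requires completing the partial description of the $3$-ary bracket given in the proposition (only its value on $1\oplus 0$ with two bivector fields is made explicit, namely zero), and using the $2$-ary relation $\{1\oplus 0,\,0\oplus\partial_{x_i}\wedge\partial_{x_j}\}_2=(|\varphi|-2)\,0\oplus\partial_{x_i}\wedge\partial_{x_j}$ together with the higher Jacobi identity to force a non-trivial contribution into $\mathrm{Tor}_{-2}$.
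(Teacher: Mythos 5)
Your strategy is the one the paper itself gestures at (the thesis states this corollary with no proof at all; the only available justification is the NMRLA-class discussion following Corollary \ref{cor:LRideal2}), and most of your setup is sound: $\mathfrak m_0$ is indeed a Lie--Rinehart ideal because all the generators $\overrightarrow{E}$, $X_{ij}$ of Proposition \ref{prop:vectorfieldsonW} vanish at the singular point, and the resolution \eqref{resol-W} is indeed minimal at $0$, so that ${\mathrm{Tor}}^{-i}(\mathfrak X(W),\mathbb C)\cong \mathcal E_{-i}/\mathfrak m_0\mathcal E_{-i}$. But the proposal has a genuine gap, and you name it yourself: you never verify that the class $[\bar\ell_3]\in H^3_{\mathrm{CE}}({\mathrm{Tor}}^{-1};{\mathrm{Tor}}^{-2})$ is nonzero. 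That verification is not a technical afterthought --- it is the entire content of the corollary. Everything before it only establishes the conditional statement ``if the class is nonzero, then no such Lie algebroid exists.'' To close the gap you would have to use item 5 of the proposition (the $3$-ary bracket on $\bigodot^3\mathfrak i_W^*\X^2(V)$ is the Koszul one of Proposition \ref{prop-koszul}, i.e.\ built from the third derivatives $\varphi_{i_1i_2i_3}(0)$) and show it is not a Chevalley--Eilenberg coboundary for the graded Lie algebra structure induced by $\ell_2|_0$; as written, nothing in your argument rules out exactness.

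There is a second problem in the last paragraph: the rank bookkeeping. The obstruction mechanism of Section \ref{ssec:Tor} rules out Lie algebroids of rank equal to the minimal number of generators, i.e.\ $r=\dim_{\mathbb C}{\mathrm{Tor}}^{-1}(\mathfrak X(W),\mathbb C)=1+\binom{d}{2}$, because only then is the reduction $\overline\Phi_0\colon A/\mathfrak m_0A\to{\mathrm{Tor}}^{-1}$ forced (by Nakayama and surjectivity) to be an isomorphism, which is what lets you transport the vanishing $3$-bracket of $A$ into a primitive for $\bar\ell_3$. Your reconciliation of the stated rank $1+\tfrac{d(d+1)}{2}$ with $\dim{\mathrm{Tor}}^{-1}+\dim\bigl(\mathfrak i_W^*\X(V)/\mathfrak m_0\bigr)$ makes the arithmetic come out but has no bearing on the argument: if the rank of $A$ genuinely exceeds the minimal number of generators, $\overline\Phi_0$ has a kernel and the trivialization of $[\bar\ell_3]$ you invoke in the final step does not follow. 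Either the corollary's exponent is a misprint for $1+\tfrac{d(d-1)}{2}$ (consistent with the generator count in Proposition \ref{prop:vectorfieldsonW}), in which case you should say so and run the argument at the minimal rank, or a separate argument is needed for the non-minimal rank; your proof addresses neither possibility.
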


\begin{remark}
In general, it is hard to compute the generators of $\mathfrak{X}(W)$. But, there is a case where we know all generators: it is when $W$ is a complete intersection with isolated singularity at zero such that $\mathcal{I}_W$ is generated by weight homogeneous polynomials $\varphi_1, \ldots,\varphi_r$. In that case, $\mathfrak{X}_W(\mathbb{C}^d)$ is generated by $\mathcal{I}_W\mathfrak{X}(\mathbb{C}^d)$, the Euler vector field, and the Hamiltonian vector fields (see \cite{HauserHerwig1993Aval,SiebertThomas1996Laod}).\\

\noindent
The computation of the Lie $\infty$-algebroid remains complicated.
\end{remark}

\vspace{2cm}

\begin{tcolorbox}[colback=gray!5!white,colframe=gray!80!black,title=Conclusion:]
As a particular case of Section \ref{Chap:main}, we notice that a Lie $\infty$-algebroid can be associated to any affine variety. Explicit computations are difficult. Some applications to Mohsen's
resolution of singularities will be given in Section \ref{blow-up-procedure}

\phantom{cc} There are still many open questions on the geometric meaning of this Lie $\infty$-algebroid structure. Although, we have not answered many questions , we state concepts, lemmas, and counter-examples that we hope to be able to use in the future.

\end{tcolorbox}


\chapter{Universal $Q$-manifolds of a singular foliation}\label{chap:oid1}

The aim of this chapter is to lay the ground for the subsequent results. More precisely, to explain how Theorem \ref{thm:existence} extends the results on singular foliations of  Sylvain Lavau's PhD \cite{LavauSylvain} followed by a referred version by C. Laurent-Gengoux, S. Lavau and T. Strobl in \cite{LLS}. The results of Section Chapter \ref{Chap:main} extend the latter for arbitrary Lie-Rinehart algebras and also to the infinite case, and it still holds even when we do not have a geometric resolution. We introduce the notion of longitudinal vector fields on a $NQ$-manifold and prove a new result on their cohomology.\\


This chapter is taken from the textbook \cite{LLL} in which I am co-author. We refer the reader to \cite{Voronov2,Poncin,LeanMadeleineJotz2020Maru,LLS} for more details on this topic.\\

Throughout of this chapter  $M$ is a smooth, real analytic or complex manifold  and $\mathbb{K}\in \{\mathbb{R},\mathbb{C}\}$. We denote by $\mathcal{O}$ the sheaf of functions on $M$. 


\section{$Q$-manifolds}\label{Q-manifold}
Let us first define $\mathbb N$-graded manifolds.
\subsection{Graded manifolds}
In words, as the name suggests graded manifolds are for graded vector spaces what manifolds are for vector spaces, in the sense that roughly speaking manifolds look locally like $\mathbb{R}^n$ and graded manifolds are locally like $\mathbb{R}^n\times V$ for some graded vector space $V$. In this chapter we introduce the notion of graded manifolds, their vector fields, their morphisms, etc.

\vspace{.5cm}

\begin{definition}
 A \emph{(positively) graded manifold} over  the \emph{base} manifold $M$ is a sheaf 
$$ \mathcal E \colon \mathcal U \mapsto \mathcal E({\mathcal U}) $$
of graded commutative algebras over $\mathbb K $
such that every $m\in M$ admits an open neighborhood $\mathcal U\subset M$ on which the sheaf structure takes the form
$$\E(\mathcal U)=\mathcal O_{\mathcal U }\otimes_\mathbb{K} S(E^*_{-\bullet})$$ for some graded vector space $E=\oplus_{i=1}^\infty E_{-i}$. Sections of the sheaf $\E$ are called \emph{functions on $E $}. It is convenient to denote a graded manifold as a pair $(M,\E)$ where $E$ is implicit. 
\end{definition}


\begin{remark}
A function $\xi\in\E_j$ is a formal sum \begin{equation}
    \xi=\sum_{i=0}^{+\infty}\xi^{(i)}
\end{equation}
with $\xi^{(i)}\in \E$ an element of polynomial-degree $i$ and degree $j$. For degree reasons, the sum must be finite.
\end{remark}

\subsubsection{Local coordinates of a graded manifold}

Recall that for $\mathcal{U}\subset M$ an open set, one has $({E^*_i})_\mathcal{U}\overset{\sim}{\longrightarrow}\mathcal{U}\times \mathbb{K}^{\mathrm{rk}(E^*_i)}$ for every $i\geq 1$. Hence, the \emph{graded coordinates} on the graded manifold $(M,\E)$ is the data made of:\\

\noindent
\textbf{In degree $0$}:  a system of coordinates $(x_1,\ldots,x_n)$ of $M$ on $\mathcal U$\\

\noindent
\textbf{In degree $i\geq 1$}: a local trivialization $(\xi_i^{1},\ldots, \xi_i^{\mathrm{rk}(E^*_i)})$ of $E^*_i$ on $\mathcal U$.\\

\noindent
That is, a system of graded coordinates of $(M, \E)$ on $\mathcal{U}$ is $$(x_1,\ldots,x_n,\xi_1^{1},\ldots, \xi_1^{\mathrm{rk}(E^*_1)},\ldots, \xi_i^{1},\ldots, \xi_i^{\mathrm{rk}(E^*_i)},\ldots\; ).$$
Elements of $\E(\mathcal U)$ are "polynomials" in $\{(\xi_i^{j})_{j=1,\ldots,\mathrm{rk}(E^*_{-i})},\;i\geq 1\}$ with coefficients in $\mathcal{O}(\mathcal{U})$.

\begin{example}
The sheaf of differential forms $(M, \E=\Omega(M))$ on a manifold $M$ is a graded manifold since for every point $m\in M$, it takes the form $\mathcal{O}_\mathcal{U}\otimes_\mathbb{K}\wedge^\bullet T_m^*M$ where $\mathcal U$ is an open neighborhood of $m$. 
Exterior forms can be seen as sections on the graded vector bundle $ E_{-1}=TM$.
\end{example}
\begin{example}
Let $k$ be a positive integer.
A finite dimensional vector space $E$ and its dual $E^*$ can be seen as graded vector bundles of respective degree $-k$ and $k$ over a point.
$E$ is a graded manifold  over $M=\{pt\}$, with functions isomorphic to $ \wedge E^*$ for $k$ odd and $S(E^*)$ for $k$ even.
\end{example}



\begin{definition}
A \emph{morphism of graded manifolds} between the two graded manifolds $(M,\mathcal \E)$ and $(M', \mathcal{E}')$ with respective base manifolds
$M$ and $M'$ is a pair made of a smooth or real analytic or holomorphic map $\phi\colon M\longrightarrow M'$ called the \emph{base
map} and a sheaf morphism over it, i.e. a family of graded algebra morphisms:
 $$ \mathcal E'(\mathcal U') \to \mathcal E(\phi^{-1}(\mathcal U')) ,$$
compatible with the restriction maps, such that\begin{equation}
    \Phi(f\alpha)=\phi^*(f )\Phi(\alpha).
\end{equation}
for all $f\in\mathcal O_{\mathcal U'}'$ and $\alpha\in\E'(\mathcal U')$.\\

A \emph{homotopy} between two morphisms of graded manifolds
$ \Phi, \Psi \colon  (M,\mathcal \E) \longrightarrow (N, \mathcal{E}')$ is a morphism
of graded manifold 
 $$ (M,\mathcal E) \times ([0,1], \Omega([0,1])) \longrightarrow (M',\mathcal E')$$
 whose restrictions to $t=0$ resp. $t=1$  coincide with $\Phi$ and $\Psi $ respectively.
\end{definition}

\subsubsection{Vector fields on graded manifolds}

Vector fields on manifolds are derivations of its algebra of functions. For a graded manifold, the analog of functions are  the sheaf of sections  $\Gamma(S^\bullet(\mathcal E^*))$. Since it is not commutative but graded commutative, one has to consider graded derivations. 
A \emph{graded derivation of degree $k$} of $\mathcal E $ is the data, for every $\mathcal U \subset M $ of a linear map
 $$ Q \colon \mathcal E_\bullet(\mathcal U) \longrightarrow   \mathcal E_{\bullet +k}(\mathcal U) ,$$
 compatible with all restriction maps, that increases the degree by $+k$ and satisfies: 
  $$  Q[FG] = Q[F] G +(-1)^{ki}FQ[G] $$
  for every $F \in \mathcal E_i(\mathcal U), G \in \mathcal E(\mathcal U)$.
   Since we think geometrically, we  say "\emph{vector fields of degree $k$}" instead of graded derivation.

\begin{definition}
Let $(M,\E)$ be a graded manifold. For $\mathcal U\subset M$ and $k\in\mathbb{Z}$ let $$\mathfrak{X}_k(E)(\mathcal U):=\text{Der}_k(\E(\mathcal U))$$be the $\E(\mathcal U)$-module of derivation of degree $k$ on $\E(\mathcal U)$. The correspondence $\mathcal U\longmapsto \mathfrak{X}_\bullet(E)(\mathcal U)$ is a sheaf of $\E$-module. Its sections are called \emph{vector fields on} $E$.
\end{definition}

Let us list some important facts on vector fields on $E$:
\begin{enumerate}
     \item the $\E$-module $\mathfrak X_\bullet(E):=\oplus_{k\in\mathbb{Z}}\mathfrak X_k(E)$ of vector fields on $E$ is naturally graded. The $\E$-module $\mathfrak X_\bullet(E)$ of vector fields on $E$ is a graded Lie subalgebra of the graded Lie algebra $\mathrm{Hom}_\mathbb{K}(\E,\E)$ whose graded Lie bracket is the graded commutator. Precisely, the graded Lie bracket \begin{equation}\label{graded:commutator}
         [P,Q]=P\circ Q-(-1)^{kl}Q\circ P\end{equation} of two vector fields $P,Q$ of degree $k,l$ respectively is a vector field of degree $k+l$. It is easily checked that the bracket \eqref{graded:commutator} fulfills
         
         \begin{enumerate}
             \item $[P,Q]=-(-1)^{jk}[Q,P]$,\quad(graded skew-symmetry)
             \item $(-1)^{jl}[P, [Q, R]] + (-1)^{jk}[Q, [R, P]]+(-1)^{kl}[R, [P, Q]]=0$,\quad(graded Jacobi identity)
             \item $[P, fQ]=P[f]Q+f[P,Q]$, \quad(Leibniz identity)
         \end{enumerate}
         for $f\in\mathcal{O}$ and $P,Q, R$ are vector fields  of degree $j,k$ and $l$ respectively.
    \item Their description in local coordinates: notice that any homogeneous element $e\in \Gamma(E_{-k})$ corresponds to a vertical\footnote{A vector field $P\in \mathfrak{X}(E)$ is said to be \emph{vertical} if it is linear with respect to functions on $M$, in other words if $P[f]=0$ for all $f\in \mathcal{O}$.}  vector field $\iota_e\in\mathfrak X_{-k}(E)$ (i.e. it is $\mathcal{O}$-linear) of degree $-k$ defined by contraction with $e$
\begin{equation}\label{eq:contraction}
    \iota_e (\xi):=\langle \xi, e\rangle, \quad \xi\in \Gamma(E^*)
\end{equation}
and we extend by $\mathcal{O}$-linear  derivation, where $\langle\cdot\,,\cdot\rangle$ is the dual pairing between $E^*$ and $E$. Notice that $\iota_e$ is by construction of polynomial-degree $-1$.

Let $(\mathcal U, x_1,\ldots,x_n)$ be a coordinate chart of $M$ and $(\xi_i^{j})_{j=1,\ldots, \mathrm{rk}(E^*_i)}$ with $i\geq 1$ be a homogeneous local trivialization of $E^*_{-i}$, it should be understood that $\xi_i^{j}$ is the $j$-th elements of the local frame in $\Gamma( E_{-i}^*)$. Let $(e_i^{j})_{j=1,\ldots, \mathrm{rk}(E_{-i})},i\geq 1$ be the dual basis of $(\xi_i^{j})_{j=1,\ldots, \mathrm{rk}(E^*_{-i})},i\geq 1$, then for every pair $i,j$, $\iota_{e_i^j}=\frac{\partial}{\partial \xi_i^j}$ is the partial derivative with respect to $\xi_i^j\in \Gamma( E^*_{-i})$. By choosing a $TM$-connection on $E$, it is easy to check that for any $k\in \mathbb Z$ the family $$\displaystyle{\left(\xi^{j_1}_{i_1}\odot\cdots\odot\xi^{j_l}_{i_l}\frac{\partial}{\partial x_j}\right)_{\tiny{\begin{array}{c}l\geq 0\\i_1\cdots i_l=k\\j_1,\ldots, j_l\\j=1,\ldots,n
\end{array}}}\cup\left(\xi^{j_1}_{i_1}\odot\cdots\odot\xi^{j_l}_{i_l}\frac{\partial}{\partial \xi_i^j}\right)_{\tiny{\begin{array}{c}l\geq 0\\i_1\cdots i_l-i=k\\j_1,\ldots, j_l\\i\geq 1,\,j=1,\ldots, \mathrm{rk}(E_{-i})
\end{array}}}}$$ form a basis for $\mathfrak X_k(E)(\mathcal{U})$ up to permutations of the $\xi^{j_1}_{i_1}\odot\cdots\odot\xi^{j_l}_{i_l}$'s. Here we adopt the convention $i_0=j_0=0$ and $\xi^0=1\in\Gamma(S^0(E^*))\simeq \mathcal O$. Whence, any  vector field $Q\in\mathfrak X_k(E)(U)$ admits coordinates decomposition as follows 
$$Q=\sum_{{\tiny{\begin{array}{c}l\geq 0\\i_1\cdots i_l=k\\ j_1,\ldots, j_l\\j=1,\ldots,n
\end{array}}}}\frac{1}{l!}\,{}^{j}Q^{j_1\cdots j_l}_{i_1\cdots i_l}\,\xi^{j_1}_{i_1}\odot\cdots\odot\xi^{j_l}_{i_l}\,\frac{\partial}{\partial x_j}+\sum_{{\tiny{\begin{array}{c}i\geq 1,\,l\geq 0\\i_1\cdots i_l-i=k\\j_1,\ldots, j_l\\j=1,\ldots,\mathrm{rk}(E_{-i})
\end{array}}}}\frac{1}{l!}\,{}^{ij}Q^{j_1\cdots j_l}_{i_1\cdots i_l}\, \xi^{j_1}_{i_1}\odot\cdots\odot\xi^{j_l}_{i_l}\frac{\partial}{\partial \xi_i^j}.$$ 
for some functions $Q^{j_1\cdots j_l}_{i_1\cdots i_l}\in \mathcal{O}$. These functions can be chosen in a unique manner to satisfy, e.g. ${}^{ij}Q^{j_{\sigma(1)}\cdots j_{\sigma(l)}}_{i_{\sigma(1)}\cdots i_{\sigma(l)}}=\epsilon(\sigma)Q^{j_1\cdots j_l}_{i_1\cdots i_l}$ for any permutation $\sigma$ of $\{1,\ldots,l\}$.\\

For example, if $Q$ is of degree $+1$, then it can be written in these notations as
$$Q=\sum_{_{\tiny{\begin{array}{c}1\leq u\leq\mathrm{rk}(E_{-1})\\j=1,\ldots,n
\end{array}}}}{}^{j}Q^{u}_{1}\,\xi^{u}_{1}\,\frac{\partial}{\partial x_j}+\sum_{_{\tiny{\begin{array}{c}i\geq 1,\,=l\geq 0\\i_1\cdots i_l-i=1\\j_1,\ldots, j_l\\j=1,\ldots,n
\end{array}}}}\frac{1}{l!}\,{}^{ij}Q^{j_1\cdots j_l}_{i_1\cdots i_l}\, \xi^{j_1}_{i_1}\odot\cdots\odot\xi^{j_l}_{i_l}\frac{\partial}{\partial \xi_i^j}.$$ 
\end{enumerate}

This following lemma says that vector fields of polynomial-degree $-1$ are all the types \eqref{eq:contraction}.
\begin{lemma}\label{lemma:contraction}
Let $(M, \E)$ be a graded  manifold. For $i\geq 1$, a vector field $P\in\mathfrak{X}_{-i}(E)$ of polynomial-degree $-1$ and of degree $-i$, is of the form $\iota_e$ for some section $e\in \Gamma(E_{-i})$.
\end{lemma}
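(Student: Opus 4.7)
The plan is to reduce the statement to a purely linear-algebraic fact about $\mathcal{O}$-linear maps from $\Gamma(E_{-i}^{*})$ into $\mathcal{O}$, by exploiting the interplay of the two gradings (the $\mathbb{Z}$-degree and the polynomial-degree) on $\mathcal{E}$. The main obstacle will be to check that the naive candidate $e$ constructed dually is globally well-defined and coincides with $P$ on all of $\mathcal{E}$, not merely on the generators.

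First I would observe that $P$ must be $\mathcal{O}$-linear. Indeed, a function $f\in\mathcal{O}\subset\mathcal{E}$ has polynomial-degree $0$ and degree $0$, so $P[f]$ would have polynomial-degree $-1$ and degree $-i$; but there are no nonzero such elements in $\mathcal{E}$ (which is concentrated in non-negative polynomial-degrees). Hence $P[f]=0$ for every $f\in\mathcal{O}$, i.e.\ $P$ is vertical. Combined with the Leibniz identity, this means that $P$ is completely determined by its values on the local generators $\xi\in\Gamma(E_{-j}^{*})$ for $j\geq 1$, and behaves like an $\mathcal{O}$-linear derivation on the polynomial algebra $S(E^{*}_{-\bullet})$.

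Next I would analyze $P$ on each $\Gamma(E_{-j}^{*})$. Such a $\xi$ has polynomial-degree $+1$ and degree $+j$, so $P[\xi]\in\mathcal{E}$ has polynomial-degree $0$ and degree $j-i$. Polynomial-degree $0$ forces $P[\xi]\in\mathcal{O}$, which has degree $0$, hence $P[\xi]=0$ unless $j=i$. Therefore $P$ is zero on $\Gamma(E_{-j}^{*})$ for $j\neq i$, and restricts to an $\mathcal{O}$-linear map
\[
\varphi_P \colon \Gamma(E_{-i}^{*}) \longrightarrow \mathcal{O}, \qquad \xi \mapsto P[\xi].
\]
Since $E_{-i}$ is a (locally) finite-rank vector bundle, the natural pairing $\Gamma(E_{-i})\otimes_{\mathcal{O}}\Gamma(E_{-i}^{*})\to\mathcal{O}$ induces an isomorphism $\Gamma(E_{-i})\simeq \mathrm{Hom}_{\mathcal{O}}(\Gamma(E_{-i}^{*}),\mathcal{O})$, so there exists a unique $e\in\Gamma(E_{-i})$ with $\varphi_P=\langle \cdot\,,e\rangle$.

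Finally I would check that $P=\iota_{e}$ on all of $\mathcal{E}$. By construction, $P$ and $\iota_{e}$ agree on $\mathcal{O}$ (both vanish there) and on each $\Gamma(E_{-j}^{*})$ for every $j\geq 1$ (both vanish for $j\neq i$, and both equal $\langle \cdot\,, e\rangle$ for $j=i$). Since $P$ and $\iota_{e}$ are both derivations of degree $-i$ on the graded commutative algebra $\mathcal{E}$, and $\mathcal{E}$ is locally generated as an $\mathcal{O}$-algebra by the $\Gamma(E_{-j}^{*})$'s, the Leibniz identity forces $P=\iota_{e}$ everywhere. The main point of vigilance, and what I expect to be the only genuinely delicate step, is the duality statement $\Gamma(E_{-i})\simeq\mathrm{Hom}_{\mathcal{O}}(\Gamma(E_{-i}^{*}),\mathcal{O})$; this is standard for finite-rank vector bundles via a partition-of-unity or local-frame argument in the smooth/real analytic case and via local freeness in the holomorphic/algebraic case, and it justifies gluing the locally defined section $e$ into a global one.
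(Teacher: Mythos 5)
Your proof is correct and follows essentially the same route as the paper's: both arguments use the polynomial-degree $-1$ hypothesis to force $P$ to be vertical and to vanish on $\Gamma(E_{-j}^{*})$ for $j\neq i$, so that $P$ is determined by an $\mathcal{O}$-linear functional on $\Gamma(E_{-i}^{*})$. The paper carries out the last step in local frames and glues the resulting local sections, whereas you invoke the reflexivity $\Gamma(E_{-i})\simeq\mathrm{Hom}_{\mathcal{O}}(\Gamma(E_{-i}^{*}),\mathcal{O})$ of a finite-rank bundle directly; these amount to the same argument.
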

\begin{proof}
Note that a vector field $P\in \mathfrak{X}_{-i}(E)$ of polynomial-degree $-1$ is vertical, i.e. $P(f)=0$, since functions of $M$ are of polynomial-degree zero. Hence, in local coordinates $(\mathcal U, x_1,\ldots,x_n)$  $M$ and $(\xi_i^{j})_{j=1,\ldots, \mathrm{rk}(E^*_{-i})}$ with  be a homogeneous local trivialization of $E^*_{-i}$ and $(e_i^{j})_{j=1,\ldots, \mathrm{rk}(E_{-i})}$ be the dual basis of $(\xi_i^{j})_{j=1,\ldots, \mathrm{rk}(E^*_{-i})}$: a polynomial-degree $-1$ vector field  $P\in\mathfrak X_{-i}(E)$ of degree $-i$ is forced to be of the form
\begin{equation}
P|_\mathcal U=\sum_{j=1}^{\mathrm{rk}(E_{-i})}f^j(x)\frac{\partial}{\partial \xi^j_i}
\end{equation}
with $f^j\in C^\infty(U)$. Now, choose a local section $e_U\in \Gamma_\mathcal{U}(E_{-i})$ of the form, \begin{equation}
    e_U:=\sum_{j\geq 1}f^j e_i^j.
\end{equation}
It is then clear that $\iota_{e_U}=P|_\mathcal U$. Since local sections of $E_{-i}$ given on different coordinates chart domains $\mathcal U_a$ and $\mathcal{U}_b$ coincide on $\mathcal{U}_a\cap \mathcal{U}_b$ and then lift to a global section of $E_{-i}$. Thus,  $\iota_{e}=P$ for some $e\in \Gamma (E_{-i})$.
\end{proof}
\subsection{$NQ$-manifolds}
\begin{definition}
A \emph{$dg$-manifold} or \emph{$NQ$-manifold} is a positively graded manifold $(M, \E)$ endowed with a
degree $+1$ homological vector field $Q$ on $E$, i.e.,   $Q\in\mathfrak{X}_1(E)$ is such that $Q^2 = 0$.\\

They shall be denoted as a triple $ (M,\mathcal E, Q)$.
\end{definition}

\begin{example}\label{ex-Lie-algebra}
Given a finite dimension Lie algebra $(\mathfrak g,[\cdot\,,\cdot\,])$ of dimension $d$. We assume that $\mathfrak{g}$ is concentrated in degree $-1$. It is clear that $(M=\{pt\},\E=\wedge^\bullet\mathfrak{g}^*)$ is a graded manifold over $M=\{pt\}$. This graded manifold carries a $dg$-manifold structure. Precisely, we define the corresponding homological vector field as follows: fix a basis $(e_i)_{i=1,\ldots,n}$ of $\mathfrak g$ and let these global coordinate functions $(\xi^i)_{i=1,\ldots,n}$ on $\mathfrak g$ be its dual. We have
$$[e_i,e_j]=\sum_{l=1}^n\lambda_{ij}^le_l$$
for some coefficients $\lambda_{ij}^l\in\mathbb{K}$. One can check that the degree $+1$ vector field  $$Q=\frac{1}{2}\sum_{i,j,l= 1}^n\lambda_{ij}^l\xi^i\wedge\xi^j\frac{\partial}{\partial\xi^l}$$
corresponds to the Chevalley-Eilenberg differential $(\dd^{\mathrm{CE}}, \wedge^\bullet\mathfrak{g}^*)$. Therefore, $Q^2=0$. Notice that $Q^2=0$ is equivalent to the Jacobi identity for $\lb$.
\end{example}

\begin{example}
Given a differential graded vector bundle $((E_{-i})_{i\geq 1},\dd)$
over $M$. There is a natural $dg$ manifold
given by its sheaf of sections $(M, \E = \Gamma(S(E^*))$. In particular, the deferential map  $\dd\colon E \longrightarrow E$ is dualized as a degree $+1$ map $S^1(E^*)\longrightarrow S^1(E^*)$ that we extend to a  $C^\infty(M)$-linear derivation on $\E$ squared to zero.
\end{example}

\begin{example}
Let $E=T[1]M$ be the shifted bundle of $M$. It induces a graded manifold structure $(M, \E=\Omega(M))$ over $M$. This graded manifold carries a $dg$-manifold structure $Q$ that corresponds to the de Rham differential on $\Omega(M)$. In terms of coordinates, the homological vector field $Q$ reads

$$\sum_{i=1}^n\dd x_i\frac{\partial}{\partial x_i}.$$
\end{example}
Let us introduce some vocabularies that will need to use.
\begin{definition}
Let $(M,\E', Q')$ and $(M,\E, Q)$ be two $NQ$-manifolds.
     \begin{enumerate}
         \item A linear map $\Phi\colon \E \longrightarrow \E'$ is said to be of \emph{polynomial-degree/degree} $j\in\mathbb{Z}$ provided that, for all function $\alpha\in \E$ of polynomial-degree/degree $i$, $\Phi(\alpha)$ is of polynomial-degree/degree $i+j$. Any map $\Phi\colon \E\longrightarrow \E'$ of degree $i$ decomposes w.r.t the polynomial-degree as follows:\begin{equation*}
        \Phi=\sum_{r\in\mathbb{Z}}\Phi^{(r)}
    \end{equation*}
with $\Phi^{(r)}\colon \E \longrightarrow \E'$ a map of polynomial-degree $r$. 
\end{enumerate}
\end{definition}

\begin{remark}
When  $\Phi\colon \E \longrightarrow \E'$ is a graded morphism of algebras, necessarily one has $\Phi^{(r)} = 0 $  for all $r<0$. Furthermore, for all $n, r \in\mathbb N$ and all $\xi_1 ,\ldots, \xi_k \in \Gamma (V)$ one has:
\begin{equation}
\Phi^{(r)}(\xi_1\odot\cdots\odot\xi_n)= \sum_{i_1+\cdots+i_n =r}\Phi^{(i_1 )} (\xi_1 )\odot \cdots \odot\Phi^{(i_n)} (\xi_n).\end{equation} Obviously, in this case $\Phi$ is determined uniquely by the image of $\Gamma(V)$.
\end{remark}
\begin{definition}{}{} \label{def:oid-morph}
Let $(M,\E,Q)$ and $(M,\E',Q')$ be two $NQ$-manifolds over $M$ with
sheaves of functions $\E$ and $\E'$ respectively. A \emph{morphism of $NQ$-manifold over $M$} from  $(M,\E',Q')$ to $(M,\E,Q)$ is a morphism of graded manifolds 
 $\Phi\colon \E \longrightarrow \E'$ (of degree $0$) over the identity map  which intertwines 
$Q$ and $Q'$, i.e.,
\begin{equation}\label{eq:Lie-infty-morphis2}
 \Phi\circ  Q=Q'\circ \Phi. 
\end{equation}
\end{definition}

\begin{remark} It is important to notice that
\begin{itemize}
    \item morphisms of $NQ$-manifolds over $M$ are by definition $\mathcal O$-linear, since they are defined over the identity map
    \item the component $\Phi^{(r)}$ of polynomial-degree $r\geq 0$ of any $\mathcal{O}$-linear map $\Phi\colon \E\longrightarrow \E'$ maps $\Gamma(E^*)$ to $\Gamma (S^{r+1}(E'^* ))$. By $\mathcal O$-linearity, it gives rise to a section  $\phi_r\in\Gamma (S^{r+1} (E'^*)\otimes E)$. Therefore, one has

       \begin{equation}
           \Phi^{(r)}(\xi) = \langle\phi_r,\xi\rangle
       \end{equation}
for all $\xi\in  \Gamma(E^*)$. It follows that $\Phi$ is entirely determined by the collection  $\left(\phi_r\in\Gamma (S^{k+1} (E'^*)\otimes E)\right)_{r\geq 0}$ when  $\Phi$ is an algebra
morphism or a $\Xi$-derivation for some map $\Xi\colon \E\longrightarrow\E'$. In such case, for $r\geq 0$, $\phi_r\in\Gamma (S^{r+1} (E'^*)\otimes E)$  is then called the \emph{$r$-th Taylor coefficient} of $\Phi$. We also call the $0$-th Taylor coefficient $\phi_0\colon E'\rightarrow E$ the \emph{linear part} of $\Phi$. The latter is a chain map
\end{itemize}


\begin{equation}
\xymatrix{
\cdots\ar[r]&E_{-3}'\ar[d]^{\phi_0}\ar[r]^{\dd'^{(3)}}&E_{-2}'\ar[d]^{\phi_0} \ar[r]^{\dd'^{(2)}}&E_{-1}'\ar[d]^{\phi_0}\ar[r]^{\rho'}&TM\ar[d]^{\mathrm{id}}\\
\cdots\ar[r]&E_{-3}\ar[r]^{\dd^{(3)}}&E_{-2} \ar[r]^{\dd^{(2)}}&E_{-1}\ar[r]^{\rho}&TM}
\end{equation}

\end{remark}

\subsection{$NQ$-manifolds - Lie $\infty$-algebroids}
We have seen in Section \ref{sec:co-version} that Lie $\infty$-algebroids  (possibly infinite dimension) over $\mathcal{O}$ are one-to-one with co-differentials of the graded symmetric algebra, which are compatible with the action of $\mathcal{O}$. This correspondence provides a simple  characterization of Lie $\infty$-algebroids over an arbitrary commutative unital algebra $\mathcal{O}$. In the finite dimensional case, we can work without co-differentials by using T. Voronov's higher derived brackets construction \cite{Voronov2}. Roughly speaking, it is shown in \cite{Voronov2} that (finite dimensional) Lie $\infty$-algebroids as in Remark \ref{rmk:NGLA} are the same as $NQ$-manifolds. However, it is important to note the latter correspondence fails in infinite dimension case, since the identification $\Gamma\left(S^\bullet (E^*)\right)\simeq\Gamma \left(S^\bullet (E)^*\right)$ fails.  We will not explain entirely the construction here, but we refer the reader to \cite{VoronovTheodore,Voronov,Voronov2} also to \cite{Poncin,LeanMadeleineJotz2020Maru} for more details on the construction.\\

The following statement is similar to our Proposition \ref{co-diff} the difference lies in the fact that ours remains valid in infinite dimension.
\begin{proposition}[T. Voronov]
\label{prop:dual}
Let  $(M,\E)$ be a graded manifold of finite dimension in each degree. There is a one-to-
one correspondence between:

\begin{enumerate}
    \item (finitely generated) negatively graded Lie $\infty$-algebroids $(E,(\ell_k)_{k\geq 1},\rho)$ over $M$,
   \item[]and
   
  \item  homological vector fields $Q\in \mathfrak{X}_{+1}(E)$.
\end{enumerate}
\end{proposition}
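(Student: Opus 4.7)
The plan is to dualize the co-algebra description of Proposition \ref{co-diff} using the finite-rank hypothesis. In each degree $i \geq 1$, the module $\Gamma(E_{-i})$ is finitely generated projective, so we have a canonical $\mathcal{O}$-module identification $\Gamma(S^k(E^*)) \simeq \mathrm{Hom}_{\mathcal{O}}(\Gamma(S^k(E)), \mathcal{O})$ in each polynomial-degree $k$. This identification is precisely what fails in the infinite rank setting and what specializes Proposition \ref{co-diff} to the present statement. Under it, an $\mathcal{O}$-multilinear map $\Gamma(S^k(E)) \to \Gamma(E)$ of degree $+1$ corresponds to a map $\Gamma(E^*) \to \Gamma(S^k(E^*))$ of degree $+1$, which extends uniquely by the graded Leibniz rule to an $\mathcal{O}$-linear derivation of $\E = \Gamma(S(E^*))$ of polynomial-degree $k-1$.

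Given a Lie $\infty$-algebroid $(E,(\ell_k)_{k\geq 1},\rho)$, I would build $Q$ by specifying it on generators. For $k \neq 2$, the $k$-ary bracket is $\mathcal{O}$-multilinear, so by the identification above it yields a derivation $Q^{(k-1)}$ of polynomial-degree $k-1$ on $\E$. For the anchor, I would define a derivation $Q^{(-1)}$ of polynomial-degree $-1$ by setting, for $f \in \mathcal{O}$, the element $Q^{(-1)}(f) \in \Gamma(E^*_{-1})$ to be the dual of $x \mapsto \rho(x)[f]$, and zero on $\Gamma(E^*_{-i})$ for $i \geq 2$. The subtle case is $\ell_2$: its failure to be $\mathcal{O}$-bilinear is controlled by the Leibniz identity $\ell_2(x, fy) = f\ell_2(x,y) + \rho(x)[f]\, y$, and this is exactly the identity ensuring that the would-be polynomial-degree $1$ dual of $\ell_2$, together with $Q^{(-1)}$, assembles into a genuine $\mathbb{K}$-linear derivation $Q^{(1)} + Q^{(-1)}$ of $\E$. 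Summing gives $Q = \sum_{k \geq -1} Q^{(k)}$, a well-defined degree $+1$ vector field on $E$.

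The next step is to show that $Q^2 = 0$ is equivalent to the higher Jacobi identities together with $\rho([x,y]_{\ell_2}) = [\rho(x),\rho(y)]$ and the compatibility $\rho \circ \ell_1 = 0$. Since $Q^2$ is a derivation, it suffices to check its vanishing on generators of $\E$, i.e. on $\mathcal{O}$ and on $\Gamma(E^*)$. Decomposing $Q^2$ by polynomial-degree, the polynomial-degree $n$ component equals $\sum_{p+q = n+1} Q^{(p-1)} \circ Q^{(q-1)}$; when transported back to $\Gamma(S^{n+1}(E)) \to \Gamma(E)$ via duality, this is precisely the Richardson--Nijenhuis-type sum $\sum_{p+q = n+1} [\ell_p, \ell_q]_{\mathrm{RN}}$ appearing in higher Jacobi, in exact parallel with the proof of Proposition \ref{co-diff}. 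The vanishing of $Q^2$ on $\mathcal{O}$ in low polynomial-degree recovers the two anchor axioms. This is the computational heart of the argument, and it is essentially a polynomial-degree-by-polynomial-degree bookkeeping.

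Conversely, given a homological vector field $Q$ of degree $+1$, I would define $(\ell_k,\rho)$ by reading off Taylor coefficients: decompose $Q = \sum_{k \geq -1} Q^{(k)}$ according to polynomial-degree, set $\rho(x)[f] := \langle Q(f), x\rangle$ for $x \in \Gamma(E_{-1})$ and $f \in \mathcal{O}$, and dualize $Q^{(k-1)}|_{\Gamma(E^*)}: \Gamma(E^*) \to \Gamma(S^k(E^*))$ to obtain $\ell_k$. The finite-rank hypothesis again guarantees that dualization is an equivalence. The main obstacle I expect is the bookkeeping of Koszul signs in the second paragraph: in particular, verifying that the Leibniz rule of $Q$ as a derivation of $\E$ translates, on the dual side, exactly into the Leibniz identity for $\ell_2$ with respect to $\rho$, and that no spurious contribution from the higher $\ell_k$ appears, requires carefully tracking the grading shifts inherent to the identification $S^k(E^*) \simeq S^k(E)^*$. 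Once this sign check is settled, the two constructions are manifestly inverse to each other, giving the bijection.
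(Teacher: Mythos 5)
Your strategy is the right one and is essentially what the paper intends: the statement is attributed to Voronov and not proved in the text (the paper only records the duality formulas between $Q^{(k-1)}$ and $\ell_k$ and refers to \cite{Voronov2,Poncin} for the construction), and your argument is precisely the finite-rank dualization of Proposition \ref{co-diff}, using $\Gamma(S^k(E^*))\simeq \mathrm{Hom}_{\mathcal O}(\Gamma(S^k(E)),\mathcal O)$ to trade Taylor coefficients of a co-derivation for a derivation of $\E$ and matching $Q^2=0$ against the Richardson--Nijenhuis form of higher Jacobi, polynomial-degree by polynomial-degree. Two bookkeeping slips should be fixed before the details are written out. First, there is no polynomial-degree $-1$ component of a degree $+1$ vector field on a positively graded manifold: by Lemma \ref{lemma:contraction} such a component would be a contraction $\iota_e$, which has degree $\leq -1$. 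The anchor's action on functions, being a map $\mathcal O=\Gamma(S^0(E^*))\to\Gamma(E_{-1}^*)$, raises polynomial degree by $+1$ and therefore lives in the \emph{same} component $Q^{(1)}$ as the dual of $\ell_2$; this is exactly why the Leibniz identity for $\ell_2$ with respect to $\rho$ is what makes $Q^{(1)}$ a single well-defined derivation, as the paper's formula $\langle Q^{(1)}[\alpha],e_1\odot e_2\rangle=\rho(e_1)[\langle\alpha,e_2\rangle]-\rho(e_2)[\langle\alpha,e_1\rangle]-\langle\alpha,\ell_2(e_1,e_2)\rangle$ makes explicit. Accordingly $Q=\sum_{k\geq 0}Q^{(k)}$, with no $k=-1$ term. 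Second, since $Q^{(p-1)}\circ Q^{(q-1)}$ has polynomial degree $p+q-2$, the polynomial-degree $n$ component of $Q^2$ is $\sum_{p+q=n+2}Q^{(p-1)}\circ Q^{(q-1)}$, not $\sum_{p+q=n+1}$; with that correction it matches $\sum_{p+q=n+2}[\ell_p,\ell_q]_{\mathrm{RN}}$ as in the proof of Proposition \ref{co-diff}. Neither issue affects the viability of the argument.
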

The relation stated in Theorem \ref{prop:dual} can be enlightened in terms of the $k$-ary brackets $(\ell_k)$ as follows:

\begin{align}
    \left[\cdots\left[\left[Q,\iota_{e_1}\right],\iota_{e_2}\right],\ldots,\iota_{e_k}\right]^{(-1)}&=\iota_{\ell_k(e_1,\ldots,e_k)},\qquad\;\;\,\text{for all $e_1,\ldots,e_k\in \Gamma(E)$}\\\nonumber&\\\rho(e)[f]&=[Q,\iota_e]^{(0)}(f),\,\qquad\text{for all $f\in\mathcal O,\, e\in \Gamma(E_{-1})$}
\end{align}

In particular\footnote{Our sign's convention are those of \cite{LLS}.},
\begin{enumerate}
\item for all $f\in \mathcal{O}$, $e \in\Gamma(E_{-1})$\begin{equation*}
    \langle Q^{(1)}[f], e\rangle = \rho(e)[f],
\end{equation*}
    \item for all $\alpha\in\Gamma(E^*)$ and $e\in\Gamma(E)$:
\begin{equation*}
\left\langle Q^{(0)}[\alpha], e\right\rangle = 
\left\langle\alpha, \ell_1(e)\right\rangle,\end{equation*}

\item for all homogeneous elements $e_1,e_2\in \Gamma(E)$ and $\alpha \in\Gamma(E^*)$
\begin{equation*}
 \left \langle Q^{(1)}[\alpha], e_1\odot e_2\right\rangle = \rho(e_1)[\langle\alpha,e_2\rangle]-\rho(e_2)[\langle \alpha, e_1\rangle]-
 \langle \alpha, \ell_2(e_1, e_2)\rangle,
\end{equation*}
with the understanding that the anchor $\rho$
vanishes on $E_{-i}$ when $i\geq 2$.
\item for every $k \geq 3$, the $k$-ary brackets $\ell_k \colon \Gamma( S_\mathbb{K}^k(E))\longrightarrow\Gamma(E)$ and the polynomial-degree $k-1$ component
$Q^{(k-1)}\colon\Gamma(E^*)~\rightarrow~\Gamma(S_\mathbb{K}^k( E^*))$ of $Q$ are dual to each other, i.e.
\begin{equation*}
    \left\langle Q^{(k-1)}[\alpha], e_1\odot\cdots\odot e_k\right\rangle=
    \left\langle\alpha, \ell_k(e_1, \ldots,e_k)\right\rangle,\qquad\text{for $\alpha\in \Gamma(E^*)$ and $e_1, \ldots,e_k\in \Gamma(E)$}.
\end{equation*}
\end{enumerate}

\begin{convention}
From now on, unless otherwise mentioned, we shall simply say "Lie $\infty$-algebroids over $M$" for "finitely generated $\infty$-algebroids over $M$". Also, Lie $\infty$-algebroids $(E,(\ell_k)_{k\geq 1},\rho)$ over $M$ shall be denoted as $(E,Q)$.
\end{convention}

\begin{remark}
Whether we see Lie $\infty$-algebroids as $NQ$-manifolds or as co-differentials, these two approches have in common to give the same notion of Lie $\infty$-morphism of $\infty$-algebroids. This can be seen directly by writing the conditions \eqref{def:morph} and \eqref{eq:Lie-infty-morphis2} in terms of the $k$-ary brackets.
\end{remark}

\section{Universal $Q$-manifolds}\label{sec:univ-q-manifold}
This section can be understood as a consequence of the main results of the first part of the thesis. We recover the result on universal $Q$-manifolds of a singular foliation \cite{LLS, LavauSylvain}, whose existence was proved under the condition that geometric resolutions exist. We recall that the existence of such a resolution is not guaranteed, unlike the case of resolution by free modules. We refer the reader to Appendix \ref{appendix:mod} and \ref{sec:geo-reso} for more details on resolutions of modules and geometric resolutions of singular foliations.

\begin{theorem}
Let $\mathfrak F$ be a singular foliation on a manifold $M$.
\begin{enumerate}
    \item Any resolution of $\mathfrak F $ by free $\mathcal{O}$-modules (which may not be a geometric resolution)
\begin{equation}
\cdots \stackrel{\dd} \longrightarrow P_{-3} \stackrel{\dd}{\longrightarrow} P_{-2} \stackrel{\dd}{\longrightarrow} P_{-1} \stackrel{\rho}{\longrightarrow} \mathfrak F\longrightarrow 0 \end{equation}
  carries a Lie $\infty $-algebroid structure over $\mathfrak F$ whose unary bracket is $\ell_1:=\dd $.
  \item In particular, when $\mathfrak F$ admits a geometric resolution $(E,\dd,\rho)$, there exists a Lie $\infty$-algebroid $(E,Q)$ over $\mathfrak F$ whose linear part is $(E,\dd,\rho)$.
\end{enumerate} 
\end{theorem}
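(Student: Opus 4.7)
The plan is to deduce both items essentially from Theorem \ref{thm:existence} applied to the Lie-Rinehart algebra structure carried by a singular foliation, then to combine Part 2 with Voronov's correspondence (Proposition \ref{prop:dual}) in order to pass from the module-theoretic statement to the geometric one.

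First, I would observe that any singular foliation $\mathfrak F$ on $M$ is, by the very definition recalled in Example \ref{ex:singfoliation}, a Lie-Rinehart subalgebra of the Lie-Rinehart algebra $\mathfrak X(M)$ over $\mathcal O$: the bracket is the restriction of the Lie bracket of vector fields, and the anchor is the inclusion $\rho_\mathfrak F\colon\mathfrak F\hookrightarrow\mathfrak X(M)\hookrightarrow\mathrm{Der}(\mathcal O)$. With this identification made, item 1 is then an immediate application of Theorem \ref{thm:existence}: given any free $\mathcal O$-resolution $\cdots\to P_{-2}\to P_{-1}\xrightarrow{\rho}\mathfrak F$, Theorem \ref{thm:existence} equips it with a Lie $\infty$-algebroid structure over $\mathcal O$ with $\ell_1=\dd$, which covers $\mathfrak F$ through the hook $\rho$. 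Because the anchor of this Lie $\infty$-algebroid lands in $\mathfrak F\subset\mathfrak X(M)$, this is exactly what it means for the structure to be ``over $\mathfrak F$''.

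For item 2, the extra input is that a geometric resolution $(E,\dd,\rho)$ consists of vector bundles $(E_{-i})_{i\geq 1}$ of finite rank over $M$ whose sheaves of sections $(\Gamma(E_{-i}))_{i\geq 1}$ provide a resolution of $\mathfrak F$ by finitely generated projective $\mathcal O$-modules (Serre--Swan). I would then note that the construction in the proof of Theorem \ref{thm:existence} never uses freeness beyond the fact that the terms of the resolution are projective (the only input being the exactness of the horizontal lines of the bicomplex $\mathfrak{Page}^{(k)}$ in Proposition \ref{bicomplex}, which holds as soon as $\bigodot^{k+1}\mathcal E'|_{-k-m-1}$ is projective). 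Hence the theorem applies verbatim to the geometric resolution and endows $(\Gamma(E_{-\bullet}),\dd,\rho)$ with a Lie $\infty$-algebroid structure $(\ell_k)_{k\geq 1}$ whose $1$-ary bracket is $\dd$ and whose anchor is $\rho$; in particular its linear part is exactly $(E,\dd,\rho)$. Finally, since each $E_{-i}$ is a vector bundle of finite rank, Proposition \ref{prop:dual} converts this (finitely generated) Lie $\infty$-algebroid into a homological vector field $Q$ of degree $+1$ on the graded manifold $E$, yielding the desired $NQ$-manifold $(E,Q)$.

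The only step that requires a word of care is the passage from ``free resolution'' (as stated in Theorem \ref{thm:existence}) to ``geometric/projective resolution'' needed for item 2. I would handle this by inspecting the proof of Theorem \ref{thm:existence} (Section \ref{thm:existence-proof}) and noting that the Page bicomplex argument, together with the recursive construction of the higher brackets $\ell_k$, uses only projectivity of the modules in the resolution; freeness is never invoked in an essential way. This is the main, and essentially only, technical point of the argument.
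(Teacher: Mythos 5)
Your proposal is correct and follows essentially the same route as the paper, whose entire proof is ``Apply Theorem \ref{thm:existence} to $\mathfrak F$ seen as a Lie-Rinehart algebra over $\mathcal O$.'' Your extra remark for item 2 --- that the $\mathfrak{Page}^{(k)}$ bicomplex argument only needs projectivity of the terms of the resolution, so the construction applies to geometric resolutions via Serre--Swan before dualizing with Proposition \ref{prop:dual} --- is a sound and worthwhile clarification of a point the paper leaves implicit, but it does not change the approach.
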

\begin{proof}
Apply Theorem \ref{thm:existence} to  $\mathfrak F$ seen as a Lie-Rinehart algebra over $\mathcal{O}$.
\end{proof}

\begin{proposition}\label{th:universal2}
Let $\mathfrak F$ be a singular foliation over $M$. Given, 
\begin{enumerate}
    \item[a)] a Lie $ \infty$-algebroid $(M, \E',Q')$ that terminates in $\mathfrak F$, i.e, $\rho'(\Gamma(E_{-1}))\subseteq \mathfrak F$,
    \item[b)] a universal Lie $\infty $-algebroid $(M,\E,Q)$ of $\mathfrak F $,
\end{enumerate} 
then
\begin{enumerate} 
\item there exists a Lie $\infty $-morphism from $(M,\E',Q')$ to $(M,\E,Q)$.
\item and any two such morphisms  are homotopic.  
\end{enumerate}
\end{proposition}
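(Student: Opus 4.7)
The plan is simply to transport Theorem \ref{th:universal} from the Lie-Rinehart setting to the singular foliation / $NQ$-manifold setting via Proposition \ref{prop:dual}. Since a singular foliation $\mathfrak F\subset \mathfrak X(M)$ is a Lie-Rinehart sub-algebra of $\mathfrak X(M)$ over $\mathcal O = C^\infty(M)$, it falls directly under the hypotheses of the universality theorem proved in Chapter \ref{Chap:main}.

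First I would unpack the two given objects. By Proposition \ref{prop:dual}, the Lie $\infty$-algebroid $(M,\E',Q')$ corresponds to a collection of brackets $(\ell'_k)_{k\geq 1}$ on $\Gamma(E')$ together with an anchor $\rho'\colon E'_{-1}\to TM$. The hypothesis $\rho'(\Gamma(E'_{-1}))\subseteq \mathfrak F$ means precisely that $(\Gamma(E'_{-\bullet}),\ell'_\bullet,\rho')$ \emph{terminates in $\mathfrak F$} through the hook $\pi'=\rho'\colon \Gamma(E'_{-1})\to \mathfrak F$ in the sense of Chapter \ref{chap-Lie-Rinehrt}. Similarly, since $(M,\E,Q)$ is a \emph{universal} Lie $\infty$-algebroid of $\mathfrak F$, its underlying complex
\begin{equation*}
\cdots \stackrel{\ell_1}{\longrightarrow}\Gamma(E_{-2})\stackrel{\ell_1}{\longrightarrow}\Gamma(E_{-1})\stackrel{\rho}{\longrightarrow}\mathfrak F\longrightarrow 0
\end{equation*}
is exact (it is a geometric resolution), so $(\Gamma(E_{-\bullet}),\ell_\bullet,\rho)$ is an \emph{acyclic} Lie $\infty$-algebroid covering the Lie-Rinehart algebra $\mathfrak F$ through $\pi=\rho$.

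Now I would invoke Theorem \ref{th:universal} with $\mathcal A = \mathfrak F$, taking $(\E'_\bullet,\ell'_\bullet,\rho')$ as the Lie $\infty$-algebroid covering $\mathfrak F$ and $(\E_\bullet,\ell_\bullet,\rho)$ as the acyclic one. This immediately produces a Lie $\infty$-algebroid morphism $\Phi\colon S^\bullet_{\mathbb K}(\Gamma(E'))\to S^\bullet_{\mathbb K}(\Gamma(E))$ over $\mathfrak F$, and declares any two such morphisms to be homotopy equivalent. Translating back via Proposition \ref{prop:dual} (together with Lemma \ref{Tay-C}, which ensures that the $\mathcal O$-multilinearity of Taylor coefficients is exactly what is needed to induce an algebra morphism $\E\to \E'$), the co-morphism $\Phi$ dualizes to a morphism of graded manifolds $\E\to\E'$ over the identity of $M$, and the relation $\Phi\circ Q_{\E'}=Q_\E\circ \Phi$ dualizes to the $NQ$-manifold condition \eqref{eq:Lie-infty-morphis2}. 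Item 1 follows.

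For item 2, I would run the same dualization on the homotopy supplied by Theorem \ref{th:universal}. Under Proposition \ref{prop:dual}, the piecewise rational continuous family $(J_t,H_t)_{t\in[a,b]}$ of co-algebra data corresponds to a piecewise rational continuous path of $NQ$-manifold morphisms $\Phi_t\colon \E\to\E'$ together with derivations $h_t$ of appropriate degree satisfying the standard homotopy equation. The only subtlety—which is the main (minor) obstacle—is checking that the notion of homotopy in Definition \ref{def:homotopy}, which allows infinitely many gluings and uses piecewise rational continuous paths, is compatible with the more geometric notion one would like to use for $NQ$-manifolds (homotopy through $(M,\E)\times([0,1],\Omega([0,1]))\to (M,\E')$). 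In the finitely generated setting the Taylor expansion of a morphism terminates in each fixed polynomial-degree after finitely many steps, so Lemma \ref{gluing-lemma} shows that the glued piecewise rational continuous homotopy is in fact rational, hence gives a bona fide $NQ$-manifold homotopy. This concludes item 2 and the proof.
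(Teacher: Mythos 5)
Your proposal is correct and is exactly the paper's argument: the paper's own proof of this proposition is the one-line "Apply Theorem \ref{th:universal}," with $\mathfrak F$ viewed as a Lie-Rinehart algebra over $C^\infty(M)$, the hypothesis $\rho'(\Gamma(E'_{-1}))\subseteq\mathfrak F$ supplying the "terminates in" condition and the geometric resolution underlying $(M,\E,Q)$ supplying acyclicity. Your additional remarks on dualizing via Proposition \ref{prop:dual} and on the compatibility of the homotopy notions are a reasonable unpacking of that one line rather than a different route.
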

\begin{proof}
Apply Theorem \ref{th:universal}.
\end{proof}

\begin{corollary}
Two universal Lie $\infty $-algebroid of a singular foliation are homotopy equivalent.

Moreover, the homotopy equivalence between them is unique up to homotopy.
\end{corollary}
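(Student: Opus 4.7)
The plan is to reduce this statement to a direct application of Proposition~\ref{th:universal2}, exactly in the same spirit as Corollary~\ref{cor:unique} was deduced from Theorem~\ref{th:universal}. Let $(M,\mathcal E,Q)$ and $(M,\mathcal E',Q')$ be two universal Lie $\infty$-algebroids of the singular foliation $\mathfrak F$. Recall that a universal Lie $\infty$-algebroid of $\mathfrak F$ is in particular a Lie $\infty$-algebroid terminating in $\mathfrak F$ (through its anchor projection), so both $(M,\mathcal E,Q)$ and $(M,\mathcal E',Q')$ simultaneously play the role of ``an algebroid terminating in $\mathfrak F$'' and ``a universal algebroid of $\mathfrak F$''. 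This symmetry is what makes the argument go through.

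First I would apply Proposition~\ref{th:universal2} twice, in opposite directions: treating $(M,\mathcal E,Q)$ as the source terminating in $\mathfrak F$ and $(M,\mathcal E',Q')$ as the universal target, we obtain a Lie $\infty$-morphism $\Phi\colon (M,\mathcal E,Q)\to(M,\mathcal E',Q')$; swapping the roles, we obtain $\Psi\colon (M,\mathcal E',Q')\to(M,\mathcal E,Q)$. Next, I would show that $\Psi\circ\Phi$ is homotopic to $\mathrm{id}_{\mathcal E}$: indeed, both $\Psi\circ\Phi$ and $\mathrm{id}_{\mathcal E}$ are Lie $\infty$-morphisms from the algebroid $(M,\mathcal E,Q)$ (which terminates in $\mathfrak F$) to the universal algebroid $(M,\mathcal E,Q)$, so the second part (``any two such morphisms are homotopic'') of Proposition~\ref{th:universal2} forces $\Psi\circ\Phi\sim\mathrm{id}_{\mathcal E}$. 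Symmetrically, $\Phi\circ\Psi\sim\mathrm{id}_{\mathcal E'}$. This establishes the homotopy equivalence.

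For the uniqueness up to homotopy, let $\Phi,\tilde\Phi\colon(M,\mathcal E,Q)\to(M,\mathcal E',Q')$ be two such homotopy equivalences. Both are Lie $\infty$-morphisms from an algebroid terminating in $\mathfrak F$ to a universal algebroid of $\mathfrak F$, so the very same uniqueness clause of Proposition~\ref{th:universal2} yields $\Phi\sim\tilde\Phi$. The reverse direction is identical.

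I do not expect a real obstacle here: the whole content is packaged in Proposition~\ref{th:universal2}, and the argument is the standard categorical trick showing that any two terminal-like objects are canonically isomorphic. The only point that deserves care is to make sure that the compositions $\Psi\circ\Phi$ and $\Phi\circ\Psi$ are indeed Lie $\infty$-morphisms (hence lie in the category in which the uniqueness clause of Proposition~\ref{th:universal2} applies); but composition of Lie $\infty$-algebroid morphisms is a morphism by the very definition given in Section~\ref{sec:Homtopies1}, so this is automatic.
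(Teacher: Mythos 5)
Your proof is correct and follows essentially the same route as the paper: the paper's own proof is the one-liner ``Apply Corollary~\ref{cor:unique}'', and your argument is precisely the standard terminal-object derivation of that corollary from the universality theorem, here in its foliation incarnation Proposition~\ref{th:universal2}. The one point you flag — compatibility of the compositions with the hooks — is indeed automatic here, since for a singular foliation the hook is the anchor and the condition $\rho\circ\Phi_0=\rho'$ is already part of the definition of a Lie $\infty$-algebroid morphism.
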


\begin{proof}
Apply Corollary \ref{cor:unique}.
\end{proof}

\subsection{The complex defined by $\mathrm{ad}_{Q^{(0)}}$}\label{ad_qzero}
Let $\mathfrak F$ be a singular foliation on $M$ that admits a universal algebroid  $(E, Q)$. For a fixed $k\in\mathbb N_0\cup\{-1\}$  consider the bicomplex defined on  $\text{Hom}^\bullet_\mathcal O\left(\Gamma(E^*),\Gamma\left(S_\mathbb{K}^{k+1}(E^*)\right)\right)$ where the horizontal differential $\partial^h$ (resp. vertical differential $\partial^v$) are given by left composition (resp. right composition with $Q^{(0)}$), namely, for all $\Psi^{(k)}\in\text{Hom}^\bullet_\mathcal O\left(\Gamma(E^*),\Gamma\left(S_\mathbb{K}^{k+1}(E^*)\right)\right)$ one has, \begin{equation}
   \partial^h\left(\Psi^{(k)}\right):=\begin{cases}
   \Psi^{(k)}\circ Q^{(0)}\quad&\text{when $\Psi^{(k)}$ restricts to $\Gamma(E^*_{-i})$,\, with $i\neq 1$},\\\Psi^{(k)}\circ\rho^*\quad&\text{when $\Psi^{(k)}$ restricts to $\Gamma(E^*_{-1})$,}
   \end{cases}
\end{equation}and \begin{equation}
\partial^v\left(\Psi^{(k)}\right):= \begin{cases}\rho^*\circ\Psi^{(k)}\quad&\text{when $\Psi^{(k)}$ is of degree $i$ and restricts to $\Gamma(E^*_{-i-1})$},\\Q^{(0)}\circ\Psi^{(k)}\quad&\text{otherwise}.
 \end{cases}
\end{equation}With the understanding that $\Gamma\left(S_\mathbb{K}^{0}(E^*)\right)\simeq \mathcal{O}$. The total differential is given by the formula,$$
    \partial\left(\Psi^{(k)}\right)=\partial^h\left(\Psi^{(k)}\right)-(-1)^{\lvert \Psi^{(k)} \rvert}\partial^v\left(\Psi^{(k)}\right)\quad\text{for all}\quad\Psi^{(k)}\in\text{Hom}^\bullet_\mathcal O\left(\Gamma(E^*),\Gamma\left(S_\mathbb{K}^{k+1}(E^*)\right)\right).$$
The following diagram pictures the idea of the bicomplex. The total degree is given by the anti-diagonals lines.

\begin{equation}\label{recap2}
	\scalebox{0.7}{\hbox{$
 \begin{array}{ccccccccc}
		& & \vdots & & \vdots & & \vdots & & \\ 
		& & \uparrow & & \uparrow & & \uparrow & & \\  
		& \rightarrow & \text{Hom}_\mathcal O\left(\Gamma(E_{-2}^*),\Gamma\left(S_\mathbb{K}^{k+1}(E^*)\right)_{k+3}\right) & \overset{\partial^h}{\rightarrow} & \text{Hom}_\mathcal O\left(\Gamma(E_{-1}^*),\Gamma\left(S_\mathbb{K}^{k+1}(E^*)\right)_{k+3}\right) 
		& \overset{\cdot\,\circ\rho^*}{\rightarrow} & \text{Hom}_\mathcal O\left(\mathfrak F^*,\Gamma\left(S_\mathbb{K}^{k+1}(E^*)_{k+3}\right)\right) & \rightarrow & 0\\ 
		& & \partial^v\uparrow & & \partial^v\uparrow & & D^v\uparrow & & \\ 
		& \rightarrow & \text{Hom}_\mathcal O\left(\Gamma(E_{-2}^*),\Gamma\left(S_\mathbb{K}^{k+1}(E^*)\right)_{k+2}\right) & \overset{\partial^h}{\rightarrow} & \text{Hom}_\mathcal O\left(\Gamma(E_{-1}^*),\Gamma\left(S_\mathbb{K}^{k+1}(E^*)\right)_{k+2}\right) 
		& \overset{\cdot\circ\,\rho^*}{\rightarrow} & \text{Hom}_\mathcal O\left(\mathfrak F^*,\Gamma\left(S_\mathbb{K}^{k+1}(E^*)_{k+2}\right)\right) & \rightarrow& 0\\ 
		& & \partial^v\uparrow & & \partial^v\uparrow & & \partial^v\uparrow & &  \\ & \rightarrow &\text{Hom}_\mathcal O\left(\Gamma(E_{-2}^*),\Gamma\left(S_\mathbb{K}^{k+1}(E^*)\right)_{k+1}\right) & \overset{\partial^h}{\rightarrow} & \text{Hom}_\mathcal O\left(\Gamma(E_{-1}^*),\Gamma\left(S_\mathbb{K}^{k+1}(E^*)\right)_{k+1}\right) 
		& \overset{\cdot\,\circ\rho^*}{\rightarrow}& \text{Hom}_\mathcal O\left(\mathfrak{F}^*,\Gamma\left(S_\mathbb{K}^{k+1}(E^*)\right)_{k+1}\right) & \rightarrow & 0\\  
	& & \uparrow & & \uparrow & & \uparrow & & \\ 
	& & 0 & & 0 & & 0 & & \\   \end{array} 
 $}}
\end{equation}
	
There is no cohomology for the total complex which is governed by $\partial\equiv[\,\cdot\,,Q^{(0)}]$ since the lines are exact (see Proposition \ref{Acyclic-Assembly-Lemma}). When $k=-1$ the bicomplex \eqref{recap2} is just the line complex:
\begin{equation}\label{-1:case}
    \cdots \stackrel{\partial^h}{\longrightarrow}  \text{Hom}_\mathcal O\left( \Gamma(E^*_{-2}),\mathcal{O}\right) \stackrel{\partial^h}{\longrightarrow}\text{Hom}_\mathcal O\left( \Gamma(E^*_{-1}),\mathcal{O}\right)\stackrel{\partial^h}{\longrightarrow}\text{Hom}_\mathcal O\left(\mathfrak{F}^*,\mathcal{O}\right)\longrightarrow 0
\end{equation}which is also exact.

\subsection{A result on longitudinal vector fields and examples}
In this section, we study the cohomology of longitudinal vector fields, which will help in proving the main results stated in the beginning of Chapter \ref{sec:3}.\\

Let $\mathfrak{F}$ be a singular foliation over $M$.
\begin{definition}
Let $E$ be a splitted graded manifold over $M$ with sheaf of function $\E$. A vector field $L\in\mathfrak{X}(E)$ is said to be a \emph{longitudinal vector field for $\mathfrak{F}$} if there exists vector fields $X_1,\ldots,X_k\in \mathfrak{F}$ and functions $\Theta_1,\ldots,\Theta_k\in\E$ such that \begin{equation}
  L(f)=\sum_{i=1}^kX_i[f]\Theta_i,\qquad \forall f\in\mathcal{O}.
\end{equation}

\end{definition}
We also need to define the following
\begin{definition}
   A vector field $X\in\mathfrak{X}(M)$ is said to be an \emph{infinitesimal symmetry of $\mathfrak F$}, if $[X,\mathfrak F]\subset\mathfrak{F}$. The Lie algebra of infinitesimal symmetries of $\mathfrak{F}$ is denoted by, $\mathfrak{s}(\mathfrak F)$
\end{definition}
\begin{example}\label{longi:examples}Here are some examples.
\begin{enumerate}
\item Vertical vector fields on a graded manifold are longitudinal for any singular foliation.
    \item For any $Q$-manifold $(E,Q)$ over a manifold $M$. The homological vector field $Q\in\mathfrak{X}(E)$ is a longitudinal vector field for $\mathfrak{F}:=\rho(\Gamma(E_{-1}))$: in local coordinates $(\mathcal{U},x_1, \dots,x_n) $ on $M$ and a local trivialization $\xi^1,\xi^2,\ldots$ of graded sections in $\Gamma(E^*)$. The vector fields $Q $ are of the form:
    \begin{align}\label{eq:sym_long1}
    Q&=\displaystyle{\sum_j\sum_{k,\, |\xi^k|=1}Q^j_k(x) \xi^k\frac{\partial}{\partial x_j} + \sum_{j}\sum_{k,\iota_1,\ldots,\iota_k}\frac{1}{k!}Q^j_{\iota_1,\ldots,\iota_k}(x) \xi^1\odot\cdots\odot\xi^k\frac{\partial}{\partial \xi^j}}\\\nonumber&=\displaystyle{\sum_{k,\, |\xi^k|=1}\xi^k\left(\sum_jQ^j_k(x)\frac{\partial}{\partial x_j}\right) + \cdots}\end{align}
  Take $X_k:=\sum_jQ^j_k(x)\frac{\partial}{\partial x_j}\in \mathfrak{F}(\mathcal{U})$ for every $k$. One has for all $f\in C^\infty(\mathcal{U})$, $Q(f)=\sum_k\xi^kX_k[f]$.  
    \item For $(E, Q)$ a $Q$-manifold and $\mathfrak{F}:=\rho(\Gamma(E_{-1}))$ its basic singular foliation. For any extension of a symmetry $X\in\mathfrak{s}(\mathfrak{F})$ of $\mathfrak{F}$ to a degree zero vector field $\widehat{X}\in\mathfrak{X}(E)$, the degree $+1$ vector field $[Q,\widehat{X}]$ is longitudinal for $\mathfrak{F}$.
    \item[] Let us show this last point using local coordinates $(x_1, \dots,x_n) $ on $M$ and a local trivialization $\xi^1,\xi^2,\ldots$ of graded sections in $\Gamma(E^*)$. The vector fields $Q $ and $\widehat{X} $ take the form:
    \begin{equation}\label{eq:sym_long}
    \begin{array}{rcl}
    Q&=&\displaystyle{\sum_j\sum_{k,\, |\xi^k|=1}Q^j_k(x) \xi^k\frac{\partial}{\partial x_j} + \sum_{j}\sum_{k,\iota_1,\ldots,\iota_k}\frac{1}{k!}Q^j_{\iota_1,\ldots,\iota_k}(x) \xi^1\odot\cdots\odot\xi^k\frac{\partial}{\partial \xi^j}} \\
    \widehat{X}&= &\displaystyle{X+ \sum_{j}\sum_{k,\iota_1,\ldots,\iota_k}\frac{1}{k!}X^j_{\iota_1,\ldots,\iota_k}(x)\xi^1\odot\cdots\odot\xi^k\frac{\partial}{\partial \xi^j}} \end{array}
    \end{equation}
    where $\displaystyle{X=\sum_{i=1}^n X_i(x) \frac{\partial}{\partial x_i}}$. By using Equation \eqref{eq:sym_long} we note that all the terms of $[Q, \widehat{X}]$ are vertical except maybe for the ones where the vector field $X$ appears. For $k\geq 1$, the vector field   $[Q^j_{\iota_1,\ldots,\iota_k}\xi^1\odot\cdots\odot\xi^k\frac{\partial}{\partial \xi^j}, X]$ is vertical; and for every fix $k$, one has  \begin{align*}
        \left[\sum_{j=1}^n Q^j_k\xi^k\frac{\partial}{\partial x_j}, X\right]=\xi^k\left[\sum_{j=1}^n Q^j_k\frac{\partial}{\partial x_j}, X\right].
    \end{align*}
    
    Thus, $\displaystyle{\left[\sum_{j=1}^n Q^j_k\frac{\partial}{\partial x_j}, X\right]\in\mathfrak{F}}$, since $X$ is a symmetry for $\mathfrak{F}$ and $\displaystyle{\sum_{j=1}^n Q^j_k\frac{\partial}{\partial x_j}\in\mathfrak{F}}$. 
\end{enumerate}
\end{example}

\begin{remark}\label{longitudinal-stable}
Longitudinal vector fields are stable under the graded Lie bracket. We denote by $\mathfrak L(E)$ the graded Lie algebra of longitudinal vector fields for $\mathfrak{F}$.
\end{remark}
\begin{remark}
Let us study vector fields on $E$. 
\begin{enumerate}
\item Sections of $E$ are identified with derivations under the isomorphism mapping  $e\in\Gamma(E)\longmapsto \iota_e \in\mathfrak{X}(E)$. This allows us to identify a vertical vector field with (maybe infinite) sums of  tensor products of the form  $\Theta\otimes e$ with $\Theta\in \E, e\in \Gamma(E)$. 

\item Any connection on $\Gamma(E^*)$ 
induces a vector field of degree zero $\widetilde{\nabla}_X\in \mathfrak{X}(E)$ by setting for $f\in\mathcal{O}$, $\widetilde{\nabla}_X(f):= X[f]$. Once a connection is chosen, we have for all $k\in\mathbb{Z}$ \begin{equation}\label{eq:identification}
    \mathfrak{X}_k(E)\simeq\bigoplus_{j\geq 1}\E_{k+j}\otimes_\mathcal{O}\Gamma(E_{-j})\oplus\E_{k}\otimes_\mathcal{O}\mathfrak{X}(M)\simeq \oplus_{j\geq 1}\Gamma(S(E^*)_{k+j}\otimes E_{-j} )\oplus\Gamma(S(E^*)_k\otimes TM).
\end{equation}
Thus, one can realize a vector field $P\in\mathfrak{X}_k(E)$ as a sequence $P=(p_0,p_1,\ldots)$, where $p_0\in \Gamma(S(E^*)_k\otimes TM)$ and $p_i\in\Gamma(S(E^*)_{k+i}\otimes E_{-i})$ for  $i\geq 1$ are called \emph{components} of $P$. In the diagram \eqref{longitudinal:complex}, $P=(p_0,p_1,\ldots)$ is represented as an element of the anti-diagonal and $p_i$ is on column $i$. We say that $P$ is of \emph{depth} $n\in \mathbb{N}$ if $p_i=0$ for all $i< n $. In particular, vector fields of depth greater or equal to $1$ are vertical. Under the decomposition \eqref{eq:identification}, the differential map $\mathrm{ad}_Q$ takes the form\begin{equation}
    D=D^h+\sum_{s\geq 0}D^{v_s}
\end{equation}with $D^2=0$. Here $D^h=\mathrm{id}\otimes\dd\quad\text{or}\quad \mathrm{id}\otimes\rho$, and $D^{v_s}\colon \Gamma(S(E^*)_k\otimes E_{-i})\to \Gamma(S(E^*)_{k+s+1}\otimes ~E_{-i-s})$ for $i\geq 0,\,s\geq 0$, we shall denote $E_{0}:= TM$. We denote the latter complex by $(\mathfrak{X}, D)$. They can be represented as anti-diagonal lines in the following commutative diagram, whose lines are complexes of $\mathcal{O}$-modules

\begin{equation}\label{longitudinal:complex}
\xymatrix{  & \vdots &&\vdots && \vdots\\\cdots\ar[r] &\Gamma(S(E^*)_{k+2}\otimes E_{-2})\ar[u] \ar^{\mathrm{id}\otimes\dd}[rr]&&\Gamma(S(E^*)_{k+2}\otimes E_{-1})\ar[u] \ar^{\mathrm{id}\otimes\rho}[rr]&& \Gamma(S(E^*)_{k+2}\otimes TM)\ar[u]\\\cdots\ar[r] & \Gamma(S(E^*)_{k+1}\otimes E_{-2})\ar[luu]\ar[u]^\ddots_{Q\otimes\mathrm{id}\, +\, \cdots} \ar^{\mathrm{id}\otimes\dd}[rr]&&\Gamma(S(E^*)_{k+1}\otimes E_{-1})\ar[luu]\ar[u]^\ddots_{Q\otimes\mathrm{id}\,+\, \cdots} \ar^{\mathrm{id}\otimes\rho}[rr]&& \Gamma(S(E^*)_{k+1}\otimes TM)\ar[lluu]\ar[u]^\ddots_{Q\otimes\mathrm{id}\, +\, \cdots}\\\cdots \ar[r]& \Gamma(S(E^*)_k\otimes E_{-2})\ar[luu]\ar[u]^\ddots_{Q\otimes\mathrm{id}\, +\, \cdots}\ar^{\mathrm{id}\otimes\dd}[rr]&&\Gamma(S(E^*)_k\otimes E_{-1})\ar@/^/[llluuu]\ar[lluu]\ar[u]^\ddots_{Q\otimes\mathrm{id}\, +\, \cdots} \ar^{\mathrm{id}\otimes\rho}[rr]&& {\Gamma(S(E^*)_k\otimes TM)}\ar@/^/[llluuu]\ar[lluu]\ar[u]^\ddots_{Q\otimes\mathrm{id}\, +\, \cdots}\\ & \vdots\ar[u] &&\vdots\ar[u] && \vdots\ar[u]\\& \texttt{column $2$}&&\texttt{column $1$} && \texttt{column $0$}\\}
\end{equation}
\end{enumerate}
\end{remark}

Under this correspondence, we understand longitudinal vector fields as the following. 
\begin{lemma}
A graded vector field $P=(p_0,p_1,\ldots)\in\mathfrak{X}$ is longitudinal if $p_0\in\E\otimes_\mathcal{O}\mathfrak{F}$.
\end{lemma}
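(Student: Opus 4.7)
The plan is to unpack the definition of longitudinal using the identification \eqref{eq:identification} induced by a choice of $TM$-connection on $E$. The key observation is that when computing $P[f]$ for $f\in\mathcal{O}$, only the degree-zero ``horizontal'' component $p_0$ contributes.

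First, I would recall that under \eqref{eq:identification} a vector field $P\in\mathfrak{X}_k(E)$ decomposes as $P=(p_0,p_1,p_2,\ldots)$ with $p_0\in\Gamma(S(E^*)_k\otimes TM)$ and $p_j\in\Gamma(S(E^*)_{k+j}\otimes E_{-j})$ for $j\geq 1$. The summands for $j\geq 1$ consist of (sums of) elements of the form $\Theta\otimes e$ with $\Theta\in\E$ and $e\in\Gamma(E_{-j})$, and they act on $\E$ via the contraction $\iota_e$ extended by $\Theta$. In particular, since $\iota_e(f)=0$ for every $f\in\mathcal{O}\subset\E$ and every $e\in\Gamma(E_{-j})$ with $j\geq 1$ (by the definition of contraction in \eqref{eq:contraction}, as $f$ has polynomial-degree $0$), each $p_j$ with $j\geq 1$ is a vertical vector field. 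Therefore
\[
P[f]=p_0[f] \qquad \text{for every } f\in\mathcal{O}.
\]

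Next, assume $p_0\in\E\otimes_\mathcal{O}\mathfrak{F}$. By definition of the tensor product, $p_0$ can be written as a finite sum
\[
p_0=\sum_{i=1}^{k}\Theta_i\otimes X_i,\qquad \Theta_i\in\E,\ X_i\in\mathfrak{F}.
\]
Under the identification $\E\otimes_\mathcal{O}\mathfrak{X}(M)\hookrightarrow\mathfrak{X}(E)$, the action of such an element on $f\in\mathcal{O}$ is precisely $\sum_{i}\Theta_i\,X_i[f]$. Combining with the previous step yields
\[
P[f]=\sum_{i=1}^{k}X_i[f]\,\Theta_i,\qquad \forall f\in\mathcal{O},
\]
which is exactly the defining relation of a longitudinal vector field. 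This establishes the claim.

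There is essentially no obstacle: the proof is a direct unpacking of the decomposition \eqref{eq:identification} together with the observation that contractions $\iota_e$ for $e\in\Gamma(E_{-j})$, $j\geq 1$, vanish on $\mathcal{O}$. The only mild subtlety is verifying that the identification $\Gamma(S(E^*)_k\otimes TM)\hookrightarrow\mathfrak{X}_k(E)$ sends $\Theta\otimes X$ to the derivation $f\mapsto \Theta\cdot X[f]$, which is built into the construction of the connection-induced splitting; this ensures that the horizontal component of $P$ really is the only one acting nontrivially on pullbacks of base functions. Note that the converse implication is equally straightforward: if $P$ is longitudinal with $P[f]=\sum_i X_i[f]\Theta_i$, then $p_0[f]=\sum_i X_i[f]\Theta_i$ for all $f\in\mathcal{O}$, forcing $p_0=\sum_i \Theta_i\otimes X_i\in\E\otimes_\mathcal{O}\mathfrak{F}$.
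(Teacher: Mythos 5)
Your proof is correct and is exactly the direct unpacking of the decomposition \eqref{eq:identification} that the paper treats as immediate — the lemma is stated there without proof, as a consequence of the preceding remark that the components $p_j$ for $j\geq 1$ are vertical and hence annihilate $\mathcal{O}$, so that $P[f]=p_0[f]$ and the condition $p_0\in\E\otimes_\mathcal{O}\mathfrak{F}$ reproduces the defining relation $P[f]=\sum_i X_i[f]\,\Theta_i$. Your added observation that the converse holds as well is also consistent with how the paper uses the lemma.
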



The following theorem is crucial for Chapter \ref{chap:symmetries}.
\begin{theorem}\label{thm:longitudinal}
Let $(E,Q)$ be a universal $Q$-manifold of $\mathfrak{F}$. 
\begin{enumerate}
    \item Longitudinal vector fields form an acyclic complex.
    
     More precisely, any longitudinal vector field on $E$ which is a $\mathrm{ad}_Q$-cocycle is the image through $\mathrm{ad}_Q$ of some vertical vector field on $E$.
     
     \item More generally, if a vector field on $E$ of depth $n$ is a $\mathrm{ad}_Q$-cocycle, then it is the image through $\mathrm{ad}_Q$ of some  vector field on $E$ of depth $n+1$. 
\end{enumerate}

\end{theorem}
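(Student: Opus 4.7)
The plan is to exploit the bigraded decomposition of $\mathfrak{X}(E)$ displayed in diagram \eqref{longitudinal:complex}. I identify a vector field with its sequence of components $(p_0,p_1,p_2,\dots)$, where $p_i$ lies in column $i$; the operator $\mathrm{ad}_Q$ then becomes the total differential $D=D^h+\sum_{s\geq 0}D^{v_s}$, with $D^h$ lowering the column index by one and each $D^{v_s}$ raising it by $s$ (and increasing polynomial degree by $s+1$). The rows of \eqref{longitudinal:complex} are obtained by tensoring $\Gamma(S(E^*)_k)$ with the $\mathcal{O}$-linear complex $\cdots\to\Gamma(E_{-2})\to\Gamma(E_{-1})\to\Gamma(TM)$ underlying $(E,Q)$. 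By universality, this complex is exact at every $\Gamma(E_{-i})$ with $i\geq 1$ and has image exactly $\mathfrak{F}$ inside $\Gamma(TM)$; since $\Gamma(S(E^*)_k)$ is locally free of finite rank, hence flat, tensoring preserves this. Consequently each row of \eqref{longitudinal:complex} is exact at every column $i\geq 1$, and the image of its last horizontal arrow (into column $0$) is precisely the longitudinal part $\Gamma(S(E^*)_k)\otimes_\mathcal{O}\mathfrak{F}$.

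Both items then follow from a diagonal diagram chase. Let $P=(0,\dots,0,p_n,p_{n+1},\dots)$ be a cocycle of depth $n$. Because no $D^{v_s}$ can reach column $n-1$ from a field concentrated in columns $\geq n$, the column $n-1$ component of $\mathrm{ad}_Q(P)=0$ reduces to $D^h(p_n)=0$. If $n\geq 1$ (item 2), exactness at column $n$ produces $r_{n+1}$ with $D^h(r_{n+1})=p_n$. If $n=0$ (item 1), the longitudinal hypothesis places $p_0$ in $\Gamma(S(E^*))\otimes_\mathcal{O}\mathfrak{F}=\mathrm{im}(D^h)$, so again there is some $r_1$ with $D^h(r_1)=p_0$. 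In either case I let $R_1$ denote the vector field supported only in column $n+1$ with value $r_{n+1}$; a direct computation shows that $\mathrm{ad}_Q(R_1)$ contributes $D^h(r_{n+1})=p_n$ in column $n$ and vanishes in all lower columns, so $P^{(1)}:=P-\mathrm{ad}_Q(R_1)$ has depth $\geq n+1$, and it remains a cocycle because $\mathrm{ad}_Q^2=0$.

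I then iterate. At stage $k\geq 2$, applying the same argument to $P^{(k-1)}$ (a cocycle of depth $\geq n+k-1$) yields $r_{n+k}$ with $D^h(r_{n+k})=p^{(k-1)}_{n+k-1}$, using exactness at column $n+k$, which holds unconditionally since $n+k\geq 2$. Setting $R_k$ to be supported in column $n+k$ with value $r_{n+k}$, the supports of distinct $R_k$'s lie in distinct columns, so the formal sum $R:=\sum_{k\geq 1}R_k$ is an honest vector field whose column $j$ component is $r_j$ for $j>n$ and zero otherwise; in particular $R$ has depth $\geq n+1$. A column-by-column computation then shows that the $r_j$ were chosen precisely to satisfy $p_i = D^h(r_{i+1})+\sum_{s=0}^{i-n-1}D^{v_s}(r_{i-s})$ for every $i\geq n$, which is the statement $\mathrm{ad}_Q(R)=P$; for $i<n$ both sides trivially vanish.

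The main technical point is that at each stage the leading-depth component of $P^{(k-1)}$ must remain $D^h$-closed, so that exactness can be applied. This is automatic: isolating the column $n+k-1$ equation in $\mathrm{ad}_Q(P^{(k-1)})=0$ kills all vertical contributions because these can only raise depth, leaving the single relation $D^h(p^{(k-1)}_{n+k-1})=0$. The second subtle point is the gluing of the $R_k$'s into one vector field, where the essential fact is that the depth filtration is bounded below: in any fixed column only finitely many $R_k$ contribute, so no convergence issue arises and $R$ is well-defined.
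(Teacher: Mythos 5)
Your proof is correct and follows essentially the same route as the paper's: both reduce $\mathrm{ad}_Q$ to the total differential of the bicomplex \eqref{longitudinal:complex}, use exactness of the rows (a consequence of $(E,Q)$ being universal) together with longitudinality to kill the column-$n$ component, and iterate the depth-raising correction, summing the corrections column by column. Your added remarks on why the leading component stays $D^h$-closed and why the infinite sum is well-defined are welcome elaborations of steps the paper leaves implicit, but they do not change the argument.
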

\begin{proof}
Since $(E,Q)$ is an universal $Q$-manifold of $\mathfrak{F}$, lines in \eqref{longitudinal:complex} are exact. It is now a diagram chasing phenomena. Let $P=(p_0,p_1,\ldots,)\in\mathfrak{X}$ be a longitudinal element which is a $D$-cocycle. By longitudinality there exists an element  $b_1\in\Gamma(S(E^*)\otimes E_{-1})$ such that $(\mathrm{id}\otimes\rho)(b_1)=p_0$. Set $P_1=(0,b_1,0,\ldots)$, that is we extend $b_1$ by zero on $\Gamma(S(E^*)\otimes E_{\leq -2})$ and $\Gamma(S(E^*)\otimes TM)$. It is clear that $P-D(P_1)=(0,p'_1,p'_2,\ldots)$ is also a $D$-cocycle. In particular, we have $D^h(p'_1)=0$ by exactness there exists $b_2\in\Gamma(S(E^*)\otimes E_{-2})$ such that $D^h(b_2)=p'_1$. As before put $P_2=(0,0,b_2,0,\ldots)$. Similarly, $P-D(P_1)-D(P_2)=(0,0,p''_2,p''_3,\ldots)$ is a $D$-cocycle. By recursion, we end up to construct $P_1,P_2,\ldots$ that satisfy $P-D(P_1)-D(P_2)+\cdots =0$, that is, there exists an element $B=(0,b_1,b_2,\ldots)\in \mathfrak{X}$ such that $D(B)=P$. This proves item $1$.

To prove item $2$ it suffices to cross out in the diagram \eqref{longitudinal:complex} the columns number $0,\ldots,n-~1$, which does not break exactness. The proof now follows as for item $1$.
\end{proof}

In particular, we deduce from Theorem \ref{thm:longitudinal} the following exact subcomplex.
\begin{corollary}\label{cor:longitudinal}
Let $(E,Q)$ be a universal $Q$-manifold of $\mathfrak{F}$.
 The subcomplex $\mathfrak{V}_Q$ of $(\mathfrak{X}(E),\mathrm{ad}_Q)$ made of vertical vector fields $P\in\mathfrak{X}(E)$ that satisfy $P\circ Q(f)=~0$ for all $f\in\mathcal{O}$ is exact.
\end{corollary}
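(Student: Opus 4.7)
The plan is to reduce the corollary to Theorem~\ref{thm:longitudinal} by observing that the extra condition $P\circ Q|_\mathcal{O}=0$ is essentially automatic for any vertical primitive of a vertical cocycle. Before the reduction, I would first verify that $\mathfrak{V}_Q$ is indeed a subcomplex of $(\mathfrak{X}(E),\mathrm{ad}_Q)$. For $P\in\mathfrak{V}_Q$ homogeneous and $f\in\mathcal{O}$, verticality of $[Q,P]$ is immediate from $[Q,P](f)=Q(P(f))-(-1)^{|P|}P(Q(f))=0-0=0$, using that $P(f)=0$ and $P(Q(f))=0$. Then $[Q,P]\circ Q|_\mathcal{O}$ vanishes because $[Q,P]\circ Q=Q\circ P\circ Q-(-1)^{|P|}P\circ Q^2=Q\circ P\circ Q$, and $Q(P(Q(f)))=Q(0)=0$. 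So $\mathrm{ad}_Q$ preserves $\mathfrak{V}_Q$.

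For the exactness, let $P\in\mathfrak{V}_Q$ be an $\mathrm{ad}_Q$-cocycle. Since $P$ is vertical, it is in particular longitudinal (Example~\ref{longi:examples}, item~1). Item~1 of Theorem~\ref{thm:longitudinal}, applied to $(E,Q)$ which is a universal $Q$-manifold of $\mathfrak{F}$, then provides a vertical vector field $B\in\mathfrak{X}(E)$ with $[Q,B]=P$. It remains to verify that $B$ itself satisfies $B\circ Q|_\mathcal{O}=0$, i.e.\ that $B\in\mathfrak{V}_Q$. For every $f\in\mathcal{O}$,
\begin{equation*}
0=P(f)=[Q,B](f)=Q(B(f))-(-1)^{|B|}B(Q(f))=-(-1)^{|B|}B(Q(f)),
\end{equation*}
using $P$ vertical for the first equality and $B$ vertical (so $B(f)=0$) for the third. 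Hence $B(Q(f))=0$ for all $f\in\mathcal{O}$, so $B\in\mathfrak{V}_Q$ and $P=\mathrm{ad}_Q(B)$ with $B$ in the subcomplex.

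There is essentially no obstacle once Theorem~\ref{thm:longitudinal} is in hand: the nontrivial homological input (diagram chase in the bicomplex~\eqref{longitudinal:complex} using acyclicity of the rows, which in turn reflects universality of $(E,Q)$) is packaged into that theorem. The only thing one must be careful about is the degree sign $(-1)^{|B|}$ when unfolding the graded commutator, which forces the equality $B(Q(f))=\pm P(f)$ and makes the condition defining $\mathfrak{V}_Q$ propagate from $P$ to $B$ for free. In short: verticality of $P$ forces verticality of any chosen primitive $B$ (Theorem~\ref{thm:longitudinal}), and then verticality of $B$ together with $[Q,B]=P$ forces the auxiliary condition $B\circ Q|_\mathcal{O}=0$ automatically.
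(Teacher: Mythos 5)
Your proof is correct and follows essentially the same route as the paper: apply item 1 of Theorem \ref{thm:longitudinal} to the vertical cocycle $P$ to obtain a vertical primitive, then observe that $0=[Q,B](f)=-(-1)^{|B|}B(Q(f))$ forces the auxiliary condition on the primitive automatically. The only addition is your explicit check that $\mathfrak{V}_Q$ is stable under $\mathrm{ad}_Q$, which the paper leaves implicit.
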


\begin{proof}
Let $P\in\mathfrak{X}(E)$ be a vertical vector field which is an $\mathrm{ad}_Q$-cocycle (note that we have automatically $P\circ Q(f)=0$ for all $f\in\mathcal{O}$). By Theorem \ref{thm:longitudinal} there exists a vertical vector field $\widetilde{P}\in\mathfrak{X}(E)$ such that $[Q,\widetilde{P}]=P$. Moreover, $\widetilde{P}\in \mathfrak{V}_Q$, since for all $f\in\mathcal{O}$, $$0=[Q,\widetilde{P}](f)=(-1)^{|\widetilde{P}|}\widetilde{P}\circ Q(f).$$
\end{proof}

The following remark will be used especially for the proof of Theorem \ref{proof:main} of Chapter \ref{chap:symmetries}.
\begin{remark}\label{rmk:arity}
For a cocycle  $P\in \mathfrak{V}_Q$ of degree $0$ one has $P^{(-1)}= 0$ (for degree reason). By Corollary \ref{cor:longitudinal}, $P$ is the image by $\mathrm{ad}_Q$ of an element $\widetilde P\in \mathfrak V_Q$ i.e. such that $[Q,\widetilde P]=P$. Also, one can choose ${\widetilde{P}}^{(-1)}=0$: we have  $$[Q, \widetilde P]^{(-1)}=[Q^{(0)}, {\widetilde{P}}^{(-1)}]=P^{(-1)}=0.$$ By exactness of $\mathrm{ad}_{Q^{(0)}}$ (see Section \ref{ad_qzero}), we have $P^{(-1)}=[Q^{(0)}, \vartheta]$ for some $\mathcal{O}$-linear map $\vartheta\in \mathrm{Hom}(\Gamma(E^*), \Gamma(S^0(E^*)))$ of degree $-2$. Put $\widebar{P}:= \widetilde P -[Q, \vartheta]$, where $\vartheta$ is extended to a derivation of polynomial-degree $-1$. Clearly, \begin{align*}
    [Q, \widebar{P}]=P\qquad\text{and}\qquad \widebar{P}^{(-1)}= \widetilde{P}^{(-1)}-[Q, \vartheta]^{(-1)}=\widetilde{P}^{(-1)}-[Q^{(0)}, \vartheta]=0.
\end{align*}
Therefore, $P=\mathrm{ad}_Q(\widebar P)$ with ${\widebar P}^{(-1)}=0$.
\end{remark}

Here is direct  consequence of Theorem \ref{thm:longitudinal} and Proposition \ref{prop:sub-complex-exact}.

\begin{corollary}
Let $\mathfrak{F}$ a singular foliation on $M$. Let $(E,Q)$ be a universal $Q$-manifold of $\mathfrak{F}$. Then,\begin{equation}
    H^\bullet\left(\mathfrak{X}(E),\mathrm{ad}_Q\right)\simeq H^\bullet\left(\frac{\mathfrak{X}(E)}{\mathfrak{L}(E)},\overline{\mathrm{ad}}_Q\right).
\end{equation}
where $\overline{\mathrm{ad}}_Q$ is induced by $\mathrm{ad}_Q$ on the quotient space.
\end{corollary}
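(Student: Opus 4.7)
The plan is to exhibit $\mathfrak{L}(E)$ as an acyclic subcomplex of $(\mathfrak{X}(E),\mathrm{ad}_Q)$ and then invoke Proposition~\ref{prop:sub-complex-exact}. First I would verify that $\mathfrak{L}(E)$ is indeed stable under $\mathrm{ad}_Q$. By item~2 of Example~\ref{longi:examples}, the homological vector field $Q$ is itself longitudinal for $\mathfrak{F}=\rho(\Gamma(E_{-1}))$. Combined with Remark~\ref{longitudinal-stable}, which states that longitudinal vector fields form a graded Lie subalgebra of $\mathfrak{X}(E)$, this yields $[Q,\mathfrak{L}(E)]\subseteq\mathfrak{L}(E)$, so that $\bigl(\mathfrak{L}(E),\mathrm{ad}_Q\bigr)$ is a genuine subcomplex of $(\mathfrak{X}(E),\mathrm{ad}_Q)$.

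Next, the crucial input is the acyclicity of this subcomplex: by Theorem~\ref{thm:longitudinal}, any longitudinal $\mathrm{ad}_Q$-cocycle is $\mathrm{ad}_Q$ of a vertical (hence longitudinal) vector field. Thus $H^\bullet(\mathfrak{L}(E),\mathrm{ad}_Q)=0$. This is the main conceptual step; importantly, one needs $(E,Q)$ to be a \emph{universal} $Q$-manifold of $\mathfrak{F}$, since Theorem~\ref{thm:longitudinal} relies on the exactness of the rows of the bicomplex \eqref{longitudinal:complex}, which is precisely where universality enters.

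Finally, consider the short exact sequence of complexes
\begin{equation*}
0\longrightarrow \bigl(\mathfrak{L}(E),\mathrm{ad}_Q\bigr)\longrightarrow \bigl(\mathfrak{X}(E),\mathrm{ad}_Q\bigr)\longrightarrow \Bigl(\tfrac{\mathfrak{X}(E)}{\mathfrak{L}(E)},\overline{\mathrm{ad}}_Q\Bigr)\longrightarrow 0.
\end{equation*}
The associated long exact sequence in cohomology, together with the vanishing of $H^\bullet(\mathfrak{L}(E),\mathrm{ad}_Q)$, collapses to the claimed isomorphism. Equivalently, this is a direct application of Proposition~\ref{prop:sub-complex-exact}. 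The step I expect to require the most care in a fully written proof is checking the well-definedness of $\overline{\mathrm{ad}}_Q$ on the quotient (which follows from stability of $\mathfrak{L}(E)$ under $\mathrm{ad}_Q$) and confirming that the isomorphism produced by the long exact sequence is natural; however, no genuine obstacle arises since the heavy lifting has already been done in Theorem~\ref{thm:longitudinal}.
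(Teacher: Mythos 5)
Your proof is correct and follows exactly the route the paper intends: the corollary is stated there as a direct consequence of Theorem \ref{thm:longitudinal} (acyclicity of the longitudinal subcomplex) and Proposition \ref{prop:sub-complex-exact}, with stability of $\mathfrak{L}(E)$ under $\mathrm{ad}_Q$ supplied by Example \ref{longi:examples} and Remark \ref{longitudinal-stable}, just as you argue. Nothing is missing.
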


Since all vertical vector fields are longitudinal, there is an isomorphism of $\mathcal{O}$-modules \begin{equation}
    \label{eq:sym_long2}
\frac{\mathfrak{X}(E)}{\mathfrak{L}(E)}\simeq \Gamma(S^\bullet(E^*))\otimes_\mathcal{O}\left(\frac{\mathfrak{X}(M)}{\mathfrak{F}}\right).
\end{equation}
Let us describe the differential map $\overline{\mathrm{ad}}_Q$ using this isomorphism: let  $(\mathcal{U}, x_1, \dots,x_n) $ be local coordinates on $M$ and take local trivialisations $\xi^{1},\xi^2,\ldots$ of graded sections in $\Gamma(E^*)$, also let $\xi_1,\xi_2,\ldots$ the corresponding dual sections in $\Gamma(E)$. Notice that locally, any  $\overline{P}\in\frac{\mathfrak{X}(E)}{\mathfrak{L}(E)}$ is represented by a vector field of the form $$\sum_{j,\,i_1,\ldots,i_k}f^j_{i_1,\ldots,i_k}(x) \xi^{i_1}\odot\cdots\odot\xi^{i_k}\otimes \frac{\partial}{\partial x_j},\quad f^j_{i_1,\ldots,i_k}\in C^\infty(\mathcal{U})$$
since the vector fields in $\tfrac{\partial}{\partial \theta^i}$ are vertical vector fields. That is to say, $\overline{P}$ can be represented by an element of the form $P=\Theta \otimes X$ with $\Theta\in \Gamma_\mathcal{U}(S^\bullet(E^*))$ and  $X\in \mathfrak{X}(\mathcal{U})$, using the decomposition \eqref{eq:sym_long2}. Since
\begin{align*}
    [Q,P]&=Q[\Theta]X +(-1)^{|\Theta|}\Theta\cdot[Q,X]\\&=Q[\Theta]X+(-1)^{|\Theta|}\Theta\odot\sum_{k,\, |\xi^k|=1}\xi^k\left[\sum_jQ^j_k(x)\frac{\partial}{\partial x_j},X\right] +\mathrm{vertical\; vector\; fields}\\&=Q[\Theta]X+(-1)^{|\Theta|}\sum_{k,\, |\xi^k|=1}\Theta\odot\xi^k\left[\rho(\xi_k),X\right]+\mathrm{vertical\; vector\; fields}
\end{align*}
we have \begin{equation}\label{eq:longitudinal-diff}
    \overline{\mathrm{ad}}_Q(\overline{P})=Q[\Theta]\otimes\overline{X} +(-1)^{|\Theta|}\sum_{k,\, |\xi^k|=1}\Theta\odot\xi^k\otimes\overline{\left[\rho(\xi_k),X\right]},
\end{equation} where $\overline{X}$ stands for the class of a vector field $X\in \mathfrak{X}(M)$ in $\frac{\mathfrak{X}(M)}{\mathfrak{F}}$. Therefore, locally $$\overline{\mathrm{ad}}_Q=\overline{Q\otimes \text{id}}+\sum_{k,\, |\xi^k|=1}\mathrm{id}\odot\xi^k\otimes\nabla^\mathrm{Bott}_{\xi_k}$$where $\nabla^\mathrm{Bott}\colon \Gamma(E_{-1})\times \frac{\mathfrak{X}(M)}{\mathfrak{F}} \longrightarrow \frac{\mathfrak{X}(M)}{\mathfrak{F}}$, is the Bott connection associated to $\mathfrak{F}$, i.e.
$\nabla^\mathrm{Bott}_e\overline{X}:=\overline{[\rho(e), X]}$.

\begin{remark}Assume that $M=\mathbb{R}^d$ and $\mathfrak{F}$ is a singular foliation that admits two leaves: $\{0\}$ and $\mathbb{R}^d\setminus {\{0\}}$. 
Then$$\frac{\mathfrak{X}(E)}{\mathfrak{L}(E)}\simeq \Gamma(S^\bullet(E^*))|_0\otimes_{\mathbb R}\left(\mathbb{R}^d\oplus\frac{\mathcal{I}_0\mathfrak{X}(M)}{\mathfrak{F}}\right).$$
The differential may be complicated to describe.
\end{remark}


\begin{example}
Let $\mathfrak F=\mathcal{I}_0\mathfrak X(\mathbb{R}^d)$ and $(E,Q)$ one of its universal Lie algebroid such that $E_{-1}=\simeq \mathfrak{gl}(\mathbb{R}^d)$ is the transformation Lie algebroid for the action of $\mathfrak{gl}(\mathbb{R}^d)$ on $\mathbb R^d $. For this singular foliation, $\frac{\mathfrak{X}(\mathbb{R}^d)}{\mathfrak{F}}\simeq \mathbb{R}^d.$ Thus, we obtain $$\frac{\mathfrak{X}(E)}{\mathfrak{L}(E)}\simeq \Gamma(S^\bullet(E^*))|_0\otimes_{C^\infty (\mathbb{R}^d)}\mathbb{R}^d.$$
and the Bott connection is simply the Chevalley-Eilenberg differential for the action of  $\mathfrak{gl}(\mathbb{R}^d)$ on $\mathbb R^d $.
In particular, it is easy to see that $$H^1(\mathfrak{X}(E))=H^1 \left( \frac{\mathfrak{X}(E)}{\mathfrak{L}(E)}\right) = H^1_{CE}(\mathfrak{gl}(\mathbb{R}^d), \mathbb R^d) =0 $$
In other words, the universal Lie $\infty$-algebroid is, in that case, rigid, i.e. any formal deformation $Q+\epsilon Q_1 + \epsilon^2 Q_2 + \cdots $ of its derivation $Q$ is formally trivial \cite{Lavau-L-G}.
\end{example}

Here is a second example where the universal Lie $\infty$-algebroid is rigid.  

\begin{example}
Let $\mathfrak{F}$ be the singular foliation given by the action of Lie algebra $\mathfrak{g}=\mathfrak{sl}(2,\mathbb{R})$ on $\mathbb{R}^2$ through the vector fields: $$\underline{e}=x\frac{\partial}{\partial y},\qquad\underline{f}=y\frac{\partial}{\partial x},\qquad \underline{h}=x\frac{\partial}{\partial x}-y\frac{\partial}{\partial y},$$
where $e,f,h$ are the standard basis elements of $\mathfrak{sl}(2,\mathbb{R})$. The singular foliation $\mathfrak{F}$ admits a universal Lie $\infty$-algebroid $(E,Q)$ built on a geometric resolution (\cite{LLS}, Example 3.31) of length $2$ $$0{\longrightarrow}E_{-2}\stackrel{\dd}{\longrightarrow}E_{-1}\stackrel{\rho}{\longrightarrow} TM,$$
where $E_{-1}\simeq \mathfrak{sl}(2,\mathbb{R})\times \mathbb{R}^2$ is a trivial vector bundle of rank 3, and $E_{-2}$ a  trivial vector bundle of rank 1. Here, the bracket between two constant sections of $E_{-1}$ is defined as being
their bracket in $\mathfrak{sl}(2,\mathbb{R})$ 
and all other brackets between generators is trivial. Also, $Q$ takes the form$$Q=xe^*\frac{\partial}{\partial y}+yf^*\frac{\partial}{\partial x}+h^*(x\frac{\partial}{\partial x}-y\frac{\partial}{\partial y})+e^*\odot f^*\frac{\partial}{\partial h^*}-2e^*\odot h^*\frac{\partial}{\partial e^*}+ 2f^*\odot h^*\frac{\partial}{\partial f^*}+(x^2 f^* + y^2 e^* -xy h^*)  \frac{\partial}{\partial \mu^*}
$$ where $\mu$ is the generator of $E_{-2}$, and $(e^*, f^*, h^*, \mu^*)$ is the dual basis of $((e, f, h, \mu))$.

\noindent
In this case, one has 
$$\frac{\mathfrak{X}(\mathbb{R}^2)}{\mathfrak{F}}\simeq \mathbb{R}^2\oplus \mathbb{R}$$ is generated by the classes of $$
\left\langle \frac{\partial}{\partial x}, \frac{\partial}{\partial y}, x\frac{\partial}{\partial x}+ y\frac{\partial}{\partial y}\right\rangle.
$$
Hence, as a graded vector space:
$$\frac{\mathfrak{X}(E)}{\mathfrak{L}(E)}\simeq \left(\wedge^\bullet\mathfrak{sl}_2^*\oplus \mathbb{R}[2]\right)\otimes_\mathcal{\mathbb{R}}(\mathbb{R}^2\oplus \mathbb{R}).$$
The differential induced by $Q $ is easily checked to be the Chevalley-Eilenberg differential for the natural $\mathfrak{sl}(2,\mathbb{R})$-action on these modules. In particular,  $$H^1(\mathfrak{X}(E))=H^1 \left( \frac{\mathfrak{X}(E)}{\mathfrak{L}(E)}\right) =
H^1\left(\mathfrak{sl}(2,\mathbb{R}), \frac{\mathfrak{X}(\mathbb{R}^2)}{\mathfrak{F}}\right)=0,$$ since  the Lie algebra $\mathfrak{sl}(2,\mathbb{R})$ is semisimple. This implies that the universal Lie $\infty$-algebroid $(E,Q)$ is rigid in the sense of \cite{Lavau-L-G}.
\end{example}

\vspace{2cm}
\begin{tcolorbox}[colback=gray!5!white,colframe=gray!80!black,title=Conclusion:]
We recalled the duality "Lie $\infty$-algebroid $\longleftrightarrow$ $Q$-manifolds of finite rank in all degree" so that the universal Lie $\infty$-algebroid becomes a universal $Q$-manifold. We recover \cite{LLS} the notion of universal $Q$-manifold $(E,Q)$ of a singular foliation. We prove that $\mathrm{ad}_Q$ has\footnote{Which squares to zero on vector fields on $E$.} no longitudinal cohomology.
\end{tcolorbox}

\chapter{Isotropy Lie algebras of a singular foliation}\label{chap:isotropy}
In this chapter, we look at an extension of the Androulidakis and Skandalis isotropy Lie algebra \cite{AZ} of a singular foliation  at a point.\\

Let us recall some definitions.

\begin{definition}
 Let $\mathfrak{F}$ be a singular foliation on a manifold $M$. Let $\mathcal I_m = \left\lbrace f \in
C^\infty (M )\mid  f (m) = 0\right\rbrace$.
 \begin{enumerate}
 \item The \emph{tangent space of $\mathfrak F$} at a point $m\in M$ is the vector space  $T_m\mathfrak{F}:=\{X|_m\mid X\in \mathfrak{F}\}$ whose elements are evaluations of the vector fields of $\mathfrak F$ at $m$. In other words, $T_m\mathfrak{F}$ is the image 
of the evaluation map $\mathrm{ev}_m\colon \mathfrak{F}\rightarrow T_mM,\; X\mapsto X|_m$. 
     \item
   The set $\left\{m\in M\,\middle|\, \frac{\mathfrak F}{\mathcal I_m\mathfrak F}\longrightarrow T_m\mathfrak{F} \;\text{is bijective}\right\}$ is open and dense in $M$ \cite{AndroulidakisIakovos}. It is the set of \emph{regular} points of $(M, \mathfrak F)$ and is denoted  by $M_{reg}$.
 \end{enumerate} 
 \end{definition}

For any point $m\in M$, the $\mathcal O$-module $\mathfrak F(m):= \left\lbrace X \in\mathfrak F \mid  X(m) =0\right\rbrace=\ker(\mathrm{ev}_m)$ is a Lie subalgebra of $\mathfrak{F}$. The evaluation map goes to quotient and induces the following exact sequence,
$$\xymatrix{0\ar[r]&\dfrac{\mathfrak{F}(m)}{\mathcal I_m\mathfrak F}\ar[r]&\dfrac{\mathfrak F}{\mathcal{I}_m\mathfrak{F}}\ar[r]&T_m\mathfrak{F}\ar[r]&0 .}$$
Since $\mathcal I_m\mathfrak F\subseteq \mathfrak F(m)$ is a Lie ideal,

\begin{definition}\label{def:isotropy}
     the quotient space $\mathfrak{g}_m = \dfrac{\mathfrak{F}(m)}{\mathcal I_m\mathfrak F}$ is a Lie algebra, which is called the \emph{isotropy Lie algebra of $\mathfrak{F}$ at $m$}. 
\end{definition}

\begin{remark}
The isotropy Lie algebras detect singular and regular points of $(M, \mathfrak F)$. It measures how far $\mathfrak F$ is of being regular in neighborhood of a point. This can be stated as follows (see Lemma 1.1 of \cite{AZ}). 
\end{remark}
\begin{proposition}\label{prop:isotropy-regualar}
For any point $m \in L$ of a leaf $L\subset M$, $\mathfrak g_m = \{0\}$ if and only if,  $L$ is a regular leaf. Also, the  $\mathfrak g_m$'s are all isomorphic as Lie algebras  while  $m\in L$.
\end{proposition}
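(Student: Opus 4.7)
The plan is to first reduce both statements to a single fact: the self-preservation property of $\mathfrak{F}$ (item 2 of Example \ref{ex:singfoliation}), namely that the local flow of any $X\in\mathfrak{F}$ preserves $\mathfrak{F}$. Before using it, I would note the elementary observation that the evaluation map $\mathrm{ev}_m\colon \mathfrak{F}/\mathcal{I}_m\mathfrak{F}\longrightarrow T_m\mathfrak{F}$ is, by construction, surjective with kernel exactly $\mathfrak{F}(m)/\mathcal{I}_m\mathfrak{F}=\mathfrak{g}_m$. Therefore a point $m\in M$ belongs to $M_{reg}$ if and only if $\mathfrak{g}_m=0$. This already reduces the first assertion to showing that the vanishing of $\mathfrak{g}_m$ at a single point $m\in L$ propagates to every point of $L$; and conversely, a leaf all of whose points lie in $M_{reg}$ is, by the very definition given before Definition \ref{def:isotropy}, a regular leaf.

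Next, I would establish the invariance of $\mathfrak{g}_m$ along a leaf. Given $m_0,m_1\in L$, these points are, by Hermann's theorem and the definition of leaves, joined by a finite composition of local flows $\phi_t^{X_1},\dots,\phi_t^{X_k}$ with $X_i\in\mathfrak{F}$; it suffices to treat a single $\phi=\phi_t^X$ with $\phi(m_0)=m_1$. Self-preservation gives that $\phi_\ast$ is a Lie algebra isomorphism between the germs of $\mathfrak{F}$ at $m_0$ and $m_1$. I then verify that $\phi_\ast$ maps $\mathfrak{F}(m_0)$ onto $\mathfrak{F}(m_1)$ and $\mathcal{I}_{m_0}\mathfrak{F}$ onto $\mathcal{I}_{m_1}\mathfrak{F}$: for any $f\in\mathcal{I}_{m_0}$ and $Z\in\mathfrak{F}$,
\begin{equation*}
\phi_\ast(fZ)=(f\circ\phi^{-1})\,\phi_\ast Z,
\end{equation*}
and $(f\circ\phi^{-1})(m_1)=f(m_0)=0$; similarly $\phi_\ast Y(m_1)=(\mathrm{d}\phi)_{m_0}(Y(m_0))=0$ when $Y\in\mathfrak{F}(m_0)$. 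Passing to the quotients yields the desired Lie algebra isomorphism $\phi_\ast\colon \mathfrak{g}_{m_0}\xrightarrow{\sim}\mathfrak{g}_{m_1}$.

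Having this, the two parts of the proposition follow immediately. For the first part, if $\mathfrak{g}_m=\{0\}$ for one $m\in L$, the isomorphisms above give $\mathfrak{g}_{m'}=0$ for every $m'\in L$, hence $L\subset M_{reg}$, i.e.\ $L$ is regular; the converse direction is obvious from the characterization of $M_{reg}$ recalled in the first paragraph. The second part is exactly the content of the isomorphism constructed in the previous paragraph.

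The main obstacle, and where I would be careful, is making the pushforward argument rigorous: $\mathfrak{F}$ is only locally finitely generated (not a module of sections of a vector bundle), so $\phi_\ast$ a priori acts on germs of vector fields rather than on $\mathfrak{F}$ globally. One must therefore either work entirely with germs $\mathfrak{F}_m$ at the two points and observe that both $\mathcal{I}_m\mathfrak{F}$ and $\mathfrak{F}(m)$ depend only on the germ of $\mathfrak{F}$ at $m$, or use a partition of unity/compact support refinement of the generators to realize the pushforward globally on a tubular neighborhood of the trajectory; either route works but needs to be spelled out to be bulletproof.
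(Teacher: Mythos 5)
Your argument is correct and is essentially the standard one: the paper itself gives no proof here but defers to Lemma 1.1 of the cited Androulidakis–Skandalis/Zambon reference, whose argument is exactly what you reproduce (identify $\mathfrak g_m$ with $\ker(\mathrm{ev}_m)$ so that $\mathfrak g_m=0$ iff $m\in M_{reg}$, then transport $\mathfrak F(m)$ and $\mathcal I_m\mathfrak F$ along the leaf via the self-preserving flows of elements of $\mathfrak F$). Your closing caveat about working with germs rather than global pushforwards is the right technical point to flag, and handling it either way you suggest is fine.
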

For $\mathcal{A}$ a Lie-Rinehart algebra over $\mathcal{O}$, the same construction can be done for any maximal ideal $\mathfrak{m}$. Let $$\mathcal{A}|_\mathfrak{m}:=\{a\in \mathcal{A}\mid \rho(a)[\mathcal{O}]\subseteq \mathfrak{m}\}.$$The quotient $\frac{\mathcal{A}|_\mathfrak m}{\mathfrak m \mathcal{A}}$ is a Lie algebra over $\mathcal{O}/\mathfrak m$. For $\mathcal{O}=C^\infty(M)$ and $\mathfrak{m}=\mathcal{I}_m$ for a point $m\in M$, the following sequence
\begin{equation}
  \xymatrix{0\ar[r]&\frac{\mathcal{A}|_{\mathcal I_m}}{\mathcal I_m\mathcal{A}}\ar@{^{(}->}[r]&\frac{\mathcal{A}}{\mathcal I_m\mathcal{A}}\ar@{->>}[r]&T_m\rho(\mathcal{A})\ar[r]&0}  
\end{equation}
is exact, where $T_m\rho(\mathcal{A})\subset \mathrm{Hom}(\mathfrak{m}/\mathfrak{m}^2,\mathcal{O}/\mathfrak{m})$ is the image of $\mathcal{A}\rightarrow\mathrm{Hom}(\mathfrak{m}/\mathfrak{m}^2,\mathcal{O}/\mathfrak{m}),\, a\mapsto \overline{\rho(a)}$. Proposition \ref{prop:isotropy-regualar} is however subtle to extend to this setting.

\begin{proposition}
For a finitely generated Lie-Rinehart $\mathcal{A}$ over $C^\infty(M)$, there exists a Lie algebroid $(\mathcal{A},\rho_\mathcal{A}, \lb_A)$ such that $\mathcal{A}\simeq \Gamma(A)$ for some vector bundle $A\rightarrow M$ if and only if $\frac{\mathcal{A}}{\mathcal I_m\mathcal A}$, as a vector space, has constant  rank at all points. In that case $\frac{\mathcal{A}(m)}{\mathcal I_m\mathcal A}$ is $\ker(\rho_\mathcal{A}|_m)$ for all $m\in M$. In particular, it is the case in a neighborhood of every point where the dimension of $\frac{\mathcal{A}}{\mathcal I_m\mathcal{A}}$ is minimal.\\

\end{proposition}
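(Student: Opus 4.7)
The plan splits naturally into four parts, matching the four clauses of the statement. I assume throughout the exact sequence already established in the excerpt and the basic fact that for a vector bundle $A\to M$ one has $\Gamma(A)/\mathcal{I}_m\Gamma(A)\simeq A_m$ via the evaluation map (a standard partition-of-unity argument).

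\emph{Forward direction.} If $\mathcal{A}\simeq\Gamma(A)$ for a vector bundle $A\to M$, then $\mathcal{A}/\mathcal{I}_m\mathcal{A}\simeq A_m$ for every $m$, so its dimension is constant, equal to the rank of $A$. This is the easy half.

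\emph{Converse.} The plan is to reduce to Serre–Swan by showing that the constant-rank hypothesis forces $\mathcal{A}$ to be locally free of finite rank as a $C^{\infty}(M)$-module. Fix $m\in M$ and choose $a_{1},\ldots,a_{r}\in\mathcal{A}$ whose classes form a basis of $\mathcal{A}/\mathcal{I}_m\mathcal{A}$. Since $\mathcal{A}$ is finitely generated, a Nakayama-type argument in the stalk $C^{\infty}_{M,m}$ (or directly with smooth bump functions) shows that $a_{1},\ldots,a_{r}$ generate $\mathcal{A}$ as a $C^{\infty}(U)$-module on some open neighborhood $U\ni m$. The constant-rank hypothesis then rules out nontrivial relations: any relation $\sum f_{i}a_{i}=0$ would force $f_{i}(m')=0$ for all $m'\in U$, because otherwise the image classes would span a space of dimension $<r$ at $m'$. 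Hence $a_{1},\ldots,a_{r}$ are a free basis on $U$, and $\mathcal{A}$ is a locally free sheaf of rank $r$. By Serre–Swan $\mathcal{A}\simeq\Gamma(A)$ for a rank-$r$ vector bundle $A\to M$, and transporting the Lie-Rinehart bracket and anchor yields a Lie algebroid structure. I expect this step to be the main technical obstacle, since the Nakayama argument in the smooth (non-local-ring) setting requires a careful passage from stalks to a genuine open neighborhood.

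\emph{Identification of $\mathcal{A}(m)/\mathcal{I}_m\mathcal{A}$ with $\ker(\rho_\mathcal{A}|_m)$.} Once $\mathcal{A}=\Gamma(A)$, the quotient $\mathcal{A}/\mathcal{I}_m\mathcal{A}$ is $A_m$ and the map to $T_m\rho(\mathcal{A})$ is the pointwise anchor $\rho_\mathcal{A}|_m\colon A_m\to T_mM$. Plugging this into the exact sequence of the excerpt identifies $\mathcal{A}|_{\mathcal{I}_m}/\mathcal{I}_m\mathcal{A}$ with $\ker(\rho_\mathcal{A}|_m)$.

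\emph{Local constancy near a minimum.} The function $\varphi\colon m\mapsto\dim_{\mathbb{R}}\mathcal{A}/\mathcal{I}_m\mathcal{A}$ is upper semi-continuous: if generators $a_{1},\ldots,a_{k}$ span at $m$, bump functions show that their classes still span at every $m'$ in a neighborhood, so $\varphi(m')\leq\varphi(m)$ locally. At a point of minimum $\varphi$ is therefore locally constant, and by the converse direction $\mathcal{A}$ is the module of sections of a vector bundle on that neighborhood.
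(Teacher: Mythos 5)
Your proposal is correct and follows essentially the same route as the paper: Nakayama's lemma to lift a basis of $\mathcal{A}/\mathcal{I}_m\mathcal{A}$ to generators on a neighborhood, the constant-rank hypothesis to exclude nontrivial relations and conclude local freeness, and Serre–Swan to produce the vector bundle. Your explicit treatment of the "no relations" step and of the upper semi-continuity of $m\mapsto\dim\mathcal{A}/\mathcal{I}_m\mathcal{A}$ near a minimum merely spells out what the paper leaves implicit.
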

\begin{proof}
This is a consequence of Nakayama Lemma \ref{Nakayama}. The germ $\mathcal{O}_m$ of functions on a neighborhood of $\mathcal{U}\subseteq M$ of $m$ is a local ring \cite{REQUEJO} with maximal ideal still denoted by $\mathcal{I}_m$. The tensor product $\mathcal{O}_m\otimes_\mathcal O \mathcal{A}$ is a module over $\mathcal{O}_m$. Let $r=\dim(\frac{\mathcal{A}}{\mathcal{I}_m\mathcal{A}})$. By Nakayama Lemma \ref{Nakayama} any basis $(\overline{e}_1,\ldots, \overline{e}_r)$ of $\frac{\mathcal{A}}{\mathcal{I}_m\mathcal{A}}$ lifts to minimal generators on $\mathcal{O}_m\otimes_\mathcal O \mathcal{A}$ in a neighborhood of $m$. Since $r=\dim(\frac{\mathcal{A}}{\mathcal {I}_{m'}\mathcal{A}})$ for all points $m'\in \mathcal{U}$, again Nakayama Lemma \ref{Nakayama} implies $({e}_1,\ldots, {e}_r)$ are minimal generators on a neighborhood of $m'$. Any representative $({e}_1,\ldots, {e}_r)$ of $(\overline{e}_1,\ldots, \overline{e}_r)$ are local sections of $\mathcal{A}$ in a neighborhood $\mathcal{U}$ of $m$ that span $\mathcal{A}$ on $\mathcal{U}$ so that $\mathcal{A}|_\mathcal{U}$ is a free module. This applies that $\mathcal{A}$ is a $C^\infty(\mathcal{U})$-module, which is projective. By Serre-Swan theorem, $\mathcal{A}=\Gamma(A)$ for some vector bundle $A$.
\end{proof}

\section{Specialization of a Lie $\infty$-algebroid at a point}\label{sec:evaluation-oid}

\begin{itemize}
    \item Let $(M,\E,Q)=\left(E_\bullet,\ell_\bullet, \rho\right)$ be a Lie $\infty$-algebroid  over a manifold $M$ with anchor $\rho$. For every point $m\in M$, 
the $k$-ary brackets restrict to the graded vector space $$ev(E,m):=\left(\bigoplus_{i\geq 2} {E_{-i}}_{|_m}\right)\oplus \ker (\rho_{m})$$
and equipped the latter with a Lie $\infty$-algebra structure that we denote by $Istropy_m(\E,Q)$. For every $k\geq 1$, the restriction goes as follows:
\begin{equation}
    \{x_1,\ldots,x_k\}_k:=\ell_k(s_1,\ldots,s_k)_{|_m}
\end{equation}
for all $x_1,\ldots,x_k\in ev(E,m) $ and $s_1,\ldots,s_k\in \Gamma(E)$ sections of $E$ such that ${s_i}(m)=x_i$ with $i=1,\ldots,k$. These brackets are well-defined. It is clear that for $k\neq 2$, since $\ell_k$ is linear over functions. But it is not immediate that the $2$-ary bracket is well-defined as well. Let us check that.

On one hand, the new brackets $\{\cdots\}_k$ have values in $ev(E,m)$ for degree reason, except may be for the $2$-ary bracket when applied to elements of degree $-1$ (i.e. elements of the kernel of $\rho_{m}$) but in that case it is in the kernel of $\rho_{m}$ since \begin{align*}
    \rho_{m}(\{x_1,x_2\}_2)&=    \rho_{m}(\ell_2(s_1,s_2)_{|_m})\\&=\rho(\ell_2(s_1,s_2))_{|_m}\\&=[\rho(s_1),\rho(s_2)]_{|_m}=0
\end{align*}
In the last line we have used the fact that the Lie bracket of two  vector fields that vanish at $m$ is a vector field that vanishes again at $m$.\\

On the other hand, the $2$-ary bracket $\{\cdot\,,\cdot\,\}_2$ is also well-defined when applied to elements of degree less or equal to $-2$, we need to verify when we take the bracket with at least an element of degree $-1$. Let  $(e^i_1,\ldots e^i_{\mathrm{rk}(E_{-i})})$ be a local trivialization of $E_{-i}$ on a neighborhood $\mathcal U$ of the point $m\in M$. For $x_1\in \ker(\rho_{m})$ and $x_2\in {E_{-i}}_{|_m}$ write $$\displaystyle{x_1=\sum_{k=1}^{{\mathrm{rk}(E_{-1})}}}\lambda_ke^{1}_k(m),\quad\displaystyle{x_2=\sum_{k=1}^{{\mathrm{rk}(E_{-i})}}}\mu_ke^{i}_k(m)$$ for some scalars $(\lambda_i)$ in $\mathbb{K}$. The scalars $(\lambda_k)$, $(\mu_k)$ extend to functions $(f_k)$,  $(g_k)$ on $\mathcal{U}$. Therefore, we have $$\{x_1,x_2\}_2=\ell_2(s_1,s_2)_{|_m}$$ with 
$$\displaystyle{s_1=\sum_{k=1}^{{\mathrm{rk}(E_{-1})}}}f_ke^{1}_k,\quad\displaystyle{s_2=\sum_{k=1}^{{\mathrm{rk}(E_{-i})}}}g_ke^{i}_k.$$ If $\widetilde{s}_2$ is another extension of $x_2$, then $(s_2-\widetilde{s}_2)(m)=0$ and this is equivalent to $(g_k-\widetilde{g}_k)(m)=0$ for  $k=1,\ldots, \mathrm{rk}(E_{-i})$. It follows that
\begin{align*}
    \ell_2(s_1,s_2-\widetilde{s}_2)_{|_m}&=\sum_{k=1}^{\mathrm{rk}(E_{-i})}\ell_2\left(s_1,(f_k-\widetilde{g}_k)e^{i}_k\right)_{|_m}\\&=\sum_{k=1}^{\mathrm{rk}(E_{-i})}\cancel{(f_k-\widetilde{g}_k)(m)}\ell_2\left(s_1,e^{i}_k\right)_{|_m}+\cancel{\rho(s_1)_{|_m}[f_k-\widetilde{g}_k]}e^{i}_k\\&=0.
\end{align*}

Hence, $$ev(E,m): \cdots \stackrel{\{\cdot\}_1=\ell_1|_m}{\longrightarrow} {E_{-3}}|_m\stackrel{\{\cdot\}_1=\ell_1|_m}{\longrightarrow}{E_{-2}}|_m\stackrel{\{\cdot\}_1=\ell_1|_m}{\longrightarrow}\ker(\rho_m)$$ comes equipped with a Lie $\infty$-algebra whose brackets are $\left(\{\cdots\}_k\right)_{k\geq 1}$.
\item Any Lie $\infty$-morphism of algebroids $\Phi\colon(M,\E',Q')\rightarrow (M,\E,Q)$ induces a  Lie $\infty$-algebra morphism $\Phi_{|_m}\colon S^\bullet(V'_{|m})\rightarrow S^\bullet(V_{|m})$ since it is $\mathcal{O}$-linear. 

\item 
We define the graded vector space \begin{equation}\label{eq:cohomology}
    \bigoplus_{i\geq 1}H^{-i}(E_{\bullet},m)
\end{equation}as the cohomology group of the complex $$\xymatrix{\cdots\ar[r]^{{\ell_1}_{|_m}}&{E_{-3}}_{|_m}\ar[r]^{{\ell_1}_{|_m}}&{E_{-2}}_{|_m}\ar[r]^{{\ell_1}_{|_m}}&\ker(\rho_{m})\ar[r]&0.}$$

One can check that when $(M,\E,Q)$ is universal for a singular foliation $\mathfrak F$, the graded space \eqref{eq:cohomology} does not depend on the underlying geometric resolution of $\mathfrak{F}$ and is denoted $H(\mathfrak{F},m)$.\\

This construction can be extended to any Lie $\infty$-algebroid over $\mathcal{O}$ for any maximal ideal $\mathcal{I}$. Let $(\E,\ell_\bullet,\rho)$ be a Lie $\infty$-alegrboid over $\mathcal{O}$. Define
\begin{equation}
     {\mathcal{E}_{-i}}|_\mathcal{I}= \left\{ \begin{array}{ll}\left\{e\in \E_{-1}\mid \rho(e)[\mathcal{O}]\subseteq \mathcal{I}\right\}& \text{for}\; i=1\\&\\\frac{\E_{-i}}{\mathcal{I}\E_{-i}}& \text{for}\; i\geq 1
      \end{array}\right.
\end{equation}
The construction is purely formal. Also, the cohomology of the complex
$$\xymatrix{\cdots\ar[r]^{{\ell_1|_\mathcal{I}}}&{\mathcal{E}_{-3}}|_\mathcal{I}\ar[r]^{\ell_1|_\mathcal{I}}&{\mathcal{E}_{-2}}|_\mathcal{I}\ar[r]^{\ell_1|_\mathcal{I}}&{\mathcal{E}_{-1}}|_\mathcal{I}}$$
denoted by $H^\bullet(\E,\mathcal{I})$ does not depend on the choice of a free resolution of the Lie-Rinehart algebra $\mathcal{A}$, we denote it by $H^\bullet(\E,\mathcal{I})$.
\end{itemize}

\subsubsection{The isotropy Lie $\infty$-algebra of a singular foliation}
We assume that $(M,\E,Q)$ is universal for $\mathfrak F$. Note that the Lie $\infty$-algberoid obtained by specialising at some point $m\in M$ does not induce directly a Lie $\infty$-algberoid on the graded space $H(\mathfrak F,m)$ but the $2$-ary bracket $\{\cdot\,,\cdot\,\}_2$ goes to quotient directly on elements of degree $-1$ i.e. to $H^{-1}(\mathfrak{F},m)$, because $$\{\dd^{(2)}_{m} (x_1),x_2\}_2=\dd^{(2)}_{m}(\{x_1,x_2\}_2)$$ for all $x_1\in {E_{-2}}_{|_m}$ and $x_2\in \ker(\rho_m)$. That endows $H^{-1}(\mathfrak{F},m)$ with Lie algebra structure. 
\begin{proposition}\label{prop:isotropy}
The Androulidakis and Skandalis isotropy Lie algebra $\mathfrak{g}_m=\frac{\mathfrak{F}(m)}{I_m\mathfrak{F}}$ of the singular foliation $\mathfrak{F}$ at a point $m\in M$, is isomorphic to  $H^{-1}(\mathfrak F,m)$ equipped with the induced Lie algebra structure.
\end{proposition}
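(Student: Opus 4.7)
The plan is to construct an explicit Lie algebra isomorphism $\Phi \colon H^{-1}(\mathfrak{F},m) \xrightarrow{\sim} \mathfrak{g}_m$ by lifting elements of $\ker(\rho_m) \subset E_{-1}|_m$ to local sections of $E_{-1}$ and applying the anchor map. Throughout, I use that $(E_\bullet, \ell_1, \rho)$ is a geometric resolution of $\mathfrak{F}$, so in particular $\rho \colon \Gamma(E_{-1}) \twoheadrightarrow \mathfrak{F}$ and $\ker \rho = \ell_1(\Gamma(E_{-2}))$, and that for any vector bundle $V \to M$ the sections of $V$ vanishing at $m$ are exactly $\mathcal{I}_m \Gamma(V)$.

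First I would define $\Phi$ at the chain level. Given $x \in \ker(\rho_m)$, choose any section $s \in \Gamma(E_{-1})$ with $s(m) = x$. Then $\rho(s) \in \mathfrak{F}$ and $\rho(s)(m) = \rho_m(x) = 0$, so $\rho(s) \in \mathfrak{F}(m)$, and I set $\Phi(x) := [\rho(s)] \in \mathfrak{g}_m$. Well-definedness: if $s'$ is another lift, then $s-s' \in \mathcal{I}_m \Gamma(E_{-1})$, so $\rho(s-s') \in \mathcal{I}_m \mathfrak{F}$, and the class is unchanged. Next I would check that $\Phi$ descends to the quotient $H^{-1}(\mathfrak{F},m) = \ker(\rho_m) / \ell_1(E_{-2}|_m)$: if $x = \ell_1|_m(y)$, lift $y$ to $t \in \Gamma(E_{-2})$, so $s := \ell_1(t)$ is a lift of $x$, and $\Phi(x) = [\rho \circ \ell_1(t)] = 0$ by the complex property $\rho \circ \ell_1 = 0$.

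Next I would verify bijectivity. For surjectivity, given $[X] \in \mathfrak{g}_m$ with $X \in \mathfrak{F}(m)$, use surjectivity of $\rho$ to write $X = \rho(s)$ for some $s \in \Gamma(E_{-1})$; then $\rho_m(s(m)) = X(m) = 0$, so $s(m) \in \ker(\rho_m)$ and $\Phi([s(m)]) = [X]$. For injectivity, suppose $\Phi([x]) = 0$, i.e.\ $\rho(s) = \sum f_i X_i$ with $f_i \in \mathcal{I}_m$ and $X_i \in \mathfrak{F}$. Lift each $X_i$ to $s_i \in \Gamma(E_{-1})$. Then $s - \sum f_i s_i$ lies in $\ker \rho = \ell_1(\Gamma(E_{-2}))$, say equals $\ell_1(t)$; evaluating at $m$ gives $x = \ell_1|_m(t(m))$, so $[x] = 0$ in $H^{-1}(\mathfrak{F},m)$.

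Finally, compatibility with brackets. The Lie bracket on $H^{-1}(\mathfrak{F},m)$ was defined via the $2$-ary bracket of the Lie $\infty$-algebra structure on $ev(E,m)$, so for $x_1, x_2 \in \ker(\rho_m)$ with lifts $s_1, s_2 \in \Gamma(E_{-1})$, one has $\{x_1,x_2\}_2 = \ell_2(s_1,s_2)|_m$. The section $\ell_2(s_1,s_2)$ is a lift of this element, hence
\[
\Phi([\{x_1,x_2\}_2]) = [\rho \circ \ell_2(s_1,s_2)] = [[\rho(s_1),\rho(s_2)]] = [\Phi([x_1]),\Phi([x_2])]_{\mathfrak{g}_m},
\]
using that $\rho$ is a morphism of brackets on $E_{-1}$ (axiom (iv) of Definition \ref{def:Linfty}) and that the bracket on $\mathfrak{g}_m$ is induced from the commutator of vector fields. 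No step is really an obstacle here; the only subtle point is to keep track of which lifts one uses and to invoke the defining exactness of the geometric resolution at each step.
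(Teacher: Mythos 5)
Your proof is correct and follows essentially the same route as the paper: both construct the map $\ker(\rho_m)\to\mathfrak{g}_m$, $u\mapsto[\rho(\widetilde u)]$ via local lifts, verify well-definedness and surjectivity, and identify the kernel with $\mathrm{im}(\ell_1|_m)$ using exactness of the geometric resolution. Your explicit verification of bracket compatibility via $\rho\circ\ell_2 = [\rho(\cdot),\rho(\cdot)]$ fills in a step the paper only asserts as "not hard to see."
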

\begin{proof}
For $m\in M$, we construct a Lie algebra  isomorphism $\zeta\colon\frac{\ker(\rho_m)}{\mathrm{im}(\dd^{(2)}_{m})}\rightarrow \mathfrak{g}_m$ as follows: For an element $u\in\ker(\rho_m)$, let $\widetilde{u}$ be an extension of $u$ to a local section on $E_{-1}$. By construction, one has $\rho(\widetilde{u})\in\mathfrak{F}(m)$. Let $\widetilde{\rho}_m$ be the surjective linear map defined by
\begin{align*}
    \widetilde{\rho}_m\colon\ker(\rho_m)\longrightarrow \mathfrak{g}_m,\, u\longmapsto [\rho(\widetilde{u})].
\end{align*}
 Since any other extension $\widetilde u$ for $u$ differs from the first one by a section in $\mathcal{I}_m\Gamma(E_{-1})$, the map $\widetilde{\rho}_m$ is well-defined. Surjectivity is due to the fact that every vector field of $\mathfrak{F}$ vanishing at $m\in M$ is of the form $\rho(e)$ with  $e$ a (local) section of $E_{-1}$ whose value at $m$ belongs to $\ker(\rho_m)$. In addition, it is not hard to see that $\widetilde{\rho}_m$ is a morphism of brackets.\\
 
 \noindent
 It remains to show that $\ker(\widetilde{\rho}_m)=\mathrm{im}(\dd^{(2)}_{m})$: let $u \in \ker(\widetilde{\rho}_m)\subset \ker({\rho}_m)$ and $\widetilde{u}$ be a local section of $E_{-1}$ that extends $u$. By definition of $u$, the class of  $\rho(\widetilde{u})$ is zero in $\mathfrak{g}_m$, therefore, there exists some functions $f_i\in \mathcal{I}_m$ and $X_i\in\mathfrak{F}, i=1,\ldots, k$,  local generators such that $\displaystyle{\rho(\widetilde{u})=\sum_{i=1}^kf_iX_i}$. This implies that,  $$\rho( \widetilde{u}-\sum_{i=1}^kf_ie_i)= 0.$$
where for $i = 1,\ldots,k$, $e_i$ is a (local) section of $E_{-1}$ whose image through $\rho$ is $X_i$. Since $(E_\bullet, \dd^{\bullet}, \rho)$
is a geometric resolution, there exists a (local) section  $q \in \Gamma(E_{-2})$ such that
\begin{equation}\label{eq:geo-iso}
    \widetilde{u}=\sum_{i=1}^kf_ie_i+ \dd^{(2)}q
\end{equation}
 By evaluating Equation \eqref{eq:geo-iso} at $m$, we find out that $u\in\mathrm{im}(\dd^{(2)}_{m})$. Conversely, for $v\in {E_{-2}}_{|_m}$, choose a (local) section $q$ of $E_{-2}$ through $v$. Therefore, $\dd^{(2)}q\in \ker\rho$, is a (local) extension of $\dd_m^{(2)}v\in \mathrm{im}(\dd^{(2)}_{m})$. The image of $\dd_m^{(2)}v$ through $\widetilde{\rho}_m$ is obviously zero. This proves that $\ker(\widetilde{\rho}_m)=\mathrm{im}(\dd^{(2)}_{m})$.
\end{proof}

However, if the underlying complex of $(M,\E,Q)$  is minimal at $m$ then, for  every $i\geq 2$, the
vector space $H^{-i}(\mathfrak{F}, m)$ is canonically isomorphic to ${E_{-i}}_{|_m}$. Also, $H^{-1}(\mathfrak{F}, m)$ is canonically isomorphic to $\ker (\rho_{m})$. 

\begin{definition}
Let $(M,\E,Q)$ be a universal Lie $\infty$-algebroid of a singular foliation $\mathfrak{F}$ whose underlying complex is minimal at $m$. Then, $H(\mathfrak{F},m)$ carries a Lie $\infty$-algebra structure given by $Istropy_m(\E,Q)$
called the \emph{isotropy Lie $\infty$-algebra of the singular foliation $\mathfrak F$ at $m$}.
\end{definition}

One can show that this definition is independent of any choices made in the construction.
\begin{remark}
By Proposition \ref{prop:isotropy}, the isotropy Lie algebra of the singular foliation $\mathfrak{F}$ at a point $m\in M$  in the sense of Androulidakis and Skandalis, is isomorphic to the degree minus one component $H^{-1}(\mathfrak F, m)\simeq\ker(\rho_{m})$ of the isotropy Lie $\infty$-algebra of $\mathfrak{F}$ at $m$.
\end{remark}
\begin{remark}
For an arbitrary Lie-Rinehart algebra $\mathcal A$ and a maximal ideal $\mathcal{I}$, it is still true that $(H^\bullet(\mathcal{A},\mathcal{I}), \dd=0)$ is quasi-isomorphic to $(\E|_\mathcal{I}, \ell_1|_\mathcal{I})$ and it is still true that the bracket of the universal Lie $\infty$-algrbroid over $\mathcal{O}$ constructed in Section \ref{sec:main} restricts to a $\infty$-algebra  structure on $(\E|_\mathcal{I},\ell_1|_\mathcal{I})$. By Homotopy transfer, this Lie $\infty$-algebra can be transferred to $(H^\bullet(\mathcal{A}, \mathcal{I}), \dd=0)$. It is not clear whether Proposition \ref{prop:isotropy} holds true or not.  
\end{remark}

\begin{lemma}
\label{lemma:M_reg}
Let $(M,\mathfrak{F})$ be a singular foliation. Let $(E,Q)$ be a universal  Lie $\infty$-algebroid of $\mathfrak{F}$ and let \begin{equation}
(E, \dd, \rho):\quad\cdots \stackrel{\dd^{(4)}} \longrightarrow E_{-3} \stackrel{\dd^{(3)}}{\longrightarrow} E_{-2} \stackrel{\dd^{(2)}}{\longrightarrow} E_{-1} \stackrel{\rho}{\longrightarrow} TM \end{equation}
be its linear part.
\begin{enumerate}
 \item For all $m\in M$, we have $\frac{\ker(\rho_m)}{\mathrm{im}(\dd^{(2)}_{m})}\simeq \mathfrak{g}_m$ as Lie algebras.
    \item The subset of regular points of $\mathfrak F$ in $M$ satisfies \begin{equation*}
    M_{\mathrm{reg}}=
    \{m\in M \mid \mathrm{rk}(\dd^{(2)}_m)=\dim(\ker\rho_m)\}.
    \end{equation*}
    
     It is open and
dense in $M$. 

\item The restriction of the foliation $\mathfrak{F}$ to $M_{\mathrm{reg}}$ is
the set of sections of a subbundle of TM, i.e., is a regular foliation.

\item If $(E,\dd,\rho)$ is of finite length, then the regular leaves have the same dimension.
\end{enumerate}

\end{lemma}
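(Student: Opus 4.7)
The four items will be proved by leveraging Proposition \ref{prop:isotropy}, together with standard semicontinuity of ranks and an Euler characteristic computation at regular points.

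\emph{Items 1 and 2.} Item 1 is exactly the content of Proposition \ref{prop:isotropy}: the construction there produces a canonical Lie algebra isomorphism $\zeta\colon\ker(\rho_m)/\mathrm{im}(\dd^{(2)}_m)\to\mathfrak{g}_m$. For item 2, by definition $m\in M_{\mathrm{reg}}$ iff the evaluation $\mathfrak{F}/\mathcal I_m\mathfrak F\to T_m\mathfrak F$ is an isomorphism, i.e. iff its kernel $\mathfrak g_m$ vanishes. By item 1 this is equivalent to $\mathrm{rk}(\dd^{(2)}_m)=\dim\ker(\rho_m)$. Openness follows from semicontinuity of rank: the rank of a vector bundle morphism is lower semicontinuous in the base point, so $m\mapsto\mathrm{rk}(\dd^{(2)}_m)$ is lower semicontinuous, while $m\mapsto\dim\ker(\rho_m)=\mathrm{rk}(E_{-1})-\mathrm{rk}(\rho_m)$ is upper semicontinuous. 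Their difference $m\mapsto\dim\mathfrak g_m\geq 0$ is upper semicontinuous, hence its zero locus $M_{\mathrm{reg}}$ is open. Density is the classical Androulidakis--Skandalis/Hermann result already used in the excerpt; alternatively, on the open dense set where both $\mathrm{rk}(\rho_m)$ and $\mathrm{rk}(\dd^{(2)}_m)$ attain their locally maximal values, an elementary inspection of a local basis lifted from $\mathfrak F/\mathcal I_m\mathfrak F$ shows $\mathfrak g_m=0$.

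\emph{Item 3.} On $M_{\mathrm{reg}}$, the morphism $\rho\colon E_{-1}\to TM$ has locally constant rank, so its image $D:=\rho(E_{-1})\subset TM_{\mathrm{reg}}$ is a genuine sub-bundle. The inclusion $\mathfrak F|_{M_{\mathrm{reg}}}\subseteq\Gamma(D)$ is tautological. For the reverse inclusion, near any $m\in M_{\mathrm{reg}}$ pick local sections $e_1,\dots,e_k$ of $E_{-1}$ such that $\rho(e_1),\dots,\rho(e_k)$ form a local frame of $D$; any section of $D$ may then be written as an $\mathcal O$-combination of the $\rho(e_i)$'s, hence lies in $\mathfrak F$. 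This proves $\mathfrak F|_{M_{\mathrm{reg}}}=\Gamma(D)$, a regular foliation.

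\emph{Item 4.} This is the main point. At each $m\in M_{\mathrm{reg}}$, the module $\mathfrak F$ is locally projective around $m$ (being locally the sections of the sub-bundle $D$ of item 3), so the projective resolution $(E_{-\bullet},\dd)\twoheadrightarrow\mathfrak F$ tensored with the residue field at $m$ computes $\mathrm{Tor}^{C^\infty(M)}_\bullet(\mathfrak F,\mathbb R_m)$, which vanishes in positive degree. Consequently the finite length complex
\begin{equation*}
0\longrightarrow E_{-n}|_m\xrightarrow{\dd^{(n)}_m}\cdots\xrightarrow{\dd^{(3)}_m}E_{-2}|_m\xrightarrow{\dd^{(2)}_m}E_{-1}|_m\xrightarrow{\rho_m}T_m\mathfrak F\longrightarrow 0
\end{equation*}
is exact. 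Taking the alternating sum of dimensions,
\begin{equation*}
\dim T_m\mathfrak F=\sum_{i=1}^{n}(-1)^{i+1}\mathrm{rk}(E_{-i}),
\end{equation*}
the right-hand side being locally constant on $M$ because each $E_{-i}$ is a vector bundle. Since for regular $m$ the leaf through $m$ has tangent space exactly $T_m\mathfrak F$, all regular leaves lying in a given connected component of $M$ share the same dimension. The main obstacle in this plan is the fibrewise exactness claim at regular points: one must justify that $\mathfrak F$ is locally projective there (via item 3) and that the universal resolution, being a projective resolution of $\mathfrak F$, is forced to become fibrewise exact at such points; the finite length hypothesis is exactly what makes the alternating sum well-defined and yields the locally constant Euler characteristic.
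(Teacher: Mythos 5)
Your proposal is correct. For item 1 you do exactly what the paper does: invoke Proposition \ref{prop:isotropy}. For items 2 and 3 the paper simply refers to Proposition 1.5 of Androulidakis--Skandalis, whereas you give a self-contained, resolution-theoretic argument: you deduce openness of $M_{\mathrm{reg}}$ from the identity $\dim\mathfrak g_m=\dim\ker(\rho_m)-\mathrm{rk}(\dd^{(2)}_m)$ together with the opposite semicontinuities of the two terms, and you build the sub-bundle $D=\rho(E_{-1})$ directly from local frames. This buys a proof that only uses the geometric resolution rather than the module-theoretic arguments of \cite{AndroulidakisIakovos}; the one step you gloss over is why $\rho$ has locally constant rank on $M_{\mathrm{reg}}$, but this follows immediately from your own item 2, since on $M_{\mathrm{reg}}$ one has $\mathrm{rk}(\rho_m)=\mathrm{rk}(E_{-1})-\mathrm{rk}(\dd^{(2)}_m)$, which is simultaneously lower and upper semicontinuous there.

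For item 4 the paper's proof is silent (it only addresses items 1--3), so your Tor-vanishing/Euler-characteristic argument is a genuine addition rather than a reproduction. It is the intended argument (it is the one underlying the claim in Section \ref{sec:blow-up-procedure} that finite-length geometric resolutions force regular leaves to have equal dimension, and it appears in \cite{LLS}): local projectivity of $\mathfrak F$ near a regular point kills the higher Tor's, the fibrewise complex becomes exact, and the alternating sum of the ranks $\mathrm{rk}(E_{-i})$ — locally constant on $M$ — computes $\dim T_m\mathfrak F=\dim L_m$. As you correctly note, the conclusion is that regular leaves in a fixed connected component of $M$ share the same dimension, which is the precise form of the statement.
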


\begin{proof}
Item 1. is proved in Proposition \ref{prop:isotropy} (see also Proposition 4.14 \cite{LLS}). For items 2. and 3. we refer the reader to Proposition 1.5 of \cite{AndroulidakisIakovos}.  
\end{proof}

\section{A blow-up procedure for a singular foliation}\label{blow-up-procedure}

We present in this section an interpretation of a blow-up procedure invented by Mohsen in \cite{MohsenOmar}.\\


 \subsection{Grassmann bundle}\label{chart:rassmanian}
For $E$ a finite dimensional vector space, we denote by 
 $\mathrm{Gr}_{-\ell}(E)$ the set of all codimension $\ell\in \mathbb N$ vector subspaces in $E$. Let us recall a few facts on $\mathrm{Gr}_{-\ell}(E)$ see \cite{Smith-Karen-E,Harris-Joe,lakshmibai2015grassmannian}, 
 our main reference is \cite{Igor}:
 \begin{enumerate}
     \item It is a metric space for the distance $ d(V,V')=\lVert P_{V}-P_{V'}\rVert,$ where  $P_V$ stands for the orthogonal projection of $E$ onto $V\subset E$ with respect to an arbitrary metric on $E$.
     \item The topology does not depend on the metric and makes  $\mathrm{Gr}_{-\ell}(E)$ a manifold.
     \item It is moreover a compact manifold and an projective variety\footnote{For the notion of projective variety and notations, we refer the reader e.g. to the book \cite{Igor}.}.
 \end{enumerate}

 \noindent
 \textbf{Affine coordinates charts:}
 One of the ways of defining the standard affine coordinates on the Grassmanian $\mathrm{Gr}_{-\ell}(E)$ is as follows (Example 1.24 of \cite{Igor}): Fix a basis $e_1,\ldots,e_{d=\dim E}$ for $E$. Any element $V\in \mathrm{Gr}_{-\ell}(E)$, i.e. a vector subspace $V\subset E$ of codimension $\ell$, can be viewed as a $d\times (d-\ell)$ matrix $\mathfrak{M}_V=(C_1,\ldots, C_{d-\ell})$ whose columns are formed by linearly independent column vectors obtained by choosing a basis for $V$. The \emph{homogeneous coordinates} of $V$ in $\mathrm{Gr}_{-\ell}(E)$ are the components of the $n\times (d-\ell)$ matrix $\mathfrak{M}_V$. Any other choice of basis for $V$ gives another maximal rank  matrix $\mathfrak{M}_V'$ and an invertible $(d-\ell)\times(d-\ell)$-matrix $P\in GL(d-\ell,\mathbb{K})$ such that $\mathfrak{M}_V=\mathfrak{M}_V'\circ P$. In particular, since  $\mathfrak{M}_V$ has full  rank, there exists a family of integers ${\displaystyle 1\leq i_{1}<\cdots <i_{d-\ell}\leq d}$, such that $\mathfrak{M}_V$ is equivalent to a matrix $\mathfrak{M}'_V$ whose submatrix made of the rows $i_{1},\cdots ,i_{d-\ell}$ is the identity matrix.

 For example, if the first $d-\ell$ rows of $\mathfrak{M}_V$ are linearly independent, then the matrix is equivalent to the matrix $$\begin{pmatrix}
 I_{d-\ell}\\ A
 \end{pmatrix}$$ where $A=(a_{ij})$ is a $\ell\times (d-\ell)$-matrix. In that case, $V$  admits a basis of the form
\begin{equation}
v_j := e_j+\sum_{k=1}^\ell a_{kj}e_k,\quad j=1,\ldots,d-\ell.
\end{equation}$V$ is completely determined by $A$.\\


 One define an atlas on $Gr_{-l}(E)$ as follows: Consider the map
\begin{align*}
    \psi_{1,\ldots,d}\colon M_{l,d-l}(\mathbb{K})&\longrightarrow M_{d,d-l}(\mathbb{K})\\A\;&\longmapsto \begin{pmatrix}
 I_{d-\ell}\\ A
 \end{pmatrix}.
\end{align*}
For a permutation $\sigma\in \mathfrak{S}_d$, we define the map $\psi_{\sigma(1),\ldots,\sigma(d)}$ that associates  every $A\in M_{l,d-l}(\mathbb{K})$ the matrix in $M_{d,d-l}(\mathbb{K})$ that permutes the lines of $\begin{pmatrix}
 I_{d-\ell}\\ A
 \end{pmatrix}$ w.r.t $\sigma$,  that is to say $$\psi_{\sigma(1),\ldots,\sigma(d)}(A):=P(\sigma)\circ\psi_{1,\ldots,d}(A)$$where $P(\sigma)$ is the permutation matrix associated to $\sigma$.\\

\noindent
\underline{\textbf{Statement}}: For every ordered integers ${\displaystyle 1\leq i_{1}<\cdots <i_{d-\ell}\leq d}$, the coordinates chart on $\mathrm{Gr}_{-\ell}(E)$ is the open set  $\mathcal U_{i_{1},\cdots ,i_{d-\ell}}\subset \mathrm{Gr}_{-\ell}(E)$ consisting of all sub-vector space of $E$ such that for every basis the submatrix which is made  of the rows $i_{1},\cdots ,i_{d-\ell}$ is invertible, and the coordinate map is $\psi_{\sigma(1),\ldots,\sigma(d)}$ with $\sigma$ is a permutation sending $1,\ldots,d-\ell$ on $i_{1},\cdots ,i_{d-\ell}$.\\


 \noindent
 \textbf{Grassmann bundle:} For $ E_{-1} \to M$ a vector bundle of rank $d$, the disjoint union:
  $$ \mathrm{Gr}_{-\ell}({E_{-1}}):= \coprod_{m\in M} \mathrm{Gr}_{-\ell}({E_{-1}}|_{m})  $$
  comes equipped with a natural manifold structure and 
 $$\Pi\colon \mathrm{Gr}_{-\ell}({E_{-1}})\longrightarrow M$$
 is a fibration. It is called \emph{the Grassmann  $(d-\ell)$-plane bundle}. To fix some notations, the fiber at $m\in M$ is  $$\Pi^{-1} ( m )=\left\{(V,m)\,\middle|\, V\in \mathrm{Gr}_{-\ell} ( E_{-1}|_m )\right\}.$$
 For every open subset $\mathcal U\subset M$ on which $E_{-1}$ is trivial, $\Pi^{-1}(\mathcal U)\simeq \mathrm{Gr}_{-\ell}(\mathbb{R}^{d})\times \mathcal{U}.$
 
\subsection{A blow-up procedure}\label{sec:blow-up-procedure}
\noindent
\textbf{Settings:}  
 In what follows, we are given a foliated manifold $(M,\mathfrak F)$ with $M$ connected. 
 We assume that a geometric resolution of finite length exists.
 Under these assumptions,
 all the regular leaves have the same dimension. Let  $M_{reg}$ the set of  the regular points of $(M,\mathfrak F)$. Denote by $\ell$ the common dimension of the regular leaves. 
 
 \begin{remark}
 For most of the present discussion, the whole geometric resolution is not needed: is it sufficient to assume that there exists vector bundles $ E_{-1}, E_{-2}$ and that all regular leaves have the same dimension.
 \end{remark} 
 
 Let $(E_\bullet, \ell_\bullet, \rho)$ be a universal Lie $\infty$-algebroid of $\mathfrak F$. Consider the underlying geometric resolution 
\begin{equation}
(E, \dd, \rho):\quad\cdots \stackrel{\ell_1=\dd^{(4)}} \longrightarrow E_{-3} \stackrel{\ell_1=\dd^{(3)}}{\longrightarrow} E_{-2} \stackrel{\ell_1=\dd^{(2)}}{\longrightarrow} E_{-1} \stackrel{\rho}{\longrightarrow} TM.\end{equation}Notice that for every point $m\in M_{reg}$, $$\dim( \dd^{(2)}_m)=\dim \ker(\rho_m)=\mathrm{rk}(E_{-1})-\ell.$$ Therefore, there exists a natural section of $\Pi$ on $ M_{\mathrm{reg}}$ defined by: \begin{equation}
    \sigma\colon  M_{\mathrm{reg}}\longrightarrow \mathrm{Gr}_{-\ell}({E_{-1}}),\,  m\longmapsto \mathrm{im}(\dd^{(2)}_m)
\end{equation}
We consider $\widetilde{M}:= \overline{\sigma(M_{\mathrm{reg}})}$ the closure of the graph of $\sigma$ in ${\mathrm{Gr}_{-\ell}(E_{-1})}$, and denote by
 $\pi$ the restriction of $\Pi$ to  $\widetilde{M}$. \\

 Recall from Section \ref{sec:evaluation-oid} that $$ev(E,m): \cdots \stackrel{}{\longrightarrow} {E_{-3}}|_m\stackrel{\dd^{(3)}_m}{\longrightarrow}{E_{-2}}|_m\stackrel{\dd^{(2)}_m}{\longrightarrow}\ker(\rho_m)$$ comes equipped with a Lie $\infty$-algebra  $\left(\{\cdots\}_k\right)_{k\geq 1}$ whose unary bracket is ${\dd^{(\bullet)}_m}$.
\begin{theorem}\label{thm:grass}
The projection map  \begin{equation}
    \pi\colon \widetilde M\subset {\mathrm{Gr}_{-\ell}(E_{-1})}\longrightarrow M , \; (V, m)\longrightarrow m
\end{equation}
fulfills the following properties

\begin{enumerate}

    \item For each point $m\in M$, the set
    \begin{equation*}
        \pi^{-1}(m)=\left\{V\subset{E_{-1}}_{|_m}\;\middle|\; \exists\, (m_n)\in M_{\mathrm{reg}}^{\mathbb{N}},\, \text{such that},\,\; \mathrm{im}(\dd^{(2)}_{m_n})\underset{n \to +\infty}{\longrightarrow} V\; as \; m_n\underset{n \to +\infty}{\longrightarrow}m\right\}
    \end{equation*}
    is non-empty.
    
\item For all $m\in M$, and $V\in \pi^{-1}(m)$ one has, 

\begin{enumerate}
    \item $\mathrm{im}(\dd^{(2)}_{m})\subseteq V \subseteq \ker(\rho_m).$
    \item The $2$-ary bracket $\{\cdot\,,\cdot\,\}_2$ on $\ker \rho_m$ restricts to $V$ and the image of  $V$ in  $\frac{\ker(\rho_m)}{\mathrm{im}(\dd^{(2)}_{m})}\simeq \mathfrak{g}_m$, is a Lie subalgebra of codimension $\ell-\dim(L_m)$, where $L_m$ is the leaf through $m$.
\end{enumerate}
    \item For all $m\in M_{\mathrm{reg}}$,  $\pi^{-1}(m)=\ker(\rho_m)=\mathrm{im}(\dd^{(2)}_m)$ is reduced to a point in $\mathrm{Gr}_{-\ell}(\mathbb{R}^{\mathrm{rk}({E_{-1})}})$.
    \item $\widetilde M$ does not depend on the choice of a geometric resolution.
    \item The projection $\pi\colon \widetilde M\rightarrow M$ is proper and onto.
\end{enumerate}
\end{theorem}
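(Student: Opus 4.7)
\medskip

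\noindent\textbf{Strategy.} The whole statement is an exercise in passing to the limit inside a Grassmannian, using the continuity of $\dd^{(2)}$ and $\rho$ and the fact that $M_{\mathrm{reg}}$ is open and dense (Lemma~\ref{lemma:M_reg}). The fiber $\pi^{-1}(m)$ is described as the set of Grassmannian-limits of $\sigma(m_n)=\mathrm{im}(\dd^{(2)}_{m_n})$ along sequences $m_n\to m$ with $m_n\in M_{\mathrm{reg}}$, and all five items will follow from analysing such limits. I would prove items 1, 2, 3, 5 first, and only then tackle item 4, which is the main obstacle.

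\medskip

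\noindent\textbf{Items 1 and 2a.} Since $M_{\mathrm{reg}}$ is dense, pick a sequence $m_n\in M_{\mathrm{reg}}$ with $m_n\to m$, trivialize $E_{-1}$ on a neighborhood of $m$, and view each $\sigma(m_n)$ as a codimension-$\ell$ subspace of the fixed fiber model. Compactness of $\mathrm{Gr}_{-\ell}(\mathbb{R}^{\mathrm{rk}(E_{-1})})$ allows extracting a convergent subsequence, whose limit $V$ lies in $\widetilde{M}=\overline{\sigma(M_{\mathrm{reg}})}$. This shows non-emptiness of $\pi^{-1}(m)$ (hence surjectivity of $\pi$) and identifies $\pi^{-1}(m)$ with the announced set of limits. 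Given such $V$, the inclusion $V\subseteq\ker(\rho_m)$ follows by continuity from $\rho\circ\dd^{(2)}=0$; and for the other inclusion, any $v\in\mathrm{im}(\dd^{(2)}_m)$ is written $v=\dd^{(2)}_m(w)$, lift $w$ to a local section $\widetilde w$ of $E_{-2}$, observe $\dd^{(2)}_{m_n}(\widetilde w(m_n))\in\sigma(m_n)$ and tends to $v$, hence $v\in V$ (limit subspaces in a Grassmannian are closed).

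\medskip

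\noindent\textbf{Item 2b.} At a regular point $m_n$ Proposition~\ref{prop:isotropy-regualar} and Lemma~\ref{lemma:M_reg} give $\mathrm{im}(\dd^{(2)}_{m_n})=\ker(\rho_{m_n})$, and on the latter the $2$-ary bracket $\{\cdot,\cdot\}_2$ takes values in itself (its image under $\rho_{m_n}$ is the commutator $[\rho(\cdot),\rho(\cdot)]_{m_n}$, which vanishes because the arguments are in $\ker\rho_{m_n}$). Approximating $v_1,v_2\in V$ by $v_{i,n}\in \sigma(m_n)$ and invoking continuity of the $2$-ary bracket (obtained from Section~\ref{sec:evaluation-oid}), the limit $\{v_1,v_2\}_2$ lies in $V$. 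The induced bracket on $V/\mathrm{im}(\dd^{(2)}_m)\subseteq\mathfrak{g}_m$ is then a Lie subalgebra. For the codimension count, $\dim V=\mathrm{rk}(E_{-1})-\ell$ and $\dim\ker(\rho_m)=\mathrm{rk}(E_{-1})-\dim L_m$, giving codimension $\ell-\dim L_m$ in $\mathfrak{g}_m$.

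\medskip

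\noindent\textbf{Items 3 and 5.} On the open set $M_{\mathrm{reg}}$ both $\dd^{(2)}$ and $\rho$ have locally constant rank, so $\sigma$ is continuous there and its unique limit at $m\in M_{\mathrm{reg}}$ is $\sigma(m)=\mathrm{im}(\dd^{(2)}_m)=\ker(\rho_m)$. Properness of $\pi$ is immediate: $\widetilde M$ is closed in $\mathrm{Gr}_{-\ell}(E_{-1})$ by construction, and the Grassmann bundle $\Pi\colon\mathrm{Gr}_{-\ell}(E_{-1})\to M$ is proper because its fibers are compact Grassmannians; restriction to a closed subset preserves properness. Surjectivity was established in item 1.

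\medskip

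\noindent\textbf{Item 4: the main obstacle.} The definition of $\widetilde M$ uses the specific vector bundles $E_{-1}, E_{-2}$ and the map $\dd^{(2)}$, which all depend on the chosen universal Lie $\infty$-algebroid. By Corollary~\ref{cor:unique} any two such universal structures $(E,Q)$ and $(E',Q')$ are homotopy equivalent, so in particular their linear parts are chain-homotopy equivalent geometric resolutions, and pointwise evaluation induces $\mathbb{K}$-linear isomorphisms $H^{-i}(\mathfrak{F},m)\cong H'^{-i}(\mathfrak{F},m)$ for every $m$. My plan is to show that the $0$-th Taylor coefficient $\phi_0\colon E'\to E$ of a Lie $\infty$-algebroid morphism between them induces a well-defined bundle map $\mathrm{Gr}_{-\ell}(E'_{-1})\supseteq \widetilde M'\to \widetilde M\subseteq\mathrm{Gr}_{-\ell}(E_{-1})$ on the relevant loci: the key point is that on $M_{\mathrm{reg}}$, $\phi_0$ restricts to an isomorphism $\ker(\rho'_m)/\mathrm{im}(\dd'^{(2)}_m)\cong \ker(\rho_m)/\mathrm{im}(\dd^{(2)}_m)=0$, and the rank comparison forces $\phi_0$ to send $\sigma'(m)=\mathrm{im}(\dd'^{(2)}_m)$ bijectively onto $\sigma(m)=\mathrm{im}(\dd^{(2)}_m)$. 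Combined with a symmetric construction using a quasi-inverse and passage to the closure, one obtains a homeomorphism $\widetilde M'\to \widetilde M$ over $M$. Alternatively, reducing locally to a minimal resolution (which is unique up to isomorphism on a neighbourhood, by Nakayama-type arguments as in Lemma~\ref{lem:finite}) gives a more direct independence, and these local identifications can be glued using uniqueness-up-to-homotopy of the comparison morphism. This gluing, and checking that the resulting map is indeed a homeomorphism onto $\widetilde M$ and not just onto its image inside $M_{\mathrm{reg}}$, is the delicate point I expect to require the most care.
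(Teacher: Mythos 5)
Your arguments for items 1, 2, 3 and 5 are correct and essentially identical to the paper's: compactness of the Grassmannian fibre for non-emptiness, the sandwich $\mathrm{im}(\dd^{(2)}_{m})\subseteq V\subseteq\ker(\rho_m)$ obtained by extending $u$ to a local section of $E_{-2}$ on one side and taking limits of $v_n\in\ker(\rho_{m_n})$ on the other, the limit argument in a local frame for stability of $\{\cdot\,,\cdot\}_2$ together with the dimension count $\ell-\dim(L_m)$, and properness from compactness of the fibres of $\Pi$. (For item 3 the paper deduces the claim directly from 2(a), since at a regular point the sandwich collapses; your continuity argument for $\sigma$ on $M_{\mathrm{reg}}$ reaches the same conclusion.)

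The genuine problem is in item 4, and specifically in your primary mechanism. The claim that ``the rank comparison forces $\phi_0$ to send $\sigma'(m)=\mathrm{im}(\dd'^{(2)}_m)$ bijectively onto $\sigma(m)=\mathrm{im}(\dd^{(2)}_m)$'' over $M_{\mathrm{reg}}$ is false in general: two geometric resolutions need not have the same $\mathrm{rk}(E_{-1})$, so $\sigma'(m)$ and $\sigma(m)$ have dimensions $\mathrm{rk}(E'_{-1})-\ell$ and $\mathrm{rk}(E_{-1})-\ell$ respectively, and no linear bijection between them can exist. Concretely, if $E'$ is obtained from $E$ by adding an acyclic summand in degrees $-1,-2$, the comparison map is the inclusion $E_{-1}\hookrightarrow E'_{-1}$ and $\phi_0(\ker\rho_m)$ is a proper subspace of $\ker\rho'_m$. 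What is true over $M_{\mathrm{reg}}$, since $\rho\circ\phi_0=\rho'$, is the preimage relation $\sigma'(m)=(\phi_0|_m)^{-1}(\sigma(m))$; any comparison of $\widetilde M'$ with $\widetilde M$ has to be organized around this (or around your fallback of splitting off a local acyclic summand so that $V\mapsto V/F|_m$ identifies the two Grassmannian loci), and then one still has to check that the identification survives passage to the closure. To be fair, the paper itself disposes of item 4 in a single sentence (``follows from the existence of homotopy equivalence between any two geometric resolutions''), so your attempt to supply the missing mechanism is welcome — but as stated the bijectivity step would fail, and the preimage/minimal-model route is the one that can be made to work.
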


\begin{proof}
Let us prove item 1 for $m\in M$. By compactness of the Grassmanian manifold $\mathrm{Gr}_{-\ell}({E_{-1}}_{|_m})\simeq \mathrm{Gr}_{-\ell}(\mathbb{R}^{\mathrm{rk}({E_{-1})}})$, out of any sequence in $M_{\mathrm{reg}}^\mathbb{N}$ that converges to $m$, we can extract a sequence $n\mapsto \kappa\left(\mathrm{im}(\dd^{(2)}_{m_n})\right)\in \mathrm{Gr}_{-\ell}({E_{-1}}_{|_m})$ that has a limit, where $\kappa$ is a local trivialization of the vector bundle $E_{-1}$ which identifies the fibers ${E_{-1}}_{|_m}$ and  ${E_{-1}}_{|_{m_n}}$. This proves item 1.\\

Let us show item $2.(a)$: Let $V\in \pi^{-1}(m)$ and $(m_n)\in M_{\mathrm{reg}}^{\mathbb{N}}$ such that $m_n\underset{n \to +\infty}{\longrightarrow}m$\;  and  \; $\mathrm{im}(\dd^{(2)}_{m_n})\underset{n \to +\infty}{\longrightarrow} V$. Let $v\in\mathrm{im}(\dd^{(2)}_{m})$. We have $v=\dd^{(2)}_{m}u$ for some $u\in {E_{-2}}_{|_m}$. Choose a (local) section $\widetilde{u}$ of $E_{-2}$ through $u$. It implies that $\dd^{(2)}_{m_n}\widetilde{u}(m_n)\underset{n \to +\infty}{\longrightarrow}\dd^{(2)}_{m}u$, hence $\dd^{(2)}_{m}u\in V$. Thus, $\mathrm{im}(\dd^{(2)}_{m})\subseteq V$. For any element $v\in V$, there exists a sequence $v_n\in \ker(\rho_{m_n})=\mathrm{im}(\dd^{(2)}_{m_n}),\, n\in \mathbb{N}$ that converges to $v$. In particular, $\rho_{m_n}(v_n)=0$ for all $n$. Hence, by continuity, one has $v\in \ker(\rho_{m})$.\\

\noindent
To prove item $(b)$, choose a local frame $e_1,\ldots,e_r,\ldots e_{\ell+r}$ of $E_{-1}$ such that $e_1(m),\ldots,e_r(m)$ is an orthogonal basis of $V$ for an arbitrary Hermitian structure on $E_{-1}$. For $i,j\in\{1,\ldots, l+r\}$, let $(c_{ij}^k)\in \mathcal{O}(\mathcal{U})$ be a family of functions over $\mathcal{U}$ such that $$\ell_2(e_i, e_j)=\sum_{k=1}^{l+r}c_{ij}^ke_k.$$ In particular, for every $\{i,j\}\subset\{1,\ldots, l+r\}$,\; $$\displaystyle{\left\{e_i(m), e_j(m)\right\}_2=\sum_{k=1}^{l+r}c_{ij}^k(m)e_k(m)}.$$ Let $v_1,v_2\in V$ with $\displaystyle{v_1=\sum_{i=1}^r\alpha^ie_i(m)}$ and  $\displaystyle{v_2=\sum_{i=j}^r\beta^ie_i(m)}$. There exists sequences $$v_1^n=\displaystyle{\sum_{i=1}^{r+l}\alpha_n^ie_i(m)}\underset{n \to +\infty}{\longrightarrow} v_1\quad  \text{and}\quad v_2^n=\displaystyle{\sum_{i=1}^{r+l}\beta_n^ie_i(m)}\underset{n \to +\infty}{\longrightarrow} v_2$$ with $v_1^n,v_2^n\in \ker\rho_{x_n}$, for all $n\in \mathbb{N}$. In particular, the sequences $(\alpha^i_n),(\beta^i_n)\in \mathbb{K}^\mathbb{N}$ with $i\in \{1,\ldots,r+l\}$ satisfy  $\alpha^i_n\underset{n \to +\infty}{\longrightarrow} \alpha^i,\quad \beta_n^i\underset{n \to +\infty}{\longrightarrow} \beta^i$ \quad($\alpha^i=\beta^i=0$ for $i\geq r+1$). For every $n\in\mathbb{N}$ we have,
\begin{equation}
    \sum_{i,j,k=1}^{r+l}\alpha^i_n\beta^j_nc_{ij}^k(m_n)e_k(m_n)=\{v_1^n,v_2^n\}_{2}\in \ker\rho_{x_n}=\mathrm{im}(\dd^{(2)}_{m_n}).
\end{equation}
Since $$\sum_{i,j,k=1}^{r+l}\alpha^i_n\beta^j_nc_{ij}^k(m_n)e_k(m_n)\underset{n \to+\infty}{\longrightarrow}\sum_{i,j,k=1}^{r+l}\alpha^i\beta^jc_{ij}^k(m)e_k(m)=\{v_1,v_2\}_{2},$$one has, $\{v_1,v_2\}_{2}\in V$.\\

Item 3. is a consequence of $2.(a)$, since $m\in M_{\mathrm{reg}}$ if and only if $\ker(\rho_m)=\mathrm{im}(\dd^{(2)}_m)$ (by  item 1. of Lemma \ref{lemma:M_reg}). Item 4. follows from the existence of homotopy equivalence between any two geometric resolutions. Item 5. follows from the fact that the projection $\Pi$ is with compact fibers. This concludes the proof.

\end{proof}

The corollary below is a direct consequence of item 2.(b) of Theorem \ref{thm:grass}.
\begin{corollary}
There is a natural inclusion
\begin{equation}
    \widetilde M\hookrightarrow \coprod_{m\in M} Gr_{\ell-\dim (L_m)}(\mathfrak{g}_{m}).  
\end{equation}
\end{corollary}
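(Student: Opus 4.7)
The plan is to exhibit a natural map $\Phi\colon \widetilde{M}\longrightarrow \coprod_{m\in M} Gr_{\ell-\dim(L_m)}(\mathfrak{g}_m)$ by sending a point $(V,m)\in \widetilde{M}$ to the pair $(\overline{V},m)$, where $\overline{V}$ denotes the image of $V$ under the canonical projection $\ker(\rho_m)\twoheadrightarrow \ker(\rho_m)/\mathrm{im}(\dd^{(2)}_m)\simeq \mathfrak{g}_m$. The bulk of the proof consists in checking that this map is well-defined, lands in the claimed target, and is injective. Each of these follows directly from Theorem~\ref{thm:grass}, so the proof reduces to assembling its consequences.

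First, well-definedness. By item~$2.(a)$ of Theorem~\ref{thm:grass}, any $(V,m)\in\widetilde{M}$ satisfies $\mathrm{im}(\dd^{(2)}_m)\subseteq V\subseteq \ker(\rho_m)$, so the quotient $V/\mathrm{im}(\dd^{(2)}_m)$ is a well-defined subspace of $\mathfrak{g}_m$. By item~$2.(b)$, the $2$-ary bracket on $\ker\rho_m$ restricts to $V$, hence its image $\overline{V}\subseteq \mathfrak{g}_m$ is a Lie subalgebra.

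Second, the codimension count. Since $V$ has codimension $\ell$ in $E_{-1}|_m$, we compute
\begin{align*}
\dim\mathfrak{g}_m-\dim\overline{V}
&=\bigl(\dim\ker(\rho_m)-\dim\mathrm{im}(\dd^{(2)}_m)\bigr)-\bigl(\dim V-\dim\mathrm{im}(\dd^{(2)}_m)\bigr)\\
&=\dim\ker(\rho_m)-\dim V\\
&=\bigl(\mathrm{rk}(E_{-1})-\dim L_m\bigr)-\bigl(\mathrm{rk}(E_{-1})-\ell\bigr)=\ell-\dim(L_m),
\end{align*}
using that $\dim\mathrm{im}(\rho_m)=\dim L_m$. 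Hence $\overline{V}\in Gr_{\ell-\dim(L_m)}(\mathfrak{g}_m)$, consistent with item~$2.(b)$.

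Third, injectivity. If $(V_1,m)$ and $(V_2,m)$ have the same image, then $\overline{V_1}=\overline{V_2}$ in $\mathfrak{g}_m$, and since both $V_1$ and $V_2$ contain $\mathrm{im}(\dd^{(2)}_m)$ we get $V_1=V_2+\mathrm{im}(\dd^{(2)}_m)=V_2$. Thus $\Phi$ is injective on each fiber of $\pi$, and since it obviously covers the identity on $M$, it is injective overall. Naturality (independence of the chosen universal Lie $\infty$-algebroid) is inherited from item~$4$ of Theorem~\ref{thm:grass} together with the canonical nature of $\mathfrak{g}_m$. The main conceptual point — and the only step that requires more than bookkeeping — is the codimension calculation, but it too is forced once one writes down the dimensions in terms of $\mathrm{rk}(E_{-1})$, $\dim L_m$, and $\ell$; no additional obstacle arises.
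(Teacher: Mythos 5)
Your proof is correct and follows essentially the same route as the paper's: both use the sandwich $\mathrm{im}(\dd^{(2)}_m)\subseteq V\subseteq\ker(\rho_m)$ from Theorem~\ref{thm:grass} to pass to the quotient $\mathfrak{g}_m$ and observe that this assignment is injective fiberwise. Your explicit codimension computation is a welcome check, though it is already asserted in item~$2.(b)$ of the theorem, so the paper simply cites it.
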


\begin{proof}
Let $m\in M$. 
Since elements  $V\in \pi^{-1}(m)$ satisfy $\mathrm{im}(\dd^{(2)}_{m})\subseteq V \subseteq \ker(\rho_m)$, they correspond injectively to a (unique) sub-vector space of codimension $\ell-\dim L_m$ in $\mathfrak g_m$. In particular, this implies $\pi^{-1}(m)\hookrightarrow Gr_{\ell-\dim (L_m)}(\mathfrak{g}_{m})$.

\end{proof}


\begin{lemma}\label{lemma:locally-affine}

For every $m\in M$, the set $K_m=\{V\in \mathrm{Gr}_{-\ell}({E_{-1}}_{|_m})\mid V\subseteq\ker\rho_m\}\subset \mathrm{Gr}_{-\ell}({E_{-1}}_{|_m})$ is locally an affine variety.
\end{lemma}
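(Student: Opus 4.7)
The plan is to reduce to a purely linear-algebraic statement on a single fiber and then write down explicit polynomial (in fact linear) defining equations in every affine chart of the Grassmannian described in Section~\ref{chart:rassmanian}. Fix $m \in M$, set $E := E_{-1}|_m$, $d := \mathrm{rk}(E_{-1})$ and $K := \ker \rho_m \subseteq E$ with $k := \dim K$. If $k < d - \ell$ then no $(d-\ell)$-dimensional subspace fits inside $K$, so $K_m = \emptyset$ and there is nothing to prove. Otherwise $K_m$ is naturally the sub-Grassmannian of $(d-\ell)$-planes lying in the fixed $k$-plane $K$, and I need to show that this is locally cut out by polynomial equations in the ambient Grassmannian.

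The first step is to fix a point $V_0 \in K_m$ and choose an affine chart $\mathcal U_{i_1,\ldots,i_{d-\ell}}$ of $\mathrm{Gr}_{-\ell}(E)$ containing $V_0$; such a chart always exists because the $\mathcal U_{i_1,\ldots,i_{d-\ell}}$ cover the Grassmannian. After relabelling the basis $e_1,\ldots,e_d$ of $E$, we may assume $(i_1,\ldots,i_{d-\ell}) = (1,\ldots,d-\ell)$. Then every $V \in \mathcal U_{1,\ldots,d-\ell}$ has the distinguished basis
\[
  v_j \;=\; e_j + \sum_{r=1}^{\ell} a_{rj}\, e_{d-\ell+r}, \qquad j = 1,\ldots,d-\ell,
\]
and the coordinate map identifies $\mathcal U_{1,\ldots,d-\ell}$ with the affine space $M_{\ell,d-\ell}(\mathbb K)$ of matrices $A = (a_{rj})$.

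The second step is to translate the condition $V \subseteq K$ into equations in $A$. Choose any linear surjection $L \colon E \twoheadrightarrow \mathbb K^{d-k}$ with $\ker L = K$; concretely one may obtain $L$ by composing $\rho_m \colon E \to T_m M$ with a linear projection of its image onto a chosen complement of some subspace so that the kernel is exactly $K$. By definition, $V \subseteq K$ if and only if $L(v_j) = 0$ for all $j = 1,\ldots,d-\ell$. Each such equation reads
\[
  L(e_j) + \sum_{r=1}^{\ell} a_{rj}\, L(e_{d-\ell+r}) \;=\; 0 \qquad \text{in } \mathbb K^{d-k},
\]
which is a system of $(d-\ell)(d-k)$ affine-linear equations in the entries $a_{rj}$. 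Hence $K_m \cap \mathcal U_{1,\ldots,d-\ell}$ is the zero locus of finitely many polynomials on the affine space $M_{\ell,d-\ell}(\mathbb K)$, i.e.\ an affine (in fact linear) algebraic subvariety.

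Finally, since every point of $K_m$ lies in some chart $\mathcal U_{i_1,\ldots,i_{d-\ell}}$ and the same argument applies verbatim after the corresponding permutation of rows, $K_m$ is locally an affine variety. There is no real obstacle here: the substance of the lemma is simply the classical fact that the sub-Grassmannian $\mathrm{Gr}_{d-\ell}(K) \hookrightarrow \mathrm{Gr}_{d-\ell}(E)$ is a closed algebraic subvariety, and the affine charts of Section~\ref{chart:rassmanian} make the defining equations explicitly linear.
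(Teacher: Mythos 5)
Your proof is correct and follows essentially the same route as the paper: fix an affine chart of the Grassmannian, write the distinguished basis $v_j$ of $V$ in the homogeneous coordinates $a_{rj}$, and observe that $V\subseteq\ker\rho_m$ becomes a system of affine-linear equations in those coordinates (the paper applies $\rho_m$ directly via its component functions $f_i^k(m)$ rather than an auxiliary surjection $L$ with kernel $\ker\rho_m$, but these are interchangeable). Your treatment of the empty case and the cleaner indexing of the chart basis are minor improvements, not a different argument.
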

\begin{proof}
Let $e_1,\ldots,e_d$ be a local frame of $E_{-1}$ on an open subset $\mathcal{U}\subset M$. 
One has, \begin{equation}\label{eq:poly-generators}
    \displaystyle{\rho({e}_i)=\sum_{k=1}^{\dim M}f_i^k\frac{\partial}{\partial x_k}\in \mathfrak{F}},
\end{equation} for some local functions $f_i^k~\in~ C^\infty(\mathcal{U})$. Without any lost of generality, consider for example the standard coordinates chart $\mathcal U_{1,\ldots, d-\ell}$ for the grassmanian $\mathrm{Gr}_{-\ell}({E_{-1}}_{|_m})$. Let $V\in \mathcal U_{1,\ldots, d-\ell}$\; and let  $(a_{ij})$ be the homogeneous coordinates of $V$. Define the sections

\begin{equation}
\widetilde{v}_j:={e}_j+\sum_{k=1}^\ell a_{kj}{e}_k,\quad j=1,\ldots,d-\ell
\end{equation} By construction, the ${\widetilde{v}_j }$'s, evaluated at $m$, form a basis for $V$. We have, \begin{align*}
    \rho(\widetilde{v}_j)=\sum_{k=1}^{\dim M}\left(f_j^k+\sum_{s=1}^\ell a_{sj}f_s^k\right)\frac{\partial}{\partial x_k}.
\end{align*}$V\subseteq\ker\rho_m$ if and only if 
    
\begin{equation}\label{eq:affino1}
    \begin{array}{cc}
  \displaystyle{f_j^k(m)+\sum_{s=1}^\ell a_{sj}f_s^k(m)=0}, &j=1,\ldots, d-\ell\\
     &\qquad k=1,\ldots, \dim M. 
\end{array}
\end{equation}
Therefore, $K_m$ is defined by the polynomial Equations \eqref{eq:affino1}.    
\end{proof}
\begin{theorem}\label{thm:locally-affine}
If $\mathfrak F$ is a polynomial singular foliation on $M\in \left\{\mathbb{C}^N,\mathbb{R}^N\right\}$, then $\widetilde M$ is a locally affine variety.
\end{theorem}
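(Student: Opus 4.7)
The plan is to work in local affine coordinate charts of the Grassmann bundle and exhibit $\widetilde{M}$ as the (topological) closure of a Zariski locally closed algebraic set. The key technical inputs are already in place: Lemma~\ref{lemma:locally-affine} gives the polynomial equations that cut out the fiberwise constraint $V\subseteq\ker(\rho_m)$, and Lemma~\ref{lemma:M_reg} characterises $M_{\mathrm{reg}}$ as the non-vanishing locus of a minor.

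First I would choose a polynomial representative of the universal Lie $\infty$-algebroid of $\mathfrak{F}$. Since $M\in\{\mathbb{C}^N,\mathbb{R}^N\}$, the ring $\mathcal{O}=\mathbb{K}[x_1,\ldots,x_N]$ is Noetherian of finite global dimension, and $\mathfrak{F}$ is a finitely generated module over it; Hilbert's syzygy theorem yields a free resolution by polynomial modules of finite rank in each degree. The recursive construction of Theorem~\ref{thm:existence} never leaves the polynomial category, so we may assume all $\dd^{(k)}$ and $\rho$ are given by polynomial matrices in a global trivialisation of each $E_{-i}$ as a trivial bundle over $M$.

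Next, I would use the affine chart description of Section~\ref{chart:rassmanian}. Trivialising $E_{-1}\cong M\times\mathbb{K}^d$, we have $\mathrm{Gr}_{-\ell}(E_{-1})\cong M\times\mathrm{Gr}_{-\ell}(\mathbb{K}^d)$, covered by affine charts $M\times\mathcal{U}_{i_1,\ldots,i_{d-\ell}}$ with coordinates $(m,A)$, $A\in M_{\ell,d-\ell}(\mathbb{K})$. The fibered subspace $V(m,A)\subset E_{-1}|_m$ that such a pair encodes depends polynomially on $m$ and affinely on $A$. The condition $V(m,A)\subseteq\ker(\rho_m)$ is given by the polynomial equations \eqref{eq:affino1}, so it defines an affine subvariety $Z\subset M\times\mathcal{U}_{i_1,\ldots,i_{d-\ell}}$.

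The main step is to identify $\widetilde{M}\cap(M\times\mathcal{U}_{i_1,\ldots,i_{d-\ell}})$ inside $Z$. On $M_{\mathrm{reg}}$, Lemma~\ref{lemma:M_reg} gives $\dim\ker(\rho_m)=\ell=\mathrm{codim}\,V$, so the inclusion $V(m,A)\subseteq\ker(\rho_m)$ forces equality $V(m,A)=\ker(\rho_m)=\mathrm{im}(\dd^{(2)}_m)=\sigma(m)$. Hence $\sigma(M_{\mathrm{reg}})$ equals $Z\cap(M_{\mathrm{reg}}\times\mathcal{U}_{i_1,\ldots,i_{d-\ell}})$. By Lemma~\ref{lemma:M_reg} again, $M_{\mathrm{reg}}$ is the complement of the vanishing locus of a suitable minor of the polynomial matrix $\dd^{(2)}$, hence Zariski open in $M$. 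Consequently, $\sigma(M_{\mathrm{reg}})$ is Zariski locally closed in the chart, and its Zariski closure $\overline{\sigma(M_{\mathrm{reg}})}^{\mathrm{Zar}}$ is an affine subvariety of $M\times\mathcal{U}_{i_1,\ldots,i_{d-\ell}}$.

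Finally, one has to compare the topological closure $\widetilde{M}$ used in the statement with the Zariski closure just constructed. In the complex case this is the classical fact that the Euclidean closure of a constructible subset of a complex algebraic variety equals its Zariski closure (a consequence of Chevalley's theorem and the irreducibility of the components of $\overline{\sigma(M_{\mathrm{reg}})}^{\mathrm{Zar}}$). In the real case one argues by complexification: $\sigma$ extends to a rational map $\sigma_{\mathbb{C}}\colon\mathbb{C}^N\dashrightarrow\mathrm{Gr}_{-\ell}(\mathbb{C}^d)$ defined on a Zariski open set containing the complexification of $M_{\mathrm{reg}}$, and one takes real points of its Zariski closure; density of $M_{\mathrm{reg}}$ in $M$ ensures the real and complex closures agree on $\mathbb{R}^N$. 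The hardest step will be this comparison in the real-analytic setting, since the Euclidean closure of a semi-algebraic set is not algebraic in general; here one really needs the special structure of $\sigma$ as the image of a polynomial map into a projective variety to conclude.
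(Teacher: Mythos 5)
Your proof follows essentially the same route as the paper's: choose a polynomial geometric resolution, work in the affine charts $\mathcal{U}_{i_1,\ldots,i_{d-\ell}}$ of Section \ref{chart:rassmanian}, note that limits of the spaces $\mathrm{im}(\dd^{(2)}_{x_n})$ still satisfy the polynomial equations \eqref{eq:affino1} of Lemma \ref{lemma:locally-affine}, and use the uniqueness of the fiber over $M_{\mathrm{reg}}$ to identify $\widetilde M$ with the algebraic set these equations cut out. The only difference is that you make explicit the Euclidean-versus-Zariski closure comparison (in particular over $\mathbb{R}$), a point the paper treats more briskly by directly declaring $\widetilde M$ to be the union of the irreducible components of \eqref{eq:affino1} that project onto $M$.
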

\begin{proof}
We can choose a polynomial geometric resolution. Denote by $x=(x_1,\ldots, x_N)$ the local coordinates on $\mathcal{W}\subseteq M$. We are using notations of Lemma \ref{lemma:locally-affine}. In Equation \eqref{eq:poly-generators}, the functions $f^k_i$ are polynomial in $(x_1,\ldots, x_N)$. By item 1, Theorem \ref{thm:grass}, every element $V$ of  $\widetilde M=\bigcup_{x\in M}\pi^{-1}(x)$ is obtained as a limit $$ \mathrm{im}(\dd^{(2)}_{x_n})\underset{n \to +\infty}{\longrightarrow} V\in \pi^{-1}(x)$$   with $(x_n)\in M_{\mathrm{reg}}^{\mathbb{N}}$,\,\text{such that}, $x_n\underset{n \to +\infty}{\longrightarrow}x$. Fix $n\in \mathbb{N}_0$. In the notations of Lemma \ref{lemma:locally-affine}, take $V=\mathrm{im}(\dd^{(2)}_{x_n})$. Thus, in the coordinate chart $\mathcal{U}_{1,\ldots,d-\ell}$ (see Section \ref{chart:rassmanian}), the coordinates $(a_{ij}^n)$ of $\mathrm{im}(\dd^{(2)}_{x_n})$ satisfy the polynomial equations

\begin{equation}\label{eq:affino12}
    \begin{array}{cc}
  \displaystyle{f_j^k(x_n)+\sum_{s=1}^\ell a_{sj}^nf_s^k(x_n)=0}, &j=1,\ldots, d-\ell\\
     &\qquad k=1,\ldots, N. 
\end{array}
\end{equation}
One has, $$f_j^k(x_n)+\sum_{s=1}^\ell a_{sj}^nf_s^k(x_n)=0 \underset{n \to +\infty}{\longrightarrow} f_j^k(x)+\sum_{s=1}^\ell a_{sj}f_s^k(x)=0,$$

where for all $s,j$,\,$a_{sj}^n\underset{n \to +\infty}{\longrightarrow} a_{sj}$. Therefore, $\widetilde M$ is given in local coordinates $\mathcal{W}\times \mathcal{U}_{1,\ldots,d-\ell}$ by elements that satisfy \begin{enumerate}
    \item Equation \eqref{eq:affino1} 
    \item and that are limit of elements of \eqref{eq:affino1} in nearby regular points, elements which are unique. Hence, it is, on this affine variety, the irreducible components of \eqref{eq:affino1} that projects onto $M$.
    
\end{enumerate}  
\end{proof}
\hspace{1cm}

\noindent
\textbf{The pull-back of $\mathfrak F$ to $\widetilde{M}$}: Let $X\in \mathfrak{F}$. There exists a section $\upsilon$ of the vector bundle $p\colon E_{-1}\rightarrow M$ such that $\rho(\upsilon)=X$. Consider the linear vector field  $\widetilde X\in \mathfrak{X}(E_{-1})$ defined as follows
\begin{align*}
  \widetilde X[p^*f]&:=p^*(\rho(\upsilon)[f]),\;\forall\; f\in C^\infty(M),\\&\\ \langle\widetilde X[\alpha],e\rangle&:=\rho(\upsilon)[\langle \alpha ,e\rangle]-\langle \alpha, \ell_{2}(\upsilon,e)\rangle,\;\forall\; \alpha\in \Gamma(E^*),\;e\in\Gamma(E_{-1}).\end{align*}
Notice that $\widetilde X$ depends on the choice of the almost Lie algebroid bracket $\ell_2$. The following items hold (see \cite{LLL}, Lemma 2.1.19. p. 61).
 \begin{enumerate}
     \item The flow  $\phi^{\widetilde X}_t\colon E_{-1}\rightarrow {E_{-1}}$ of $\widetilde X$ when it is defined, is a vector bundle isomorphism over $\phi^X_t$.
     
     \item The diagram below commutes,
     \begin{equation}
         \xymatrix{E_{-1}\ar[r]^{\phi^{\widetilde X}}\ar[d]_\rho&E_{-1}\ar[d]^\rho\\TM\ar[r]_{d\phi^X_t}&TM}
     \end{equation}
     where $\phi^X_t$ is the flow of $X$.
 \end{enumerate}
In particular, $\phi^{\widetilde X}_t$ preserve the grassmanian $\mathrm{Gr}_{-\ell}(E_{-1})$.

\begin{proposition}\label{prop:lift-on-blowup}
For every $X\in \mathfrak F$, choose $\upsilon\in \Gamma(E_{-1})$ such that $\rho(\upsilon)=X$. The vector field  $\widetilde X$ induces a vector field $\widetilde{\widetilde X}$ on $\mathrm{Gr}_{-\ell}(E_{-1})$ such that
\begin{enumerate}
    \item $\widetilde{\widetilde X}$ is tangent to $\widetilde M$.
    \item $\widetilde{\widetilde X}$ projects onto $X$.
\end{enumerate}
\end{proposition}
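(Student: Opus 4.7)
The proof proceeds in three steps, following the natural strategy of constructing $\widetilde{\widetilde{X}}$ via a flow on the Grassmann bundle, then checking the two required properties separately.

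\textbf{Step 1: Construction of $\widetilde{\widetilde{X}}$ via the flow.} Since $\widetilde{X}$ is a linear vector field on $E_{-1}$, its (local) flow $\phi_t^{\widetilde{X}}$ is by the stated properties a vector bundle isomorphism over $\phi_t^X$ whenever it is defined. Vector bundle isomorphisms send $(d-\ell)$-dimensional subspaces of fibers to $(d-\ell)$-dimensional subspaces of fibers; this gives a local one-parameter family of diffeomorphisms $\Phi_t$ of $\mathrm{Gr}_{-\ell}(E_{-1})$ defined by $\Phi_t(V,m) := (\phi_t^{\widetilde{X}}(V), \phi_t^X(m))$. The infinitesimal generator of $\Phi_t$ is the vector field $\widetilde{\widetilde{X}}$.

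\textbf{Step 2: Projection onto $X$.} The identity $\Pi \circ \Phi_t = \phi_t^X \circ \Pi$ holds by construction, because both $\phi_t^{\widetilde{X}}$ and $\Phi_t$ cover $\phi_t^X$. Differentiating at $t=0$ yields $(d\Pi)(\widetilde{\widetilde{X}}) = X \circ \Pi$, i.e., $\widetilde{\widetilde{X}}$ projects onto $X$ through $\pi = \Pi|_{\widetilde{M}}$.

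\textbf{Step 3: Tangency to $\widetilde{M}$.} It suffices to show that $\Phi_t$ preserves $\widetilde{M}$; tangency of $\widetilde{\widetilde{X}}$ then follows by differentiation. First I will argue that $\phi_t^{\widetilde{X}}$ preserves $\ker \rho$ fiberwise: the commutative diagram recalled just before the proposition gives $\rho \circ \phi_t^{\widetilde{X}} = d\phi_t^X \circ \rho$, so $\rho(v) = 0$ implies $\rho(\phi_t^{\widetilde{X}}(v)) = 0$. Next, because $X \in \mathfrak F$, its flow is a self-equivalence of the singular foliation $\mathfrak F$ (a singular foliation is preserved by the flows of its vector fields), hence it sends leaves to leaves; in particular it preserves the common dimension $\ell$ of regular leaves, so $\phi_t^X(M_{\mathrm{reg}}) \subset M_{\mathrm{reg}}$. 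For $m \in M_{\mathrm{reg}}$, both $\ker \rho_m$ and $\ker \rho_{\phi_t^X(m)}$ are of codimension $\ell$, so $\phi_t^{\widetilde{X}}$ induces a linear isomorphism between them, which means $\Phi_t(\sigma(m)) = \sigma(\phi_t^X(m))$ by Lemma~\ref{lemma:M_reg}, item 1 (where $\sigma(m) = \mathrm{im}(\dd^{(2)}_m) = \ker \rho_m$ on $M_{\mathrm{reg}}$). Thus $\Phi_t$ preserves the graph $\sigma(M_{\mathrm{reg}})$, and since $\Phi_t$ is a homeomorphism of $\mathrm{Gr}_{-\ell}(E_{-1})$, it preserves its closure $\widetilde M$.

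The main obstacle I anticipate is Step 3, specifically confirming that the orbit of $\sigma(M_{\mathrm{reg}})$ under the flow remains inside $\sigma(M_{\mathrm{reg}})$ rather than escaping to other limit points in $\widetilde M$. This is handled cleanly because $\phi_t^X$ preserves the regular stratum (a consequence of self-preservation of $\mathfrak F$), so no subtle analysis at singular points is required before taking closures. The only other point that deserves care is that $\widetilde{X}$, and hence $\widetilde{\widetilde{X}}$, depends on the choices of $\upsilon$ and of the $2$-ary bracket $\ell_2$, but the proposition only asserts existence for a fixed such choice, so this dependence is not an obstacle to the proof.
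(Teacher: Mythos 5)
Your proof is correct and follows essentially the same route as the paper: define $\widetilde{\widetilde X}$ as the infinitesimal generator of the flow induced on the Grassmann bundle by $\phi_t^{\widetilde X}$, use $\rho\circ\phi_t^{\widetilde X}=d\phi_t^X\circ\rho$ to see that $\phi_t^{\widetilde X}$ carries $\mathrm{im}(\dd^{(2)}_{m})=\ker\rho_m$ at regular points to the corresponding space at $\phi_t^X(m)$, and conclude by passing to the closure of $\sigma(M_{\mathrm{reg}})$. The only (welcome) difference is that you make explicit the fact that $\phi_t^X$ preserves $M_{\mathrm{reg}}$, which the paper's limit argument uses tacitly.
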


\begin{proof}
Since  for  every $x\in M$, $\phi^{\widetilde X}_t|_x$ is an isomorphism of ${E_{-1}}|_x$, therefore $\phi^{\widetilde X}_t|_x$ preserves $\Pi^{-1}(x)=\mathrm{Gr}_{-\ell}({E_{-1}}|_x)$. We define $\widetilde{\widetilde X}$ for $(V,x)\in \mathrm{Gr}_{-\ell}(E_{-1})$ by \begin{equation}
    \label{eq:pull-back-gr}\widetilde{\widetilde X}(V):=\frac{d}{dt}|_{t=0}\left(\phi^{\widetilde X}_t|_x\right)|_V\in T_V\mathrm{Gr}_{-\ell}({E_{-1}}|_x).
\end{equation}This shows item 1.\\

Let us show item 1, $\phi^{\widetilde X}_t$ preserves $\widetilde M$: to see this take $(V,x)\in \widetilde M$,  let $x_n\underset{n \to +\infty}{\longrightarrow}x$ be such that $\mathrm{imd}^{(2)}_{x_n}\underset{n \to +\infty}{\longrightarrow} V$ with $(x_n)\subset{M_{reg}}$. For every $n\in\mathbb{N}_0$, one has  $$\phi_t^{\widetilde X}|_{x_n}\left(\mathrm{imd}^{(2)}_{x_n}\right)=\mathrm{imd}^{(2)}_{\phi^X_t(x_n)},$$ since $\rho\circ\phi_t^{\widetilde X}=d\phi_t^{X}\circ\rho$. Thus, 
\begin{align*}
    \phi_t^{\widetilde X}|_x(V)&=\lim_{n\rightarrow +\infty}\phi_t^{\widetilde X}|_{x_n}\left(\mathrm{imd}^{(2)}_{x_n}\right)\\&=\lim_{n\rightarrow +\infty}\left(\mathrm{imd}^{(2)}_{\phi^X_t(x_n)}\right)\in \pi^{-1}\left(\phi^X_t(x)\right).
\end{align*}
By consequence, for $V\in \widetilde M$, we have  $\widetilde{\widetilde X}(V)\in T_V\widetilde M$, by Equation \eqref{eq:pull-back-gr}.
\end{proof}

\begin{corollary}
$\widetilde{\widetilde X}$ does not depend on the choice of the almost bracket $\ell_2$.
\end{corollary}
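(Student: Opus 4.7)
The plan is to reduce the statement to a pointwise computation on the regular stratum and then conclude by density. Let $\ell_2$ and $\ell_2'$ be two almost Lie algebroid brackets on $E_{-1}$ compatible with the given anchor $\rho$, and write $\Delta\ell_2 := \ell_2 - \ell_2'$. Because both brackets satisfy the same Leibniz rule relative to $\rho$, the operator $\Delta\ell_2\colon \Gamma(E_{-1})\otimes \Gamma(E_{-1}) \to \Gamma(E_{-1})$ is $\mathcal O$-bilinear. Moreover, since the anchor is a morphism for both brackets when restricted to $E_{-1}$, one has $\rho(\Delta\ell_2(\upsilon,e)) = 0$ for all $\upsilon, e\in \Gamma(E_{-1})$; in particular $\Delta\ell_2(\upsilon, e)(x) \in \ker\rho_x$ for every $x\in M$.

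Let $\widetilde X$ and $\widetilde X'$ be the two linear vector fields on $E_{-1}$ built from $\upsilon$ with $\ell_2$ and $\ell_2'$ respectively, and let $\widetilde{\widetilde X}, \widetilde{\widetilde X}'$ be their induced vector fields on $\mathrm{Gr}_{-\ell}(E_{-1})$. The difference $\widetilde X - \widetilde X'$ has the same action as $\widetilde X$ and $\widetilde X'$ on pullbacks $p^*f$, so it is a \emph{vertical} linear vector field on $E_{-1}$; explicitly, in the fiber over $x\in M$ it is the linear endomorphism $v \mapsto -\Delta\ell_2(\upsilon,e)(x)$ of $E_{-1}|_x$, where $e\in\Gamma(E_{-1})$ is any local section with $e(x)=v$ ($\mathcal{O}$-bilinearity of $\Delta\ell_2$ ensures this is well defined). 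Passing to $\mathrm{Gr}_{-\ell}(E_{-1})$ via \eqref{eq:pull-back-gr} and using the identification $T_V \mathrm{Gr}_{-\ell}(E_{-1}|_x) \simeq \mathrm{Hom}(V, E_{-1}|_x/V)$, one reads off
\[
(\widetilde{\widetilde X} - \widetilde{\widetilde X}')(V) \;=\; \bigl(v \longmapsto -\Delta\ell_2(\upsilon,e)(x) \bmod V\bigr) \;\in\; \mathrm{Hom}(V, E_{-1}|_x/V).
\]

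Now restrict attention to a regular point $x\in M_{\mathrm{reg}}$ and to $V = \sigma(x) = \mathrm{im}(\mathrm{d}^{(2)}_x) = \ker\rho_x$ (item 3 of Theorem~\ref{thm:grass}). For any $e$, the vector $\Delta\ell_2(\upsilon,e)(x)$ lies in $\ker\rho_x = V$ by the observation of the first paragraph. Hence the quotient class above is zero, i.e.
\[
\widetilde{\widetilde X}(V) \;=\; \widetilde{\widetilde X}'(V) \qquad \text{for every } V \in \sigma(M_{\mathrm{reg}}).
\]
The two vector fields $\widetilde{\widetilde X}$ and $\widetilde{\widetilde X}'$ are smooth on $\mathrm{Gr}_{-\ell}(E_{-1})$ and, by Proposition~\ref{prop:lift-on-blowup}, both are tangent to $\widetilde M$. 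Since by construction $\widetilde M = \overline{\sigma(M_{\mathrm{reg}})}$, the set $\sigma(M_{\mathrm{reg}})$ is dense in $\widetilde M$, and continuity forces $\widetilde{\widetilde X}|_{\widetilde M} = \widetilde{\widetilde X}'|_{\widetilde M}$.

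The only subtle point is verifying that the difference $\widetilde X - \widetilde X'$ really translates into the displayed element of $\mathrm{Hom}(V,E_{-1}|_x/V)$ under the canonical identification of the tangent space of the Grassmannian; this is standard, but deserves a careful check of signs and of the fact that the vertical nature of $\widetilde X - \widetilde X'$ is what allows the computation to be performed fiberwise. Once that identification is in place, the remainder of the argument is a straightforward density/continuity conclusion.
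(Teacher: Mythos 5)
Your proof is correct, and it reaches the same two-step skeleton as the paper's argument (agreement over the regular stratum, then density of $\sigma(M_{\mathrm{reg}})$ in $\widetilde M$ plus continuity), but it establishes the first step by a genuinely different and more computational route. The paper simply invokes the fact that $\widetilde{\widetilde X}$ projects onto $X$ (Proposition \ref{prop:lift-on-blowup}) together with the fact that $\pi\colon\widetilde M\to M$ is invertible over the open dense set $M_{\mathrm{reg}}$, where each fibre of $\pi$ is a single point: there the vector field is forced to be $(\pi|^{-1})_*X$, hence independent of $\ell_2$. You instead compute the difference of the two lifts directly: you observe that $\Delta\ell_2=\ell_2-\ell_2'$ is $\mathcal O$-bilinear and $\ker\rho$-valued, so $\widetilde X-\widetilde X'$ is a vertical linear vector field, and under $T_V\mathrm{Gr}_{-\ell}(E_{-1}|_x)\simeq \mathrm{Hom}(V,E_{-1}|_x/V)$ its image at $V=\ker\rho_x$ vanishes because the endomorphism takes values in $V$ itself. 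What the paper's argument buys is brevity, and it works without unwinding the Grassmannian tangent space; what yours buys is an explicit formula for the discrepancy $\widetilde{\widetilde X}-\widetilde{\widetilde X}'$ at \emph{every} point of $\mathrm{Gr}_{-\ell}(E_{-1})$ (not just on $\widetilde M$), which makes transparent exactly why only the class of $\ell_2$ modulo $\ker\rho$-valued bilinear maps could ever matter. Both are complete; the sign you flag in the identification of the vertical part is indeed immaterial since you only need the resulting homomorphism $V\to E_{-1}|_x/V$ to vanish.
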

\begin{proof}
This is an obvious consequence of item 3. in Proposition \ref{prop:lift-on-blowup}, since $\pi\colon \widetilde M\longrightarrow M$ is invertible on an open (dense) subset.
\end{proof}
Denote by $\widetilde{\mathfrak{F}}$ the module generated by the  $\widetilde{\widetilde X}$'s, with  $X\in \mathfrak{F}$. For $\mathfrak{F}$ polynomial, $\widetilde{\mathfrak{F}}$ is a singular foliation on the locally affine variety $\widetilde M$, because $\widetilde X$ is still polynomial.

\begin{corollary}
If $\mathfrak{F}$ is a polynomial singular foliation, then it is lifted to  a projective singular foliation  $\widetilde{\mathfrak{F}}$ on the locally affine variety $\widetilde M$.
\end{corollary}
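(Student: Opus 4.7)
The plan is to verify three properties for $\widetilde{\mathfrak{F}}$ on the locally affine variety $\widetilde{M}$: polynomiality (so that the generators make sense as projective/regular vector fields on $\widetilde M$), local finite generation (as a module over regular functions on $\widetilde M$), and involutivity under the Lie bracket. The main obstacle is the last one, since a priori $[\widetilde{\widetilde{X}},\widetilde{\widetilde{Y}}]$ might differ from $\widetilde{\widetilde{[X,Y]}}$ by a vertical vector field that is not obviously in $\widetilde{\mathfrak{F}}$. The density of $\pi^{-1}(M_{\mathrm{reg}})$ in $\widetilde{M}$ proved in Theorem \ref{thm:grass} together with polynomiality will be the key tool to circumvent this.

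First, for polynomiality, I would work in one of the standard affine charts $\mathcal{U}_{1,\ldots,d-\ell}\subset \mathrm{Gr}_{-\ell}(E_{-1})$ described in Section \ref{chart:rassmanian}, using the local coordinates $(x_1,\ldots,x_N,a_{ij})$. Since $\mathfrak{F}$ is polynomial, one can choose a polynomial geometric resolution so that the structure maps $\rho$ and $\ell_2$ are polynomial in a local trivialisation $e_1,\dots,e_d$ of $E_{-1}$. Substituting these expressions into the defining equations $\widetilde{X}[p^*f]=p^*(\rho(\upsilon)[f])$ and $\langle\widetilde{X}[\alpha],e\rangle=\rho(\upsilon)[\langle\alpha,e\rangle]-\langle\alpha,\ell_2(\upsilon,e)\rangle$, one reads off that the coefficients of $\widetilde{X}$ with respect to $\tfrac{\partial}{\partial x_k}$ and $\tfrac{\partial}{\partial e_j^*}$ are polynomial. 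The formula \eqref{eq:pull-back-gr} defining $\widetilde{\widetilde{X}}$ from $\widetilde{X}$ is a derivative of the flow, which in the standard chart is expressible as a rational (in fact polynomial after clearing the invertible submatrix) combination of the coefficients of $\widetilde{X}$ and the matrix coordinates $a_{ij}$. Hence $\widetilde{\widetilde{X}}$ is polynomial on $\mathcal{U}_{1,\ldots,d-\ell}\cap \widetilde{M}$ — and by Theorem \ref{thm:locally-affine}, $\widetilde{M}\cap \mathcal{U}_{1,\ldots,d-\ell}$ is a genuine affine variety, so $\widetilde{\widetilde{X}}$ restricts to a regular (polynomial) vector field on it.

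Next, for local finite generation, I would invoke that $\mathfrak{F}$, being a polynomial singular foliation on $M\in\{\mathbb{C}^N,\mathbb{R}^N\}$, is finitely generated as an $\mathcal{O}(M)$-module, say by $X_1,\ldots,X_r$. Choosing preimages $\upsilon_1,\ldots,\upsilon_r\in\Gamma(E_{-1})$ yields lifts $\widetilde{\widetilde{X}}_1,\dots,\widetilde{\widetilde{X}}_r$ on $\widetilde{M}$. Any $X\in\mathfrak{F}$ writes $X=\sum_i f_i X_i$ with $f_i\in\mathcal{O}(M)$; using the preimage $\sum_i f_i \upsilon_i$ in the construction of $\widetilde{X}$, a direct inspection of the defining formulas shows that the resulting $\widetilde{\widetilde{X}}$ equals $\sum_i \pi^*(f_i)\widetilde{\widetilde{X}}_i$ modulo a correction coming from the section of $\ker\rho$ by which different preimages differ; but this correction is itself of the form $\widetilde{\widetilde{Z}}$ for $Z=\rho(\cdot)=0$, hence vanishes on the Grassmannian lift since it comes from a linear vector field on $E_{-1}$ whose projection to $\mathrm{Gr}_{-\ell}(E_{-1})$ is zero. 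Thus $\widetilde{\mathfrak{F}}$ is generated as a module over regular functions on $\widetilde{M}$ by the finite family $\widetilde{\widetilde{X}}_1,\ldots,\widetilde{\widetilde{X}}_r$.

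Finally, for involutivity, the argument I would use is: by item 3 of Theorem \ref{thm:grass}, $\pi\colon\widetilde{M}\to M$ is a biholomorphism over $M_{\mathrm{reg}}$, and $\pi^{-1}(M_{\mathrm{reg}})$ is open and dense in $\widetilde{M}$. By item 2 of Proposition \ref{prop:lift-on-blowup}, $\pi_*\widetilde{\widetilde{X}}=X$, so on $\pi^{-1}(M_{\mathrm{reg}})$ the vector field $\widetilde{\widetilde{X}}$ is simply the pullback of $X$ by $\pi^{-1}$. Consequently on this open dense subset
\[
[\widetilde{\widetilde{X}},\widetilde{\widetilde{Y}}]\big|_{\pi^{-1}(M_{\mathrm{reg}})}=\widetilde{\widetilde{[X,Y]}}\big|_{\pi^{-1}(M_{\mathrm{reg}})},
\]
and since $\mathfrak{F}$ is involutive, $[X,Y]\in\mathfrak{F}$ so the right-hand side is in $\widetilde{\mathfrak{F}}$. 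Both sides being regular (polynomial) vector fields on the locally affine variety $\widetilde{M}$ by the first step, and agreeing on an open dense subset, they coincide everywhere on $\widetilde{M}$. Thus $[\widetilde{\widetilde{X}},\widetilde{\widetilde{Y}}]\in\widetilde{\mathfrak{F}}$, proving involutivity. Combining the three steps, $\widetilde{\mathfrak{F}}$ is a locally finitely generated, involutive module of polynomial vector fields on the locally affine variety $\widetilde{M}$, i.e.\ a projective singular foliation.
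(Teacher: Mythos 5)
Your proof is correct and, in substance, follows the same route the paper takes, although the paper compresses the whole argument into the single remark preceding the corollary ("$\widetilde{\mathfrak{F}}$ is a singular foliation on the locally affine variety $\widetilde M$, because $\widetilde X$ is still polynomial") and leaves involutivity and finite generation implicit. What you add — checking polynomiality in the standard affine charts, finite generation from finitely many lifted generators, and involutivity via the identity $[\widetilde{\widetilde{X}},\widetilde{\widetilde{Y}}]=\widetilde{\widetilde{[X,Y]}}$ on the dense open set $\pi^{-1}(M_{\mathrm{reg}})$ followed by continuation of regular vector fields — is exactly the reasoning the paper relies on elsewhere (it is the argument used to prove that $\widetilde{\widetilde{X}}$ is independent of the choice of $\ell_2$), so your write-up is a legitimate and more complete version of the intended proof.

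One local justification is misstated, though the conclusion it supports is right. In the finite-generation step you claim that the correction coming from a different choice of preimage $\upsilon'=\upsilon+w$ with $w\in\ker\rho$ "vanishes on the Grassmannian lift since it comes from a linear vector field on $E_{-1}$ whose projection to $\mathrm{Gr}_{-\ell}(E_{-1})$ is zero." That is not true on the whole Grassmann bundle: a fibrewise linear vector field on $E_{-1}$ covering the zero vector field on $M$ generates fibrewise linear automorphisms, which act nontrivially on $\mathrm{Gr}_{-\ell}(E_{-1}|_m)$ in general. The correct argument is the one you already use for involutivity: over $M_{\mathrm{reg}}$ the fibre of $\widetilde M$ is the single point $\mathrm{im}(\dd^{(2)}_m)$, which is preserved by the flow of $\widetilde{w}$ (since that flow covers the identity of $M$ and intertwines the anchors), so the induced vector field vanishes on $\pi^{-1}(M_{\mathrm{reg}})\cap\widetilde M$ and hence, by density and polynomiality, on all of $\widetilde M$. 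The same density argument is also what justifies $\widetilde{\widetilde{fX}}=\pi^{*}(f)\,\widetilde{\widetilde{X}}$ on $\widetilde M$ (the naive computation of $\widetilde{fX}$ on $E_{-1}$ produces a Leibniz correction term $\rho(e)[f]\upsilon$ which does not vanish on the full Grassmann bundle either). With that repair the proof is complete.
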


\noindent
\textbf{Generalization.}
The same construction would apply if, instead of considering $\mathrm{im}(\dd^{(2)})\subseteq E_{-1}$, we consider $\mathrm{im}(\dd^{(i+1)})\subseteq E_{-i}$ or $\mathrm{im}(\rho)\subseteq TM$ for $i\geq 1$. Denote by $\widetilde M_i$ the transformation that we would obtain by such a construction. A lift $\widetilde{\mathfrak{F}}_i$ of $\mathfrak{F}$ can be defined exactly in the same manner by considering a lift $\widetilde X$ of $X\in \mathfrak{F}$ on $E_{-i}$ (or $TM$) associated to $\ell_2\colon \Gamma(E_{-1})\times \Gamma(E_{-i})\rightarrow \Gamma(E_{-i})$ or to $[X, \cdot\,]$.\\

\noindent
More precisely, consider for $i\geq 0$, the Grassmann bundle $Gr_{-\ell^i}(E_{-i})$, with $E_0:=TM$, where $\ell^i:=\mathrm{rk}(\dd^{i+1}_m)$ (with the understanding that $\dd^{(1)}=\rho$ for $i=0$) at regular points. These bundles are manifolds that project to $M$ through a proper map. Over $M_{reg}$, there exists a unique 
$V\in Gr_{-\ell^i}(E_{-i})$ such that $ \dd^{(i)}_m(V)=0$. Theses define maps
$$\sigma_i\colon M_{reg}\longhookrightarrow Gr_{-\ell^i}(E_{-i}).$$
We define $\widetilde{M}_i:=\overline{\sigma_i(M_{reg})}$. This is not a manifold in general. However, it is a locally affine variety if $\mathfrak{F}$ is a polynomial singular foliation on $\mathbb{R}^N$ or, $\mathbb{C}^N$ as in Theorem \ref{thm:locally-affine}. Moreover,

\begin{enumerate}
    \item independent of the choice of a geometric resolution, as in Theorem \ref{thm:grass},
    \item the projection $\widetilde M_i\rightarrow M$ is proper and onto, as in Theorem \ref{thm:grass},
    \item $\mathfrak{F}$ lifts to a singular foliation on $\widetilde M_i$, and it is one-to-one to $M_{reg}$, as in Proposition \ref{prop:lift-on-blowup}.
\end{enumerate}
For $i=0$ the same conclusion holds for \begin{align*}
    \sigma_0\colon M&\longrightarrow Gr_{\ell^0}(TM)\\&m\longmapsto T_m\mathfrak F.
\end{align*}

Let us give some examples.
\begin{example}Notice that $\mathrm{Gr}_{-\ell}(\mathbb{C}^\ell)=\{pt\}$, so that if $\dd^{(i)}$ is into on an open dense subset, the construction degenerates, in this case $\mathrm{Gr}_{-\ell}(\mathbb{C}^\ell)\times M\simeq M$
\begin{enumerate}
    \item If $\mathfrak{F}$ is a projective singular foliation, then $\widetilde M_1\simeq M$: because $\Gamma(E_{-1})\simeq \mathfrak{F}$ and $E_{-1}\stackrel{\rho}{\rightarrow}TM$ is injective on the open dense subset $M_{reg}$, i.e. $\ell^0=\mathrm{rk}(E_{-1})$. Hence $Gr_{-\ell^0}(E_{-1})\simeq M$.
    \item If $\mathfrak{F}$ admits open dense orbits, ${\widetilde M}_0\simeq M$, since $Gr_{-\dim M}(TM)\simeq M$.
    
    \item Let $W\subseteq M=\mathbb{C}^d$ be an affine variety generated by (independent functions)  $\varphi,\psi\in \mathbb{C}[x_1,\ldots,x_d]$, i.e. $\mathcal{I}_W=\langle\varphi,\psi\rangle$. Let $\mathfrak{F}_W$ be the singular foliation of (polynomial) vector fields vanishing on $W$. At regular points $x\in M_{reg}$,  $T_x\mathfrak{F}_W=d$, hence
    
    \begin{enumerate}
        \item $\widetilde M_0\simeq M$.
        
        \item 
        
        Also, \begin{itemize}
            \item $E_{-1}=(\mathbb{C}^d\oplus \mathbb{C}^d)\times M,\; \rho\colon e_i\oplus 0\mapsto \varphi\frac{\partial}{\partial x_i}$ and $0\oplus e_i\mapsto -\psi\frac{\partial}{\partial x_i}$, for all $i=1, \ldots, d$.
 \item $E_{-2}=\mathbb{C}^d\times M,$ and $\dd^{(2)}\colon e_i\mapsto \frac{\psi}{K}e_i\oplus \frac{\varphi}{K}e_i$ with, $K=gcd(\varphi, \psi)$$$ \mathrm{im}(\dd^{(2)}_x)=\ker\rho_x=\left\langle\frac{\psi(x)}{K(x)}u\oplus \frac{\varphi(x)}{K(x)}u, \;\text{with}\; u\in \mathbb{C}^d \right\rangle
 $$ For a  convergent sequence $(y_n)$ in $M\setminus W$, $ (\ker \rho_{y_n})$ converges if and only if $[\frac{\psi(y_n)}{K(y_n)}\colon \frac{\varphi(y_n)}{K(y_n)}]$ converges in $\mathbb{P}^1(\mathbb{C})$. In that case, $\widetilde M_1$ is the closure of the graph $\{(y, [\psi(y)\colon \varphi(y)]), y\in M\setminus W\}$, which is the blow up of $\mathbb{C}^d$ along $W$.
 \item $\widetilde M_2\simeq M$ since $\dd_x^{(2)}$ is injective at regular points, so that $Gr_{-d}(E_{-2})\simeq M$.
        \end{itemize}        
        
\end{enumerate}

    \item Let $W$ be the affine variety defined by $\varphi\in\mathbb{C}[x_1,\ldots,x_d]$. Consider the singular foliation $\mathfrak{F}_\varphi=\{X\in \mathfrak{X}(\mathbb{C}^d)\mid X[\varphi]=0\}$.  
    For every $y\in \mathbb C^d$, $(T_y\mathfrak{F}_\varphi)^\bot= \langle \nabla_y\varphi\rangle$. For convergent sequence $y_n\underset{n \to +\infty}{\longrightarrow}y\in W$. The sequence $\mathrm{im}(\rho_{y_n})$ converges if and only if $\nabla_{y_n}\varphi$ converges in $Gr_{-(d-1)}(\mathbb{C}^d)$, that is $\left[\frac{\partial \varphi}{\partial x_1}(y_n)\colon \cdots \colon\frac{\partial \varphi}{\partial x_d}(y_n)\right]$ converges in the projective space $\mathbb{P}^{d-1}(\mathbb{C})$. Therefore, $\widetilde M_0$ is the closure of the graph of the map, $y\mapsto (y, \left[\frac{\partial \varphi}{\partial x_1}(y)\colon \cdots \colon\frac{\partial \varphi}{\partial x_d}(y)\right])$ which is the  blow up of $\mathbb{C}^d$ along the singular locus of $W$.\end{enumerate}
\end{example}
\vspace{0.3cm}
Let us conclude this chapter by an open question. Can we desingularize a singular affine variety $W\subseteq \mathbb C^d$ by applying the constructions above to the singular foliation $\mathfrak{F}=\mathfrak X(W)$ of vector fields tangent to $W$? We should then understand $\widetilde W_0$ as the Nash modification of $W$ \cite{D.T}. The meaning of $\widetilde W_1$ is unclear. We would like to relate the $\widetilde W_i$'s and the $\widetilde {\mathfrak{F}}_i$ together and go further in the universal Lie $\infty$-algebroid, e.g. to understand the role of the $3$-ary bracket in this procedure.

\vspace{2cm}

\begin{tcolorbox}[colback=gray!5!white,colframe=gray!80!black,title=Conclusion:]
In this chapter, we recall the notion of isotropy Lie ($\infty$)-algebra of a singular foliation (and of a Lie-Rinehart algebra).

\phantom{cc}Then, we use the geometric resolution (i.e. the resolution on which the universal Lie $\infty$-algebroid is built) to recover several notions of resolution of singularities: one being due to Nash and a second one to Mohsen.
\end{tcolorbox}

\chapter{Symmetries of singular foliations through Lie $\infty$-algebroids}\label{chap:symmetries}
This chapter is one of the main application of results in Chapter \ref{main} and Section \ref{sec:univ-q-manifold}. These results are taken from my  article \cite{RL}.

We introduce the notion of weak symmetry actions of a lie algebra $\mathfrak g$ on a singular foliation $\mathfrak F$ and study the interaction of those on the universal Lie $\infty$-algebroids of $\mathfrak F$. Also, in the subsequent chapter, we apply these results to the problem of extending a strict Lie algebra action on a sub-affine variety on the ambient space. 

\begin{convention}
Throughout this chapter, $M$ stands for a smooth or complex manifold, or an affine variety over $\mathbb{C}$. We denote the sheaf of smooth or complex, or regular functions on $M$ by $\mathcal O$ and the sheaf of vector fields on $M$ by $\mathfrak{X}(M)$, and $X[f]$
stands for a vector field $X\in\mathfrak{X}(M)$ applied to $f\in \mathcal O$. Also, $\mathbb K$ stands for $\mathbb R$ or $\mathbb C$.
\end{convention}

\section{Definitions and examples}
\begin{definition}
Let $\mathfrak F\subset\mathfrak{X}(M)$ be a singular foliation over $M$. 
\begin{itemize}
    \item A diffeomorphism $\phi\colon M\longrightarrow M$ is said to be a \emph{symmetry} of $\mathfrak F$, if $\phi_*(\mathfrak F)=\mathfrak F$.
    
    \item A vector field $X\in\mathfrak{X}(M)$ is said to be an \emph{infinitesimal symmetry of $\mathfrak F$}, if $[X,\mathfrak F]\subset\mathfrak{F}$. The Lie algebra of infinitesimal symmetries of $\mathfrak{F}$ is denoted by $\mathfrak{s}(\mathfrak F)$.\\
\end{itemize}

    In particular, $\mathfrak F\subset\mathfrak{s}(\mathfrak F)$, since $[\mathfrak F,\mathfrak F]\subset\mathfrak{F}$.
\end{definition}

\begin{proposition}\cite{AndroulidakisIakovos,GarmendiaAlfonso}\label{prop:symm}
Let $M$ be a smooth or complex manifold. The flow of an infinitesimal symmetry of $\mathfrak F$, if it exists, is a symmetry of $\mathfrak{F}$.
\end{proposition}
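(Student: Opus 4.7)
Write $\phi_t = \phi_t^X$ for the (local) flow of the infinitesimal symmetry $X$ on its domain of definition $U_t \subset M$. Since $\mathfrak F$ is locally finitely generated, I can work in a neighbourhood of any point $m \in U_t$ where there exist finitely many local generators $Y_1,\ldots,Y_r \in \mathfrak F$. The assumption $[X,\mathfrak F]\subset \mathfrak F$ gives (perhaps after shrinking the neighbourhood) functions $f_{ij}\in \mathcal O$ such that
\[
[X,Y_i] \;=\; \sum_{j=1}^r f_{ij}\,Y_j, \qquad i=1,\ldots,r.
\]
The goal is to show $(\phi_t)_* Y_i \in \mathfrak F$; since any $Y\in\mathfrak F$ is locally of the form $\sum_i g_i Y_i$ and push-forward satisfies $(\phi_t)_*(gY)=(g\circ \phi_{-t})\cdot(\phi_t)_*Y$, this will imply $(\phi_t)_*\mathfrak F\subset \mathfrak F$.

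\textbf{Key step (an ODE argument).} Set $Z_i(t):=(\phi_t)_* Y_i$. The standard identity $\frac{d}{dt}(\phi_t)_*Y = -(\phi_t)_*[X,Y]$ specialises to
\[
\frac{dZ_i}{dt}(t) \;=\; -(\phi_t)_*[X,Y_i] \;=\; -\sum_{j=1}^r (f_{ij}\circ \phi_{-t})\,Z_j(t), \qquad Z_i(0)=Y_i.
\]
Introduce the auxiliary matrix-valued ODE on the same time interval
\[
\dot{A}(t) \;=\; -A(t)\,F(t)^{T}, \qquad A(0)=I,
\]
where $F(t)$ is the $r\times r$ matrix with entries $f_{ij}\circ \phi_{-t}$; this linear ODE has a unique smooth solution $A(t)$. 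Then the vector fields $W_i(t) := \sum_{j=1}^r A_{ji}(t)\,Y_j$ satisfy exactly the same ODE as $Z_i(t)$, with the same initial condition. By uniqueness of solutions of linear ODEs in the Banach space obtained by restricting to any relatively compact open subset of the neighbourhood (or, equivalently, by pointwise uniqueness and smoothness), $Z_i(t)=W_i(t)$, hence
\[
(\phi_t)_* Y_i \;=\; \sum_{j=1}^r A_{ji}(t)\,Y_j \;\in\; \mathfrak F.
\]

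\textbf{Conclusion.} Gluing these local identities over the domain of $\phi_t$ yields $(\phi_t)_*\mathfrak F \subset \mathfrak F$. Applying the same reasoning to $-X$, which is also an infinitesimal symmetry and whose flow is $\phi_{-t}$, gives $(\phi_{-t})_*\mathfrak F \subset \mathfrak F$, i.e.\ $\mathfrak F \subset (\phi_t)_*\mathfrak F$. Therefore $(\phi_t)_*\mathfrak F = \mathfrak F$, so $\phi_t$ is a symmetry of $\mathfrak F$ on its domain.

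\textbf{Main obstacle.} The only delicate point is the passage from the infinitesimal relation $[X,Y_i]=\sum_j f_{ij}Y_j$ to the integrated statement that $(\phi_t)_*Y_i$ remains in the finitely generated $\mathcal O$-module $\mathfrak F$. This is where local finite generation of $\mathfrak F$ is essential: it reduces the problem to a linear ODE with finitely many unknowns, whose solution can be recognised as a linear combination of the generators. Without this finiteness (e.g.\ for a merely involutive subsheaf that is not finitely generated), the argument would fail, and one would be reduced to the weaker conclusion that the Lie derivative $\mathcal L_X$ preserves $\mathfrak F$ only infinitesimally.
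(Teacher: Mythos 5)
Your proof is correct, and it is essentially the classical argument of Androulidakis--Skandalis (the references the paper cites for this statement), but it is \emph{not} the route the thesis itself takes. The paper derives Proposition \ref{prop:symm} as a byproduct of Corollary \ref{1symmetry}: the symmetry $X$ is first lifted to a degree-zero vector field $Z\in\mathfrak X_0(E)$ on a universal $NQ$-manifold $(E,Q)$ of $\mathfrak F$ with $[Z,Q]=0$; the flow $\Phi^Z_t$ then restricts to a vector bundle automorphism of $E_{-1}$ covering $\phi^X_t$ and intertwining the anchor $\rho$, which immediately produces an invertible matrix $\mathfrak M^t_X$ of functions with $(\phi^X_t)_*(X_1,\dots,X_n)^{T}=\mathfrak M^t_X\,(X_1,\dots,X_n)^{T}$ for local generators $X_i$ of $\mathfrak F$. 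Your matrix $A(t)$, obtained by solving $\dot A=-AF(t)^{T}$, is exactly this $\mathfrak M^t_X$ constructed by hand: what the paper packages into the flow of the lifted vector field on the graded manifold, you unpack into a finite-dimensional linear ODE whose uniqueness (applied pointwise in each $T_mM$) identifies $(\phi_t)_*Y_i$ with $\sum_j A_{ji}(t)Y_j\in\mathfrak F$. Your route is more elementary and self-contained — it needs only local finite generation and ODE uniqueness, and in fact the invertibility of $A(t)$ (as the solution of a linear ODE with $A(0)=I$) already gives $(\phi_t)_*\mathfrak F=\mathfrak F$ without invoking $-X$ — whereas the paper's route presupposes the whole machinery of Theorem \ref{main}, but in exchange interprets the result inside the Lie $\infty$-algebroid framework and extends to lifts of the flow to all of $E$. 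One small point you should tighten: the identity $(\phi_t)_*Y_i=\sum_j A_{ji}(t)Y_j$ only makes sense on an open set where the $Y_j$ and the structure functions $f_{ij}$ are defined along the whole trajectory $\phi_s$, $s\in[0,t]$; for small $t$ this is arranged by shrinking the neighbourhood, and for general $t$ one concatenates using $\phi_{t+s}=\phi_t\circ\phi_s$ — worth a sentence, but it does not affect the validity of the argument.
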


As we will see later, one of the consequences of our future results is that any symmetry $X\in\mathfrak{s}(\mathfrak F)$ of a singular foliation $\mathfrak F$ admits a lift to a degree zero vector field on any universal $NQ$-manifold over $\mathfrak{F}$ that commutes with the  homological vector field $Q$. This allows us to have an alternative proof and interpretation of Proposition \ref{prop:symm} (see Section \ref{sec:3}).\\

Let $(\mathfrak g, \lb_{\mathfrak g})$ be a Lie algebra over $\mathbb{K}=\mathbb{R}$ or $\mathbb{C}$, depending on the context. From now on and in the sequel $\mathfrak g$ is concentrated in degree $-1$.

\begin{definition}\label{def:sym}
A \emph{weak symmetry action of the Lie algebra $\mathfrak{g}$} on  a singular foliation $\mathfrak F$ on $M$ is a $\mathbb K$-linear map $\varrho\colon\mathfrak{g}\longrightarrow \mathfrak X(M)$ that satisfies:\begin{itemize}
    \item $\forall\, x\in\mathfrak{g},\;[\varrho(x),\mathfrak F]\subseteq \mathfrak F$,
    \item  $\forall\, x,y\in\mathfrak{g},\;\varrho([x,y]_\mathfrak{g})-[\varrho(x),\varrho(y)]\in \mathfrak F$.
\end{itemize} 
When $x\longmapsto \varrho(x)$ is a Lie algebra morphism, we speak of \emph{strict symmetry action} of $\mathfrak{g}$ on $\mathfrak F$. There is an equivalence relation on the set of weak symmetry actions which is defined as follows: two weak symmetry actions, $\varrho, \varrho' \colon \mathfrak{g}\longrightarrow \mathfrak X(M)$ are said to be \emph{equivalent} if there exists a linear map $\varphi\colon\mathfrak{g}\longrightarrow \mathfrak{F}$ such that $\varrho-\varrho'=\varphi$.
\end{definition}
\begin{remark}It is important to notice that when $\mathfrak F$ is a regular foliation and $M/\mathfrak{F}$ is a manifold, any weak symmetry action of a Lie algebra $\mathfrak g$ on $\mathfrak{F}$ induces a strict action of $\mathfrak{g}$ over $M/\mathfrak{F}$. Definition \ref{def:sym} is a way of extending this idea to all singular foliations.
\end{remark}
Here is a list of some examples.

\begin{example}\label{ex:pull-back}
Let $\pi\colon M\longrightarrow N$ be a submersion. Since any vector field on $N$ comes from a $\pi$-projectable vector  field on $M$, therefore any Lie algebra morphism $\mathfrak{g}\longrightarrow \mathfrak{X}(N)$ can be lifted to a weak symmetry action $\mathfrak{g}\longrightarrow \mathfrak{X}(M)$  on the regular foliation $\Gamma(\ker\dd\pi)$, and any two such lifts are equivalent.

Furthermore, any weak action of a Lie algebra $\mathfrak{g}$ on a singular foliation $\mathfrak{F}$ on $N$ can be lifted to a class of weak symmetry actions on the pull-back foliation $\pi^{-1}(\mathfrak{F})$, (see Definition 1.9 in  \cite{AndroulidakisIakovos}). 
\end{example}
\begin{example}\label{ex:isotropy} Let $\mathfrak{F}$ be a singular foliation on $M$. For any point $m\in M$, consider  $\mathfrak{g}_m = \dfrac{\mathfrak{F}(m)}{\mathcal I_m\mathfrak F}$  the {isotropy Lie algebra of $\mathfrak{F}$ at $m$} (see Definition \ref{def:isotropy}). Let us denote its Lie bracket by  $\lb_{\mathfrak{g}_m}$.

\begin{enumerate}
    \item Consider $\varrho\colon \mathfrak{g}_m\to \mathfrak F(m)\subset\mathfrak{X}(M)$ a section of the projection map,
\begin{equation}\xymatrix{\mathcal I_m\mathfrak F\ar@{^{(}->}[r]&\mathfrak F(m)\ar@{->>}[r]&\mathfrak{g}_m\ar@/_/[l]_\varrho}\end{equation}Then, $[\varrho(x), \mathcal I_m\mathfrak F]\subset\mathcal{I}_m\mathfrak F$ and $\varrho([x,y]_{\mathfrak{g}_m})-[\varrho(x),\varrho(y)]\in\mathcal I_m\mathfrak F$. Hence, the map $\varrho\colon \mathfrak{g}_m\to \mathfrak{X}(M)$ is a weak symmetry action of the singular foliation $\mathcal I_m\mathfrak F$. A different section $\varrho'$ of  the projection map yields an equivalent weak symmetry action of $\mathfrak{g}_m$ on $\mathcal{I}_m\mathfrak{F}$. An obstruction class for having a strict symmetry action equivalent to $\varrho$ will be given later in Section \ref{sec:5}.

\item In particular, for $k\geq 1$, let us denote by $\mathfrak{g}_m^k$ the isotropy Lie algebra of the singular foliation $\mathcal I^k_m\mathfrak F$ at $m$. Any section $\varrho_k\colon\mathfrak{g}^k_m\longrightarrow \mathfrak{X}(M)$ of the projection map 

\begin{equation}\xymatrix{\mathcal I_m^{k+1}\mathfrak F\ar@{^{(}->}[r]&\mathcal{I}^k\mathfrak F\ar@{->>}[r]&\mathfrak{g}^k_m\ar@/_/[l]_{\varrho_k}}\end{equation}is a weak symmetry action of the Lie algebra $\mathfrak{g}^k_m$ on the singular foliation $\mathcal I_m^{k+1}\mathfrak F$. 
\end{enumerate}
\end{example}

\begin{example}
Let $(A, \lb_A, \rho)$ be an almost Lie algebroid on a smooth manifold $M$, and let $\mathfrak{F}=\rho(\Gamma(A))$. Assume there exists $p\in M$ such that $A|_{p}$ is a Lie algebra and $\rho|_p=0$. The question of finding a map $A|_{p}\longrightarrow \Gamma(A)$ such that the composition  \begin{equation}\label{eq:transformation-morphism}
    A|_{p}\longrightarrow \Gamma(A)\stackrel{\rho}{\longrightarrow}\mathfrak{X}(M) 
\end{equation}
be a Lie algebra morphism, i.e., to ask whether the singular foliation $\mathfrak{F}$ comes from the transformation Lie algebroid of $A|_p$, can be formulated as a weak symmetry action of $A|_{p}$ on the singular foliation $\mathcal I_p\mathfrak{F}$ as follows: 
Consider a linear map  \begin{align}\label{eq:transf-sym}
    A|_{p}&\longrightarrow \Gamma(A)\\&\nonumber a\longmapsto \widetilde{a}
\end{align}
such that for all $a\in A|_{p}$ and $\widetilde{a}$ a section of $A$ such that $\widetilde{a}|_p=a$. It is easily checked that the map in \eqref{eq:transf-sym} is not a Lie algebra morphism, but it satisfies$$\widetilde{[a,b]}_{A|_p}-\left[\widetilde a, \widetilde b\right]_A\in \mathcal{I}_p\Gamma(A)\qquad\text{and}\qquad\left[\widetilde{A|_p},\, \mathcal{I}_p\Gamma(A)\right]\subset{\mathcal{I}_p\Gamma(A)}.$$

\noindent
Therefore, the map $a\longmapsto \rho(\widetilde a)$ is a weak symmetry action of ${A|_p}$ on $\mathcal{I}_p\mathfrak{F}$. Notice that the isotropy Lie algebra $\mathfrak{g}^1_p=\frac{\mathcal{I}_p\mathfrak F}{\mathcal{I}^2_p\mathfrak{F}}$ of the singular foliation $\mathcal{I}_p\mathfrak{F}$, is Abelian. In Chapter \ref{chap:obstruction-theory}, we show that in this case,  the obstruction of having a Lie algebra morphism in \eqref{eq:transformation-morphism} is a cocycle of Chevalley-Eilenberg of $A|_p$ valued in $\mathfrak{g}^1_p$.
\end{example}

The next example comes from \cite{CLG-Ryv}, and follows the same patterns as in Examples \ref{ex:pull-back} and \ref{ex:isotropy}. It is based on the notion of Ehresmann connection. Let us first recall quickly this concept for the sake of completeness and clarity. There are several equivalent manners of viewing Ehresmann connections (see \cite{Ivan-Kolar} Section 9, Page 76), the most relevant approach in this context is the following: An \emph{Ehresmann connection} on a smooth fiber bundle\footnote{this generalizes connections to arbitrary
fiber bundle $\pi\colon E \rightarrow M$, here the total space $E$ is itself a smooth manifold, and has its
own tangent bundle $T E \rightarrow E$.} $\pi\colon E\rightarrow M$ is a vector subbundle $H$ of $TE$, such that $TE=H\oplus V$, where $V:=\{\xi\in TE\mid\pi_*(\xi)=0\}$ is called the "\emph{vertical bundle}" whose fiber at $e\in E$ is $V_e=T_e(E_{\pi(e)})$. The subbundle $H$ is called the "\emph{horizontal bundle}". Given a connection as defined above,  for every $e\in E$, the linear map $\pi_*\colon T_e E \rightarrow  T_{\pi(e)}M$ restricts to an isomorphism, $H_e \rightarrow T_{\pi(e)}M$, whose inverse $T_{\pi(e)}M \rightarrow H_e$ is called the \emph{horizontal lift}.

\begin{example}\label{ex:F-connection}
Let $(M,\mathfrak F)$ be a singular foliation on a smooth manifold $M$ and $L\subset M$ a leaf. Let $[L,M]$ be a neighborhood of $L$ in $M$ equipped with some projection\footnote{such projection comes with the Tubular neighborhood theorem \cite{Ana-Cannas}.} $\pi\colon[L,M]\rightarrow L$. 
 According to \cite{CLG-Ryv}, upon replacing  $[L,M]$ be a smaller neighborhood of $L$ if necessary,  there exists an Ehresmann connection (that is a vector sub-bundle $H \subset T[L,M]$ with $H\oplus\ker(\pi_*)=T[L,M]$)
which satisfies that $\Gamma(H) \subset \mathfrak F $. Such an Ehresmann connection is called an \emph{Ehresmann $\mathfrak F$-connection} and induces a $C^\infty(L)$-linear section $\varrho_H\colon\mathfrak X(L)\rightarrow \mathfrak{F}^{\mathrm{proj}}$ of the surjection $\mathfrak{F}^{ \mathrm{proj}} \rightarrow \mathfrak X(L)$, where $\mathfrak{F}^{ \mathrm{proj}}$ stands for  vector fields of $\mathfrak{F}$ $\pi$-projectable on elements of $\mathfrak{X}(L)$. The section $\varrho_H$ is a weak symmetry action of $\mathfrak{X}(L)$ on the \emph{transverse} foliation $\mathcal{T}:=\Gamma(\ker\pi_*)\cap~\mathfrak F$.
When the Ehresmann connection $H$ is flat,   $\varrho_H $ is bracket-preserving, 
and defines a strict symmetry of $\mathfrak{X}(L)$ on the transverse foliation $\mathcal T $.
\end{example}

\begin{example}
Consider, for a fixed $k$, the singular foliation $\mathfrak{F}_k:=\mathcal{I}_0^k\mathfrak{X}(\mathbb{R}^d)$ generated by all vector fields vanishing to order $k$ at the origin. The action of the Lie algebra $\mathfrak{gl}(\mathbb R^d)$ on $\mathbb{R}^d$ which is given by,  $$\mathfrak{gl}(\mathbb R)\longrightarrow \mathfrak{X}(\mathbb{R}^d),\,(a_{ij})_{1\leq i,j\leq d}\longmapsto\sum_{1\leq i, j\leq d}a_{ij}x_i\frac{\partial}{\partial x_j}$$ is a strict symmetry of $\mathfrak F_k$.
\end{example}
\begin{example}Let $\varphi := (\varphi_1,\ldots,\varphi_r )$ be a $r$-tuple of homogeneous polynomial functions in $d$ variables over $\mathbb K$. Consider the singular foliation $\mathfrak F_\varphi$ (see \cite{CLRL} Section 3.2.1) which is generated by all polynomial vector fields $X \in \mathfrak X(\mathbb K^d )$ that satisfy $X[\varphi_i ] = 0$ for all
$i \in\{1,\ldots, r\}$. The action $\mathbb K\rightarrow\mathfrak{X}(\mathbb{K}^d),\;\lambda\mapsto \lambda\overrightarrow{E}$, is a strict symmetry of  $\mathfrak{F}_\varphi$. Here $\overrightarrow{E}$ stands for the Euler vector field.
\end{example}
\begin{example}\label{ex:affine-action}
Let $W\subset \mathbb{C}^d$ be an affine variety and $\mathcal{I}_W\subset\mathbb{C}[x_1,\ldots,x_d]$ its corresponding ideal. Let us denote by $\mathfrak{X}(W):=\mathrm{Der}(\mathbb{C}[x_1,\ldots,x_d]/\mathcal{I}_W)$ the Lie algebra of vector field of $W$. Any vector field on $W$ can be extended (not unique) to a vector field on $\mathbb{C}^d$ (see Section \ref{three-main-constructions}). 

Let $\mathfrak{F}_W:=\mathcal I_W\mathfrak{X}(\mathbb{C}^d)$ the singular foliation made of vector fields vanishing on $W$. Since every vector field on $W$ can be extended to a vector field on $\mathbb{C}^d$ tangent to $W$. Any Lie algebra morphism $\varrho\colon\mathfrak{g}\longrightarrow \mathfrak{X}(W)$ extends to a linear map $\widetilde{\varrho}\colon\mathfrak{g}\longrightarrow \mathfrak{X}(\mathbb{C}^d)$ that makes this diagram commutes

\begin{equation*}
    \xymatrix{&\mathfrak{X}(\mathbb{C}^d)\ar@{->>}[d] \\\mathfrak{g}\ar[ru]^{\widetilde{\varrho}}\ar[r]_{\varrho}& \mathfrak{X}(W)}
\end{equation*}

This extension $\widetilde{\varrho}$ is a weak symmetry action of $\mathfrak{g}$ on $\mathfrak{F}_W$ over the ambient space $\mathbb{C}^d$. Two different extensions yield equivalent symmetry actions.
\end{example}
\section{A Lie $\infty$-morphism lifting a weak symmetry of a foliation}\label{chp:main2}\label{sec:3}
We recall that $\mathcal{O}$ is the sheaf of smooth/complex  functions on  a smooth/complex manifold $M$, or the algebra of regular functions on  an affine variety over  $\mathbb{C}$.
We refer the reader to the Chapters \ref{chap:oid1} and \ref{Chap:main} for the notion of (universal) Lie $\infty$-algebroid of a singular foliation. We denote them by $(E, Q)$ and their functions by $\E$.\\ 

For the sake of clarity, let put this chapter in context, and fix some notations.\\

Let $(\mathfrak{g},\lb_\mathfrak{g})$ a Lie algebra and $(E, Q)$ a Lie $\infty$-algebroid over $M$. In the sequel, the Lie algebra $\mathfrak{g}$ is concentrated in degree $-1$. The differential graded Lie algebra $(\mathfrak{X}(E),\lb, \mathrm{ad}_Q)$ of vector fields on $E$  is shifted by $1$, i.e. a derivation of degree $k$ in $\mathfrak{X}_k(E)$ is of degree $k-1$ as an element of the shifted space $\mathfrak{X}_k(E)[1]$. The graded symmetric Lie bracket on $\mathfrak{X}(E)[1]$ is of degree $+1$ and given on homogeneous elements $u,v\in\mathfrak{X}(E)[1]$ as  $$\{u,v\}:=(-1)^{\lvert v\rvert}[u,v].$$ 
In the sequel, we write $(\mathfrak{X}(E)[1],\lb,\mathrm{ad}_Q)$ instead of $(\mathfrak{X}(E)[1],\{\cdot\,,\cdot\},\mathrm{ad}_Q)$.\\

Let $\left(S_\mathbb{K}^\bullet\mathfrak{g}, Q_\mathfrak{g}\right)$ respectively $(S_\mathbb{K}^\bullet(\mathfrak{X}(E)[1]),\Bar{Q})$ be the corresponding formulations in terms of co-derivations of the differential graded Lie algebras $(\mathfrak{g},\lb_\mathfrak{g})$ and $(\mathfrak{X}(E)[1],\lb,\text{ad}_Q)$. Precisely, $Q_\mathfrak{g}$ is the co-derivation defined by putting for every homogeneous monomial $x_1\wedge\cdots\wedge x_k \in S^k_\mathbb{K} \mathfrak g$,
\begin{equation}
   Q_\mathfrak{g}(x_1\wedge\cdots\wedge x_k) :=\sum_{1\leq i<j\leq k}(-1)^{i+j-1}[x_i,x_j]_\mathfrak{g}\wedge x_1\wedge\cdots \widehat{x}_i\cdots \widehat{x}_j\cdots\wedge x_k,
\end{equation}
and  $\Bar{Q}=\Bar{Q}^{(0)}+\Bar{Q}^{(1)}$ is the co-derivation of degree $+1$ where the only non-zero Taylor coefficients are, $\Bar{Q}^{(0)}\colon S_\mathbb{K}^1(\mathfrak{X}(E)[1])\stackrel{\text{ad}_Q}{\longrightarrow}\mathfrak{X}(E)[1]$ and $\Bar{Q}^{(1)}\colon S_\mathbb{K}^2(\mathfrak{X}(E)[1])\stackrel{\{ \cdot\,,\cdot\}}{\longrightarrow}\mathfrak{X}(E)[1]$.\\

The following is a particular case of Example \ref{ex:dgla-morph}, see also \cite{LadaTom1994ShLa}.
\begin{definition}\label{def:morph2}
A \emph{Lie $\infty$-morphism}\footnote{Here, we use "$\rightsquigarrow$" to emphasize that $\Phi$ is not a DGLA morphism.} $\Phi\colon (\mathfrak{g},\lb_\mathfrak{g})\rightsquigarrow (\mathfrak X_\bullet(E)[1],\lb,\text{ad}_{Q})$ is a graded coalgebra morphism $\Bar{\Phi}\colon (S_\mathbb{K}^\bullet\mathfrak{g},Q_\mathfrak{g})\longrightarrow (S_\mathbb{K}^\bullet\left(\mathfrak X(E)[1]\right),\Bar{Q})$ of degree zero which satisfies,
\begin{equation}
    \Bar\Phi\circ Q_\mathfrak{g} =\Bar{Q}\circ\Bar\Phi.
\end{equation}In order words, it is the datum of degree zero linear maps $\left(\Bar{\Phi}_k\colon S_\mathbb{K}^{k+1}\mathfrak{g}\longrightarrow \mathfrak X_{-k}(E)[1]\right)_{k\geq 0}$ that satisfies \begin{align}\label{infty-morph-axiom}
    \sum_{1\leq i<j\leq n+2}(-1)^{i+j-1}\Bar{\Phi}_{n}([x_i,x_j]_\mathfrak{g},x_1,\ldots,\widehat{x}_{ij},\ldots,x_{n+2})&=[Q,\Bar{\Phi}_{n+1}(x_{1},\ldots,x_{n+2})] \,+&\nonumber\\\sum_{\tiny{\begin{array}{c}
        i+j= n \\i\leq j\\\sigma\in\mathfrak{S}_{i+1,j+1}\end{array}}}\epsilon(\sigma)[\Bar{\Phi}_i(x_{\sigma(1)}&,\ldots,x_{\sigma(i+1)}),\Bar{\Phi}_j(x_{\sigma(i+2)},\ldots,x_{\sigma(n+2)})]
\end{align}where $\widehat{x}_{ij}$ means that we take $x_i,x_j$ out of the list. When there is no risk of confusion, we write $\Phi$ for $\Bar{\Phi}$.
\end{definition}
\begin{remark}It is important to notice that:
\begin{enumerate}
    \item Definition \ref{def:morph2} and Definition \ref{def:oid-morph} are compatible when $M=\{\mathrm{pt}\}$. Therefore,  morphisms in both sense match.

\item In \cite{MEHTA2012576}, Definition \ref{def:morph2} corresponds to the definition of actions of a Lie $\infty$-algebras of finite dimension on Lie $\infty$-algebroids of finite rank. Here we only have a Lie algebra. In contrast to theirs, we do not assume that $\mathfrak{g}$ is finite dimensional.
\end{enumerate}
\end{remark}
\begin{remark}\label{Rmk:CE1}
It follows from the axioms \eqref{infty-morph-axiom} that for all $x,y\in \mathfrak{g}$, $[Q, \Phi_0(x)]=0$ and \begin{equation}\label{eq:second-condition}
    \Phi_0([x,y]_{\mathfrak{g}})-[\Phi_0(x),\Phi_0(y)]=[Q,\Phi_1(x,y)].
\end{equation} In particular, if the homological vector field $Q$  vanishes at some point $m\in M$, then the map $x\longmapsto (P\in\mathfrak{X}(E),\, P_{|_m}\mapsto[\Phi_0(x),\,P]_{|_m})$ endows the vector space $\mathfrak{X}(E)_{|_m}\simeq~(S(E^*)\otimes E)_{|_m}$ with a $\mathfrak{g}$-module structure. Moreover, the restriction of the map $\Phi_1\colon \wedge^2\mathfrak{g}\longrightarrow \mathfrak{X}_{-1}(E)_{|_m}$ at $m$ is a $2$-cocycle of Chevalley-Eilenberg.
\end{remark}
{\begin{remark}Let $(E,Q)$ be a Lie $\infty$-algebroid and $\mathfrak F$ its basic singular foliation. Any Lie $\infty$-morphism $\Phi\colon (\mathfrak{g},\lb_\mathfrak{g})\longrightarrow (\mathfrak X_\bullet(E)[1],\lb,\text{ad}_{Q})$ gives a weak symmetry action of $\mathfrak{g}$ on $\mathfrak F$. If $Q_{|_m}=0$ for some point $m\in M$, the $\mathfrak{g}$-action defined in Remark \ref{Rmk:CE1}, is independent of the equivalence class of the weak symmetry action.
\end{remark}}

The following lemma explains what the $0$-Taylor coefficient of a Lie $\infty$-morphism as in Definition \ref{def:morph2}  induces  on the linear part of $(E,Q)$. More details will be given in Proposition \ref{prop:induced-action} and Remark \ref{rmk:rk:low-dual-terms-Lie-infty}.
\begin{lemma}\label{lemma:basic-action}
The $0$-th Taylor coefficient $\Phi_0\colon \mathfrak{g}\longrightarrow \mathfrak X_0(E)$ induces 

\begin{enumerate}
    \item a linear map $\varrho\colon \mathfrak{g}\longrightarrow \mathfrak{X}(M),\, x\longmapsto \left(\varrho(x)[f]:=\Phi_0(x)[f],\;\;f\in \mathcal{O}\right)$ and
    \item a linear map
$x\in\mathfrak{g}\longmapsto \nabla_x\in\mathrm{Der}_0(E)$, i.e. for each $x\in \mathfrak{g}$, $\nabla_x\colon E\longrightarrow E$ is a degree zero map  that satisfies $$\nabla_x(fe)=~f\nabla_x(e) + \varrho(x)[f]e,\;\; \text{for}\;\; f\in \mathcal{O}, e\in \Gamma(E).$$
such that  \begin{equation}\label{eq:dual-action}
     \langle \Phi_0(x)^{(0)}(\alpha),e\rangle=\varrho(x)[\langle\alpha,e\rangle]-\langle\alpha,\nabla_x(e)\rangle,\;\;\text{for all $\alpha\in \Gamma(E^*),e\in \Gamma(E)$}.
 \end{equation}
 $\Phi_0(x)^{(0)}$ stands for the polynomial-degree zero of $\Phi_0(x)$.
\end{enumerate}

\end{lemma}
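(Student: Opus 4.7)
\textbf{Proof plan for Lemma \ref{lemma:basic-action}.}

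The plan is to exploit the fact that $\Phi_0(x)\in \mathfrak{X}_0(E)$ is a degree~$0$ derivation of the sheaf $\mathcal{E}=\Gamma(S^{\bullet}(E^{*}))$, and to extract its ``polynomial degree zero'' part, which will be the only relevant piece for producing $\varrho$ and $\nabla$. Recall that polynomial degrees run over $\mathbb Z$, with $\mathcal O=\Gamma(S^{0}(E^{*}))$ sitting in polynomial degree $0$ and $\Gamma(E^{*})=\Gamma(S^{1}(E^{*}))$ in polynomial degree $1$. A derivation of polynomial degree $0$ (such as $\Phi_{0}(x)^{(0)}$) preserves both the internal $\mathbb Z$-grading and the polynomial degree. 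In particular it restricts to an $\mathbb{K}$-derivation $\mathcal O\to\mathcal O$ and to an $\mathbb{K}$-linear map $\Gamma(E^{*})\to\Gamma(E^{*})$.

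First, I would obtain item~1 directly: since $\Phi_{0}(x)$ is of degree $0$, its restriction to $\mathcal O$ lies in $\mathcal O$ (other polynomial degrees contribute $0$ there, so $\Phi_{0}(x)|_{\mathcal O}=\Phi_{0}(x)^{(0)}|_{\mathcal O}$), and the Leibniz identity satisfied by $\Phi_{0}(x)$ as a graded derivation of $\mathcal E$ specialises to the usual Leibniz identity on $\mathcal O$. Setting $\varrho(x)[f]:=\Phi_{0}(x)[f]$ for $f\in\mathcal O$ thus yields a vector field on $M$, and linearity in $x$ is obvious.

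Next, I would define $\nabla_{x}\colon\Gamma(E)\to\Gamma(E)$ by duality, using the finite rank of the graded components of $E$ to identify $\Gamma(E^{**})\simeq \Gamma(E)$. For $e\in\Gamma(E)$, consider the map
\begin{equation*}
\Gamma(E^{*})\longrightarrow\mathcal O,\qquad \alpha\longmapsto \varrho(x)\bigl[\langle\alpha,e\rangle\bigr]-\langle\Phi_{0}(x)^{(0)}(\alpha),e\rangle.
\end{equation*}
The key verification is that this map is $\mathcal O$-linear in $\alpha$. This follows from the Leibniz rule for $\Phi_{0}(x)^{(0)}$ as a derivation of $\mathcal E$: for $g\in\mathcal O$, $\Phi_{0}(x)^{(0)}(g\alpha)=\varrho(x)[g]\,\alpha + g\,\Phi_{0}(x)^{(0)}(\alpha)$, and the $\varrho(x)[g]\langle\alpha,e\rangle$ terms cancel against the one coming from $\varrho(x)\bigl[g\langle\alpha,e\rangle\bigr]$, leaving $g$ times the original expression. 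Hence the map defines a unique element $\nabla_{x}(e)\in\Gamma(E^{**})\simeq\Gamma(E)$ satisfying \eqref{eq:dual-action}. The fact that $\nabla_{x}$ preserves the internal grading on $E$ follows because $\Phi_{0}(x)^{(0)}$ is of degree $0$ and so preserves each $\Gamma(E^{*}_{i})$.

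Finally, I would verify the Leibniz rule for $\nabla_{x}$. For $f\in\mathcal O$ and $e\in\Gamma(E)$, pair $\nabla_{x}(fe)$ with an arbitrary $\alpha\in\Gamma(E^{*})$ and compute via \eqref{eq:dual-action}:
\begin{equation*}
\langle\alpha,\nabla_{x}(fe)\rangle=\varrho(x)[f\langle\alpha,e\rangle]-f\langle\Phi_{0}(x)^{(0)}(\alpha),e\rangle=\varrho(x)[f]\langle\alpha,e\rangle+f\langle\alpha,\nabla_{x}(e)\rangle,
\end{equation*}
which is exactly $\langle\alpha,\varrho(x)[f]\,e+f\nabla_{x}(e)\rangle$. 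By non-degeneracy of the pairing, $\nabla_{x}(fe)=f\nabla_{x}(e)+\varrho(x)[f]\,e$, completing the proof. The only mildly delicate point is the identification $\Gamma(E^{**})\simeq\Gamma(E)$, but this is safe here since $(E,Q)$ is by convention a Lie $\infty$-algebroid of finite rank in each degree; no calculation in higher polynomial degree is needed because \eqref{eq:dual-action} only concerns the polynomial-degree~$0$ component.
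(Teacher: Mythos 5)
Your argument is correct, but it takes a different route from the paper's. The paper defines $\nabla_x$ through the graded commutator with contraction operators: it invokes Lemma \ref{lemma:contraction} (every vertical vector field of polynomial-degree $-1$ and degree $-i$ is $\iota_e$ for some $e\in\Gamma(E_{-i})$) to write $[\Phi_0(x),\iota_e]^{(-1)}=\iota_{\nabla_x e}$, obtains the Leibniz rule by expanding $[\Phi_0(x),\iota_{fe}]^{(-1)}$, and only then unwinds the commutator on $\alpha\in\Gamma(E^*)$ to arrive at \eqref{eq:dual-action}. You instead take \eqref{eq:dual-action} as the \emph{definition} of $\nabla_x(e)$, checking that $\alpha\mapsto\varrho(x)[\langle\alpha,e\rangle]-\langle\Phi_0(x)^{(0)}(\alpha),e\rangle$ is $\mathcal O$-linear and using $\Gamma(E^{**})\simeq\Gamma(E)$. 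The two arguments are dual to one another and rely on the same finiteness hypothesis (your double-dual identification versus the local-frame argument inside Lemma \ref{lemma:contraction}); yours has the small advantage of being self-contained, bypassing the classification of polynomial-degree $-1$ vector fields, while the paper's formulation via $\iota_{\nabla_x e}$ is the one reused later (e.g.\ in Remark \ref{rmk:rk:low-dual-terms-Lie-infty}, where $[[Q,\Phi_0(x)],\iota_e]^{(-1)}$ is computed by iterated brackets), so it integrates more directly with the subsequent computations. One point worth making explicit in your write-up: the identity $\Phi_0(x)|_{\mathcal O}=\Phi_0(x)^{(0)}|_{\mathcal O}=\varrho(x)$ holds for degree reasons (the degree-$0$ part of $\mathcal E$ is exactly $\mathcal O$ since $E$ is concentrated in negative degrees), which is what guarantees that the coefficient appearing in your Leibniz expansion of $\Phi_0(x)^{(0)}(g\alpha)$ is the scalar function $\varrho(x)[g]$ and not a higher polynomial-degree term.
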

\begin{proof}
Using Lemma \ref{lemma:contraction}, we have for every $x\in\mathfrak g$, and $e\in \Gamma(E)$,\,  $$[\Phi_0(x), \iota_e]^{(-1)}=\iota_{\nabla_xe},$$ for some $\mathbb{K}$-bilinear map $\nabla_x\colon \Gamma(E_{-\bullet})\longrightarrow \Gamma(E_{-\bullet})$ that depends linearly on $x\in\mathfrak g$ and that satisfies \begin{equation}
    \label{eq:linear-part-der}
    \nabla_x(fe)=~f\nabla_x(e) + \varrho(x)[f]e,\; \text{for $f\in \mathcal{O}, e\in \Gamma(E)$}.
\end{equation}
To see \eqref{eq:linear-part-der}, compute $[\Phi_0(x), \iota_{fe}]^{(-1)}$: \begin{align*}
    \iota_{\nabla_x(fe)}&=[\Phi_0(x), \iota_{(fe)}]^{(-1)}\\&=\Phi_0(x)[f]\iota_e + f[\Phi_0(x), \iota_e]^{(-1)}\\&=\iota_{\varrho(x)[f]e + \nabla_xe}.
\end{align*}

In particular, one has for all $\alpha\in \Gamma(E^*),e\in \Gamma(E)$, \begin{align*}
    \langle \Phi_0(x)^{(0)}(\alpha),e\rangle&=\Phi_0(x)^{(0)}[\langle \alpha,e\rangle]-[\Phi_0(x)^{(0)}, \iota_e]^{(-1)}(\alpha)\\&=\varrho(x)[\langle\alpha,e\rangle]-\langle\alpha,\nabla_x(e)\rangle.
\end{align*}
\end{proof}

\subsection{Homotopies}

The following is a particular case of Definition \ref{def:homotopy} of Section \ref{sec:Homtopies1}. We rewrite it in this special context for the sake of clarity.
\begin{definition}\label{homp:def}
Let $\Bar{\Phi},\Bar{\Psi}\colon (S_\mathbb{K}^\bullet\mathfrak{g},Q_\mathfrak{g})\rightsquigarrow \left(S_\mathbb{K}^\bullet(\mathfrak X(E)[1]),\Bar{Q}\right)$ be Lie $\infty$-morphisms. We say $\Bar{\Phi},\Bar{\Psi}$ are \emph{homotopic over the identity of $M$} if the following conditions hold:\begin{enumerate}
    \item there a piecewise rational continuous path $t\in[a,b]\mapsto\Xi_t\colon(S_\mathbb{K}^\bullet\mathfrak{g},Q_\mathfrak{g})\rightsquigarrow \left(S_\mathbb{K}^\bullet(\mathfrak X(E)[1]),\Bar{Q}\right)$ made of Lie $\infty$-morphisms  that coincide with $\Bar{\Phi}$ and $\Bar{\Psi}$ at $t=a$ and $b$, respectively,
     
 \item and a piecewise rational path $t\in[a,b]\mapsto H_t$ of $\Xi_t$-co-derivations of degree $-1$ such that \begin{equation}
        \frac{\dd\Xi_t }{\dd t}=\Bar{Q}\circ H_t+H_t\circ Q_{\mathfrak{g}}.
    \end{equation}
\end{enumerate}
\end{definition}
\begin{remark}
Homotopy equivalence in the sense of the Definition \ref{homp:def} is an equivalence relation, and it is compatible with composition of Lie $\infty$-morphisms. Also, we "glue" infinitely many equivalences, as in Lemma \ref{gluing-lemma}.
\end{remark}
\begin{convention}
In the sequel, $Q_\mathfrak{g}$ and $\Bar Q$ will be in implicit.
\end{convention}
\begin{remark}
Definition \ref{homp:def} is slightly  more general than the equivalence relation \cite{MEHTA2012576}. In \cite{MEHTA2012576}, it is explained that Lie $\infty$-oid morphisms are Maurer-Cartan elements in some Lie $\infty$-algebroid $\mathfrak{g}\oplus E$ of certain form, and they define equivalence as gauge-equivalence of the Maurer-Cartan elements. This gauge equivalence corresponds to homotopies as above for which all functions are smooth. Also, we do not require nilpotence unlike in Definition 5.1 of  \cite{MEHTA2012576}. Last, we do not assume $\mathfrak{g}$ to be of finite dimension.
\end{remark}

\begin{proposition}\label{prop:induced-action}
Let $\mathfrak{g}$ be a Lie algebra and $(E,Q)$ a Lie $\infty$-algebroid over $M$.  

\begin{enumerate}
    \item Any {Lie $\infty$-morphism} $\Phi\colon (\mathfrak{g},\lb_\mathfrak{g})\rightsquigarrow (\mathfrak X_\bullet(E)[1],\lb,\mathrm{ad}_{Q})$ induces a weak symmetry action of $\mathfrak{g}$ on  the basic singular foliation $\mathfrak F=\rho(\Gamma(E_{-1}))$ of $(E,Q)$.
    \item Homotopic Lie $\infty$-morphisms ${\Phi},{\Psi}\colon (\mathfrak{g},\lb_\mathfrak{g})\rightsquigarrow (\mathfrak X_\bullet(E)[1],\lb,\mathrm{ad}_{Q})$ induce equivalent weak symmetry actions $\varrho_a, \varrho_b$ of $\mathfrak{g}$ on  the basic singular foliation $\mathfrak F$.
\end{enumerate}
\end{proposition}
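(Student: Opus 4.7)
The plan is to extract the weak symmetry action from $\Phi_0$ by projecting to the base and then use the identity $[Q,\Phi_0(x)]=0$ together with Equation~\eqref{eq:second-condition} from Remark~\ref{Rmk:CE1} as the main computational tools. The key structural observation, which I would state up front, is that the grading on $\E$ forces $\E_0=\mathcal O$ (since $E^*$ is concentrated in positive degrees) and $\E_{-1}=0$; consequently, any degree $0$ derivation of $\E$ restricts to a vector field on $M$, and any degree $-1$ derivation applied to a function vanishes. Also, the anchor can be written intrinsically as $\rho(e)[f]=[Q,\iota_e](f)$ for $e\in\Gamma(E_{-1})$ and $f\in\mathcal O$, and by Lemma~\ref{lemma:contraction}, the polynomial-degree $-1$ part of any degree $-1$ derivation is necessarily of the form $\iota_e$ for some $e\in \Gamma(E_{-1})$.

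For item (1), I would first define $\varrho\colon\mathfrak g\to\mathfrak X(M)$ using Lemma~\ref{lemma:basic-action}, so that $\varrho(x)=\Phi_0(x)|_{\mathcal O}$. To check $[\varrho(x),\mathfrak F]\subseteq\mathfrak F$, pick $e\in\Gamma(E_{-1})$ and $f\in\mathcal O$ and compute, using $[Q,\Phi_0(x)]=0$ and Leibniz,
\[
  [\varrho(x),\rho(e)][f]\;=\;\Phi_0(x)\bigl(\iota_e Q(f)\bigr)-\iota_e Q\bigl(\Phi_0(x)f\bigr)\;=\;[\Phi_0(x),\iota_e]\bigl(Q(f)\bigr).
\]
Since $Q(f)\in\Gamma(E_{-1}^*)$ has polynomial-degree $1$ and lands in $\E_0=\mathcal O$, only the polynomial-degree $-1$ part of $[\Phi_0(x),\iota_e]$ contributes; by Lemma~\ref{lemma:basic-action} this part is $\iota_{\nabla_x e}$, yielding $[\varrho(x),\rho(e)]=\rho(\nabla_x e)\in\mathfrak F$. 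For the second axiom, apply Equation~\eqref{eq:second-condition} to $f\in\mathcal O$: the left-hand side restricts to $\bigl(\varrho([x,y]_{\mathfrak g})-[\varrho(x),\varrho(y)]\bigr)[f]$, while the right-hand side equals $[Q,\Phi_1(x,y)](f)=\Phi_1(x,y)(Q(f))$ since $\Phi_1(x,y)f\in\E_{-1}=0$; the same polynomial-degree argument, combined with Lemma~\ref{lemma:contraction} applied to $\Phi_1(x,y)\in\mathfrak X_{-1}(E)$, gives $\Phi_1(x,y)^{(-1)}=\iota_{e_{xy}}$ for some $e_{xy}\in\Gamma(E_{-1})$, so the right-hand side equals $\rho(e_{xy})[f]\in\mathfrak F$.

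For item (2), let $(\Xi_t,H_t)_{t\in[a,b]}$ be a homotopy joining $\Phi=\Xi_a$ to $\Psi=\Xi_b$. Restricting the homotopy equation to $S^1\mathfrak g=\mathfrak g$, note that $Q_{\mathfrak g}$ has polynomial-degree $-1$ and hence vanishes on $S^1\mathfrak g$, so the equation reduces to $\tfrac{d(\Xi_t)_0(x)}{dt}=[Q,H_0^t(x)]$, where $H_0^t\colon\mathfrak g\to\mathfrak X_{-1}(E)$ is the $0$-th Taylor coefficient of $H_t$. Integrating over $[a,b]$ (which is legitimate because the path is piecewise rational continuous) produces $K(x):=\int_a^b H_0^t(x)\,dt\in\mathfrak X_{-1}(E)$ with $\Psi_0(x)-\Phi_0(x)=[Q,K(x)]$. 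Repeating the polynomial-degree analysis of item (1) on $[Q,K(x)](f)$ for $f\in\mathcal O$, and invoking Lemma~\ref{lemma:contraction} to write $K(x)^{(-1)}=\iota_{e_x}$ with $e_x\in\Gamma(E_{-1})$, yields $\varrho_\Psi(x)-\varrho_\Phi(x)=\rho(e_x)\in\mathfrak F$, which is exactly the equivalence relation on weak symmetry actions from Definition~\ref{def:sym}.

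The steps are conceptually straightforward; the main obstacle is bookkeeping, namely translating between the co-derivation formalism (Taylor coefficients of $\Phi$ and $H_t$) and the concrete derivations $\Phi_0(x),\Phi_1(x,y),H_0^t(x)$ on $\E$, while tracking signs and polynomial-degree constraints, and justifying that the single term $[Q,H_0^t(x)]$ indeed arises when restricting $\bar Q\circ H_t+H_t\circ Q_{\mathfrak g}$ to $S^1\mathfrak g$.
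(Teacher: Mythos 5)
Your proposal is correct and follows essentially the same route as the paper: define $\varrho$ via Lemma \ref{lemma:basic-action}, use $[Q,\Phi_0(x)]=0$ together with the polynomial-degree $-1$ part of $[\Phi_0(x),\iota_e]$ to get $[\varrho(x),\rho(e)]=\rho(\nabla_x e)$, evaluate Equation \eqref{eq:second-condition} in polynomial degree zero with Lemma \ref{lemma:contraction} for the second axiom, and reduce homotopy equivalence to $\Psi_0(x)-\Phi_0(x)=[Q,K(x)]$ with $K(x)^{(-1)}=\iota_{e_x}$. The only cosmetic difference is that you integrate the homotopy equation on $S^1\mathfrak g$ by hand where the paper invokes Proposition \ref{prop:homotpy-usual}, which is the same computation.
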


\begin{proof}
Item \emph{1.} is a consequence of Remark \ref{Rmk:CE1}. Indeed,  take $\varrho\colon \mathfrak{g}  \longrightarrow \mathfrak{X}(M)$ as in Lemma \ref{lemma:basic-action} (\emph{1.}) We claim that $\varrho$ is a weak symmetry action of $\mathfrak{g}$ on  $\mathfrak{F}$: Let $x,y\in \mathfrak g$, and $e\in \Gamma(E_{-1})$ and $f\in \mathcal{O}$.
\begin{itemize}
    \item  $[\Phi_0(x), Q]=0$ entails, \begin{align*}
        \left\langle\Phi_0(x)^{(0)}\left[Q^{(1)}(f)\right],e\right\rangle&=\left\langle Q^{(1)}\left(\Phi_0(x)^{(0)}[f]\right),e\right\rangle\\&\\\varrho(x)[\langle Q[f],e\rangle]-\left\langle Q[f],\nabla_x(e)\right\rangle&=\rho(e)[\varrho(x)],\qquad\text{(by Lemma \ref{lemma:basic-action} (\emph{2.}))}\\&\\\varrho(x)[\rho(e)][f]-\rho(\nabla_x(e))[f]&=\rho(e)[\varrho(x)]
    \end{align*}
By consequence, $[\varrho(x), \rho(e)]=\rho(\nabla_x(e))\in \mathfrak{F}$. Therefore, $[\varrho(x), \mathfrak{F}]\subseteq \mathfrak{F}$.

    \item By Lemma \ref{lemma:contraction}, there exists a skew-symmetric linear map $\eta\colon \wedge^2 \mathfrak{g}\longrightarrow \Gamma(E_{-1})$ such that $\Phi_1(x,y)^{(-1)}=\iota_{\eta(x,y)}$. Therefore, the polynomial-degree zero of Equation \eqref{eq:second-condition} evaluated at an arbitrary function $f\in \mathcal{O}$ yields:
\begin{align*}
     &\Phi_0([x,y]_{\mathfrak{g}})^{(0)}(f)-[\Phi_0(x),\Phi_0(y)]^{(0)}(f)=[Q,\Phi_1(x,y)]^{(0)}(f)\\&\\\Longrightarrow \;&\Phi_0([x,y]_{\mathfrak{g}})(f)-\left[\Phi_0(x)^{(0)},\Phi_0(y)^{(0)}\right](f)=\left[Q^{(1)},\Phi_1(x,y)^{(-1)}\right](f)\\&\\\Longrightarrow &\; \varrho([x,y]_\mathfrak{g})[f]-[\varrho(x), \varrho(y)][f]=\left[Q^{(1)},\iota_{\eta(x,y)}\right](f)\\&\\\phantom{\Longrightarrow} &\; \phantom{\varrho([x,y]_\mathfrak{g})[f]-[\varrho(x), \varrho(y)][f]}=\rho(\eta(x,y))[f].
\end{align*}Since $f$ is arbitrary, this proves item \emph{1.} Using Proposition \ref{prop:homotpy-usual}, $\Phi\sim\Psi$ implies for $x\in \mathfrak{g}$ that
\begin{align}
   \nonumber \Psi(x)-\Phi(x)&=\Bar{Q}\circ H(x)+\cancel{H\circ Q_{\mathfrak{g}}(x)}\\\label{eq:sym-equi}&=[Q, H(x)]
\end{align}
with $H\colon \mathfrak{g}\longrightarrow \mathfrak{X}_{-1}(E)$ a linear map. Let $\beta\colon \mathfrak{g}\longrightarrow \Gamma(E_{-1})$ be a linear map such that $H(x)^{(-1)}=\iota_{\beta(x)}$. Taking the polynomial-degree zero of both sides in Equation \eqref{eq:sym-equi} and evaluating at  $f\in \mathcal{O}$ we obtain that \begin{align*}
        \left(\varrho_a(x)-\varrho_b(x)\right)[f]= \left[Q^{(1)}, H(x)^{(-1)}\right]=\left[Q^{(1)}, \iota_{\beta(x)}\right][f]=\rho(\beta(x))[f].
\end{align*}
Since $f$ is arbitrary, this proves item $\emph{2}$.
\end{itemize}
\end{proof}

Proposition \ref{prop:induced-action} tells us that {Lie $\infty$-morphism} $\Phi\colon (\mathfrak{g},\lb_\mathfrak{g})\rightsquigarrow (\mathfrak X_\bullet(E)[1],\lb,\text{ad}_{Q})$ induces weak symmetry action on the base manifold $M$. The aim of the next section is to look at the opposite direction. It responds to the following question: Do any weak symmetry action of a Lie algebra on a singular foliation comes from a Lie $\infty$-morphism? If so, can we extend uniquely?\\

Now we define what we call "lift" of a weak symmetry action of a Lie algebra $\mathfrak{g}$ on a singular foliation $\mathfrak{F}$ to a Lie $\infty$-algebroid $(E,Q)$ over $\mathfrak{F}$. 

\begin{definition}\label{def:lift}
Let $\mathfrak{F}$ be a singular foliation over $M$ and $(E, Q)$ a Lie $\infty$-algebroid over $\mathfrak F$. Consider a weak symmetry action $\varrho\colon\mathfrak{g}\longrightarrow\mathfrak{X}(M)$ of $\mathfrak g$ on  $\mathfrak F$.
\begin{itemize}
    \item We say that a Lie $\infty$-morphism of differential graded Lie algebras $$\Phi\colon (\mathfrak{g},\lb_\mathfrak{g})\rightsquigarrow (\mathfrak X_\bullet(E)[1],\lb,\text{ad}_{Q})$$ \emph{lifts the weak symmetry action $\varrho$ to $(E, Q)$} if for all $x\in\mathfrak{g}, f\in\mathcal{O}$,\, $\Phi_0(x)(f)=\varrho(x)[f]$.
    \item When $\Phi$ exists, we say then $\Phi$ is a \emph{lift} of $\varrho$ on $(E,Q)$.
\end{itemize}
\end{definition}

\section{Main statements}
 We now state the main theorem of this chapter. Proposition \ref{prop:induced-action} showed that a Lie $\infty$-morphism between a Lie algebra $\mathfrak{g}$ and a Lie $\infty$-algebroid $(E,Q)$ induces a weak action of $\mathfrak{g}$ on the basic foliation. In this section, we show  that any weak symmetry action of a Lie algebra $\mathfrak{g}$ on a singular foliation $\mathfrak{F}$ arises this way.\\

 More precisely,
\begin{theorem}\label{main}Let $\mathfrak F$ a be a singular foliation over a smooth manifold (or an affine variety) $M$ and $\mathfrak{g}$ a Lie algebra. Let $\varrho\colon\mathfrak{g}\longrightarrow\mathfrak{X}(M)$ be a weak symmetry action  of $\mathfrak g$ on $\mathfrak F$. The following assertions hold:
\begin{enumerate}
    \item for any universal Lie $\infty$-algebroid $(E,Q)$ of the singular foliation $\mathfrak F$, there exists a Lie $\infty$-morphism $\Phi\colon(\mathfrak{g},\lb_\mathfrak g)\rightsquigarrow\left(\mathfrak X_\bullet(E)[1],\lb, \emph{ad}_Q \right)$ that lifts $\varrho$ to $(E,Q)$,
    \item any two such Lie $\infty$-morphisms are homotopy equivalent over the identity of $M$,
    \item any two such lifts of any two  equivalent weak symmetry actions of $\mathfrak{g}$ on $\mathfrak{F}$ are homotopy equivalent.
\end{enumerate}

\begin{remark}
Again, Lie $\infty$-morphisms in item $\emph{1}$ of Theorem \ref{main} are $\mathfrak{g}$-actions on $(E,Q)$ in \cite{MEHTA2012576}.
\end{remark}

\begin{remark}\label{rk:low-terms-Lie-infty}
The item $\emph{1}$ in Theorem \ref{main} means that there exists a linear map $\Phi_0\colon\mathfrak{g}\longrightarrow\mathfrak{X}_0(E)$ such that \begin{equation}\label{eq:compatibility-with-Q}
    \Phi_0(x)[f]=\varrho(x)[f], \;\text{and}\,\; [Q,\Phi_0(x)]=0,\quad \forall x\in \mathfrak{g}, f\in \mathcal{O}.
\end{equation}
This morphism is not a graded Lie algebra morphism, but there exist a linear map $\Phi_1\colon\wedge^2\mathfrak{g}\longrightarrow \mathfrak{X}_{-1}(E)$ such that for all $x,y, z\in \mathfrak{g}$,\begin{equation}\label{eq:term2}
    \Phi_0([x,y]_{\mathfrak{g}})-[\Phi_0(x),\Phi_0(y)]=[Q,\Phi_1(x,y)].
\end{equation}
Also, \begin{equation}\label{eq:low-term2}
    \Phi_1\left([x,y]_\mathfrak{g},z\right)-[\Phi_0(x),\Phi_1(y,z)]+\circlearrowleft(x,y,z)=[Q,\Phi_2(x,y,z)]
\end{equation}for some linear map $\wedge^3\mathfrak{g}\longrightarrow \mathfrak{X}_{-2}(E)$. These sets of compatibility conditions continue to higher multilinear maps.  
\end{remark}

\end{theorem}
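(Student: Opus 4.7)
The plan is to construct the Taylor coefficients $\Phi_n \colon \wedge^{n+1}\mathfrak g \to \mathfrak X_{-n}(E)$ by induction on $n$, using as the key homological tool the acyclicity result of Corollary \ref{cor:longitudinal}: every vertical vector field $P$ on $E$ with $P\circ Q = 0$ on $\mathcal O$ which is $\mathrm{ad}_Q$-closed is $\mathrm{ad}_Q$-exact, with a primitive satisfying the same properties. Universality of $(E,Q)$ enters precisely at this point.

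\emph{Step 1 (initialization).} For $x\in\mathfrak g$ choose an arbitrary degree $0$ extension $\widehat{\varrho(x)}\in\mathfrak X_0(E)$ of the vector field $\varrho(x)\in\mathfrak X(M)$ (e.g.\ via a connection). Since $\varrho(x)$ is an infinitesimal symmetry of $\mathfrak F$, item 3 of Example \ref{longi:examples} shows that $[Q,\widehat{\varrho(x)}]$ is a longitudinal vector field; it is $\mathrm{ad}_Q$-closed because $Q^2=0$. Applying Theorem \ref{thm:longitudinal} (more precisely Corollary \ref{cor:longitudinal} after suitable choice, using the vertical refinement of Remark \ref{rmk:arity}) we write $[Q,\widehat{\varrho(x)}]=[Q,P(x)]$ for a vertical $P(x)$ depending linearly on $x$. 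Set $\Phi_0(x):=\widehat{\varrho(x)}-P(x)$. Then $\Phi_0(x)[f]=\varrho(x)[f]$ for every $f\in\mathcal O$ and $[Q,\Phi_0(x)]=0$, which is exactly \eqref{eq:compatibility-with-Q}.

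\emph{Step 2 (inductive step).} Assume $\Phi_0,\dots,\Phi_{n-1}$ have been constructed so that the Lie $\infty$-morphism relations \eqref{infty-morph-axiom} hold up to arity $n-1$ and so that $\Phi_k(\mathfrak g^{\wedge (k+1)})$ is vertical for $k\geq 1$. Define the would-be obstruction as the right-hand side of \eqref{infty-morph-axiom} in arity $n$, with the unknown term $[Q,\Phi_n(\cdot)]$ suppressed:
\[
\mathcal O_n(x_1,\dots,x_{n+1}) := \sum_{i<j}(-1)^{i+j-1}\Phi_{n-1}\bigl([x_i,x_j]_\mathfrak g,\dots\bigr) - \sum_{\substack{i+j=n-2 \\ \sigma\in\mathfrak S_{i+1,j+1}}}\epsilon(\sigma)\bigl[\Phi_i(\cdots),\Phi_j(\cdots)\bigr].
\]
A direct (but careful) computation using graded Jacobi for $[\,\cdot\,,\cdot\,]$ on $\mathfrak X(E)[1]$, the Jacobi identity for $[\,\cdot\,,\cdot\,]_\mathfrak g$, and the lower-arity Lie $\infty$-morphism relations shows $[Q,\mathcal O_n]=0$. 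For $n\geq 1$ the map $\mathcal O_n$ is vertical (its lowest-arity terms only involve already vertical $\Phi_k$ with $k\geq 1$, and in arity $n=1$ it equals $\Phi_0([x,y]_\mathfrak g)-[\Phi_0(x),\Phi_0(y)]$, which annihilates $\mathcal O$ because $\varrho$ is a weak symmetry action of $\mathfrak g$ on $\mathfrak F$: the difference acts as an element of $\mathfrak F$ on $\mathcal O$, which after the vertical correction in Remark \ref{rmk:arity} lives in $\mathfrak V_Q$). Hence by Corollary \ref{cor:longitudinal} there exists $\Phi_n\in\mathfrak V_Q$ with $[Q,\Phi_n]=\mathcal O_n$, completing the induction.

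\emph{Step 3 (uniqueness up to homotopy).} Given two lifts $\Phi,\Psi$, set $R_0(x):=\Psi_0(x)-\Phi_0(x)$. Since both lifts agree on $\mathcal O$, $R_0(x)$ is vertical, and it is $\mathrm{ad}_Q$-closed; Corollary \ref{cor:longitudinal} yields $R_0(x)=[Q,H_0(x)]$ with $H_0(x)\in\mathfrak V_Q$. Plug this $H_0$ (extended by $0$ on higher symmetric powers) as the initial Taylor coefficient into the homotopy differential equation of Definition \ref{def:homotopy}, as in Proposition \ref{prop:justify}. The resulting family of co-morphisms $\Xi_t$ joins $\Phi$ to a new Lie $\infty$-morphism $\Xi_1$ whose $0$-th Taylor coefficient equals $\Psi_0$. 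Continue the construction recursively (as in Lemma \ref{imp-lemma2} and the proof of Theorem \ref{th:universal}), killing successively the differences $\Psi_n-\Xi_1^{(n)}$ via exact $H_n$'s provided again by Corollary \ref{cor:longitudinal}, and gluing the homotopies on $[n,n+1]$ using Lemma \ref{gluing-lemma}.

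\emph{Step 4 (equivalent actions).} If $\varrho-\varrho'=\varphi$ with $\varphi\colon\mathfrak g\to\mathfrak F$, lift $\varphi(x)=\rho(e(x))$ with $e(x)\in\Gamma(E_{-1})$. Then $\iota_{e(x)}\in\mathfrak X_{-1}(E)$ is vertical, and $[Q,\iota_{e(x)}]$ is a degree $0$ vertical vector field whose action on $\mathcal O$ is exactly $\rho(e(x))[\cdot\,]=\varphi(x)[\cdot\,]$. Thus if $\Phi$ lifts $\varrho$ then $\Phi'_0:=\Phi_0-[Q,\iota_{e(\cdot)}]$ lifts $\varrho'$, and these two differ by an exact term, exactly the situation of Step 3; extending this deformation by the same recursive procedure yields the desired homotopy between lifts of equivalent actions.

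The main obstacle is the bookkeeping in Step 2: one must verify that the obstruction $\mathcal O_n$ at each stage not only is $\mathrm{ad}_Q$-closed but actually lives in the subcomplex $\mathfrak V_Q$ of Corollary \ref{cor:longitudinal} (so that acyclicity applies and the primitive can itself be chosen in $\mathfrak V_Q$, preserving the inductive hypothesis). This requires a careful polynomial-degree analysis as in Remark \ref{rmk:arity}, together with using that $\varrho$ is only a \emph{weak} (not strict) symmetry action, so that defect terms live in $\mathfrak F$ and can be absorbed as images of sections of $E_{-1}$. Once this verticality propagation is established, the rest is a direct analogue of the universality proof of Theorem \ref{th:universal}.
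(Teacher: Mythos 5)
Your proposal is correct and follows essentially the same route as the paper: initialization of $\Phi_0$ via the acyclicity of longitudinal vector fields (Lemma \ref{initia2:lemm}), absorption of the arity-one defect $\varrho([x,y]_\mathfrak g)-[\varrho(x),\varrho(y)]\in\mathfrak F$ by the term $[Q,\iota_{\eta(x,y)}]$ with $\eta$ a lift through $\rho$, recursion on arity via Corollary \ref{cor:longitudinal} (packaged in the paper as Lemma \ref{lem:length}), and uniqueness by the recursive homotopy-gluing of Lemma \ref{imp-lemma2} and Lemma \ref{gluing-lemma}. The only slip is the phrase that the arity-one obstruction "annihilates $\mathcal O$" — it does not until after subtracting $[Q,\iota_{\eta(x,y)}]$ — but your closing paragraph already identifies the correct mechanism, so this is cosmetic rather than a gap.
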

\begin{corollary}\label{1symmetry}
Any symmetry $X\in\mathfrak{X}(M)$ of the singular foliation $\mathfrak F$ can be lifted to a degree zero vector field $Z\in\mathfrak X_0(E)$ that commutes with $Q$, i.e. such that $[Z,Q]=0$. 
\end{corollary}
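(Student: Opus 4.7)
The plan is to reduce this statement to a direct application of Theorem \ref{main} by interpreting a single symmetry as a weak symmetry action of an abelian one-dimensional Lie algebra. Concretely, I would let $\mathfrak{g} = \mathbb{K}$ be the abelian Lie algebra of dimension one, with trivial bracket, and define the linear map
\[
\varrho \colon \mathfrak{g} \longrightarrow \mathfrak{X}(M), \qquad \lambda \longmapsto \lambda X.
\]
The first step is to verify that $\varrho$ is a weak symmetry action in the sense of Definition \ref{def:sym}. The condition $[\varrho(\lambda), \mathfrak{F}] = \lambda [X, \mathfrak{F}] \subseteq \mathfrak{F}$ is immediate from the assumption that $X$ is an infinitesimal symmetry of $\mathfrak{F}$. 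The second condition $\varrho([\lambda,\mu]_{\mathfrak{g}}) - [\varrho(\lambda), \varrho(\mu)] \in \mathfrak{F}$ is trivial because $\mathfrak{g}$ is abelian (so the left-hand term is zero) and $[\lambda X, \mu X] = 0$ as well.

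Next, I would apply item $1$ of Theorem \ref{main} to this weak symmetry action. This provides a Lie $\infty$-morphism
\[
\Phi \colon (\mathfrak{g}, [\cdot,\cdot]_{\mathfrak{g}}) \rightsquigarrow \bigl(\mathfrak{X}_{\bullet}(E)[1], [\cdot,\cdot], \mathrm{ad}_{Q}\bigr)
\]
lifting $\varrho$ in the sense of Definition \ref{def:lift}. Set $Z := \Phi_{0}(1) \in \mathfrak{X}_{0}(E)$, where $1 \in \mathfrak{g}$ is the generator. By Equation \eqref{eq:compatibility-with-Q} in Remark \ref{rk:low-terms-Lie-infty}, the vector field $Z$ satisfies
\[
[Q, Z] = 0 \qquad \text{and} \qquad Z[f] = \Phi_{0}(1)[f] = \varrho(1)[f] = X[f]
\]
for all $f \in \mathcal{O}$. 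The first identity is exactly the required commutation with $Q$, while the second says that $Z$ projects to $X$ on the base, i.e. $Z$ is a lift of $X$ to $E$.

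There is no serious obstacle here, since the whole content of the corollary is packaged inside Theorem \ref{main}. The only thing to check is that a single infinitesimal symmetry $X$ really does give rise to a weak symmetry action of some Lie algebra, and taking $\mathfrak{g} = \mathbb{K}$ handles this cleanly. Note also that Proposition \ref{prop:induced-action} provides a sanity check: the weak symmetry action induced on $M$ by the $\infty$-morphism $\Phi$ is, by construction, $\varrho$ itself, i.e. the one determined by $X$.
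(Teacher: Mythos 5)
Your proof is correct and follows exactly the paper's own argument: the paper likewise applies Theorem \ref{main} to the one-dimensional abelian Lie algebra $\mathfrak{g}=\mathbb{R}$ acting via $\lambda\mapsto\lambda X$ and sets $Z=\Phi_0(1)$. The extra verification you give that $\varrho$ is a weak symmetry action is a welcome detail the paper leaves implicit.
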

\begin{proof}
To construct $Z$, it suffices to apply Theorem \ref{main} for $\mathfrak{g}=\mathbb{R}$ and take $Z$ to be the image of $1$ through $\Phi_0\colon \mathbb R\longrightarrow \mathfrak{X}_0(E)$. 
\end{proof}

\begin{remark}
 In particular, Corollary \ref{1symmetry} has the following consequences: \begin{enumerate}
    \item for any admissible $t$, the flow $\Phi^Z_t\colon \E \longrightarrow \E$ of $Z$ induces an isomorphism of vector bundles $E_{-1}\longrightarrow E_{-1}$. Since $[Q,Z]=0$, the following diagram commutes,
    
    $$\xymatrix{\Gamma(E_{-1})\ar[d]^{\rho}\ar[r]^{(\Phi^Z_t)^{(0)}}&\Gamma(E_{-1})\ar[d]^\rho\\\mathfrak{X}(M)\ar[r]_{(\varphi^X_t)_*}&\mathfrak{X}(M)}$$

    where  $\phi^X_t$  is the flow of $X$ at $t$.
    \item Consequently, for any open set $U\subset M$ which is stable under $\varphi^X_t$, there exists an invertible matrix $\mathfrak{M}^t_X$ with coefficients in $\mathcal O(U)$ that satisfies $$\left(\phi^X_t\right)_*\begin{pmatrix}X_1\\\vdots\\X_n\end{pmatrix}=\mathfrak{M}^t_X\begin{pmatrix}X_1\\\vdots\\X_n\end{pmatrix},$$
   for some generators $X_1,\ldots,X_n$ of $\mathfrak{F}$ over $U$. As announced earlier, we recover Proposition \ref{prop:symm}, that is, $\left(\phi^X_t\right)_*(\mathfrak{F})=\mathfrak{F}$.
   
\end{enumerate} 
\end{remark}

 Let $(E,Q)$ and $(E',Q')$ be two universal Lie $\infty$-algebroids of $\mathfrak{F}$. A direct consequence of Ricardo Campos's Theorem 4.1 in \cite{Campos} is that the differential graded Lie algebras $\left( \mathfrak X_\bullet(E)[1],\lb, \mathrm{ad}_Q \right)$ and $\left( \mathfrak X_\bullet(E)[1],\lb, \mathrm{ad}_{Q'}\right)$ are homotopy equivalent over the identity of $M$. This leads to the following statement.
\begin{corollary}
 Let $\varrho\colon\mathfrak{g}\longrightarrow\mathfrak{X}(M)$ be a weak symmetry action of a Lie algebra $\mathfrak{g}$ on $\mathfrak F$. Then,   there exist Lie $\infty$-morphisms, $\Phi\colon\mathfrak{g}\rightsquigarrow \left( \mathfrak X_\bullet(E)[1],\lb, \emph{ad}_Q \right)$ and $\Psi\colon\mathfrak{g}\rightsquigarrow \left(\mathfrak X_\bullet(E')[1],\lb, \emph{ad}_{Q'} \right)$ that lift $\varrho$, and $\Phi,\Psi$ make the following diagram commute up to homotopy

\begin{equation}\label{diagram:Campos}\xymatrix{ &\mathfrak{g}\ar[dl]_\Phi\ar[dr]^\Psi& \\\left( \mathfrak X_\bullet(E)[1],\lb, \emph{ad}_Q \right)\ar@{<->}[rr]^\sim & &\left( \mathfrak X_\bullet(E')[1],\lb, \emph{ad}_{Q'} \right).}\end{equation}

\end{corollary}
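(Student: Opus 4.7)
The plan is to reduce this to Theorem \ref{main} using Campos's homotopy equivalence as the ``bridge'' between the two DGLAs $(\mathfrak{X}_\bullet(E)[1],\mathrm{ad}_Q)$ and $(\mathfrak{X}_\bullet(E')[1],\mathrm{ad}_{Q'})$. First, apply item 1 of Theorem \ref{main} to the universal Lie $\infty$-algebroid $(E,Q)$ and the weak symmetry action $\varrho$ in order to obtain a Lie $\infty$-morphism $\Phi\colon\mathfrak{g}\rightsquigarrow(\mathfrak{X}_\bullet(E)[1],\lb,\mathrm{ad}_Q)$ lifting $\varrho$. Similarly, apply the theorem to $(E',Q')$ and $\varrho$ to obtain a lift $\Psi\colon\mathfrak{g}\rightsquigarrow(\mathfrak{X}_\bullet(E')[1],\lb,\mathrm{ad}_{Q'})$.

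Next, invoke Campos's Theorem 4.1 of \cite{Campos}: since the universal Lie $\infty$-algebroids $(E,Q)$ and $(E',Q')$ are homotopy equivalent over the identity of $M$ (by Corollary \ref{cor:unique} transported through Proposition \ref{co-diff}), there exists a Lie $\infty$-quasi-isomorphism of DGLAs
\[
F\colon \bigl(\mathfrak{X}_\bullet(E)[1],\lb,\mathrm{ad}_Q\bigr)\;\rightsquigarrow\;\bigl(\mathfrak{X}_\bullet(E')[1],\lb,\mathrm{ad}_{Q'}\bigr)
\]
admitting a homotopy inverse $G$. The composition $F\circ\Phi\colon\mathfrak{g}\rightsquigarrow(\mathfrak{X}_\bullet(E')[1],\lb,\mathrm{ad}_{Q'})$ is again a Lie $\infty$-morphism from the Lie algebra $\mathfrak{g}$ to the DGLA of vector fields on $E'$.

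The key point is then to check that $F\circ\Phi$ lifts a weak symmetry action of $\mathfrak{g}$ on $\mathfrak{F}$ that is equivalent to $\varrho$ in the sense of Definition \ref{def:sym}. By Proposition \ref{prop:induced-action}, any Lie $\infty$-morphism $\mathfrak{g}\rightsquigarrow(\mathfrak{X}_\bullet(E')[1],\mathrm{ad}_{Q'})$ induces a weak symmetry action on the basic singular foliation $\mathfrak{F}=\rho'(\Gamma(E'_{-1}))$; for $F\circ\Phi$, this action is governed by the $0$-th Taylor coefficient $F_0\colon\mathfrak{X}(E)\to\mathfrak{X}(E')$ of Campos's quasi-isomorphism applied to $\Phi_0(x)$. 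Because $F$ is constructed over the identity of $M$, the linear map $F_0$ commutes with the natural evaluation of vector fields on $\mathcal{O}\subset\mathcal{E}\cap\mathcal{E}'$, which gives $F_0(\Phi_0(x))[f]=\Phi_0(x)[f]=\varrho(x)[f]$ for every $f\in\mathcal{O}$. Hence $F\circ\Phi$ is itself a lift of the action $\varrho$ on $(E',Q')$. Applying item 2 of Theorem \ref{main} to the two lifts $F\circ\Phi$ and $\Psi$ of the same $\varrho$ yields a homotopy $F\circ\Phi\sim\Psi$, which is precisely the homotopy commutativity of the diagram \eqref{diagram:Campos}.

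The main obstacle in this plan is verifying in detail the compatibility $F_0(\Phi_0(x))[f]=\varrho(x)[f]$ for $f\in\mathcal{O}$. Campos's equivalence is stated ``over the identity of $M$'' but it is a morphism of DGLAs of vector fields on graded manifolds, so one must trace through how its linear Taylor coefficient interacts with the polynomial-degree decomposition of vector fields (Lemma \ref{lemma:basic-action}) and confirm it preserves the action on $\mathcal{O}$. Should this compatibility only hold up to an error valued in $\mathfrak{F}$, one would instead invoke item 3 of Theorem \ref{main} — producing a homotopy between lifts of equivalent weak symmetry actions — which suffices to close the diagram up to homotopy. Either way, the conclusion follows.
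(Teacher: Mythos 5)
Your proposal is correct and follows essentially the same route as the paper: the paper's proof simply observes that the composition of $\Phi$ with the horizontal Campos equivalence is a lift of $\varrho$ on $(E',Q')$ and then invokes item 2 of Theorem \ref{main} to conclude it is homotopic to $\Psi$. Your additional care about the compatibility $F_0(\Phi_0(x))[f]=\varrho(x)[f]$ (with the fallback to item 3 if the induced action is only equivalent to $\varrho$) is a reasonable elaboration of what the paper leaves implicit, not a different argument.
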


\begin{proof}
The composition of $\Phi$ with the horizontal map in the diagram \eqref{diagram:Campos} is a lift of the action $\varrho$. It is necessarily homotopy equivalent to $\Psi$ by item $(2)$ in Theorem \ref{main}.
\end{proof}

\subsection{Proof of \ref{main}}\label{results:proofs}
This section is devoted to the proof of the main results, i.e. Theorem \ref{main}.\\

Let $\mathfrak F$ be a singular foliation, and $(E,Q)$ a universal Lie $\infty$-algebroid of $\mathfrak F$. We start with the following lemma.
\begin{lemma}\label{initia2:lemm}
  For every weak symmetry Lie algebra action of $\mathfrak g$ on $\mathfrak{F}$ there exists a linear map, $\Phi_0\colon \mathfrak g\rightarrow \mathfrak X_0(E)$, such that $[Q,\Phi_0(x)]=0$ and $\Phi_0(x)[f]=\varrho(x)[f]$ for all $x\in\mathfrak g$, $f\in\mathcal{O}$. 
\end{lemma}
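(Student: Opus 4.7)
The plan is to lift $\varrho$ in two stages: first to a naive (not necessarily $\mathrm{ad}_Q$-closed) linear map $\widehat{\varrho}: \mathfrak{g} \to \mathfrak{X}_0(E)$, and then to correct $\widehat{\varrho}$ by a vertical vector field to land in the kernel of $\mathrm{ad}_Q$, using the acyclicity result for longitudinal vector fields (Theorem \ref{thm:longitudinal}).

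For the first stage, I would choose any $TM$-connection $\nabla$ on the graded vector bundle $E$. This induces, by the usual horizontal lifting procedure, a $\mathbb{K}$-linear map $X \in \mathfrak{X}(M) \mapsto \widetilde{X} \in \mathfrak{X}_0(E)$ with $\widetilde{X}[f] = X[f]$ for all $f \in \mathcal{O}$. Composing with $\varrho$ produces a $\mathbb{K}$-linear map $\widehat{\varrho}: \mathfrak{g} \to \mathfrak{X}_0(E),\; x \mapsto \widetilde{\varrho(x)}$ which already satisfies the second condition of the lemma, i.e. $\widehat{\varrho}(x)[f] = \varrho(x)[f]$, but generally fails to commute with $Q$.

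For the second stage, consider the linear map $\mathfrak{g} \to \mathfrak{X}_1(E),\; x \mapsto [Q,\widehat{\varrho}(x)]$. Since $\varrho(x)$ is an infinitesimal symmetry of $\mathfrak{F}$ (first axiom of a weak symmetry action), item~3 of Example~\ref{longi:examples} shows that $[Q,\widehat{\varrho}(x)]$ is a \emph{longitudinal} vector field on $E$ for every $x \in \mathfrak{g}$. Moreover, by the graded Jacobi identity and $Q^2=0$, $[Q,[Q,\widehat{\varrho}(x)]]=0$, so each $[Q,\widehat{\varrho}(x)]$ is an $\mathrm{ad}_Q$-cocycle in the complex of longitudinal vector fields. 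Since $(E,Q)$ is universal, Theorem~\ref{thm:longitudinal} gives that the subcomplex of longitudinal vector fields is acyclic, and more precisely that every longitudinal $\mathrm{ad}_Q$-cocycle is the image under $\mathrm{ad}_Q$ of a \emph{vertical} vector field.

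It remains to make the primitive depend linearly on $x$. For this, let $\mathfrak{V} \subset \mathfrak{X}_0(E)$ denote the $\mathbb{K}$-vector space of vertical vector fields of degree $0$, and consider the $\mathbb{K}$-linear map $\partial := \mathrm{ad}_Q : \mathfrak{V} \to \mathfrak{L}_1(E)^{\mathrm{cocycle}}$ onto the space of longitudinal cocycles of degree $1$. Theorem~\ref{thm:longitudinal} asserts that $\partial$ is surjective. Since everything in sight is $\mathbb{K}$-linear and $\mathbb{K}$ is a field, we may choose a $\mathbb{K}$-linear section $s$ of $\partial$, and set
\[
\beta : \mathfrak{g} \longrightarrow \mathfrak{V}, \qquad \beta(x) := s\bigl([Q,\widehat{\varrho}(x)]\bigr).
\]
Defining $\Phi_0(x) := \widehat{\varrho}(x) - \beta(x)$, we obtain a $\mathbb{K}$-linear map $\Phi_0: \mathfrak{g} \to \mathfrak{X}_0(E)$ with $[Q,\Phi_0(x)] = [Q,\widehat{\varrho}(x)] - \partial(\beta(x)) = 0$, and with $\Phi_0(x)[f] = \widehat{\varrho}(x)[f] - \beta(x)[f] = \varrho(x)[f]$ since $\beta(x)$ is vertical and hence annihilates $\mathcal{O}$. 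The main subtlety is the linearity in $x$ of the correction $\beta$; this is the place where the existence of a universal $Q$-manifold is essential, because only then is Theorem~\ref{thm:longitudinal} available and the surjectivity of $\partial$ at our disposal. Everything else is a direct consequence of the weak-symmetry hypothesis on $\varrho$.
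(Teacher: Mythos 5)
Your proof is correct and follows essentially the same route as the paper's: extend $\varrho(x)$ to an arbitrary degree-zero vector field on $E$, observe that its bracket with $Q$ is a longitudinal $\mathrm{ad}_Q$-cocycle, and kill it by a vertical primitive supplied by Theorem \ref{thm:longitudinal}. Your extra care in making the construction $\mathbb{K}$-linear in $x$ (via a connection for the first lift and a linear section of $\mathrm{ad}_Q$ on vertical fields for the correction) addresses a point the paper glosses over, but it is a refinement of the same argument rather than a different one.
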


\begin{proof}
For $x\in\mathfrak{g}$, let $\widehat{\varrho(x)}\in \mathfrak X_0(E)$ be any arbitrary extension of $\varrho(x)\in\mathfrak{s}(\mathfrak{F})$ to a degree zero vector field on $E$. Since $\varrho(x)$ is a symmetry of $\mathfrak{F}$, the degree $+1$ vector field $[\widehat{\varrho(x)},Q]$ is also a longitudinal vector field on $E$, see Example \ref{longi:examples} item 3. In addition, $[\widehat{\varrho(x)},Q]$ is a $\mathrm{ad}_Q$-cocycle. By item $1$ of Theorem \ref{thm:longitudinal}, there exists a vertical vector field $Y(x)\in\mathfrak a(E)$ of degree zero such that \begin{equation}
    [Q,Y(x)+ \widehat{\varrho(x)})]=0.
\end{equation}

Let us set for $x\in\mathfrak{g}$,\;$\Phi_0(x):=Y(x)+\widehat{\varrho(x)}$. By construction, we have, $[Q,\Phi_0(x)]=0$ and $\Phi_0(x)[f]=\varrho(x)[f]$ for all $x\in\mathfrak g,\,f\in\mathcal{O}$. 
\end{proof}

We will need the following lemma.
\begin{lemma}\label{lem:length}Assume $(E,Q)$ is a universal Lie $\infty$-algebroid over $M$. Let $\Bar{\Phi}\colon (S_\mathbb{K}^\bullet\mathfrak{g},Q_\mathfrak{g})\longrightarrow (S_\mathbb{K}^\bullet\mathfrak X(E)[1],\Bar{Q})$ be a coalgebra morphism which is a Lie $\infty$-morphism up to polynomial-degree $n\geq 0$, i.e. $$\left(\Bar\Phi\circ Q_\mathfrak{g} -\Bar{Q}\circ\Bar\Phi\right)^{(i)}=0\;\text{ for all integer} \;\;i\in\{0,\ldots,n\}.$$ Then, $\Bar{\Phi}$ can be lengthened to a $\infty$-morphism up to polynomial-degree $n+1$. 
\end{lemma}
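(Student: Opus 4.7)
The plan is to identify the obstruction to extending $\bar\Phi$ one polynomial-degree further, to show that it is an $\mathrm{ad}_Q$-cocycle valued in longitudinal vector fields, and then to invoke Theorem \ref{thm:longitudinal} to kill it by a suitable choice of $\Phi_{n+1}$. Set $T:=\bar\Phi\circ Q_\mathfrak g-\bar Q\circ\bar\Phi$. The hypothesis $T^{(i)}=0$ for $i\le n$, together with the co-Leibniz identity for $T$ (which follows from the coalgebra morphism property of $\bar\Phi$, exactly as in Lemma \ref{O-linearity-lemma}), implies that $T^{(n+1)}$ is a $\Phi^{(0)}$-co-derivation, and is therefore determined by its Taylor coefficient
\[
R_{n+1}\colon \wedge^{n+2}\mathfrak g \longrightarrow \mathfrak X_{-n}(E).
\]
When $\Phi_{n+1}$ is first set to zero, $R_{n+1}(x_1,\dots,x_{n+2})$ is precisely the difference of the two sides of the Lie $\infty$-morphism axiom \eqref{infty-morph-axiom} for index $n$, minus the $[Q,\Phi_{n+1}]$-term. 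Modifying $\Phi_{n+1}$ by a linear map $\Theta\colon \wedge^{n+2}\mathfrak g\to\mathfrak X_{-n-1}(E)$ alters $R_{n+1}$ by $-\mathrm{ad}_Q\circ\Theta$ while leaving $T^{(i)}$ unchanged for $i\le n$, so the problem reduces to producing $\Theta$ with $\mathrm{ad}_Q\circ\Theta=R_{n+1}$.

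Two properties of $R_{n+1}$ then have to be established. First, $R_{n+1}$ is an $\mathrm{ad}_Q$-cocycle: from $\bar Q^2=0$ and $Q_\mathfrak g^2=0$ one obtains $\bar Q\circ T+T\circ Q_\mathfrak g=0$, and projecting this identity to polynomial-degree $n+1$ while using $T^{(i)}=0$ for $i\le n$ (which forces $(T\circ Q_\mathfrak g)^{(n+1)}=0$ since $Q_\mathfrak g$ has polynomial-degree $1$) leaves only $\bar Q^{(0)}\circ T^{(n+1)}$, whose Taylor coefficient is $\mathrm{ad}_Q\circ R_{n+1}$. Second, $R_{n+1}$ takes values in longitudinal vector fields on $E$. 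For $n\ge 1$ this is automatic: any element of $\mathfrak X_{-n}(E)$ is vertical, since such a vector field applied to $\mathcal O=\E_0$ would land in $\E_{-n}=0$. For $n=0$, the obstruction reads
\[
R_1(x,y)=\Phi_0([x,y]_\mathfrak g)-[\Phi_0(x),\Phi_0(y)],
\]
and evaluating on $f\in\mathcal O$ yields $(\varrho([x,y]_\mathfrak g)-[\varrho(x),\varrho(y)])[f]\in\mathfrak F[f]$ by the weak symmetry hypothesis on $\varrho$; hence $R_1(x,y)$ agrees on base functions with a vector field in $\mathfrak F$ (extended arbitrarily to $E$), so differs from an element of $\mathfrak F$ by a vertical vector field and is therefore longitudinal.

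Theorem \ref{thm:longitudinal} then supplies, for each $\omega\in\wedge^{n+2}\mathfrak g$, a vertical vector field $\Theta(\omega)\in\mathfrak X_{-n-1}(E)$ with $[Q,\Theta(\omega)]=R_{n+1}(\omega)$. Choosing a basis of $\wedge^{n+2}\mathfrak g$ and picking one such preimage on each basis element yields the required $\mathbb K$-linear map $\Theta$; declaring $\Phi_{n+1}:=\Theta$ completes the lengthening. The main obstacle lies in the longitudinality check of the second step at $n=0$: this is the only place where the weak symmetry assumption on $\varrho$ is used in an essential way, and it is used precisely to obtain the longitudinality property that feeds into Theorem \ref{thm:longitudinal}. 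For $n\ge 1$ the longitudinality is automatic, and the remainder of the proof is a formal manipulation of coalgebraic data, with the universality of $(E,Q)$ entering only through the exactness granted by Theorem \ref{thm:longitudinal}.
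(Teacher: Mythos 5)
Your proof is correct and follows essentially the same route as the paper's: both form the obstruction $T^{(n+1)}$ with $T=\Bar\Phi\circ Q_\mathfrak{g}-\Bar{Q}\circ\Bar\Phi$, deduce $[Q,T^{(n+1)}]=0$ from $\Bar{Q}\circ T+T\circ Q_\mathfrak{g}=0$ together with $T^{(i)}=0$ for $i\le n$ and $Q_\mathfrak{g}^{(0)}=0$, and then kill the obstruction by a vertical primitive supplied by the acyclicity results of Theorem \ref{thm:longitudinal} and Corollary \ref{cor:longitudinal}. The only divergence is at $n=0$: the paper asserts that $T^{(n+1)}$ is vertical for all $n\geq 0$ (automatic only for $n\geq 1$), whereas you correctly note that for $n=0$ one only obtains longitudinality, and only by invoking the weak-symmetry hypothesis on $\varrho$ --- a hypothesis not present in the lemma's statement but available wherever the lemma is used, since the paper handles the polynomial-degree $0$ and $1$ steps by hand in the proof of Theorem \ref{main} with exactly that argument.
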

\begin{proof} For convenience, we omit the variables $x\in S_\mathbb{K}^\bullet\mathfrak{g}$. The identity, $$\Bar{Q}\circ\left(\Bar\Phi\circ Q_\mathfrak{g} -\Bar{Q}\circ\Bar\Phi\right)+\left(\Bar\Phi\circ Q_\mathfrak{g} -\Bar{Q}\circ\Bar\Phi\right)\circ Q_\mathfrak{g}=0$$ taken in polynomial-degree $n+1$ yields, \begin{align*}
    0=\left(\Bar{Q}\circ(\Bar\Phi\circ Q_\mathfrak{g} -\Bar{Q}\circ\Bar\Phi)\right)^{(n+1)}&=[Q,(\Bar\Phi\circ Q_\mathfrak{g} -\Bar{Q}\circ\Bar\Phi)^{(n+1)}],
\end{align*}
\text{since $Q_\mathfrak{g}^{(0)}=0$ and $\left(\Bar\Phi\circ Q_\mathfrak{g} -\Bar{Q}\circ\Bar\Phi\right)^{(i)}=0$ for $i\in\{0,\ldots,n\}$}. It is clear that for all $n\geq 0$ the map $\left(\Bar\Phi\circ Q_\mathfrak{g} -\Bar{Q}\circ\Bar\Phi\right)^{(n+1)}\colon S^{n+2}_\mathbb{K}\mathfrak{g}\longrightarrow\mathfrak{X}_{-n}(E)[1]$ take value in vertical vector fields on $E$. By virtue of Lemma \ref{cor:longitudinal} there exists a map  $\zeta\colon S^{n+2}_\mathbb{K}\mathfrak{g} \longrightarrow\mathfrak{X}_{-n-1}(E)[1]$  such that  \begin{equation}[Q,\Bar{\Phi}^{(n+1)}+\zeta]=\Bar\Phi^{(n)}\circ Q_\mathfrak{g}^{(1)} -\Bar{Q}^{(1)}\circ\Bar\Phi^{(n)}.
\end{equation}
By redefining the polynomial-degree $n+1$ of $\Bar{\Phi}$ as $\Bar{\Phi}^{(n+1)}:=\Bar{\Phi}^{(n+1)}+\zeta$. One obtains a Lie $\infty$-morphism up to polynomial-degree $n+1$. The proof continues by recursion.
\end{proof}

\begin{proof}[Proof of Theorem \ref{main}]\label{proof:main} Let us show Item 1. Note that
Lemma \ref{initia2:lemm}  gives the existence of a linear map $\Phi_0\colon\mathfrak{g}\longrightarrow \mathfrak{X}_0(E)$ such that, $[Q,\Phi_0(x)]=0$ for all $x\in \mathfrak{g}$. For $x,y\in \mathfrak{g}$, consider \begin{equation}
    \Lambda(x,y)=\Phi_0([x,y]_\mathfrak{g})-[\Phi_0(x),\Phi_0(y)].\end{equation}Since $\varrho([x,y]_\mathfrak{g})-[\varrho(x),\varrho(y)]\in \mathfrak F$ for all $x,y\in\mathfrak g$, and since $\rho\colon\Gamma(E_{-1})\longrightarrow\mathfrak{F}$ surjective, we have $\varrho([x,y]_\mathfrak{g})-[\varrho(x),\varrho(y)]=\rho\left(\eta(x,y)\right)$ for some element $\eta(x,y)\in\Gamma(E_{-1})$ depending linearly on $x$ and $y$. Now we consider the vertical vector field of degree $-1$, $\iota_{\eta(x,y)}\in\mathfrak{X}_{-1}(\mathbb{U}^\mathfrak F)$ which is defined on $\Gamma(E^*)$ as:
$$\iota_{\eta(x,y)} (\alpha):= \langle\alpha, {\eta(x,y)}\rangle\;\; \text{for all}\;\; \alpha\in\Gamma(E^*),$$ and extended it by derivation on the whole space. For every $f\in\mathcal O$,\begin{align*}
\left(\Lambda(x,y)-[Q,\iota_{\eta(x,y)}]\right)(f)&=\left(\varrho([x,y]_\mathfrak{g})-[\varrho(x),\varrho(y)]-\rho(\eta(x,y) \right)[f]\hspace{1cm}\text{(by definition of $\Phi_0$)}\\&=0\hspace{7.47cm}\text{(by definition of $\eta$)}
\end{align*}
It is clear that $\Lambda(x,y)+[Q,\iota_{\eta(x,y)}]$ is a $\text{ad}_Q$-cocycle. Also, $\left(\Lambda(x,y)+[Q,\iota_{\eta(x,y)}]\right)^{(-1)}$: for every $\alpha\in \Gamma(E^*)$, \begin{align*}
    [Q,\iota_{\eta(x,y)}]^{(-1)}(\alpha)=[Q^{(0)}, \iota_{\eta(x,y)}](\alpha)=\cancel{Q^{(0)}[\langle\alpha,\eta(x,y)\rangle]}+\cancel{\langle Q^{(0)}[\alpha],\eta(x,y)\rangle}=0,
\end{align*}
where the first term (resp. the second term) is cancelled by $\mathcal{O}$-linearity of $Q^{(0)}$ (resp. for degree reason). Hence, by Corollary \ref{cor:longitudinal} and Remark \ref{rmk:arity}, the degree zero vector field $\Lambda(x,y)+[Q,\iota_{\eta(x,y)}]$ is of the form $[Q, \Upsilon(x,y)]$ for some vertical vector field $\Upsilon(x,y)\in\mathfrak{X}_{-1}(E)$ of degree $-1$ with $\Upsilon (x,y)^{(-1)}=0$. For all $x,y\in\mathfrak{g}$, we define the Taylor coefficient $\Phi_1\colon\wedge^2\mathfrak{g}\longrightarrow \mathfrak{X}(E)$  as $\Phi_1(x,y):=\Upsilon(x,y)+\iota_{\eta(x,y)}$. By construction,  we have the following relation,\\
\begin{equation}
    \Phi_0([x,y]_\mathfrak{g})-[\Phi_0(x),\Phi_0(y)]=[Q, \Phi_1(x,y)],\;\forall x,y\in\mathfrak{g}
\end{equation}

\noindent
Consider for $x,y,z\in\mathfrak{g}$, \begin{equation}
    \vartheta(x,y,z)=\Phi_1\left([x,y]_\mathfrak{g},z\right)-[\Phi_0(x),\Phi_1(y,z)]+\circlearrowleft(x,y,z).
\end{equation}Here, $\circlearrowleft(x,y,z)$ stands for circular permutation of $x,y$ and $z$ with Koszul sign. For degree reason $\vartheta(x,y,z)$ is $\mathcal O$-linear. Moreover, $ \vartheta(x,y,z)$ is a $\text{ad}_Q$-cocycle: 
\begin{align*}
    \left[Q, \Phi_1([[x,y]_\mathfrak{g},z]_\mathfrak{g})\right]+\circlearrowleft(x,y,z)&=-\left[\Phi_0\left([x,y]_\mathfrak{g}\right),\Phi_0(z)\right]+\circlearrowleft(x,y,z)\\&=[[\Phi_0(z),Q],\Phi_1(x,y)]-[[\Phi_1(x,y),\Phi_0(z)],Q]+\circlearrowleft(x,y,z)\\&=[Q,[\Phi_0(x),\Phi_1(y,z)]]+\circlearrowleft(x,y,z).
    \end{align*}
   
\noindent
Here, we have used the fact that $[Q,\Phi_0(x)]=0$ for all $x\in \mathfrak{g}$, and the Jacobi identity for the Lie brackets $\lb_\mathfrak{g}$ and $\lb$. By Corollary \ref{cor:longitudinal}, there exists a derivation of degree $-2$ denoted by $\Phi_2(x,y,z)\in\mathfrak{X}_{-2}(E)[1]$ that satisfies, \begin{equation}
    \vartheta(x,y,z)=[Q,\Phi_2(x,y,z)].
\end{equation}

So far, in the construction of the Lie $\infty$-morphism, we have shown the existence of a Lie $\infty$-morphism $\Bar{\Phi}\colon S_\mathbb{K}^\bullet\mathfrak{g}\longrightarrow S_\mathbb{K}^\bullet\left(\mathfrak{X}(E)[1] \right)$ up to polynomial-degree $2$ that is $(\Bar{\Phi}\circ Q_\mathfrak{g})^{(i)}=(\Bar{Q}\circ \Bar{\Phi})^{(i)}$ with $i=0,1,2$. The proof continues by recursion or by applying directly Lemma \ref{lem:length}. This proves the part $1.$ of the theorem.
\end{proof}
Before proving item 3 of Theorem \ref{main} we will need the following lemma. For convenience,  we sometimes omit the variables in $\mathfrak{g}$.
\begin{lemma}\label{lemma:homotopy}
For any two Lie $\infty$-morphisms $\Gamma,\Omega\colon (S_\mathbb{K}^\bullet\mathfrak{g},Q_\mathfrak{g})\rightsquigarrow (S_\mathbb{K}^\bullet(\mathfrak{X}(E)[1]),\Bar{Q})$ which coincide up to polynomial-degree $n\geq 1$, i.e. $\Gamma^{(i)}=\Omega^{(i)}$, for $0\leq i\leq n$, their difference in polynomial-degree $n+1$, namely, $$\Gamma^{(n+1)}-\Omega^{(n+1)}\colon S_\mathbb{K}^{n+2}\mathfrak{g}\longrightarrow\mathfrak{X}_{-n-1}(E)[1]$$ is valued  in $\mathrm{ad}_Q$-coboundary. 
\end{lemma}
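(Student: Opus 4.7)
\textbf{Proof plan for Lemma \ref{lemma:homotopy}.} The idea is to extract the polynomial-degree $n+1$ component of the Lie $\infty$-morphism equation $\bar\Phi\circ Q_\mathfrak{g}=\bar Q\circ\bar\Phi$ for both $\Gamma$ and $\Omega$, subtract, and use the acyclicity of the complex $(\mathfrak{V}_Q,\mathrm{ad}_Q)$ granted by Corollary \ref{cor:longitudinal}. First I would recall that $Q_\mathfrak{g}$ has only a polynomial-degree~$1$ component (the Lie bracket) and that $\bar Q=\bar Q^{(0)}+\bar Q^{(1)}$ with $\bar Q^{(0)}=\mathrm{ad}_Q$ (polynomial-degree $0$) and $\bar Q^{(1)}=\{\cdot\,,\cdot\}$ (polynomial-degree $1$).

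Next I would compute both sides of $(\Gamma\circ Q_\mathfrak{g})^{(n+1)}=(\bar Q\circ\Gamma)^{(n+1)}$. Only one term survives on the left, namely $\Gamma^{(n)}\circ Q_\mathfrak{g}^{(1)}$, while on the right one finds $\bar Q^{(0)}\circ\Gamma^{(n+1)}+\bar Q^{(1)}\circ\Gamma^{(n)}$. Writing the analogous identity for $\Omega$ and subtracting, the hypothesis $\Gamma^{(i)}=\Omega^{(i)}$ for $0\le i\le n$ kills all terms involving the lower Taylor coefficients, leaving
\begin{equation*}
\bar Q^{(0)}\circ\bigl(\Gamma^{(n+1)}-\Omega^{(n+1)}\bigr)=0.
\end{equation*}
Projecting onto the linear part shows that the Taylor coefficient $\Theta:=(\Gamma^{(n+1)}-\Omega^{(n+1)})\colon S_\mathbb K^{n+2}\mathfrak g\longrightarrow \mathfrak X_{-n-1}(E)[1]$ is pointwise $\mathrm{ad}_Q$-closed: for every $x\in S_\mathbb K^{n+2}\mathfrak g$, $[Q,\Theta(x)]=0$.

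The main step is then to verify that $\Theta(x)$ lies in the subcomplex $\mathfrak{V}_Q$ of Corollary \ref{cor:longitudinal}. Since $n\geq 1$, the vector field $\Theta(x)$ has degree $-(n+1)\le -2$ on $E$; in particular it is vertical, and for any $f\in\mathcal O$ the element $\Theta(x)\bigl(Q[f]\bigr)$ would have degree $-n\le -1$ in $\mathcal E$, hence vanishes because $\mathcal E$ is concentrated in non-negative degrees. Thus $\Theta(x)\in\mathfrak V_Q$. This is precisely the reason why the hypothesis $n\geq 1$ is essential: for $n=0$ the contraction $\iota_e$ with $e\in\Gamma(E_{-1})$ (cf.\ the construction of $\Phi_1$ in the proof of Theorem \ref{main} via the map $\eta$) shows that $\mathfrak X_{-1}(E)$ need not be contained in $\mathfrak V_Q$, and indeed such $\iota_e$ is typically not an $\mathrm{ad}_Q$-coboundary.

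Once $\Theta(x)\in\mathfrak V_Q$ is a cocycle, Corollary \ref{cor:longitudinal} (exactness of $(\mathfrak V_Q,\mathrm{ad}_Q)$) yields an element $R(x)\in\mathfrak V_Q$ with $\mathrm{ad}_Q(R(x))=\Theta(x)$, proving that $\Gamma^{(n+1)}-\Omega^{(n+1)}$ is valued in $\mathrm{ad}_Q$-coboundaries as claimed. The only subtlety I anticipate is bookkeeping the polynomial-degree decomposition of $\bar Q\circ\Gamma$ correctly (in particular, that $\bar Q^{(1)}\circ\Gamma^{(n)}$ depends only on the already-matching Taylor coefficients and therefore cancels); everything else is a direct appeal to the exactness result of Section \ref{ad_qzero}.
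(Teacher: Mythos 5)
Your proposal is correct and follows essentially the same route as the paper: extract the polynomial-degree $n+1$ component of the morphism equations, use the hypothesis to cancel all terms involving the lower Taylor coefficients so that $[Q,\Gamma^{(n+1)}-\Omega^{(n+1)}]=0$, and then invoke the acyclicity of longitudinal/vertical vector fields (the paper cites Theorem \ref{thm:longitudinal} item 1 where you cite the equivalent Corollary \ref{cor:longitudinal}). Your extra verification that the values lie in $\mathfrak{V}_Q$ because they have degree $-(n+1)\le -2$, and your remark on why $n\geq 1$ matters, are consistent with (and slightly more explicit than) the paper's argument.
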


\begin{proof}
Indeed, a direct computation yields \begin{align*}
    \Bar{Q}\circ(\Gamma-\Omega)=(\Gamma-\Omega)\circ Q_\mathfrak{g}&\Longrightarrow \Bar{Q}^{(0)}\circ(\Gamma-\Omega)^{(n+1)}-\underbrace{\left((\Gamma-\Omega)\circ Q_\mathfrak{g}\right)^{(n+1)}}_{=0}=0\\&\Longrightarrow[Q,\Gamma^{(n+1)}-\Omega^{(n+1)}]=0\\&\Longrightarrow \Gamma^{(n+1)}-\Omega^{(n+1)}=[Q,
    H^{(n+1)}]\hspace{1cm}\text{(by item 1 of Theorem \ref{thm:longitudinal})}
\end{align*}for some linear map $H^{(n+1)}\colon S_\mathbb{K}^{n+2}\mathfrak{g}\longrightarrow\mathfrak{X}_{-n-2}(E)[1]$.
\end{proof}

Let us show item 2 of Theorem \ref{main}. Let $\Phi,\Psi\colon \mathfrak g\longrightarrow\mathfrak X({E})[1]$ be two different lifts of the action $\mathfrak{g}\longrightarrow\mathfrak{X}(M)$. We denote by $\Bar{\Phi},\Bar{\Psi}\colon S_\mathbb{K}^\bullet\mathfrak{g}\longrightarrow S_\mathbb{K}^\bullet(\mathfrak{X}(E)[1])$ the unique comorphisms given respectively by the Taylor coefficients  \begin{equation}
    \begin{cases}
    &\Bar{\Phi}^{(r)}\colon S_\mathbb{K}^{r+1}\mathfrak{g}\xrightarrow{\Phi_{r}}\mathfrak{X}_{-r}(E)[1]\\&\Bar{\Psi}^{(r)}\colon S_\mathbb{K}^{r+1}\mathfrak{g}\xrightarrow{\Psi_{r}}\mathfrak{X}_{-r}(E)[1]
    \end{cases},\; \text{for $r\geq 0$.}
\end{equation}

For any $x\in\mathfrak{g}$, the degree zero vector field\, $\Phi_{0}(x)-\Psi_{0}(x)\in\mathfrak{X}_0(E)$ is vertical. Moreover, we have, $[Q,\Phi_{0}(x)-\Psi_{0}(x)]=0$. By Corollary \ref{cor:longitudinal} there exists a vector field $H_{0}\in\mathfrak{X}_{-1}(E)$ of degree $-1$, such that $\Psi_{0}(x)-\Phi_{0}(x)=[Q,H_{0}(x)]$

\begin{equation}
   \xymatrix{&\mathfrak{g}\ar[d]^{\Psi_{0}-\Phi_{0}}  \ar@{-->}[ld]_{H_{0}}\\\mathfrak{X}_{-1}(E)[1]\ar[r]^{\text{ad}_Q} & \mathfrak{X}_0(E)[1]}  
\end{equation}


Consider the following differential equation \begin{equation}\label{diff-eq1}
    \begin{cases}
    \frac{\dd\Xi_t}{\dd t}&=\Bar{Q}\circ H_t+H_t\circ Q_\mathfrak g,\hspace{1cm}t\in[0,1]\\\Xi_0&=\Bar \Phi
    \end{cases}
\end{equation}where $(\Xi_t)_{t\in[0,1]}$ is as in Definition \ref{homp:def}, and for $t\in[0,1]$, $H_t$ is the unique $\Xi_t$-co-derivation where the only non-zero polynomial-degree is $H^{(0)}=H_{0}$. Equation \eqref{diff-eq1} gives a homotopy between $\Bar{\Phi}$ and $\Xi_1$. When we consider the polynomial-degree zero component in Equation \eqref{diff-eq1}, one obtains\begin{align*}
    \frac{\dd\Xi_t^{(0)}}{\dd t}&=\Bar{Q}^{(0)}\circ H_t^{(0)}+H_t^{(0)}\circ Q_\mathfrak g^{(0)}\\&=[Q,H_{0}]\\&=\Psi_{0}-\Phi_{0}=\Bar{\Psi}^{(0)}-\Bar{\Phi}^{(0)}.
\end{align*}Therefore, $\Xi_t^{(0)}=\Bar{\Phi}^{(0)}+t(\Bar{\Psi}^{(0)}-\Bar{\Phi}^{(0)})$, and $\Bar{\Phi}\sim \Xi_1$ with $\Bar{\Psi}^{(0)}=\Xi_1^{(0)}$. Using Lemma \ref{lemma:homotopy}, the image of  any element through the map  $\Bar{\Psi}^{(1)}-\Xi_1^{(1)}\colon S_\mathbb{K}^{2}\mathfrak{g}\longrightarrow\mathfrak{X}_{-1}(E)[1]$ is a $\text{ad}_Q$-coboundary. Thus, $\Bar{\Psi}^{(1)}-\Xi_1^{(1)}$ can be written as \begin{equation}
    \Bar{\Psi}^{(1)}-\Xi_1^{(1)}=[Q, H^{(1)}],\quad \text{with}\, \;H^{(1)}\colon S_\mathbb{K}^{2}\mathfrak{g}\longrightarrow\mathfrak{X}_{-2}(E)[1].
\end{equation}Let us go one step further by considering the differential equation on $[0,1]$ given by \begin{equation}\label{equa:diff2}
    \begin{cases}
    \frac{\dd\Theta_t}{\dd t}&=\Bar{Q}\circ H_t+H_t\circ Q_\mathfrak g\\\Xi_0&=\Bar \Xi_1
    \end{cases}
\end{equation}Here $H_t$ is the extension of $H^{(1)}$ as the unique $\Theta_t$-co-derivation where all its arities vanish  except the polynomial-degree 1 which is given by $H^{(1)}$. In polynomial-degree zero, $(\Theta^{(0)}_t)_{t\in[0,1]}$ is constant and has value $\Theta_1^{(0)}=\Bar{\Psi}^{(0)}$. In polynomial-degree one, we have, \begin{align*}
    \frac{\dd\Theta_t^{(1)}}{\dd t}&=\Bar{Q}^{(0)}\circ H_t^{(1)}\\&=[Q,H^{(1)}]=\Bar{\Psi}^{(1)}-\Xi_1^{(1)}.
\end{align*}Hence, $\Theta_t^{(1)}=\Bar{\Phi}^{(1)}+t(\Bar{\Psi}^{(1)}-\Xi_1^{(1)})$ with $\Bar{\Psi}^{(i)}=\Theta_1^{(i)}$ for $i=0,1$. We then continue this
procedure by gluing all these homotopies as in the proof of item \emph{2}  of Theorem \ref{th:universal}. We obtain at last a Lie $\infty$-morphism $\Omega$ such that  $\Bar{\Phi}\sim\Omega$ and $\Omega^{(i)}=\Bar{\Psi}^{(i)}$ for $i\geq 0$. That means $\Omega=\Bar{\Psi}$, therefore $\Bar{\Phi}\sim\Psi$. This proves item 2. of Theorem \ref{main}.\\

Let us prove item 3 of Theorem \ref{main}. Given two equivalent weak symmetry actions $\varrho,\varrho'$ of $\mathfrak{g}$ on a singular foliation $\mathfrak{F}$, i.e. $\varrho,\varrho'$ differ by a linear map $\mathfrak g\longrightarrow \mathfrak{X}(M)$ of the form $x\mapsto \rho(\beta(x))$ for some linear map $\beta\colon\mathfrak g\longrightarrow \Gamma(E_{-1})$. Let $\Phi,\Phi'\colon\mathfrak{g}\rightsquigarrow \left( \mathfrak X_\bullet(E)[1],\lb, \text{ad}_Q \right)$ be a lift into a Lie $\infty$-morphism of the action $\varrho$ and $\varrho'$ respectively. One has for all $x\in\mathfrak{g}$ and $f\in\mathcal{O}$, \begin{align*}
        \left(\Phi_0(x)-\Psi_0(x)-[Q,\iota_{\varphi(x)}]\right)(f)&=\rho(\varphi(x))[f]-\langle Q(f),\varphi(x)\rangle\\&=0.
    \end{align*}
 Since $[Q, \Phi_0(x)-\Psi_0(x)-[Q,\iota_{\varphi(x)}]]=0$, by Corollary \ref{cor:longitudinal} there exists a vertical derivation $\widehat{H}(x)\in \mathfrak{X}_{-1}(E)$ of degree $-1$ depending linearly on $x\in\mathfrak{g}$ such that $$\Phi_0(x)-\Psi_0(x)=[Q,\widehat{H}(x)+\iota_{\varphi(x)}].$$ Let $H(x):=\widehat{H}(x)+\iota_{\varphi(x)}$, for $x\in\mathfrak{g}$. The proof continues the same as for item 2 of Theorem \ref{main}

\subsection{Particular examples}
We recall that for a regular foliation $\mathfrak{F}$ on a manifold $M$, the Lie algebroid $T F \subset T M$, whose sections form $\mathfrak F$, is a universal Lie $\infty$-algebroid of $\mathfrak F$. Its corresponding $Q$-manifold is given by the leafwise De Rham differential on $\Gamma(\wedge^\bullet T^*F)$. 

\begin{example}
Let $\mathfrak{F}$ be a regular foliation on a manifold $M$. Any weak symmetry action $\mathfrak{g}\longrightarrow \mathfrak X(M),\,x\longmapsto \varrho(x)$,  of $\mathfrak F$, can be lifted to Lie $\infty$-morphism $\Phi\colon\mathfrak{g}\rightsquigarrow \left( \mathfrak X_\bullet(E)[1],\lb, \text{ad}_Q \right)$ 
given explicitly as follows:
\begin{align}
   x\in\, &\mathfrak{g}\longmapsto \Phi_0(x)={\mathcal{L}}_{\varrho(x)}\in\mathfrak{X}_0(\wedge^\bullet T^* F)\\x\wedge y\in\wedge^2&\mathfrak g\longmapsto \Phi_1(x,y)=\iota_{\chi(x,y)}\in\mathfrak{X}_{-1}(\wedge^\bullet T^* F)\end{align}
   and $\left(\Phi_i\colon \wedge^{i+1}\mathfrak{g}\longrightarrow \mathfrak{X}_{-i}(\wedge^\bullet T^* F)\right)\equiv 0$, for all $i\geq 2$, where $\chi(x,y):=\varrho([x,y]_\mathfrak{g})-[\varrho(x),\varrho(y)]$ for $x,y\in\mathfrak{g}$. Also, ${\mathcal{L}}_{X}$ stands for the Lie derivative on multi-forms w.r.t $X\in \mathfrak{X}(M)$, and $\iota_X$ is the internal product.
\end{example}
\begin{example}
Let $\mathfrak F$ be a singular foliation on a manifold $M$ together with a strict symmetry action $\varrho\colon\mathfrak{g}\longrightarrow\mathfrak{X}(M)$ such that $\underline{\mathfrak{g}}\subset\mathfrak{F}$. Hence, $C^\infty(M)\underline{\mathfrak{g}}$ is a singular foliation which is the image of the transformation Lie algebroid $\mathfrak g\times M$. The  universality theorem (see \cite{LLS,CLRL}) provides the existence of a Lie $\infty$-morphism $\nu\colon\mathfrak{g}\longrightarrow\mathbb{U}^\mathfrak F$. Let us call its Taylor coefficients $\nu_n\colon\wedge^{n+1}\mathfrak{g}\longrightarrow E_{-n-1},\,n\geq 0$. We may take for example the $0$-th and $1$-th Taylor coefficients of a Lie $\infty$-morphism that lifts $\varrho$ as: \begin{align*}\label{natural}\Phi_0(x)&:=[Q,\iota_{\nu_0(x)}]\in\mathfrak{X}_{0}(\mathbb{U}^\mathfrak F),\; \text{for}\; x\in \mathfrak g.\\\Phi_1(x,y)&:=[Q,\iota_{\nu_1(x,y)}]^{(-1)}-\sum_{k\geq 0}[[Q,\iota_{\nu_0(x)}],\iota_{\nu_0(y)}]^{(k)}\in\mathfrak{X}_{-1}(\mathbb{U}^\mathfrak F),\; \text{for}\; x,y\in \mathfrak g.\end{align*}
Note that in this case the action $\varrho$ is equivalent to zero, therefore by item 3 of Theorem \ref{main} the Lie $\infty$-morphism $\Phi$ is homotopic to zero. 
\end{example}

\section{Lifts of weak symmetry actions and Lie $\infty$-algebroids}\label{sec:4}

In this section, $\mathfrak{g}$ is a finite dimensional Lie algebra that we see as the trivial vector bundle over $M$  with fiber $\mathfrak{g}$. 

The following theorem says that any lift of strict symmetry action of $\mathfrak{g}$ on a singular foliation $\mathfrak{F}$ induces a Lie $\infty$-algebroids with some special properties and vice versa. See \cite{MEHTA2012576}, Prop. 3.3, for a proof of the following statement.
\begin{proposition}\label{alt:thm-res}Let $(E,Q)$ be a Lie $\infty$-algebroid over a singular foliation $\mathfrak F$. 
Any Lie $\infty$-morphism 
  $\Phi\colon(\mathfrak{g},\lb_\mathfrak{g})\rightsquigarrow (\mathfrak X_\bullet(E)[1],\lb,\mathrm{ad}_{Q})$ with $\mathfrak{g}$ of finite dimension
  induces a Lie $\infty$-algebroid  $(E\oplus\mathfrak{g},Q')$ with \begin{equation}\label{def:Q}
  Q':=\dd^{\text{CE}}+ Q + \sum_{
k\geq 1,i_1,\ldots,i_k=1,\ldots,\mathrm{dim}(\mathfrak{g})}\frac{1}{k!}\xi^{i_1}\odot\cdots\odot\xi^{i_k}\Phi_{k-1}(\xi_{i_1},\ldots,\xi_{i_{k}}),
\end{equation}
where $\dd^{\text{CE}}$ is the Chevalley-Eilenberg complex of $\mathfrak{g}$, and   $\xi^{1},\ldots,\xi^{\mathrm{dim}(\mathfrak{g})}\in\mathfrak{g}^
*$ is the dual basis of some basis $\xi_{1},\ldots,\xi_{\mathrm{dim}(\mathfrak{g})}\in\mathfrak{g}$ and for all $k\geq 0$, $\Phi_k\colon S^{k+1}\mathfrak{g}\longrightarrow \mathfrak{X}_{-k}(E)[1]$ is the $k$-th Taylor coefficients of $\Phi$.\\

In the dual point of view, \eqref{def:Q} corresponds to a Lie $\infty$-algebroid over the complex
\begin{equation}\label{semi-resol1}
 \begin{array}{c} \cdots\stackrel{\ell_1}{\longrightarrow}  E_{-3}\stackrel{\ell_1}{\longrightarrow}  E_{-2} \stackrel{\ell_1}{\longrightarrow}\mathfrak g\oplus E_{-1} \stackrel{\rho'}{\longrightarrow}TM\end{array}
\end{equation}
whose brackets satisfy
\begin{enumerate}\item\label{item_1} the anchor map $\rho'$ sends an element $x\oplus e\in\mathfrak{g}\oplus E_{-1}$ to $\varrho(x)+\rho(e)\in\varrho(\mathfrak g)+T\mathfrak F$,\item \label{item_2} the binary bracket satisfies $$\ell_2\left(\Gamma(E_{-1}),\Gamma(E_{-1})\right)\subset \Gamma(E_{-1})\quad \text{and}\quad\ell_2(\Gamma(E_{-1}),x)\subset \Gamma(E_{-1}),\; \forall\, x\in\mathfrak{g}$$\item \label{item_3}the $\mathfrak g$-component of the binary bracket on constant sections of $\mathfrak{g}\times M$ is the Lie bracket of $\mathfrak{g}$.\end{enumerate}Conversely, if there exists a Lie $\infty$-algebroid $(E',Q')$ whose underlying complex of vector bundles is of the form $\eqref{semi-resol1}$ and that satisfies item \ref{item_1}, \ref{item_2} and \ref{item_3}, then there is a Lie $\infty$-morphism $$\Phi\colon(\mathfrak{g},\lb_\mathfrak{g})\rightsquigarrow \left( \mathfrak X_\bullet(E)[1],\lb, \emph{ad}_{Q} \right)$$  which is defined on a given basis $\xi_1,\ldots,\xi_d$ of $\mathfrak{g}$ by:
\begin{equation}\label{expl:lift}
\Phi_{k-1}(\xi_{i_1},\ldots,\xi_{i_k})=\emph{pr}\circ[\cdots[[Q',\iota_{\xi_{i_1}}],\iota_{\xi_{i_2}}],\ldots,\iota_{\xi_{i_k}}]\subset\mathfrak{X}(E)[1],
\;k\in\mathbb{N},\end{equation}
where $\emph{pr}$ stands for the projection map $\mathfrak{X}{(E')}[1]\longrightarrow \mathfrak{X}{(E)[1]}$. 
\end{proposition}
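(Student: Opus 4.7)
The plan is to identify $(Q')^2 = 0$ with the Lie $\infty$-morphism axioms by expanding both sides according to the polynomial-degree in the dual coordinates $\xi^1, \ldots, \xi^{\dim \mathfrak{g}}$. Functions on $E \oplus \mathfrak{g}$ identify with $\Gamma(S((E \oplus \mathfrak{g})^*)) \simeq S(\mathfrak{g}^*) \otimes_{\mathcal{O}} \Gamma(S(E^*))$, so any vector field on $E \oplus \mathfrak{g}$ decomposes according to its polynomial-degree in $\xi^1, \ldots, \xi^{\dim \mathfrak{g}}$. Under this decomposition the three summands defining $Q'$ have clean interpretations: $d^{\mathrm{CE}}$ carries a net polynomial-degree $+1$ in $\xi$ (it raises the $\xi$-degree by $+2$ through $\xi^j \xi^k$ and lowers it by $-1$ through $\tfrac{\partial}{\partial \xi^i}$), $Q$ has polynomial-degree $0$, and the summand $\xi^{i_1} \odot \cdots \odot \xi^{i_k} \Phi_{k-1}(\xi_{i_1}, \ldots, \xi_{i_k})$ has polynomial-degree $k$ since $\Phi_{k-1}$ is a vector field on $E$ alone. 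A routine check on total degrees then shows that $Q'$ is of degree $+1$ as a vector field on $E \oplus \mathfrak{g}$.

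Next, I would establish that $(Q')^2 = 0$ is equivalent to $\Phi$ being a Lie $\infty$-morphism. Writing $Q' = d^{\mathrm{CE}} + Q + \sum_{k \geq 1} R_k$ with $R_k$ the polynomial-degree-$k$ contribution built from $\Phi_{k-1}$, one has $(Q')^2 = (d^{\mathrm{CE}})^2 + Q^2 + [d^{\mathrm{CE}}, Q] + \sum_k ([d^{\mathrm{CE}}, R_k] + [Q, R_k]) + \tfrac{1}{2}\sum_{i,j \geq 1}[R_i, R_j]$. The first three terms vanish automatically (Jacobi for $\mathfrak{g}$, the hypothesis $Q^2 = 0$, and the fact that $d^{\mathrm{CE}}$ and $Q$ act on disjoint coordinate sets), so the remaining equation splits by polynomial-degree in $\xi$: degree $1$ gives $[Q, R_1] = 0$, i.e.\ $[Q, \Phi_0(x)] = 0$ for every $x \in \mathfrak{g}$ (the first Lie $\infty$-morphism axiom); and degree $n+1$ for $n \geq 1$ gives $[d^{\mathrm{CE}}, R_n] + [Q, R_{n+1}] + \tfrac{1}{2}\sum_{i+j=n+1} [R_i, R_j] = 0$. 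The bracket $[d^{\mathrm{CE}}, R_n]$ produces exactly the Chevalley-Eilenberg-type sum $\sum_{i<j}(-1)^{i+j-1}\Phi_{n-1}([x_i,x_j]_{\mathfrak{g}}, \ldots)$ appearing on the left-hand side of \eqref{infty-morph-axiom}, while $[R_i, R_j]$ unpacks to the commutator cross-terms $[\Phi_{i-1}, \Phi_{j-1}]$ on the right-hand side. Matching term by term recovers \eqref{infty-morph-axiom}.

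The three structural conditions on the induced Lie $\infty$-algebroid are immediate from the shape of $Q'$. Condition \ref{item_1} holds because the polynomial-degree $1$ piece $\xi^i \Phi_0(\xi_i)$ applied to $f \in \mathcal{O}$ yields $\xi^i \, \varrho(\xi_i)[f]$, so the anchor of $x \oplus e \in \mathfrak{g} \oplus E_{-1}$ is $\varrho(x) + \rho(e)$. Condition \ref{item_2} holds because every $\Phi_{k-1}(\xi_{i_1}, \ldots, \xi_{i_k})$ is a vector field on $E$ with no $\tfrac{\partial}{\partial \xi}$-component, so the $2$-ary bracket of a $\Gamma(E_{-1})$-section with a constant $\mathfrak{g}$-section stays inside $\Gamma(E_{-1})$, and similarly the $2$-ary bracket on $\Gamma(E_{-1}) \odot \Gamma(E_{-1})$ stays inside $\Gamma(E_{-1})$. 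Condition \ref{item_3} follows from $d^{\mathrm{CE}}$ being the only summand producing $\tfrac{\partial}{\partial \xi}$-derivatives out of quadratic $\xi$-monomials, and $d^{\mathrm{CE}}$ encodes the Lie bracket of $\mathfrak{g}$ by definition.

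For the converse, given $(E \oplus \mathfrak{g}, Q')$ satisfying \ref{item_1}--\ref{item_3}, define $\Phi_{k-1}$ by \eqref{expl:lift}. Condition \ref{item_2} ensures that the iterated bracket $[\cdots[Q', \iota_{\xi_{i_1}}], \ldots, \iota_{\xi_{i_k}}]$, after the projection $\mathrm{pr}\colon \mathfrak{X}(E') \to \mathfrak{X}(E)$, is a well-defined vector field on $E$; condition \ref{item_3} guarantees that the formula is compatible with the Lie bracket of $\mathfrak{g}$ at polynomial-degree $1$. The Lie $\infty$-morphism axioms \eqref{infty-morph-axiom} then follow by iterating the graded Jacobi identity on $\mathfrak{X}(E \oplus \mathfrak{g})$ against $(Q')^2 = 0$ after contracting with the $\iota_{\xi_{i_j}}$'s. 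The main technical obstacle will be the careful tracking of Koszul signs and of the combinatorial factor $\tfrac{1}{k!}$ in \eqref{def:Q} against the $(i_1, \ldots, i_k)$-shuffles appearing naturally in \eqref{infty-morph-axiom}; a useful conceptual shortcut is to regard $Q'$ as a Maurer-Cartan element in the DGLA $\mathfrak{X}(E \oplus \mathfrak{g})$ whose polynomial-degree $k$ part recovers $\Phi_{k-1}$, so that \eqref{infty-morph-axiom} becomes the Maurer-Cartan equation read component by component in $\xi$.
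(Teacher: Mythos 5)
Your proposal is correct and follows essentially the same route as the paper: the forward direction is the "direct computation" the paper alludes to, naturally organized by expanding $(Q')^2=0$ according to polynomial-degree in the $\xi$-coordinates, and the converse is precisely the Voronov-type argument the paper uses, namely iterating the graded Jacobi identity inside $0=\mathrm{pr}\circ[\cdots[[Q',Q'],\iota_{\xi_{i_1}}],\ldots,\iota_{\xi_{i_k}}]$. The Maurer--Cartan reading of \eqref{def:Q} that you propose is a clean way to package the sign and $\tfrac{1}{k!}$ bookkeeping, but it does not change the substance of the argument.
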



\begin{proof}We explain the idea of the proof. A direct computation gives the first implication. Conversely, let us denote by $Q'$ the homological vector fields of Lie $\infty$-algebroid whose underlying complex of vector bundles is of the form $\eqref{semi-resol1}$. The map defined in Equation \eqref{expl:lift} is indeed a lift into a Lie $\infty$-morphism of the weak symmetry action $\varrho$:

\begin{itemize}
    \item It is not difficult to check that, for any $\xi\in\mathfrak{g}$, one has $[Q,\Phi_0(\xi)]=0$.
    \item The fact that $\Phi$ defines a Lie $\infty$-morphism can be found using Voronov trick \cite{VoronovTheodore, Voronov}, i.e, doing Jacobi's identity inside the null derivation\begin{equation}\label{trick}
        0=\text{pr}\circ[\cdots[[[Q',Q'],\iota_{\xi_{i_1}}],\iota_{\xi_{i_2}}],\ldots,\iota_{\xi_{i_k}}].
    \end{equation}
\end{itemize}
A direct computation of Equation \eqref{trick} falls exactly on the requirements of Definition \ref{def:morph2}.

\end{proof}
\begin{remark}
Let us compute Equation \eqref{trick} for a small number of generators (e.g $k=2,3$) in order to show how it works: from the identity $$\left[\left[\left[Q',Q'\right],\iota_{\xi_{i_1}}\right],\iota_{\xi_{i_2}}\right]=0,$$ one obtains by using twice the Jacobi identity the following relation,
\begin{equation}\label{trick:rel}
    \left[Q',\left[\left[Q',\xi_{i_1}\right],\xi_{i_2}\right] \right]-\left[\left[Q',\xi_{i_1}\right],\left[Q',\xi_{i_2}\right]\right]=0.
\end{equation}
\noindent
One should notice that $\left[\left[Q',\xi_{i_1}\right],\xi_{i_2}\right]$ splits into two parts. One part  where the Chevalley-Eilenberg acts to give $\left[\left[\dd^{\text{CE}},\xi_{i_1}\right],\xi_{i_2}\right]=\iota_{[\xi_{i_1},\xi_{i_2}]_\mathfrak{g}}$, while the other part is $\left[\left[Q'-\dd^{\text{CE}},\xi_{i_1}\right],\xi_{i_2}\right]$. Hence, by putting them in Equation \eqref{trick:rel}, afterwards projecting on $\mathfrak{X}\left(S^\bullet (E^*)\right)$, we get\begin{align*}
    &\text{pr}\circ\left[Q',\iota_{[\xi_{i_1},\xi_{i_2}]_\mathfrak{g}}\right]+\text{pr}\circ[Q',\left[\left[Q'-\dd^{\text{CE}},\xi_{i_1}\right],\xi_{i_2}\right]]-\text{pr}\circ\left[\left[Q',\xi_{i_1}\right],\left[Q',\xi_{i_2}\right]\right]=0.
\end{align*}
  From here, we deduce that \begin{equation*}
      \Phi_0([\xi_{i_1},\xi_{i_2}]_\mathfrak{g})=[Q,\Phi_1(\xi_{i_1},\xi_{i_2})]+[\Phi_0(\xi_{i_1}),\Phi_0(\xi_{i_2})].
  \end{equation*}  
\end{remark}
Here is an application of Theorem \ref{alt:thm-res}.
\begin{corollary}\phantom{}
\begin{enumerate}
    \item Let $\mathfrak{g}$ be a Lie algebra and $G$ its Lie group.
    \item Let $(M, \mathfrak{F})$ a singular foliation together with a weak symmetry action $\varrho\colon\mathfrak{g}\longrightarrow \mathfrak{X}(M)$. 
\end{enumerate}The following assertions hold:
\begin{enumerate}
    \item On $G\times M$ the $C^\infty(G\times M)$-module generated by 
   \begin{equation}
        \begin{cases}
               (\overleftarrow{u}, \varrho(u))  & u\in \mathfrak{g}\\
            (0, X) & X\in\mathfrak{F} 
    \end{cases}
   \end{equation}
is a singular foliation that we denote by $(\mathfrak{X}(G)\times_\varrho \mathfrak{F})$. 
\item If $(E, \dd, \varrho)$ is a geometric resolution of $(M, \mathfrak{F})$, then \begin{equation}\label{eq:MxG}
    \cdots\stackrel{}{\longrightarrow} p^*E_{-3}\stackrel{\dd}{\longrightarrow} p^*E_{-2}\stackrel{\dd}{\longrightarrow} \mathfrak{g}\oplus p^*E_{-1}\stackrel{\rho'}{\longrightarrow} T(G\times M)
\end{equation}
    with $\rho'(e, u)=\left(\overleftarrow{u}, \varrho(u)+ \rho(e)\right)$, is a geometric resolution of $(\mathfrak{X}(G)\times_\varrho \mathfrak{F})$. Here, $p\colon G\times M\longrightarrow M$ is the projection on $M$.
\end{enumerate}
Let $(E,Q)$ be a universal Lie $\infty$-algebroid structure of $(M, \mathfrak{F})$. Let $\Phi_k\colon \wedge^{k+1}\mathfrak{g}\longrightarrow \mathfrak{X}_{-k}(E)$ be the Taylor coefficients of a Lie $\infty$-morphism $\mathfrak{g}\rightsquigarrow \mathfrak{X}(E)$ that lifts $\varrho$. Then 

\begin{equation}\label{def:Q2}
  Q':=\dd^{\text{G}}_{dR}+ Q + \sum_{
k\geq 1,i_1,\ldots,i_k=1,\ldots,\mathrm{dim}(\mathfrak{g})}\frac{1}{k!}\xi^{i_1}\odot\cdots\odot\xi^{i_k}\Phi_{k-1}(\xi_{i_1},\ldots,\xi_{i_{k}}),
\end{equation}
with  $(\xi_{1},\ldots,\xi_{\mathrm{dim}(\mathfrak{g})})$, $(\xi^{1},\ldots,\xi^{\mathrm{dim}(\mathfrak{g})})$ be dual basis of $\mathfrak{g}$ and $\mathfrak{g}^*$
\begin{enumerate}
    \item is a universal Lie $\infty$-algebroid of  $(\mathfrak{X}(G)\times_\varrho \mathfrak{F})$
    \item whose coefficients are left invariant for the action of $G$ on $G\times M$ given by $g\cdot(h,m):=(gh,m)$.
\end{enumerate}
Conversely, a left invariant Lie $\infty$-algebroid on \eqref{eq:MxG} can be interpreted as Taylor coefficients of a lift of $\varrho$.
\end{corollary}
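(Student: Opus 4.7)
The plan is to verify the four assertions in order, while viewing the construction as a geometric repackaging of Proposition~\ref{alt:thm-res} on the extended base $G\times M$.

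First I would check that $\mathfrak{X}(G)\times_\varrho\mathfrak{F}$ is a singular foliation. Local finite generation is immediate: the local generators of $\mathfrak{F}$ together with a basis of $\mathfrak{g}$ produce a local generating set on any product of a chart in $M$ with $G$. Involutivity follows from three computations on generators: since $\overleftarrow{\,\cdot\,}\colon\mathfrak{g}\to\mathfrak{X}(G)$ is a Lie algebra morphism and acts only on the $G$-factor while $\varrho(u)$ acts only on $M$, one has $[(\overleftarrow{u},\varrho(u)),(\overleftarrow{v},\varrho(v))] = (\overleftarrow{[u,v]_\mathfrak{g}},[\varrho(u),\varrho(v)])$, and the weak symmetry assumptions $\varrho([u,v]_\mathfrak{g})-[\varrho(u),\varrho(v)]\in\mathfrak{F}$ and $[\varrho(u),\mathfrak{F}]\subset\mathfrak{F}$ force this bracket, together with $[(\overleftarrow{u},\varrho(u)),(0,X)]=(0,[\varrho(u),X])$ and $[(0,X),(0,Y)]=(0,[X,Y])$, to lie again in $\mathfrak{X}(G)\times_\varrho\mathfrak{F}$.

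Second, I would check that \eqref{eq:MxG} is a geometric resolution. The projection $p\colon G\times M\to M$ is a submersion, so $p^*$ preserves exactness, which yields exactness at every $p^*E_{-i}$ for $i\ge 2$. At the degree $-1$ spot, a section $(e,u)\in\Gamma(\mathfrak{g}\oplus p^*E_{-1})$ lies in $\ker\rho'$ iff $\overleftarrow{u(g,m)}|_g=0$ for every $(g,m)$ and $\varrho(u)+\rho(e)=0$; injectivity of $\mathfrak{g}\to\mathfrak{X}(G)$ at each point forces $u=0$, then $\rho(e)=0$ and exactness of $(E,\dd,\rho)$ gives $e\in\dd(p^*E_{-2})$. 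Surjectivity of $\rho'$ onto $\mathfrak{X}(G)\times_\varrho\mathfrak{F}$ is by construction.

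Third, I would verify that $Q'$ defined by \eqref{def:Q2} is a homological vector field whose induced Lie $\infty$-algebroid has \eqref{eq:MxG} as its underlying complex. Using the Maurer-Cartan equation $\dd^G_{dR}\xi^i = -\tfrac12 c^i_{jk}\xi^j\wedge\xi^k$ for left-invariant $1$-forms, the $\dd^G_{dR}$-piece encodes exactly the Chevalley-Eilenberg portion of the construction in Proposition~\ref{alt:thm-res}, while $Q$ encodes the structure coming from the universal Lie $\infty$-algebroid $(E,Q)$ and the $\Phi_{k-1}$-terms encode the $\mathfrak{g}$-action. Expanding $(Q')^2=0$ and collecting polynomial degrees in the $\xi^i$'s reproduces exactly: $(\dd^G_{dR})^2=0$ (the Jacobi identity in $\mathfrak{g}$), $Q^2=0$, and the higher Lie $\infty$-morphism relations \eqref{infty-morph-axiom} satisfied by the $\Phi_k$'s; this is the step that requires a careful but routine degree-by-degree expansion, completely parallel to the computation behind Proposition~\ref{alt:thm-res}. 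Left-invariance of $Q'$ is then immediate because each building block is left-invariant: $\dd^G_{dR}$ restricts to a left-invariant operator on the Maurer-Cartan forms $\xi^i$, $Q$ only involves variables on $M$, and the $\Phi_{k-1}(\xi_{i_1},\ldots,\xi_{i_k})$ are vector fields on $E$ over $M$, pulled back to $G\times M$. Since $(E,Q)$ is universal and \eqref{eq:MxG} is a free resolution of $\mathfrak{X}(G)\times_\varrho\mathfrak{F}$, the Lie $\infty$-algebroid it carries is automatically universal by Theorem~\ref{th:universal}.

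For the converse, given a left-invariant homological vector field $Q''$ on the graded manifold with underlying complex \eqref{eq:MxG}, I would decompose $Q''$ with respect to the $G$-action. Left-invariance forces the Taylor coefficients to be fully determined by their restriction to left-invariant sections, which are identified with sections over $M$, and after subtracting the $\dd^G_{dR}$ and $Q$ pieces (both canonically present by left-invariance together with the prescribed anchor $\rho'$) the remaining polynomial-degree-$k$ components define linear maps $\wedge^{k+1}\mathfrak{g}\to\mathfrak{X}_{-k}(E)[1]$. The condition $(Q'')^2=0$, expanded in the same way as above, is precisely the statement that these maps assemble into a Lie $\infty$-morphism lifting $\varrho$ in the sense of Definition~\ref{def:lift}. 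The hard part of the argument is the combinatorial expansion of $(Q')^2=0$ into the Lie $\infty$-morphism axioms; the remaining steps are essentially bookkeeping.
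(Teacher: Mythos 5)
Your proposal is correct and follows exactly the route the paper intends: the paper states this corollary without proof as a direct application of Proposition~\ref{alt:thm-res}, and your argument simply geometrizes that proposition on $G\times M$, identifying the Chevalley--Eilenberg piece with $\dd^{G}_{dR}$ on left-invariant forms and checking involutivity, exactness of the pulled-back resolution, and the degree-by-degree expansion of $(Q')^2=0$. The only point worth keeping in mind when writing it up is that $\dd^{G}_{dR}$ contributes not only the Maurer--Cartan (Chevalley--Eilenberg) terms in the $\xi^i$ but also the anchor component $\overleftarrow{u}$ on functions of $G$, which is consistent with your verification of $\rho'$.
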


\subsection{A more general statement of Proposition \ref{alt:thm-res}}
We end the chapter with a generalization of Proposition \ref{alt:thm-res}. \\

In the previous section, Proposition \ref{alt:thm-res} is stated in the finite dimensional context, i.e. it needs $\mathfrak g$ to be finite dimensional and the existence of a geometric resolution for the singular foliation $\mathfrak{F}$. In this section we prove that given a weak symmetry action of a Lie algebra $\mathfrak{g}$ (may be of infinite dimensional)  on a Lie-Rinehart algebra $\mathfrak{F}\subset \mathfrak{X}(M)$ (we do not require $\mathfrak{F}$ being locally finitely generated), such Lie $\infty$-algebroid described at the sections level of the complex \eqref{semi-resol1} as Proposition \ref{alt:thm-res} exists.\\ 

We state the following theorem in the context of singular foliations, but the same statement and the same proof are valid word-by-word by replacing $\mathfrak{F}$ by a Lie-Rinehart algebra.
\begin{theorem}\label{alt-thm-res}Let $\mathfrak{g}$ be a (possibly infinite dimensional) Lie $\mathbb{K}$-algebra  and let  $\varrho\colon\mathfrak{g}\longrightarrow \mathfrak{X}(M)$ be a weak symmetry action of $\mathfrak{g}$ on  a singular foliation $\mathfrak F$. Let $\left((\mathcal{K}_{-i})_{i\geq 1},\dd, \rho\right)$ be a free resolution of the singular foliation $\mathfrak F$ over $\mathcal{O}$. 
The complex of trivial vector bundles over $M$
\begin{equation}\label{semi-resol}
  \begin{array}{c} \cdots\stackrel{\dd}{\longrightarrow}  E_{-3}\stackrel{\dd}{\longrightarrow}  E_{-2} \stackrel{\dd}{\longrightarrow}\mathfrak g\oplus E_{-1} \stackrel{\rho'}{\longrightarrow}TM
    \end{array}
\end{equation}
where $\Gamma(E_{-1})=\mathcal{K}_{-i}$, comes equipped with a Lie $\infty$-algebroid structure
\begin{enumerate}
\item whose unary bracket is $\dd$ and whose anchor map $\rho'$, sends an element $x\oplus e\in\mathfrak{g}\oplus E_{-1}$ to $\varrho(x)+\rho(e)\in\varrho(\mathfrak{g})+T\mathfrak F$,
\item the binary bracket satisfies $$\ell_2\left(\Gamma(E_{-1}),\Gamma(E_{-1})\right)\subset \Gamma(E_{-1})\quad \text{and}\quad\ell_2(\Gamma(E_{-1}),\Gamma(\mathfrak{g}))\subset \Gamma(E_{-1}),$$
\item the $\mathfrak g$-component of the binary bracket on constant sections of $\mathfrak{g}\times M$ is the Lie bracket of $\mathfrak{g}$.
\end{enumerate}
\end{theorem}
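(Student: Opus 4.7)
The plan is to recast the statement as an existence question for a Lie $\infty$-algebroid associated to a carefully chosen Lie-Rinehart algebra, then apply Theorem \ref{thm:existence}. The key preliminary construction is to view the data $(\mathfrak{g},\mathfrak{F},\varrho)$ as defining a single Lie-Rinehart algebra $\mathcal{A}$ over $\mathcal{O}$ as follows. As an $\mathcal{O}$-module, set $\mathcal{A} := (\mathcal{O}\otimes_{\mathbb{K}}\mathfrak{g})\oplus \mathfrak{F}$; define the anchor by $\rho_{\mathcal{A}}(\phi,X) := \phi\cdot\varrho + X$ (where $\phi\cdot\varrho = \sum f_i\varrho(x_i)$ for $\phi = \sum f_i\otimes x_i$); and define the bracket on generators by
\begin{align*}
 [(x,0),(y,0)]_{\mathcal{A}} &:= ([x,y]_{\mathfrak{g}},\, -\chi(x,y)), \\
 [(x,0),(0,X)]_{\mathcal{A}} &:= (0,\, [\varrho(x),X]), \\
 [(0,X),(0,Y)]_{\mathcal{A}} &:= (0,\, [X,Y]),
\end{align*}
where $\chi(x,y) := \varrho([x,y]_{\mathfrak{g}})-[\varrho(x),\varrho(y)] \in \mathfrak{F}$ by the weak-symmetry hypothesis, extended to all of $\mathcal{A}$ by the Leibniz rule. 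The first paragraph of the proof will verify that this defines a Lie-Rinehart algebra: the anchor is manifestly a Lie algebra morphism thanks to the $-\chi$ term, and a direct computation on triples of constant sections shows that the Jacobiator in the $\mathfrak{F}$-component reads $\sum_{\mathrm{cyc}}\big([\varrho(z),\chi(x,y)]-\chi([x,y]_{\mathfrak{g}},z)\big)$, which vanishes by expanding $\chi$ and using Jacobi for $\mathfrak{g}$ and for $\mathfrak{X}(M)$; the mixed and purely $\mathfrak{F}$-valued cases are then immediate consequences of $\varrho(x)$ being an infinitesimal symmetry and of Jacobi for $\mathfrak{F}$.

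Next I would observe that the complex \eqref{semi-resol} is precisely a free resolution of $\mathcal{A}$ in the category of $\mathcal{O}$-modules, via the hook $\pi\colon (\mathcal{O}\otimes_{\mathbb{K}}\mathfrak{g})\oplus\mathcal{K}_{-1}\longrightarrow \mathcal{A}$, $(\phi,e)\mapsto(\phi,\rho(e))$, and the original differentials $\dd$ in higher degrees. Freeness is clear, since $\mathfrak{g}$ is a free $\mathbb{K}$-vector space, so $\mathcal{O}\otimes_{\mathbb{K}}\mathfrak{g}$ is a free $\mathcal{O}$-module. Surjectivity of $\pi$ follows from surjectivity of $\rho\colon\mathcal{K}_{-1}\to\mathfrak{F}$, and the kernel of $\pi$ is $0\oplus\ker\rho = 0\oplus\dd(\mathcal{K}_{-2})$ by exactness of the original resolution; exactness in degrees $\leq -2$ coincides with that of $(\mathcal{K}_{-\bullet},\dd)$.

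Having identified \eqref{semi-resol} as a free resolution of $\mathcal{A}$, Theorem \ref{thm:existence} applies and equips it with a Lie $\infty$-algebroid structure whose unary bracket is the prescribed differential and whose anchor, through the hook, reads $\rho'(x,e) = \varrho(x)+\rho(e)$. This already yields property 1. For properties 2 and 3, I would inspect the construction in the proof of Theorem \ref{thm:existence}: the 2-ary bracket is first defined on generators lifting the bracket of $\mathcal{A}$, and then corrected by an element $\tau_2\in\bigoplus_{j\geq 2}\mathrm{Hom}(\odot^2 E|_{-j-1},E_{-j})$ which lives in strictly negative degrees $\leq -2$ and therefore does not alter the $E_{-1}$-valued component. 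Hence on generators one is free to choose: for $e_1,e_2\in\mathcal{K}_{-1}$, a lift $\ell_2(e_1,e_2)\in\mathcal{K}_{-1}$ of $[\rho(e_1),\rho(e_2)]\in\mathfrak{F}$; for $x\in\mathfrak{g}$ and $e\in\mathcal{K}_{-1}$, a lift $\ell_2(x,e)\in\mathcal{K}_{-1}$ of $[\varrho(x),\rho(e)]\in\mathfrak{F}$ (which exists because $\rho$ is surjective onto $\mathfrak{F}$); and for $x,y\in\mathfrak{g}$, the element $\ell_2(x,y)=([x,y]_{\mathfrak{g}},\,e_{x,y})$ with $e_{x,y}\in\mathcal{K}_{-1}$ satisfying $\rho(e_{x,y})=-\chi(x,y)$. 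These choices immediately deliver both parts of property 2 and property 3.

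The main obstacle is really limited to the Jacobi verification for $\mathcal{A}$, which is essentially the classical computation for an $L_\infty$-style deformation of a semi-direct product by the cocycle $\chi$; once $\mathcal{A}$ is secured as a Lie-Rinehart algebra, the rest is a straightforward application of Theorem \ref{thm:existence} combined with the observation that the correction terms in its proof live deep enough in the complex to preserve the prescribed behaviour on $E_{-1}$. I do not expect any new cohomological obstruction to appear, since the required compatibilities only concern the lowest polynomial-degree part of the structure, exactly where the freedom of choice in Theorem \ref{thm:existence} is unconstrained.
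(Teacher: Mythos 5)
Your proof is correct, but it takes a genuinely different route from the paper's. The paper observes that the complex \eqref{semi-resol}, hooked into $\mathfrak{X}(M)$ via $\rho'$, fails to be exact in degree $-1$ (elements $x\oplus e$ with $\varrho(x)=-\rho(e)$ need not lie in $\dd(E_{-2})$), so it declines to invoke Theorem \ref{thm:existence} and instead re-runs its recursion by hand in a modified bicomplex whose last column is $\mathrm{Hom}_{\mathcal O}(\bigodot^{k+1}\mathcal R, \dd\mathcal R_{-2})$, checking at each stage that the obstruction (the Jacobiator, then the higher $[\ell_i,\ell_j]_{\mathrm{RN}}$) actually lands in $\dd\mathcal R_{-2}$. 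You sidestep this by changing the target: the external direct sum $\mathcal A=(\mathcal O\otimes_{\mathbb K}\mathfrak g)\oplus\mathfrak F$ with the $\chi$-twisted bracket is a bona fide Lie--Rinehart algebra (your Jacobi computation is right, and the mixed cases do reduce to $[\varrho(x),\mathfrak F]\subset\mathfrak F$ and Jacobi in $\mathfrak X(M)$), and precisely because the sum is external rather than taken inside $\mathfrak X(M)$, the hook $\pi(\phi,e)=(\phi,\rho(e))$ has kernel exactly $0\oplus\ker\rho=0\oplus\dd(\mathcal K_{-2})$, so \eqref{semi-resol} becomes an honest free resolution of $\mathcal A$ and Theorem \ref{thm:existence} applies verbatim. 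Your reading of Lemma \ref{lem1} is also accurate: the correction $\tau_2$ lies in $\bigoplus_{j\geq 2}\mathrm{Hom}_{\mathcal O}(\bigodot^2\E|_{-j-1},\E_{-j})$ and so has no component with domain of total degree $-2$, hence the restriction of $\ell_2$ to degree $-1$ elements is exactly your chosen lift on generators, which yields properties 2 and 3. What each approach buys: the paper's is self-contained in its bicomplex machinery and isolates the "no obstruction in degree $-1$" phenomenon explicitly; yours is shorter, identifies conceptually the Lie--Rinehart algebra that the structure is universal for, and gets as a bonus (from Theorem \ref{th:universal}) that the resulting Lie $\infty$-algebroid is unique up to homotopy as a universal object over $\mathcal A$ --- a statement the paper's proof does not immediately deliver.
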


\begin{remark}
 When we have $\varrho(\mathfrak{g})\cap T_m\mathfrak F=\{0\}$ for all $m$ in $M$, Equation \eqref{semi-resol} is a free  resolution of the singular foliation $C^{\infty}(M)\varrho(\mathfrak{g})+\mathfrak F$ and we can apply directly the Theorem \ref{thm:existence}. Otherwise, we need to show there is no obstruction in degree $-1$ while doing the construction of the brackets if the result still needs to hold. 
\end{remark}

\begin{proof}{(of Theorem \ref{alt-thm-res})} The complex of Equation \eqref{semi-resol} being exact everywhere except in degree $-1$ we cannot apply directly Theorem 2.1 in \cite{CLRL} but we can mimic the proof given for Theorem 2.1 in \cite{CLRL} to construct the higher brackets when there is no obstruction in degree $-1$.
\noindent
 For convenience, let us denote $\mathcal R_{-1}:=\Gamma({\mathfrak{g}})\oplus\Gamma(E_{-1})$ and $\mathcal R_{-i}:=\Gamma(E_{-i})$ for $i\geq 2$. Given a natural number $k\geq 0$, we consider the total complex $\left(\widehat{\mathfrak{Page}}^{(k)}_\bullet(\mathcal R),D=[\dd,\cdot]_{\text{RN}}\right)$ of the following bicomplex

\begin{equation}\label{eqbicomplex}
\scalebox{0.8}{\hbox{$ 
	\begin{array}{ccccccccccc}
		& & \vdots & & \vdots & & \vdots & & 
		\\ 
		& & \uparrow & & \uparrow & & \uparrow & & 
		\\ 
		\cdots& \rightarrow & \text{Hom}_\mathcal O\left(\bigodot^{k+1} \mathcal R\,_{|_{-k-3}},\mathcal R_{-3}\right) & \overset{\dd}{\rightarrow} & \text{Hom}_\mathcal O\left(\bigodot^{k+1} \mathcal R\,_{|_{-k-3}},\mathcal R_{-2}\right) 
		& \overset{\dd}{\rightarrow} & \text{Hom}_\mathcal O\left(\bigodot^{k+1} \mathcal R\,_{|_{-k-3}},\dd\mathcal R_{-2}\right) & \rightarrow & 0
		\\ 
		& & \delta\uparrow & & \delta\uparrow & & \delta\uparrow & & 
		\\ 
		\cdots& \rightarrow & \text{Hom}_\mathcal O\left(\bigodot^{k+1} \mathcal R\,_{|_{-k-2}},\mathcal R_{-3}\right) & \overset{\dd}{\rightarrow} & \text{Hom}_\mathcal O\left(\bigodot^{k+1} \mathcal R\,_{|_{-k-2}},\mathcal R_{-2}\right) 
		& \overset{\dd}{\rightarrow} & \text{Hom}_\mathcal O\left(\bigodot^{k+1}\mathcal R\,_{|_{-k-2}},\dd\mathcal R_{-2}\right) & \rightarrow& 0
		\\ 
		& & \delta\uparrow & & \delta\uparrow & & \delta\uparrow & &  
		\\ 
		\cdots& \rightarrow & \text{Hom}_\mathcal O\left(\bigodot^{k+1} \mathcal R\,_{|_{-k-1}},\mathcal R_{-3}\right) & \overset{\dd}{\rightarrow} & \text{Hom}_\mathcal O\left(\bigodot^{k+1}\mathcal R\,_{|_{-k-1}},\mathcal R_{-2}\right) 
		& \overset{\dd}{\rightarrow}& \text{Hom}_\mathcal O\left(\bigodot^{k+1}\mathcal R\,_{|_{-k-1}},\dd\mathcal R_{-2}\right) & \rightarrow & 0
	\\
	& & \uparrow & & \uparrow & & \uparrow & & 
	\\ 
	& & 0 & & 0 & & 0 & &  \\
	& & \hbox{\small{\texttt{"-$3$ column"}}} & & \hbox{\small{\texttt{"-$2$ column"}}} & & \hbox{\small{\texttt{"-$1$ column"}}} & &  	
\end{array}$}}
\end{equation}
The map $\delta$ stands for the vertical differential which is defined for all $\Phi\in\text{Hom}_\mathcal O\left(\bigodot^{k+1} \mathcal R,\mathcal R\right)$  by
$$\delta(\Phi) \, (r_1,\ldots,r_{k+1}):=\Phi\circ\dd \, (r_1\odot\ldots\odot r_{k+1}),\hspace{1cm} \forall \,r_1,\dots, r_{k+1}\in \mathcal R,$$
where here $\dd$ acts as an $\mathcal O$-derivation on $r_1\odot\ldots\odot r_{k+1}\in\bigodot^k\mathcal R$ and the horizontal differential given by 
 $$\Phi \mapsto \dd \circ \Phi .$$

Since the line the bicomplex is exact, the total complex $\left(\widehat{\mathfrak{Page}}^{(k)}_\bullet(\mathcal R),D=[\dd,\cdot]_{\text{RN}}\right)$ is also exact.

\noindent 
\textbf{Construction of the $2$-ary bracket}: its construction is almost the same as  in \cite{CLRL} we adapt what has been done to our case. We first construct a $2$-ary bracket on $\mathcal R_{-1}$ to extend on every degree. For all $ k \geq 1$, let us denote by 
$(e_i^{(-k)})_{i \in I_k}$
a basis of $\Gamma(E_{-k})$. The set $\{X_i=\rho(e_i^{(-1)})\in \mathfrak F\mid i\in I_{1}\}$ is a set of generators of $\mathfrak F$. In particular, there exists elements $c^k_{ij}\in\mathcal{O}$
and satisfying the skew-symmetry condition $c^k_{ij}=-c^k_{ji}$ together with
\begin{equation} 
\left[ X_i,X_j\right]=\sum_{k\in I}c^k_{ij} X_k \hspace{.5cm} \forall i,j \in I_{1}.
\end{equation}
By definition of weak symmetry one has \begin{equation}
  [\varrho(\xi_i),\rho(e_j^{(-1)})]\in \mathfrak F\quad\text{and}\quad \varrho([\xi_i,\xi_j])_\mathfrak{g}-[\varrho(\xi_i),\varrho(\xi_j)]\in\mathfrak F\;\;\text{for all}\; (i,j)\in I_{\mathfrak{g}}\times I_{-1}.  
\end{equation}
Here, $(\xi_i)_{i\in I_\mathfrak{g}}$ is a basis for $\mathfrak{g}$. Since $\left((\mathcal{K}_{-i})_{i\geq 1},\dd, \rho\right)$ is a free resolution of $\mathfrak F$, there exists two $\mathcal O$-bilinear maps $\chi\colon\Gamma({\mathfrak{g}})\times \Gamma (E_{-1})\to \Gamma(E_{-1})$,  $\eta\colon\Gamma({\mathfrak{g}})\times\Gamma({\mathfrak{g}})\to \Gamma(E_{-1})$ defined on generators $\xi_i, e_j^{(-1)}$ by the relations\begin{equation*}
    [\varrho(\xi_i),\rho(e_j^{(-1)})]=\rho(\chi(\xi_i,e_j^{(-1)}))\qquad\text{and}\qquad \varrho([\xi_i,\xi_j]_\mathfrak{g})-[\varrho(\xi_i),\varrho(\xi_j)]=\rho(\eta(\xi_i,\xi_j)).
\end{equation*}
We first define a naive $2$-ary bracket on $\Gamma(E_{-1})$ as follows: 
\begin{enumerate}
\item an anchor map by $\rho'(e_i^{(-1)})=X_i$, and $\rho'(\xi_i)=\varrho(\xi_i)$, for all $i\in I,I_{\mathfrak{g}}$,
\item a degree $ +1$ graded symmetric operation $\tilde{\ell}_2 $ on $ \mathcal R_{\bullet}$ as follows:
\begin{enumerate}
    \item $\tilde{\ell}_2\left(e_i^{(-1)},e_j^{(-1)} \right) =\sum_{k\in I}c^k_{ij}e_k^{(-1)}$ for all $i,j\in I_{-1}$,
    \item $\tilde{\ell}_2\left(\xi_i,e_j^{(-1)} \right)=\chi\left(\xi_i,e_j^{(-1)}\right)$,
    \item $\tilde{\ell}_2\left(\xi_i,\xi_j\right)=[\xi_i,\xi_j]_{\mathfrak{g}}+\eta(\xi_i,\xi_j)$,
    \item $\widetilde{\ell}_2$ is zero on the other generators,
\item we extend $\tilde{\ell}_2$ to $\mathcal R $ using $\mathcal O$-bilinearity and Leibniz identity with respect to the anchor $\rho'$.
\end{enumerate} 
\end{enumerate} 

By $(a),(b),(c),(d),(e)$, $\tilde{\ell}_2$ satisfies the Leibniz identity with respect to the anchor $\widetilde{\rho}$ and $(a),(b),(c)$  makes the latter a bracket morphism. The map defined for all homogeneous $r_1,r_2\in \mathcal{R_\bullet}$ by\begin{equation}
  [\dd, \tilde{\ell}_2]_{\hbox{\tiny{RN}}}(r_1,r_2) = \dd \circ \tilde \ell_2 \left( r_1,r_2\right)+ \tilde \ell_2 \left(\dd r_1,r_2 \right) +(-1)^{\lvert r_1\rvert} \tilde \ell_2 \left(r_1,\dd r_2 \right),   
\end{equation}
is a graded symmetric degree $ +2$ operation $ (\mathcal R \otimes \mathcal R)_\bullet \longrightarrow \mathcal R_{\bullet +2} $, and $[\dd, \tilde{\ell}_2]_{{\hbox{\tiny{RN}}}_{|_{\mathcal R_{-1}}}}=0$. It is $\mathcal O$-bilinear, i.e. for all $f \in \mathcal O, r_1,r_2 \in \mathcal R$
 $$[\dd, \tilde{\ell}_2]_{\hbox{\tiny{RN}}}(r_1,f r_2 )- f[\dd, \tilde{\ell}_2](r_1,r_2 )=0.$$ We also have that $\rho([\dd, \tilde{\ell}_2]_{\hbox{\tiny{RN}}}(r_1,f r_2 ))=\rho(\tilde{\ell}_2(\dd r_1,r_2))=0$, for all $r_1\in\mathcal R_{-2}, r_2\in\mathcal R_{-1}$, since $\rho\circ\dd=0$. Thus, ${[\dd, \tilde{\ell}_2]_{\hbox{\tiny{RN}}}}_{|\mathcal R_{-2}\times\mathcal R_{-1}}\in \dd\mathcal R_{-2}$, because $\left((E_{-i})_{i\geq 1},\dd, \rho\right)$ is a geometric resolution.\\
 
 \noindent
 Therefore, $ [\dd, \tilde{\ell}_2]_{\hbox{\tiny{RN}}}$ is a degree $+2$ element in the total complex $\widehat{\mathfrak{Page}}^{(1)}(\mathcal R)$. The $\mathcal O$-bilinear operator $[\dd, \tilde{\ell}_2]_{\hbox{\tiny{RN}}}$ is $D$-closed in  $\widehat{\mathfrak{Page}}^{(1)}(\mathcal R)$, since $[ \dd, [\dd, \tilde{\ell}_2]_{\hbox{\tiny{RN}}}]_{{\hbox{\tiny{RN}}}_{|_{\mathcal R_{\leq -2}}}}=0$. So there exists  $\tau_2 \in \oplus_{j\geq 2} \text{Hom}_\mathcal O\left(\bigodot^2\mathcal R_{-j-1},\mathcal R_{-j}\right)$ such as $D(\tau_2)= -[\dd, \tilde{\ell}_2]_{\hbox{\tiny{RN}}}.$  By replacing  $ \tilde{\ell}_2$  by $\tilde{\ell}_2+\tau_2$ we get a $2$-ary bracket $\ell_2$ of degree $+1$ which is compatible with the differential map $\dd$ and the anchor map $\widetilde{\rho}$. \\
 
 \noindent
\textbf{Construction of higher brackets}: notice that by construction of the $2$-ary bracket $\ell_2$ one has, $\text{Jac}(r_1,r_2,r_3)\in\dd\mathcal R_{-2}$ for all $r_1,r_2,r_3\in\mathcal R_{-1}$. In other words, $\text{Jac}\in\text{Hom}_\mathcal O(\bigodot^3\mathcal{R}_{-1},\dd\mathcal R_{-2})$. A direct computation shows\begin{equation*}
\dd \text{Jac}(r_1,r_2,r_3)=\text{Jac}(\dd r_1,r_2,r_3)+(-1)^{\lvert r_1\rvert}\text{Jac}(r_1,\dd r_2,r_3)+(-1)^{\lvert r_1\rvert+\lvert r_2\rvert}\text{Jac}(r_1,r_2,\dd r_3)
\end{equation*}\text{for all}\;$r_1,r_2,r_3 \in \mathcal R$. Which means, $[\text{Jac},\dd]_{\hbox{\tiny{RN}}}(r_1,r_2,r_3)=0$ \text{for all}\;$r_1,r_2,r_3 \in \mathcal R$.

\noindent
Thus, $D(\text{Jac})=0$. It follows that,  $\text{Jac}$ is a $D$-coboundary, there exists an element $\ell_3=\sum_{j\geqslant 2} \ell_3^{j}\in\widehat{\mathfrak{Page}}_1^{(2)}(\mathcal R)$ with $\ell_3^{j}\in\text{Hom}(\bigodot^3\mathcal R\, _{|_{-j-1}},\mathcal R_{-j})$ such that \begin{equation}
D(\ell_3) =-\text{Jac}.
\end{equation}
We choose the $3$-ary bracket to be $\ell_3$. For degree reason, the remaining terms of the $k$-ary brackets for $k\geq 3$ have trivial components on the column $-1$ of the bicomplex \eqref{eqbicomplex}. From this point, the proof continues exactly as in Section \ref{thm:existence-proof}.
\end{proof}

\begin{example}\label{ex:F-connection-lift}
We return to Example \ref{ex:F-connection}. In that case, the Lie algebra $\mathfrak{g}=\mathfrak{X}(L)$ that acts on the singular foliation $\mathcal{T}$. In that case, we have $\varrho(\mathfrak{g})\cap T_m\mathcal T=\{0\}$ for all $m$ in $[L,M]$. We can apply directly Theorem \ref{thm:existence}, to obtain a Lie $\infty$-algebroid structure on the complex 
\begin{equation}
  \begin{array}{c} \cdots\stackrel{\dd}{\longrightarrow}  \Gamma(E_{-3})\stackrel{\dd}{\longrightarrow}  \Gamma(E_{-2}) \stackrel{\dd}{\longrightarrow}\mathfrak g\oplus \Gamma(E_{-1}) \stackrel{\rho'}{\longrightarrow}\mathfrak{X}([L,M]).
    \end{array}
\end{equation}
Notice that here the Lie algebra $\mathfrak g$ is infinite dimensional, therefore we are not allowed to use the duality between Lie $\infty$-algebroid and $Q$-manifold. Therefore, we cannot use the explicit Formula \eqref{expl:lift} to define at lift $\Phi$. However, we have to rely on the existence theorem \ref{main} to assure the existence of a lift $\Phi$.
\end{example}

\vspace{2cm}

\begin{tcolorbox}[colback=gray!5!white,colframe=gray!80!black,title=Conclusion:]
We show that actions of a Lie algebra $\mathfrak g$ on the leaf space $M/\mathfrak{F}$ i.e. weak symmetry actions, lift to Lie $\infty$-algebra morphisms $\mathfrak{g}\rightsquigarrow\mathfrak{X}(E)$ on the DGLA of  vector fields on an universal Lie $\infty$-algebroid $(E,Q)$, provided it exists, in a unique up to homotopy manner.\\

We explain how to  use chapter \ref{Chap:main} to get rid of all finite ranks/dimensions assumptions, using the universal  Lie $\infty$-algebroids of Lie-Rinehart algebras.
\end{tcolorbox}

\chapter{On weak and strict symmetries: an obstruction theory}\label{sec:5}\label{chap:obstruction-theory}
In this chapter, we apply the main theorems of \ref{sec:3} of Chapter \ref{chap:symmetries}
to define a class obstructing the existence of strict symmetry action equivalent to a given weak symmetry action.\\

\section{Introduction}
Recall that Theorem \ref{main} assures that any weak symmetry action $\varrho\colon\mathfrak g\rightarrow \mathfrak{X}(M)$ of a Lie algebra $\mathfrak g$ on a singular foliation $\mathfrak F$ admits a lift to a Lie $\infty$-morphism \begin{equation}
    \label{eq:Lie-morp-obstruction-theory}\Phi\colon (\mathfrak{g},\lb_\mathfrak{g})\rightsquigarrow (\mathfrak X_\bullet(E)[1],\lb,\text{ad}_{Q}).
\end{equation} To understand the compatibility conditions between  the low terms of $\Phi$ see Remark \ref{rk:low-terms-Lie-infty}. By playing with the definition of $\Phi$ we make the following observation. 
\begin{remark}\label{rmk:rk:low-dual-terms-Lie-infty} What does $\Phi$ induces on the linear part of $(E,Q)$?  We have seen in Lemma \ref{lemma:basic-action} and Proposition \ref{prop:induced-action} that the $0$-th Taylor coefficient of the lift $\Phi$ induces a  linear map
$x\in\mathfrak{g}\longmapsto \left(\nabla_x\colon E_{-i}\longrightarrow E_{-i}\right)$ for every $i\geq 1$   that satisfies $$\nabla_x(fe)=~f\nabla_x(e) + \varrho(x)[f]e,\;\; \text{for}\;\; f\in \mathcal{O}, e\in \Gamma(E).$$
\begin{enumerate}

\item 

Also, for every $x\in \mathfrak{g}$ and $e\in \Gamma(E_{-1})$, $$\rho(\nabla_x(e))=[\varrho(x), \rho(e)].$$ 
\item A direct computation gives
\begin{align*}
   0=\left[\left[Q,\Phi_0(x)\right], \iota_e\right]^{(-1)}&=\left[Q,\left[\Phi_0(x), \iota_e\right]\right]^{(-1)}-\left[\Phi_0(x), \left[Q,\iota_e\right]\right]^{(-1)}\\&\\&=\left[Q^{(0)},\left[\Phi_0(x), \iota_e\right]^{(-1)}\right]-\left[\Phi_0(x)^{(0)}, \left[Q,\iota_e\right]^{(-1)}\right]\\&\\&=\left[Q^{(0)},\iota_{\nabla_x(e)}\right]-\left[\Phi_0(x)^{(0)}, \iota_{\ell_1(e)}\right]\\&=\iota_{\ell_1\circ\nabla_x(e)}-\iota_{\nabla_x\circ\ell_1(e)}
\end{align*}
We have used graded Jacobi identity and the dual correspondence between Lie $\infty$-algebroids and $NQ$-manifolds (see Proposition \ref{prop:dual}). We recapitulate in the following commutative diagram:

\begin{equation}\label{diagram:1}
    \xymatrix{ \cdots\ar[r]^{\dd}&  \ar[r]^{\dd}\Gamma(E_{-2})\ar@<3pt>[d]^{\nabla_x}&  \ar[r]^{\rho} \Gamma(E_{-1})\ar@<3pt>[d]^{\nabla_x} &  \mathfrak F \ar@<3pt>[d]^{\text{ad}_{\varrho(x)}} &  \\\cdots\ar[r]^{\dd}& \ar[r]^{\dd}  \Gamma(E_{-2})&  \Gamma(E_{-1})\ar[r]^{\rho}  &\mathfrak F  } 
\end{equation}which means,
 $$\ell_1\circ\nabla_x=   \nabla_x\circ \ell_1\quad\text{and}\quad  \rho\circ\nabla_x=\text{ad}_{\varrho(x)}\circ \rho.$$
Here, $\ell_1$ stands for the corresponding unary bracket of $(E, Q)$. Also, for $X\in \mathfrak{X}(M)$, $\text{ad}_{X}:=[X,\,\cdot\,]$.\\

\item
For $x,y\in\mathfrak{g}$, and $e\in\Gamma(E)$, the relation \eqref{eq:term2} yields
\begin{align*}
\left[\Phi_0([x,y]_{\mathfrak{g}})-\left[\Phi_0(x),\Phi_0(y)\right],\iota_e\right]^{(-1)}&=\left[\left[Q,\Phi_1(x,y)\right], \iota_e\right]^{(-1)}\\&\\\iota_{\nabla_{[x,y]_\mathfrak{g}}(e)}-[\Phi_0(x),[\Phi_0(y),\iota_e]]^{(-1)}+[\Phi_0(y),[\Phi_0(x),\iota_e]]^{(-1)}&=\qquad {}^{"}\\&\\\iota_{\nabla_{[x,y]_\mathfrak{g}}(e)}-\left[\Phi_0(x)^{(0)}, \iota_{\nabla_y(e)} \right]+\left[\Phi_0(y)^{(0)}, \iota_{\nabla_x(e)} \right]&=\left[\left[Q,\Phi_1(x,y)\right], \iota_e\right]^{(-1)}.\end{align*}
Thus \begin{equation}\label{eq:1}
    \left[\left[Q,\Phi_1(x,y)\right], \iota_e\right]^{(-1)}=\iota_{\nabla_{[x,y]_\mathfrak{g}}(e)}- \iota_{\nabla_x\circ \nabla_y(e)}+\iota_{\nabla_y\circ \nabla_x(e)}.
\end{equation}

On the other hand, $\Phi_1(x,y)$ admits a polynomial decomposition$$\Phi_1(x,y)^{(-1)}+\sum_{i\geq 0}\Phi_1(x,y)^{(i)}= \iota_{\eta(x,y)}+\sum_{i\geq 0}\Phi_1(x,y)^{(i)}$$ and that $$\displaystyle{\left[\sum_{i\geq 0}\Phi_1(x,y)^{(i)}, \iota_e\right]^{(-1)}=\left[\Phi_1(x,y)^{(0)}, \iota_e\right]}=\iota_{\gamma(x,y)(e)},$$ for some linear map $\gamma(x,y)\colon \Gamma(E_{-\bullet})\longrightarrow \Gamma(E_{|e|-1})$ depending linearly on $x, y$. Therefore, 
\begin{align*}
    \left[\left[Q,\Phi_1(x,y)\right], \iota_e\right]^{(-1)}&=\left[\left[Q,\iota_{\eta(x,y)}\right], \iota_e\right]^{(-1)}+\left[\left[Q,\sum_{i\geq 0}\Phi_1(x,y)^{(i)}\right], \iota_e\right]^{(-1)}\\&=\iota_{\ell_2(\eta(x,y),e)}+\left[Q,\left[\sum_{i\geq 0}\Phi_1(x,y)^{(i)}, \iota_e\right]\right]^{(-1)}-\left[\sum_{i\geq 0}\Phi_1(x,y)^{(i)},\left[Q, \iota_e\right]\right]^{(-1)}\\&=\iota_{\ell_2(\eta(x,y),e)}+\left[Q^{(0)},\left[\sum_{i\geq 0}\Phi_1(x,y)^{(i)}, \iota_e\right]^{(-1)}\right]-\left[\Phi_1(x,y)^{(0)},\left[Q, \iota_e\right]^{(-1)}\right]\\&=\iota_{\ell_2(\eta(x,y),e)}+\left[Q^{(0)}, \iota_{\gamma(
    x,y)(e)}\right]-\left[\Phi_1(x,y)^{(0)}, \iota_{\ell_1(e)}\right]\\&=\iota_{\ell_2(\eta(x,y),e)}+\iota_{\ell_1(\gamma(
    x,y)(e))}-\iota_{\gamma(x,y)(\ell_1(e))}
\end{align*}
By equating the latter with \eqref{eq:1} we can recapitulate as follows:\\

\noindent
\textbf{Conclusion}: In general, the map\;$\mathfrak g\longrightarrow \mathrm{Der}(E),\,x\mapsto \nabla_x$ is not a Lie algebra morphism  even when the action $\varrho$ is strict. In fact, there exists a bilinear map $\gamma\colon \wedge^2\mathfrak g\longrightarrow \mathrm{End}(E)[1]$ of degree ~$0$ that satisfies

$$\nabla_{[x,y]_\mathfrak g}-[\nabla_x, \nabla_y]=\gamma(x,y)\circ\ell_1- \ell_1\circ \gamma(x,y)+ \ell_2(\eta(x,y),\cdot\,),$$here $\ell_2$ is the corresponding $2$-ary bracket of $(E,Q)$, and  $\eta\colon\wedge^2\mathfrak g\longrightarrow \Gamma(E_{-1})$ is such that $\varrho([x,y]_\mathfrak{g})-[\varrho(x),\varrho(y)]=\rho(\eta(x,y))$.\\

\item Also, Equation \eqref{eq:low-term2} taken in polynomial-degree $-1$ implies, that, for all $\alpha\in \Gamma(E_{-1}^*)$
\begin{align*}
&\Phi_1([x,y]_\mathfrak g,z)^{(-1)} -[\Phi_0(x)^{(0)}, \Phi_1(y,z)^{(-1)}]+\circlearrowleft(x,y,z)=[Q^{(0)}, \Phi_2(x,y,z)^{(-1)}].\\&\\\Longrightarrow&\;\iota_{\eta([x,y]_\mathfrak g,z)}-[\Phi_0(x)^{(0)}, \iota_{\eta(y,z)}]+\circlearrowleft(x,y,z)=[Q^{(0)},\iota_{\zeta(x,y,z)}],\quad \text{with}\quad\zeta\colon \wedge^3\mathfrak{g}\rightarrow \Gamma(E_{-2})\\&\\\Longrightarrow&\; \langle \alpha, \eta([x,y]_\mathfrak g,z)\rangle-\left(\Phi_0(x)^{(0)}[\langle \alpha, \eta(y,z)\rangle]-\langle \Phi_0(x)^{(0)}(\alpha), \eta(y,z)\rangle\right)+\circlearrowleft=\langle Q^{(0)}[\alpha],\zeta(x,y,z)\rangle,\\&\\\Longrightarrow& 
\end{align*}
\begin{align*}
     \langle \alpha, \eta([x,y]_\mathfrak g,z)\rangle-\left(\cancel{\varrho(x)[\langle \alpha, \eta(y,z)\rangle]}-\cancel{\varrho(x)[\langle\alpha, \eta(y,z)\rangle]}+\langle\alpha, \nabla_x\eta(y,z)\rangle\right)&+\circlearrowleft(x,y,z)=\\&\langle \alpha,\, \ell_1(\zeta(x,y,z))\rangle.
\end{align*}
We have used Equations \eqref{eq:compatibility-with-Q} and  \eqref{eq:dual-action} in the last line. Hence, 

\begin{equation*}
    \langle \alpha, \eta([x,y]_\mathfrak g,z)- \nabla_x\eta(y,z)\rangle+\circlearrowleft(x,y,z)=\langle \alpha,\, \ell_1(\zeta(x,y,z))\rangle.
\end{equation*}Since $\alpha$ is arbitrary, one obtains \begin{equation}
    \nabla_x\eta(y,z)-\eta([x,y]_\mathfrak g,z)+\circlearrowleft(x,y,z)= \ell_1(\zeta(x,y,z)).
\end{equation}

In particular, if $m\in M$ is such that $\ell_1|_m=0$ and $\ell_2|_m=0$, then the map $x\mapsto \nabla_x$ defines an action on the isotropy Lie algebra $\mathfrak g_m$ of $\mathfrak F$, since $\nabla_x$ preserves the kernel of $\rho$. If in addition $\eta(x,y)\in \ker \rho_m$, then $\eta|_m\colon\wedge^2\mathfrak g\longrightarrow \mathfrak g_m$ is a cocycle of Chevalley-Eilenberg.\\
\end{enumerate}
\end{remark}
\begin{remark}
Notice that by using the duality which is given in Proposition \ref{alt:thm-res}, the linear map $x\mapsto \nabla_x$ is simply $$x\mapsto \ell_2'(x,\,\cdot),$$ 
where $\ell_2'$ is the $2$-ary bracket between sections of $\mathfrak g$ and $E$  of the Lie $\infty$-algebroid $(Q', E\oplus \mathfrak g)$ over the complex
\begin{equation}\label{semi-resol2}
 \begin{array}{c} \cdots\stackrel{\ell_1}{\longrightarrow}  E_{-3}\stackrel{\ell_1}{\longrightarrow}  E_{-2} \stackrel{\ell_1}{\longrightarrow}\mathfrak g\oplus E_{-1} \stackrel{\rho'}{\longrightarrow}TM\end{array}
\end{equation}like in Proposition \ref{alt:thm-res}. Indeed, for $\alpha\in \Gamma(E_{-1}^*)$ and $e\in \Gamma(E_{-1})$, \begin{align*}
\Phi_0(x)^{(0)}&= \text{pr}\circ[Q', \iota_x]^{(0)}\\&=\text{pr}\circ [Q'^{(1)},\iota_x].
\end{align*}
This implies that:
\begin{align*}
   \langle \Phi_0(x)^{(0)}\alpha, e \rangle&= \langle  Q'^{(1)}\alpha, x\odot e \rangle\\&=\rho'(x)[\langle \alpha, e \rangle]-\cancel{\rho'(e)[\langle \alpha,x\rangle]}-\langle \alpha, \ell_2'(x,e) \rangle\\&=\varrho(x)[\langle \alpha, e \rangle]-\langle \alpha, \ell_2'(x,e) \rangle.
\end{align*}
\end{remark}

\section{An obstruction theory}
Let us start with some generalities. Assume we are given\begin{itemize}
\item a Lie algebra $\mathfrak g$,
\item a weak symmetry action $\varrho\colon \mathfrak g\longrightarrow \mathfrak X(M)$ of $\mathfrak g$ on a singular foliation $\mathfrak{F}$, together  with $\eta\colon\wedge^2\mathfrak{g}\longrightarrow \Gamma(E_{-1})$   such that $x,y\in\mathfrak{g}$\begin{equation}\label{eq:def:eta}
    \varrho([x,y]_\mathfrak{g})-[\varrho(x),\varrho(y)]=\rho(\eta(x,y)).
\end{equation}
    \item an universal Lie $\infty$-algebroid $(E,Q_E)$  of $\mathfrak{F}$, 
 \end{itemize}  
Theorem \ref{main} assures  $\varrho\colon\mathfrak g\rightarrow \mathfrak{X}(M)$ admits a lift to a Lie $\infty$-morphism \begin{equation}
    \label{eq:Lie-morp-obstruction-theory2}\Phi\colon (\mathfrak{g},\lb_\mathfrak{g})\rightsquigarrow (\mathfrak X_\bullet(E)[1],\lb,\text{ad}_{Q}).
\end{equation}Equivalently, if $\mathfrak{g}$ is of finite dimension, \eqref{eq:Lie-morp-obstruction-theory2} corresponds (by Proposition \ref{alt:thm-res}) to a Lie $\infty$-algebroid $(E',Q')$ over $M$ such that
    
 \begin{itemize}
     \item $(E, Q_E)$ is included as a sub-Lie $\infty$-algebroid in a Lie algebroid $(E', Q)$ over $M$,\item its underlying complex is, $E'_{-1}:=~\mathfrak{g}\oplus E_{-1}$, and for any $i\geq 2$, $E_{-i}'=E_{-i}$, 
    namely \begin{equation}\label{semi-res}
  \begin{array}{c} \cdots\stackrel{\dd}{\longrightarrow}  E_{-3}\stackrel{\dd}{\longrightarrow}  E_{-2} \stackrel{\dd}{\longrightarrow}\mathfrak g\oplus E_{-1} \stackrel{{\rho'}}{\longrightarrow}TM,
    \end{array}
\end{equation}\item we have,  $$\ell'_2(x\oplus 0,y\oplus 0)=[x,y]_{\mathfrak{g}}\oplus \eta(x,y)$$ 
and $$\ell_2'(x,\Gamma(E_{-1}))\subset\Gamma(E_{-1})$$ for all $x\in\mathfrak{g}$.
\end{itemize}   

\begin{remark}
It is important to note that the Lie $\infty$-algebroid $(E',Q')$ can be constructed directly out of the weak symmetry action $\varrho\colon \mathfrak{g}\longrightarrow\mathfrak{X}(M)$, even if $\mathfrak{g}$ is of infinite dimension (see Theorem \ref{alt-thm-res}).
\end{remark}

\begin{remark}
In Equation \eqref{semi-res}, the complex $(E,\ell_1)$ can be chosen to be  minimal at a point $m\in M$, i.e., ${\ell_1}_{|_m}=0$, provided that a geometric resolution of $\mathcal{F}$ exists. By Proposition 4.14 in \cite{LLS} the isotropy Lie algebra $\mathfrak{g}_m=\frac{\mathcal{F}(m)}{\mathcal{I}_m\mathcal{F}}$ of the singular foliation $\mathcal{F}$ at the point $m\in M$ is isomorphic to $\ker(\rho_m)$. This allows to denote the latter space also by $\mathfrak{g}_m$.
\end{remark}

In what follows, we will use the $\infty$-algebroid which is described on the complex \eqref{semi-res}.

\begin{lemma}\label{class}
Let $m\in M$ be a fixed point of the $\mathfrak g$-action $\varrho$. 
Assume that the underlying complex $(E,\ell_1)$ is minimal at a point $m$, i.e. ${\ell_1}_{|_m}=0$. The map
\[{\nu}\colon\mathfrak{g}\longrightarrow \emph{End}\left(\mathfrak{g}_m\right),\;x\longmapsto\ell_2'(x\,,\cdot)_{|_m}\]satisfies \begin{itemize}
    \item[(a)] $\nu([x,y]_\mathfrak{g})-[\nu(x),\nu(y)]+\ell_2(\,\cdot,\eta(x,y))_{|_m}=0$,
    \item[(b)] $\nu(z)\left(\eta(x,y)_{|_m}\right)-\eta([x,y]_\mathfrak{g},z)_{|_m}+\circlearrowleft(x,y,z)=0$.
\end{itemize}

\end{lemma}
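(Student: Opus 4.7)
The plan is to extract both identities directly from the higher Jacobi identity of the Lie $\infty$-algebroid $(E', Q')$ described in Proposition~\ref{alt:thm-res} (or Theorem~\ref{alt-thm-res} in the infinite-dimensional setting), built on the complex \eqref{semi-res}. Recall that $\ell_2'$ acts on two constant sections $x,y \in \mathfrak{g}$ as $\ell_2'(x,y) = [x,y]_\mathfrak{g} \oplus \eta(x,y)$, coincides with the original $\ell_2$ on $\Gamma(E_{-1})\times\Gamma(E_{-1})$, and maps $\mathfrak{g}\times\Gamma(E_{-1})$ into $\Gamma(E_{-1})$; by definition $\nu(x) = \ell_2'(x,\cdot)|_m$.

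For identity (a), I would apply the higher Jacobi identity of order $3$ of $(E',Q')$ to the triple $(x,y,e)$ with $x,y \in \mathfrak{g}$ viewed as constant sections and $e \in \Gamma(E_{-1})$ an arbitrary local extension of a chosen value in $E_{-1}|_m$. In graded symmetric form, and using that $\ell_1'$ vanishes on elements of $\mathfrak{g}$, this identity reads
\begin{equation*}
\ell_2'(\ell_2'(x,y),e) - \ell_2'(\ell_2'(x,e),y) + \ell_2'(\ell_2'(y,e),x) + \ell_1'(\ell_3'(x,y,e)) + \ell_3'(x,y,\ell_1'(e)) = 0.
\end{equation*}
Evaluating at $m$, the last two terms vanish because $\ell_1|_m = 0$ (minimality at $m$); expanding $\ell_2'(x,y) = [x,y]_\mathfrak{g} + \eta(x,y)$ and using $\ell_2'(\eta(x,y),e) = \ell_2(\eta(x,y),e)$ gives precisely identity (a).

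For identity (b), the same Jacobi identity applied to the triple $(x,y,z)$ with all three in $\mathfrak{g}$ leads to the splitting
\begin{equation*}
\ell_2'(\ell_2'(x,y),z) = [[x,y]_\mathfrak{g},z]_\mathfrak{g} + \eta([x,y]_\mathfrak{g},z) + \ell_2'(\eta(x,y),z).
\end{equation*}
After cyclic summation with the appropriate Koszul signs, the $\mathfrak{g}$-component vanishes automatically by the Jacobi identity on $\mathfrak{g}$, while the $\Gamma(E_{-1})$-component, evaluated at $m$ and rewritten via the graded symmetry $\ell_2'(\eta(x,y),z)|_m = -\,\ell_2'(z,\eta(x,y))|_m = -\,\nu(z)(\eta(x,y)|_m)$ (both arguments being of degree $-1$), produces identity (b); the Jacobi-identity terms involving $\ell_1'$ and $\ell_3'$ again die at $m$.

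The main obstacle will be the careful bookkeeping of Koszul signs (all arguments sit in degree $-1$ while brackets raise degree by $+1$) and the verification that $\nu(x)\colon E_{-1}|_m \to E_{-1}|_m$ is well defined: since $\ell_2'(x, fe) = f\,\ell_2'(x,e) + \varrho(x)[f]\,e$, the value at $m$ depends only on $e|_m$ precisely when the anchor contribution $\varrho(x)[f]\,e$ vanishes at $m$, which is the natural hypothesis implicit in the minimal-resolution setting where the lemma applies, in the spirit of Section~\ref{sec:evaluation-oid} on evaluation of Lie $\infty$-algebroids at a point.
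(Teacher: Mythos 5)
Your proof is correct and follows essentially the same route as the paper: the paper likewise derives both identities from the $n=3$ Jacobi identity of $(E',Q')$ applied to the triples $(x,y,e)$ and $(x,y,z)$, using $\ell_1|_m=0$ to reduce it to an honest Jacobi identity on the fibre $E'_{-1}|_m$ together with the decomposition $\ell_2'(x,y)=[x,y]_\mathfrak{g}\oplus\eta(x,y)$ and the Jacobi identity of $[\cdot,\cdot]_\mathfrak{g}$. Your closing remark about the well-definedness of $\nu(x)$ as an endomorphism of the fibre (the anchor term $\varrho(x)[f]\,e$ evaluated at $m$) is a genuine subtlety that the paper's proof passes over in silence, so it is worth keeping.
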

\begin{proof}
The map $\nu$ is well-defined. The Jacobi identity on elements $x,y\in \mathfrak{g},\,e\in \Gamma(E_{-1})$, evaluated at the point $m$, implies that $$\nu({[x,y]_\mathfrak{g}})(e_{|_m})-[\nu(x),\nu(y)](e_{|_m})+\ell_2(\eta(x,y),e)_{|_m}=0.$$ This proves item (a). Likewise,  Jacobi identity on  elements $x,y,z\in\mathfrak{g}$ and since  ${\ell_1}_{|_m}=0$ give:\begin{align*}
    \ell'_2(\ell_2'(x,y),z)_{|_m} +\circlearrowleft(x,y,z)=0\,&\Longrightarrow\,\ell'_2([x,y]_\mathfrak{g},z)_{|_m}+\ell'_2(\eta(x,y),z)_{|_m} +\circlearrowleft(x,y,z)=0,\\&\Longrightarrow \,\nu(z)\left(\eta(x,y)_{|_m}\right)-\eta([x,y]_\mathfrak{g},z)_{|_m}+\circlearrowleft(x,y,z)=0.
\end{align*}
Here we have used the definition of $\ell'_2$ on degree $-1$ elements and Jacobi identity for the bracket $[\cdot\,,\cdot]_\mathfrak{g}$. This proves item (b).
\end{proof}

By Lemma \ref{class}, $\mathfrak g_m$ is equipped with a $\mathfrak{g}$-module structure when $\eta(x,y)_{|_m}$ is  for all $x,y\in \mathfrak{g}$ valued in the center the Lie algebra $\mathfrak g_m$. Recall that $\eta$ is defined by Equation \eqref{eq:def:eta}. Notice that if $m\in M$ is a fixed point of the $\mathfrak g$-action $\varrho$, then  this implies in particular that $\eta(x,y)|_m\in \ker \rho_m=\mathfrak g_m$.

\begin{proposition}\label{Prop:class}
Let $m\in M$ be a fixed point of the $\mathfrak g$-action $\varrho$. Assume that 
\begin{itemize}
\item the underlying complex $(E,\ell_1)$  of $(E,Q)$ is minimal at $m$,
      \item for all $x,y\in \mathfrak{g}$, $\eta(x,y)_{|_m}$\,is valued in the center\footnote{In particular, when the $2$-ary bracket $\ell_2$ is zero at $m$, on elements of degree $-1$ we have, $Z(\mathfrak{g}_m)=\mathfrak{g}_m$.} $Z(\mathfrak{g}_m)$ of  $\mathfrak{g}_m$.
\end{itemize}
 Then,
\begin{enumerate}
    \item the restriction of the $2$-ary bracket 
\[\ell_2'\colon\mathfrak{g}\otimes Z(\mathfrak{g}_m)\longrightarrow Z(\mathfrak{g}_m)\]endows $Z(\mathfrak{g}_m)$ with a $\mathfrak{g}$-module structure which does not
depend neither on the choices of weak symmetry action  $\varrho$, a universal Lie $\infty$-algebroid of $\mathfrak{F}$, nor of the Lie $\infty$-morphism $\Phi\colon \mathfrak{g}\longrightarrow \mathfrak{X}(E)$.
\item the restriction of the map $\eta\colon\wedge^2\mathfrak{g}\longrightarrow \Gamma({E_{-1}})$ at $m$ \[\eta_{|_m}\colon\wedge^2\mathfrak{g}\longrightarrow Z(\mathfrak{g}_m)\]is a $2$-cocycle for the Chevalley-Eilenberg complex of $\mathfrak{g}$ valued in $Z(\mathfrak{g}_m)$,
\item the cohomology class of this cocycle does not depend on the representatives of the equivalence class of $\varrho$,
\item  if $\varrho$ is equivalent to a strict symmetry action, then $\eta_{|_m}$ is exact.
\end{enumerate}
\end{proposition}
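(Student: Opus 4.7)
The plan is to work mainly with the Lie $\infty$-algebroid $(E',Q')$ on the complex \eqref{semi-res} built from the lift $\Phi$, since the Jacobi identity for its $2$-ary bracket $\ell_2'$ will produce, at the point $m$ where $\ell_1|_m=0$, the required algebraic identities almost for free. Lemma~\ref{class} already supplies the two key identities: (a) measures the defect of $x\mapsto \nu(x)=\ell_2'(x,\cdot)|_m$ from being a Lie algebra morphism, and (b) is the Chevalley--Eilenberg cocycle identity for $\eta|_m$. The whole proof consists in extracting items (1)--(4) from these two equalities, together with the standard fact that two lifts of $\varrho$ differ by a homotopy (Theorem \ref{main}, items 2 and 3).

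\textbf{Item (1).} First I would show that $\nu(x)$ preserves $Z(E_{-1}|_m)$. Writing Jacobi for $\ell_2'$ applied to $x\in\mathfrak g$, $z\in\Gamma(E_{-1})$ with $z|_m\in Z(E_{-1}|_m)$, and an arbitrary $w\in\Gamma(E_{-1})$, evaluating at $m$ and using $\ell_1|_m=0$ gives
\[\ell_2(\nu(x)(z|_m),w|_m)=\nu(x)\bigl(\ell_2(z,w)|_m\bigr)-\ell_2(z|_m,\nu(x)(w|_m))=0,\]
both terms on the right vanishing because $z|_m$ is central. Hence $\nu(x)Z(E_{-1}|_m)\subset Z(E_{-1}|_m)$. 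Restricting Lemma~\ref{class}(a) to $Z(E_{-1}|_m)$ then kills the term $\ell_2(\cdot,\eta(x,y))|_m$ (by the centrality hypothesis on $\eta$) and yields $\nu([x,y]_\mathfrak g)=[\nu(x),\nu(y)]$ on the center. For independence of choices, two lifts $\Phi,\Phi'$ are homotopic (Theorem~\ref{main} item 2) through a path $(\Xi_t,H_t)$, and differentiating at the level of the $0$-th Taylor coefficient gives $\Phi'_0(x)-\Phi_0(x)=[Q,K(x)]$ for some $K(x)\in\mathfrak X_{-1}(E)$. The induced variation of $\nu$ on $Z(E_{-1}|_m)$ is then $[\ell_1,\kappa(x)]|_m$ for an appropriate $\kappa(x)\in\mathrm{End}(E)$, and this vanishes because $\ell_1|_m=0$. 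A parallel argument, using Proposition~\ref{univ:precise} to embed two universal structures into a common one, handles the change of universal Lie $\infty$-algebroid.

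\textbf{Items (2) and (3).} Item (2) is immediate from Lemma~\ref{class}(b) once one observes that under the centrality hypothesis the identity reads exactly $(d^{CE}\eta|_m)(x,y,z)=0$ with respect to the $\mathfrak g$-module structure of item (1). For item (3), if $\varrho'=\varrho+\rho\circ\beta$ with $\beta\colon\mathfrak g\to\Gamma(E_{-1})$, the defect of $\varrho'$ from being bracket-preserving is (using $[\varrho(x),\rho(e)]=\rho(\nabla_x e)$ and the morphism property of $\rho$)
\[\rho\bigl(\eta(x,y)+\beta([x,y]_\mathfrak g)-\nabla_x\beta(y)+\nabla_y\beta(x)-\ell_2(\beta(x),\beta(y))\bigr),\]
so we can choose a representative $\eta'$ whose difference with $\eta$ lies in $\ker\rho$ and, modulo $\ell_1$-exact terms, equals $\beta([x,y]_\mathfrak g)-\nabla_x\beta(y)+\nabla_y\beta(x)$ plus a bracket of two central elements at $m$. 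Evaluating at $m$, the $\ell_1$-term disappears and the last bracket lies in $Z(E_{-1}|_m)$, giving
\[\eta'|_m-\eta|_m=d^{CE}(\beta|_m)\pmod{\mathrm{im}\,\ell_1|_m=0},\]
which is a Chevalley--Eilenberg coboundary.

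\textbf{Item (4) and main obstacle.} If $\varrho$ is equivalent to a strict action $\varrho'$, then one may choose $\eta'\equiv 0$, so $\eta|_m$ is cohomologous to zero by item (3), hence exact. The principal technical obstacle I anticipate is item (3): one must verify that after choosing $\eta$ for $\varrho'$ the correction terms are indeed either $\ell_1$-exact or central at $m$, and that all signs from the Koszul rule in the Lie $\infty$-algebroid identities match the Chevalley--Eilenberg sign convention. Keeping track of this carefully, and checking at each stage that the centrality hypothesis is preserved, is where care is required; everything else is a direct translation of the Jacobi identity for $\ell_2'$ at the point $m$.
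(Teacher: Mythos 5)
Your proof is correct and rests on the same two pillars as the paper's: the identities (a) and (b) of Lemma~\ref{class}, and the transformation formula $\eta'(x,y)=\eta(x,y)+\beta([x,y]_\mathfrak{g})-\ell_2'(x,\beta(y))+\ell_2'(y,\beta(x))-\ell_2(\beta(x),\beta(y))$ under $\varrho\mapsto\varrho+\rho\circ\beta$. The one place where you genuinely diverge is in how the center is treated: the paper simply reduces to the case $\ell_2|_m=0$ on $E_{-1}|_m$, so that $Z(E_{-1}|_m)=E_{-1}|_m$ and every correction term ($\ell_2(\cdot,\eta(x,y))|_m$ in Lemma~\ref{class}(a), $\ell_2(\beta(x),e)|_m$ in the variation of $\ell_2'$, and $\ell_2(\beta(x),\beta(y))|_m$ in the formula for $\eta'$) dies for free; you instead keep a general center, prove via the Jacobi identity that $\nu(x)$ preserves $Z(E_{-1}|_m)$, and argue the correction terms vanish on the center. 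That buys you the statement as literally written (for a proper center), which the paper's "we may assume" does not quite deliver; the price is the one step where you assert that $\ell_2(\beta(x),\beta(y))|_m$ is "a bracket of two central elements" — $\beta(x)|_m$ is not assumed central, so in the general-center regime this term is not obviously a coboundary and needs an extra argument (it disappears exactly in the regime $\ell_2|_m=0$ that the paper restricts to). Your use of the homotopy between lifts (Theorem~\ref{main}, items 2--3) for the independence claims in item (1) is also a mild variant of the paper's, which instead invokes the explicit modification $(x,e)\mapsto\ell_2'(x,e)+\ell_2(\beta(x),e)$ of the binary bracket; both give the same conclusion since $\ell_1|_m=0$ kills the homotopy terms.
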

\begin{proof}
$(E, \ell_1)$ being minimal at $m$,  $\ell_2'|_m$ satisfies the Jacobi identity. In particular,  for every $x\in \mathfrak g[1]$, $\ell_2'(x,\cdot\,)|_m$ preserves $Z(\mathfrak{g}_m)$. By item $(a)$ of Lemma \ref{class}, the restriction of the $2$-ary bracket 
\[\ell_2'\colon\mathfrak{g}\otimes Z(\mathfrak{g}_m)\longrightarrow Z(\mathfrak{g}_m)\] endows $Z(\mathfrak{g}_m)$ with a $\mathfrak{g}$-module, since $\ell_2(\cdot,\,\eta(x,y))|_m=0$ by assumption. It is easy to see that if we change the action $\varrho$ to $\varrho+\rho\circ\beta$  for some vector bundle morphism $\beta\colon\mathfrak{g}\longrightarrow E_{-1}$ such that $\beta|_m\colon\mathfrak{g}\longrightarrow Z(\mathfrak{g}_m)$, the new $2$-ary bracket between sections of $\mathfrak{g}[1]$ and $E_{-1}$ constructed as in the  proof of Theorem \ref{alt-thm-res} is modified by $(x,e)\mapsto\ell_2'(x,e)+\ell_2(\beta(x),e)$. The second term of the latter vanishes at $m$, by definition of $\beta|_m$. As a result, the action of $\ell_2'$ on $Z(\mathfrak{g}_m)$ does not depend on the choices made in the construction. This proves  item ${1}$.

{Item ${2}$ follows from item (b) of Lemma \ref{class} that tells that $\eta_{|_m}\colon\wedge^2\mathfrak{g}\longrightarrow Z(\mathfrak{g}_m)$ is a $2$-cocycle for the Chevalley-Eilenberg complex of $\mathfrak{g}$ valued in $Z(\mathfrak{g}_m)$.}

Let $\varrho'$ be a weak symmetry action of $\mathfrak{g}$ on $\mathcal{F}$ which is equivalent to $\varrho$, i.e., there exists a vector bundle morphism $\beta\colon\mathfrak{g}\longrightarrow E_{-1}$ (with $\beta|_m\colon\mathfrak{g}\longrightarrow Z(\mathfrak{g}_m)$) such that $\varrho'(x)=\varrho(x)+\rho(\beta(x))$ for all $x\in\mathfrak{g}$. Let $\eta'\colon\wedge^2\mathfrak{g}\longrightarrow E_{-1}$ be such that $\varrho'([x,y]_\mathfrak{g})-[\varrho'(x),\varrho'(y)]=\rho(\eta'(x,y))$ for all $x,y\in\mathfrak{g}$. Following the constructions in the proof of Theorem \ref{alt-thm-res}, this implies that\begin{equation}\label{eq:eta-exa}
    \eta'(x,y)=\eta(x,y)+\beta([x,y]_\mathfrak{g})- \ell_2'(x,\beta(y))+\ell'_2(y,\beta(x))-\ell_2(\beta(x),\beta(y))),\quad \text{for all $x,y\in\mathfrak{g}$}.
\end{equation}
Equation \eqref{eq:eta-exa} implies that $\eta'(x,y)|_m-\eta(x,y)|_m=\dd^{CE}(\beta|_m)(x,y)$, where $\dd^{CE}$ stands for the Chevalley-Eilenberg differential. As a consequence, $\eta'|_m$ and $\eta|_m$ define the same class in the Chevalley-Eilenberg complex of $\mathfrak{g}$ valued in $Z(\mathfrak{g}_m)$. This proves item $3$ and $4$.
\end{proof}

\begin{remark}
When ${\ell_2}_{|_m}\neq 0$. The weak symmetry action $\varrho$ is equivalent to strict one if  the Maurer-Cartan-like equation \eqref{eq:eta-exa} has no solution with ${\eta'}_{|_m}=0$.
\end{remark}

Let $\mathfrak{F}$ be a singular foliation. Let us choose a universal Lie $\infty$-algebroid $(E,Q)$ such that $(E,\ell_1)$ is minimal at a point $m\in M$. Such a structure always exists (see Proposition \ref{prop:minimal-resol}). By Proposition 4.14 in \cite{LLS} the isotropy Lie algebra $\mathfrak{g}_m$ of the singular foliation $\mathfrak{F}$ at the point $m\in M$ is isomorphic to $\ker(\rho_m)$. The following is a direct consequence of Proposition \ref{Prop:class}.
\begin{corollary}\label{prop:class}
Let $m\in M$ be a fixed point of the $\mathfrak g$-action $\varrho$. Assume that the isotropy Lie algebra $\mathfrak{g}_m$ of $\mathfrak F$ at $m$ is Abelian. Then, for any weak symmetry action  $\varrho$ of a Lie algebra action $\mathfrak{g}$ on $\mathfrak{F}$ such that $\varrho([x,y]_\mathfrak{g})-[\varrho(x),\varrho(y)]\in\mathfrak{F}(m)$ for all $x,y\in\mathfrak{g}$

 \begin{enumerate}
    \item $\mathfrak{g}_m$ is a $\mathfrak{g}$-module.
    \item The bilinear map, $\eta_{|_m}\colon\wedge^2\mathfrak g\to \mathfrak{g}_m$, is a Chevalley-Eilenberg $2$-cocycle  of $\mathfrak g$ valued in $\mathfrak{g}_m$. \item Its  class $\emph{cl}(\eta)\in H^2(\mathfrak g,\mathfrak{g}_m)$ does not depend on the choices made in the construction.
    \item Furthermore, $\emph{cl}(\eta)$ is an obstruction of having a strict symmetry action equivalent to $\varrho$.
\end{enumerate}
\end{corollary}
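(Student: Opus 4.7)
The plan is to deduce Corollary \ref{prop:class} directly from Proposition \ref{Prop:class} by verifying that its hypotheses are met once we work with a well-chosen representative of the universal Lie $\infty$-algebroid. First I would select, by minimality at a point (which is guaranteed for finitely generated foliations over local/Noetherian rings, as referenced in the thesis), a universal Lie $\infty$-algebroid $(E,Q)$ of $\mathfrak F$ whose underlying complex $(E,\ell_1)$ is minimal at $m$, i.e.\ ${\ell_1}_{|_m}=0$. Under this choice, the identification $\mathfrak{g}_m \cong \ker(\rho_m)$ (Proposition 4.14 in \cite{LLS}, recovered here as Proposition \ref{prop:isotropy}) holds canonically, and the $2$-ary bracket $\ell_2$ descends at $m$ to the Lie bracket of the isotropy algebra $\mathfrak{g}_m$.

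Next, I would verify the center condition. The hypothesis $\varrho([x,y]_\mathfrak{g})-[\varrho(x),\varrho(y)]\in\mathfrak{F}(m)$ is exactly the statement $\rho(\eta(x,y))_{|_m}=0$, i.e.\ $\eta(x,y)_{|_m}\in \ker(\rho_m)\cong \mathfrak{g}_m$. Since $\mathfrak{g}_m$ is Abelian, the bracket induced by $\ell_2$ on $\ker(\rho_m)$ vanishes identically, so $\eta(x,y)_{|_m}$ is central in the relevant Lie algebra (and the argument of Proposition \ref{Prop:class} applies with $\mathfrak{g}_m$ playing the role of $Z(E_{-1}|_m)$, which is why the auxiliary step ``we may assume $\ell_2|_m=0$'' works in our setting). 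The compatibility of the Leibniz-type identity at $m$ when one argument lies in $\ker(\rho_m)$ is what makes the evaluation $\ell_2'(x,\cdot)_{|_m}$ from Lemma \ref{class} well-defined on $\mathfrak{g}_m$ and makes item (1) of Proposition \ref{Prop:class} give the $\mathfrak{g}$-module structure of item 1 of the corollary.

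From there, items 2 and 3 of the corollary are obtained by transporting verbatim items 2 and 3 of Proposition \ref{Prop:class}: item (b) of Lemma \ref{class} (with $\nu = \ell_2'(\cdot,-)_{|_m}$ and abelian $\mathfrak g_m$) is precisely the Chevalley--Eilenberg cocycle identity $\nu(z)\eta(x,y)_{|_m}-\eta([x,y]_\mathfrak g,z)_{|_m}+\circlearrowleft(x,y,z)=0$, and the independence of the class $\mathrm{cl}(\eta)\in H^2(\mathfrak g,\mathfrak g_m)$ from representatives of the equivalence class of $\varrho$ follows from Equation \eqref{eq:eta-exa} applied under $\ell_2|_m=0$: passing to $\varrho'=\varrho+\rho\circ\beta$ changes $\eta_{|_m}$ by the Chevalley--Eilenberg coboundary $\dd^{CE}(\beta_{|_m})$. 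Item 4 then is the contrapositive of Proposition \ref{Prop:class}(4): if there existed a strict symmetry action equivalent to $\varrho$, its own $\eta$ would vanish, and by the same Equation \eqref{eq:eta-exa} this forces $\eta_{|_m}$ to be exact, so $\mathrm{cl}(\eta)=0$.

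The only subtlety is ensuring that the ``central'' hypothesis in Proposition \ref{Prop:class} is genuinely supplied by the abelian-ness of $\mathfrak g_m$ rather than by the stronger $\ell_2|_m=0$ used in its proof. I would handle this by observing that since we only need $\ell_2(\eta(x,y),e)_{|_m}$ to vanish for $e\in\ker(\rho_m)$ (all tests in Lemma \ref{class}, items (a)--(b), specialize to arguments that can be taken in $\ker(\rho_m)$ once we project onto $\mathfrak g_m$), the abelian assumption is exactly sufficient; this is the only place a small verification is required, and I expect it to be the main conceptual obstacle to presenting the corollary as an immediate consequence rather than a genuine specialization of Proposition \ref{Prop:class}.
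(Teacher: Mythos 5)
Your proposal is correct and follows essentially the same route as the paper, which simply invokes a universal Lie $\infty$-algebroid minimal at $m$, the identification $\mathfrak g_m\cong\ker(\rho_m)$, and then cites Proposition \ref{Prop:class} as giving the corollary directly. Your closing remark about the center hypothesis is a fair observation of a point the paper glosses over, and your resolution — that all the identities of Lemma \ref{class} needed here only test $\ell_2$ against elements of $\ker(\rho_m)$, where abelian-ness of $\mathfrak g_m$ suffices — is exactly the right justification.
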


\begin{example}
We return to Example \ref{ex:isotropy} with $m\in M$ a leaf of $\mathfrak{F}$. Since the isotropy Lie algebra $\mathfrak{g}^k_m$ is Abelian for every $k
\geq 2$ the following assertions hold by Corollary \ref{prop:class}:

\begin{enumerate}
\item For each $k\geq 1$, the vector space $\mathfrak{g}_m^{k+1}$ is a $\mathfrak{g}_m^{k}$-module.
\item The obstruction of having a strict symmetry action equivalent to $\varrho_k$ is a Chevalley-Eilenberg cocycle valued in $\mathfrak{g}_m^{k+1}$.
\end{enumerate}
\end{example}
Here is a particular case of this example.
\begin{example}
Let $\mathfrak{F}:=\mathcal{I}_0^3\mathfrak{X}(\mathbb{R}^n)$ be the singular foliation generated by vector fields vanishing to order $3$ at the origin. The quotient $\mathfrak{g}:=\frac{\mathcal{I}_0^2\mathfrak{X}(\mathbb{R}^n)}{\mathcal{I}_0^3\mathfrak{X}(\mathbb{R}^n)}$ is a trivial Lie algebra. There is a weak symmetry action of $\mathfrak{g}$ on $\mathfrak{F}$ which assigns to an element in $\mathfrak{g}$ a representative in $\mathcal{I}^2_0\mathfrak{X}(\mathbb{R}^n)$. In this case, the isotropy Lie algebra of $\mathfrak{F}$ at zero is Abelian  and $\ell'_2(\mathfrak{g},\mathfrak{g}_0)_{|_0}=0$. Thus, the  action  of $\mathfrak{g}$ on $\mathfrak{g}_0$ is trivial. One can choose $\eta\colon\wedge^2\mathfrak{g}\longrightarrow \mathfrak{g}_0$ such that $\eta\left(\overline{x_i^2\frac{\partial}{\partial x_i}},\overline{x_i^2\frac{\partial}{\partial x_j}}\right)=2e_{ij}$, with $e_{ij}$ a constant section in a set of generators of degree $-1$ whose image by the anchor is $x_i^3\frac{\partial}{\partial x_j}$. Therefore, $\eta_{|_0}\left(\overline{x_i^2\frac{\partial}{\partial x_i}},\overline{x_i^2\frac{\partial}{\partial x_j}}\right)\neq 0$. This implies that the class of $\eta$ is not zero at the origin. Therefore, by item 2 of Corollary \ref{prop:class} the weak symmetry action of $\mathfrak{g}$ on $\mathfrak{F}$ is not equivalent to a strict one.
\end{example}

\begin{example}
Consider again the Example \ref{ex:F-connection-lift} and let $(E, \ell_\bullet, \rho)$ be a universal Lie $\infty$-algebroid of $\mathcal{T}$. Recall that a \emph{flat Ehresmann} connection is a horizontal distribution whose sections are closed under the Lie bracket of vector fields. Let $m\in M$. Assume in Example \ref{ex:F-connection} that the  section $\varrho_H\colon\mathfrak X(L)\rightarrow \mathfrak{F}^{\mathrm{proj}}$ satisfies $$\varrho_H([a,b])-[\varrho_H(a),\varrho_H(b)]=\rho(\eta(a,b))\in \mathcal{T}(m)$$
for some bilinear map $\eta\colon \wedge^2\mathfrak X(L)\rightarrow E_{-1}$. By Proposition \ref{prop:class}, the isotropy Lie algebra $\mathfrak g_m^\mathcal{T}$ of $\mathcal T$  at $m$ is a $\mathfrak X(L)$-module,  and $\eta$ is a cocycle of Chevalley-Eilenberg. This provides an  obstruction  for the $\mathcal{ F}$-connection to be flat.
\end{example}

Also, we have the following consequence of Corollary \ref{prop:class} for Lie algebra actions on affine varieties, as in Example \ref{ex:affine-action}. Before going to Corollary \ref{cor:affine} let us write definitions and some facts.\\

\noindent
\textbf{Settings:} Let $W$ be an affine variety realized as a subvariety of $\mathbb{C}^d$, and defined by some ideal $\mathcal I_W\subset\mathbb{C}[x_1,\ldots,x_d]$. We denote by $\mathfrak{X}(W):=\mathrm{Der}(\mathcal O_W)$ the Lie algebra of vector fields on $W$, where $\mathcal O_W$  is  coordinates ring of $W$. 
\begin{definition}
A point $p\in W$ is said to be $\emph{strongly singular}$ if for all $f\in \mathcal I_W$, $\dd_pf\equiv0$ or equivalently if for all $f\in \mathcal I_W$ and $X\in \mathfrak{X}(\mathbb{C}^d)$, one has $X[f](p)\in \mathcal I_p$.
\end{definition}

\begin{example}
Any singular point of a hypersurface $W$ defined by a polynomial $\varphi\in\mathbb{C}[x_1,\ldots,x_d]$ is strongly singular.
\end{example}
The lemma below is immediate.
\begin{lemma}
In a strongly singular point, the isotropy Lie algebra of the singular foliation $\mathfrak{F}=\mathcal I_W\mathfrak{X}(\mathbb{C}^d)$ is Abelian.
\end{lemma}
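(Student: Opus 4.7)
The plan is straightforward, since strong singularity gives a very clean cancellation in the formula for the Lie bracket on $\mathfrak{F} = \mathcal{I}_W \mathfrak{X}(\mathbb C^d)$. First I would note that, assuming $p \in W$ (which is the only relevant case for the isotropy algebra), the inclusion $\mathcal I_W \subseteq \mathcal I_p$ forces every element of $\mathfrak F$ to vanish at $p$, so $\mathfrak F(p) = \mathfrak F$ and the isotropy Lie algebra is simply $\mathfrak g_p = \mathfrak F / \mathcal I_p \mathfrak F$. Thus it suffices to show that $[\mathfrak F, \mathfrak F] \subseteq \mathcal I_p \mathfrak F$.

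The key step is the following computation. Take $X = \sum_i f_i X_i$ and $Y = \sum_j g_j Y_j$ with $f_i, g_j \in \mathcal I_W$ and $X_i, Y_j \in \mathfrak X(\mathbb C^d)$. Expanding the bracket with the Leibniz rule gives
\[
 [X,Y] \;=\; \sum_{i,j} \Bigl( f_i g_j\, [X_i, Y_j] \;+\; f_i\, X_i[g_j]\, Y_j \;-\; g_j\, Y_j[f_i]\, X_i \Bigr).
\]
In the first sum the coefficient $f_i g_j$ lies in $\mathcal I_W^2 \subseteq \mathcal I_p \mathcal I_W$, so this piece lies in $\mathcal I_p \mathfrak F$. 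In the remaining two sums, the strongly singular hypothesis is precisely that $X_i[g_j], Y_j[f_i] \in \mathcal I_p$; combined with $f_i, g_j \in \mathcal I_W$, each term therefore sits in $\mathcal I_p \cdot \mathcal I_W \mathfrak X(\mathbb C^d) = \mathcal I_p \mathfrak F$.

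Hence every bracket of two elements of $\mathfrak F$ projects to zero in $\mathfrak g_p$, which proves the isotropy Lie algebra at $p$ is Abelian. No obstacle is really present here; the only delicate point is verifying that the definition of strongly singular is exactly what is needed to kill the two ``anchor-derivative'' terms, and making clear that the assumption $p \in W$ is implicit (otherwise $\mathfrak F(p)$ could be a proper subspace of $\mathfrak F$, but the same computation would still apply after restricting to elements of $\mathfrak F$ that vanish at $p$).
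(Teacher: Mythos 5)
Your proof is correct, and since the paper dismisses this lemma with ``The lemma below is immediate,'' your Leibniz-rule computation is precisely the verification being left to the reader: the term $f_i g_j [X_i,Y_j]$ dies because $\mathcal I_W^2 \subseteq \mathcal I_p\mathcal I_W$ (using $\mathcal I_W \subseteq \mathcal I_p$ for $p \in W$), and the two anchor-derivative terms die because strong singularity is exactly the statement $X[f] \in \mathcal I_p$ for $f \in \mathcal I_W$. Your remark that $p \in W$ is implicit is already built into the paper's definition of a strongly singular point, so nothing further is needed.
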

\begin{corollary}\label{cor:affine}
Let $\varrho\colon\mathfrak{g}\longrightarrow \mathfrak{X}(W) $ be a Lie algebra morphism.
\begin{enumerate}
    \item Any extension $\widetilde{\varrho}$ as in Example \ref{ex:affine-action} is a weak symmetry action for the singular foliation $\mathfrak{F}=\mathcal I_W\mathfrak{X}(\mathbb{C}^d)$.
    
    \item 
    For any strongly singular point $p$ in $W$ if the class $cl(\eta)$ does not vanish the strict action $\varrho\colon\mathfrak{g}\longrightarrow \mathrm{Der}(\mathcal O_W)$ can not be extended to the ambient space.
\end{enumerate}
\end{corollary}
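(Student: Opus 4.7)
For item 1, I would unpack the two defining conditions of a weak symmetry action of $\mathfrak g$ on $\mathfrak F=\mathcal I_W\mathfrak X(\mathbb C^d)$. First, since $\widetilde\varrho(x)$ is (by construction, as any extension of a vector field on $W$) tangent to $W$, we have $\widetilde\varrho(x)[\mathcal I_W]\subset\mathcal I_W$. Consequently, for a generator $fY$ of $\mathfrak F$ with $f\in\mathcal I_W$ and $Y\in\mathfrak X(\mathbb C^d)$, the Leibniz identity
\[
[\widetilde\varrho(x),\,fY]=\widetilde\varrho(x)[f]\,Y+f\,[\widetilde\varrho(x),Y]
\]
lies in $\mathcal I_W\mathfrak X(\mathbb C^d)=\mathfrak F$. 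Second, $\varrho$ is a Lie algebra morphism, so the vector field $\widetilde\varrho([x,y]_\mathfrak g)-[\widetilde\varrho(x),\widetilde\varrho(y)]\in\mathfrak X(\mathbb C^d)$ restricts to zero on $W$, i.e.\ its coefficients all lie in $\mathcal I_W$, hence it belongs to $\mathfrak F$. This establishes that $\widetilde\varrho$ is a weak symmetry action, and different extensions differ by a linear map $\mathfrak g\to \mathcal I_W\mathfrak X(\mathbb C^d)=\mathfrak F$, so they are all equivalent in the sense of Definition \ref{def:sym}.

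For item 2, I would apply the obstruction theory already built. At a strongly singular point $p\in W$, the preceding lemma tells us that the isotropy Lie algebra $\mathfrak g_p$ of $\mathfrak F$ at $p$ is Abelian. Moreover, by construction $\eta(x,y)\in\Gamma(E_{-1})$ is a representative of the class $[\widetilde\varrho([x,y]_\mathfrak g)-[\widetilde\varrho(x),\widetilde\varrho(y)]]$, and this vector field of $\mathfrak F$ vanishes at $p$ because each of $\widetilde\varrho(x),\widetilde\varrho(y),\widetilde\varrho([x,y]_\mathfrak g)$ is tangent to $W$ and $\varrho$ is a Lie algebra morphism on $W$; hence $\eta(x,y)_{|_p}\in\mathfrak F(p)/\mathcal I_p\mathfrak F=\mathfrak g_p$ and Corollary \ref{prop:class} applies, producing a well-defined class $\mathrm{cl}(\eta)\in H^2(\mathfrak g,\mathfrak g_p)$.

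Now suppose, for contradiction, that $\varrho$ admits an extension to a Lie algebra morphism $\widetilde\varrho'\colon\mathfrak g\to\mathfrak X(\mathbb C^d)$ whose restriction to $W$ is $\varrho$. Any such $\widetilde\varrho'$ is itself a weak symmetry action of $\mathfrak g$ on $\mathfrak F$ by item 1, and because $\widetilde\varrho'(x)$ and $\widetilde\varrho(x)$ induce the same derivation of $\mathcal O_W$, their difference takes values in $\mathcal I_W\mathfrak X(\mathbb C^d)=\mathfrak F$. Thus $\widetilde\varrho$ and $\widetilde\varrho'$ are equivalent weak symmetry actions, with $\widetilde\varrho'$ strict. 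By item 4 of Proposition \ref{Prop:class}, the cocycle $\eta_{|_p}$ must then be exact, so $\mathrm{cl}(\eta)=0$, contradicting the hypothesis. The contrapositive is exactly item 2.

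The only subtle point, and the step I would be most careful with, is the identification of the cocycle produced by the extension $\widetilde\varrho$ with the $\eta_{|_p}$ of Proposition \ref{Prop:class}: one must pick a universal Lie $\infty$-algebroid of $\mathfrak F$ whose linear part is minimal at $p$ so that $\mathfrak g_p\simeq \ker(\rho_p)$ and so that $\eta(x,y)_{|_p}$ canonically represents a class in $H^2(\mathfrak g,\mathfrak g_p)$ that is independent of all choices; the invariance statements in items 1 and 3 of Proposition \ref{Prop:class} then take care of the remaining ambiguities.
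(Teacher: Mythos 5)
Your proof is correct and follows exactly the route the paper intends: item 1 is the content of Example \ref{ex:affine-action} (tangency to $W$ gives $\widetilde\varrho(x)[\mathcal I_W]\subset\mathcal I_W$, and the defect of being a morphism lies in $\mathcal I_W\mathfrak X(\mathbb C^d)$), and item 2 is the direct application of Corollary \ref{prop:class}, using that $\mathfrak g_p$ is Abelian at a strongly singular point and that any strict ambient extension would be an equivalent strict action, forcing $\mathrm{cl}(\eta)=0$. The paper states the corollary without a written proof precisely because it is this immediate consequence, so nothing further is needed.
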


Let us give an example of a Lie algebra action on an affine variety that do not extend to the ambient space.

We hope to construct an example as follows

\begin{example}
Let $W\subset\mathbb{C}^d$ be an affine variety generated by a regular homogeneous polynomial $\varphi\in\mathcal{O}=\mathbb{C}[x_1,\ldots,x_d]$ of degree $\geq 2$. Assume there exists two 
vector fields 
$X,Y\in \mathfrak{X}(\mathbb{C}^d)$ that satisfy $X[\varphi]=f\varphi,\,Y[\varphi]=g\varphi$, with $f,g\in\mathcal{I}_0$, and such that $[X,Y]=\varphi Z$, for some $Z\in\mathfrak{X}(\mathbb{C}^d)$.  Consider the action of the trivial $\mathfrak{g}=\mathbb{R}^2$ on $W$ that sends its canonical basis to $X$, and $ Y$ respectively. It is  a weak symmetry action on the singular foliation $\mathfrak{F}^\varphi:=\langle\varphi\rangle\mathfrak{X}(\mathbb{C}^d)$ and induces a Lie algebra map, $\mathfrak g\longrightarrow \mathfrak{X}(W)$.\\

Notice that the universal Lie algebroid of $\mathfrak{F}^\varphi$ is a Lie algebroid (see Example of \cite{CLRL}) because,
$$ \xymatrix{0\ar[r] & \mathcal{O}\mu \otimes_\mathcal O \X(\mathbb C^d) \ar^{\varphi\frac{\partial}{\partial\mu}\otimes_\mathcal O \text{id}}[rr] & & \mathfrak{F}^\varphi.}$$
is a $\mathcal{O}$-module isomorphism. Here $\mu$ is a degree $ -1$ variable, so that $\mu^2=0$. The universal algebroid structure over that resolution is given on the set of generators by: $$\ell_2\left(\mu\otimes_\mathcal O \dfrac{\partial}{\partial x_a},\mu\otimes_\mathcal O \dfrac{\partial}{\partial x_b}\right):=\frac{\partial\varphi}{\partial x_a}\,\mu\otimes_\mathcal O \partial_{x_b}-\frac{\partial\varphi}{\partial x_b}\,\mu\otimes_\mathcal O \partial_{x_a}$$ and $\ell_k:=0$ for every $k\geq 3$. Write $Z=\displaystyle{\sum_{i=1}^d}f_i\dfrac{\partial}{\partial x_i}$, with $(f_i)_{i=1,\ldots,d}\subset \mathcal{O}$. We have, $$\eta(e_1,e_2):=\sum_{i=1}^df_i\mu\otimes_\mathcal{O}\dfrac{\partial}{\partial x_i}.$$ where $e_1,e_2$ is the canonical basis of $\mathbb{R}^2$.\\
This Lie $\infty$-algebroid structure satisfies all the assuptions of Proposition \ref{prop:class}. Assume that the vector field $Z$ does not vanish at zero. Since the action is trivial at zero and $\eta_{|_0}\neq 0$, therefore its class is non-zero. By consequence, such action cannot be extended to ambient space.
\end{example}

Let us make it explicit
\begin{example}

Let $W\subset\mathbb{C}^2$ be the affine variety generated by the polynomial $\varphi=FG$ with $F,G \in\mathbb{C}[x,y]=:\mathcal{O}$. We consider the vector fields $U=F\mathcal{X}_G,\, V=G\mathcal{X}_F\in\mathfrak X(\mathbb{C}^2)$, where $\mathcal{X}_F$ and $\mathcal{X}_G$ are Hamiltonian vector fields w.r.t the Poisson structure $\{x,y\}:=1$. Note that $U, V$  are tangent to $W$, i.e. $U[\varphi], V[\varphi]\in \langle \varphi \rangle$. It is easily checked that $[U,V]=\varphi \mathcal{X}_{\{F,G\}}$.\\

The action of the trivial Lie algebra  $\mathfrak{g}=\mathbb{R}^2$ on $W$ that sends its canonical basis $(e_1, e_2)$ to $U$, and $V$ respectively, is a weak symmetry action on the singular foliation $\mathfrak{F}^\varphi:=\langle\varphi\rangle\mathfrak{X}(\mathbb{C}^2)$, and induces a Lie algebra map, \begin{equation}\label{eq:LA-Mor-AV}
   \varrho\colon\mathfrak g\longrightarrow \mathfrak{X}(W)
.\end{equation}

A universal Lie algebroid of $\mathfrak{F}^\varphi$ is a Lie algebroid (see Example 3.19 of \cite{CLRL}) because,
$$ \xymatrix{0\ar[r] & \mathcal{O}\mu \otimes_\mathcal O \X(\mathbb C^2) \ar^{\varphi\frac{\partial}{\partial\mu}\otimes_\mathcal O \text{id}}[rr] & & \mathfrak{F}^\varphi}$$
is a $\mathcal{O}$-module isomorphism. Here $\mu$ is a degree $ -1$ variable, so that $\mu^2=0$. The universal algebroid structure over that resolution is given on the set of generators by: \begin{equation}
    \ell_2\left(\mu\otimes_\mathcal O \dfrac{\partial}{\partial x},\mu\otimes_\mathcal O \dfrac{\partial}{\partial y}\right):=\frac{\partial\varphi}{\partial x}\,\mu\otimes_\mathcal O \dfrac{\partial}{\partial y}-\frac{\partial\varphi}{\partial y}\,\mu\otimes_\mathcal O \dfrac{\partial}{\partial x}\end{equation} and $\ell_k:=0$ for every $k\geq 3$. Write $\mathcal{X}_{\{F,G\}}=\dfrac{\partial \{F,G\}}{\partial y}\dfrac{\partial }{\partial x}-\dfrac{\partial \{F,G\}}{\partial x}\dfrac{\partial }{\partial y} $. Therefore,  we can put \begin{equation}\label{eta:Poisson}
         \eta(e_1,e_2):=\dfrac{\partial \{F,G\}}{\partial y}\,\mu\otimes_\mathcal{O}\dfrac{\partial}{\partial x}-\dfrac{\partial \{F,G\}}{\partial x}\,\mu\otimes_\mathcal{O}\dfrac{\partial}{\partial y}.\end{equation}

Take for example,  $F(x,y)= y-x^2$ and $G(x,y)=y+x^2$. The isotropy Lie algebra $\mathfrak{g}_{(0,0)}$ of $\mathfrak{F}^\varphi$ is Abelian, since zero is a strong singular point of $W$. By Corollary \ref{prop:class} (1), $\mathfrak{g}_{(0,0)}$ is a $\mathbb{R}^2$-module. A direct computation shows that the action on $\mathfrak{g}_{(0,0)}$ is not trivial, but takes value in $\mathcal{O}\, \mu\otimes_\mathcal O \dfrac{\partial}{\partial x}$. Besides, Equation \eqref{eta:Poisson} applied to $\{F,G\}=4x$ gives \begin{equation}\label{eta:Poisson2}
            \eta(e_1,e_2)=-4\,\mu\otimes_\mathcal O \dfrac{\partial}{\partial y}.
\end{equation}

If $\eta_{|_{(0,0)}}$ were a coboundary of Chevalley Eilenberg, we would have (in the notations of Proposition \ref{Prop:class}) that \begin{equation}\label{eq:nonzero-class}
    \eta(x,y)_{|_{(0,0)}}=\beta([x,y]_\mathfrak{\mathbb{R}^2})- \ell_2'(x,\beta(y))+\ell'_2(y,\beta(x))\in \mathcal{O}\, \mu\otimes_\mathcal O \dfrac{\partial}{\partial x},\quad \text{for all $x,y\in \mathfrak{g}$}
\end{equation} for some linear map $\beta\colon  \mathfrak{g}\longrightarrow \mathfrak{g}_{(0,0)}$. Therefore, Equation \eqref{eq:nonzero-class} is impossible by Equation \eqref{eta:Poisson2} and since $\eta_{|_{(0,0)}}\neq 0$. In orther words, its class $\mathrm{cl}(\eta)$ does not vanish. By Corollary \ref{cor:affine} (2), the action $\varrho$ given in Equation \eqref{eq:LA-Mor-AV} cannot be extended to ambient space.
\end{example}

\vspace{2cm}

\begin{tcolorbox}[colback=gray!5!white,colframe=gray!80!black,title=Conclusion:]
We applied the existence of a Lie $\infty$-morphism to the question of lifting to $M$ an $\mathfrak{g}$-action on $M/\mathfrak{F}$, i.e. of making strict a weak $\mathfrak{g}$-action.

\phantom{cc}We apply this question to several geometric issues, e.g. neighbourhood of leaves, Lie algebra actions on an affine variety.
\end{tcolorbox}

\chapter{Bi-submersion towers}\label{chap:tower}
\section{Symmetries of bi-submersions}\label{sec:6}
In this chapter, we introduce the notion \textquotedblleft bi-submersion towers\textquotedblright. The work contained in this chapter is entirely original, except for the notion below that arose in a discussion between C. Laurent-Gengoux, L.~ Ryvkin, and I, and will be the object of a separate study.\\

\noindent
Let us firstly recall the  definition of bi-submersion. The concept of bi-submersion over singular foliations has been introduced in \cite{AndroulidakisIakovos}.

\begin{definition}
Let $M$ be a manifold endowed with a singular foliation $\mathfrak F $.
A \emph{bi-submersion} $\xymatrix{B\ar@/^/[r]^{s}\ar@/_/[r]_{t}&M}$ over $\mathfrak F$ is a triple $(B,s,t) $ where:
\begin{itemize}
    \item $B$ is a manifold,
    \item $s,t\colon B \to M$ are 
    submersions, respectively called \emph{source} and \emph{target},
\end{itemize}
such that the pull-back singular foliations $ s^{-1} \mathfrak F$ and $t^{-1} \mathfrak F $ are both equal to the space of vector fields of the form $\xi+\zeta $ with $\xi\in \Gamma(\ker (\dd s)) $ and $\zeta \in \Gamma(\ker (\dd t))$. Namely,

\begin{equation}
\label{eq:defbisub}
  s^{-1} \mathfrak F=t^{-1} \mathfrak F = \Gamma(\ker (\dd s))+\Gamma(\ker (\dd t)).
\end{equation}
In that case, we also say that $(B,s,t)$ is a bi-submersion over $(M, \mathfrak{F})$.
\end{definition}

\begin{example}\label{ex:holonomy-biss}
Let $\mathfrak{F}$ be a singular foliation over a manifold $M$. For $x\in M$ and $X_1,\ldots, X_n\in \mathfrak{F}$ inducing  generators for $\mathfrak{F}_x:=\mathfrak{F}/\mathcal{I}_x\mathfrak{F}$. We know from  \cite{AndroulidakisIakovos} that there is an open neighborhood $\mathcal{W}$ of $(x, 0)\in M \times \mathbb R^n$ such that $(\mathcal{W}, t, s)$ is a bi-submersion over $\mathfrak{F}$, where\begin{equation}
    s(x,y)=x\quad  \text{and}\quad \displaystyle{t (x, y)=\exp_x\left(\sum_{i=1}^ny_iX_i\right)}=\varphi_1^{\sum_{i=1}^ny_iX_i}(x),
\end{equation}
where for $X\in\mathfrak X(M)$,\;$\varphi^X_1$ denotes the time-1 flow of $X$. 
 Such bi-submersions are called \emph{path holonomy bi-submersions} \cite{AZ}.
 \end{example}

Now we can introduce the following definition.
\begin{definition}\label{def:tower-bi}
A \emph{bi-submersion tower over a singular foliation $\mathfrak{F}$ on $M$}, is a (finite or infinite) sequence of manifolds and maps as follows
\begin{equation}\label{eq:tower}\mathcal{T}_B:\xymatrix{\cdots\phantom{X}\ar@/^/[r]^{s_{i+1}}\ar@/_/[r]_{t_{i+1}}&{B_{i+1}}\ar@/^/[r]^{s_{i}}\ar@/_/[r]_{t_{i}}&{B_{i}}\ar@/^/[r]^{s_{i-1}}\ar@/_/[r]_{t_{i-1}}&{\cdots}\ar@/^/[r]^{s_1}\ar@/_/[r]_{t_1}&B_1\ar@/^/[r]^{s_0}\ar@/_/[r]_{t_0}&B_0,}
\end{equation}
together with a sequence $\mathfrak F_i $ of singular foliations on $B_i$, with the convention that $B_0=M$ and $\mathfrak F_0 = \mathfrak F $, such that
\begin{itemize}
\item for all $i \geq 1$, $\mathfrak F_i\subset\Gamma(\ker ds_{i-1})\cap\Gamma(\ker dt_{i-1})$, 
\item for each $i\geq 1$, $\xymatrix{{B_{i+1}}\ar@/^/[r]^{s_{i}}\ar@/_/[r]_{t_{i}}&{B_{i}}}$ is a bi-submersion over $\mathfrak F_i$.
    
\end{itemize}
A bi-submersion tower over $(M,\mathfrak F) $ shall be denoted as $(B_{i+1},s_i,t_i,\mathfrak F_i)_{i \geq 0}$.
The bi-submersion tower over $\mathfrak{F}$ in  \eqref{eq:tower} is said to be of \emph{length $n\in\mathbb{N}$} if $B_j=B_n, s_j=t_j=\mathrm{id}\;$ and $\mathfrak{F}_j=\{0\}$ for all $j\geq n$.
\end{definition}

\begin{remark}\label{Rmk:Geom_consequences}
Let us spell out some consequences of the axioms. For $i\geq 1$,
two points $b,b'\in B_i$ of the same leaf of $\mathfrak F_i$ satisfy $s_{i-1}(b)=s_{i-1}(b')$ and $t_{i-1}(b)=t_{i-1}(b')$. Also, for all $b\in B_i,\; T_b\mathfrak{F}_i\subset (\ker ds_{i-1})_{|_b}\cap(\ker dt_{i-1})_{|_b}$.
\end{remark}

Let us explain how such towers can be constructed out of a singular foliation. Let $\mathfrak{F}$ be a singular foliation on $M$. Then, 
\begin{enumerate}
\item By Proposition 2.10 in \cite{AndroulidakisIakovos}, there always exists a bi-submersion $\xymatrix{B_1\ar@/^/[r]^{s_0}\ar@/_/[r]_{t_0}&M}$ over $\mathfrak F$.
\item The $C^\infty(B_1)$-module $\Gamma(\ker ds_{0})\cap\Gamma(\ker dt_{0}) $ is closed under Lie bracket. When it is locally finitely generated, it is a singular foliation on $B_1$.  
Then, it admits a bi-submersion $\xymatrix{B_2\ar@/^/[r]^{s_1}\ar@/_/[r]_{t_1}&B_1} $.
Therefore, we have obtained the two first terms of a bi-submersion tower.
\item We can then continue this construction provided that $\Gamma(\ker ds_{1})\cap\Gamma(\ker dt_{1}) $ is locally finitely generated as a $C^\infty(B_2)$-module, and that it is so at each step\footnote{In real analytic case, the module $\Gamma(\ker ds_1)\cap\Gamma(\ker dt_1)$ is locally finitely generated because of the noetherianity of the ring of germs of real analytic functions \cite{FrischJacques,Yum-TongSiu}.}. 
\end{enumerate}

\begin{definition}
A bi-submersion tower  $ \mathcal{T}_B=(B_{i+1},s_i,t_i,\mathfrak F_i)$  over $ (M,\mathfrak F)$ is called \emph{exact bi-submersion tower over $ (M,\mathfrak F)$} when $ \mathfrak F_{i+1}= \Gamma(\ker ( ds_i))\cap  \Gamma(\ker ( dt_i))$ for all $i \geq 0$.
It is called a \emph{path holonomy bi-submersion tower} (resp. \emph{path holonomy atlas bi-submersion tower}) if  $\xymatrix{B_{i+1}\ar@/^/[r]^{s_i}\ar@/_/[r]_{t_i}&B_i}$ is a path holonomy bi-submersion (resp. a path holonomy atlas) for $\mathfrak F_i $ for each $i \geq 0$. When a path holonomy bi-submersion tower is exact, we speak of \emph{exact path holonomy bi-submersion tower}.
\end{definition}

The following theorem gives a condition which is equivalent to the existence of a bi-submersion tower over a singular foliation. The proof uses Lemma \ref{lemma:invariant-vector-fields} which is stated in the next section. 
\begin{theorem}\label{thm:equivalence}
Let $\mathfrak F$ be a singular foliation on $M$. The following items are equivalent:
\begin{enumerate}
    \item $\mathfrak F$ admits a geometric resolution.
    \item There exists an exact path holonomy bi-submersion tower over $(M, \mathfrak F)$.
\end{enumerate}
\end{theorem}

\begin{convention}\label{conven:proj}For a submersion $\phi\colon M\longrightarrow N$ and a smooth map $ \psi\colon M\longrightarrow N$, we denote by ${}^\psi\Gamma(\ker d\phi)$ the space of $\psi$-projectable vector fields in $\Gamma(\ker d\phi)\subset \mathfrak{X}(M)$.\end{convention}
\begin{proof}
\noindent
$(1)\Rightarrow (2)$ : Assume that $\mathcal F$ admits a geometric resolution $(E, \dd, \rho)$. 
In particular, $(E_{-1}, \rho)$ is an anchored bundle over $\mathcal{F}$. {We need to show by recursion on $i\geq 0$ that $\Gamma(\ker ds_i
    )\cap\Gamma(\ker dt_i
    )$ is locally finitely generated because $\ker\dd^{(i+1)}$ or $\ker\rho$ is locally  finitely generated. We actually repeat at each step  $i\geq 0$, the general fact  that the pull-back complex of vector bundle by the submersion $\varphi=t_0\circ t_1\circ \cdots \circ t_i\colon B_{i+1}\longrightarrow M$ 
\begin{equation}\label{eq:pull-back-geo-resol}
         \xymatrix{\cdots\ar[r]& \varphi ^*E_{-i-3}\ar[r]^{\tiny{\varphi ^*\dd^{(i+3)}}}&\varphi ^*E_{-i-2}\ar[r]^{\tiny{\varphi ^*\dd^{(i+2)}}}&\varphi ^*E_{-i-1}\ar[r]& TB_{i+1} }
     \end{equation} remains exact at the sections level (at degree\footnote{We shall  understand that the degree of elements of $\varphi ^*E_{-i-j}$ is $-j$.} $\leq -1$), since $C^\infty(B_{i+1})$ is a flat $C^\infty(M)$-module. In addition, for $i\geq 1,$ the complex \eqref{eq:pull-back-geo-resol} defines a geometric resolution of $\mathcal{F}_{i+1}$.}
    
Let $(B_1, s_0, t_0)$  be a path holonomy bi-submersion over $(M,\mathcal{F})$. Consider the map 
\begin{align}\label{eq:induced-map-tower}
    R\colon\Gamma(t^*_0E_{-1})&\longrightarrow {}^{t_0}\Gamma(\ker ds_0
    )\subset\mathfrak{X}(B_1)\\\nonumber t^*_0e\;&\longmapsto \overrightarrow{e}
\end{align}
defined as in Proposition \ref{prop:left-arrows}. {By Lemma \ref{lemma:invariant-vector-fields} (1), the map $R$ in \eqref{eq:induced-map-tower}  comes from a vector bundle morphism $t_0^* E_{-1} \longrightarrow \ker ds_0$ 
 and is surjective on an open subset $V_1\subset B_1$, by item $2$ of Lemma \ref{lemma:invariant-vector-fields}}. 
 In particular the map $R$ in \eqref{eq:induced-map-tower} restricts to a surjective map
\begin{align}\label{eq:restriction-map-tower}
    \ker(t_0^*\rho)&\longrightarrow \ker\left( dt_0|_{\Gamma(\ker ds_0)}\right)=\Gamma(\ker ds_0
    )\cap\Gamma(\ker dt_0
    )\subset\mathfrak{X}(B_1)
\end{align}
By exactness in degree $-1$, $\ker\rho=\dd^{(2)}(\Gamma(E_{-2}))$. Therefore,  $\ker (t^*_0\rho)$ is locally finitely generated. By surjectivity of the map \eqref{eq:restriction-map-tower}, $\Gamma(\ker ds_0
    )\cap\Gamma(\ker dt_0)=:\mathcal{F}_1$ is also locally finitely generated on $V_1$, in particular $\mathcal{F}_1$ is a singular foliation on $V_1\subset B_1$ (we may assume that $B_1=V_1$.). Thus, one can take a path holonomy bi-submersion $(B_2,s_1,t_1)$ over $(B_1,\mathcal{F}_1)$. The proof continues the same as the previous step.

    Let us  make a step further for clarity. The composition \begin{equation*}
        \xymatrix{\Gamma(E_{-2})\ar@{.>>}@/_1pc/[rr]\ar@{->}[r]^<<<<{\dd^{(2)}}&\mathrm{im}(\dd^{(2)})=\ker\rho\ar@{->>}[r]&\mathcal{F}_1}
    \end{equation*} together with $E_{-2}$ is an anchored bundle over $\mathcal{F}_1$.  Just like in the first step,  define the surjective $C^\infty(B_2)$-linear map,
    \begin{align}\label{eq:induced-map-tower2}
    \Gamma(t_1^*E_{-2})&\longrightarrow {}^{t_1}\Gamma(\ker ds_1
    )\subset\mathfrak{X}(B_2)\\\nonumber t_1^*e\;&\longmapsto \overleftarrow{e}.
\end{align}
By Lemma \ref{lemma:invariant-vector-fields} (2), the map \ref{eq:induced-map-tower2} restricts (upon taking $B_2$ smaller) to a surjective map
\begin{align}\label{eq:restriction-map-tower2}
    \ker\left(\Gamma(t_1^*E_{-2})\stackrel{t_1^*\dd^{(2)}}{\longrightarrow} \Gamma(t_1^*E_{-1})\right)&\longrightarrow \ker\left( dt_1|_{\Gamma(\ker ds_1)}\right)=\Gamma(\ker ds_1
    )\cap\Gamma(\ker dt_1
    )\subset\mathfrak{X}(B_2)
\end{align}
By exactness in degree $-2$, the $C^{\infty}(M)$-module $$\ker\left(\Gamma(E_{-2})\stackrel{\dd^{(2)}}{\longrightarrow}\Gamma(E_{-1})\right)=\dd^{(3)}(\Gamma(E_{-3}))$$ is (locally) finitely generated, hence $\mathcal{F}_2:=\Gamma(\ker ds_1
    )\cap\Gamma(\ker dt_1
    )$ is a singular foliation on $B_2$. Thus, one can take a path holonomy bi-submersion $(B_3,s_2,t_2)$ over $(B_2,\mathcal{F}_2)$. By recursion on $i\geq 1$, we use a path holonomy bi-submersion $(B_{i+1},s_i,t_i)$ over $(B_i,\mathcal{F}_i)$ and  construct an anchor bundle over $\mathcal{F}_i$  by the composition \begin{equation*}
        \xymatrix{\Gamma(\varphi^* E_{-i-1})\ar@{.>>}@/_1pc/[rr]\ar@{->}[r]^<<<<{\varphi^*\dd^{({i+1})}}&\mathrm{im}(\varphi^*\dd^{(i+1)})=\ker(\varphi^*\dd^{(i)})\ar@{->>}[r]&\mathcal{F}_i}
    \end{equation*} with $\varphi=t_0\circ t_1\circ \cdots \circ t_{i-1}\colon B_{i}\longrightarrow M$ and show as for $i=0, 1$ that $\mathcal{F}_{i+1}:=\Gamma(\ker ds_i
    )\cap\Gamma(\ker dt_i
    )$ is a singular foliation on $B_{i+1}$. The proof follows.\\

\noindent
$(2)\Rightarrow (1)$ is proven by Lemma \ref{prop:sequence} and Remark \ref{rmk:pull-back-on-M} below.
\end{proof}

\begin{lemma}\label{prop:sequence}
Let $\mathfrak{F}$ be a singular foliation on $M$. Assume that there exists a bi-submersion tower $\mathcal{T}_B=(B_i,t_i,s_i, \mathfrak{F}_i)_{i\geq 0}$ over $\mathfrak F$. Then,
\begin{equation}\label{eq:exa-bisub}
        \xymatrix{
\cdots\ar[r]&\ker ds_2 \ar[d] \ar[r]^{ dt_2}&\ker ds_1\ar[d] \ar[r]^{ dt_1}&\ker ds_0 \ar[d] \ar[r]^{ dt_0} &TM\ar[d]\\
\cdots\ar[r]&B_3 \ar[r]_{t_2}&B_2\ar[r]_{t_1}&B_1 \ar[r]_{t_0} &M.}
    \end{equation}
 is a complex of vector bundles, which is exact on the sections level\footnote{Let us explain the notion of exactness at the level of sections when the base manifolds are not the same: what we mean is that for all $n\geq 0$, $\Gamma(\ker dt_n)\cap \Gamma(\ker ds_{n})=(t_{n+1})_*(\Gamma(\ker ds_{n+1}))$.
 
 Equivalently, it means that the pull-back of the vector bundles in \eqref{bi-thm:eq} to any one of the manifold $B_m $ with $m \geq n $ is exact at the level of sections, i.e\begin{equation}\label{complex-projectable}\xymatrix{ \Gamma(t_{n+1,m}^* \ker ds_{n+1})  \ar[r]^{ dt_{n+1}}&\Gamma(t_{n,m}^* \ker ds_n) \ar[r]^{ dt_n}&\Gamma(t_{n-1,m}^* \ker ds_{n-1}})\end{equation}is a short exact sequence of $C^\infty(B_m) $-modules, with $ t_{n,m}= t_{n} \circ \dots \circ t_{m}$ for all $m \geq n$.
 }
 if $\mathcal{T}_B$ is an exact bi-submersion tower, i.e. if $\mathfrak F_i=\Gamma(\ker ds_{i-1})\cap\Gamma(\ker dt_{i-1})$ for all $i\geq 1$.
\end{lemma}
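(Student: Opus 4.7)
My plan is to prove the two assertions separately: first, the complex property of \eqref{eq:exa-bisub} (which holds for any bi-submersion tower), and then the exactness on sections assuming the tower is exact. Both rest on a pointwise analysis using the bi-submersion axiom, supplemented for the exactness part by the standard lifting property of bi-submersions.

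\textbf{Complex property.} At a point $b \in B_{i+1}$, the bi-submersion axiom $s_i^{-1}\mathfrak{F}_i = t_i^{-1}\mathfrak{F}_i = \Gamma(\ker ds_i) + \Gamma(\ker dt_i)$ yields, at the pointwise tangent level,
\[\ker ds_i|_b + \ker dt_i|_b = (dt_i|_b)^{-1}(T_{t_i(b)}\mathfrak F_i).\]
Hence, for any $X \in \ker ds_i|_b$, one has $dt_i(X) \in T_{t_i(b)}\mathfrak F_i$. Combining this with Remark~\ref{Rmk:Geom_consequences}, which gives $T\mathfrak F_i \subset \ker ds_{i-1} \cap \ker dt_{i-1}$, shows simultaneously that $dt_i$ carries $\ker ds_i$ into $\ker ds_{i-1}$ (covering the base map $t_i$) and that $dt_{i-1} \circ dt_i = 0$ on $\ker ds_i$. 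So \eqref{eq:exa-bisub} is indeed a complex of vector bundles.

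\textbf{Exactness on sections.} Assume the tower is exact, so that $\mathfrak F_{n+1} = \Gamma(\ker ds_n) \cap \Gamma(\ker dt_n)$. I will prove the identity $(t_{n+1})_*(\Gamma(\ker ds_{n+1})) = \Gamma(\ker ds_n) \cap \Gamma(\ker dt_n)$ appearing in the Remark following the statement. The inclusion ``$\subset$'' is immediate from the complex property above: for any locally $t_{n+1}$-projectable $X \in \Gamma(\ker ds_{n+1})$, its pushforward takes pointwise values in $\ker ds_n \cap \ker dt_n$. For the reverse inclusion, given $Y \in \mathfrak F_{n+1}$, I first take any local $t_{n+1}$-lift $\widetilde Y$ (which exists since $t_{n+1}$ is a submersion). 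The bi-submersion property of $(B_{n+2}, s_{n+1}, t_{n+1})$ over $\mathfrak F_{n+1}$ then gives a decomposition
\[\widetilde Y = X + V, \qquad X \in \Gamma(\ker ds_{n+1}),\ V \in \Gamma(\ker dt_{n+1}),\]
and since $V$ is $t_{n+1}$-vertical, $X$ is still a $t_{n+1}$-lift of $Y$, proving ``$\supset$''.

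\textbf{Main obstacle.} The delicate point is the interpretation of the pushforward and the upgrading to the pulled-back complex \eqref{complex-projectable} on an arbitrary $B_m$. Generic sections of $\ker ds_{n+1}$ are not $t_{n+1}$-projectable, so $(t_{n+1})_*\Gamma(\ker ds_{n+1})$ has to be read at the sheaf level, as the $C^\infty(B_{n+1})$-module locally generated by pushforwards of projectable sections; and the lifts of $Y \in \mathfrak F_{n+1}$ produced above are only local. To conclude exactness of \eqref{complex-projectable}, these local lifts must be assembled via a partition of unity on $B_m$ and then pulled back along the submersion $t_{n+1,m} : B_m \to B_{n+1}$. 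The pointwise algebra is straightforward, but this local-to-global passage is the main technical hurdle, because $\mathfrak F_i$ is singular and the distribution $\ker ds_n \cap \ker dt_n$ need not be a smooth subbundle of $TB_{n+1}$.
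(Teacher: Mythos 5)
Your proof is correct and follows essentially the same route as the paper's: the complex property is obtained pointwise from $\Gamma(\ker ds_i)\subset t_i^{-1}(\mathfrak F_i)$ together with Remark~\ref{Rmk:Geom_consequences}, and exactness is obtained by lifting an element of $\mathfrak F_{n+1}$ through the submersion $t_{n+1}$ and splitting the lift via $t_{n+1}^{-1}(\mathfrak F_{n+1})=\Gamma(\ker ds_{n+1})+\Gamma(\ker dt_{n+1})$, exactly as in the paper. The local-to-global issue you flag is legitimate but harmless here, since the statement is a sheaf-level one about (projectable) sections and the paper's argument is likewise local.
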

\begin{proof}
 For any element $b \in B_{i+1}$ and any vector $v\in \ker ds_i \subset T_{b} B_{i+1}$ one has\begin{align*}
    & dt_i(v)\in T_{t_i(b)}\mathfrak{F}_{i},\quad\text{(since $\Gamma(\ker ds_i)\subset t^{-1}_i(\mathfrak{F}_{i})$)}.\\\Longrightarrow\quad&  dt_i(v)\in\left(\ker ds_{i-1}\cap\ker dt_{i-1}\right)|_{t_i(b)}\quad \text{by Definition  \ref{def:tower-bi}}.\\\Longrightarrow \quad& dt_i(v)\in \ker ds_{i-1}\quad \text{and}\quad dt_{i-1}\circ dt_i (v)=0,\; \text{for all $i\geq 1$}.
\end{align*}
This shows the sequence \eqref{eq:exa-bisub} is a well-defined complex of vector bundles.

Let us prove that it is exact when  $\mathfrak F_i=\Gamma(\ker ds_{i-1})\cap\Gamma(\ker dt_{i-1})$ for all $i\geq 1$. Let $\xi\in\Gamma\left(\ker ds_{i-1}\right)$ be a $t_{i-1}$-projectable vector field that projects to zero, i.e.  $ dt_{i-1}(\xi)=0$. This implies that   $\xi\in\Gamma(\ker ds_{i-1})\cap\Gamma(\ker dt_{i-1})=\mathfrak F_i$. Since $t_{i}$ is a submersion, there exists a $t_i$-projectable vector field $\zeta\in t_{i}^{-1}(\mathfrak{F}_i)$ that satisfies $ dt_{i}(\zeta)=\xi$. The vector field $\zeta$ can be written as $\zeta=\zeta_1+\zeta_2$ with $\zeta_1\in\Gamma\left(\ker dt_{i}\right)$ and $\zeta_2\in\Gamma\left(\ker ds_{i}\right)$, because $t_i^{-1}(\mathfrak{F}_i)=\Gamma(\ker ds_{i})+\Gamma(\ker dt_{i})$. One has, $ dt_i(\zeta_2)=\xi$. A similar argument shows that the map, $\Gamma(\ker ds_0)\overset{ dt_0}{\longrightarrow}t^*_0\mathfrak{F}$, is surjective. This proves exactness in all degree.
\end{proof}

\begin{remark}\label{rmk:pull-back-on-M}One of the consequence of Lemma \ref{prop:sequence} is that: 
\begin{enumerate}
    \item If there exists a sequence of maps \begin{equation}
    \xymatrix{M\ar@{>->}[r]^{\varepsilon_0}&B_1\ar@{>->}[r]^{\varepsilon_1}&B_2\,\ar@{>->}[r]^{\varepsilon_2}&\cdots}
\end{equation}\label{sequence-sections}where for all $i\geq 0$, $\varepsilon_i$ is a section for both $s_i$ and $t_i$ then the pull-back of \eqref{eq:exa-bisub} on $M$ through the sections $(\varepsilon_i)_{i\geq 0}$ i.e. \begin{equation}\label{eq:comp-vect-bund}
    \xymatrix{\cdots\ar[r]^{dt_3}&\varepsilon_{2,0}^*\ker ds_2\ar[r]^{dt_2}&\varepsilon_{1,0}^*\ker ds_1\ar[r]^{dt_1}&\varepsilon_0^*\ker ds_0\ar[r]^{dt_0}&TM}
\end{equation}
is a complex of vector bundles, with the convention $\varepsilon_{n,0}= \varepsilon_{n}\circ\cdots \circ\varepsilon_{0}$. If $\mathcal{T}_B$ is an exact bi-submersion tower then, \eqref{eq:comp-vect-bund} is a geometric resolution of $\mathfrak{F}$.
\item In case that $\mathcal{T}_B$ is an exact path holonomy bi-submersion tower, such a sequence \eqref{sequence-sections} always exists, since the bi-submersions $(B_{i+1}, s_i, t_i)$ are as in Example \ref{ex:holonomy-biss}. For such bi-sumersions, the zero section $x\mapsto ( x, 0)$ is a section for both $s_i$ and $t_i$.
\end{enumerate}
\end{remark}

\begin{corollary}\label{cor:exa}
Under the assumptions of Lemma \ref{prop:sequence}, assume the  tower  of  bi-submersion $\mathcal{T}_B$ is of length $n+1$. 
Then, the pull-back of the sequence of vector bundles
  \begin{equation}\label{bi-thm:eq}
      \xymatrix{\ker ds_{n}\ar[rrrrd] \ar[r]^-{ dt_{n}}&t^*_{n}\ker ds_{n-1}\ar[r]\ar[rrrd]&{\;\cdots}\ar[rrd] \ar[r]^-{ dt_2}&t_{2,n}^*\ker ds_1\ar[rd] \ar[r]^-{ dt_1}&TB_{n+1}\times_{TM}\ker ds_0 \ar[d] \ar[r]^-{\mathrm{pr}_1} &TB_{n+1}\ar[ld]\\& & &&B_{n+1}
  \footnote{ $TB_{n+1}\times_{TM}\ker ds_0=\{(u,v)\in TB_{n+1}\times\ker ds_0\mid  dt_{0,n}(u)= dt_0(v)\}$.}
  }
 \end{equation}
    is a geometric resolution of the pull-back foliation $t_{0,n}^{-1}(\mathfrak{F})\subset \mathfrak{X}(B_{n+1})$, where  $\mathrm{pr}_1$ is the projection on $TB_{n+1}$ and for $i\geq 1$, $t_{i,j}$ is the composition $t_i\circ\cdots\circ t_j\colon B_{j+1}\to B_{i}$.
\end{corollary}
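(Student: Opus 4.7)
The plan is to derive the corollary from Lemma \ref{prop:sequence} by pulling back the complex \eqref{eq:exa-bisub} to $B_{n+1}$ via $t_{0,n} = t_0 \circ \cdots \circ t_n$ and then modifying the rightmost position, replacing the target $t_{0,n}^*TM$ by $TB_{n+1}$, so that the image of the anchor map on sections realises the pullback foliation $t_{0,n}^{-1}(\mathfrak F)$ rather than lifts of sections of $TM$. The length-$(n+1)$ assumption makes the tower effectively finite, which is what allows one to identify $t_{0,n}^{-1}(\mathfrak F)$ as a geometric resolution with a $C^\infty(B_{n+1})$-module of finite length.

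First I would check that \eqref{bi-thm:eq} is a complex of vector bundles over $B_{n+1}$. For the morphisms strictly to the left of the fibre product, these are just the pullbacks via $t_{i,n}$ of the differentials in \eqref{eq:exa-bisub}, so the complex property is inherited from Lemma \ref{prop:sequence}. The morphism $dt_1 \colon t_{2,n}^*\ker ds_1 \to TB_{n+1} \times_{TM} \ker ds_0$ is defined by $v \mapsto (0, dt_1(v))$; this is well-defined in the fibre product since $dt_0\circ dt_1 = 0$, and composing with $\mathrm{pr}_1$ yields zero directly.

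Next I would verify exactness at the level of sections at every position strictly to the left of $TB_{n+1}$. For the positions $t_{i,n}^*\ker ds_{i-1}$ with $i\geq 2$, this is the pullback to $B_{n+1}$ of the exact sequence \eqref{complex-projectable} granted by Lemma \ref{prop:sequence} applied to the exact tower. The critical case is exactness at $TB_{n+1}\times_{TM}\ker ds_0$: a section $(X,v)$ lies in $\ker \mathrm{pr}_1$ iff $X=0$ and $dt_0(v)=0$, i.e.\ $v$ is a section of $t_{1,n}^*(\ker ds_0\cap \ker dt_0)$. By the exactness of Lemma \ref{prop:sequence} at the $\ker ds_0$ position, every such $v$ can be written as $dt_1(w)$ with $w\in\Gamma(t_{2,n}^*\ker ds_1)$, so $(X,v)$ is in the image of $dt_1$ as defined above.

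The last and most delicate step is identifying $\mathrm{pr}_1(\Gamma(TB_{n+1}\times_{TM}\ker ds_0)) = t_{0,n}^{-1}(\mathfrak F)$. One inclusion is direct: for $X\in \Gamma(\ker dt_{0,n})$ the pair $(X,0)$ lies in the fibre product; for $X$ that is $t_{0,n}$-related to some $Y\in \mathfrak F$, the bi-submersion property $dt_0(\Gamma(\ker ds_0)) = \mathfrak F$ produces $w\in\Gamma(\ker ds_0)$ with $dt_0(w) = Y$, and $v := t_{1,n}^*w$ gives the required partner. The reverse inclusion uses the decomposition $\Gamma(t_{1,n}^*\ker ds_0) = C^\infty(B_{n+1})\otimes_{C^\infty(B_1)}\Gamma(\ker ds_0)$ to write any admissible $v$ as a finite sum $\sum f_i\, t_{1,n}^*w_i$ with $w_i\in\Gamma(\ker ds_0)$, and conclude that $dt_{0,n}(X) = \sum f_i\,(dt_0(w_i)\circ t_{1,n})$ is $t_{0,n}$-related to an element of $\mathfrak F$ (locally), so that $X \in t_{0,n}^{-1}(\mathfrak F)$. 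The hard part will be precisely this last identification: the interplay between the pointwise fibre-product condition $dt_{0,n}(X)=dt_0(v)$ and the $C^\infty(B_{n+1})$-module structure forces a careful use of local generators and of the bi-submersion property at the bottom step of the tower, and it is here that the length-$(n+1)$ hypothesis truly enters to ensure that the resolution has the geometric form claimed.
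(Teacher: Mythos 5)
Your proposal is correct and follows essentially the same route as the paper, which simply invokes Lemma \ref{prop:sequence} for exactness of the pulled-back complex and observes that, by construction, the projection of the fibre product $TB_{n+1}\times_{TM}\ker ds_0$ onto $TB_{n+1}$ induces $t_{0,n}^{-1}(\mathfrak F)$. You merely spell out the details that the paper leaves implicit (the well-definedness of the map into the fibre product, exactness at that spot, and the two inclusions identifying the image of $\mathrm{pr}_1$), all of which are sound.
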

\begin{proof}
By Lemma \ref{prop:sequence}, the complex in Equation \eqref{bi-thm:eq} is exact. By construction, the projection of the fiber product $TB_{n+1}\times_{TM}\ker ds_0$ to $TB_{n+1}$ induces the singular foliation $t_{0,n}^{-1}(\mathfrak{F})$. 
\end{proof}

\subsection{Lift of a symmetry to the bi-submersion tower}
Let us investigate what an action   $\varrho\colon\mathfrak g\rightarrow \mathfrak{X}(M)$ of a Lie algebra $\mathfrak{g}$ on $(M,\mathfrak F)$ would induce on a bi-submersion tower $\mathcal{T}_B$ over $\mathfrak{F}$.\\

We start with some vocabulary and preliminary results.

\begin{definition}Let $(B,s,t)$ be a bi-submersion of a singular foliation $\mathfrak F$ on a manifold $M$. We call \emph{lift} of a vector field $X\in\mathfrak X(M)$ to the bi-submersion $(B,s,t)$ a vector field $\widetilde{X}\in \mathfrak{X}(B)$ which is both $s$-projectable on $X$ and $t$-projectable on $X$. 
\end{definition}

The coming proposition means that the notion of lift to a bi-submersion only makes sense for symmetries of the singular foliation.

\begin{prop}\label{lift-symmetry-bi-susbmersion} If a vector field on $M$ admits a lift to a bi-submersion $(B,s,t)$, then it is a symmetry of $\mathfrak F$.
\end{prop}
\begin{proof}
Let $\widetilde{X}\in\mathfrak{X}(B)$ be a lift of $X\in\mathfrak{X}(M)$. Since $\widetilde{X}$  is $s$-projectable, $[\widetilde{X},\Gamma(\ker \dd s)]\subset\Gamma(\ker \dd s)$. Since $\widetilde{X}$  is $t$-projectable, $[\widetilde{X},\Gamma(\ker \dd t)]\subset\Gamma(\ker \dd t)$. Hence:
\begin{align*} [\widetilde{X},s^{-1}(\mathfrak{F})] &= [\widetilde{X},\Gamma(\ker \dd s)+\Gamma(\ker \dd t)] \\ &= [\widetilde{X},\Gamma(\ker (\dd s)]+[\widetilde{X},\Gamma(\ker \dd t)]  \\ &\subset  \Gamma(\ker \dd s)+\Gamma(\ker \dd t) =s^{-1}(\mathfrak{F}) .\end{align*}
In words, $\widetilde{X}$ is a symmetry of $s^{-1} \mathfrak F$.
Since $\widetilde{X}$ projects through $s$ to $X$, $X$ is a symmetry of ~$\mathfrak F $.
\end{proof}

We investigate the existence of lifts of symmetries of $ \mathfrak F$ to bi-submersions over $\mathfrak F $.

\begin{remark}\label{rmk:uniqueness} For a given $X\in\mathfrak{s}(\mathfrak F)$, 
\begin{enumerate}
    \item the lift $\widetilde{X}$ to a given bi-submersion is not unique, even when it exists. However, two different lifts of a $X\in\mathfrak s(\mathfrak{F})$ to a bi-submersion $(B,s,t)$ differ by an element of the intersection $\Gamma(\ker (\dd s))\cap\Gamma(\ker (\dd t))$. 
    \item $\widetilde X$ is a symmetry of $\Gamma(\ker (\dd s))\cap\Gamma(\ker (\dd t))$: $[\widetilde{X}, \Gamma(\ker (\dd s))\cap\Gamma(\ker (\dd t))]\subset \Gamma(\ker (\dd s))\cap\Gamma(\ker (\dd t))$,  since $\widetilde{X}$ is $s$-projectable and $t$-projectable.
\end{enumerate}
\end{remark}

As the following example shows, the lift of a symmetry  to a bi-submersion may not exist.

\begin{example}
Consider the trivial foliation $\mathfrak{F}:=\{0\}$ on $M$. For any diffeomorphism $\phi\colon M\longrightarrow M$, $(M,\text{id}, \phi)$ is a bi-submersion over $\mathfrak{F}$. Every vector field $X\in\mathfrak{X}(M)$ is a symmetry of $\mathfrak{F}$. If it exists, its lift has to be given by, $\widetilde{X}=X $ since the source map is the identity. But $\widetilde{X}=X $ is $t$-projectable if and only if
$X$ is $\phi$-invariant. A non $\phi$-invariant vector field $X$ therefore admits no lift to $(M,\text{id}, \phi)$.
\end{example}

However, internal symmetries, i.e. elements in $\mathfrak F$ admit lifts to any bi-submersion.

\begin{prop}\label{prop:lift-internal}
Let $(B,s,t)$ be any bi-submersion of a singular foliation $\mathfrak F$ on a manifold $M$. 
Every internal symmetry, i.e. every vector field in $\mathfrak{F}$, admits a lift to $(B,s,t)$.
\end{prop}
\begin{proof}
Let $X\in\mathfrak{F}$. Since $s\colon B\longrightarrow M $ is a submersion, there exists $X^s\in\mathfrak{X}(B)$ $s$-projectable on $X$. 
Since $t $ is a submersion, there exists $X^t\in\mathfrak{X}(B)$ $t$-projectable on $X$.
By construction $X^s \in s^{-1}(\mathfrak F) $
 and  $X^t \in t^{-1}(\mathfrak F) $.
 Using the property \eqref{eq:defbisub} of the bi-submersion $(B,s,t)$, the vector fields  $X^s$ and $X^t$ decompose as $$\begin{cases}
X^s=X^s_s+X^s_t\quad \text{with\; $X^s_s\in \Gamma(\ker (\dd s))$,\;$X^s_t\in\Gamma(\ker (\dd t))$},\\X^t=X^t_s+X^t_t\quad \text{with\; $X^t_s\in \Gamma(\ker (\dd s))$,\;$X^t_t\in\Gamma(\ker (\dd t))$}.
\end{cases}$$
By construction, $ X^s_t  $ is $s$-projectable to $X$ and $t$-projectable to $0$ while  $ X^t_s  $ is $s$-projectable to $0$ and $t$-projectable to $X$.
It follows that, $\widetilde{X}:=X_s^t+X_t^s$, is a lift of $X$ to the bi-submersion $(B,s,t)$. \\
\end{proof}

Let us make the Proposition \ref{prop:lift-internal}  more precise.
{The proof we give for Theorem \ref{thm:equivalence} uses the notion of left-invariant, resp. right-invariant, vector fields on a bi-submersion over a singular foliation. We define the latter in the next proposition. It uses the notion of anchored bundle over a singular foliation and almost Lie algebroid, see \cite{LLS, LLL} for more details. } 

\begin{proposition}\label{prop:left-arrows}
Let $(B,s,t)$ be a bi-submersion over a singular foliation $\mathcal F$ on a manifold $M$. Let $(A, \rho)$ be an anchored bundle over $\mathcal{F}$, i.e., $A\longrightarrow M$ is a vector bundle and $\rho\colon A\longrightarrow TM$ is a vector bundle morphism such that $\rho(\Gamma(A))=\mathcal F$. There exists two maps
\begin{equation}
\label{def:gauchedroite}
     \begin{array}{rcl} \Gamma(A) & \longrightarrow & \mathfrak X(B) \\ a & \longmapsto & \overleftarrow{a}\\  a & \longmapsto & \overrightarrow{a}\end{array}
\end{equation}
fulfilling the following conditions:
\begin{enumerate}
\item the vector field $\overrightarrow{a}\in \mathfrak{X}(B)$
(resp. $\overleftarrow{a}\in \mathfrak{X}(B)$) is $t$-related (resp. $s$-related) with $\rho(a) \in~\mathcal{F}
$,
    \item the vector field $\overrightarrow{a}$
(resp. $\overleftarrow {a}$) is tangent to the fibers of $s$ (resp. $t$),
\item $\overrightarrow{fa}=t^*(f)\overrightarrow{a}$ and $\overleftarrow{fa}=s^*(f)\overleftarrow{a}$ for all $a\in \Gamma(A),\,f\in C^\infty(M)$.
\end{enumerate}

{The vector fields $\overleftarrow{a}$ (resp. $\overrightarrow{a}$) for $a\in\Gamma(A)$ are called \emph{left-invariant} (resp. \emph{right-invariant}) vector fields of $(B,s,t)$.}
\end{proposition}
\begin{proof}
By Proposition \ref{prop:lift-internal}, given a section $a\in\Gamma(A)$ the vector field $\rho(a)\in\mathcal{F}$ admits a lift $\widetilde {\rho(a)}\in \mathfrak{X}(B)$ on $(B,s,t)$ of the form $$\widetilde{\rho(a)}:=\rho(a)_s^t+\rho(a)_t^s$$ with $\rho(a)^t_s\in \Gamma(\ker (\dd s))$ and $\rho(a)^s_t\in\Gamma(\ker (\dd t))$. Also, $dt(\rho(a)_s^t)=\rho(a)$ and  $ds(\rho(a)_t^s)=\rho(a)$. Let $b\in B$ and $\mathcal{U}_b$ an open neighborhood of $b$. Let $(a_1,\ldots,a_r)$ be a local trivialization of $A$ on the open subset $\mathcal U=s(\mathcal{U}_b)\subset M$.  We define a map $R_U$ on local generators by 
\begin{align}
    R_U\colon\Gamma_{\mathcal{U}_b}(t^*A)&\longrightarrow \Gamma_{\mathcal{U}_b}(\ker (\dd s))\\\nonumber t^*a_i\;&\longmapsto \rho(a_i)^t_s
\end{align}
and extend by $C^\infty(\mathcal U_b)$-linearity. 
These maps can be glued using partitions of unity. More precisely, let $(\chi_\lambda)_{\lambda\in \Lambda}$ be a partition of unity subordinate
to an open cover $(\mathcal{U}_\lambda)_{\lambda\in \Lambda}$ by open sets that trivialize the vector bundle $A$. We define a map $R$ on $\Gamma(t^*A)$ as $$\displaystyle{\sum_{\lambda\in \Lambda}\chi_\lambda R_{\mathcal{U}_\lambda}} .$$ 
Now for  $a\in \Gamma(a)$ we define $\overrightarrow{a}:= R(s^*a)$. The map $\overleftarrow{\bullet}$ is defined similarly. Item $1$, $2$ and $3$ hold by construction.

Assume that  $(A,\rho)$ is equipped with an almost Lie algebroid bracket $\lb_A$. For all $a,b\in \Gamma(A)$ one has
\begin{align*}
    ds\left(\overleftarrow{[a,b]_{A}}- [\overleftarrow{a},\overleftarrow{b}]\right)&=\rho([a, b]_{A})-[\rho(a),\rho(b)]\\&=0,\end{align*}because $\overleftarrow{a}$ is $s$-projectable to $\rho(a)$ and $\rho$ is a morphism of brackets. Since left-invariant vector fields are tangent to the fibers of $t$, one has $dt\left(\overleftarrow{[a,b]_{A}}- [\overleftarrow{a},\overleftarrow{b}]\right)=~0$. The proof is similar for $\overrightarrow{[a,b]_{A}}- [\overrightarrow{a},\overrightarrow{b}]$. This ends the proof. 
\end{proof}
The following lemma is important in the proof of Theorem \ref{thm:equivalence}.
\begin{lemma}\label{lemma:invariant-vector-fields}
Let $(B,s,t)$ be any bi-submersion over a singular foliation $\mathcal F$ on a manifold $M$, and $(A,\rho)$ an anchored bundle over $\mathcal{F}$. 
\begin{enumerate}
   \item  {There exists vector bundle morphisms $R\colon t^*A \longrightarrow \ker ds$ 
 and $L\colon s^*A \longrightarrow \ker dt$ inducing \eqref{def:gauchedroite}}.
 
\item  {Let $x\in M$.  If $(B,s,t)$ is a path holonomy bi-submersion over $\mathcal{F}$ near $(x, 0)$  then, every $b\in B$ such that $t(b)=x$  admits a neighborhood $V$ such that every $t$-projectable vector field of $\Gamma_V(\ker ds)$ 
is of the form $R(\xi)$ 
for some $\xi\in \Gamma_{V}(t^*A)$.}
\end{enumerate}
\end{lemma}
\begin{remark}
In item 1 of Lemma \ref{lemma:invariant-vector-fields}, by \textquotedblleft\,inducing \eqref{def:gauchedroite}\textquotedblright  we mean that  for every $a\in \Gamma(A)$, $\overrightarrow{a}:=R(s^*a)$ and $\overleftarrow{a}:=L(t^*a)$.
\end{remark}
\begin{proof}
{Item $1$ is an immediate consequence of Proposition \ref{prop:left-arrows} and is in fact used in its proof. Let us prove item ${2}$. By assumption, $B$ is a neighborhood of $(0,x)$ in $M\times \mathbb{R}^n$ with $n=\mathrm{rk}_x(\mathcal{F})=\dim (\mathcal{F}_x:=\mathcal{F}/\mathcal{I}_x\mathcal{F})$  near $x\in M$ (see Example \ref{ex:holonomy-biss}). Let $b\in B$ and $\mathcal{U}_b$ an open neighborhood of $b$. Let $(a_1,\ldots,a_r)$ be a local trivialization of $A$ on the open subset $\mathcal U=t(\mathcal{U}_b)\subset M$.  One has by definition of right-invariant vector fields of $(B,s, t)$ that $dt(\overrightarrow{a_i})=t^*\rho(a_i)$ for $i=1,\ldots, \mathrm{rk}(A)$. The vector fields $\rho(a_i)$ are generators of $\mathcal{F}$ on $\mathcal{U}$. We necessarily have $n\leq\mathrm{rk}(A)$. Since the $\rho(a_i)(x)'s$ are generators of $\mathcal{F}_x$, without loss of generality we can assume that $\rho(a_1)(x), \ldots, \rho(a_n)(x)$ is  a basis of $\mathcal{F}_x$. Since $\mathrm{rk}(\ker ds)=n$,  $\left(\overrightarrow{a_i}(b)\right)_{i=1,\ldots, n}$ form a basis of $\ker ds|_x$. Therefore, the $\overrightarrow{a_i}$’s are independent at every point of some neighborhood $V\subset\mathcal{U}_b$ of $b$ i.e., they form a local trivialization of the  vector bundle $\ker ds\longrightarrow B$. As a result, vector fields of $\Gamma_V(\ker ds)$ are of the form $\sum _i^nf_i\overrightarrow{a_i}$ with $f_i\in C^\infty(V)$ for $i=1, \ldots,n$. This ends the proof.}
\end{proof}

We can now state one of the important results of this section. It uses several concepts introduced in \cite{AndroulidakisIakovos}, which are recalled in the proof. 

\begin{prop}\label{prop:lift}
Let $\mathfrak{F}$ be a singular foliation on a manifold $M$. Any symmetry $X\in \mathfrak{s}(\mathfrak F)$ admits a lift
 \begin{enumerate}
    \item to any path holonomy bi-submersion  $(B,s,t)$,
    \item to Androulidakis-Skandalis' path holonomy atlas,
    \item to a neighborhood of any point in a bi-submersion through which there exists a local bisection that induces the identity.
\end{enumerate}
\end{prop}
\begin{remark}
In cases (1) or (2) in Proposition \ref{prop:lift}, a linear lift $$ X \to \widetilde{X}$$ can be defined on the whole space $\mathfrak{s}(\mathfrak F)$ of symmetries of $\mathfrak F$. 
As an immediate consequence of Remark \ref{rmk:uniqueness}, 
we obtain that for all $X,Y\in\mathfrak{s}(\mathfrak F)$,
\begin{equation}
   \widetilde{[X,Y]}-[\widetilde{X},\widetilde{Y}]\in \Gamma(\ker\dd s)\cap\Gamma(\ker\dd t).
\end{equation}
\end{remark}
\begin{proof}[Proof of (Proposition \ref{prop:lift})]
Let $X\in\mathfrak{s}(\mathfrak{F})$. Assume that $(B,s,t)=(\mathcal{W}, s_0,t_0)$ is a path holonomy bi-submersion associated to some generators $X_1,\ldots,X_n\in\mathfrak{F}$ as in Example \ref{ex:holonomy-biss}. Fix $(u,y=(y_1,\ldots,y_n))\in \mathcal{W}\subset M\times\mathbb{R}^n$, set $Y:=\sum_{i=1}^dy_iX_i$. Since $d\varphi_1^Y(X)=(\varphi^Y_1)_*(X)\in X+\mathfrak{F}$, there exists $Z_y\in\mathfrak{F}$ depending in smoothly on $y$ such that $dt_0(X,0)=X+Z_y$. Take $\widetilde{Z}_y\in t_0^{-1}(\mathfrak{F})$ such that $dt_0(\widetilde{Z}_y)=Z_y$. One has, 
\begin{equation*}
    dt_0\left((X,0)-\widetilde{Z}_y\right)=X= ds_0(X,0).
\end{equation*}
We can write $\widetilde{Z}_y=\widetilde{Z}_y^1+\widetilde{Z}_y^2$, with $\widetilde{Z}_y^1\in\Gamma(\ker  ds_0)$, $\widetilde{Z}_y^2\in\Gamma(\ker dt_0)$. By construction, $\widetilde{X}:=(X,-\widetilde{Z}_y^1)$ is a lift of $X$ to the bi-submersion $(\mathcal{W},s_0,t_0)$. This proves item 1.\\

If $X_B\in\mathfrak{X}(B)$ and  $X_{B'}\in\mathfrak{X}(B')$ are two lifts of the symmetry $X$ on the path holonomy bi-submersions  $(B, s,t )$ and $(B',s',t')$ respectively, then $(X_B,X_{B'})$ is a lift of $X$ on the composition bi-submersion $B\circ B'$. This proves item 2, since the path holonomy atlas is made of fibered products and inverse of holonomy path holonomy bi-submersions \cite{AndroulidakisIakovos}. Also $X_a$ is a symmetry of $(B,t,s)$.\\

Item $2$ in Proposition 2.10 of \cite{AndroulidakisIakovos} states that if the identity of $M$ is carried by $(B,s,t)$ at some point $v\in B$, then there exists an open neighborhood $V\subset B$ of $v$ that satisfies $s_{|_V}=s_0\circ g$ and $t_{|_V}=t_0\circ g$, for some submersion $g\colon V\longrightarrow \mathcal{W}$, for $\mathcal{W}$ of the form as in item 1. Thus, for all $X\in\mathfrak{s}(\mathfrak{F})$ there exists a vector field $\widetilde{X}\in\mathfrak{X}(V)$ fulfilling $ ds_{|_V}(\widetilde{X})=dt_{|_V}(\widetilde{X})=X$. This proves item 3.\\
\end{proof}

\begin{definition}
A \emph{symmetry of the tower of bi-submersion} $\mathcal{T}_B=(B_{i+1},s_i,t_i,\mathcal F_i)_{i \geq 0} $ is a family $X=(X_i)_{i \geq 0} $, with  the \emph{$i$-th component} $X_i $ in $\mathfrak{s}(\mathcal{F}_i)$,  such that $ ds_{i-1}(X_i)= dt_{i-1}(X_i)=X_{i-1}$ for all $i\geq 1$.
We denote by $\mathfrak{s}(\mathcal{T}_B)$ the Lie algebra of symmetries of $\mathcal{T}_B$.
\end{definition}

The next theorem gives a class of bi-submersion tower to which any symmetry of the base singular foliation $\mathfrak F $ lifts.

\begin{theorem}\label{thm:symm-tower-path} Let $\mathfrak F $  be a  foliation. Let $\mathcal{T}_B$ be a 
path holonomy bi-submersion tower (or an exact path holonomy atlas bi-submersion tower). 
A vector field $X\in\mathfrak{X}(M)$ is a symmetry of $\mathfrak{F}$, i.e. $[X,\mathfrak{F}]\subset\mathfrak{F}$, if and only if it is the component on $M$ of a symmetry of $\mathcal{T}_B$.
\end{theorem}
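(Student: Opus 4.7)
The forward implication is immediate: by definition, the component on $M$ of a symmetry of $\mathcal{T}_B$ lies in $\mathfrak{s}(\mathfrak F_0) = \mathfrak{s}(\mathfrak F)$. The real content lies in the converse, where I plan to build the whole tower of lifts inductively, using Proposition \ref{prop:lift} at each level and Remark \ref{rmk:uniqueness} to guarantee that each newly produced lift is a symmetry of the next foliation in the tower.

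The recursion goes as follows. Set $X_0 := X$ and assume we have produced, for some $i \geq 0$, a family $(X_0, X_1, \dots, X_i)$ with $X_j \in \mathfrak{s}(\mathfrak F_j)$ and $ds_{j-1}(X_j) = dt_{j-1}(X_j) = X_{j-1}$ for $1 \leq j \leq i$. Since $(B_{i+1}, s_i, t_i)$ is a path holonomy bi-submersion (or a path holonomy atlas) over $(B_i, \mathfrak F_i)$, Proposition \ref{prop:lift} (items 1 and 2) produces a vector field $X_{i+1} \in \mathfrak X(B_{i+1})$ that is simultaneously $s_i$-projectable and $t_i$-projectable onto $X_i$. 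I then invoke Remark \ref{rmk:uniqueness}(2) to conclude that $X_{i+1}$ preserves the intersection $\Gamma(\ker ds_i) \cap \Gamma(\ker dt_i)$. For an exact (path holonomy, or path holonomy atlas) tower this intersection is exactly $\mathfrak F_{i+1}$, so $X_{i+1} \in \mathfrak{s}(\mathfrak F_{i+1})$, and the induction continues.

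The key technical point, which is the only step that requires care, is precisely this last identification $\mathfrak F_{i+1} = \Gamma(\ker ds_i) \cap \Gamma(\ker dt_i)$. For an exact path holonomy bi-submersion tower this holds by the very definition of exactness. For the exact path holonomy atlas case, one must additionally use Proposition \ref{lift-symmetry-bi-susbmersion} and Proposition \ref{prop:lift}(2) to ensure that the lift constructed on the atlas (obtained via fibered products and inverses of path holonomy bi-submersions) still consists of a well-defined vector field on $B_{i+1}$ compatible with the source and target; this is exactly what item~2 of Proposition~\ref{prop:lift} provides.

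The main obstacle I anticipate is bookkeeping rather than conceptual: one needs to keep track of the fact that at each level the ambient foliation changes from $\mathfrak F_i$ to $\mathfrak F_{i+1}$, so that the hypothesis $X_i \in \mathfrak{s}(\mathfrak F_i)$ needed to apply Proposition \ref{prop:lift} is restored after each step. Once the identification of $\mathfrak F_{i+1}$ with the kernel intersection is in place, this is automatic from Remark~\ref{rmk:uniqueness}(2), and the sequence $(X_i)_{i \geq 0}$ obtained by iteration defines a symmetry of $\mathcal{T}_B$ whose component on $M$ is the given $X$, completing the proof.
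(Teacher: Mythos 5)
Your proof is correct and follows essentially the same route as the paper's, which simply invokes Proposition \ref{prop:lift} (items 1 and 2) to lift the symmetry at every stage of the tower and Proposition \ref{lift-symmetry-bi-susbmersion} for the converse direction; your version merely spells out the induction and the role of Remark \ref{rmk:uniqueness}(2) in restoring the hypothesis $X_{i+1}\in\mathfrak{s}(\mathfrak F_{i+1})$ at each step. Note only that, as you yourself flag, closing the induction via $\mathfrak F_{i+1}=\Gamma(\ker ds_i)\cap\Gamma(\ker dt_i)$ uses exactness of the tower, a point the paper's two-line proof glosses over in exactly the same way.
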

\begin{proof}
It is a direct consequence of Proposition \ref{lift-symmetry-bi-susbmersion} and of item 1. resp. item 2. in Proposition \ref{prop:lift}. It is due to the fact that the tower $\mathcal{T}_B$ is generated by path  holonomy bi-submersions, and then we can lift  symmetries at every stage of the tower $\mathcal{T}_B$. 
\end{proof}

\begin{remark}\label{rmk:sym-chain-map}
Let $(X^i)_{i\geq 0}$ be a lift of $X_0:=X\in\mathfrak{s}(\mathfrak{F})$. For $i\geq 1$, $\nabla^i_X:=\mathrm{ad}_{X_i}$ preserves $\Gamma(\ker  ds_{i-1})$, since $X_i$ is $s_{i-1}$-projectable. Altogether, they define a chain map $(\nabla_X^i)_{i\geq 0}$ at the section level of the complex \eqref{eq:exa-bisub}, on projectable vector fields in  \eqref{complex-projectable}, since for every $i\geq 0$ and any $t_{i}$-projectable vector field $\xi\in \ker  ds_{i}$, \begin{align*}
     dt_{i}([X_{i+1},\xi])&=[ dt_{i}(X_{i+1}), dt_{i}(\xi)]\\&=[X_{i},  dt_{i}(\xi)], 
\end{align*}that is $ dt_{i}\circ \nabla^{i+1}_X=\nabla^i_X\circ  dt_{i}$. 
\end{remark}

\begin{remark}
In \cite{GarmendiaAlfonso2}, under some assumptions, it is shown that if a Lie group  $G$ acts on a foliated manifold $(M,\mathfrak{F})$, then it acts on its holonomy groupoid. It is likely that this result follows from Theorem \ref{thm:symm-tower-path}, this will be addressed in another study.
\end{remark}
\subsection{Lifts of actions of a Lie algebra on a bi-submersion tower}
We end the section with the following constructions and some natural questions.\\

Let $\mathcal{T}_B=(B_{i+1},s_i,t_i,\mathfrak F_i)_{i \geq 0} $ be an exact path holonomy  bi-submersion tower over a singular foliation $(M,\mathfrak F)$. 

By Theorem \ref{thm:symm-tower-path}, any vector field $X\in \mathfrak{s}(\mathfrak{F})$ lifts to a symmetry $(X^i)_{i\geq 0}$ of $\mathcal{T}_B$. Once a lift is chosen, we can define a linear map, 
$$X\in \mathfrak{s}(\mathfrak{F})\mapsto (X^i)_{i\geq 1}\in \mathfrak{s}(\mathcal{T}_B).$$
Let $\varrho\colon\mathfrak g\rightarrow \mathfrak{X}(M)$ be a strict symmetry action of a Lie algebra $\mathfrak{g}$ on $(M,\mathfrak F)$.  For $x\in\mathfrak g$, there exists   $(\varrho(x)^i)_{i \geq 0}$, with  $\varrho(x)^i\in\mathfrak{s}(\mathfrak{F}_i)\subset \mathfrak{X}(B_i)$ a symmetry of $\mathcal{T}_B$ such that $\varrho(x)^0=\varrho(x)\in \mathfrak s(\mathfrak F)$, by Theorem \ref{thm:symm-tower-path}. Consider the composition, \begin{equation}
x\in\mathfrak g\longmapsto \varrho(x)\in \mathfrak{s}(\mathfrak F)\longmapsto (\varrho(x)^i)_{i \geq 0}\in \mathfrak{s}(\mathcal{T}_B)\mapsto \varrho(x)^1\in \mathfrak{X}(B_1).
\end{equation}

\begin{lemma}\label{lemma:sym-action-tower}
For all $x,y\in \mathfrak{g}$, $$[\varrho(x),\varrho(y)]^1-[\varrho(x)^1, \varrho(y)^1]= dt_1(C_1(x,y))$$
with $C_1(x,y)\in \Gamma(\ker  ds_1\rightarrow B_2)$ a $t_1$-projectable vector field, for some bilinear map  $$C_1\colon \wedge^2\mathfrak g \longrightarrow \Gamma(\ker  ds_1\rightarrow B_2).$$
\end{lemma}
\begin{proof}
This follows from Lemma \ref{prop:sequence}, because $[\varrho(x),\varrho(y)]^1-[\varrho(x)^1, \varrho(y)^1]\in \Gamma(\ker ds_{0})\cap\Gamma(\ker dt_{0})$. 
\end{proof}

\begin{theorem}\label{thm:final-sym}
The map $C_1\colon \wedge^2\mathfrak g \longrightarrow \Gamma(\ker  ds_1\rightarrow B_2)$ of Lemma \ref{lemma:sym-action-tower} satisfies for all $x,y, z\in \mathfrak{g}$,
\begin{equation}
    C_1([x,y]_\mathfrak{g},z) + \nabla_{\varrho(x)}^2(C_1(y,z)) + \circlearrowleft(x,y,z)= dt_2(C_2(x,y,z)) 
\end{equation}
for some tri-linear map $C_2\colon\wedge^3\mathfrak g \longrightarrow \Gamma(\ker  ds_2\rightarrow B_3)$. Here, $\nabla^2$ is as in Remark \ref{rmk:sym-chain-map}.
\end{theorem}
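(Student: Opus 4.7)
The plan is to reduce the theorem to showing that the trilinear expression $E(x,y,z) := C_1([x,y]_\mathfrak{g},z) + \nabla^2_{\varrho(x)}(C_1(y,z)) + \circlearrowleft(x,y,z)$ takes values in $\Gamma(\ker ds_1)\cap\Gamma(\ker dt_1)$ on $B_2$, and then to use the exactness of the sequence of Lemma \ref{prop:sequence} to lift $E$ through $dt_2$ in a trilinear way. Indeed, since $\mathcal{T}_B$ is an exact path holonomy bi-submersion tower, Lemma \ref{prop:sequence} gives $\mathfrak{F}_2 = \Gamma(\ker ds_1)\cap\Gamma(\ker dt_1) = (dt_2)_\ast\bigl(\Gamma(\ker ds_2 \to B_3)\bigr)$, and a linear section of this surjection composed with $(x,y,z)\mapsto E(x,y,z)$ will produce the desired $C_2$.

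First I would verify that $E\in\Gamma(\ker ds_1)$. By Lemma \ref{lemma:sym-action-tower}, $C_1$ is valued in $\Gamma(\ker ds_1)$, so the first summand is clear. For $\nabla^2_{\varrho(x)}(C_1(y,z)) = [\varrho(x)^2,C_1(y,z)]$, I would use that $\varrho(x)^2$ is $s_1$-projectable to $\varrho(x)^1$ (as a symmetry of the tower) and that $C_1(y,z)\in\Gamma(\ker ds_1)$, so that $ds_1([\varrho(x)^2,C_1(y,z)]) = [\varrho(x)^1,0]=0$, as already observed in Remark~\ref{rmk:sym-chain-map}.

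The hard part will be showing $E\in\Gamma(\ker dt_1)$. I would apply $dt_1$ to $E$, using the $t_1$-projectability of $C_1(\cdot,\cdot)$ and of $\varrho(x)^2$, together with the defining identity $dt_1(C_1(u,v)) = [\varrho(u),\varrho(v)]^1 - [\varrho(u)^1,\varrho(v)^1]$ from Lemma \ref{lemma:sym-action-tower}, the strictness $\varrho([u,v]_\mathfrak{g}) = [\varrho(u),\varrho(v)]$, and the linearity of the chosen lift $(\cdot)^1$. After expansion the resulting expression on $B_1$ breaks into three cyclic sums: one of the form $\sum_\circlearrowleft [[\varrho(x),\varrho(y)],\varrho(z)]^1$, which vanishes by Jacobi in $\mathfrak{X}(M)$ and linearity of the lift; one of the form $\sum_\circlearrowleft [\varrho(x)^1,[\varrho(y)^1,\varrho(z)^1]]$, which vanishes by Jacobi in $\mathfrak{X}(B_1)$; and the remaining two cross terms, which cancel against each other (up to the sign convention in $\circlearrowleft$) by the graded antisymmetry of the bracket on $\mathfrak{X}(B_1)$ applied under the cyclic sum, namely $\sum_\circlearrowleft [[a,b]^1,c^1] = -\sum_\circlearrowleft [a^1,[b,c]^1]$. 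Checking that these signs really cancel as they should in the statement is the subtle bookkeeping step, and it is the only point where the strictness assumption on $\varrho$ is essential.

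Once $dt_1(E)=0$ is established, $E$ lies in $\Gamma(\ker ds_1)\cap\Gamma(\ker dt_1) = \mathfrak{F}_2$, and exactness of the sequence of Lemma \ref{prop:sequence} on $B_2\leftarrow B_3$ yields a preimage in $\Gamma(\ker ds_2 \to B_3)$. To secure trilinearity of $C_2$, I would fix once and for all a $\mathbb{K}$-linear section of the surjection $dt_2\colon\Gamma(\ker ds_2)\twoheadrightarrow \mathfrak{F}_2$ (at the level of vector spaces, whose existence is standard) and define $C_2(x,y,z)$ as the image of $E(x,y,z)$ under this section; the skew-symmetry of $C_2$ in $(x,y,z)$ is then inherited from the skew-symmetry of $E$, which itself follows from the antisymmetry of $[\,\cdot\,,\cdot\,]_\mathfrak{g}$ and the bilinear antisymmetry of $C_1$.
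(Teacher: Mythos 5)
Your overall strategy coincides with the paper's: reduce the theorem to showing that the trilinear expression lands in $\Gamma(\ker ds_1)\cap\Gamma(\ker dt_1)=\mathfrak F_2$ (the $\ker ds_1$ half by $s_1$-projectability of $\varrho(x)^2$, exactly as you argue) and then lift through $dt_2$ by the exactness of Lemma \ref{prop:sequence}; your remark about fixing a $\mathbb K$-linear section of $dt_2$ once and for all to make $C_2$ genuinely trilinear is a point the paper leaves implicit, and is a welcome addition.

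The one step that does not go through as you describe is the cancellation of the two cross terms. After expanding $dt_1$ of the expression, the cross terms are $-\sum_{\circlearrowleft}[\varrho([x,y]_\mathfrak g)^1,\varrho(z)^1]$ and $+\sum_{\circlearrowleft}[\varrho(x)^1,\varrho([y,z]_\mathfrak g)^1]$, and the identity you invoke, $\sum_{\circlearrowleft}[a^1,[b,c]^1]=-\sum_{\circlearrowleft}[[a,b]^1,c^1]$, shows precisely that these two cyclic sums are \emph{equal}; with the $+$ sign in front of $\nabla^2$ in the statement they therefore add up to $-2\sum_{\circlearrowleft}[\varrho([x,y]_\mathfrak g)^1,\varrho(z)^1]$ instead of cancelling. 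What actually closes the argument (and is what the paper does) is to feed the defining relation of Lemma \ref{lemma:sym-action-tower}, $\varrho([x,y]_\mathfrak g)^1=[\varrho(x)^1,\varrho(y)^1]+dt_1(C_1(x,y))$, into this leftover: the $[[\varrho(x)^1,\varrho(y)^1],\varrho(z)^1]$ part dies by Jacobi on $B_1$, and the remainder $\sum_{\circlearrowleft}[dt_1(C_1(x,y)),\varrho(z)^1]$ is recognized, by $t_1$-projectability of $C_1(x,y)$ and of $\varrho(z)^2$, as $dt_1$ of the cyclic sum of $\pm\nabla^2_{\varrho(z)}(C_1(x,y))$, i.e.\ as $dt_1$ of the $\nabla^2$-half of the expression. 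So the cancellation is between the $dt_1$-images of the two halves of $E$, mediated by the defining relation of $C_1$, not an internal cancellation of cross terms by antisymmetry. Your hedge about the sign convention is warranted --- carried out literally with the sign of Lemma \ref{lemma:sym-action-tower} one obtains $\sum_{\circlearrowleft}\bigl(C_1([x,y]_\mathfrak g,z)-\nabla^2_{\varrho(x)}(C_1(y,z))\bigr)\in\Gamma(\ker dt_1)$, and the paper's own intermediate computation is not consistent with its Lemma on this point --- but the mechanism you propose is not the one that makes the terms disappear, and as written your verification of $dt_1(E)=0$ has a gap at exactly this spot.
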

\begin{proof}
For $x,y, z\in \mathfrak{g}$, 
\begin{align*}
    dt_1\left(C_1([x,y]_\mathfrak g,z)\right) +\circlearrowleft(x,y,z)&=[\varrho([x,y]_\mathfrak g),\varrho(z)]^1-[\varrho([x,y]_\mathfrak g)^1,\varrho(z)^1] +\circlearrowleft(x,y,z)\\&=\cancel{[[\varrho(x),\varrho(y)],\varrho(z)]^1}-[[\varrho(x),\varrho(y)]^1,\varrho(z)^1]+\circlearrowleft(x,y,z)\\&=-\cancel{[[\varrho(x)^1,\varrho(y)^1],\varrho(z)^1]}+[dt_1(C_1(x,y)),\varrho(z)^1]+\circlearrowleft(x,y,z)\\&=dt_1([C_1(x,y),\varrho(z)^2]) +\circlearrowleft(x,y,z).
\end{align*}
We have used Jacobi identity and $dt_1(\varrho(z)^2)=\varrho(z)^1$. This implies that 
\begin{equation}
    dt_1\left(C_1([x,y]_\mathfrak g,z)-[C_1(x,y),\varrho(z)^2] +\circlearrowleft(x,y,z)\right)=0.
\end{equation}Again Lemma \ref{prop:sequence} implies the result.\end{proof}

Here is a natural question:\\

\noindent
\textbf{Question}:  Can we continue the construction in Theorem \ref{thm:final-sym} to a Lie $\infty$-morphism?\\

\noindent
There is another natural type of questions:\\

\noindent
\textbf{Question}: Can a strict symmetry action $\varrho\colon\mathfrak g\rightarrow \mathfrak{X}(M)$ of a Lie algebra $\mathfrak{g}$ on a singular foliation $(M,\mathfrak F)$ lift to a given geometric resolution $(E_\bullet, \dd, \rho)$ of $\mathfrak{F}$?\\

\noindent
\textbf{Discussion}

    \begin{itemize}
        \item Can we answer this question with what we have? 
        \item We know by Theorem \ref{main} that $\varrho$ lifts on any universal Lie $\infty$-algebroid $(E,Q)$ of $\mathfrak{F}$ to a Lie $\infty$-morphism $\Phi\colon\mathfrak g\longrightarrow \mathfrak{X}_\bullet(E)$. If we can choose at least the polynomial-degree zero of the Taylor coefficient $\Phi_1\colon \wedge^2\mathfrak g\rightarrow \mathfrak{X}_{-1}(E)$ to be zero, then the answer is yes.
        \item If yes, can we assume that the previous action to preserve the $dg$-almost Lie algebroid? Again, if yes, then the polynomial-degree $+1$ can be chosen to be zero.
        \end{itemize}
        It seems that being able to suppress the higher Taylor coefficients has a strong geometric meaning. We intend to study that problem in a subsequent paper.\\

\noindent
\textbf{Question}: Can a bi-submersion tower be equipped with a (local) Lie $\infty$-groupoid structure?

\appendix
\begin{appendices}

\chapter{Tensor algebra}\label{appendix:tensor}
We have used almost everywhere in the thesis, modules or vector spaces that arise as the quotient of the tensor algebra. It is worth it to dedicate a section to recall the construction and some basic facts on tensor algebras. In this chapter we assume that the reader is familiar with the notion of $\mathcal{O}$-modules, graded $\mathcal{O}$-modules, and vector spaces.\\

 It is known that many algebras such as the exterior algebra, symmetric algebra, Clifford algebras \cite{Todorov-Ivan}, universal enveloping algebras \cite{Xavier-Bekaert} and many other algebras are the quotient of tensor algebras. These make the tensor algebra a fundamental and a very useful notion. In this thesis we have dealt a lot with $\mathcal{O}$-multilinear maps, it simply means that these maps are $\mathcal{O}$-linear w.r.t each argument while we fix the other arguments. The tensor algebra is  used to characterize multilinear relations between algebraic objects related to  modules or vector spaces. There are several ways to construct the tensor algebra, we refer the reader  e.g. to chapter 16 of \cite{lang2005algebra} or \cite{A.Gathmann,KEITH-CONRAD} for more details on the matter.\\

When $\mathcal{O}=\mathbb{K}$ the reader may replace "$\mathcal{O}$-module" by "$\mathbb{K}$-vector space" and the construction of the tensor algebra that we give below works the same.
 \section{Tensor product}
 Let $\mathcal{V}$ and $\mathcal{W}$ and $\mathcal{Z}$ be $\mathcal{O}$-modules. The \emph{tensor product $\mathcal{V}\otimes_\mathcal{O} \mathcal{W}$ of $\mathcal{V}$ and $\mathcal{W}$ over $\mathcal{O}$} is the $\mathcal{O}$-module which is defined by the following universal property 
 \begin{theorem}\label{th:tensors-univ}
 There exists a $\mathcal{O}$-bilinear map, $p\colon\mathcal{V}\times \mathcal W\rightarrow \mathcal{V}\otimes_\mathcal{O} \mathcal{W}$, that satisfies the following  properties: 
     \begin{enumerate}
     \item any  $\mathcal{O}$-bilinear map $B\colon \mathcal{V}\times \mathcal{W}\rightarrow \mathcal{Z}$ admits a unique $\mathcal{O}$-linear map $\gamma\colon\mathcal{V}\otimes_\mathcal{O} \mathcal{W}\rightarrow \mathcal{Z}$, such that $\gamma\circ p=B$. In other words, such that the following diagram commutes:
\begin{equation}\label{univ:tensor}
    \xymatrix{\mathcal{V}\times \mathcal{W}\ar[rr]^p\ar[dr]_B &&\mathcal{V}\otimes_\mathcal{O}\mathcal{W}\ar@{.>}[ld]_\gamma&\\&\mathcal{Z}&}
\end{equation}

\item Furthermore, if there is another $\mathcal{O}$-module $\mathcal{Q}$ and a $\mathcal{O}$-bilinear map, $p'\colon\mathcal{V}\times \mathcal W\rightarrow \mathcal Q$ with the property \eqref{univ:tensor}, then there exists a unique isomorphism $i\colon \mathcal{V}\otimes_\mathcal{O} \mathcal{W}\rightarrow \mathcal{Q}$ such that $p'=i\circ p$. 
 \end{enumerate}
 \end{theorem}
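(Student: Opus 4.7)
My plan is to prove existence by the standard free-module-modulo-relations construction, and then deduce uniqueness from a category-theoretic argument using the universal property twice.

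For existence, I would first form the free $\mathcal{O}$-module $F$ generated by the set $\mathcal{V}\times\mathcal{W}$, whose elements are finite formal sums $\sum_i a_i \cdot (v_i,w_i)$ with $a_i\in\mathcal{O}$ and $(v_i,w_i)\in\mathcal{V}\times\mathcal{W}$. Next, I would consider the $\mathcal{O}$-submodule $R\subset F$ generated by all elements of the four types
\[
(v_1+v_2,w)-(v_1,w)-(v_2,w),\quad (v,w_1+w_2)-(v,w_1)-(v,w_2),
\]
\[
(fv,w)-f\cdot(v,w),\quad (v,fw)-f\cdot(v,w),
\]
for $v,v_1,v_2\in\mathcal{V}$, $w,w_1,w_2\in\mathcal{W}$, $f\in\mathcal{O}$. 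I would then define $\mathcal{V}\otimes_\mathcal{O}\mathcal{W}:=F/R$, denote the class of $(v,w)$ by $v\otimes w$, and set $p(v,w):=v\otimes w$. By construction $R$ was chosen exactly so that $p$ is $\mathcal{O}$-bilinear, and by definition the elementary tensors $v\otimes w$ generate $F/R$ as an $\mathcal{O}$-module.

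To verify the universal property, given an $\mathcal{O}$-bilinear $B\colon\mathcal{V}\times\mathcal{W}\to\mathcal{Z}$, I would first extend $B$ by $\mathcal{O}$-linearity to a linear map $\widetilde B\colon F\to\mathcal{Z}$ on the free module (which is immediate, since linear maps out of a free module are determined by their values on a basis). The bilinearity of $B$ forces $\widetilde B$ to vanish on each of the four families of generators of $R$, hence $\widetilde B$ vanishes on all of $R$ and descends to a well-defined $\mathcal{O}$-linear map $\gamma\colon F/R\to\mathcal{Z}$ satisfying $\gamma\circ p=B$. Uniqueness of $\gamma$ follows because any $\mathcal{O}$-linear map out of $F/R$ is determined by its values on the generators $v\otimes w=p(v,w)$, and those values are forced by the equation $\gamma\circ p=B$.

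For the second assertion (uniqueness up to unique isomorphism), I would invoke the universal property twice and use a standard categorical trick. Applying the universal property of $(\mathcal{V}\otimes_\mathcal{O}\mathcal{W},p)$ to the bilinear map $p'$ yields a unique $\mathcal{O}$-linear $i\colon \mathcal{V}\otimes_\mathcal{O}\mathcal{W}\to\mathcal{Q}$ with $i\circ p=p'$; applying the analogous universal property of $(\mathcal{Q},p')$ to $p$ yields a unique $j\colon\mathcal{Q}\to\mathcal{V}\otimes_\mathcal{O}\mathcal{W}$ with $j\circ p'=p$. Then $j\circ i$ and $\mathrm{id}_{\mathcal{V}\otimes_\mathcal{O}\mathcal{W}}$ both satisfy the factorization property for $p$ itself, so by the uniqueness clause they coincide; symmetrically $i\circ j=\mathrm{id}_\mathcal{Q}$, giving the desired unique isomorphism.

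There is no serious obstacle here: the construction is entirely routine. The only mildly delicate point is making sure that the four families of generators of $R$ are genuinely the ones forced by bilinearity (no more, no less), so that $p$ is bilinear and any bilinear $B$ factors, while elementary tensors still generate the quotient as an $\mathcal{O}$-module, which is essential for uniqueness of $\gamma$.
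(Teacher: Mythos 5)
Your proposal is correct and follows essentially the same route as the paper: construct $\mathcal{V}\otimes_\mathcal{O}\mathcal{W}$ as the free $\mathcal{O}$-module on $\mathcal{V}\times\mathcal{W}$ modulo the same four families of bilinearity relations, check that bilinear maps descend to the quotient, and obtain uniqueness up to unique isomorphism by applying the universal property twice. The only cosmetic difference is the order of the two parts (the paper treats uniqueness before existence), which does not affect the argument.
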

 \begin{proof}We first show "uniqueness" when such $\mathcal{O}$-module exits, then show existence.
 
 \noindent
 \textbf{Uniqueness}: Suppose that there exist two pairs $(\mathcal{V}{\otimes}\mathcal{W}, \,{p}\colon\mathcal{V}\times \mathcal W\rightarrow \mathcal{V}{\otimes}_\mathcal{O} \mathcal{W})$ and $(\mathcal{V}\tilde{\otimes}\mathcal{W}, \,\tilde{p}\colon\mathcal{V}\times \mathcal W\rightarrow \mathcal{V}\tilde{\otimes}_\mathcal{O} \mathcal{W})$ satisfying the property \ref{th:tensors-univ}. By using two times \ref{th:tensors-univ}, we deduce the existence of a unique $\mathcal{O}$-linear map $i\colon \mathcal{V}\otimes_\mathcal{O} \mathcal{W}\rightarrow \mathcal{V}\tilde{\otimes}\mathcal{W}$, and another one  $j\colon \mathcal{V}\tilde{\otimes}_\mathcal{O} \mathcal{W}\rightarrow \mathcal{V}{\otimes}\mathcal{W}$ such that $i\circ p=\tilde{p}$ and $j\circ \tilde{p}=p$. This implies that $j\circ i\circ p=p$ and $i\circ j \circ \tilde{p}=\tilde{p}$. By unicity of the factorization in \ref{th:tensors-univ}, we must have $i\circ j=\mathrm{id}$ and $j\circ i=\mathrm{id}$. This proves item 2.\\
 
 \noindent
 \textbf{Existence}: Let us consider the free $\mathcal{O}$-module $\mathcal{P}$ generated by elements of the Cartesian product $\mathcal{V}\times \mathcal{W}$, i.e. elements of $\mathcal{P}$ are formal finite sums of elements of the form $f(v,w)$, with $f\in\mathcal{O}$ and $v\in\mathcal{V}, w\in\mathcal{W}$. Next, we divide by the submodule $\mathcal{R}\subset \mathcal{P}$ of $\mathcal P$ generated by  
 elements of the following types:
 
 \begin{align*}
    & (v_{1}+v_{2},w)-(v_{1},w)-(v_{2},w),\\&(v,w_{1}+w_{2})-(v,w_{1})-(v,w_{2}),\\&(fv,w)-f(v,w),\,\;\text{and}\,\;(v,fw)-f(v,w)
 \end{align*}
which are the relations that elements of the tensor product must satisfy with $v,v_1,v_2\in\mathcal V$ and $w,w_1, w_2\in\mathcal W$ and $f\in\mathcal{O}$. In those notations, the tensor product of $\mathcal{V}$ and $\mathcal{W}$ over $\mathcal{O}$ is defined as the quotient $$\mathcal{V}\otimes_\mathcal{O} \mathcal{W}:=\mathcal{P}/\mathcal{R}.$$

For $v\in\mathcal{V}, w\in\mathcal{W}$, we denote the class of $(v,w)\in \mathcal{V}\times \mathcal W$ by $v\otimes w\in \mathcal V\otimes_\mathcal{O}\mathcal W$. This quotient comes with the natural map, $p\colon \mathcal{V}\times \mathcal{W}\rightarrow \mathcal{V}\otimes_\mathcal{O} \mathcal{W},\; (v,w)\mapsto v\otimes w$. For any $\mathcal{O}$-linear map $B\colon \mathcal{V}\times \mathcal{W}\rightarrow \mathcal{Z}$, the $\mathcal{O}$-linear map $q\colon \mathcal P\rightarrow \mathcal{Z}$ which is given on the basis of $\mathcal{P}$ by $(x,y)\mapsto q((x,y)):= B(x,y)$ clearly goes to quotient to define a $\mathcal{O}$-linear map $\widebar{q}\colon \mathcal{V}\otimes_\mathcal{O} \mathcal{W}\rightarrow \mathcal Z$, since e.g. 
\begin{align*}
    q\left((v_{1}+v_{2},w)-(v_{1},w)-(v_{2},w)\right)&=q((v_{1}+v_{2},w))-q((v_{1},w))-q((v_{2},w))\\&=B(v_{1}+v_{2},w)-B(v_{1},w)-B(v_{2},w)\\&=0,\qquad\text{by bilinearity of $B$}
\end{align*}
also,

\begin{align*}
    q((fv,w)-f(v,w))&=q((fv,w))-fq((v,w)),\qquad\text{by $\mathcal{O}$-lineraity of $q$}\\&=B(fv,w)-fB(v,w)=0,\,\quad\text{by $\mathcal{O}$-bilineraity of $B$.}
\end{align*}We did everything to get,  $\widebar{q}\circ p=B$. Therefore, we can take $\gamma=\widebar{q}$ to satisfy \eqref{univ:tensor}.\\

\end{proof}
 \begin{proposition}
 Let $\mathcal{V},\mathcal{W},\mathcal{Z}$ be $\mathcal{O}$-modules. We have the following isomorphisms

 \begin{enumerate}
     \item  $\mathcal V\otimes_\mathcal{O}\mathcal W\simeq  \mathcal W\otimes_\mathcal{O}\mathcal V$.
    \item $\mathcal{O}\otimes_\mathcal{O}\mathcal V\simeq \mathcal{V}$, and for any ideal $\mathcal{I}\subset \mathcal{O}$, Also, $\frac{\mathcal{O}}{\mathcal{I}}\otimes_\mathcal{O}\mathcal V\simeq \frac{\mathcal{V}}{\mathcal{I}\mathcal{V}}$
    \item $(\mathcal V\otimes_\mathcal{O}\mathcal W) \otimes_\mathcal{O}\mathcal Z\simeq \mathcal V\otimes_\mathcal{O}(\mathcal W \otimes_\mathcal{O}\mathcal Z)$.
\end{enumerate}
\end{proposition}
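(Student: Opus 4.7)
The plan is to use the universal property of Theorem \ref{th:tensors-univ} repeatedly: for each desired isomorphism I will construct bilinear maps in both directions, factor them through the appropriate tensor products to get $\mathcal{O}$-linear maps, and then invoke the uniqueness clause (item 2 of Theorem \ref{th:tensors-univ}) to see that their compositions are the identity. No direct manipulation of the defining quotients is needed; everything is done abstractly.

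For item~1, I would consider the $\mathcal{O}$-bilinear map $\sigma\colon \mathcal V\times\mathcal W\to \mathcal W\otimes_\mathcal{O}\mathcal V$ sending $(v,w)\mapsto w\otimes v$. By Theorem \ref{th:tensors-univ} it factors as $\bar{\sigma}\colon \mathcal V\otimes_\mathcal{O}\mathcal W\to \mathcal W\otimes_\mathcal{O}\mathcal V$. Building the analogous map $\bar{\sigma}'$ in the other direction, the compositions $\bar{\sigma}'\circ\bar{\sigma}$ and $\bar{\sigma}\circ\bar{\sigma}'$ agree with the identity on pure tensors and are therefore the identity by the uniqueness part of the universal property.

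For item~2, the action $\mathcal O\times\mathcal V\to\mathcal V$, $(f,v)\mapsto fv$, is $\mathcal{O}$-bilinear and so induces $\mu\colon \mathcal O\otimes_\mathcal{O}\mathcal V\to\mathcal V$; its inverse sends $v\mapsto 1\otimes v$, and the same uniqueness argument finishes it. For the quotient version, I note that the bilinear map $(\bar{f},v)\mapsto \overline{fv}\in\mathcal V/\mathcal I\mathcal V$ is well defined, hence factors through $(\mathcal O/\mathcal I)\otimes_\mathcal O\mathcal V$. Its inverse is induced by $\bar v\mapsto \bar 1\otimes v$, which is well defined because elements of $\mathcal I\mathcal V$ pull back to combinations of the form $f\otimes v$ with $f\in\mathcal I$, and these vanish since $f\otimes v=\bar f\cdot(1\otimes v)=0$ in $(\mathcal O/\mathcal I)\otimes_\mathcal O\mathcal V$.

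For item~3 — which I expect to be the main obstacle, since it requires iterating the universal property in a controlled way — I would proceed as follows. Fix $z\in\mathcal Z$ and consider the $\mathcal O$-bilinear map $\mathcal V\times\mathcal W\to \mathcal V\otimes_\mathcal O(\mathcal W\otimes_\mathcal O\mathcal Z)$, $(v,w)\mapsto v\otimes(w\otimes z)$. By Theorem \ref{th:tensors-univ} it factors through an $\mathcal{O}$-linear map $\phi_z\colon \mathcal V\otimes_\mathcal O\mathcal W\to \mathcal V\otimes_\mathcal O(\mathcal W\otimes_\mathcal O\mathcal Z)$. The assignment $(u,z)\mapsto \phi_z(u)$ is then $\mathcal{O}$-bilinear on $(\mathcal V\otimes_\mathcal O\mathcal W)\times\mathcal Z$, so factors through $(\mathcal V\otimes_\mathcal O\mathcal W)\otimes_\mathcal O\mathcal Z$ to give a single $\mathcal{O}$-linear map $\Phi\colon (\mathcal V\otimes_\mathcal O\mathcal W)\otimes_\mathcal O\mathcal Z\to \mathcal V\otimes_\mathcal O(\mathcal W\otimes_\mathcal O\mathcal Z)$ sending $(v\otimes w)\otimes z$ to $v\otimes(w\otimes z)$. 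The symmetric construction produces an $\mathcal{O}$-linear map $\Psi$ in the other direction. On pure tensors $\Psi\circ\Phi$ and $\Phi\circ\Psi$ act as the identity, and the uniqueness clause of Theorem \ref{th:tensors-univ}, applied twice at each level, forces them to equal the identity globally. The only delicate step is to check that the family $(\phi_z)_{z\in\mathcal Z}$ is itself $\mathcal O$-bilinear in $(u,z)$, which is immediate from the $\mathcal{O}$-linearity of $\phi_z$ in $u$ and from comparing the two factorizations when $z$ varies.
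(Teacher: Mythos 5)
Your proof is correct and follows the same route as the paper's: in both cases the isomorphisms are obtained by factoring the obvious bilinear maps through the tensor product via the universal property and checking that the resulting maps are mutually inverse on pure tensors. You simply supply the details (notably the two-step factorization for associativity and the well-definedness checks for the quotient case) that the paper's one-line sketch leaves to the reader.
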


 \begin{proof}
 For item 1, the map $(v,w)\in\mathcal{V}\times\mathcal{W}\mapsto w\otimes v \in\mathcal{V}\otimes_\mathcal{O}\mathcal{W}$ goes to quotient to \emph{the twist map} $v\otimes w\mapsto w\otimes v$  and give the isomorphism  whose inverse is $w\otimes v\in\mathcal{W}\otimes_\mathcal{O}\mathcal{V}\mapsto v\otimes w\in \mathcal{V}\otimes_\mathcal{O}\mathcal{W}$.
 
 For item 2, the map $(f,v)\in\mathcal{O}\times\mathcal{V}\rightarrow fv$ also induces an isomorphism whose inverse is $v\in\mathcal{V}\mapsto 1\otimes v\in\mathcal{O}\otimes_\mathcal{O}\mathcal{V}$. A similar map gives the second clause.

For item 3, the isomorphism is, obviously,  $(v_1\otimes v_2)\otimes v_3\mapsto v_1\otimes (v_2\otimes v_3)$.  This  is trivially extended to more $\mathcal{O}$-modules. 
 \end{proof}
  \begin{remark}
A similar  construction as in Theorem \ref{th:tensors-univ} can be done for a finite family of $\mathcal{O}$-modules $\mathcal V_1,\ldots, \mathcal V_r$, as in the case of bilinear maps. Then $\mathcal V_1\otimes_\mathcal{O}\cdots\otimes_\mathcal{O}\mathcal V_r$ is a universal object that factorizes $r$-multilinear maps, defined on $\mathcal V_1\times\cdots\times\mathcal V_r$. We naturally have, $$(\mathcal V_1\otimes_\mathcal{O}\mathcal{V}_2)\otimes_\mathcal{O}\mathcal V_3\simeq\mathcal V_1\otimes_\mathcal{O}(\mathcal{V}_2\otimes_\mathcal{O}\mathcal V_3)\simeq \mathcal V_1\otimes_\mathcal{O}\mathcal{V}_2\otimes_\mathcal{O}\mathcal V_3.$$So, in this thesis, we make no difference in how we denote the elements $(v_1\otimes v_2)\otimes v_3$, $v_1 \otimes(v_2\otimes v_3)$, or $v_1\otimes v_2 \otimes v_3$.
\end{remark}

 \begin{remark}\label{rmk:product-on-tensor} It is important to notice that if $\mathcal{C}$ and $\mathcal{C}'$ are graded $\mathcal O$-algebras,  then $\mathcal{C}\otimes_\mathcal{O}\mathcal{C}'$ is a graded $\mathcal O$-algebra with product \begin{equation}\label{produt-overtensors}
  (c_1\otimes c_1')(c_2\otimes c_2'):=(-1)^{|c_1'||c_2|}c_1c_2\otimes c_1'c_2'   
 \end{equation}
for homogeneous elements $c_1, c_2\in \mathcal{C}$ and $c_1', c_2'\in \mathcal{C}'$. Moreover, if  $\mathcal{C}$ and $\mathcal{C'}$ are unitary with units $1_C$ and $1_{C'}$, respectively, then the algebra $\mathcal{C}\otimes_\mathcal{O}\mathcal{C}$ is also unitary with unit $1_\mathcal{C}\otimes 1_\mathcal{C'}$. Here, to avoid explosion of notations, we denote the product of two elements $a, b$ by $ab$. Also, we will not make notational distinctions between the unit elements.
\end{remark}
 
\begin{convention}\label{conv:tensor-of-maps}
For $\Phi \colon \mathcal C \rightarrow \mathcal C'$ and
$\Psi \colon \mathcal C'' \rightarrow \mathcal C'''$  two homogeneous morphisms of $\mathbb{Z}$-graded $\mathcal{O}$-modules, then $\Phi\otimes\Psi : \mathcal C\otimes_\mathcal{O} \mathcal C''\rightarrow \mathcal C' \otimes_\mathcal{O}\mathcal C'''$ stands for the following morphism:
\begin{equation*}
    (\Phi\otimes\Psi)(x\otimes y) = (-1)^{\lvert \Psi\rvert\lvert x\rvert}\Phi(x)\otimes\Psi(y),\; \text{for all homogeneous}\; x \in \mathcal C , y \in \mathcal C''.\end{equation*}
 \end{convention}
 
 \begin{remark}
 In virtue of Remark \ref{rmk:product-on-tensor} and Convention \ref{conv:tensor-of-maps}, if  $\Phi \colon \mathcal C \rightarrow \mathcal C'$ and
$\Psi \colon \mathcal C'' \rightarrow \mathcal C'''$ are graded algebra morphisms, then $\Phi\otimes\Psi$ is also a graded algebra morphism w.r.t to the product defined in \eqref{produt-overtensors}: 
\begin{align*}
    \Phi\otimes\Psi((c_1\otimes c_1')(c_2\otimes c_2'))&=(-1)^{|c_1'||c_2|}    \Phi\otimes\Psi (c_1c_2\otimes c_1'c_2' )\\&=(-1)^{|c_1'||c_2|}\Phi(c_1c_2)\otimes\Psi(c_1'c_2')\\&=(-1)^{|c_1'||c_2|}\Phi(c_1)\Phi(c_2)\otimes\Psi(c_1')\Psi(c_2')\\&=\left(\Phi(c_1)\otimes\Phi(c_1')\right)\left(\Psi(c_2)\otimes\Psi(c_2')\right)\\&= \left(\Phi\otimes\Psi(c_1\otimes c_1')\right)\left( \Phi\otimes\Psi(c_2\otimes c_2')\right),\quad\text{since $\Phi, \Psi$ are of degree $0$.}
\end{align*}

\end{remark}
 
\section{The tensor algebra of a linear space} 
 Let $\mathcal{V}$ be an $\mathcal{O}$-module. For $k\in \mathbb{N}_0$, the \emph{$k$-th tensor power $T^k_\mathcal O \mathcal{V}$ over $\mathcal O$ of $\mathcal V$} (\emph{elements of polynomial-degree $k$}) is the tensor product of $\mathcal{V}$ with itself $k$ times, namely
$$ T^k_\mathcal O \mathcal{V}:=
\underbrace{ \mathcal V \otimes_\mathcal O \cdots \otimes_\mathcal O \mathcal V}_{\hbox{\small{$k$ times}}}
$$
we also adopt the convention $T^0_\mathcal O \mathcal{V}\simeq \mathcal{O}$, and $\mathcal{V}\simeq T^1_\mathcal{O}\mathcal{V}$. This leads to consider the $\mathcal{O}$-module constructed as the direct sum of the tensor powers, i.e. $$ T^\bullet_\mathcal O \mathcal V:= \bigoplus_{k =1}^\infty T^k_\mathcal O \mathcal{V}=\mathcal{O}\oplus \mathcal{V}\oplus (\mathcal{V}\otimes_\mathcal{O} \mathcal{V})\oplus(\mathcal{V}\otimes_\mathcal{O} \mathcal{V}\otimes_\mathcal{O} \mathcal{V})\oplus\cdots$$

\begin{proposition}
The $\mathcal{O}$-module $T^\bullet_\mathcal O \mathcal V$  comes equipped with a graded unital $\mathcal{O}$-algebra structure, which is induced by the canonical map $$T^k_\mathcal O \mathcal V\times T^\ell_\mathcal O \mathcal V \longrightarrow T^k_\mathcal O \mathcal V\otimes_\mathcal{O} T^\ell_\mathcal O \mathcal V\simeq T^{k+\ell}_\mathcal O \mathcal V,$$
that is extended by bilinearity to all $T_\mathcal{O}^\bullet\mathcal{V}$. We denote this product by $\otimes$. The unit element is $1\in\mathcal{O}\simeq T_\mathcal{O}^0\mathcal{V}$, in particular, we have $1\otimes v=1\cdot v=v$  for every $v\in \mathcal{V}$. 
\end{proposition}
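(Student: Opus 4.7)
\medskip

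The plan is to build the multiplication on $T^\bullet_\mathcal{O}\mathcal{V}$ degree by degree, then extend it by bilinearity, and finally check graded associativity and unitality. The heart of the argument is the natural isomorphism $T^k_\mathcal{O}\mathcal{V}\otimes_\mathcal{O} T^\ell_\mathcal{O}\mathcal{V}\simeq T^{k+\ell}_\mathcal{O}\mathcal{V}$, which I would justify using the associativity isomorphism $(\mathcal{V}_1\otimes_\mathcal{O}\mathcal{V}_2)\otimes_\mathcal{O}\mathcal{V}_3\simeq \mathcal{V}_1\otimes_\mathcal{O}(\mathcal{V}_2\otimes_\mathcal{O}\mathcal{V}_3)$ established in the preceding proposition, applied iteratively. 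Concretely, for fixed $k,\ell$, the $(k+\ell)$-multilinear map $\mathcal{V}^{\times(k+\ell)}\to T^{k+\ell}_\mathcal{O}\mathcal{V}$ sending $(v_1,\dots,v_{k+\ell})\mapsto v_1\otimes\cdots\otimes v_{k+\ell}$ factors through the bilinear pairing $T^k_\mathcal{O}\mathcal{V}\times T^\ell_\mathcal{O}\mathcal{V}\to T^{k+\ell}_\mathcal{O}\mathcal{V}$, giving the desired product on homogeneous components. One then extends by $\mathcal{O}$-bilinearity to a product on the direct sum $T^\bullet_\mathcal{O}\mathcal{V}$, noting that for any two elements, only finitely many homogeneous components are nonzero, so the extension is well-defined.

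Once the product is in hand, I would verify graded associativity: for $a\in T^k_\mathcal{O}\mathcal{V}$, $b\in T^\ell_\mathcal{O}\mathcal{V}$, $c\in T^m_\mathcal{O}\mathcal{V}$, both $(a\otimes b)\otimes c$ and $a\otimes (b\otimes c)$ land in $T^{k+\ell+m}_\mathcal{O}\mathcal{V}$, and they agree on elementary tensors because both arise from the canonical $(k+\ell+m)$-multilinear map. Since elementary tensors generate each $T^i_\mathcal{O}\mathcal{V}$, this extends by $\mathcal{O}$-linearity. For the unit, I would use the isomorphism $\mathcal{O}\otimes_\mathcal{O}\mathcal{V}\simeq \mathcal{V}$ (and its iteration $\mathcal{O}\otimes_\mathcal{O} T^k_\mathcal{O}\mathcal{V}\simeq T^k_\mathcal{O}\mathcal{V}$) to see that multiplication by $1\in\mathcal{O}=T^0_\mathcal{O}\mathcal{V}$ on either side acts as the identity on each homogeneous component, hence on all of $T^\bullet_\mathcal{O}\mathcal{V}$.

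The grading is built in: by construction the product restricts to a map $T^k_\mathcal{O}\mathcal{V}\times T^\ell_\mathcal{O}\mathcal{V}\to T^{k+\ell}_\mathcal{O}\mathcal{V}$, so the decomposition $T^\bullet_\mathcal{O}\mathcal{V}=\bigoplus_{k\geq 0}T^k_\mathcal{O}\mathcal{V}$ witnesses the grading of the algebra. I would close by remarking that the product $\otimes$ on $T^\bullet_\mathcal{O}\mathcal{V}$ coincides on elementary tensors with the notation $v_1\otimes\cdots\otimes v_n$ already used, so no notational ambiguity arises.

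The only step requiring any care is the well-definedness of the pairing $T^k_\mathcal{O}\mathcal{V}\times T^\ell_\mathcal{O}\mathcal{V}\to T^{k+\ell}_\mathcal{O}\mathcal{V}$: one must check that the map on elementary tensors respects all the relations defining each tensor product, which is a direct application of the universal property of $T^k_\mathcal{O}\mathcal{V}$ (and of $T^\ell_\mathcal{O}\mathcal{V}$) as iterated tensor products. Beyond this, the proof is entirely formal, the main obstacle being only the bookkeeping of checking $\mathcal{O}$-bilinearity and compatibility with all the identifications involved.
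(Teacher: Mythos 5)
Your proof is correct: the paper states this proposition without proof, treating it as a standard fact, and your argument — defining the product on homogeneous components via the canonical bilinear map composed with the isomorphism $T^k_\mathcal{O}\mathcal{V}\otimes_\mathcal{O} T^\ell_\mathcal{O}\mathcal{V}\simeq T^{k+\ell}_\mathcal{O}\mathcal{V}$, extending by bilinearity, and checking associativity and unitality on elementary tensors using the isomorphisms $(\mathcal{V}_1\otimes_\mathcal{O}\mathcal{V}_2)\otimes_\mathcal{O}\mathcal{V}_3\simeq\mathcal{V}_1\otimes_\mathcal{O}(\mathcal{V}_2\otimes_\mathcal{O}\mathcal{V}_3)$ and $\mathcal{O}\otimes_\mathcal{O}\mathcal{V}\simeq\mathcal{V}$ from the preceding proposition — is exactly the standard route and is sound. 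There is nothing to add beyond noting that you correctly identified the one point needing care (well-definedness of the pairing via the universal property), which is where the only real content lies.
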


 \subsection{$T_\mathcal{O}^\bullet\mathcal{V}$ as a co-algebra}
 In this section, we consider an  $\mathcal{O}$-module $\mathcal{V}$ which can be possibly graded. However, the construction is independent whether the module is graded or not.\\

 The notion of co-algebra structure is important in the context of the thesis, since it allows dealing with infinite dimension objects. It appears all along the thesis. For this concept, see Definition \ref{def:coproduct}. A natural way to construct a co-product (see e.g \cite{Loday-Vallette,Kassel}) structure \footnote{here ${\otimes}$ is an "outer" tensor product, it should not be confused with the internal tensor product in $T_\mathcal{O}^\bullet\mathcal{V}$ that denotes also its graded algebra structure.} on $T_\mathcal{O}^\bullet\mathcal{V}$ $$\Delta\colon T_\mathcal{O}^\bullet\mathcal{V}\longrightarrow T_\mathcal{O}^\bullet\mathcal{V}\bigotimes \,T_\mathcal{O}^\bullet\mathcal{V}$$  is to define it on elements $v\in\mathcal{V}\simeq T_\mathcal{O}^1\mathcal V$ of polynomial-degree $1$, and on the unit element $1\in \mathcal{O}\simeq T_\mathcal{O}^0\mathcal V$ and extend it to a (degree $0$) $\mathcal{O}$-algebra morphism to the whole $T_\mathcal{O}^\bullet \mathcal V$, namely for $v_1\otimes\cdots\otimes v_k\in T_\mathcal{O}^k\mathcal V$, $$\Delta(v_1\otimes\cdots\otimes v_k)=\Delta(v_1)\cdots \Delta(v_k).$$ The one which is defined by \begin{align*}
     v&\mapsto v\otimes 1 + 1 \otimes v\\1&\mapsto 1 \otimes1
 \end{align*}
endows $T_\mathcal{O}^\bullet\mathcal{V}$ with a co-associative (co-commutative) co-algebra structure, that is, it satisfies the axioms of Definition \ref{def:coproduct}. Indeed, the maps $$\Delta\otimes\mathrm{id}, \mathrm{id} \otimes \Delta\colon T_\mathcal{O}^\bullet\mathcal{V}\otimes T_\mathcal{O}^\bullet\mathcal{V} \rightarrow T_\mathcal{O}^\bullet\mathcal{V} \otimes T_\mathcal{O}^\bullet\mathcal{V}\otimes T_\mathcal{O}^\bullet\mathcal{V}$$ are algebra morphisms, so are $(\Delta \otimes \mathrm{id} )\circ\Delta$ and $(\mathrm{id} \otimes \Delta)\circ\Delta$, therefore it suffices to check that they coincide on $\mathcal{V}$. Let us check that. For $v\in \mathcal{V}$,

\begin{align*}
    (\Delta \otimes \mathrm{id} )\circ\Delta(v)&= \Delta \otimes \mathrm{id} (v\otimes 1 + 1 \otimes v)\\&=\Delta(v){\otimes} 1 + \Delta(1){\otimes} v\\&= (v\otimes 1 + 1 \otimes v)\otimes 1+ (1 \otimes 1)\otimes v\\&=(v\otimes 1)\otimes 1 + ( 1 \otimes v)\otimes 1+ (1 \otimes 1)\otimes v\\&=(\mathrm{id} \otimes \Delta)\circ\Delta(v), \quad\text{by associativity of $\otimes$.}
\end{align*}
By extending $\Delta$ to an algebra morphism, one gets explicit expressions as follows. For ${\displaystyle v\otimes w\in T^{2}_\mathcal{O}\mathcal{V}}$, one has 

\begin{align*}
    \Delta(v\otimes w)&=\Delta (v)\Delta (w)\\&=(v\otimes 1+1\otimes v) (w\otimes 1+1\otimes w)\\&=(v\otimes w)\otimes 1+v\otimes w+(-1)^{|v||w|}w\otimes v+1\otimes (v\otimes w).
\end{align*}We have used  Formula \eqref{produt-overtensors} and $ 1\otimes v=1\cdot v=v$, each time $\otimes$ is the tensor symbol in $T^\bullet_\mathcal{O}\mathcal{V}$. If  $\mathcal{V}$ is not graded, i.e., concentrated in degree zero, there is no sign $(-1)^{|v||w|}$. More generally, for every $v_1,\ldots, v_n\in \mathcal{V}$,

\begin{equation}
\Delta(v_1\otimes\cdots\otimes v_n)=\sum_{i=1}^{n-1}\epsilon(\sigma)\sum_{\sigma\in\mathfrak{S}({i,n-i})}v_{\sigma(1)}\otimes\cdots \otimes v_{\sigma(i)}\bigotimes v_{\sigma(i+1)}\otimes\cdots\otimes v_{\sigma(n)},\end{equation}
 where $\sigma\in\mathfrak{S}({i,n-i})$ is a $(i,n-i)$-shuffle and  $\epsilon(\sigma)$ is the Koszul sign associated to the $n$-uplet $v_1,\ldots, v_n$. See Section \ref{conventions} for more details.
\chapter{Homological algebra}\label{appendix:mod}

The goal of this chapter is to introduce some important results on homological algebra, which are used in this thesis. Although, these are classical notions in commutative algebra, I think it is important to make a brush-up on them for the readability of the thesis. Most of the notions of this chapter can be found in \cite{Weibel,zbMATH00704831,Matsumura,Hazewinkel-Michiel}, I also have learned a lot from the Lecture notes \cite{Caroline-Lassueur}.

\section{Complexes of modules}\label{app:complexes-modules}

We recall that a module over $\mathcal{O}$ is like a vector space in the sense that all the axioms still hold, except that the underlying field is replaced by $\mathcal{O}$. In this section, when $\mathcal{O}=\mathbb{K}$, the reader may replace "module" by "vector space".\\

For us, a \emph{$\mathbb{Z}$-graded module over $\mathcal{O}$} is a module $\mathcal{V}$ endowed with a direct sum decomposition $\mathcal{V}=\oplus_{i\in \mathbb Z} \mathcal{V}_i$ of $\mathcal{O}$-modules. We simply say "$\mathcal{O}$-modules" for graded modules which are concentrated in degree zero. For every $i\in \mathbb{Z}$, elements of $\mathcal{V}_i$ are said to be of \emph{degree} $i$. Let $\mathcal{V}$ and $\mathcal{W}$ be graded $\mathbb Z$-modules over $\mathcal{O}$, a $\mathcal{O}$-linear map $L\colon \mathcal{V}\rightarrow \mathcal{W}$ is said to be \emph{homogeneous \emph{or a} morphism of $\mathbb{Z}$-graded $\mathcal{O}$-modules of degree $|L|:=\ell\in \mathbb{Z}$}, if $L(\mathcal{V}_k)\subseteq \mathcal{V}_{k+\ell}$ for all $k\in \mathbb{Z}$. The set of  all $\mathcal{O}$-linear maps of degree $\ell$ from $\mathcal{V}$ to $\mathcal{W}$ form an $\mathcal{O}$-module that  we denote by $\mathrm{Hom}_\mathcal{O}^\ell(\mathcal{V},\mathcal{W})$. This implies that $\mathrm{Hom}_\mathcal{O}(\mathcal{V},\mathcal{W}):=\bigoplus_{\ell\in\mathbb{Z}}\mathrm{Hom}_\mathcal{O}^\ell(\mathcal{V},\mathcal{W})$ is a graded module. Also,  this graded module comes equipped with natural graded Lie bracket given by the graded commutator, namely \begin{equation}
    [F,G]:=F\circ G -(-1)^{|F||G|}G\circ F
\end{equation} 
for any homogeneous elements $F,G\in\mathrm{Hom}_\mathcal{O}(\mathcal{V},\mathcal{W})$. It is easily checked that the bracket satisfies

\begin{enumerate}
             \item $[F,G]=-(-1)^{|F||G|}[G,F]$\quad (\emph{graded skew-symmetry})
             \item $(-1)^{|F||H|}[F, [G, H]] + (-1)^{|H||G|}[H, [F, G]]+(-1)^{|G||F|}[G, [H, F]]=0,$\quad (\emph{graded Jacobi identity})
\end{enumerate}
         for homogeneous $\mathcal{O}$-linear maps $F, G, H\in\mathrm{Hom}_\mathcal{O}(\mathcal{V},\mathcal{W})$.


\begin{definition}
A \emph{complex of $\mathcal{O}$-modules} $(\mathcal{V}_\bullet,\dd)$ is a graded module $\mathcal{V}=\bigoplus_{i\in \mathbb{Z}} \mathcal{V}_i$ together with a  squared to zero $\mathcal{O}$-linear map $\dd\colon \mathcal{V}\rightarrow \mathcal{V}$ of degree $+1$ called the \emph{differential map}. In other words, it is a sequence 

\begin{equation}\label{eq:complex}
     \quad  \cdots {\longrightarrow}\mathcal{V}_{i-1} \stackrel{\dd}{\longrightarrow} \mathcal{V}_i \stackrel{\dd}{\longrightarrow}\mathcal{V}_{i+1}{\longrightarrow}\cdots 
\end{equation}

$\mathcal{O}$-linear maps such that  $\dd^2=\dd\circ\dd=0$. 
\begin{enumerate}
\item For every $i\in\mathbb Z$, elements of $\mathcal{V}_i$ are called \emph{cochains of degree $i$}. 
    \item We say  that \eqref{eq:complex} is  \emph{bounded below/above} if $\mathcal{V}_i=0$ for $i\leq n/i\geq n$, for some $n\in \mathbb{Z}$.
    
    \item A \emph{subcomplex} of a complex $(\mathcal{V}, \dd)$ a collection  of $\mathcal{O}$-modules $\left(\mathcal{V}'_i\subseteq \mathcal{V}_i\right)_{i\in \mathbb{Z}}$ such that $\dd(\mathcal{V}'_i)\subset \mathcal{V}_{i+1}$ for each $i\in\mathbb Z$. In particular $(\mathcal{V}', \dd'=\dd|_{\mathcal{V}'})$ is a complex of $\mathcal{O}$-modules.
    
    In particular, it induces a complex $\left(\mathcal{V}/\mathcal{V}',\, \overline{\dd}\right)$ called the \emph{quotient complex} where for $i\in \mathbb{Z}$,\, $\left(\mathcal{V}/\mathcal{V}'\right)_i:=\mathcal{V}_i/\mathcal{V}_i'$ and $$\dd\colon \mathcal{V}_i/\mathcal{V}_i'\longrightarrow \mathcal{V}_{i+1}/\mathcal{V}_{i+1}'$$ is determined uniquely by the universal property of the quotient.
\end{enumerate}
\end{definition}
\begin{remark}
Let $(\mathcal{V}, \dd)$  be a complex of $\mathcal{O}$-modules. Denote by $\dd_i\colon \mathcal{V}_i\rightarrow \mathcal{V}_{i+1}$ the restriction of $\dd$ to $\mathcal{V}_i$. Then $\dd^2=0$ means that $\dd_i\circ \dd_{i-1}$ for each $i\in \mathbb{Z}$. In particular, $\mathrm{Im}\dd_{i-1}\subseteq\ker \dd_{i}$ for every $i\in\mathbb Z$. This leads us to the next definition.
\end{remark}

\begin{definition}
   Let $(\mathcal{V}, \dd)$ be a complex of $\mathcal{O}$-modules. For each $i\in \mathbb{Z}$,
   
   \begin{enumerate}
       \item a \emph{$i$-cocycle of $(\mathcal{V},\dd)$} is an  element of $\ker(\mathcal{V}_i \stackrel{\dd}{\longrightarrow}\mathcal{V}_{i+1})$;
       \item a \emph{$i$-coboundary of $(\mathcal{V},\dd)$} is an element of $\mathrm{Im} (\mathcal{V}_{i-1} \stackrel{\dd}{\longrightarrow}\mathcal{V}_{i})$;
       \item the \emph{$i$-th cohomology group of $(\mathcal{V},\dd)$} is the quotient $H^i(\mathcal{V}):=\dfrac{\ker(\mathcal{V}_i \stackrel{\dd}{\longrightarrow}\mathcal{V}_{i+1})}{\mathrm{Im} (\mathcal{V}_{i-1} \stackrel{\dd}{\longrightarrow}\mathcal{V}_{i})}$.
   \end{enumerate}
   The complex $(\mathcal{V}, \dd)$ is \emph{exact at $i$} if $H^i(\mathcal{V})=\{0\}$. It is said to be \emph{exact} or \emph{acyclic} if it is exact at every degree $i\in\mathbb{Z}$.
\end{definition}

\begin{definition}
Let $(\mathcal{V},\dd^\mathcal{V})$ and $(\mathcal{W},\dd^\mathcal{W})$ be complexes of $\mathcal{O}$-modules.  

\begin{enumerate}
    \item 
A \emph{chain map} or \emph{complex of $\mathcal{O}$-modules morphism} between the complexes $(\mathcal{V},\dd)$ and $(\mathcal{W},\dd)$  is a $\mathcal{O}$-linear map $L\colon \mathcal{V}\rightarrow \mathcal{W}$ of degree $0$, which commutes with the differentials, that is a  collection of $\mathcal{O}$-linear map $L_\bullet\colon \mathcal{V}_{\bullet}\longrightarrow \mathcal{W}_{\bullet}$, such that the following diagram commutes

\begin{equation}\label{}
        \xymatrix{
\cdots\ar[r]&\mathcal{V}_{i}\ar[d]_{L_{i}} \ar[r]^{\dd^\mathcal{V} }&\mathcal{V}_{i+1}\ar[d]^{L_{i+1}} \ar[r]&\cdots \\
\cdots\ar[r]&\mathcal{W}_{i} \ar[r]^{\dd^\mathcal{W}}&\mathcal{W}_{i+1}\ar[r]&\cdots}\end{equation}

i.e. $\dd^\mathcal{W}\circ L_{i}= L_{i+1}\circ \dd^{\mathcal{V}}$ for every $i\in \mathbb{Z}$.

\item A \emph{homotopy} between two chain maps $K_\bullet,L_\bullet\colon \mathcal{V}_{\bullet}\longrightarrow \mathcal{W}_{\bullet}$ is the datum $\{h_i\colon \mathcal{V}_{i}\longrightarrow \mathcal{W}_{i-1}\}_{i\geq 1}$ of $\mathcal{O}$-linear maps, that satisfies 
for each $i\in \mathbb{Z}$, $K_i- L_i= \dd^{\mathcal{W}}\circ h_i +h_{i-1} \circ \dd^{\mathcal{V}}$. These maps are displayed in the following diagram as

\begin{equation}\label{}
        \xymatrix{
\cdots\ar[r]&\mathcal{V}_{i-1}\ar[d]_{K_{i-1}- L_{i-1}}\ar[r]^{\dd^{\mathcal{V}}}&\mathcal{V}_{i}\ar@{.>}[ld]_{h_i} \ar[d]<5pt>_{K_i- L_i} \ar[r]^{\dd^{\mathcal{V}} }&\mathcal{V}_{i+1}\ar[d]^{K_{i+1}- L_{i+1}}\ar@{.>}[ld]_{h_{i+1}} \ar[r]&\cdots \\
\cdots\ar[r]&\mathcal{W}_{i-1}\ar[r]^{\dd^{\mathcal{W}}}&\mathcal{W}_{i} \ar[r]^{\dd^{\mathcal{W}}}&\mathcal{W}_{i+1}\ar[r]&\cdots}
\end{equation}

\begin{enumerate}
\item When there is a homotopy between two chain maps, $L_\bullet,K_\bullet\colon \mathcal{V}_{\bullet}\longrightarrow \mathcal{W}_{\bullet}$, we often  write $L\sim K$. One can check that $\sim$ is indeed an equivalence relation.
    \item Two complexes of $\mathcal{O}$-modules $(\mathcal{V}, \dd^\mathcal{V})$ and $(\mathcal{W}, \dd^\mathcal{W})$  are said to be \emph{homotopy equivalent}, if there exist chain maps $L_\bullet\colon \mathcal{V}_{\bullet}\longrightarrow \mathcal{W}_{\bullet}$ and $K_\bullet\colon \mathcal{W}_{\bullet}\longrightarrow \mathcal{V}_{\bullet}$ such that $L\circ K \sim \mathrm{id}_{\mathcal{W}_\bullet}$ and $K\circ L \sim \mathrm{id}_{\mathcal{V}_\bullet}$. Likewise, one can check that homotopy equivalence between complexes of $\mathcal O$-modules is an equivalence relation.

\end{enumerate}
\end{enumerate}
\end{definition}

\begin{remark}
Note that in particular,  if $L_\bullet\colon \mathcal{V}_{\bullet}\longrightarrow \mathcal{W}_{\bullet}$ is a chain map that is an $\mathcal{O}$-linear isomorphism, then its inverse is also a chain map. Thus, they define a homotopy equivalence between $(\mathcal{V}, \dd^\mathcal{V})$ and $(\mathcal{W}, \dd^\mathcal{W})$.
\end{remark}

\begin{remark}
Let $L_\bullet\colon \mathcal{V}_{\bullet}\longrightarrow \mathcal{W}_{\bullet}$ be a chain map. For every $i\in\mathbb{Z}$, we have $$L\left(\ker(\mathcal{V}_i \stackrel{\dd^\mathcal{V}}{\longrightarrow}\mathcal{V}_{i+1})\right)\subseteq \ker(\mathcal{W}_i \stackrel{\dd^\mathcal{W}}{\longrightarrow}\mathcal{W}_{i+1}),\,\; \text{and}\,\; L\left(\mathrm{Im}(\mathcal{V}_{i-1} \stackrel{\dd^\mathcal{V}}{\longrightarrow}\mathcal{V}_{i})\right)\subseteq \mathrm{Im}(\mathcal{W}_{i-1} \stackrel{\dd^\mathcal{W}}{\longrightarrow}\mathcal{W}_{i}).$$ $L$ induces naturally a well-defined $\mathcal{O}$-linear map $H(L)\colon H(\mathcal{V})\rightarrow H(\mathcal{W}), [v]\mapsto [L(v)]$: for every $v\in \ker(\mathcal{V}_i \stackrel{\dd^\mathcal{V}}{\longrightarrow}\mathcal{V}_{i+1})$ and $v_0\in \mathcal{V}_{i-1}$
\begin{align*}
    H(L)([v+\dd^\mathcal{V}(v_0)])&=[L(v+\dd^\mathcal{V}(v_0))]=[L(v)+\dd^\mathcal{W}\circ L(v_0))]=[L(v)]=H(L)([v]).
\end{align*}
Notice that
\begin{enumerate}
    \item homotopic chain maps induce the same map on cohomology groups.
    
    \item if the complexes of $\mathcal{O}$-modules $(\mathcal{V}, \dd^\mathcal{V})$ and $(\mathcal{W}, \dd^\mathcal{W})$ are homotopy equivalent through $L$, then $H(L)\colon H(\mathcal{V})\rightarrow H(\mathcal{W})$ is an isomorphism.
\end{enumerate}
\end{remark}
\begin{proposition}\label{prop:sub-complex-exact}
If $(\mathcal{V}',\dd')$ is an acyclic subcomplex of a complex $(\mathcal{V},\dd)$, then $$H^\bullet(\mathcal{V}/\mathcal{V}')\simeq{H^\bullet(\mathcal{V})}.$$
\end{proposition}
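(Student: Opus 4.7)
My plan is to show that the canonical projection $\pi\colon (\mathcal V,\dd)\longrightarrow (\mathcal V/\mathcal V',\bar\dd)$, which is clearly a chain map of degree zero, induces an isomorphism on cohomology. Everything will be done by hand via diagram chasing, using only the definition of cohomology and the acyclicity of $(\mathcal V',\dd')$; I will avoid invoking the long exact sequence in cohomology since it does not appear earlier in the text.

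The first step is surjectivity of $H^\bullet(\pi)$. Take a cohomology class in $H^i(\mathcal V/\mathcal V')$ represented by some $v\in \mathcal V_i$ with $\bar\dd[v]=0$, i.e.\ $\dd v\in \mathcal V'_{i+1}$. Since $\dd(\dd v)=0$ and $(\mathcal V',\dd')$ is acyclic, the element $\dd v$ is a $\dd'$-coboundary: there exists $v'\in \mathcal V'_i$ with $\dd' v'=\dd v$, i.e.\ $\dd v'=\dd v$. Then $v-v'\in\mathcal V_i$ is a $\dd$-cocycle whose image through $\pi$ agrees with $[v]$ modulo $\mathcal V'_i$. Hence every class in $H^i(\mathcal V/\mathcal V')$ comes from a class in $H^i(\mathcal V)$.

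The second step is injectivity. Suppose $v\in \mathcal V_i$ is a $\dd$-cocycle whose image $[v]\in H^i(\mathcal V/\mathcal V')$ vanishes. This means there exists $w\in \mathcal V_{i-1}$ with $v-\dd w\in \mathcal V'_i$. The element $u:=v-\dd w$ lies in $\mathcal V'_i$ and satisfies $\dd' u=\dd u=\dd v-\dd^2 w=0$, so it is a $\dd'$-cocycle. By acyclicity of $(\mathcal V',\dd')$, there is $z\in \mathcal V'_{i-1}$ with $\dd' z=u$, and hence $v=\dd(w+z)$ is a $\dd$-coboundary in $\mathcal V$. This shows $H^\bullet(\pi)$ has trivial kernel.

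I do not expect any serious obstacle here: the only nontrivial ingredient is the acyclicity of $\mathcal V'$, which is used exactly once in each direction of the argument (to correct $v$ so that $\dd v=0$ in the surjectivity step, and to trivialise the cocycle $v-\dd w\in \mathcal V'_i$ in the injectivity step). Once written out, the argument is purely formal, so I would present the proof in essentially the two paragraphs above, without any additional preparation.
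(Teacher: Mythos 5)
Your proof is correct and follows essentially the same route as the paper: both arguments show that the canonical projection induces an isomorphism on cohomology by a direct diagram chase, using the acyclicity of $(\mathcal{V}',\dd')$ exactly once in each direction (to correct a representative into a genuine $\dd$-cocycle for surjectivity, and to trivialise the cocycle $v-\dd w\in\mathcal{V}'$ for injectivity). No changes are needed.
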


\begin{proof}
The projection  $p_\bullet\colon \mathcal{V}\longrightarrow \mathcal{V}_\bullet/\mathcal{V}'_\bullet$ is a chain map from $(\mathcal{V},\dd)$ and $(\mathcal{V}/\mathcal{V}', \overline{\dd})$. We claim  $p_\bullet$ induces an isomorphism on the cohomology groups. To show injectivity of  $H(p)\colon H^\bullet(\mathcal{V})\longrightarrow H^\bullet(\mathcal{V}/\mathcal{V}')$, let $e\in \ker \dd$ such that $p(e)=\overline{e}\in \mathrm{Im}\,\overline{\dd}$, i.e. there is $u\in\mathcal{V}_{|e|-1}$ such that $\overline{e}=\overline{\dd}(\overline{u})$. It follows that $ e-\dd u\in \mathcal{V}'_{|e|}$. This implies,
\begin{equation}
    \dd'(e-\dd u)=\dd(e-\dd u)=0.
\end{equation}
By Exactness of $(\mathcal{V}',\dd')$,
\begin{align*}
&\Longrightarrow e-\dd u= \dd'(v),\hspace{1cm}\text{(for some $v\in \mathcal{V}'_{|e|-1}\subset  \mathcal{V}_{|e|-1}$)}\\&\Longrightarrow\; e=\dd(u+v)\in \mathrm{Im}\,\dd.
\end{align*}
This proves injectivity. Surjectivity goes as follows: let $e\in \mathcal{V}_{i}$ such that $\overline{\dd}(\overline{e})=0$. \begin{align*}
    \overline{\dd}(\overline{e})=0&\Longrightarrow\dd(e)\in \mathcal{V}_{i+1}'
\end{align*}We have,  $\dd'(\dd(e))=\dd\circ\dd(e)=0$. By exactness of $(\mathcal{V}',\dd')$, we can write $\dd(e)=\dd'(v)\; \text{for some}\; v\in \mathcal{V}'_{i}$. This implies that, $e-v=u\in \ker\dd$. Therefore, $$H(p)([u])= [p(u)]=[p(e)-p(v))]=[p(e)]=[\overline{e}].$$ This completes the proof.


\end{proof}

\subsubsection{The Chevalley-Eilenberg complex}
The following example of complex is important. We have used it several times in this thesis. Especially in Chapter \ref{chap:obstruction-theory} to define obstruction classes. Let us recall the definition. We refer the reader e.g. to \cite{Wagemann} for more details.\\

Let $(\mathfrak g, \lb_\mathfrak{g})$ be a Lie algebra and $V$ a $\mathbb{K}$-vector space. 

\begin{definition}
 A \emph{representation or action} of  $\mathfrak{g}$ on $V$ is a Lie algebra morphism \begin{equation}\label{eq:represetation}
     \nu\colon \mathfrak (\mathfrak g, \lb_\mathfrak{g})\longrightarrow (\mathrm{End}(V), \lb)
 \end{equation}
 where $(\mathrm{End}(V), \lb)$ denotes the vector space $\mathrm{End}(V)$ of endomorphisms of $V$ together with the Lie bracket $\lb$  which is the commutator: $[\alpha, \beta]=\alpha\circ \beta -\beta\circ \alpha, \; \forall\,\alpha, \beta\in \mathrm{End}(V)$. In this case, $V$ is then called a \emph{$\mathfrak g$-module} (w.r.t to $\nu$). In the literature, the action $\nu$ is often denoted by $\cdot$.\\

\noindent
Equation \eqref{eq:represetation} means that for all $x,y\in \mathfrak{g}$, $$\nu([x,y]_\mathfrak{g})=[\nu(x),\nu(y)]=\nu(x)\circ\nu(y)-\nu(y)\circ\nu(x).$$
\end{definition}
\begin{example}Here are two important examples of $\mathfrak{g}$-modules that we often use.
\begin{itemize}
\item \emph{The adjoint action}: $\mathfrak g$ acts on itself by the Lie bracket, i.e.  $\nu(x) ( y) := [x, y]_\mathfrak{g}$ for all $x, y \in \mathfrak g$. Indeed, $\nu\colon \mathfrak g \longrightarrow \mathrm{End}(\mathfrak g),\, x\longmapsto [x,\cdot\,]_\mathfrak g=:\mathrm{ad}_x$ is a Lie algebra morphism:
\begin{align*}
    \mathrm{ad}_{[x,y]_\mathfrak g}&=[[x,y]_\mathfrak g,\cdot\,]_\mathfrak g\\&=-[[y,\cdot\,]_\mathfrak g,x]_\mathfrak g-[[\cdot\,,x]_\mathfrak g,y]_\mathfrak g, \qquad(\text{by identity of Jacobi})\\&=[x,[y,\cdot\,]_\mathfrak g]_\mathfrak g-[y,[x,\cdot\,]_\mathfrak g]_\mathfrak g\\&=\mathrm{ad}_x\circ\mathrm{ad}_y-\mathrm{ad}_y\circ\mathrm{ad}_x=[\mathrm{ad}_x,\mathrm{ad}_y].
\end{align*}
\item \emph{The trivial action}: i.e. $\mathbb{K}$ is a $\mathfrak{g}$-module through the action $\nu(x)(\lambda) := 0$ for all $x\in \mathfrak{g}$ and all $\lambda \in  \mathbb{K}$.
\end{itemize}`
\end{example}
Now let us recall the definition of the Chevalley-Eilenberg complex. 
\begin{definition}Let $\nu\colon\mathfrak g \longrightarrow \mathrm{End}(V)$ be an action of $\mathfrak g$ on $V$. The \emph{Chevalley-Eilenberg complex of $\mathfrak g$ valued in $V$} is the  complex 

\begin{equation}\label{eq:CE-complex}
     \quad  \cdots {\longrightarrow}\mathrm{Hom}_\mathbb K(\wedge^{i-1}\mathfrak g,V) \stackrel{\dd^{\mathrm{CE}}}{\longrightarrow} \mathrm{Hom}_\mathbb K(\wedge^{i}\mathfrak g,V) \stackrel{\dd^{\mathrm{CE}}}{\longrightarrow}\mathrm{Hom}_\mathbb K(\wedge^{i+1}\mathfrak g,V){\longrightarrow}\cdots 
\end{equation}
whose $i$-th cochains  space is defined  to be
$\mathrm{Hom}_\mathbb K(\wedge^i\mathfrak g,V)$, the vector space of $i$-linear skew-symmetric linear maps from $\underbrace{\mathfrak g\times \cdots \times\mathfrak{g}}_{\text{$i$-times}}$ to $V$, under the convention $\mathrm{Hom}_\mathbb K(\wedge^0\mathfrak g,V)\simeq V$. The differential map is defined for $\mu\in\mathrm{Hom}_\mathbb K(\wedge^{i}\mathfrak g,V) $ by

\begin{align*}
    \left(\dd^{\mathrm{CE}}\mu\right)(x_1 ,\ldots,x_{i+1})&=\sum_{k=1}^{i+1}(-1)^{k-1}\nu(x_k)(\mu(x_1 ,\ldots,\widehat{x}_k, \ldots,x_{i+1}))\\&+\sum_{1\leq k<l\leq i+1}(-1)^{k+l}\mu([x_k,x_l]_\mathfrak{g},x_1,\ldots,\widehat{x}_{kl},\ldots, x_{i+1})
\end{align*}
where $\widehat{x}_k$ means ${x}_k$ is missing in the $k$-th place also $\widehat{x}_{kl}$  means that $x_k, x_l$ are missing in the $k$-th and $l$-th place respectively.
\end{definition}

\begin{remark}\phantom{}
\begin{enumerate} 
\item It follows from the identity of Jacobi that $\dd^{CE}\circ \dd^{CE}=0$.
\item For all  $x,y, z\in \mathfrak{g}$ we have, 
\begin{itemize}
    \item $\dd^{\mathrm CE}(x)=\nu(x)$.
    \item For $\mu\in\mathrm{Hom}_\mathbb K(\mathfrak g,V)$, $$\left(\dd^{CE}\mu\right)(x,y)=\nu(x)(\mu(y))-\nu(y)(\mu(x))-\mu([x,y]_\mathfrak{g}).$$
    \item For $\eta\in\mathrm{Hom}_\mathbb K(\wedge^2\mathfrak g,V)$, 
    \begin{align*}
        \left(\dd^{\mathrm CE}\eta\right)(x,y,z)&=\nu(x)(\eta(y,z))-\nu(y)(\eta(x,z))+ \nu(z)(\eta(x,y))\\&\quad-\eta([x,y]_\mathfrak{g},y)+\eta([x,z]_\mathfrak{g},y)-\eta([y,z]_\mathfrak{g},x).
    \end{align*}
\end{itemize}
\end{enumerate}
\end{remark}
\begin{remark}
 When $\mathfrak{g}$ is of finite dimension, the Chevalley-Eilenberg complex \eqref{eq:CE-complex} is canonically isomorphic to the complex $(\wedge^ \bullet\mathfrak{g}^*\otimes V,\; \dd)$ and for $\xi=\xi_1\wedge\cdots\wedge \xi_k\in \wedge^k\mathfrak{g}^*$,
  \begin{equation}\label{eq:CE-new-formula}
     \dd(\xi\otimes v)=\sum_{i=1}^{n}(\xi\wedge \xi_i)\otimes (\xi_i\cdot v) - \dd_\mathfrak{g}(\xi)\otimes v
 \end{equation}
 where $\xi_1,\ldots, \xi_n$ is a basis of $\mathfrak g$. In the formula \eqref{eq:CE-new-formula}, $\dd_\mathfrak{g}$ is the Chevalley-Eileberg differential of $\mathfrak{g}$ w.r.t the trivial action on $\mathbb{K}$.
\end{remark}

\subsection{Operations on complexes}
We have  used and adapted the following lemma many times in this thesis, e.g. Section \ref{interpretaion-bicomplex}.
\begin{lemma}[]\label{lemma:hom-complex}
Let $(\mathcal{V}_\bullet, \dd^\mathcal{V})$ and $(\mathcal{W}_\bullet, \dd^\mathcal{W})$ be complexes of $\mathcal{O}$-modules. Then, the pair $\left(\mathrm{Hom}^\bullet_\mathcal{O}(\mathcal{V},\mathcal{W}\right), \partial)$ is a complex of $\mathcal{O}$-modules, where the differential map is given by 

\begin{equation}
    \partial(F):=\dd^\mathcal{W}\circ F-(-1)^{|F|}F\circ \dd^\mathcal{V},
\end{equation}
for all homogeneous element $F\in \mathrm{Hom}_\mathcal{O}(\mathcal{V},\mathcal{W})$.
\end{lemma}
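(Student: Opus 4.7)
The statement has two contents: first, that $\partial$ is a well-defined $\mathcal{O}$-linear map of degree $+1$ on the graded module $\mathrm{Hom}^{\bullet}_{\mathcal{O}}(\mathcal{V},\mathcal{W})$; and second, that $\partial^{2}=0$. Both are purely formal verifications, but let me outline them cleanly.

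\textbf{Step 1: degree and $\mathcal{O}$-linearity.} Given a homogeneous $F\in\mathrm{Hom}^{\ell}_{\mathcal{O}}(\mathcal{V},\mathcal{W})$, the map $\dd^{\mathcal{W}}\circ F$ shifts the degree by $\ell+1$, and so does $F\circ \dd^{\mathcal{V}}$. Hence $\partial(F)\in\mathrm{Hom}^{\ell+1}_{\mathcal{O}}(\mathcal{V},\mathcal{W})$. The $\mathcal{O}$-linearity of $\partial(F)$ follows from the $\mathcal{O}$-linearity of $F$, $\dd^{\mathcal{V}}$ and $\dd^{\mathcal{W}}$. This shows $\partial$ sends the graded module to itself, increasing degree by $1$.

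\textbf{Step 2: computing $\partial^{2}$.} For a homogeneous $F$ of degree $\ell$, I would compute directly:
\begin{align*}
\partial(\partial(F))
&= \dd^{\mathcal{W}}\circ\partial(F) - (-1)^{\ell+1}\,\partial(F)\circ \dd^{\mathcal{V}} \\
&= \dd^{\mathcal{W}}\circ\bigl(\dd^{\mathcal{W}}\circ F - (-1)^{\ell}F\circ \dd^{\mathcal{V}}\bigr) - (-1)^{\ell+1}\bigl(\dd^{\mathcal{W}}\circ F - (-1)^{\ell}F\circ \dd^{\mathcal{V}}\bigr)\circ \dd^{\mathcal{V}} \\
&= (\dd^{\mathcal{W}})^{2}\circ F - (-1)^{\ell}\dd^{\mathcal{W}}\circ F\circ \dd^{\mathcal{V}} - (-1)^{\ell+1}\dd^{\mathcal{W}}\circ F\circ \dd^{\mathcal{V}} + (-1)^{2\ell+1}F\circ (\dd^{\mathcal{V}})^{2}.
\end{align*}
Using $(\dd^{\mathcal{W}})^{2}=0$ and $(\dd^{\mathcal{V}})^{2}=0$, the first and last terms vanish. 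The middle two terms add up to zero because $(-1)^{\ell}+(-1)^{\ell+1}=0$. Hence $\partial^{2}(F)=0$ for every homogeneous $F$, and by linearity $\partial^{2}\equiv 0$ on the whole graded module.

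\textbf{Conclusion and remark on difficulty.} Putting Steps 1 and 2 together produces the desired complex $(\mathrm{Hom}^{\bullet}_{\mathcal{O}}(\mathcal{V},\mathcal{W}),\partial)$. There is essentially no obstacle here: the only subtle point is keeping track of the sign conventions so that the cross terms in $\partial^{2}$ cancel — which is exactly why the sign $-(-1)^{|F|}$ is built into the definition of $\partial$. This is the standard internal-Hom differential in the category of chain complexes, and the calculation above is the only thing one needs to check.
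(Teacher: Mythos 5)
Your proof is correct and follows essentially the same route as the paper's: observe that $\partial$ has degree $+1$, then expand $\partial^2(F)$ and note that the two cross terms $\dd^{\mathcal{W}}\circ F\circ\dd^{\mathcal{V}}$ cancel by the sign convention while the remaining terms vanish since $(\dd^{\mathcal{V}})^2=(\dd^{\mathcal{W}})^2=0$. Nothing is missing.
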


\begin{proof}
The $\mathcal{O}$-linear map, $\partial\colon\mathrm{Hom}_\mathcal{O}(\mathcal{V},\mathcal{W})\rightarrow\mathrm{Hom}_\mathcal{O}(\mathcal{V},\mathcal{W})$, is clearly of degree $+1$, since the maps $\dd^\mathcal{V}$ and $\dd^\mathcal{W}$ are. Moreover,  for every $F\in\mathrm{Hom}_\mathcal{O}(\mathcal{V},\mathcal{W})$

\begin{align*}
    \partial^2(F)&=\partial(\dd^\mathcal{W}\circ F-(-1)^{|F|}F\circ \dd^\mathcal{V})\\&=\underbrace{\dd^\mathcal{W}\circ\dd^\mathcal{W}\circ F}_{=0} -(-1)^{|F|+1}\cancel{\dd^\mathcal{W}\circ F\circ\dd^\mathcal{V}} -(-1)^{|F|}\left(\cancel{\dd^\mathcal{W}\circ F\circ\dd^\mathcal{V}}-(-1)^{|F|+1}\underbrace{F\circ\dd^\mathcal{V}\circ \dd^\mathcal{V}}_{=0}\right)\\&=0.
\end{align*}
\end{proof}

\begin{remark}It's worth it to notice that
\begin{enumerate}
    \item the cocycles $F$ of  $\left(\mathrm{Hom}^\bullet_\mathcal{O}(\mathcal{V},\mathcal{W}), \partial\right)$ are those that satisfy $\dd^\mathcal{V}\circ F=(-1)^{|F|}F\circ \dd^\mathcal{W}$. In particular, the chain maps between $(\mathcal{V},\dd^\mathcal V)$ and $(\mathcal{W},\dd^\mathcal{W})$ are the $0$-cocyles of $\left(\mathrm{Hom}^\bullet_\mathcal{O}(\mathcal{V},\mathcal{W}\right), \partial)$.
    
    \item two chain maps $F_\bullet,G_\bullet\colon \mathcal{V}_{\bullet}\longrightarrow \mathcal{W}_{\bullet}$ are homotopic if and only if $G-F$ is $0$-coboundary of $\left(\mathrm{Hom}^\bullet_\mathcal{O}(\mathcal{V},\mathcal{W}\right), \partial)$, that is, there exists a $\mathcal{O}$-linear map $H\in \mathrm{Hom}^{-1}_\mathcal{O}(\mathcal{V},\mathcal{W})$ of degree $-1$ such that $$F- G= \dd^{\mathcal{W}}\circ H +H \circ \dd^{\mathcal{V}}.$$
    
    \item when $(\mathcal{V}_\bullet, \dd^\mathcal{V})=(\mathcal{W}_\bullet, \dd^\mathcal{W})$, we have  $\left(\mathrm{Hom}^\bullet_\mathcal{O}(\mathcal{V},\mathcal{V}), \partial= [\dd^\mathcal{V},\cdot\,]\right)$.
\end{enumerate} 
\end{remark}

\subsubsection{Direct sum and tensor product of complexes}
Let $(\mathcal V_\bullet,\dd^{\mathcal V})$ and $(\mathcal W_\bullet,\dd^{\mathcal W})$ be complexes of $\mathcal O$-modules. Then, the tensor product $\mathcal V_\bullet\otimes_\mathcal O\mathcal W_\bullet$ together with the grading  $$(\mathcal V\otimes_\mathcal O\mathcal W)_k=\bigoplus_{{red}{i+j=k}} \mathcal V_i\otimes_\mathcal O\mathcal W_j$$ for $k\in\mathbb{Z}$, comes equipped with a differential map classically defined by $$\partial=\dd^{\mathcal V}\otimes \mathrm{id} + \mathrm{id}\otimes \dd^{\mathcal W},$$ namely,
 $$ \partial(v\otimes w)=\dd^{\mathcal V}(v)\otimes w + (-1)^{|v|}v\otimes \dd^{\mathcal W}(w),$$ for all homogeneous elements $v,w\in\mathcal V\otimes_\mathcal O\mathcal W$, is complex of $\mathcal O$-modules. The operator $\partial$ is indeed of degree $+1$. It is easily checked that $\partial^2=0$.

\subsubsection{Bi-complex}
\begin{definition}
 A  \emph{bi-complex} or \emph{double complex} is a collection of $\mathcal{O}$-modules $\mathcal V=(\mathcal V_{i,j})_{i,j\in \mathbb{Z}}$ together with two families of $\mathcal O$-linear maps
 \begin{enumerate}
     \item  $d_{i,j}^{h}\colon \mathcal V_{i,,j}\rightarrow  \mathcal V_{i+1,j}$,\; such that\; $d_{i,j}^{h}\circ d_{i-1,j}^{h}=0,\quad \text{for}\;i,j \in \mathbb Z$\;\;\text{called \emph{horizontal differential map}}
     \item $d_{i,j}^{v}\colon \mathcal V_{i,j}\rightarrow  \mathcal V_{i,j+1}$,\; such that\; $d_{i,j}^{v}\circ d_{i,j-1}^{v}=0,\quad\text{for all}\;i,j \in \mathbb Z$\;\;\text{called  \emph{vertical differential map}}
 \end{enumerate}
 that obey  for all $i,j \in \mathbb Z$ the identity
 $$d_{i,j+1}^{h}\circ d_{i,j}^{v}=d_{i,j}^{v}\circ d_{i+1,j}^{h}.$$
In this thesis we only consider first quadrant bi-complexes that  is, $\mathcal{V}_{i,j}=0$ for all $i\in \mathbb{Z}_{\leq 0}$ and $j\in \mathbb{ N}_0$. These can be represented as the commutative diagram

 \begin{equation}\label{app:bi-cpmplex}
     { \hbox{$
	\begin{array}{ccccccccccc}
		& & \vdots & & \vdots & & \vdots & & 
		\\ 
		& & \uparrow & & \uparrow & & \uparrow & & 
		\\ 
		\cdots& \rightarrow & \mathcal{V}_{i,j} & \overset{d^h}{\rightarrow} & \mathcal{V}_{i,j}
		& \overset{d^h}{\rightarrow} & \mathcal{V}_{i,j} & \rightarrow & 0
			\\ 
		& & d^v\uparrow & & d^v\uparrow & & d^v\uparrow & & 
		\\ 
		\cdots& \rightarrow & \mathcal{V}_{i,j} & \overset{d^h}{\rightarrow} & \mathcal{V}_{i,j}
		& \overset{d^h}{\rightarrow} & \mathcal{V}_{i,j} & \rightarrow& 0
		\\ 
		& & d^v\uparrow & & d^v\uparrow & & d^v\uparrow & &  
		\\ 
		\cdots& \rightarrow & \mathcal{V}_{i,j} & \overset{d^h}{\rightarrow} & \mathcal{V}_{i,j} 
		& \overset{d^h}{\rightarrow}& \mathcal{V}_{i,j} & \rightarrow & 0
	\\
	& & \uparrow & & \uparrow & & \uparrow & & 
	\\ 
	& & 0 & & 0 & & 0 & &  \\
	& & \hbox{\small{\texttt{"-$2$ column"}}} & & \hbox{\small{\texttt{"-$1$ column"}}} & & \hbox{\small{\texttt{"last column"}}} & &  	
\end{array}
$}}
 \end{equation}
\end{definition}

\noindent
One associate to the bi-complex \eqref{app:bi-cpmplex} the so-called \emph{total complex} which is defined by the anti-diagonals of  \eqref{app:bi-cpmplex}, namely $\left(T_r:=\bigoplus _{i+j=r}\mathcal{V}_{i,j}\right)_{r}$ with \emph{total differential} $D\colon T_r\rightarrow T_{r+1}$ defined by $$D(\tau_{ij}):=d^h(\tau_{ij}) - (-1)^r d^v(\tau_{ij}),\;\;\text{for}\;\;\tau_{ij}\in \mathcal{V}_{i,j}.$$
Indeed, \begin{align*}
    D^2&=(d^h - (-1)^{r+1} d^v)\circ (d^h - (-1)^r d^v)\\&=\underbrace{(d^h)^2}_{=0}-\cancel{(-1)^rd^h\circ d^v}- \cancel{(-1)^{r+1} d^v\circ d^h} -\underbrace{(d^v)^2}_{=0}\\&=0.
\end{align*}

\begin{proposition}[Acyclic Assembly Lemma \cite{Weibel}]\label{Acyclic-Assembly-Lemma}
Let $(\mathcal V_{i,j})_{i,j\in \mathbb{Z}}$ be a first quadrant bi-complex like in the notation above such that the rows are exact, then the total complex $(T_\bullet, D)$ is exact.
\end{proposition}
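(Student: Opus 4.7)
The plan is to prove the Acyclic Assembly Lemma by an elementary diagonal chase, using row exactness to successively eliminate the components of a total cocycle one antidiagonal position at a time.

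First, I would fix $r \in \mathbb{Z}$, take a total cocycle $\tau = \sum_{i+j=r} \tau_{i,j} \in T_r$ with $D\tau = 0$, and aim to produce $\sigma \in T_{r-1}$ with $D\sigma = \tau$. Because the bi-complex is first quadrant, the antidiagonal $\{(i,j) : i+j=r,\ i,j \geq 0\}$ is finite, so $\tau$ has only finitely many nonzero components. Unpacking $D\tau = 0$ coordinate by coordinate gives the family of relations
\begin{equation*}
d^h(\tau_{i-1,j}) = (-1)^{r}\, d^v(\tau_{i,j-1}), \qquad i+j = r+1,
\end{equation*}
with the convention $\tau_{a,b}=0$ whenever $a<0$ or $b<0$.

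Next I would set up an induction on $j_0 := \min\{j\geq 0 : \tau_{r-j,j} \neq 0\}$, the smallest row index appearing in the support of $\tau$; set $i_0 = r-j_0$. The cocycle relation at position $(i_0+1,j_0)$ collapses to $d^h(\tau_{i_0,j_0}) = (-1)^{r} d^v(\tau_{i_0+1,j_0-1}) = 0$, the right-hand side vanishing by minimality of $j_0$ (or because $j_0=0$). If $i_0 \geq 1$, row exactness of the $j_0$-th row yields $\sigma_0 \in \mathcal{V}_{i_0-1,j_0}$ with $d^h(\sigma_0) = \tau_{i_0,j_0}$. The case $i_0=0$ is the only boundary subtlety: row exactness at column zero, together with $\mathcal{V}_{-1,j}=0$, forces $d^h$ to be injective there, so $d^h(\tau_{0,r})=0$ gives $\tau_{0,r}=0$, contradicting $j_0=r$; hence this case does not occur.

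Then I would form $\tau' := \tau - D\sigma_0$. Since $D\sigma_0 = d^h(\sigma_0) + (-1)^{r} d^v(\sigma_0)$ has its only nonzero components in positions $(i_0,j_0)$ and $(i_0-1,j_0+1)$, the new cocycle $\tau'$ has no component in $\mathcal{V}_{i_0,j_0}$ and differs from $\tau$ only by modifying $\tau_{i_0-1,j_0+1}$; in particular the minimum row index of its support satisfies $j_0' \geq j_0+1$. Since the antidiagonal has at most $r+1$ positions, iterating this construction terminates in finitely many steps and produces $\sigma := \sigma_0 + \sigma_1 + \cdots \in T_{r-1}$ with $D\sigma = \tau$.

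The main obstacle is purely a matter of bookkeeping: one has to verify that at every step the residual element is still a total cocycle whose support has strictly moved upward in the vertical direction, and that the boundary case $i_0 = 0$ really is excluded. Both points follow at once from $D^2=0$ together with the explicit two-term expression for $D\sigma_k$, so the induction is genuinely finite and the argument closes. A more sophisticated alternative would be to invoke the spectral sequence of the column filtration, whose $E_1$-page computes row cohomology and hence vanishes by hypothesis — but for a self-contained proof in the thesis, the diagonal chase above is cleaner.
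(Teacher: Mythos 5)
Your strategy --- peel off the component of the cocycle sitting in the lowest nonzero row using exactness of that row, and push the support upward --- is the right one, and it is exactly the chase the thesis runs in its applications (see the proofs of Proposition \ref{bicomplex} and Theorem \ref{thm:longitudinal}); for the present statement the thesis gives no proof and defers to Weibel. The gap in your write-up is the termination step, and it comes from a misreading of the indexing. You place the bicomplex in $\{i\geq 0,\ j\geq 0\}$, so that each antidiagonal $i+j=r$ has at most $r+1$ positions and the chase stops after finitely many steps. But the bicomplexes of the thesis (see the diagram \eqref{app:bi-cpmplex} and the main example $\mathfrak{Page}^{(k)}$, which lives on $\mathbb{Z}^{-}\times\mathbb{N}_0$) sit in the upper \emph{left} quadrant: rows have the form $\cdots\to\mathcal V_{-2,j}\to\mathcal V_{-1,j}\to\mathcal V_{0,j}\to 0$, extending infinitely to the left, and there are infinitely many rows. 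Every antidiagonal is then infinite, each step of your chase moves the correction one row up and one column to the left, and nothing forces the process to stop. Worse, with the total complex taken as the direct sum $T_r=\bigoplus_{i+j=r}\mathcal V_{i,j}$ as literally written, the statement is in fact false at this level of generality: one can build an upper-left-quadrant bicomplex with exact rows and a degree-zero cocycle all of whose primitives have infinitely many nonzero components. So ``the antidiagonal has at most $r+1$ positions'' cannot be what closes the argument.

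The repair is cheap but has to be said. At step $k$ your chase produces an element $\sigma_k$ concentrated in the single bidegree $\left(r-1-j_0-k,\ j_0+k\right)$, so the formal sum $\sigma=\sum_{k\geq 0}\sigma_k$ has exactly one component in each bidegree of the antidiagonal $i+j=r-1$ and is therefore a well-defined element of the \emph{product} $\prod_{i+j=r-1}\mathcal V_{i,j}$, where it does satisfy $D\sigma=\tau$. This proves acyclicity of the product total complex, which is Weibel's actual statement for a (translated) half-plane complex with exact rows, and which is what the thesis uses in practice: the primitives constructed in Proposition \ref{bicomplex} and Theorem \ref{thm:longitudinal} are infinite sums $P_1+P_2+\cdots$ with one term per column. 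So either the $\bigoplus$ in the definition of $T_r$ must be read as a product, or a hypothesis bounding the number of rows must be added; your argument as written only covers the latter situation. One last small point: the spectral sequence you propose as an alternative is the one attached to the filtration by \emph{rows} (the column filtration has $E_1$ equal to column cohomology), and for this quadrant the row filtration is unbounded in each total degree, so its convergence is precisely the issue one is trying to avoid --- it is not an easy way out here.
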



\noindent
\textbf{Mapping cone}. Let $(\mathcal{V}, \dd^\mathcal{V})$ and $(\mathcal{W}, \dd^\mathcal{W})$ be two complexes and $L\colon \mathcal{V}_\bullet\rightarrow \mathcal{W}_\bullet$ a chain map. The \emph{mapping cone} is the complex $(\mathcal{C}, \partial)$ whose degree $i$ is given by $\mathcal{V}_{i-1}\oplus \mathcal{W}_{i}$ and  whose differential is defined as 

$$\partial=\begin{pmatrix}
-\dd^\mathcal{V}&0\\-L& \dd^\mathcal{W}
\end{pmatrix}
$$
That is, the differential is given on  elements $(v,w)\in \mathcal{V}\oplus \mathcal{W}$ by $$\partial(v,w)=(-\dd^\mathcal{V}(v), \dd^\mathcal{W}(w)-L(v)).$$ The mapping cone $(\mathcal{C}, \partial)$ is exact if and only if $L\colon \mathcal{V}_\bullet\rightarrow \mathcal{W}_\bullet$ is  a quasi-isomorphism, \cite{Weibel}, Cor. 1.5.4 p.19.


\section{Resolutions of a module}\label{app:proj-res}
\begin{definition}
   An $\mathcal{O}$-module $\mathcal{V}$ is said to be \emph{projective} if it fulfills the following: given a $\mathcal{O}$-linear map $L\colon\mathcal{V}\colon\rightarrow \mathcal{Z}$, every surjective $\mathcal{O}$-linear map   $J\colon\mathcal{W}\rightarrow \mathcal{Z}$  admits a  $\mathcal{O}$-linear map $\widetilde{L}\colon\mathcal{V}\rightarrow \mathcal{W}$ such that the following diagram commutes
   \begin{equation}
       \xymatrix{&\mathcal{V}\ar@{.>}[ld]\ar[d]&\\\mathcal{W}\ar@{->}[r]&\mathcal{Z}\ar[r]&0}
   \end{equation}
\end{definition}
\begin{example}
Free modules are projective modules (see e.g. Proposition 5.1.2 of \cite{Hazewinkel-Michiel}).
\end{example}

\begin{definition}
Let $\mathcal A$ be a $\mathcal O$-module. A \emph{free/projective resolution} (or \emph{resolution by free/projective modules}) of $\mathcal{A}$ is an exact complex  $(\mathcal{V}_\bullet, \dd)$ 
\begin{equation}\label{eq:complex2}
     \quad  \cdots {\longrightarrow}\mathcal{V}_{-i-1} \stackrel{\dd}{\longrightarrow} \mathcal{V}_{-i} \stackrel{\dd}{\longrightarrow}\mathcal{V}_{-i+1}\stackrel{\dd}{\longrightarrow}\cdots \stackrel{\dd}{\longrightarrow} \mathcal{V}_{-2} \stackrel{\dd}{\longrightarrow} \mathcal{V}_{-1}\stackrel{\pi}{\longrightarrow}\mathcal{A}\longrightarrow 0
\end{equation}
such that the $\mathcal{V}_{-i}$'s are free/projective modules.
\end{definition}

\begin{remark}
The complex \eqref{eq:complex2} may be of infinite length. When the length is finite, we say that we have a \emph{finite} free/projective resolution. In that case,  the sequence 
\begin{equation}
     \quad  0 {\longrightarrow}\mathcal{V}_{-n} \stackrel{\dd}{\longrightarrow} \mathcal{V}_{-n+1} \stackrel{\dd}{\longrightarrow}\mathcal{V}_{-n+2}\stackrel{\dd}{\longrightarrow}\cdots \stackrel{\dd}{\longrightarrow} \mathcal{V}_{-2} \stackrel{\dd}{\longrightarrow} \mathcal{V}_{-1}\stackrel{\pi}{\longrightarrow}\mathcal{A}\longrightarrow 0
\end{equation}

is exact in every degree. In particular, the  sequence $0 \rightarrow \mathcal{V}_{-n}\stackrel{\dd}{\rightarrow} \mathcal{V}_{n-1}$ is exact at $-n$. For instance, the map $\mathcal{V}_{-n}\stackrel{\dd}{\rightarrow}\mathcal{V}_{n-1}$ injective. Thus, $\mathcal{V}_{-n}\simeq \mathrm{Im}(\mathcal{V}_{-n}\stackrel{\dd}{\rightarrow}\mathcal{V}_{n-1})$. By exactness, we have that $\mathcal{V}_{-n}\simeq \mathrm{Im}(\mathcal{V}_{-n}\stackrel{\dd}{\rightarrow}\mathcal{V}_{n-1})=\ker(\mathcal{V}_{-n+1}\stackrel{\dd}{\rightarrow}\mathcal{V}_{-n})$.
\end{remark}

\begin{proposition}\label{prop:free-resol}
Every $\mathcal{O}$-module $\mathcal{A}$ admits a free/projective resolution.
\end{proposition}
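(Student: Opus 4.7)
The plan is to build the resolution degree by degree by iteratively taking free covers of syzygies. Since free modules are projective, constructing a free resolution will simultaneously prove the projective case.

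First, I would establish the fundamental lemma: every $\mathcal{O}$-module $\mathcal{M}$ is the quotient of a free $\mathcal{O}$-module. The construction is the standard one: pick any generating set $S \subset \mathcal{M}$ (for instance, $S = \mathcal{M}$ itself if no smaller set is preferred), form the free $\mathcal{O}$-module $\mathcal{O}^{(S)}$ on the set $S$, and define the surjection $\mathcal{O}^{(S)} \twoheadrightarrow \mathcal{M}$ by sending each basis element $e_s$ to $s \in \mathcal{M}$ and extending by $\mathcal{O}$-linearity. This is surjective by the choice of $S$ as generators.

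Second, I would construct the resolution recursively. Apply the fundamental lemma to $\mathcal{A}$ to obtain a free $\mathcal{O}$-module $\mathcal{V}_{-1}$ together with a surjection $\pi \colon \mathcal{V}_{-1} \twoheadrightarrow \mathcal{A}$. The kernel $\ker \pi \subset \mathcal{V}_{-1}$ is again an $\mathcal{O}$-module, so the fundamental lemma yields a free $\mathcal{O}$-module $\mathcal{V}_{-2}$ and a surjection $\mathcal{V}_{-2} \twoheadrightarrow \ker \pi$. Composing with the inclusion $\ker \pi \hookrightarrow \mathcal{V}_{-1}$ produces a map $\dd \colon \mathcal{V}_{-2} \to \mathcal{V}_{-1}$ whose image is exactly $\ker \pi$, ensuring exactness at $\mathcal{V}_{-1}$. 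Inductively, given $\dd \colon \mathcal{V}_{-i} \to \mathcal{V}_{-i+1}$ already built, apply the lemma to $\ker(\dd \colon \mathcal{V}_{-i} \to \mathcal{V}_{-i+1})$ to obtain a free module $\mathcal{V}_{-i-1}$ with a surjection onto this kernel, and let $\dd \colon \mathcal{V}_{-i-1} \to \mathcal{V}_{-i}$ be the composition with the inclusion.

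Third, I would verify exactness. By construction, the image of $\dd \colon \mathcal{V}_{-i-1} \to \mathcal{V}_{-i}$ coincides with $\ker(\dd \colon \mathcal{V}_{-i} \to \mathcal{V}_{-i+1})$ at every stage, and the image of $\pi$ is all of $\mathcal{A}$; this is precisely exactness of the complex
\begin{equation*}
\cdots \xrightarrow{\dd} \mathcal{V}_{-i-1} \xrightarrow{\dd} \mathcal{V}_{-i} \xrightarrow{\dd} \cdots \xrightarrow{\dd} \mathcal{V}_{-1} \xrightarrow{\pi} \mathcal{A} \longrightarrow 0.
\end{equation*}
Since each $\mathcal{V}_{-i}$ is free by construction (hence projective by the cited Proposition 5.1.2 of \cite{Hazewinkel-Michiel}), this is simultaneously a free and a projective resolution.

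There is honestly no serious obstacle here: the whole argument rests on the one fact that any module admits a generating set and that the free module on any set exists, both of which are immediate from the axioms. The only point that could deserve a remark is that the resolution produced this way is typically of infinite length and uses very large free modules (when $S = \mathcal{M}$); finer control (finite rank, finite length, minimality) requires additional assumptions on $\mathcal{O}$ or $\mathcal{A}$ such as Noetherianity or the Hilbert Syzygy theorem, and those refinements are addressed elsewhere in the thesis rather than in this existence statement.
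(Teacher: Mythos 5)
Your proof is correct and follows essentially the same route as the paper's: choose generators to realize $\mathcal{A}$ as a quotient of a free module, then iteratively cover the successive kernels by free modules, with exactness holding by construction since each image equals the preceding kernel. The paper phrases the first step via the first isomorphism theorem rather than directly via surjections, but this is only a cosmetic difference.
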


\begin{proof}
Firstly, notice that every module is isomorphic to a quotient of a free module: to see this, choose a set of generators $\{v_i\in \mathcal{A} \mid  i\in I\}$ 
of the module $\mathcal{A}$ so that $\displaystyle{\mathcal{V}=\sum_{i\in I}\mathcal{O}v_i}$. The $\mathcal{O}$-linear map $$\pi\colon \bigoplus_{i\in I}\mathcal{O}\rightarrow \mathcal{A},\; (f_j)_{j\in{J}\subseteq I}\mapsto \sum_{j\in J}f_jv_j, \; \text{$J$ is finite}$$ is surjective. By the first isomorphism theorem, it follows that $\mathcal{A}\simeq (\bigoplus_{i\in I}\mathcal{O})/\ker\pi$. Now put $\bigoplus_{i\in I}\mathcal{O}=:\mathcal{V}_{-1}$. This yields an exact sequence
$$0\longrightarrow \ker\pi\longhookrightarrow\mathcal{V}_{-1}\stackrel{\pi}{\longrightarrow} \mathcal{A}{\longrightarrow} 0.$$ But $\ker\pi$ does not  need to be a free $\mathcal{O}$-module, but there is a free $\mathcal{O}$-module $\mathcal{V}_{-2}$ together with a surjective $\mathcal{O}$-linear map $\xymatrix{\mathcal{V}_{-2}\ar@{->>}[r]^{\pi_1}&\ker \pi}$  such that $\mathcal{V}_{-2}/\ker \pi_1\simeq \ker\pi$. This is added to the previous sequence as follows

$$\xymatrix{0\ar[r]&\ker {\dd_1}\; \ar@{^{(}->}[r]&\mathcal{V}_{-2}\ar@{->>}[r]^{\pi_1}\ar@{.>>}@/_2pc/[rr]_{\dd_1}&\ker \pi\;\ar@{^{(}->}[r]& \mathcal{V}_{-1}\ar@{->>}[r]^\pi&\mathcal{A}}.$$ Once again, $\ker {\dd_1}$ does not  need to be a free $\mathcal{O}$-module, therefore we can continue the procedure. Inductively, assume that we have constructed a $\mathcal{O}$-linear map, $\dd_n \colon \mathcal{V}_{-n-1}\rightarrow \mathcal{V}_{-n}$, for $n\geq 2$. Thus, there exists a free $\mathcal{O}$-module $\mathcal{V}_{-n-2}$ together with a surjective $\mathcal{O}$-linear map $\pi_{n+1}\colon \mathcal{V}_{-n-2}\rightarrow \mathcal{V}_{-n-1}$ such that
$\mathcal{V}_{-n-2}/\ker \pi_{n+1}\simeq \ker \dd_n$. Joining this to the previous sequence, one get

\begin{equation}\label{eq:free-resol}
    \xymatrix{0\ar[r]&\ker {\dd_{n+1}}\; \ar@{^{(}->}[r]&\mathcal{V}_{-n-2}\ar@{->>}[r]^{\pi_{n+1}}\ar@{.>>}@/_2pc/[rr]_{\dd_{n+1}}&\ker \dd_n\;\ar@{^{(}->}[r]& \mathcal{V}_{-n-1}\ar@{->>}[r]^{\dd_n}&\mathcal{V}_{-n}\ar[r]&\cdots \ar[r]& \mathcal{V}_{-2}\ar[r]^{\dd_1}&\mathcal{V}_{-1}\ar@{->>}[r]^\pi&\mathcal{A}}
\end{equation}
By construction, we have $\ker (\dd_n)=\mathrm{Im}(\dd_{n+1})$. Therefore, we have built an exact sequence up to length $n+1$. This completes the proof.

One shall notice that the process \eqref{eq:free-resol} may be continued forever without reaching a free kernel.
\end{proof}

Here is an important operation on complexes. The following Proposition states the localization  in the sense of item \ref{def:localisation} of Section \ref{LR-construction} preserves exactness. More precisely,
\begin{proposition}[\cite{stacks-project}, Section 10.9  or \cite{A.Gathmann}]\label{prob:exact-localization}
Let $\mathcal{A}$ be an $\mathcal{O}$-module and $S\subset{\mathcal{O}}$ a multiplicative subset. For any free resolution of $\mathcal{A}$ $$\xymatrix{\cdots\ar[r]^\dd&\mathcal{V}_{-3}\ar[r]^\dd&\mathcal{V}_{-2}\ar[r]^\dd&\mathcal{V}_{-1}\ar[r]^\pi& \mathcal{A}},$$ the complex

$$\xymatrix{\cdots\ar[r]^{S^{-1}\dd}&S^{-1}\mathcal{V}_{-3}\ar[r]^{S^{-1}\dd}&S^{-1}\mathcal{V}_{-2}\ar[r]^{S^{-1}\dd}&S^{-1}\mathcal{V}_{-1}\ar[r]^{S^{-1}\pi}& S^{-1}\mathcal{A}}$$is a free resolution of $S^{-1}\mathcal{A}$ by $S^{-1}\mathcal{O}$-modules.
\end{proposition}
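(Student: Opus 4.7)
The plan is to prove the proposition by verifying two independent facts: first, that localization transforms free $\mathcal{O}$-modules into free $S^{-1}\mathcal{O}$-modules; second, that localization preserves exactness of the complex. Both are classical consequences of the definition of $S^{-1}\mathcal{V} = S^{-1}\mathcal{O} \otimes_\mathcal{O} \mathcal{V}$ recalled in item~2 of Section~\ref{LR-construction}.

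First I would handle the freeness. If $\mathcal{V}_{-i} = \bigoplus_{j \in J_i} \mathcal{O}\, e_j$ is free over $\mathcal{O}$, then the natural $S^{-1}\mathcal{O}$-linear map $\bigoplus_{j \in J_i} S^{-1}\mathcal{O}\, e_j \to S^{-1}\mathcal{V}_{-i}$ sending $\left(\frac{f_j}{s_j}\right)_{j} \mapsto \sum_j \frac{f_j}{s_j} e_j$ is an isomorphism (using that localization commutes with arbitrary direct sums, since the defining equivalence relation on $S \times (\bigoplus_j \mathcal{V}_j)$ splits componentwise). Hence each $S^{-1}\mathcal{V}_{-i}$ is free over $S^{-1}\mathcal{O}$.

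The main step is exactness. For each index $i$, I would take a class $\frac{v}{s} \in S^{-1}\mathcal{V}_{-i}$ satisfying $S^{-1}\dd\!\left(\frac{v}{s}\right) = \frac{\dd v}{s} = 0$ in $S^{-1}\mathcal{V}_{-i+1}$. By the definition of the equivalence relation defining the localization, this means that there exists $u \in S$ with $u\,\dd v = 0$ in $\mathcal{V}_{-i+1}$, i.e.\ $\dd(uv) = 0$ by $\mathcal{O}$-linearity of $\dd$. The exactness of the original resolution at degree $-i$ then supplies $w \in \mathcal{V}_{-i-1}$ with $uv = \dd(w)$. Consequently
\[
\frac{v}{s} \;=\; \frac{uv}{us} \;=\; \frac{\dd(w)}{us} \;=\; S^{-1}\dd\!\left(\frac{w}{us}\right),
\]
which shows $\ker(S^{-1}\dd) \subseteq \mathrm{Im}(S^{-1}\dd)$. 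The reverse inclusion is automatic from $\dd \circ \dd = 0$. The same argument, with $\dd$ replaced by $\pi$ in the final spot, shows surjectivity of $S^{-1}\pi$ and exactness at $S^{-1}\mathcal{V}_{-1}$.

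I expect no serious obstacle: the only mildly delicate point is the bookkeeping around the auxiliary element $u \in S$ used to clear denominators, which is a standard manipulation in the theory of localization. The result can also be phrased abstractly by saying that $S^{-1}\mathcal{O}$ is a flat $\mathcal{O}$-module, so that tensoring the exact sequence $\cdots \to \mathcal{V}_{-2} \to \mathcal{V}_{-1} \to \mathcal{A} \to 0$ with $S^{-1}\mathcal{O}$ preserves exactness; however I prefer to give the direct element-wise argument above so that the proof is self-contained and does not require introducing flatness.
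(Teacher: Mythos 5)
Your argument is correct and complete: the freeness claim follows from localization commuting with direct sums, and the element-wise clearing of denominators via an auxiliary $u\in S$ is exactly the standard proof of exactness of the localization functor given in the cited references (\cite{stacks-project}, Section 10.9, and \cite{A.Gathmann}). The paper itself does not spell out a proof but simply defers to those references, so your self-contained version matches the intended argument; the only cosmetic remark is that in the displayed chain $\ker(S^{-1}\dd)\subseteq\mathrm{Im}(S^{-1}\dd)$ you could note explicitly that the identification $\tfrac{v}{s}=\tfrac{uv}{us}$ uses the defining equivalence relation, but this is immediate.
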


It is well-known that a submodule of a finitely generated module is not finitely generated in general. The following assertion guarantees this for finitely generated modules over Noetherian rings (see Proposition 1.4 of \cite{zbMATH00704831}, Page 28).

\begin{proposition}\label{prop:noetherian}
If $\mathcal{O}$ is Noetherian as a ring, then all submodules of
an $\mathcal{O}$-module $\mathcal{V}$ are finitely generated if and only if $\mathcal{V}$ is finitely generated.
\end{proposition}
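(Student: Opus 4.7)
The easy direction is immediate: if every submodule of $\mathcal{V}$ is finitely generated, then in particular $\mathcal{V}$ itself (viewed as a submodule of itself) is finitely generated. All the content lies in the converse, so I would focus the argument there.

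The plan for the converse is to reduce the statement to the classical fact that, in the category of $\mathcal{O}$-modules, the Noetherian property (every submodule is finitely generated, equivalently the ascending chain condition on submodules) is preserved under passage to submodules, quotients, and extensions. Concretely, I would first observe that for any short exact sequence $0 \to \mathcal{V}' \to \mathcal{V} \to \mathcal{V}'' \to 0$, the module $\mathcal{V}$ is Noetherian if and only if both $\mathcal{V}'$ and $\mathcal{V}''$ are. The nontrivial half of this lemma (Noetherianness of $\mathcal{V}'$ and $\mathcal{V}''$ implies Noetherianness of $\mathcal{V}$) is proved by taking an arbitrary submodule $\mathcal{W} \subseteq \mathcal{V}$, using finite generation of $\mathcal{W} \cap \mathcal{V}'$ and of the image of $\mathcal{W}$ in $\mathcal{V}''$, and then lifting these generators back to finitely many generators of $\mathcal{W}$.

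Having this stability lemma, I would proceed by induction on $n$ to show that the free module $\mathcal{O}^n$ is Noetherian for every $n \ge 1$. The base case $n=1$ is exactly the assumption that $\mathcal{O}$ is Noetherian as a ring (since ideals of $\mathcal{O}$ are the same as submodules of $\mathcal{O}$). For the inductive step, the split short exact sequence $0 \to \mathcal{O} \to \mathcal{O}^n \to \mathcal{O}^{n-1} \to 0$ combined with the stability lemma gives that $\mathcal{O}^n$ is Noetherian.

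Finally, since $\mathcal{V}$ is finitely generated, choose generators $v_1, \ldots, v_n$; the map $\mathcal{O}^n \twoheadrightarrow \mathcal{V}$ sending $(f_1, \ldots, f_n) \mapsto \sum f_i v_i$ is surjective, so $\mathcal{V}$ is a quotient of $\mathcal{O}^n$. By the stability lemma (quotient case), $\mathcal{V}$ is Noetherian, i.e.\ every submodule is finitely generated. The only step requiring any care is the proof of the stability lemma for extensions, in particular the lifting argument producing finitely many generators of $\mathcal{W}$ from those of $\mathcal{W} \cap \mathcal{V}'$ and of the image of $\mathcal{W}$ in $\mathcal{V}''$; everything else is a formal consequence.
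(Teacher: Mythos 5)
Your proof is correct. Note that the paper does not actually prove this proposition: it only cites it (Proposition 1.4 of Eisenbud), so there is no in-text argument to compare against. Your argument is the standard one and is complete: the stability lemma for extensions holds exactly as you describe, since for $\mathcal{W}\subseteq\mathcal{V}$ one lifts generators $\bar u_1,\dots,\bar u_l$ of the image of $\mathcal{W}$ in $\mathcal{V}''$ to elements $u_j\in\mathcal{W}$, and then any $w\in\mathcal{W}$ satisfies $w-\sum_j f_j u_j\in\mathcal{W}\cap\mathcal{V}'$ for suitable $f_j\in\mathcal{O}$, so $\mathcal{W}$ is generated by the $u_j$ together with finitely many generators of $\mathcal{W}\cap\mathcal{V}'$. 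The induction on $\mathcal{O}^n$ and the passage to the quotient $\mathcal{O}^n\twoheadrightarrow\mathcal{V}$ then finish the converse, and the forward direction is, as you say, trivial.
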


The following theorem assures existence of free resolution of finite length in a special case.
\begin{theorem}[Hilbert Syzygy Theorem]\label{app:Syzygies} \cite{PeevaIrena, EisenbudSyzygies} Assume $\mathcal{O}=\mathbb{C}[x_1,\ldots, x_d]$. Any finitely generated (graded) $\mathcal{O}$-module $\mathcal{A}$ admits a finite graded free resolution by finitely generated $\mathcal{O}$-modules
$$\xymatrix{\cdots\ar[r]^\dd&\mathcal{V}_{-3}\ar[r]^\dd&\mathcal{V}_{-2}\ar[r]^\dd&\mathcal{V}_{-1}\ar[r]^\pi& \mathcal{A}}$$
of length  $N \leq  d+1$.
\end{theorem}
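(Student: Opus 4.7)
The plan is to produce the resolution by a standard ``minimal resolution'' argument and then bound its length via the Koszul complex computing $\mathrm{Tor}$ over the base field. Since the theorem is attributed to Hilbert and its proof is well known, I shall only indicate the main steps rather than spell out every routine verification.

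First I would construct, with no bound on length, a graded free resolution of $\mathcal{A}$ by finitely generated free $\mathcal{O}$-modules. Since $\mathcal{O} = \mathbb{C}[x_1,\dots,x_d]$ is Noetherian, Proposition \ref{prop:noetherian} implies that every submodule of a finitely generated $\mathcal{O}$-module is finitely generated; in particular every syzygy module encountered along the way is finitely generated. Mimicking the procedure used in the proof of Proposition \ref{prop:free-resol}, but at each step picking a \emph{minimal} homogeneous set of generators of the preceding kernel, one obtains a graded free resolution $\cdots\to\mathcal V_{-3}\to\mathcal V_{-2}\to\mathcal V_{-1}\to\mathcal A\to 0$ with each $\mathcal V_{-i}$ finitely generated, free, and \emph{minimal} in the sense that $\dd(\mathcal V_{-i-1})\subseteq\mathfrak m\,\mathcal V_{-i}$, where $\mathfrak m=(x_1,\dots,x_d)$ is the irrelevant maximal ideal.

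Next I would bound the length by $d+1$. The key observation is that minimality forces the complex obtained by applying $\mathcal O/\mathfrak m\otimes_\mathcal O (-)=\mathbb C\otimes_\mathcal O(-)$ to the resolution to have zero differentials; hence
\[
\mathrm{Tor}^\mathcal O_i(\mathcal A,\mathbb C)\;\simeq\;\mathbb C\otimes_\mathcal O \mathcal V_{-i}
\]
as graded $\mathbb C$-vector spaces for every $i\geq 1$. Thus $\mathcal V_{-i}=0$ as soon as $\mathrm{Tor}^\mathcal O_i(\mathcal A,\mathbb C)=0$, by Nakayama's lemma in the graded setting (a finitely generated graded module over $\mathcal O$ is zero iff its quotient by $\mathfrak m$ is zero). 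Finally, $\mathrm{Tor}^\mathcal O_\bullet(\mathcal A,\mathbb C)$ can be computed from any projective resolution of the second argument, and the Koszul complex $K_\bullet(x_1,\dots,x_d)$ provides a free resolution of $\mathbb C$ of length exactly $d$ (the sequence $x_1,\dots,x_d$ being regular in $\mathcal O$). Therefore $\mathrm{Tor}^\mathcal O_i(\mathcal A,\mathbb C)=0$ for all $i>d$, which gives $\mathcal V_{-i}=0$ for $i\geq d+1$ and hence a resolution of length at most $d+1$.

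The main obstacle is the existence and good behaviour of the minimal free resolution, i.e.\ the application of graded Nakayama to identify $\mathrm{Tor}$ with $\mathbb C\otimes\mathcal V_{-i}$; once this is in place, the Koszul complex immediately caps the projective dimension at $d$. All remaining verifications (exactness of the cone construction at each step, compatibility of minimal generating sets with the grading, acyclicity of the Koszul complex for a regular sequence) are standard and I would only cite them rather than reprove them here.
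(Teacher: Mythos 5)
The paper does not actually prove this theorem: it is stated with citations to Peeva and to Eisenbud's \emph{Syzygies} and used as a black box (e.g.\ to bound the length of resolutions of $\mathfrak X_W(\mathbb C^d)$ and $\mathcal I_W\mathfrak X(\mathbb C^d)$), so there is no internal proof to compare yours against. Your sketch is the standard modern argument and it is correct: build a minimal graded free resolution (Noetherianity via Proposition \ref{prop:noetherian} keeps every syzygy module finitely generated, and the graded analogue of Nakayama with respect to the irrelevant ideal $\mathfrak m=(x_1,\dots,x_d)$ replaces the local-ring hypothesis of Proposition \ref{prop:minimal-resol}), observe that minimality kills the differentials of $\mathbb C\otimes_{\mathcal O}\mathcal V_\bullet$ so that the $\mathrm{Tor}$ groups read off the ranks of the $\mathcal V_{-i}$, and then cap the projective dimension by computing $\mathrm{Tor}^{\mathcal O}_\bullet(\mathcal A,\mathbb C)$ from the Koszul resolution of $\mathbb C$, which has length $d$ because $x_1,\dots,x_d$ is a regular sequence. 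All of these ingredients are in fact developed in the paper's Appendix B (minimal resolutions, the remark identifying $\mathrm{Tor}^{-i}(\mathcal A,\mathbb K)$ with $\mathcal V_{-i}\otimes_{\mathcal O}\mathbb K$, and the Koszul complex of a regular sequence), so your proof is consistent with the paper's toolkit even though the paper chose to cite rather than prove.

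One cosmetic inconsistency to fix: with your own normalisation $\mathrm{Tor}^{\mathcal O}_i(\mathcal A,\mathbb C)\simeq\mathbb C\otimes_{\mathcal O}\mathcal V_{-i}$ for $i\geq 1$, the module $\mathcal V_{-i}$ sits in homological degree $i-1$, so the Koszul vanishing $\mathrm{Tor}_j=0$ for $j>d$ gives $\mathcal V_{-i}=0$ for $i\geq d+2$, not $i\geq d+1$; the surviving modules are $\mathcal V_{-1},\dots,\mathcal V_{-(d+1)}$, which is exactly the length $N\leq d+1$ claimed in the statement. As written, your conclusion that $\mathcal V_{-i}=0$ for $i\geq d+1$ would yield length $\leq d$, which is stronger than the theorem and false in general (take $\mathcal A=\mathbb C$ itself, whose minimal resolution is the full Koszul complex with $d+1$ free modules). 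This is purely an off-by-one in bookkeeping, not a gap in the argument.
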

\subsubsection{Tor complex} \label{app:tor}
Let $(\mathcal V_\bullet,\dd^{\mathcal V}, \pi)$ be a projective resolution of an $\mathcal{O}$-module $\mathcal{A}$. For any $\mathcal{O}$-module $\mathcal{B}$, we consider the complex 

$$\xymatrix{\cdots \ar[r]&\mathcal{V}_{-3}\otimes_\mathcal{O}\mathcal{B}\ar[r]^{\dd^{\mathcal V}\otimes \mathrm{id}}&\mathcal{V}_{-2}\otimes_\mathcal{O}\mathcal{B}\ar[r]^{\dd^{\mathcal V}\otimes \mathrm{id}}&\mathcal{V}_{-1}\otimes_\mathcal{O}\mathcal{B}\ar[r]^{\dd^{\mathcal V}\otimes \mathrm{id}}&\mathcal{A}\otimes_\mathcal{O}\mathcal{B}}.$$

We define for $i\geq 0$
$$\mathrm{Tor}^{-i} ( \mathcal{A} ,\mathcal{B}):= H^{-i}( \mathcal{V}_\bullet\otimes_\mathcal{O} \mathcal{B}).$$
Here are some properties of $\mathrm{Tor}$ \cite{Caroline-Lassueur}.
\begin{proposition} The functor $\mathrm{Tor}$ satisfies
\begin{enumerate}
    \item $\mathrm{Tor}^{0}(\mathcal{A},\mathcal{B})\simeq \mathcal{A}\otimes_\mathcal{O}\mathcal{B}$.
    \item If $\mathcal A$ is projective, then $\mathrm{Tor}^{-i}(\mathcal{A},\mathcal{B})=0$ for every $i\geq 1$.
    
    \item If $\mathcal{B}$ is flat, then $\mathrm{Tor}^{-i}(\mathcal{A},\mathcal{B})=0$ for every $i\geq 1$.
    
    \item For all $i$, $\mathrm{Tor}^{-i}(\mathcal{A},\mathcal{B})=\mathrm{Tor}^{-i}(\mathcal{B},\mathcal{A})$.
    \item The construction of $\mathrm{Tor}(\mathcal A,\mathcal B)$ is independent of the choice of the resolution, i.e.  any other projective resolution of $\mathcal A$ or $\mathcal{B}$ yields the same $\mathrm{Tor}$ groups.

\end{enumerate}
\end{proposition}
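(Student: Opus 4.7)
The plan is to take the items in a logical order, starting with well-definedness, since several of the other items implicitly rely on it.

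I would first prove item 5 (independence of resolution). The standard approach is to invoke the \emph{Comparison Theorem} for projective resolutions: given two projective resolutions $(\mathcal{V}_\bullet, \dd^\mathcal{V}, \pi)$ and $(\mathcal{V}'_\bullet, \dd^{\mathcal{V}'}, \pi')$ of $\mathcal{A}$, the identity on $\mathcal{A}$ lifts (by projectivity of the $\mathcal{V}_{-i}$ and exactness of $\mathcal{V}'_\bullet$) to chain maps between them, and any two such lifts are chain homotopic. A routine recursion builds these liftings degree by degree, exactly as in the proof of Proposition~\ref{prop:free-resol}. Applying the additive functor $-\otimes_{\mathcal O}\mathcal{B}$ preserves chain maps and chain homotopies, so the induced maps between $\mathcal{V}_\bullet\otimes_\mathcal O \mathcal{B}$ and $\mathcal{V}'_\bullet\otimes_\mathcal O\mathcal{B}$ are mutually inverse isomorphisms on cohomology. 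This gives item 5.

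Once well-definedness is in place, item 1 is immediate from the right exactness of $-\otimes_\mathcal{O}\mathcal{B}$: applied to $\mathcal{V}_{-2}\stackrel{\dd}{\to}\mathcal{V}_{-1}\stackrel{\pi}{\to}\mathcal{A}\to 0$ one gets $\mathrm{coker}(\dd\otimes \mathrm{id})\simeq \mathcal{A}\otimes_\mathcal{O}\mathcal{B}$, which is $\mathrm{Tor}^0(\mathcal{A},\mathcal{B})$. Item 2 follows by exploiting freedom in the choice of resolution: if $\mathcal{A}$ is projective, then $\cdots\to 0\to 0\to \mathcal{A}\stackrel{\mathrm{id}}{\to}\mathcal{A}$ is a projective resolution, and tensoring yields a complex concentrated in degree $-1$, so all $\mathrm{Tor}^{-i}$ vanish for $i\geq 1$. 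Item 3 is equally direct: if $\mathcal{B}$ is flat, then $-\otimes_\mathcal{O}\mathcal{B}$ is exact, so tensoring the exact complex $\mathcal{V}_\bullet\to\mathcal{A}\to 0$ preserves exactness, which forces the higher cohomology groups to vanish.

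The main obstacle is item 4, the symmetry $\mathrm{Tor}^{-i}(\mathcal{A},\mathcal{B})\simeq \mathrm{Tor}^{-i}(\mathcal{B},\mathcal{A})$, because a priori the two sides are computed from resolutions of different modules. The standard way I would proceed is to pick projective resolutions $\mathcal{V}_\bullet\to\mathcal{A}$ and $\mathcal{W}_\bullet\to\mathcal{B}$ simultaneously and form the double complex $\mathcal{V}_\bullet\otimes_\mathcal{O}\mathcal{W}_\bullet$ (in the sense of the tensor product of complexes from Appendix~\ref{appendix:mod}). Using flatness of projective modules together with item 3, I would show that each row of this bicomplex is a projective resolution of $\mathcal{V}_{-i}\otimes\mathcal{B}$ and each column a projective resolution of $\mathcal{A}\otimes \mathcal{W}_{-j}$; a standard Acyclic Assembly/spectral sequence computation (Proposition~\ref{Acyclic-Assembly-Lemma}) then identifies the cohomology of the total complex both with $\mathrm{Tor}^{-i}(\mathcal{A},\mathcal{B})$ and with $\mathrm{Tor}^{-i}(\mathcal{B},\mathcal{A})$. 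This \textquotedblleft balancing the Tor functor\textquotedblright\ step is the technical heart of the proof.

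In summary, the order I would follow is: item 5 (via comparison theorem and homotopy invariance), then items 1--3 (direct consequences via well-chosen resolutions and right exactness), and finally item 4, whose proof requires the bicomplex machinery and is the genuine obstacle.
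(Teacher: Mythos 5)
Your outline is correct and is the standard textbook argument. Note that the paper itself offers no proof of this proposition: it is stated in the appendix as a list of known properties with a citation to the literature, so there is no in-paper argument to compare against. Your ordering (well-definedness via the Comparison Theorem first, then items 1--3 as easy consequences of the freedom to choose the resolution and of right-exactness, and finally the balancing of Tor via the double complex $\mathcal V_\bullet\otimes_\mathcal O\mathcal W_\bullet$) is exactly how the cited references proceed, and you correctly single out item 4 as the only genuinely nontrivial step. One small point of care: with the paper's indexing, the complex computing $\mathrm{Tor}$ is the \emph{deleted} resolution tensored with $\mathcal B$ (the term $\mathcal A\otimes_\mathcal O\mathcal B$ is not part of it), otherwise surjectivity of $\pi\otimes\mathrm{id}$ would force $\mathrm{Tor}^0$ to vanish and contradict item 1; your identification of $\mathrm{Tor}^0$ with $\mathrm{coker}(\dd\otimes\mathrm{id})$ implicitly uses this convention, so it is worth making it explicit.
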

\subsubsection{Minimal resolutions}
Detailed definitions on local rings can be found in \cite{Nagata-Masayoshi}.

\begin{definition}
   \label{def:regular-local-ring}
   \begin{enumerate}
       \item A ring $R$ is
said to be a  \emph{local ring} if it has a "unique maximal ideal", i.e., a proper ideal $\mathfrak m\subset R$  such that $\mathfrak m$ contains every other ideal of $R$.
\item A local ring $R$ with maximal ideal $\mathfrak m$ is called  \emph{regular} if $\mathfrak{m}$ can be generated by $n$ elements, where $n=\dim R$ is the krull dimension\footnote{the Krull dimension of $R$ is by definition  the supremum of lengths of all chains of prime ideals in $R$ \cite{Matsumura}, p. 30} of $R$.
   \end{enumerate}
\end{definition}

Assume now that $R$ is a local ring with maximal ideal $\mathfrak{m}$, and let $\mathbb{K}:=R/\mathfrak m$. Geometrically, local rings correspond to germs of functions on a manifolds or affine variety at a point.\\

We assume that $(R, \mathfrak m)$ is a local Noetherian commutative ring. Here is an important lemma that uses definition of local rings.
\begin{lemma}[Nakayama]\label{Nakayama} Let $\mathcal{V}$ be a finitely generated $R$-module such that $r=\dim (\mathcal{V}/\mathfrak m \mathcal{V})< \infty$. Then, any  basis of the vector space $\mathcal{V}/\mathfrak m \mathcal{V}$ lifts to a (minimal)
generating set for $\mathcal{V}$ as a $R$-module. In particular, $\mathcal{V}$ can be generated by $r$ elements.
\end{lemma}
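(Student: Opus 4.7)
The plan is to prove this by an application of the classical form of Nakayama's lemma, namely the statement that a finitely generated module $M$ over a local ring $(R,\mathfrak m)$ satisfying $\mathfrak m M = M$ must be zero. I would first recall or reprove this classical form via the determinant trick: given generators $m_1,\dots,m_k$ of $M$ with each $m_i = \sum_j a_{ij} m_j$ for some $a_{ij}\in \mathfrak m$, one obtains a matrix identity $(I-A)\vec m = 0$, and multiplying by the adjugate yields $\det(I-A)\cdot m_i = 0$ for every $i$. Since $\det(I-A) \equiv 1 \pmod{\mathfrak m}$ and $\mathfrak m$ is the Jacobson radical of the local ring $R$, this determinant is a unit, forcing $M=0$.

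Given this, the proof of the statement proceeds as follows. Let $\bar v_1,\dots,\bar v_r$ be a $\mathbb K$-basis of $\mathcal V/\mathfrak m\mathcal V$, and pick lifts $v_1,\dots,v_r\in\mathcal V$. First I would form the submodule $N := Rv_1+\cdots+Rv_r\subseteq \mathcal V$ and show $\mathcal V = N + \mathfrak m \mathcal V$: indeed, any $v\in \mathcal V$ has a class $\bar v = \sum_i \bar\lambda_i \bar v_i$ for some $\lambda_i\in R$, so $v - \sum_i \lambda_i v_i \in \mathfrak m\mathcal V$. Passing to the quotient $M := \mathcal V/N$, this reads $M = \mathfrak m M$. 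Since $\mathcal V$ is finitely generated and $R$ is Noetherian, $M$ is finitely generated, and the classical Nakayama lemma above gives $M=0$, i.e.\ $\mathcal V = N$. Thus $v_1,\dots,v_r$ generate $\mathcal V$.

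For minimality, I would argue by contradiction: if some set $w_1,\dots,w_s$ with $s<r$ generated $\mathcal V$ as an $R$-module, then their images would generate $\mathcal V/\mathfrak m \mathcal V$ as an $R$-module, and hence as a $\mathbb K=R/\mathfrak m$-vector space; but a vector space of dimension $r$ cannot be spanned by fewer than $r$ elements. This also shows that any $R$-generating set of $\mathcal V$ has cardinality at least $r$, so the lifted basis provides a generating set of minimal possible size.

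The only subtle point is the use of the hypothesis that $R$ is \emph{local}: it is what makes $\mathfrak m$ equal to the Jacobson radical and hence what makes the determinant $\det(I-A)$, congruent to $1$ modulo $\mathfrak m$, automatically a unit. Without locality (e.g.\ for general Noetherian $R$), the statement fails, and one would need the full Nakayama lemma with the Jacobson radical in place of $\mathfrak m$. So the main \textquotedblleft obstacle\textquotedblright{} is not really an obstacle but a bookkeeping observation: the proof reduces to the classical Nakayama lemma once one notices that \emph{lifting a basis of $\mathcal V/\mathfrak m\mathcal V$ gives a submodule $N$ with $\mathcal V = N + \mathfrak m\mathcal V$}, and then the quotient $\mathcal V/N$ is killed by Nakayama.
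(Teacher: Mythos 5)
Your proof is correct and is the standard argument: reduce to the classical Nakayama lemma (finitely generated $M$ over a local ring with $\mathfrak m M = M$ forces $M=0$, via the determinant trick), applied to the quotient $\mathcal V/N$ where $N$ is the submodule generated by the lifts. The paper itself does not write out a proof at all — it simply cites Lemma 1.4 of Eisenbud's \emph{Syzygies} — and the argument given there is essentially the one you reproduce, so there is nothing to contrast. One minor remark: you do not need Noetherianity of $R$ to know that $\mathcal V/N$ is finitely generated; it is a quotient of the finitely generated module $\mathcal V$, so it is automatically finitely generated.
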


\begin{proof}
\cite{EisenbudSyzygies} Lemma 1.4,  p. 6.
\end{proof}

\begin{remark}
By Nakayama Lemma, a local ring $R$ is regular if  the dimension of the $R/\mathfrak m$-vector space $\mathfrak m/\mathfrak m^2$ is $\dim R$. 
\end{remark}

One can construct free resolutions of finitely generated modules over $R$ like in the proof of Proposition \ref{prop:free-resol} by taking a minimal set of homogeneous generators at each step. This can be formalized as follows,
\begin{definition}
   A projective resolution
   \begin{equation}\label{eq:minimal-resol1}
       \xymatrix{\cdots\ar[r]^\dd&\mathcal{V}_{-3}\ar[r]^\dd&\mathcal{V}_{-2}\ar[r]^\dd&\mathcal{V}_{-1}\ar[r]\ar[r]^\pi&\mathcal{A}\ar[r]&0}
   \end{equation}
   of a finitely generated $R$-module $\mathcal A$ is said to be \emph{minimal} if the differential map satisfies $\dd(\mathcal{V}_{-i})\subseteq\mathfrak m\mathcal V_{-i+1}$ for all $i\geq 2$.
\end{definition}
\begin{proposition}\cite{EisenbudSyzygies}\label{prop:minimal-resol}
Every finitely  generated $R$-module $\mathcal{A}$ admits a minimal free resolution.
\end{proposition}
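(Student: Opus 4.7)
The strategy is to construct the resolution by induction, at each stage choosing a \emph{minimal} generating set of the relevant module and invoking Nakayama's Lemma to control the image of the differential. The main tool is the equivalence: \emph{a generating set of a finitely generated $R$-module $\mathcal{V}$ is minimal if and only if its image in $\mathcal{V}/\mathfrak{m}\mathcal{V}$ is a $\mathbb{K}$-basis, which happens if and only if the corresponding surjection $R^{r} \twoheadrightarrow \mathcal{V}$ has kernel contained in $\mathfrak{m} R^{r}$.}

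\noindent
\textbf{Initial step.} Let $r_{1} := \dim_{\mathbb{K}}(\mathcal{A}/\mathfrak{m}\mathcal{A})$, which is finite because $\mathcal{A}$ is finitely generated over $R$. By Lemma \ref{Nakayama}, lifting a $\mathbb{K}$-basis of $\mathcal{A}/\mathfrak{m}\mathcal{A}$ yields a minimal generating family $a_{1},\ldots,a_{r_{1}}$ of $\mathcal{A}$ and hence a surjection $\pi \colon \mathcal{V}_{-1} := R^{r_{1}} \twoheadrightarrow \mathcal{A}$. I would then verify $\ker(\pi) \subseteq \mathfrak{m}\mathcal{V}_{-1}$: any relation $\sum f_{i} a_{i} = 0$ reduces modulo $\mathfrak{m}$ to $\sum \bar{f}_{i} \bar{a}_{i} = 0$ in the vector space $\mathcal{A}/\mathfrak{m}\mathcal{A}$, forcing $\bar{f}_{i} = 0$, i.e.\ $f_{i} \in \mathfrak{m}$.

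\noindent
\textbf{Inductive step.} Suppose we have constructed free $R$-modules $\mathcal{V}_{-1},\ldots,\mathcal{V}_{-n}$ of finite rank with maps $\dd \colon \mathcal{V}_{-i} \to \mathcal{V}_{-i+1}$ (resp.\ $\pi$ at the end) that form an exact sequence and satisfy $\dd(\mathcal{V}_{-i}) \subseteq \mathfrak{m}\mathcal{V}_{-i+1}$ for $2 \le i \le n$. Let $K_{n} := \ker\bigl(\mathcal{V}_{-n} \to \mathcal{V}_{-n+1}\bigr)$. Because $R$ is Noetherian and $\mathcal{V}_{-n}$ is finitely generated, Proposition \ref{prop:noetherian} forces $K_{n}$ to be finitely generated, so $\dim_{\mathbb{K}}(K_{n}/\mathfrak{m}K_{n}) =: r_{n+1} < \infty$. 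Again by Nakayama, lift a basis of $K_{n}/\mathfrak{m}K_{n}$ to a minimal generating set of $K_{n}$; this yields a surjection $\mathcal{V}_{-n-1} := R^{r_{n+1}} \twoheadrightarrow K_{n}$. Composing with the inclusion $K_{n} \hookrightarrow \mathcal{V}_{-n}$ defines the next differential $\dd \colon \mathcal{V}_{-n-1} \to \mathcal{V}_{-n}$, whose image is exactly $K_{n}$ (restoring exactness at $\mathcal{V}_{-n}$). The same minimality argument as in the initial step, applied to $K_{n}$, shows that the kernel of $\mathcal{V}_{-n-1} \twoheadrightarrow K_{n}$ sits inside $\mathfrak{m}\mathcal{V}_{-n-1}$; combined with $K_{n} \subseteq \mathfrak{m}\mathcal{V}_{-n}$ (a relation whose image in $\mathcal{V}_{-n}/\mathfrak{m}\mathcal{V}_{-n}$ is zero, since $K_{n}$ maps to zero in $\mathcal{V}_{-n+1}$ and the previous differential had image in $\mathfrak{m}\mathcal{V}_{-n+1}$… here one uses the equivalence recalled at the start).

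\noindent
\textbf{Conclusion and expected obstacle.} Iterating produces a (possibly infinite) free resolution $\cdots \to \mathcal{V}_{-3} \to \mathcal{V}_{-2} \to \mathcal{V}_{-1} \to \mathcal{A} \to 0$ with $\dd(\mathcal{V}_{-i}) \subseteq \mathfrak{m}\mathcal{V}_{-i+1}$ for every $i \ge 2$, which is the definition of minimality. The only subtle point — and the step most likely to require care — is verifying at each stage that the inclusion $\ker\bigl(\mathcal{V}_{-n-1} \twoheadrightarrow K_{n}\bigr) \subseteq \mathfrak{m}\mathcal{V}_{-n-1}$ translates into $\dd(\mathcal{V}_{-n-1}) \subseteq \mathfrak{m}\mathcal{V}_{-n}$; this uses precisely that a minimal generating set modulo $\mathfrak{m}$ is a basis, so that the surjection factors as a minimal presentation. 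Everything else is formal bookkeeping.
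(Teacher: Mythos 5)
Your proposal is correct and follows essentially the same route as the paper: build the resolution recursively as in Proposition \ref{prop:free-resol}, but choose a minimal generating set (via Nakayama, Lemma \ref{Nakayama}) at each stage, use Noetherianity (Proposition \ref{prop:noetherian}) to keep the kernels finitely generated, and observe that minimality of each presentation forces the kernel — which is the image of the next differential — into $\mathfrak{m}$ times the free module. The only cosmetic difference is that you check $\ker\pi\subseteq\mathfrak{m}\mathcal{V}_{-1}$ element-wise, while the paper tensors the short exact sequence with $\mathbb{K}=R/\mathfrak{m}$ and compares dimensions; these are the same computation, and your final parenthetical (despite its slight index shuffle) correctly identifies that the containment $\dd(\mathcal{V}_{-n-1})\subseteq\mathfrak{m}\mathcal{V}_{-n}$ is exactly the statement that the presentation $\mathcal{V}_{-n}\twoheadrightarrow K_{n-1}$ was minimal.
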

\begin{proof}
The proof goes just like in Proposition \ref{prop:free-resol}, one just need to take minimal set of generators in the construction. By doing so, it suffices to check that $\dd(\mathcal{V}_{-2})\subset \mathcal{V}_{-1}$:  Let $r=\dim (\mathcal{A}/\mathfrak m \mathcal{A})$. By Nakayama Lemma, we can take $\mathcal{V}_{-1}=R^r$. One has a short exact sequence,$$0\longrightarrow \dd(\mathcal{V}_{-2})=\ker \pi\longhookrightarrow \mathcal{V}_{-1}\longrightarrow \mathcal{A}\longrightarrow 0.$$ It induces an exact sequence $$\dd(\mathcal{V}_{-2})/\mathfrak m\, \dd(\mathcal{V}_{-2}) \longhookrightarrow \mathcal{V}_{-1}/\mathfrak m\mathcal{V}_{-1}\longrightarrow \mathcal{A}/\mathfrak{m}\mathcal{A}\longrightarrow 0,$$ by tensorizing with $\mathbb{K}=R/\mathfrak{m}$. Since by construction $\mathcal{V}_{-1}/\mathfrak m\mathcal{V}_{-1}\simeq \mathcal{A}/\mathfrak{m}\mathcal{A}\simeq \mathbb{K}^r$, the image of the map $\dd(\mathcal{V}_{-2})/\mathfrak m\, \dd(\mathcal{V}_{-2}) \longhookrightarrow \mathcal{V}_{-1}/\mathfrak m\mathcal{V}_{-1}$ is zero, i.e., $\dd(\mathcal{V}_{-2})\subseteq \mathfrak m\mathcal{V}_{-1}$. Likewise,  by Noetheriality of $R$, $\ker \pi\subset \mathcal{V}_{-1}$ is finitely generated. One can repeat the procedure by starting with a minimal set of generators of $\ker \pi$ and so on. The proof continues by recursion.
\end{proof}
\begin{remark}
Minimal resolutions and the Tor complex: given a minimal projective resolution as in \eqref{eq:minimal-resol1}, its quotient by the maximal ideal $\mathfrak{m}$ corresponds to the complex
$$ \xymatrix{\cdots\ar[r]&\mathcal{V}_{-3}\otimes_\mathcal{O}\mathbb{K}\ar[r]^{\dd\otimes \mathrm{id}}&\mathcal{V}_{-2}\otimes_\mathcal{O}\mathbb{K}\ar[r]^{\dd\otimes \mathrm{id}}&\mathcal{V}_{-1}\otimes_\mathcal{O}\mathbb{K}\ar[r]^{\pi\otimes \mathrm{id}}&\mathcal{A}\otimes_\mathcal{O}\mathbb{K}\ar[r]&0}$$
whose cohomology compute $\mathrm{Tor}^\bullet(\mathcal{A}, \mathbb{K})$. By minimality of $(\mathcal{V}_\bullet,\dd, \pi)$ one has  $\dd\otimes \mathrm{id}\equiv 0$, therefore $\mathrm{Tor}^{-i}(\mathcal{A}, \mathbb{K})\simeq  \mathcal{V}_{-i}\otimes_\mathcal{O}\mathbb{K}$ for every $i\geq 2$. In particular, the ranks of the $\mathcal{V}_{-i}$'s  for $i\geq 2$ are independent of the choices made in the construction of the minimal projective resolution.
\end{remark}

\subsubsection{Koszul complex }

Assume that  $\mathcal{V}$ is a free $\mathcal{O}$-module of finite rank $n$, which is concentrated in degree $-1$. Here, we denote by $\bigwedge^\bullet \mathcal{V}$ the graded symmetric algebra of $\mathcal{V}$. Let  $F\colon \mathcal{V}\rightarrow\mathcal{O}$ be a $\mathcal{O}$-linear map. Given a free basis $e_1,\ldots, e_n$ of $\mathcal{V}$, $F$ is completely determined by $n$-uplet $(f_1, \ldots,f_n)\subset \mathcal{O}$. 

\begin{definition}\cite{zbMATH00704831,Matsumura}
The \emph{Koszul complex  associated to $(f_1, \ldots,f_n)$} is the complex
\begin{equation}\label{eq:Koszul-complex}
    0\stackrel{}{\longrightarrow}\bigwedge^n \mathcal{V}\stackrel{\dd}{\longrightarrow}\bigwedge^{n-1} \mathcal{V}\stackrel{\dd}{\longrightarrow}\cdots\longrightarrow \bigwedge^2 \mathcal{V}\stackrel{\dd}{\longrightarrow} \mathcal{V}\stackrel{F}{\longrightarrow}\mathcal{O}.
\end{equation}
whose differential $\dd\colon \bigwedge^\bullet \mathcal{V}\stackrel{\dd}{\longrightarrow}\bigwedge^{\bullet-1}\mathcal{V}$ is the unique derivation of $\bigwedge^\bullet \mathcal{V}$ such that, $\dd|_\mathcal{V}=F.$ In other words, 

\begin{equation}
   \dd(e_{i_1}\wedge\cdots\wedge e_{i_k})=\sum_{j=1}^k(-1)^{j-1}F(e_{i_j})e_{i_1}\wedge\cdots \hat{e}_{i_j}\cdots \wedge e_{i_k}. 
\end{equation}
\end{definition}
It is easily checked  that this gives a well-defined complex.

\begin{definition}
   An ordered sequence of elements $f_1,\ldots, f_r\in \mathcal{O}$ is called a \emph{regular sequence} on a module $\mathcal{V}$ if the following conditions are satisfied
   \begin{enumerate}
      \item $f_i$ is not a zero divisor on the quotient $\mathcal{V}/\langle f_1,\ldots, f_{i-1}\rangle\mathcal{V}$,
       \item $\langle f_1,\ldots, f_r\rangle\mathcal{V}\neq \mathcal{V}$.
   \end{enumerate}
\end{definition}
\begin{proposition}
If $(f_1,\ldots,f_k)$ is regular sequence on $\mathcal{O}$, then the Koszul complex \eqref{eq:Koszul-complex} has no cohomology in degree less equal to $-1$.
\end{proposition}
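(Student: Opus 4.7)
The plan is to proceed by induction on the length $k$ of the regular sequence. For the base case $k=1$, the Koszul complex reduces to $0 \to \mathcal{O} \xrightarrow{f_1} \mathcal{O}$, and the very definition of regular sequence (condition 1 with $i=1$) says that $f_1$ is not a zero divisor in $\mathcal{O}$, so multiplication by $f_1$ is injective and the cohomology in degree $-1$ vanishes.

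For the inductive step, the key observation is that the Koszul complex has a recursive structure. Writing $\mathcal{V} = \mathcal{V}' \oplus \mathcal{O} e_k$ with $\mathcal{V}' = \bigoplus_{i<k}\mathcal{O} e_i$, the isomorphism of graded algebras $\bigwedge^\bullet \mathcal{V} \cong \bigwedge^\bullet \mathcal{V}' \otimes_{\mathcal{O}} \bigwedge^\bullet(\mathcal{O} e_k)$ and the derivation property of the Koszul differential give an isomorphism of complexes
\begin{equation*}
K(f_1,\ldots,f_k) \;\cong\; K(f_1,\ldots,f_{k-1}) \otimes_{\mathcal{O}} K(f_k).
\end{equation*}
Equivalently, $K(f_1,\ldots,f_k)$ is the mapping cone of multiplication by $f_k$ on $K(f_1,\ldots,f_{k-1})$, yielding a short exact sequence of complexes
\begin{equation*}
0 \longrightarrow K(f_1,\ldots,f_{k-1}) \longrightarrow K(f_1,\ldots,f_k) \longrightarrow K(f_1,\ldots,f_{k-1})[1] \longrightarrow 0
\end{equation*}
whose connecting homomorphism in the associated long exact sequence is (up to a sign) multiplication by $f_k$.

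By the induction hypothesis, $H^{-i}(K(f_1,\ldots,f_{k-1}))=0$ for all $i \geq 1$, while $H^0(K(f_1,\ldots,f_{k-1})) = \mathcal{O}/\langle f_1,\ldots,f_{k-1}\rangle$. Feeding this into the long exact sequence, for every $i \geq 2$ the cohomology $H^{-i}(K(f_1,\ldots,f_k))$ is trapped between two zero terms, hence vanishes. The remaining tail reads
\begin{equation*}
0 \longrightarrow H^{-1}(K(f_1,\ldots,f_k)) \longrightarrow \mathcal{O}/\langle f_1,\ldots,f_{k-1}\rangle \xrightarrow{\;\cdot\, f_k\;} \mathcal{O}/\langle f_1,\ldots,f_{k-1}\rangle,
\end{equation*}
and the regularity hypothesis that $f_k$ is not a zero divisor on this quotient forces $H^{-1}(K(f_1,\ldots,f_k)) = 0$, completing the induction.

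The main obstacle is really just the bookkeeping: carefully checking the isomorphism $K(f_1,\ldots,f_k) \cong K(f_1,\ldots,f_{k-1}) \otimes K(f_k)$ with the correct signs and grading, and verifying that the connecting morphism of the cone sequence is indeed multiplication by $f_k$. Once those identifications are pinned down, the rest of the argument is formal diagram chasing with the long exact sequence. (As a side benefit, the argument also yields $H^0(K(f_1,\ldots,f_k)) = \mathcal{O}/\langle f_1,\ldots,f_k\rangle$, which keeps the induction self-sustaining.)
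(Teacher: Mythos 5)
Your proof is correct. The paper does not actually prove this statement itself — it simply cites Theorem 16.5 of Matsumura — and your induction on $k$ via the identification $K(f_1,\ldots,f_k)\cong K(f_1,\ldots,f_{k-1})\otimes_{\mathcal O}K(f_k)$ (equivalently, the mapping cone of multiplication by $f_k$) together with the long exact sequence is precisely the standard argument given in that reference, so you have faithfully supplied the proof the paper delegates to the literature.
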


\begin{proof}
Theorem 16.5. of \cite{Matsumura}.
\end{proof}

\begin{example}
For $\mathcal{O}=\mathbb{K}[x_1,\ldots,x_d]$ be the polynomial ring in $d$ indeterminates,  the Koszul complex which is associated to $(x_1,\ldots,x_d)$ induces a free resolution of $\mathbb K\simeq \mathcal{O}/\langle x_1,\ldots, x_d\rangle$.
\end{example}
\section{Geometric resolutions of a singular foliation}\label{sec:geo-reso}
Let us start with this definition.

\begin{definition} Let $\mathfrak{F}$ be a singular foliation on $M$. A \emph{complex of vector bundles over $\mathfrak{F}$} consists of a triple $(E_\bullet,\dd^\bullet,\rho)$, where
   \begin{enumerate}
       \item $ E_\bullet = (E_{-i})_{i \geq 1}$ is a family of vector bundles over $M$, indexed by negative integers.
       \item  $\dd^{(i+1)}\in \mathrm{Hom}(E_{-i-1}, E_{-i})$ is a vector bundle morphism over the identity of $M$ called the \emph{differential map}
       \item $\rho \colon E_{-1} \longrightarrow TM $ is a vector bundle morphism over the identity of $M$ called the \emph{anchor map} with $\rho(\Gamma(E_{-1}))=\mathfrak{F}$. 
   \end{enumerate}
   such that \begin{equation}\label{eq:geom-ch-complex}\xymatrix{ \cdots \ar[r] & E_{-i-1} \ar[r]^{{\dd^{(i+1)}}} \ar[d] & 
     E_{ -i} \ar[r]^{{\dd^{(i)}}}
     \ar[d] & E_{i-1} \ar[r] \ar[d] & \ar@{..}[r] & \ar[r]^{{\dd^{(2)}}}& E_{-1} \ar[r]^{\rho} \ar[d]& TM \ar[d] \\ 
      \ar@{=}[r] & \ar@{=}[r] M  &  \ar@{=}[r] M 
      &  \ar@{=}[r] M  &\ar@{..}[r] & \ar@{=}[r]   &  \ar@{=}[r] M  & M}\end{equation}
  which form a (chain) complex, i.e.
  $$ \dd^{(i)}\circ\dd^{(i+1)}=0  \hbox{ and }  \rho \circ \dd^{(2)}=0.$$
  \end{definition}
  
 \begin{remark}\label{rmk:cohmology}
  Two main cohomology groups can  be associated to a complex of vector bundles over $\mathfrak{F}$:

\begin{enumerate}
    \item {\textbf{Cohomology at the level of sections}}.
     The complex of vector bundles \eqref{eq:geom-ch-complex} induces a complex of sheaves of modules over functions. More explicitly, for every open subset $\mathcal U \subset M $, there is a complex: $$ \cdots {\longrightarrow} \Gamma_{\mathcal U}({E_{ -i-1}})
     \stackrel{\dd^{(i+1)}}{\longrightarrow} \Gamma_{\mathcal U}({E_{-i}})
     \stackrel{\dd^{(i)}}{\longrightarrow}{\Gamma_{\mathcal U}(E_{-i+1})}{\longrightarrow}\cdots  \stackrel{\dd^{(2)}}{\longrightarrow} \Gamma_{\mathcal U}(E_{-1})  
     \stackrel{\rho}{\longrightarrow} \mathfrak F_{\mathcal U} \subset \mathfrak X(\mathcal U).$$   Since $\mathrm{Im}(\dd^{(i+1)})\subseteq\ker \dd^{(i)}$ for every $i\in\mathbb N$, we are allowed to define the quotient spaces,
      $$ H^{-i}(E_{\bullet},\mathcal U) = \left\{ \begin{array}{ll} \frac{\ker\left(\Gamma_\mathcal{U}(E_{-1}) \overset{\rho}{\longrightarrow }\mathfrak{F}\right) 
      }{\mathrm{Im}\left(\Gamma_\mathcal{U}(E_{-2})\overset{\dd^{(2)}}{\longrightarrow}\Gamma_\mathcal U(E_{-1})\right)} & \hbox{for $i=1 $}\\&
      \\ \frac{\ker \left(\Gamma_\mathcal{U}(E_{-i})\overset{\dd^{(i)}}{\longrightarrow }\Gamma_\mathcal{U}(E_{i+1})\right)} {\mathrm{Im}\left(\Gamma_\mathcal{U}(E_{-i-1})\overset{\dd^{(i+1)}}{\longrightarrow}\Gamma_\mathcal{U}(E_{-i})\right)} &
      \hbox{ if $ i \geq 2$.}\end{array}\right.$$
  They are  modules over functions on $\mathcal U$. For each $i\geq 1$, $H^{-i}(E_{\bullet},\mathcal U)$ is called the \emph{$i$-th cohomology} of $(E_\bullet, \dd^\bullet, \rho) $ at the level of sections on $\mathcal{U}$.  
    \begin{enumerate}
        \item We way then $(E_\bullet, \dd^\bullet, \rho) $ is a \emph{geometric resolution} of $\mathfrak{F}$ if  for every open set $\mathcal{U}\subset M$ and $i\geq 1$, if it induces an exact complex on the level of sections, that is $$H^{-i}(E_{\bullet},\mathcal U')=\{0\}$$ for every open subset $\mathcal U'\subset \mathcal U$. 
        \item A geometric resolution $(E_\bullet,\dd^{\bullet}, \rho)$ of $\mathfrak{F}$ is said to be \emph{minimal} at a point $m\in M$ if for each $i\geq 2$ the linear map $\dd^{(i)}_{|_m}\colon E_{-i}\longrightarrow {E_{-i+1}}_{|_m}$ vanishes.
    \end{enumerate} 
    
    \item {\textbf{Cohomology at an arbitrary point $m \in M$}}. 
    Also, the complex of vector bundles \eqref{eq:geom-ch-complex}, at an arbitrary point $m \in M$, restricts to a complex of vector spaces $$ \quad  \cdots {\longrightarrow} {E_{ -i-1}}_{|_m}\stackrel{\dd^{(i+1)}_{|_m}}{\longrightarrow} {E_{-i}}_{|_m}  \stackrel{\dd^{(i)}_{|_m}}{\longrightarrow}{E_{-i+1}}_{|_m}{\longrightarrow}\cdots \stackrel{\dd^{(2)}_{|_m}} {\longrightarrow} E_{-1}  \stackrel{\rho_{|_m}}{\longrightarrow} T_m M .$$   We can look at the quotient vector spaces:
      
      $$ H^{-i}(E_{\bullet},m) = \left\{ \begin{array}{ll} \frac{\ker\left({E_{-1}}_{|_m} \overset{\rho_{|_m}}{\longrightarrow }T_mM\right) 
      }{\mathrm{Im}\left({E_{-2}}_{|_m}\overset{\dd^{(2)}{|_m}}{\longrightarrow}{E_{-1}}_{|_m}\right)} & \hbox{for $i=1 $}\\&
      \\ \frac{\ker \left({E_{-i}}_{|_m}\overset{\dd^{(i)}{|_m}}{\longrightarrow }{E_{i+1}}_{|_m}\right)} {\mathrm{Im}\left({E_{-i-1}}_{|_m}\overset{\dd^{(i+1)}{|_m}}{\longrightarrow}{E_{-i}}_{|_m}\right)} &
      \hbox{ if $ i \geq 2$.}\end{array}\right.$$
      
 Here we call $H^{-i}(E_{\bullet},m)$ the \emph{$i$-th cohomology of $(E_\bullet, \dd^\bullet, \rho) $ at the point $m$}.

 It is important to notice the following: even if $(E_\bullet, \dd^\bullet, \rho) $ is a geometric resolution of $\mathfrak{F}$, there are no reasons for $(E_\bullet, \dd^\bullet, \rho) $ to be exact at $m$. For example, if the resolution is minimal at some point $m\in M$, then $H^{-i}(E_{\bullet},m)\simeq {E_{-i}}_{|_m}$ for each $i\geq 2$ and $H^{-1}(E_{\bullet},m)\simeq\ker(\rho_{|_m})$.
\end{enumerate}
\end{remark}
\begin{remark}
Most of the definitions and operations on chain complexes of modules are adapted in a obvious manner to  complexes of vector bundles over $M$, both on the level of the sections or at a point. See also \cite{LLS,LavauSylvain} for more.
\end{remark}
\subsubsection{On existence of geometric resolutions}
Geometric resolutions of a singular foliation $\mathfrak{F}$ as defined in Remark \ref{rmk:cohmology}(a) are not guaranteed in all contexts. However, there always exists a projective resolution of $\mathfrak F$ as a $\mathcal{O}$-module (see \ref{app:proj-res}). But these resolutions do not induce always a geometric resolution of $\mathfrak F$, since the projective modules of a projective resolution may not correspond to vector bundles because they may not be locally finitely generated. When the latter condition is satisfied, the Serre-Swan theorem \cite{SwanRichardG,MoryeArchanaS} states that there is a one-to-one correspondence between locally finitely generated projective modules and sections of vector bundles. Under the assumptions of the latter theorem, geometric resolutions of 
 singular foliations are exactly projective resolutions at the sections level in the category of chain complexes by $\mathcal O$-modules, since sections of vector bundles over $M$ are projective $\mathcal O$-modules.\\
 
 The following proposition summarizes some contexts where geometric resolutions exist, see \cite{LLS} for their proofs.
 
 \begin{proposition}\phantom{\cite{LLS}}
 \begin{enumerate}
     \item Any algebraic singular foliation\footnote{A singular foliation which is generated by polynomial vector fields on $\mathbb{K}^d$.} on $\mathbb K^d $ admits geometric resolutions by trivial vector bundles and of length $\leq d+1 $.
    
    The same holds for a real analytic of holomorphic singular foliation, but only in a neighborhood of a point.
    
    \item  A locally real analytic singular foliation on a manifold of dimension $d$ admits a geometric resolution of length $\leq d+1$ on any relatively compact open subset of $M$.
 \end{enumerate}
 \end{proposition}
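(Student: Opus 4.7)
The overall strategy is to translate the problem into commutative algebra via the Serre–Swan correspondence: since finitely generated projective modules over the sheaf/algebra of functions correspond to vector bundles (and finitely generated free modules to trivial ones), a geometric resolution of $\mathfrak F$ of length $\leq d+1$ is the same data as a projective resolution of $\mathfrak F$ as an $\mathcal O$-module of the same length, with the further property that the modules involved are locally finitely generated. The proof then reduces to quoting Hilbert–Syzygy-type statements in three different rings of functions (polynomial, germs of holomorphic/analytic, real analytic on a relatively compact set), each of which is Noetherian and has good homological dimension.

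For item 1(a), I would proceed as follows. The algebra $\mathcal O = \mathbb K[x_1,\ldots,x_d]$ is Noetherian, and $\mathfrak F \subset \mathfrak X(\mathbb K^d) \simeq \mathcal O^d$ is by assumption a sub-$\mathcal O$-module of a finitely generated free module; Proposition~\ref{prop:noetherian} then forces $\mathfrak F$ itself to be finitely generated. Hilbert's Syzygy Theorem~\ref{app:Syzygies} now applies and gives a graded free resolution
\[
0 \longrightarrow \mathcal V_{-N} \stackrel{\dd}{\longrightarrow} \cdots \stackrel{\dd}{\longrightarrow} \mathcal V_{-1} \stackrel{\rho}{\longrightarrow} \mathfrak F \longrightarrow 0, \qquad N \leq d+1,
\]
with each $\mathcal V_{-i}$ finitely generated free over $\mathcal O$. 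Interpreting each $\mathcal V_{-i}$ as sections of a trivial vector bundle $E_{-i}=M\times \mathbb K^{r_i}$ and extending $\dd,\rho$ to bundle maps gives the desired geometric resolution. For item 1(b), I replace $\mathcal O$ by the local ring $\mathcal O_{M,m}$ of germs of real analytic (resp.\ holomorphic) functions at $m$, which by Rückert–Oka is a regular Noetherian local ring of Krull dimension $d$. By the Auslander–Buchsbaum–Serre theorem every finitely generated module over such a ring has projective (equivalently free, since the ring is local) dimension $\leq d$, hence admits a free resolution of length $\leq d+1$. One then spreads this resolution to a neighborhood of $m$ by lifting a chosen finite family of generators and relations for the germ (using coherence of the structure sheaf to ensure that the liftings continue to satisfy the chain relations on some small enough neighborhood).

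For item 2, the strategy is to upgrade the pointwise germ-level resolutions of item 1(b) to one valid on a chosen relatively compact open subset $\mathcal U \Subset M$. Here the key input is Frisch's theorem (and its real analytic analogue), which gives Noetherianity of the ring of real analytic functions on a relatively compact open subset that is sufficiently nice (e.g.\ after shrinking to a Stein domain in the complexification), together with coherence of the sheaf of real analytic vector fields. One works stepwise: pick a finite set of global sections generating $\mathfrak F$ on $\mathcal U$, which defines $E_{-1} \to \mathcal U$ and $\rho$; the kernel $\ker\rho$ is then a coherent sheaf on $\mathcal U$, hence its global sections form a finitely generated module, which one presents by a trivial bundle $E_{-2}$; and so on. The process is forced to terminate after at most $d+1$ steps by the pointwise Auslander–Buchsbaum–Serre bound applied at each $m\in\mathcal U$, since minimality of free resolutions over the local rings bounds the length of the global resolution as well.

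The main obstacle, as always in this circle of ideas, lies in item 2: controlling finite generation globally on $\mathcal U$ and making sure that the successive kernels $\ker(\dd^{(i)})$ admit finite presentations by trivial bundles, rather than only by vector bundles that a priori could be non-trivial or whose sections fail to be finitely generated. Handling this requires genuinely using Frisch-type Noetherianity and coherence results for the real analytic sheaf on relatively compact subsets, and this is where the assumption of real analyticity (as opposed to only smoothness) is essential—indeed, for purely smooth singular foliations, no such finite-length geometric resolution exists in general, which is precisely the reason the algebraic formalism of Chapter~\ref{Chap:main} was developed in the first place.
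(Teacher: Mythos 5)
Your proposal is correct and follows essentially the same route as the source the paper defers to (the paper itself gives no proof here, citing \cite{LLS}): Hilbert's Syzygy Theorem together with Noetherianity of $\mathbb K[x_1,\ldots,x_d]$ for the algebraic case, the Auslander--Buchsbaum--Serre bound over the regular local ring of analytic germs plus coherence for the local analytic case, and Frisch-type Noetherianity with complexification/Stein arguments for relatively compact subsets. You also correctly locate the genuine difficulty in item 2 (global finite generation of the successive kernels), which is exactly where the real analytic hypothesis is used.
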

 
 
 Here we have some examples of geometric resolutions of singular foliations.
\begin{example}
Let $\mathfrak F_0=\{X\in\mathfrak X(V)\mid X(0)=0\}$ be the singular foliation made of all vector fields vanishing at the origin of a vector space $V$ (e.g. think of $\mathbb C^N$ or $\mathbb R^N$). The contraction by the Euler vector field $\displaystyle{\overrightarrow{E}=\sum_{i=1}^Nx_i\frac{\partial}{\partial x_i}}$ gives rise to a
complex of trivial vector bundles 

\begin{equation}\label{eq:Koszul1}
 \quad  \cdots {\longrightarrow} \wedge^3T^*V  \stackrel{\iota_{\overrightarrow{E}}}{\longrightarrow}\wedge^2T^*V   \stackrel{\iota_{\overrightarrow{E}}}{\longrightarrow}T^*V\stackrel{\iota_{\overrightarrow{E}}}{\longrightarrow} \mathbb{C}\times V=:\underline{\mathbb C},\end{equation} whose complex on the sections level is $(\Omega^\bullet(V), \iota_{\overrightarrow{E}})$. Here $(x_1,\ldots, x_N )$ are the
canonical coordinates on $V$. The latter is the Kozul complex associated to the coordinate  functions $x_1,\ldots,x_N$ of $V$. Since the $x_i$'s form a regular sequence, it is well known that $(\Omega^\bullet(V), \iota_{\overrightarrow{E}})$ is exact.\\
 
 The following complex of vector bundles over $V$ 
 
 \begin{equation}\label{eq:Koszul2}
 \quad  \cdots {\longrightarrow} \wedge^3T^*V\otimes TV \stackrel{\iota_{\overrightarrow{E}}\otimes\mathrm{id}}{\longrightarrow}\wedge^2T^*V  \otimes TV \stackrel{\iota_{\overrightarrow{E}}\otimes\mathrm{id}}{\longrightarrow}T^*V\otimes TV\stackrel{\iota_{\overrightarrow{E}}\otimes\mathrm{id}}{\longrightarrow}\underline{\mathbb C}\otimes TV.\end{equation}

 is a geometric resolution of $\mathfrak F_0$ since $\left(\Omega^\bullet(V)\otimes\mathfrak X(V), \iota_{\overrightarrow{E}}\otimes\mathrm{id}\right)$ is also exact (here  $\Omega^i(V):= \Gamma(\wedge^iT^* V )$ stands for the sheaf of $i$-forms on $V$).\\
 
 More generally, the construction we have made in \eqref{eq:Koszul2} is still possible by contracting with any vector field $\displaystyle{X=\sum_{i=1}^NX^i\frac{\partial}{\partial x_i}}\in \mathfrak{X}(V)$. The latter yields a complex of vector bundles that covers the singular foliation $\mathfrak F_X$ generated by the $X^i\frac{\partial}{\partial x_j}$'s. For instance, if $X$ is a polynomial vector field and $(X^1,\ldots,X^N)$ form a regular sequence, we get a geometric resolution of $\mathfrak F_X$.

\end{example}

\begin{example}
Let $\mathfrak F_2=\mathcal{I}^2_0\mathfrak X(\mathbb{K}^2)\subset \mathfrak F_0$ be the sub-singular  singular foliation made of vector fields vanishing at order $2$ at the origin of $\mathbb{K}^2$, where  $\mathcal{I}^2_0\subset \mathcal{O}(\mathbb{K}^2)$ is the ideal generated by the monomials $x^2, xy, y^2$. Note that the ideal $\mathcal{I}_0^2$ admits a free resolution of the form \begin{equation}\label{eq:ideal-reso1}
    0\longrightarrow \mathcal{O}(\mathbb{K}^2)\oplus\mathcal{O}(\mathbb{K}^2)\stackrel{\delta_1}{\longrightarrow} \mathcal{O}(\mathbb{K}^2)\oplus\mathcal{O}(\mathbb{K}^2)\oplus\mathcal{O}(\mathbb{K}^2)\stackrel{\delta_0}{\longrightarrow}\mathcal{I}_0^2\longrightarrow 0,
\end{equation}where for all $f,g,h\in\mathcal{O}(\mathbb{K}^2)$, $$\delta_0(f,g,h)=x^2f+xyg+y^2h\,\;\text{and}\,\; \delta_1(f,g)=(xf,xf-yg,xg).$$The free resolution \eqref{eq:ideal-reso1} has to take the form  
\begin{equation*}
    0\longrightarrow \Gamma(\mathcal I_{-2})\stackrel{\delta_1}{\longrightarrow} \Gamma(\mathcal{I}_{-1})\stackrel{\delta_0}{\longrightarrow}\mathcal{I}_0^2\longrightarrow 0,
\end{equation*}
for sum trivial vector bundles $\mathcal{I}_{-1}, \mathcal{I}_{-2}$ on $\mathbb{K}^2$. Thus, the following complex 
\begin{equation}
    0\longrightarrow \mathcal I_{-2}\otimes T\mathbb{K}^2\stackrel{\delta_1\otimes \mathrm{id}}{\longrightarrow} \mathcal{I}_{-1}\otimes
    T\mathbb{K}^2\stackrel{\delta_0\otimes \mathrm{id}}{\longrightarrow}\mathcal{I}_0^2\otimes T\mathbb{K}^2\longrightarrow 0
\end{equation}is a geometric resolution of $\mathcal
F_2$. Note that  $\mathcal{I}_{-1}$ can be identified with the tivial vector bundle $S^2((\mathbb{K}^2)^*)$.\\

More generally, let $\mathfrak F_k$ be the singular foliation made of  vector fields
vanishing at order $k$ at the origin of a vector space $V$ of dimension $N$ over $\mathbb{R}$ or $\mathbb{C}$. The  Hilbert’s syzygy theorem assures the existence of a free resolution of length $N+1$ of the ideal $\mathcal{I}^k_0$ made of functions on $V$ vanishing to
order $k$ at the origin. This resolution is of the form 
\begin{equation*}
    \cdots \longrightarrow \Gamma(\mathcal I_{-2})\stackrel{\delta_1}{\longrightarrow} \Gamma(\mathcal{I}_{-1})\stackrel{\delta_0}{\longrightarrow}\mathcal{I}_0^2\longrightarrow 0,
\end{equation*}for some family of trivial vector bundles $(\mathcal I_{-i})_{i\geq1}$ over $V$. We obtain a geometric resolution of $\mathfrak F^k$  of the form

\begin{equation*}
    \cdots \longrightarrow \Gamma(\mathcal I_{-2}\otimes TV)\stackrel{\delta_1\otimes \mathrm{id}}{\longrightarrow} \Gamma(\mathcal{I}_{-1}\otimes
    TV)\stackrel{\delta_0\otimes \mathrm{id}}{\longrightarrow}\mathcal{I}_0^2\otimes \mathfrak{X}(V)=\mathfrak F^k.
\end{equation*}
\end{example}

\end{appendices}
\bibliographystyle{alpha}
\bibliography{thesisphd}
\vfill
\begin{center}
    \textsc{Universit\'e de Lorraine, CNRS, IECL, F-57000 Metz, France.}
\end{center}
\newpage\hbox{}\thispagestyle{empty}\newpage

\end{document}